\newcolumntype{C}{>{\centering\arraybackslash}X}
\def\comment#1{{\sf{[#1]}}}
\def\Z{{\mathbb Z}}
\def\Q{{\mathbb Q}}
\def\R{{\mathbb R}}
\def\C{{\mathbb C}}
\def\P{{\mathbb P}}
\def\bA{{\mathbb A}}
\def\bB{{\mathbb B}}
\def\D{{\mathbb D}}
\def\bE{{\mathbb E}}
\def\bG{{\mathbb G}}
\def\H{{\mathbb H}}
\def\L{{\mathbb L}}
\def\bU{{\mathbb U}}
\def\V{{\mathbb V}}
\def\A{{\mathcal A}}
\def\cD{{\mathcal D}}
\def\E{{\mathcal E}}
\def\F{{\mathcal F}}
\def\M{{\mathcal M}}
\def\cN{{\mathcal N}}
\def\O{{\mathcal O}}
\def\U{{\mathcal U}}
\def\cC{{\mathcal C}}
\def\cG{{\mathcal G}}
\def\cH{{\mathcal H}}
\def\K{{\mathcal K}}
\def\cL{{\mathcal L}}
\def\cP{{\mathcal P}}
\def\cR{{\mathcal R}}
\def\cS{{\mathcal S}}
\def\cV{{\mathcal V}}
\def\X{{\mathcal X}}
\def\a{{\mathfrak a}}
\def\f{{\mathfrak f}}
\def\g{{\mathfrak g}}
\def\h{{\mathfrak h}}
\def\k{{\mathfrak k}}
\def\m{{\mathfrak m}}
\def\n{{\mathfrak n}}
\def\p{{\mathfrak p}}
\def\r{{\mathfrak r}}
\def\s{{\mathfrak s}}
\def\u{{\mathfrak u}}
\def\B{\mathfrak B}
\def\fM{\mathfrak M}
\def\d{{\delta}}
\def\e{{\epsilon}}
\def\w{{\omega}}
\def\G{{\Gamma}}
\def\rhotilde{{\tilde{\rho}}}
\def\ctilde{{\tilde{c}}}
\def\kbar{{\overline{k}}}
\def\xbar{{\overline{x}}}
\def\Ebar{{\overline{E}}}
\def\Hbar{{\overline{\cH}}}
\def\Mbar{\overline{\M}}
\def\cEbar{{\overline{\E}}}
\def\nablabar{{\overline{\nabla}}}
\def\eop{\check{\e}}
\def\eeop{\check{\mathbf{e}}}
\def\vv{{\vec{\mathsf v}}}
\def\ww{{\vec{\mathsf w}}}
\def\uu{{\vec{1}}}
\def\tate{{\vec{\mathsf{t}}}}
\def\bp{{\boldsymbol{\p}}}
\def\Pol{{\mathbf{Pol}}}
\def\aa{{\mathbf a}}
\def\bb{{\mathbf b}}
\def\ee{{\mathbf e}}
\def\zz{{\mathbf z}}
\def\br{{\mathbf r}}
\def\bv{\mathbf{v}}
\def\bw{\mathbf{w}}
\def\v{\mathbf{v}}
\def\bmu{{\boldsymbol{\mu}}}
\def\bphi{{\boldsymbol{\varphi}}}
\def\be{{\boldsymbol{\epsilon}}}
\def\ssigma{{\boldsymbol{\sigma}}}
\def\adual{{\check{\aa}}}
\def\bdual{{\check{\bb}}}
\def\Hdual{{\check{H}}}
\def\Qbar{{\overline{\Q}}}
\def\Ql{{\Q_\ell}}
\def\Zl{{\Z_\ell}}
\def\Vec{{\mathsf{Vec}}}
\def\Rep{{\mathsf{Rep}}}
\def\MHS{{\mathsf{MHS}}}
\def\MTM{{\mathsf{MTM}}}
\def\MEM{{\mathsf{MEM}}}
\def\MM{{\mathsf{MM}}}
\def\sl{\mathfrak{sl}}
\def\gl{\mathfrak{gl}}
\def\SL{{\mathrm{SL}}}
\def\GL{{\mathrm{GL}}}
\def\Gm{{\mathbb{G}_m}}
\def\Ga{{\mathbb{G}_a}}
\def\SLtilde{\widetilde{\SL}}
\def\cGhat{\widehat{\cG}}
\def\grt{{\mathfrak{grt}}}
\def\GRT{{\mathrm{GRT}}}
\def\what{{\mathbf{w}}}
\def\DR{{\mathrm{DR}}}
\def\et{\mathrm{\acute{e}t}}
\def\cris{\mathrm{cris}}
\def\wtd{\mathrm{wtd}}
\def\rel{\mathrm{rel}}
\def\cusp{\mathrm{cusp}}
\def\eis{{\mathrm{eis}}}
\def\geom{\mathrm{geom}}
\def\mot{\mathrm{mot}}
\def\prol{{(\ell)}}
\def\top{\mathrm{top}}
\def\un{{\mathrm{un}}}
\def\op{\mathrm{op}}
\def\ss{\mathrm{ss}}
\def\Mod{\mathrm{Mod}}
\def\elliptic{\mathrm{ell}}
\def\class{\mathrm{class}}
\def\res{\mathrm{res}}
\def\Fr{\mathcal{F}}
\def\Frbar{{\overline{\Fr}}}
\def\an{\mathrm{an}}
\def\cts{\mathrm{cts}}
\def\fte{\mathrm{fte}}
\def\sm{\mathrm{sm}}
\def\reg{{\sf{reg}}}
\def\real{{\sf{real}}}
\def\l{{\ell}}
\def\Tate{{\mathrm{Tate}}}
\def\sr{{\mathsf r}}
\def\dot{{\bullet}}
\def\bs{\backslash}
\def\bbs{{\bs\negthickspace \bs}}
\def\ll{\langle\langle}
\def\rr{\rangle\rangle}
\def\Pminus{{\P^1-\{0,1,\infty\}}}
\def\Sdual{{\check{S}}}
\def\Het{H_{\et}}
\def\Hfte{H_\sm}
\def\blank{\phantom{x}}
\def\round#1{{(#1)}}
\newcommand\im{\operatorname{im}} 
\newcommand\id{\operatorname{id}}
\newcommand\ad{\operatorname{ad}}
\newcommand\Span{\operatorname{span}}
\newcommand\Spec{\operatorname{Spec}}
\newcommand\Gal{\operatorname{Gal}}
\newcommand\Hom{\operatorname{Hom}}
\newcommand\Ext{\operatorname{Ext}}
\newcommand\End{\operatorname{End}}
\newcommand\Aut{\operatorname{Aut}}
\newcommand\Diff{\operatorname{Diff}}
\newcommand\Der{\operatorname{Der}}
\newcommand\Gr{\operatorname{Gr}}
\newcommand\Isom{\operatorname{Isom}}
\newcommand\Res{\operatorname{Res}}
\newcommand\Inn{\operatorname{Inn}}
\newcommand\OutDer{\operatorname{OutDer}}
\newcommand\diag{\operatorname{diag}}
\newcommand\Lie{\operatorname{Lie}}
\newcommand\Sym{\operatorname{Sym}}
\renewcommand\Im{\operatorname{Im}}
\numberwithin{equation}{section}
\newtheorem{theorem}{Theorem}[section]
\newtheorem{lemma}[theorem]{Lemma}
\newtheorem{proposition}[theorem]{Proposition}
\newtheorem{corollary}[theorem]{Corollary}
\theoremstyle{definition}
\newtheorem{definition}[theorem]{Definition}
\newtheorem{example}[theorem]{Example}
\newtheorem{problem}{Problem} 
\theoremstyle{remark}
\newtheorem{remark}[theorem]{Remark}
\newtheorem{notation}[theorem]{Notation}
\newtheorem{question}[theorem]{Question}
\newtheorem{convention}[theorem]{Convention}
\newtheorem{conjecture}[theorem]{Conjecture}
\begin{document}

\title{Universal Mixed Elliptic Motives}

\author{Richard Hain}
\address{Department of Mathematics\\ Duke University\\
Durham, NC 27708-0320}
\email{hain@math.duke.edu}


\author{Makoto Matsumoto}
\address{Graduate School of Sciences\\
Hiroshima University\\
Hiroshima 739-8526, Japan}
\email{m-mat@math.sci.hiroshima-u.ac.jp}

\thanks{Supported in part by the Scientific Grants-in-Aid 19204002, 23244002,
15K13460, JST-CREST 151001, and by the Core-to-Core grant 18005 from the Japan
Society for the Promotion of Science; and grants DMS-0706955, DMS-1005675 and
DMS-1406420 from the National Science Foundation.}

\date{\today}



\maketitle

\tableofcontents

\section{Introduction}

Among the principal goals of the theory of motives is to construct a $\Q$-linear
tannakian category $\MM(T)$ of mixed motivic sheaves over a smooth base $T$
whose ext groups are the motivic cohomology groups of $T$:
$$
\Ext_{\MM(T)}^\dot(\Q,M) \cong H^\dot_\mot(T,M),
$$
where $M$ is a motive over $T$, such as $\Q(n)$. This goal was partially
achieved by Levine \cite{levine}, Hanamura \cite{hanamura} and Voevodsky \cite{voevodsky}, each of whom constructed a triangulated tensor category of mixed motives with the correct ext groups. Many obstructions remain to constructing tannakian categories of mixed motives including, most notably, the problem of establishing Beilinson--Soul\'e vanishing. One case where this goal has been achieved is that of mixed Tate motives over a ring of $S$ integers in a number field. This was established by Levine \cite{levine:mtm} and Deligne--Goncharov \cite{deligne-goncharov} using the work of Borel \cite{borel}, Beilinson \cite{beilinson:regulators} and the existence of a derived category of mixed motives. The theory of mixed Tate motives can be regarded as the story of motives associated to genus $0$ curves and their moduli spaces, \cite{brown:genus0}.

In this paper we take a step towards extending this story to genus $1$ curves
and their moduli spaces. We construct a $\Q$-linear tannakian category $\MEM_1$
of {\em universal mixed elliptic motives} over $\M_{1,1/\Z}$, the moduli stack
of elliptic curves, which contains $\MTM$, the category of mixed Tate motives
unramified over $\Z$. The ring of functions on its tannakian fundamental group
is a Hopf algebra in the category of ind-objects of $\MTM$ which encodes relations between the periods of iterated integrals of Eisenstein series and periods classical modular forms of level 1.

Each object $\V$ of $\MEM_1$ consists of an object $V$ of $\MTM$ whose Betti
realization is endowed with an action of $\SL_2(\Z)$. This action is required to
determine a compatible set of local systems (Betti, $\Q$-de~Rham, Hodge,
$\ell$-adic) over $\M_{1,1}$ whose fibers over the canonical tangent vector
$\tate:=\partial/\partial q$ \label{def:tate} of $\Mbar_{1,1}$ at the cusp are
the various  realizations of $V$. An object of $\MEM_1$ can be specialized to an
elliptic curve $E/T$ by pulling back its associated local systems along the the
structure map $T \to \M_{1,1}$ to obtain a family of local systems over $T$.

This hybrid approach to constructing a tannakian category of mixed motives over
$\M_{1,1}$ is necessitated by the lack of understanding of motives associated to
modular forms. In particular, it circumvents the problem of proving
Beilinson--Soul\'e vanishing for the category of mixed motives generated by
motives of classical modular forms. We expect the local systems (Betti, de~Rham,
Hodge, $\ell$-adic) associated to an object of $\MEM_1$ to be the set of
compatible realizations of a mixed motivic sheaf over $\M_{1,1/\Q}$ in the sense
of Ayoub \cite{ayoub} or Arapura \cite{arapura}. If this is the case, then their
pullbacks to $T$ should be the set of realizations of a mixed elliptic motive
over $T$ in the sense of Goncharov \cite{goncharov:mem}. We have not pursued
this as our interests lie in the tannakian aspects of the theory, especially in
the determination of the fundamental group of $\MEM_1$ and its relation to
classical modular forms and mixed Tate motives.

The most basic object of $\MEM_1$ is the local system $\H := R^1\pi_\ast\Q$
associated to the universal elliptic curve $\pi: \E_{/\Z} \to \M_{1,1/\Z}$. The
simple objects of $\MEM_1$ are the Tate twists $S^m\H(r)$ of the symmetric
powers of $\H$. The elliptic polylogarithms of Beilinson and Levin
\cite{beilinson-levin} produce basic objects of $\MEM_1$. These provide, for
each $n > 1$, a non-trivial extension
\begin{equation}
\label{eqn:polylog}
0 \to S^{2n-2}\H(2n-1) \to \bE \to \Q(0) \to 0,
\end{equation}
which corresponds to the Eisenstein series of weight $2n$. 

The tannakian fundamental group of $\MEM_1$ is an extension of  $\GL_2$ by a
prounipotent group $\U^\MEM_1$. The main result of this paper
(Theorem~\ref{thm:motivic}) is a partial presentation of the Lie algebra
$\u_1^\MEM$ of $\U^\MEM_1$. General results imply that this has a ``minimal presentation'' whose generators project to a basis of $H_1(\u_1^\MEM)$ and where a minimal set of relations projects to a basis of $H_2(\u_1^\MEM)$. Existing results on modular forms and mixed Tate motives are used to compute $H^1(\u_1^\MEM)$ and to give a lower bound on the size of $H^2(\u_1^\MEM)$. Specifically, the extensions (\ref{eqn:polylog}) comprise a basis of the ``geometric part'' of $H^1(\u^\MEM_1)$. A complete basis is obtained by adding the extensions that correspond to the (motivic) odd zeta values $\zeta^\m(2m+1)$, ($m>0$). Using work of Brown \cite{brown:mmv} and Pollack \cite{pollack}, we construct a (conjecturally complete) set of minimal relations between generators dual to these extensions and determine their leading quadratic terms. These relations are dual to a linearly independent set of elements of $H^2(\u_1^\MEM)$. Each minimal relation corresponds to a Hecke eigenform, and each Hecke eigenform determines a countable set of minimal relations. If one assumes that the ``regulator mapping''
$$
\Ext^2_{\MEM_1}(\Q,S^{2n}\H(r))\otimes \R \to
H^2_\cD(\M_{1,1/\R},S^{2n}\H_\R(r)) 
$$
is injective\footnote{See Conjecture~\ref{conj:h2}(\ref{item:hodge}) for a more
precise statement.}, an analogue of Beilinson's conjecture
\cite[Conj.~8.4.1]{beilinson:hodge_coho}, then these relations generate all
relations in $\u^\MEM_1$. This partial presentation is discussed in more detail
later in the introduction.

One goal of this work is to illuminate the relationship between cusp forms of
$\SL_2(\Z)$ and the depth filtration of the fundamental group of $\MTM$. In this
vein, in Section~\ref{sec:ihara-takao}, we show that the relations in
$\u^\MEM_1$ coming from cusp forms imply the congruences between the generators
of the infinitesimal Galois action on the unipotent fundamental group of
$\Pminus$ in depth 2 that were discovered by Ihara and Takao \cite{ihara} and
made explicit by Goncharov \cite{goncharov:dihedral} and Schneps \cite{schneps}.
Our proof gives a conceptual explanation of how and why cusp forms impose
relations on the depth graded quotients of mixed Tate motives.

\bigskip

We now give a more detailed, but still informal, discussion of universal mixed
elliptic motives. The full definition is given in Section~\ref{sec:mem}. Suppose
that $r$ and $n$ are non-negative integers with $r+n > 0$. Denote the moduli
stack over $\Spec\Z$ of smooth projective curves of genus 1 with $n$ marked points and $r$
non-zero tangent vectors by $\M_{1,n+\vec{r}/\Z}$. So $\M_{1,1}$ is the moduli stack of elliptic curves, and $\M_{1,2}$ is the moduli stack of elliptic curves $(E,0)$ together with an additional point $x\neq 0$. For $\ast\in \{1,\uu,2\}$ we
construct a category $\MEM_\ast$ of universal mixed elliptic motives over
$\M_{1,\ast/\Z}$. Objects of $\MEM_\ast$ will be called {\em universal mixed
elliptic motives of type $\ast$}.

The Tate curve $\E_\Tate\to\Spec\Z[[q]]$ defines a tangential base point
$$
\tate : \Spec\Z((q)) \to \M_{1,1/\Z}.
$$
The normalization of the fiber $\Ebar_0$ over $q=0$ is isomorphic to $\P^1$ and
has a natural coordinate $w$, unique up to the involution $w\leftrightarrow
w^{-1}$, that takes the value $1$ at the identity and defines an isomorphism of
the smooth points $E_0$ of $\Ebar_0$ with $\Gm_{/\Z}$. There is therefore a map
$$
\Spec\Z((t)) \to \Spec\Z((q,v)) \to \M_{1,2/\Z},\quad q \mapsto t,\ v \mapsto t,
$$
where $v=w-1$, which determines a tangential base point of $\M_{1,2/\Z}$ and
the tangential base point
$$
\Spec\Z((t)) \to \Spec\Z((q,v)) \to \M_{1,\uu/\Z}
$$
of $\M_{1,\uu/\Z}$. We shall denote any of these distinguished base points,
including $\tate$, by $\vv_o$.

Informally, a {\em mixed elliptic motive of type $\ast \in \{1,\uu,2\}$ over
$\Z$} is a ``motivic local system'' $\V$ of $\Q$-vector spaces over
$\M_{1,\ast/\Z}$ with a weight filtration $W_\dot$ by sub local systems that
satisfies:
\begin{enumerate}

\item each weight graded quotient of $\V$ is a sum of the simple local systems
$S^n \H(m)$, where $\H = R^1 \pi_\ast \Q(0)$ and $\pi : \E \to \M_{1,\ast}$ is
the universal elliptic curve;

\item the fiber $V_o$ of $\V$ over $\vv_o$ is an object of $\MTM$; its weight
filtration $M_\dot$ is the relative weight filtration associated to the weight
filtration $W_\dot$ and the monodromy operator about the nodal cubic.

\end{enumerate}
The fiber of $\H$ over $\vv_o$ will be denoted by $H$. It is isomorphic to
$\Q(0) \oplus \Q(-1)$ as an object of $\MTM$.

Objects of $\MEM_\ast$ have compatible Betti, Hodge, $\ell$-adic and
$\Q$-de~Rham realizations. The Hodge realization of a mixed elliptic motive $\V$
is an admissible variation of mixed Hodge structure over $\M_{1,\ast}^\an$ whose
fiber over $\vv_o$ is the associated limit mixed Hodge structure. The
$\ell$-adic realization of $\V$ is a lisse sheaf $\V_\ell$ over
$\M_{1,\ast/\Z[1/\ell]}$ whose fiber over $\vv_o$ is the $\ell$-adic realization
of $V_o$. The $\Q$-de~Rham realization of $\V$ is a bifiltered bundle
$(\cV,F^\dot,W_\dot)$ over $\Mbar_{1,\ast/\Q}$ with a regular connection that
underlies Deligne's canonical extension of $\V$ to $\Mbar^\an_{1,1}$ and its
Hodge and weight filtrations. There are natural functors
$$
\MTM\to \MEM_1 \to \MEM_2 \to \MEM_\uu \to \MTM
$$
whose composite is the identity.

Along with the pure motives $S^n\H(m)$, the simplest objects of $\MEM_\ast$
include the ``geometrically constant'' motives. These are the objects of $\MTM$
that are pulled back along the structure map $\M_{1,\ast/\Z} \to \Spec\Z$. Their
Hodge realizations are constant variations of MHS over $\M_{1,\ast}^\an$. More
interesting objects of $\MEM_\ast$ can be constructed from the unipotent
fundamental group of punctured elliptic curves. Denote by $\bp$ the local system
over $\M_{1,2}$ whose fiber over $(E;0,x)$ is the Lie algebra $\p(E',x)$ of the
unipotent completion of $\pi_1(E',x)$, where $E'$ denotes $E-\{0\}$. The local
system $\bp$ is a pro-object of $\MEM_2$. This is an elliptic analogue of the
result of Deligne and Goncharov \cite{deligne-goncharov} that the Lie algebra of
the unipotent fundamental group of $\Pminus$ is a pro-object of
$\MTM$.\label{def:E'}

The category $\MEM_\ast$, being a $\Q$-linear neutral tannakian category, is the
category of representations of an affine group scheme, unique up to inner
automorphism, that we shall denote by $\pi_1(\MEM_\ast)$. One goal of this paper
is to determine the structure of $\pi_1(\MEM_\ast)$. While we do not find an
explicit presentation of its Lie algebra, we are able to give a good idea of its
``shape''. The first step in this direction is taken in
Section~\ref{sec:structure} where we show that $\pi_1(\MEM_\ast)$ is an
extension of $\GL(H)$ by a prounipotent group and prove that there are natural
isomorphisms
$$
\pi_1(\MEM_\uu) \cong \pi_1(\MEM_1)\ltimes \Q(1)
\text{ and }
\pi_1(\MEM_2) \cong \pi_1(\MEM_1)\ltimes \pi^\un_1(E_\tate',\vv_o),
$$
where $E_\tate$ is the fiber of the universal elliptic curve over $\tate$. This
reduces the problem of finding presentations of $\pi_1(\MEM_\ast)$ to the case
$\ast = 1$ and to the problem of determining how it acts on the Lie algebra of
$\pi^\un_1(E_\tate',\vv_o)$.

When trying to understand $\pi_1(\MEM_\ast)$, it is convenient to define the
{geometric fundamental group} $\pi_1^\geom(\MEM_\ast)$ of $\MEM_\ast$ to be the
kernel of the natural surjection $\pi_1(\MEM_\ast) \to \pi_1(\MTM)$. Taking the
fiber over the tangent vector $\vv_o$ corresponding to the Tate curve gives a
splitting $s_\tate : \pi_1(\MTM) \to \pi_1(\MEM_\ast)$ of this homomorphism, so
that one has an isomorphism
$$
\pi_1(\MEM_\ast) \cong \pi_1(\MTM)\ltimes \pi_1^\geom(\MEM_\ast).
$$

A key technical point (established in Section~\ref{sec:splitting_DR}), which
simplifies the problem of finding presentations of the Lie algebras of the
$\pi_1(\MEM_\ast)$,  is that the de~Rham realization of every object of
$\MEM_\ast$ has a canonical (i.e., unique natural) bigrading that splits the
Hodge filtration and both weight filtrations. This generalizes the canonical
grading of the de~Rham realization of a mixed Tate motive that splits it weight
and Hodge filtrations. This bigrading allows us to canonically identify the
de~Rham realization $V^\DR$ of an object $\V$ of $\MEM_\ast$ with its associated
bigraded module
$$
\Gr V^\DR = \bigoplus_{m,n} \Gr^M_{2m}\Gr^W_n V^\DR
$$
and reduces the problem of finding a presentation of the Lie algebra of
$\pi_1(\MEM_\ast)$ to the problem of finding a presentation of its associated
bigraded Lie algebra.

For this reason, we now specify the fiber functor to be the de~Rham fiber
functor. This allows us to identify the Lie algebra $\g_\ast^\MEM$ of
$\pi_1(\MEM_\ast)$ with the completion of its associated bigraded Lie algebra.
Denote its pronilpotent radical by $\u_\ast^\MEM$. The canonical splitting of
the weight filtration $W_\dot$ gives a canonical splitting of the extension
$$
0 \to \u^\MEM_\ast \to \g^\MEM_\ast \to \gl(H) \to 0.
$$
So we can regard $\u^\MEM_\ast$ as a Lie algebra in the category of
$\gl(H)$-modules and canonically identify $\g^\MEM_\ast$ with $\gl(H)\ltimes
\u^\MEM_\ast$.

The computation of the simple extensions in Part~2 is used in
Section~\ref{sec:gens} to find a set of generators $\ee_{2n}$ of the Lie algebra
$\u_1^\geom$ of the prounipotent radical of $\pi_1^\geom(\MEM_\ast)$. The
generator $\ee_{2n}$ is dual to the extension (\ref{eqn:polylog}) associated to
the Eisenstein series of weight $2n$. It is a highest weight vector in a copy of
$S^{2n-2}H(2n-1)$ in $\u^\geom_1$.

A fundamental result \cite{deligne-goncharov} of Deligne and Goncharov asserts
that $\pi_1(\MTM)$ is an extension of $\Gm$ by a prounipotent group whose Lie
algebra is freely generated (non canonically) by a set $\{\ssigma_{2m+1}:m\ge
1\}$. In Part~\ref{part:presentation}, we show that $\u^\MEM_1$ has a bigraded
presentation with generators
$$
\zz_{2m+1} \in \Gr^W_{-4m-2}\Gr^M_{-4m-2}\u^\MEM_1 \text{ and }
\ee_{2n} \in \Gr^W_{-2n}\Gr^M_{-2}\u^\MEM_1, \quad m > 0,\ n > 1,
$$
as a Lie algebra in the category of $\sl(H)$-modules, where each $\zz_{2m+1}$
spans a copy of the trivial representation of $\sl(H)$ and projects to the
corresponding generator $\ssigma_{2m+1}$ of the Lie algebra of the prounipotent
radical of $\pi_1(\MTM)$.  The Lie algebra $\u^\geom_1$ is generated
(topologically) by the set
$$
\{\ee_0^j\cdot\ee_{2n}:n>1,\ 0 \le j \le 2n-2\},
$$
where $\ee_0\in \sl(H)$ lowers $\sl(H)$-weights by $2$ and where $u\cdot v$
denotes the adjoint action of $u$ on $v$.

There are two kinds of relations in $\u^\MEM_1$: namely,
\begin{description}

\item [geometric relations] the relations between the geometric generators
$\{\ee_{2n}:n\ge 0\}$,

\item [arithmetic relations] which describe how the arithmetic generators
$\zz_{2m+1}$ act on the geometric generators $\ee_{2n}$.

\end{description}
In Section~\ref{sec:pollack_motivic}, we compute the quadratic leading terms of
a minimal generating set of relations of $\u^\MEM_1$. There is one such
geometric relation for each normalized Hecke eigen cusp form and each integer
$d\ge 2$, the degree of the relation as an expression in the $\ee_{2n}$, $n\ge
0$. Each Eisenstein series $G_{2m}$ determines a countable set of arithmetic
relations which express $[\zz_{2m-1},\ee_{2n}]$ in terms of the geometric
generators $\{\ee_k: k\ge 0\}$. We conjecture that regulator
$$
\Ext^1_\MM\big(\Q,H^1_\cusp(\M_{1,1},S^{2n}\H(2n+d))\big)\otimes_\Q\R
\to H^1_\cD\big(\M_{1,1}^\an,S^{2n}\H_\R(2n+d)\big)^{\Frbar_\infty}
$$
should be an isomorphism. (Cf.\ Conjecture~\ref{conj:h2}(\ref{item:hodge}).)
This is an analogue of Beilinson's Conjecture \cite{beilinson:hodge_coho}. If
true, it would imply that the relations we have found generate all relations in
$\u^\MEM_\ast$.

In Part~\ref{part:presentation}, we approach the problem of finding the
geometric relations by studying the monodromy representation $\u^\geom_1 \to
\Der \p$, where $\p$ denotes the Lie algebra of $\pi_1^\un(E_\tate,\vv_o)$.
Relations between the images of the $\ee_{2n}$ in $\Der \p$ give an upper bound
on the relations that hold between the $\ee_{2n}$ in $\u^\geom_1$. The problem
of determining these relations can be studied by passing to the associated
bigraded Lie algebras. Since $\p$ is free, it is isomorphic to the completion of
its associated bigraded, which is the free Lie algebra $\L(H)\cong\L(A,T)$. The
$\Q$-de~Rham story gives a formula for the images of the $\ee_{2n}$ in
$\Der\L(A,T)$ via the elliptic KZB equation \cite{kzb,levin-racinet,hain:kzb,rome}.
The image of the generator $\ee_{2n}$ of $\u^\geom_\uu$ is $2\e_{2n}/(2n-2)!$,
where $\e_{2n}$ is the unique derivation\footnote{These derivations were first
studied by Tsunogai \cite{tsunogai} and later used in this context by Calaque,
Enriquez and Etingof \cite{kzb}.} of $\L(H)$ satisfying
$$
\e_{2n}(T) = - \ad_T^{2n}(A) \text{ and } \e_{2n}([T,A])=0,\qquad n>1.
$$

Pollack, in his undergraduate thesis \cite{pollack}, found all quadratic
relations that hold between the $\{\e_{2n}:n\ge 0\}$ in $\Der \L(A,T)$ and
showed that they correspond to cuspidal cocycles of $\SL_2(\Z)$ invariant under
the ``real Frobenius" operator. He also found relations of each degree $d\ge 3$
that hold between the $\e_{2n}$ modulo ``depth 3'', one for each invariant
cuspidal cocycle of $\SL_2(\Z)$. His results are summarized in
Section~\ref{sec:pollack}.

A lower bound on the relations in $\u^\MEM_1$ is obtained from the computation
of the cup products of the classes of the extensions (\ref{eqn:polylog}) in the
real Deligne cohomology groups $H^\dot_\cD(\M_{1,1}^\an,S^{2n}\H(r))$. These are
deduced from the fundamental work of Francis Brown \cite{brown:mmv} who computed
periods of twice iterated integrals of Eisenstein series. His work is closely
related to unpublished computations of Tomohide Terasoma \cite{terasoma} who
computed cup products in Deligne cohomology of the classes in higher Chow groups
of Kuga--Sato varieties that correspond to Eisenstein series, generalizing to
higher weight Beilinson's computations \cite{beilinson:modular} in weight
2.\footnote{There is an inconsistency between the results of Brown and Terasoma.
Our computations are consistent with Brown's. Nonetheless, we believe that
Terasoma's results are basically correct.} It turns out that the upper and lower
bounds on the quadratic heads of the relations in $\u^\MEM_1$ coincide, as we
show in Section~\ref{sec:pollack_motivic}, which allows us to determine the
quadratic leading terms of all relations in $\u^\MEM_1$.

In related work, Enriquez \cite{enriquez} has defined an elliptic generalization
of Drinfeld's braided monoidal categories \cite{drinfeld} and defined an affine
$\Q$-group $\GRT_\elliptic$ for which the scheme of elliptic associators is a
torsor.\footnote{Implicit in \cite{enriquez} is that this group depends on the
choice of a free Lie algebra of rank 2 over $\Q$ together with an ordered pair of
generators. Two natural choices are the Betti and de~Rham realizations of the
Lie algebra of $\pi_1^\un(E_\tate',\vv_o)$ with a pair of generators which
descends to a symplectic basis of its abelianization. Such a choice will
correspond to the choice of a fiber functor.} It is a split extension
$$
1 \to R_\elliptic \to \GRT_\elliptic \to \GRT \to 1,
$$
where $\GRT$ is the affine $\Q$-group, defined by Drinfeld \cite{drinfeld}, and
where the projection to $\GRT$ is induced by the map (\ref{eqn:inclusion}). One
expects there to be a homomorphism $\pi_1(\MEM_\uu) \to \GRT_\elliptic$ with an
appropriate choice of fiber functors,  which projects the extension above to
$$
1 \to \pi_1^\geom(\MEM_\uu) \to \pi_1(\MEM_\uu) \to \pi_1(\MTM) \to 1.
$$
Assuming that it exists, this homomorphism will be surjective if and only if
Enriquez's conjecture \cite[Conj.~9.1]{enriquez} holds and the  standard
homomorphism $\pi_1(\MTM) \to \GRT$ is surjective. It will be injective if and
only if $\u^\MEM \to \Der\p$ is injective.\footnote{Brown's result
\cite{brown:mtm} implies that $\pi_1(\MTM) \to \GRT$ is injective.} The
existence of this homomorphism will imply that the relations in $\u^\MEM_\uu$
will also hold in the Lie algebra $\grt_\elliptic$ of $\GRT_\elliptic$.

\bigskip

This paper is in four parts. In the first part we define universal mixed
elliptic motives. Before doing this, we give a detailed description of the local
system $\H$ over $\M_{1,1}$ and its fiber $H$ over $\tate$ in all of its
manifestations. We also consider the basic structure of $\pi_1(\MEM_\ast)$ that
one gets from formal considerations.

The second part is devoted to proving that the elliptic polylogarithmic
extensions (\ref{eqn:polylog}) lie in $\MEM_\ast$ and that they exhaust all
extensions of $\Q$ by a simple object $S^m\H(r)$ of $\MEM_1$ when $m>0$. For
this we use the Hodge theory of the relative completion of $\SL_2(\Z)$, which is
described in detail in \cite{hain:modular}. Results from \cite{hain:db_coho}
allow us to relate extensions of admissible variations of mixed Hodge structure
over $\M_{1,\ast}^\an$ to the Deligne--Beilinson cohomology groups
$H_\cD^\dot(\M_{1,\ast}^\an,S^m\H(r))$. We also use weighted and crystalline
completion to study the $\ell$-adic aspects. This part concludes with a
discussion of the relationship between the groups
$\Ext^2_{\MEM_\ast}(\Q,S^m\H(r))$ and certain of the conjectures of Beilinson
and Bloch--Kato on regulators associated to modular curves.

In Part 3 we consider the problem of finding a presentation of the Lie algebra
$\u^\MEM_\ast$. We first show that the $\ee_{2n}$ and the $\zz_{2m+1}$ generate
$\u^\MEM_1$. We then recall Pollack's relations and use Brown's period
computations to show that they lift to relations in $\u^\MEM_\ast$.

In Part 4 we use the results of Part 3 to begin the study of the relationship
between universal mixed elliptic motives and mixed Tate motives. This is
achieved by specialization to the nodal cubic. As an application, we deduce the
Ihara--Takao congruences from Pollack's relations. We also work out the precise
relationship between the  depth filtration on $\pi_1(\MTM)$ and the pull back
of the ``elliptic weight filtration'' $W_\dot$ of $\Der \p$.

\subsection{History}

This paper evolved slowly over the past 8 years and owes much to its
antecedents. These include the paper \cite{beilinson-levin} of Beilinson and
Levin on the elliptic polylogarithm, the work of Deligne \cite{deligne:p1} on
the fundamental group of $\Pminus$, and the paper of Deligne and Goncharov
\cite{deligne-goncharov} on mixed Tate motives. The works of
Calaque--Enriquez--Etingof \cite{kzb} and Levin--Racinet \cite{levin-racinet} on
the elliptic KZB connection proved to be useful, both in understanding the
$\Q$-de~Rham picture and in establishing formulas that play a key role in
Part~\ref{part:genus0}. Finally, Nakamura's computation \cite{nakamura} of the
action of the absolute Galois group $G_\Q$ on the fundamental group of
$E'_\tate$ was also a useful reference point and helped us understand the
$\ell$-adic picture.

This work originated in informal computations done in the Spring of 2007 in
which we found (under optimistic assumptions) the basic form (described  in
\S\ref{sec:relns}) of a presentation of the $\ell$-adic crystalline completion
of $\pi_1(\M_{1,1/\Z[\ell^{-1}]},\tate)$. Standard conjectures on the ranks of
Selmer groups and an optimistic assumption on the size of the image of cup
products in Galois cohomology---which now appears likely to be true---led us to
predict that each cusp form of $\SL_2(\Z)$ should determine a sequence of
relations in the unipotent radical of the crystalline completion, and thus
between the derivations $\e_{2n}$ discussed above. The first author gave the
problem of finding some of these relations to Aaron Pollack, who was then an
undergraduate. He found all the quadratic relations between the $\e_{2n}$ and
also relations of each degree $\ge 3$ that held modulo depth $\ge 3$. This
suggested that our optimistic assumptions were indeed true. However, the proof
that Pollack's relations lifted to $\u^\MEM_1$ had to wait until the period
computations of Brown \cite{brown:mmv}.\ There is no $\ell$-adic proof of this.
It would  be very interesting to have one.
\bigskip

\noindent{\em Acknowledgments:} The authors gratefully acknowledge the support
of MSRI in 2008 where some of the initial work was done. The first author also
gratefully acknowledges the generous support of the Friends of the Institute for Advanced Study and Duke University which allowed him to spend his sabbatical year 2014--15 at the IAS where part of this paper was written. The first author would like to thank Francis Brown for numerous stimulating discussion during which the material in Parts 3 and 4 evolved. The authors would also thank Tomohide Terasoma for his correspondence related to the computation of the cup product in Deligne cohomology of Kuga--Sato varieties. They would also like to thank Francis Brown and the referee for their detailed and constructive comments on the manuscript, and Aaron Pollack for his contributions to this project during the early stages of this work.

\section{Notation and Conventions}

All moduli spaces in this paper will be regarded as stacks and their underlying
analytic spaces will be regarded as orbifolds (i.e., stacks in the category of
complex analytic varieties). The dual $\Hom_F(V,F)$ of a vector space $V$ over a field $F$ will be denoted by $V^\vee$.

\subsection{Path multiplication} We use the topologists' convention. The product
$\alpha\beta$ of two paths $[0,1]\to X$ in a topological space is defined when
$\alpha(1)=\beta(0)$. The product $\alpha\beta$ traverses $\alpha$ first, then
$\beta$. This is the opposite of the algebraists' convention, where paths are
multiplied in the ``functional order''.

\subsection{The topological fundamental group} Suppose that $X$ is an algebraic
variety  over a subring $R$ of $\C$. Denote the corresponding analytic variety
by $X^\an$. For $x\in X(\C)$, we  denote the topological fundamental group of
$X^\an$ by $\pi_1^\top(X,x)$.

If $X$ is a DM stack over $R$, the associated analytic space $X^\an$ is an
orbifold (i.e., a stack in the category of topological spaces). For each
suitable base point $x$ of $X^\an$, we denote the orbifold fundamental group of
$X^\an$ by $\pi_1^\top(X,x)$. Fundamental groups of stacks are defined in
\cite{noohi}, for example. Typically in this paper, $X$ will be the
stack/orbifold quotient of a smooth variety by a finite group. In this case the
orbifold fundamental group is easy to describe directly. See,
\cite[Sec.~3]{hain:elliptic}, for example.

Denote the orbifold quotient of a topological space $\X$ by a discrete group
$\G$ by $\G\bbs\X$. If $\X$ is simply connected, then for each choice of a base
point $x_o\in \X$, there is a natural isomorphism
$$
\pi_1^\top(\G\bbs \X,x_o) \cong \G.
$$
In addition, we can regard the projection $p : \X \to \G\bbs \X$ as a base
point. In this case $\G$ is naturally isomorphic to $\pi_1^\top(\G\bbs \X,p)$.
For example
$$
\pi_1^\top(\SL_2(\Z) \bbs \h,p) \cong \SL_2(\Z).
$$

\subsection{The \'etale fundamental group}

Denote the \'etale fundamental group of a scheme $X$ over a ring $R$ by
$\pi_1(X,\xbar)$, where $\xbar$ is a geometric point of $X$. The \'etale
fundamental group of a DM stack $X$ can also be defined --- see \cite{noohi}. We
will also denote it by $\pi_1(X,\xbar)$. Suppose that $R$ is a field $k$ with
algebraic closure $\kbar$. The fundamental group of $\Spec k$ with respect to
the geometric point $\Spec \kbar \to \Spec k$ is simply $\Gal(\kbar/k)$. We
denote it by $G_k$. The structure morphism $X \to \Spec k$ induces a
homomorphism $\pi_1(X,\xbar) \to G_k$. One has the canonical short exact
sequence
\begin{equation}
\label{eqn:ses}
1 \to \pi_1(X\times_k \kbar,\xbar) \to \pi_1(X,\xbar) \to G_k \to 1.
\end{equation}

\subsection{Topological versus \'etale fundamental groups}
\label{sec:opposite}

When $X$ is a DM-stack over $\C$ and $x\in X(\C)$, there is a natural
isomorphism\footnote{Strictly speaking, our path multiplication convention
implies that this is an anti-isomorphism. If one prefers, one can replace the
\'etale fundamental group by its opposite group.}
$$
\pi_1(X,x) \cong \pi_1^\top(X,x)^\wedge
$$
between the \'etale fundamental group of $X$ and the profinite completion of the
topological fundamental group of the associated analytic variety. When $k$ is a
subfield of $\C$, the exact sequence (\ref{eqn:ses}) becomes
$$
1 \to \pi_1^\top(X,x)^\wedge \to \pi_1(X,x) \to G_k \to 1.
$$

\subsection{Tannakian fundamental groups}

Suppose that $F$ is a field of characteristic zero. We will denote the tannakian
fundamental group of an $F$-linear neutral tannakian category $\cC$ with respect
to a fiber functor $\w : \cC \to \Vec_F$ by $\pi_1(\cC,\w)$. It is an affine
group scheme that is an inverse limit of affine algebraic groups. When the
context is clear, we will omit $\w$ from the notation.

We will denote the category of linear representations of a group $G$ over a
field $F$ by $\Rep_F(G)$ and the sub-category of finite dimensional
representations by $\Rep^\fte_F(G)$. If $G$ is an affine $F$-group (scheme), we
will write $\Rep(G)$ instead of $\Rep_F(G)$. If $F$ has characteristic zero,
then $\Rep(G)$ is equivalent to the category of ind-objects of $\Rep^\fte(G)$.

Subsequently, the term ``algebraic group'' will mean ``affine algebraic group''.
In particular, we will not refer to elliptic curves as algebraic groups.

\subsection{Cohomology of affine group schemes}

The cohomology of an affine group scheme $G$ over a field $F$ of characteristic
zero with coefficients in a $G$-module $V$ is defined by
$$
H^\dot(G,V) := \Ext^\dot_{\Rep(G)}(F,V)
$$ 
where $\Rep(G)$ denotes the category of $G$-modules. The fact that every
$G$-module is a direct limit of its finite dimensional submodules implies that
if $V$ is finite dimensional over $F$, then
$$
H^\dot(G,V) = \Ext^\dot_{\Rep^\fte(G)}(F,V)
$$
where $\Rep^\fte(G)$ denotes the category of finite dimensional $G$-modules. If
$G$ is an extension of a reductive group $R$ by a prounipotent group $U$, and
if $U$ has Lie algebra $\u$, then there are natural isomorphisms
$$
H^\dot(G,V) \cong H^\dot(U,V)^R \cong H^\dot(\u,V)^R.
$$
For background and more details, see \cite[Chapts.~3,4]{jantzen} and
\cite{hain:db_coho}.

\subsection{Free Lie algebras}

The free Lie algebra generated by a vector space $V$ will be denoted by $\L(V)$.
\label{def:LV} The free Lie algebra generated by a set $\X$ will be denoted by
$\L(\X)$. It is isomorphic to the free Lie algebra generated by the vector space
with basis $\X$.

\part{Introduction to Universal Mixed Elliptic Motives}

\section{Mixed Tate Motives}

Deligne and Goncharov \cite{deligne-goncharov}, using work of Voevodsky
\cite{voevodsky} and Levine \cite{levine,levine:mtm}, constructed a
$\Q$-tannakian category of {\em mixed Tate motives} $\MTM(\O_{K,S})$ over a
number field $K$, unramified outside a subset $S$ of primes of $\O_K$. This
category has simple objects $\Q(n)$ and has the property that
$$
\Ext_{\MTM(\O_{K,S})}^j\big(\Q,\Q(n)\big) \cong 
\begin{cases}
\Q & j = n = 0, \cr
K_{2n-1}(\O_{K,S})\otimes \Q & j = 1,\ n > 0, \cr
0 & \text{ otherwise.}
\end{cases}
$$
In this paper, we need only the case where $\O_{K,S}=\Z$, so for simplicity we restrict discussion to that case. Denote $\MTM(\Z)$ by $\MTM$.\label{def:MTM} Each object $V$ of $\MTM$ has a weight filtration, which we denote by $M_\dot$ and a Hodge filtration $F^\dot$. There are realization functors on $\MTM$ that take the object $V$ to its Betti realization $(V^B,M_\dot)$, its de~Rham realization $(V^\DR,M_\dot,F^\dot)$, and for each prime number $\ell$, its $\ell$-adic realization $(V_\ell,M_\dot)$, which is a filtered $G_\Q := \Gal(\Qbar/\Q)$ module, unramified outside $\ell$ and crystalline at $\ell$. There are comparison isomorphisms
$$
((V^\DR,M_\dot)\otimes_\Q \C \cong V^B,M_\dot)\otimes_\Q \C \text{ and }
(V^B,M_\dot)\otimes_\Q \Ql \cong (V_\ell,M_\dot).
$$
The Betti and de~Rham realizations combine to give the Hodge realization of $V$, which is a mixed Hodge structure whose weight graded quotients are of Tate type. One important property of $\MTM$ is that the functor that takes $\MTM$ to the category $\MHS$ of $\Q$-MHS is fully faithful.

Every simple object of $\MTM$ is isomorphic to $\Q(n)$ for some $n\in \Z$. For later use, we recall same basic facts. The weight filtration of $V=\Q(n)$ is
$$
0 = M_{-2n-1}V \subset M_{-2n}V = V
$$
and that the realizations of $\Q(n)$ are
$$
V^B = \Q,\ V^\DR = \Q,\ V_\ell = \Ql.
$$
The Galois action on $V_\ell$ is given by the $n$th power of the $\ell$-adic
cyclotomic character $\chi_\ell : G_K \to \Zl^\times$. The Hodge filtration of
$V^\DR$ is
$$
V^\DR = F^{-n} V^\DR \supset F^{-n+1}V^\DR = 0.
$$
The comparison isomorphism $V^\DR\otimes\C \to V^B\otimes \C$ is multiplication
by $(2\pi i)^{-n}$. The Hodge realization is the one dimensional Hodge structure of type $(-n,-n)$.

The category $\MTM^\ss$ of semi-simple mixed Tate motives (i.e., those that are
direct sum of copies of $\Q(m)$'s) is neutral tannakian and has fundamental
group isomorphic to $\Gm$; the mixed Tate motive $\Q(m)$ corresponds to the
$m$th power of the standard character of $\Gm$. The functor $\Gr^M_\dot : \MTM
\to \MTM^\ss$ is a retraction of $\MTM$ onto $\MTM^\ss$.

Let
$$
\w^B : \MTM \to \Vec_\Q,\ \w^\DR : \MTM \to \Vec_\Q,\
\w_\ell : \MTM \to \Vec_\Ql
$$
be the fiber functors that take $V$ to $V^B$, $V^\DR$ and $V_\ell$, respectively. For each of these, the inclusion $\MTM^\ss \to \MTM$ induces a
homomorphism $\pi_1(\MTM,\w) \to \Gm$. Denote its kernel by $\K$ (or $\K_\w$
when we want to identify the fiber functor).\label{def:K} It is prounipotent.
Since for every mixed Tate motive $V$, there is a natural isomorphism
$$
V^\DR \cong \bigoplus_n F^nV^\DR \cap M_{2n} V^\DR
$$
of $\Q$-vector spaces, the functor $\Gr^M_\dot$ induces a splitting of the
homomorphism $\pi_1(\MTM,\w^\DR) \to \pi_1(\MTM^\ss,\w^\DR) = \Gm$. The Lie
algebra $\k$ of $\K$ is thus a $\Gm$-module. It is (non-canonically) isomorphic
to the completion of  the free Lie algebra
$$
\L(\ssigma_3,\ssigma_5,\ssigma_7,\ssigma_9, \dots)
$$
where $\Gm$ acts on $\ssigma_{2n+1}$ via the $(2n+1)$st power of the standard
character.

\section{Moduli Spaces of Elliptic Curves}
\label{sec:moduli}

Suppose that $r$ and $n$ are non-negative integers satisfying $r+n>0$. Denote
the moduli stack over $\Spec \Z$ of smooth projective curves of genus 1 with $n$ marked
points and $r$ non-zero tangent vectors by $\M_{1,n+\vec{r}}$. Here the $n$
marked points and the anchor points of the $r$ tangent vectors are distinct.
Taking each tangent vector to its anchor point defines a morphism
$\M_{1,n+\vec{r}} \to \M_{1,n+r}$ that is a principal ${\mathbb G}_m^r$-bundle.
The Deligne-Mumford compactification \cite{knudsen} of $\M_{1,n}$ will be
denoted by $\Mbar_{1,n}$. It is also defined over $\Spec\Z$. In this paper, we
are primarily concerned with the cases $(n,r) = (1,0), (0,1)$ and $(2,0)$. Then $\M_{1,1}$ is the moduli stack of elliptic curves $(E,0)$, and $\M_{1,2}$ is the moduli stack of elliptic curves $(E,0)$ with an additional point $x\neq 0$.

\subsection{The moduli stacks $\M_{1,1}$, $\M_{1,\uu}$ and $\M_{1,2}$}

For a $\Z$-algebra $A$, we denote by $\M_{1,n+\vec{r}/A}$ the stack
$\M_{1,n+\vec{r}}\times_{\Spec\Z}\Spec A$. Similarly, the pullback of
$\Mbar_{1,n}$ to $\Spec A$ will be denoted by $\Mbar_{1,n/A}$.
The universal elliptic curve over $\M_{1,1}$ will be denoted by $\E$. Note that
$\M_{1,2}$ is $\E$ with its identity section removed. The extension
$\cEbar\to\Mbar_{1,1}$ of the universal elliptic curve to $\Mbar_{1,1}$ is
obtained by glueing in the Tate curve $\E_\Tate \to \Spec\Z[[q]]$ (cf.\
\cite[Ch.~V]{silverman:2}). It is simply $\Mbar_{1,2}$.

The unique cusp of $\Mbar_{1,1}$ is the moduli point of the nodal cubic. Denote
it by $e_o$.\label{def:cusp} The standard line bundle $\cL$ over $\Mbar_{1,1}$
is the conormal bundle of the zero section of $\cEbar \to \Mbar_{1,1}$. Sections
of $\cL^{\otimes n}$ over $\Mbar_{1,1}$ are modular forms of weight $n$, and
those that vanish at the cusp $e_o$ are cusp forms of weight $n$.

The restriction of the relative tangent bundle of $\cEbar$ to its identity 
section is the dual $\check{\cL}$ of $\cL$.  The moduli space $\M_{1,\uu}$ is
$\check{\cL}'$, the restriction of $\check\cL$ to $\M_{1,1}$ with its
zero-section removed. This is isomorphic to the complement $\cL'$ of the
zero-section of $\cL$.

We will also identify $\Mbar_{1,1}$ with the identity section of $\cEbar$. With
this convention, $e_o$ also denotes the identity of the nodal cubic in $\cEbar$
and also the corresponding point of the partial compactification $\check\cL$ of
$\M_{1,\uu}$.

An explicit description of $\cL'$ over $\Mbar_{1,1}$ can be deduced from the
discussion \cite[Chapt.~2]{katz-mazur} and the formulas in
\cite[Appendix~A]{silverman:1}; namely $\Mbar_{1,1}$ is the quotient stack
$\Gm\bbs\cL'$. When $2$ and $3$ are invertible in $A$, $\cL_A'$ is the scheme
$$
\cL_A' = \bA^2_A - \{0\} = \Spec A[u,v] -\{0\}.
$$
The point $(u,v)$ corresponds to the plane cubic $y^2 = 4x^3-ux-v$ and the
abelian differential $dx/y$. The $\Gm$-action is $\lambda : (u,v) \mapsto
(\lambda^{-4} u,\lambda^{-6} v)$. In this case, $\Mbar_{1,1}$ and $\M_{1,1}$ are
the quotient stacks
$$
\Mbar_{1,1/A} = \Gm\bbs \cL'_A \text{ and }
\M_{1,1/A} = \Gm\bbs(\bA^2_A - D^{-1}(0)),
$$
where $D=u^3 - 27v^2$ is (up to a factor of 4) the discriminant of the cubic.

Since some 2-pointed genus 1 curves have a non-trivial automorphism, $\Mbar_{1,2}$ is a stack, but not a scheme. When $2$ and $3$ are invertible in $A$, the stack $\M_{1,2/A}$ is the quotient of the scheme
$$
\{(u,v,x,y) \in \bA_A^2\times \bA_A^2: y^2 = 4x^3 - ux -v,\ (u,v) \neq 0\}
$$
by the $\Gm$-action
$$
\lambda : (u,v,x,y) \mapsto
(\lambda^{-4}u,\lambda^{-6}v,\lambda^{-2}x,\lambda^{-3}y).
$$
The point $(u,v,x,y)$ corresponds to the point $(x,y)$ on the cubic
$y^2=4x^3-ux-v$ and the abelian differential $dx/y$. The action of $\lambda$
multiplies $dx/y$ by $\lambda$.

\subsection{Tangent vectors}
\label{sec:tangent}

We use Deligne's tangential base points \cite[\S15]{deligne:p1}. The Tate curve $E_{\mathrm{Tate}} \to \Spec \Z[[q]]$ corresponds to a morphism 
$\Spec\Z[[q]] \to \Mbar_{1,1}$. The parameter $q$ is a formal parameter of
$\Mbar_{1,1}$ at the cusp $e_o$.  The tangent vector
$$
\partial/\partial q = \Spec \Qbar((q^{1/n}:n\ge 1))
$$
of $\Mbar_{1,1}$ at the moduli point of the nodal cubic is integral and its
reduction mod $p$ is non-zero for all prime numbers $p$. Denote it by $\tate$.
Identify $\Mbar_{1,1}$ with the identity section of the completion $\Mbar_{1,2}$
of the universal elliptic curve over $\Mbar_{1,1}$. With this identification,
$e_o$ is the identity of the nodal cubic.

The fiber $\Ebar_0$ of the Tate curve over $q=0$ is an integral model of the
nodal cubic. Its normalization is $\P^1_\Z$. Let $E_0$ be $\Ebar_0$ with its
double point removed. It is isomorphic to $\Gm_{/\Z}$.  Let $w$ be a parameter
on $\Ebar_0$ whose pullback to the normalization of $\Ebar_0$ takes the values
$0$ and $\infty$ on the inverse image of the double point, and the value 1 at
the identity. It is unique up to $w\mapsto w^{-1}$. It determines the tangent
vector $\ww_o := \partial/\partial w$ of $\Ebar_0$ at the identity $e_o$ of
$E_0$. It is integrally defined and non-zero at all primes.

We thus have the tangent vector $\vv_o := \tate+\ww_o$ of $\M_{1,\uu}$ (and thus
of $\M_{1,2}$ and $\E$ as well) at the identity $e_o$ of $E_0$ which is non-zero
at all primes.

\subsection{Moduli spaces as complex orbifolds}

To fix notation and conventions, we give a quick review of the construction of
$\M_{1,1}^\an$, $\E^\an$ and their Deligne-Mumford compactifications. All will
be regarded as complex analytic orbifolds. This material is classical and very
well-known. A detailed discussion of these constructions and an explanation of
the notation can be found in \cite{hain:elliptic}.

\subsubsection{The orbifolds $\M_{1,1}^\an$ and $\Mbar_{1,1}^\an$}
\label{sec:orbifolds}

The moduli space $\M_{1,1}^\an$ is the orbifold quotient $\M_{1,1}^\an =
\SL_2(\Z)\bbs\h$ of the upper half plane $\h$ by the standard $\SL_2(\Z)$
action. For $\tau\in\h$, set $\Lambda_\tau := \Z \oplus \Z\tau$. The point
$\tau\in \h$ corresponds to the elliptic curve
$$
(E_\tau,0) := (\C/\Lambda_\tau,0),
$$
together with the symplectic basis $\aa,\bb$ of $H_1(E_\tau,\Z)$ that
corresponds to the generators $1,\tau$ of $\Lambda_\tau$ via the canonical
isomorphism $\Lambda_\tau \cong H_1(E_\tau,\Z)$. The element
$$
\gamma = \begin{pmatrix} a & b \cr c & d \end{pmatrix}
$$
of $\SL_2(\Z)$ takes the basis $\aa,\bb$ of $H_1(E_\tau,\Z)$ to the basis
\begin{equation}
\begin{pmatrix} a & b \cr c & d \end{pmatrix}
\begin{pmatrix}\bb \cr \aa\end{pmatrix}
\end{equation}
of $H_1(E_{\gamma\tau},\Z)$.

The orbifold $\Mbar_{1,1}^\an$ underlying the Deligne-Mumford compactification
$\Mbar_{1,1}$ of $\M_{1,1}$ is obtained by glueing in the quotient $C_2\bbs\D$
of a disk $\D$ of radius $e^{-2\pi}$ by a trivial action of the cyclic group
$C_2:=\{\pm 1\}$. These are glued together by the diagram
$$
\xymatrix{
C_2 \bbs \D & \ar[l]_(0.75)\approx
{\begin{pmatrix} \pm 1 & \Z \cr 0 & \pm 1 \end{pmatrix}}
\bbs
\{\tau\in \h:\Im(\tau)>1\} \ar[r] & \SL_2(\Z)\bbs \h
}
$$
where the left-hand map takes $\tau$ to $q:=\exp(2\pi i\tau)$ and where $C_2$ is included in {\scriptsize$\begin{pmatrix} \pm 1 & \Z \cr 0 & \pm \end{pmatrix}$} as the scalar matrices.

\subsubsection{The line bundle $\cL^\an$}

The restriction of $\cL^\an$ to $\M_{1,1}^\an$ is the orbifold quotient of the
trivial line bundle $\C\times\h \to \h$ by the $\SL_2(\Z)$ action 
$$
\begin{pmatrix}a & b\cr c & d\end{pmatrix} : (z,\tau) \mapsto
\big((c\tau+d)z,(a\tau+b)/(c\tau+d)\big)
$$
It is an orbifold line bundle over $\M_{1,1}^\an$. Its restriction (i.e., pullback) to the punctured $q$-disk
$$
\D^\ast := \begin{pmatrix} 1 & \Z \cr 0 & 1 \end{pmatrix} \bs \h
$$
is naturally isomorphic to the trivial bundle
$\C\times\D^\ast \to \D^\ast$, and therefore extends naturally to a (trivial)
line bundle over $\D$. The line bundle $\cL^\an$ over $\Mbar_{1,1}$ is the
unique extension of the line bundle above to $\Mbar_{1,1}^\an$ that restricts to
this trivial bundle over the $q$-disk.

\subsubsection{Eisenstein series}

To fix notation and normalizations we recall some basic facts from the
analytic theory of elliptic curves.

Suppose that $k\ge 1$. The (normalized) Eisenstein series of weight $2k$
is defined by the series
$$
G_{2k}(\tau) = \frac{1}{2}\frac{(2k-1)!}{(2\pi i)^{2k}}
\sum_{\substack{\lambda \in \Z\oplus \Z\tau\cr\lambda \neq 0}}
\frac{1}{\lambda^{2k}}
= -\frac{B_{2k}}{4k} + \sum_{n=1}^\infty \sigma_{2k-1}(n)q^n,
$$
where $\sigma_k(n)=\sum_{d|n}d^k$. When $k>1$ it converges absolutely to a
modular form of weight $2k$. When properly summed, it also converges when $k=1$.
In this case the logarithmic 1-form
\begin{equation}
\label{eqn:G2}
\frac{d\xi}{\xi} - 2\cdot 2\pi i\, G_2(\tau)\, d\tau
\end{equation}
is $\SL_2(\Z)$-invariant, where $\SL_2(\Z)$ acts on $\C^\ast\times \h$ by
$\gamma : (\xi,\tau) \mapsto \big(\xi/(c\tau+d),\gamma\tau\big)$.

The ring of modular forms of $\SL_2(\Z)$ is the polynomial ring $\C[G_4,G_6]$.

\subsection{The Weierstrass $\wp$-function}

For a lattice $\Lambda\subset\C$, the Weierstrass $\wp$-function is defined by
$$
\wp_\Lambda(z) := \frac{1}{z^2} +
\sum_{\substack{\lambda \in \Lambda\cr \lambda \neq 0}}
\bigg[\frac{1}{(z-\lambda)^2} - \frac{1}{\lambda^2} \bigg].
$$
For $\tau \in \h$, set $\wp_\tau(z) = \wp_{\Lambda_\tau}(z)$. One has the
expansion
$$
\wp_\tau(z) = (2\pi i)^2\bigg(\frac{1}{(2\pi i z)^2} +
\sum_{m=1}^\infty \frac{2}{(2m)!}G_{2m+2}(\tau) (2\pi i z)^{2m}\bigg).
$$
The function $\C \to \P^2(\C)$ defined by
$
z \mapsto [(2\pi i)^{-2}\wp_\tau(z),(2\pi i)^{-3}\wp_\tau'(z),1]
$
induces an embedding of $\C/\Lambda_\tau$ into $\P^2(\C)$. The image has affine
equation
$$
y^2 = 4x^3 - g_2(\tau)x - g_3(\tau),
$$
where
$$
g_2(\tau) = 20 G_4(\tau) \text{ and } g_3(\tau) = \frac{7}{3} G_6(\tau).
$$
This has discriminant the normalized cusp form of weight 12:
$$
\Delta(\tau) := g_2(\tau)^3 - 27 g_3(\tau)^2 = q \prod_{n\ge 1}(1-q^n)^{24}.
$$

The following statement is easily verified.  For a proof of the last statement,
see \cite[Prop.~19.1]{hain:kzb}.

\begin{proposition}
\label{prop:differentials}
The abelian differential $dx/y$ corresponds to  $d(2\pi i dz)$ and the differential
$xdx/y$ of the second kind corresponds to $(2\pi i)^{-2}\wp_\tau(z)d(2\pi i z)$.
These differentials form a symplectic basis of $H^1_\DR(E_\tau)$ as
$$
\int_{E_\tau} \frac{dx}{y}\smile \frac{xdx}{y} = 2 \pi i.
$$
\end{proposition}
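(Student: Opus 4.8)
The plan is to verify the first two identities by a direct substitution into the Weierstrass parametrization, and to reduce the period computation in the last assertion to the Legendre relation for the Weierstrass quasi-periods.

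First I would record that the embedding $z\mapsto[(2\pi i)^{-2}\wp_\tau(z),(2\pi i)^{-3}\wp_\tau'(z),1]$ identifies $E_\tau=\C/\Lambda_\tau$ with the cubic $y^2=4x^3-g_2(\tau)x-g_3(\tau)$ via $x=(2\pi i)^{-2}\wp_\tau(z)$ and $y=(2\pi i)^{-3}\wp_\tau'(z)$. Then $dx=(2\pi i)^{-2}\wp_\tau'(z)\,dz$, so $dx/y=2\pi i\,dz$, which is the first claim, and multiplying by $x$ gives $x\,dx/y=(2\pi i)^{-1}\wp_\tau(z)\,dz$, the second. In particular $dx/y$ is a nowhere-vanishing holomorphic $1$-form (spanning $H^{1,0}$) and $x\,dx/y$ is a differential of the second kind with a double pole at the origin and no residue; since it is not exact (as the period computation below will show), the two span $H^1_\DR(E_\tau)$.

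For the pairing I would use the Weierstrass zeta function $\zeta_\tau$, normalized by $\zeta_\tau'=-\wp_\tau$, so that $\wp_\tau(z)\,dz=-d\zeta_\tau$ as a meromorphic form; although $\zeta_\tau$ is not $\Lambda_\tau$-periodic it is quasi-periodic, $\zeta_\tau(z+\omega_i)=\zeta_\tau(z)+\eta_i$ for the periods $\omega_1=1$, $\omega_2=\tau$ and quasi-periods $\eta_1,\eta_2$. Integrating over the cycles $\aa$ (from $0$ to $1$) and $\bb$ (from $0$ to $\tau$) along paths avoiding the lattice, the $\aa$- and $\bb$-periods of $\wp_\tau(z)\,dz$ are $-\eta_1$ and $-\eta_2$, while those of $dz$ are $1$ and $\tau$. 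Writing $\aa^\ast,\bb^\ast$ for the basis of $H^1(E_\tau,\Z)$ dual to $\aa,\bb$, with $\int_{E_\tau}\aa^\ast\wedge\bb^\ast=1$ in the complex orientation, we have $dz=\aa^\ast+\tau\,\bb^\ast$ and $\wp_\tau(z)\,dz=-\eta_1\,\aa^\ast-\eta_2\,\bb^\ast$ in $H^1_\DR(E_\tau)$, whence
$$
\int_{E_\tau} dz\smile\wp_\tau(z)\,dz=(\tau\eta_1-\eta_2)\,\int_{E_\tau}\aa^\ast\wedge\bb^\ast=\tau\eta_1-\eta_2 .
$$
The Legendre relation $\tau\eta_1-\eta_2=2\pi i$ then gives the value $2\pi i$, and combining with the first two identities yields $\int_{E_\tau}\frac{dx}{y}\smile\frac{x\,dx}{y}=2\pi i\cdot(2\pi i)^{-1}\cdot 2\pi i=2\pi i$.

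As an alternative that sidesteps invoking the cup product formula for integral classes, one can represent the second-kind differential by a smooth form: write $\wp_\tau(z)\,dz=c_1\,dz+c_2\,d\bar z+dg$ for a smooth function $g$ on $E_\tau$, solve the period conditions to get $c_2=(\tau\eta_1-\eta_2)/(\bar\tau-\tau)=-\pi/\Im\tau$ (again by Legendre), and use $\int_{E_\tau}dz\wedge d\bar z=-2i\,\Im\tau$; this reproduces $2\pi i$. Either way the only nontrivial input beyond bookkeeping is the Legendre relation. The main point to be careful about is the orientation and sign conventions, together with the fact that the pointwise wedge $dz\wedge\wp_\tau(z)\,dz$ vanishes, so one genuinely needs the cohomological (period) computation rather than a direct integration of the meromorphic representatives; this is the content of \cite[Prop.~19.1]{hain:kzb}.
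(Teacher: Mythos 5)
Your proof is correct, and it is in fact more complete than what the paper offers: the paper declares the first two identities ``easily verified'' and delegates the period computation to the cited reference \cite[Prop.~19.1]{hain:kzb}, whereas you supply a self-contained argument. Your verification of $dx/y=2\pi i\,dz$ and $x\,dx/y=(2\pi i)^{-1}\wp_\tau(z)\,dz$ by substituting $x=(2\pi i)^{-2}\wp_\tau$, $y=(2\pi i)^{-3}\wp_\tau'$ is exactly the intended routine check. For the pairing, your reduction to the Legendre relation $\tau\eta_1-\eta_2=2\pi i$ via the quasi-periods of the Weierstrass zeta function is the standard route (and surely the content of the cited proposition); the signs and the orientation convention $\int_{E_\tau}\aa^\ast\wedge\bb^\ast=1$ are handled correctly, and the alternative computation with a smooth representative $c_1\,dz+c_2\,d\bar z+dg$ gives a useful consistency check. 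The only cosmetic remark is that linear independence of the two classes in the two-dimensional space $H^1_\DR(E_\tau)$ follows directly from the nonvanishing of their cup product, so the aside about $x\,dx/y$ not being exact is subsumed by the period computation you carry out anyway.
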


\subsection{The analytic space $\M_{1,\uu}^\an$}

Recall that $D(u,v) = u^3 - 27v^2$. The map
$$
\C^\ast \times \h \to \C^2 - D^{-1}(0),\quad (\xi,\tau) \mapsto
\big(\xi^{-4}g_2(\tau),\xi^{-6}g_3(\tau)\big)
$$
induces a biholomorphism ${\cL^\an}' \to \M_{1,\uu}^\an = \C^2 - D^{-1}(0)$.
The point $(\xi,\tau)$ of $\C^\ast\times \h$ corresponds to the point
$$
\big(\C/\xi\Lambda_\tau,2\pi i dz\big) \cong
\big(\C/\Lambda_\tau,2\pi i\xi\,dz\big)
$$
of $\M_{1,\uu}^\an$, which also corresponds to the curve $y^2 = 4x^3 -
g_2(\tau)x - g_3(\tau)$ with the abelian differential $\xi dx/y$.

\subsubsection{The universal elliptic curve $\E^\an$}

Define $\G$ to be the subgroup of $\GL_3(\Z)$ that consists of the matrices
$$
\gamma = \begin{pmatrix} a & b & 0 \cr c & d & 0 \cr m & n & 1\end{pmatrix}
$$
where $a,b,c,d,m,n\in \Z$ and $ad-bc = 1$. It is isomorphic to the semi-direct
product of $\SL_2(\Z)$ and $\Z^2$ and acts on $X := \C\times \h$ on the left via
the formula $\gamma : (z,\tau)\mapsto (z',\tau')$, where 
$$
\begin{pmatrix}
\tau' \cr 1 \cr z' 
\end{pmatrix}
=
(c\tau+d)^{-1}
\begin{pmatrix}
a & b & 0 \cr c & d & 0 \cr m & n & 1
\end{pmatrix}
\begin{pmatrix}
\tau \cr 1 \cr z 
\end{pmatrix}.
$$
The universal elliptic curve $\pi^\an : \E^\an \to \M_{1,1}^\an$ is the orbifold
quotient of the projection $\C\times \h \to \h$ by $\G$, which acts on $\h$ via
the quotient map $\G\to \SL_2(\Z)$. The fiber of $\pi^\an$ over the orbit of
$\tau$ is $E_\tau$.

It can also be regarded as the orbifold quotient $\SL_2(\Z)\bbs \E_\h$ of the
universal framed elliptic curve
$$
\E_\h := \Z^2 \bs \ (\C\times \h),
$$
where $(m,n) : (z,\tau) \mapsto (z + m\tau + n,\tau)$, by the natural action
$$
\gamma : (z,\tau) \mapsto \big((c\tau+d)^{-1}z,\gamma \tau)
$$
of $\SL_2(\Z)$.

The universal elliptic curve $\cEbar^\an \to \Mbar_{1,1}^\an$ is obtained by
glueing in the Tate curve as described in \cite[\S5]{hain:elliptic}. The
restriction to the $q$-disk $\D$ of $\cEbar^\an$ minus the double point of
$\Ebar_0$ is the quotient of $\C^\ast\times\D$ by the $\Z$-action
$$
n : (w,q) \mapsto
\begin{cases}
(q^n w, q) & q \neq 0 \cr
(w,0) & q = 0.
\end{cases}
$$

To relate this to the algebraic construction, note that $\M_{1,2+\uu}^\an$ is
the analytic variety $\G \bs \big(\C^\ast\times \C \times \h)$, where
$\gamma(\xi,z,\tau) = \big((c\tau+d)\xi,\gamma(z,\tau)\big)$. The function
$$
(\xi,z,\tau) \mapsto \big(\xi^{-4}g_2(\tau),\xi^{-6}g_3(\tau),
[(2\pi i)^{-2}\wp_\tau(z),(2\pi i)^{-3}\wp_\tau'(z),1]\big)
$$
from $\C^\ast\times \C \times \h$ to $\C^2-D^{-1}(0)\times \P^2$ induces a
biholomorphism
$$
\G\bs \big(\C^\ast\times \C \times \h) \to \M_{1,2+\uu}^\an.
$$
It is invariant with respect to the-$\C^\ast$ action $\lambda\cdot(\xi,z,\tau) =
(\lambda\xi,z,\tau)$ on $\G\bs(\C^\ast\times \C \times \h)$  and the
$\C^\ast$-action on $\M_{1,2+\uu}$ that multiplies the abelian differential by
$\lambda$.

\subsubsection{Orbifold fundamental groups}

Since $\M_{1,1}^\an$ is the orbifold quotient of $\h$ by $\SL_2(\Z)$, there
is a natural isomorphism
$$
\pi_1^\top(\M_{1,1},p) \cong \Aut(\h \to \SL_2(\Z)\bbs\h)
\cong \SL_2(\Z),
$$
where $p$ denotes the projection $\h \to \M_{1,1}^\an$. The inclusion
of the imaginary axis in $\h$ induces a canonical isomorphism
\begin{equation}
\label{eqn:canon_isom}
\pi_1^\top(\M_{1,1},\tate) \cong \pi_1^\top(\M_{1,1},p) \cong \SL_2(\Z).
\end{equation}

The orbifold fundamental group of $\M_{1,\uu}^\an$ is a central extension of
$\SL_2(\Z)$ by $\Z$. There are natural isomorphisms
$$
\pi_1^\top(\M_{1,\uu},\vv_o) \cong B_3 \cong \SLtilde_2(\Z),
$$
where $B_3$ denotes the braid group on 3 strings and $\SLtilde_2(\Z)$ denotes
the inverse image of $\SL_2(\Z)$ in the universal covering group
$\SLtilde_2(\R)$ of $\SL_2(\R)$. Again, this is well-known; details can be found
in \cite[\S8]{hain:elliptic}.

Similarly, there are natural isomorphisms
$$
\pi_1^\top(\E,\vv_o) \cong \pi_1^\top(\E,p')
\cong \Aut(\C\times\h\to \E^\an) \cong \G \cong \SL_2(\Z)\ltimes \Z^2,
$$
where $p' : \C\times\h \to \E^\an$ is the projection.

\section{The Local System $\H$}
\label{sec:localsys_H}

Roughly speaking, the local system $\H$ over $\M_{1,1}$ is the ``motivic local
system'' $R^1\pi_\ast\Q$ associated to the universal elliptic curve $\pi : \E\to
\M_{1,1}$. While we could work in Voevodsky's category of motivic sheaves
\cite{voevodsky}, we will instead define $\H$ to be a set of compatible
realizations: Betti, $\Q$-de Rham, Hodge, and $\ell$-adic \'etale. These are
described below in detail.

In subsequent sections, we will abuse notation and denote the pull back of $\H$
to $\M_{1,n+\vec{r}}$ by $\H$ for all $r,n\ge 0$ with $r+n>0$.

\subsection{Betti realization}

The Betti realization of $\H$ is the orbifold local system
$$
\H^B := R^1\pi_\ast^\an\Q
$$
over $\M_{1,1}^\an$ associated to the universal elliptic curve $\pi^\an : \E^\an
\to \M_{1,1}^\an$. Since Poincar\'e duality induces an isomorphism $H^1(E) \cong
H_1(E)$ for all elliptic curves $E$, $\H^B$ is also isomorphic to the local
system whose fiber over $[E]\in \M_{1,1}$ is $H_1(E,\Q)$.

Since $\h$ is contractible, there is a natural isomorphism
$$
H_1(\E_\h,\Z) \cong H_1(E_\tau,\Z) \cong \Z\aa \oplus \Z\bb
$$
for each $\tau\in \h$. The sections $\aa$ and $\bb$ thus trivialize the pullback
of $\H^B$ to $\h$. The induced action of $\SL_2(\Z)$ on $H_1(\E_\h,\Z)$ is given
by left multiplication:
$$
\gamma : 
\begin{pmatrix} \bb \cr \aa \end{pmatrix} \mapsto
\gamma \begin{pmatrix} \bb \cr \aa \end{pmatrix}.
$$
It corresponds to a right action of $\SL_2(\Z)$ on $\Q\aa\oplus \Q\bb$.

Denote the dual basis of $H^1(\E_\h,\Z)$ by $\adual,\bdual$. The action of
$\SL_2(\Z)$ on this frame is given by $\gamma : \begin{pmatrix} \bdual & -\adual
\end{pmatrix} \mapsto \begin{pmatrix} \bdual & -\adual \end{pmatrix}\gamma$. The
basis $\adual,\bdual$ corresponds to the basis $-\bb,\aa$ of $H_1(E_\tau)$ under
Poncar\'e duality. The corresponding action of $\SL_2(\Z)$ on this frame is:
\begin{equation}
\label{eqn:action}
\gamma
: \begin{pmatrix} \aa & -\bb \end{pmatrix} \mapsto
\begin{pmatrix} \aa & -\bb \end{pmatrix}\gamma.
\end{equation}
This defines a left action of $\SL_2(\Z)$ on $H^B\times \h$, where $H^B = \Q\aa
\oplus \Q\bb$ should be thought of as the fiber of $\H^B$ over $p$, and also
over $\tate$.

\subsection{The flat vector bundle $\cH^\an$ and its canonical extension}
\label{sec:connection}
Denote the flat connection on the holomorphic vector bundle
$$
\cH^\an = \H^B\otimes \O_{\M_{1,1}^\an}
$$
by $\nabla_0$. The pullback $\cH^\an_\h$ of $\cH^\an$ to $\h$ is the vector
bundle $\O_{\M_{1,1}^\an}\aa \oplus \O_{\M_{1,1}^\an}\bb$. The sections $\aa$
and $\bb$ are flat.

Define a holomorphic section $\bw$ of $\H^\an_\h$ by
$$
\bw(\tau) = 2\pi i\, \w_\tau \in H^1(E_\tau,\C),
$$
where $\w_\tau$ denotes the class of the holomorphic 1-form $dz$ in
$H^1(E_\tau)$.

This bundle has a Hodge filtration
$$
\cH^\an = F^0 \cH^\an  \supset F^1 \cH^\an \supset F^2 \cH^\an = 0.
$$
The section $\bw$ trivializes $F^1\cH^\an_\h$.
 
The sections $\aa$ and $\bw$ descend to give a framing
$$
\cH^\an_{\D^\ast} = \O_{\D^\ast}\aa \oplus \O_{\D^\ast}\bw
$$
of 
the pullback of $\cH^\an$ to the punctured $q$-disk. Since $\log q = 2\pi i
\tau$,
$$
\nabla_0 \bw = \nabla_0 (-2\pi i \bb + \log q\, \aa) = \aa \frac{dq}{q}.
$$
Since $\nabla_0 \aa = 0$, the connection on $\cH_\h$ is given by
\begin{equation}
\label{eqn:connection}
\nabla_0 = d + \aa\frac{\partial}{\partial \bw} \frac{dq}{q}.
\end{equation}

To define an extension $\Hbar^\an$ of $\cH^\an$ to a vector bundle over
$\Mbar_{1,1}^\an$, it suffices to extend $\cH^\an_{\D^\ast}$ to a vector
bundle over $\D$. We do this by defining
$$
\Hbar_\D^\an = \O_{\D}\aa \oplus \O_{\D}\bw.
$$
The formula (\ref{eqn:connection}) implies that the connection $\nabla_0$
extends to a meromorphic connection on $\Hbar^\an$ with a regular singular point
at the cusp $e_o$. The residue of the connection at the cusp is the nilpotent
operator $\aa\partial/\partial \bw$. This implies:

\begin{proposition}
The flat vector bundle $(\Hbar^\an,\nabla_0)$ is Deligne's canonical extension
of $\cH^\an$ to $\Mbar_{1,1}^\an$. The holomorphic sub-bundle $F^1\cH^\an$
extends to the holomorphic sub-bundle $F^1 \Hbar^\an$ that is spanned (locally)
by $\bw$.
\end{proposition}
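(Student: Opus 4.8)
The plan is to check that $(\Hbar^\an,\nabla_0)$ has the two properties that characterize Deligne's canonical extension and then to invoke the uniqueness of such an extension. Everything is local at the cusp $e_o$, since away from $e_o$ the bundle $\Hbar^\an$ agrees with $\cH^\an$; near $e_o$, $\Mbar_{1,1}^\an$ is the orbifold $C_2\bbs\D$. The first step is to confirm that $\aa$ and $\bw$ really do frame $\cH^\an$ over the punctured $q$-disk $\D^\ast$, which is immediate from the identity $\bw = -2\pi i\,\bb + (\log q)\,\aa$ and the flatness of $\aa$ and $\bb$; consequently $\Hbar_\D^\an = \O_\D\aa\oplus\O_\D\bw$ is a genuine locally free extension of $\cH^\an_{\D^\ast}$ across $q=0$. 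I would also record here that the involution of the orbifold chart (the $-1$ automorphism of elliptic curves, which acts by $-\id$ on $\H^B$) sends $\aa\mapsto -\aa$ and $\bw\mapsto -\bw$, so this extension is equivariant and descends to $C_2\bbs\D$.

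The second step uses formula (\ref{eqn:connection}): since $\nabla_0\bw = \aa\,dq/q$ and $\nabla_0\aa = 0$, in the frame $(\aa,\bw)$ the connection has the shape $\nabla_0 = d + N\,dq/q$ with $N$ the nilpotent endomorphism $\bw\mapsto\aa$, $\aa\mapsto 0$. Thus --- as already observed in the paragraph preceding the statement --- $\nabla_0$ extends to a meromorphic connection on $\Hbar^\an$ with at worst a logarithmic pole at $e_o$ and nilpotent residue $N$; in particular the eigenvalues of the residue all lie in $[0,1)$. By the uniqueness of Deligne's canonical extension among logarithmic extensions whose residues have eigenvalues in a fixed fundamental domain for $\Z\subset\C$, $(\Hbar^\an,\nabla_0)$ is that canonical extension. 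The only input here that is not a routine calculation is this uniqueness theorem, which is classical; the sole point needing a little care is its compatibility with the orbifold structure at the cusp, which is handled by the equivariance recorded above.

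For the statement about the Hodge filtration, note that $\bw = 2\pi i\,\w_\tau$ spans $F^1\cH^\an$ over $\D^\ast$, so $F^1\cH^\an_{\D^\ast} = \O_{\D^\ast}\bw$. The rank-one subsheaf $\O_\D\bw\subset\Hbar_\D^\an$ is a direct summand --- the quotient is the line bundle framed by the image of $\aa$ --- hence a holomorphic sub-bundle, and it restricts to $F^1\cH^\an$ over $\D^\ast$. Gluing it to $F^1\cH^\an\subset\cH^\an$ over $\M_{1,1}^\an$, which is legitimate because the two agree on the overlap $\D^\ast$, produces a holomorphic sub-bundle $F^1\Hbar^\an\subset\Hbar^\an$ that near $e_o$ is spanned by $\bw$, as asserted. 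This last step is pure bookkeeping and presents no obstacle.
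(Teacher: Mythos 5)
Your proof is correct and follows essentially the same route as the paper, which treats the proposition as an immediate consequence of the computation that $\nabla_0$ extends with a logarithmic pole at $e_o$ and nilpotent residue $\aa\,\partial/\partial\bw$, hence falls under the uniqueness characterization of Deligne's canonical extension. Your additional remarks on the $C_2$-equivariance at the orbifold chart and on gluing the sub-bundle $\O_\D\bw$ to $F^1\cH^\an$ merely make explicit details the paper leaves implicit.
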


\subsection{Algebraic de~Rham realization}
\label{sec:DR}

A vector bundle on $\M_{1,1/\Q}$ with a connection is a vector bundle on
$\bA^2_\Q - \{0\}$ with a $\Gm$-action, endowed with a $\Gm$-invariant
connection that is trivial on each $\Gm$-orbit. Define
$$
\Hbar^\DR = \O_{\bA^2_\Q} S \oplus \O_{\bA^2_\Q}T.
$$
Extend the action $\lambda : (u,v) \mapsto (\lambda^{-4}u,\lambda^{-6}v)$ of
$\Gm$ on $\bA^2$ to this bundle by defining
$$
\lambda \cdot S = \lambda^{-1} S \text{ and } \lambda \cdot T = \lambda T.
$$
Define a connection $\nabla_0$ on $\Hbar^\DR$ by
$$
\nabla_0 = d +
\Big(
-\frac{1}{12}\frac{dD}{D}\otimes T
+ \frac{3}{2}\frac{\alpha}{D}\otimes S
\Big)\frac{\partial}{\partial T}
+
\Big(
-\frac{u}{8}\frac{\alpha}{D}\otimes T
+ \frac{1}{12}\frac{dD}{D}\otimes S
\Big)\frac{\partial}{\partial S},
$$
where $\alpha = 2udv-3vdu$ and $D = u^3 - 27 v^2$.

The connection is $\Gm$-invariant, trivial on each orbit, defined over $\Q$, has regular singularities
along the discriminant divisor $D = u^3 - 27v^2 = 0$ and is holomorphic on its
complement. It therefore descends to a rational connection over
$\Mbar_{1,1/\Q}$, with a regular singular point at the cusp $e_o$.

Define a Hodge filtration
$$
\Hbar^\DR = F^0\Hbar^\DR \supset F^1\Hbar^\DR \supset F^2\Hbar^\DR = 0
$$
on $\Hbar^\DR$ by setting $F^1\Hbar^\DR = \O_{\bA^2_\Q} T$. This is a
$\Gm$-invariant sub-bundle, and thus defined over $\M_{1,1/\Q}$.

The following statement follows from \cite[Prop.~19.6]{hain:kzb},
\cite[Prop.~19.7]{hain:kzb}, and Proposition~\ref{prop:differentials}
above.\footnote{The normalizations in \cite{hain:kzb} differ from those here.
They are related by $T = 2\pi i\hat{T}$, $S=\hat{S}/2\pi i$, and $A=2\pi i
\aa$.}

\begin{proposition}
\label{prop:nabla_0}
There is a natural isomorphism
$$
(\Hbar^\an,\nabla_0) \cong
(\Hbar^\DR,\nabla_0)\otimes_{\O_{\Mbar_{1,1/\Q}}}\O_{\Mbar_{1,1}^\an}
$$
that respects the Hodge filtration. When pulled back to
$\M_{1,\uu/\Q}=\bA^2-\{0\}$, the section $T$ corresponds to $dx/y$ and the
section $S$ to $xdx/y$. After pulling back to the $q$-disk along the slice
$q\mapsto \big(g_2(q),g_3(q)\big)\in \bA^2$, we have $T=\bw$ and $S=\aa - 2
G_2(q) \bw$.
\end{proposition}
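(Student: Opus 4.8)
The plan is to transport the explicit algebraic data on $\Hbar^\DR$ to the analytic side through the classical period comparison for $R^1\pi_\ast$ of the universal curve together with the description of the canonical extension over the $q$-disk in \S\ref{sec:connection}. First I would use the identification of bundles-with-connection on $\M_{1,1/\Q}$ with $\Gm$-equivariant bundles on $\bA^2_\Q - \{0\}$ carrying a $\Gm$-invariant, fibrewise-flat connection, under which the pullback of the universal elliptic curve is the Weierstrass cubic $y^2 = 4x^3 - ux - v$ with abelian differential $dx/y$. Declaring the weight $+1$ section $T$ to be the class of $dx/y$ and the weight $-1$ section $S$ to be the class of $xdx/y$ identifies the restriction of $\Hbar^\DR$ to $\M_{1,1/\Q}$ with the algebraic de~Rham cohomology bundle of the universal curve, compatibly with Hodge filtrations, since $dx/y$ and $xdx/y$ span the de~Rham cohomology of a smooth fibre and $dx/y$ spans $F^1$. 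What then requires proof is that the connection $\nabla_0$, written out in terms of $\alpha = 2u\,dv - 3v\,du$ and $D = u^3 - 27v^2$, is precisely the Gauss--Manin connection in the basis $\{dx/y,\,xdx/y\}$; this Picard--Fuchs identity is \cite[Prop.~19.6]{hain:kzb}, once one applies the normalization dictionary $T = 2\pi i\,\hat{T}$, $S = \hat{S}/2\pi i$, $A = 2\pi i\,\aa$ of the footnote above. As a byproduct this shows that $\nabla_0$ has regular singularities along $D = 0$, hence descends to $\Mbar_{1,1/\Q}$ with a regular singular point at $e_o$.

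Next I would restrict to the analytic $q$-disk along the slice $q \mapsto \big(g_2(q),g_3(q)\big)$. By Proposition~\ref{prop:differentials} the class of $dx/y$ equals the class of $2\pi i\,dz$, which is $\bw(\tau)$ by definition, so $T = \bw$ at once. Likewise the class of $xdx/y$ becomes the class of $(2\pi i)^{-1}\wp_\tau(z)\,dz$, and writing this in the flat frame $\{\aa,\bb\}$ comes down to computing the $\aa$- and $\bb$-periods of the meromorphic differential $\wp_\tau(z)\,dz$, i.e.\ the Weierstrass quasi-periods; combining the classical identity relating these to the weight-two Eisenstein series $G_2(q)$ with the Legendre relation gives $S = \aa - 2G_2(q)\bw$. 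This is \cite[Prop.~19.7]{hain:kzb}, and I expect it to be the one genuinely substantive step, the rest being bookkeeping with $\Gm$-weights and powers of $2\pi i$.

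Finally I would assemble the statement. Over the open stack $\M_{1,1}^\an$, the required isomorphism of $\cH^\an$ with the analytification of the restriction of $(\Hbar^\DR,\nabla_0)$ to $\M_{1,1/\Q}$ is the canonical period comparison for $R^1\pi_\ast$ of the universal curve; it is horizontal for the connections and, since $F^1$ is spanned on each side by $T = \bw$, strict for the Hodge filtration. To see that $\Hbar^\DR\otimes_{\O_{\Mbar_{1,1/\Q}}}\O_{\Mbar_{1,1}^\an}$ agrees with Deligne's canonical extension $\Hbar^\an$ of \S\ref{sec:connection}, note that on the $q$-disk the frame change $\aa = S + 2G_2(q)\,T$, $\bw = T$ is holomorphic and invertible at $q = 0$, because $2G_2(q) = -\tfrac{1}{12} + 2\sum_{n\ge 1}\sigma_1(n)q^n$ is holomorphic there; hence $\O_\D S \oplus \O_\D T = \O_\D\aa \oplus \O_\D\bw = \Hbar^\an_\D$, and in this frame the connection acquires the nilpotent residue $\aa\,\partial/\partial\bw$ of \eqref{eqn:connection}. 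Since a regular connection admits a unique extension with nilpotent residue, the two extensions coincide, which yields the asserted natural isomorphism, compatibly with $F^\dot$. The main obstacle, as indicated, is the quasi-period identity $S = \aa - 2G_2(q)\bw$, i.e.\ \cite[Prop.~19.7]{hain:kzb}; everything else is formal once the $2\pi i$-normalizations are tracked.
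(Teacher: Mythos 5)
Your proposal is correct and follows essentially the same route as the paper, whose entire proof consists of citing \cite[Prop.~19.6]{hain:kzb}, \cite[Prop.~19.7]{hain:kzb} and Proposition~\ref{prop:differentials} together with the normalization dictionary $T = 2\pi i\hat{T}$, $S=\hat{S}/2\pi i$, $A=2\pi i\aa$. You supply the same references and correctly fill in the assembly steps the paper leaves implicit (the $\Gm$-equivariant description, the holomorphy of $2G_2(q)$ at $q=0$, and the characterization of the canonical extension by nilpotent residue).
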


\subsection{Hodge realization}

The Hodge realization of $\H$ is the polarized variation of $\Q$-Hodge structure
(PVHS) of weight 1 over $\M_{1,1}^\an$ whose underlying local system is the
Betti incarnation $\H^B$ of $\H$ and whose associated flat vector bundle, with
its Hodge filtration, is the flat vector bundle
$(\Hbar^\DR,\nabla_0,F^\dot)\otimes\O_{\M_{1,1}^\an}$.  It is naturally
polarized by the cup product. Since it is of weight 1, its weight filtration is
$$
0 = W_0\H \subseteq W_1\H = \H.
$$

We compute the limit mixed Hodge structure on the fiber $H := H_\tate$ of $\H$
over $\tate=\partial/\partial q$.\label{def:H}

The fiber $H^\DR$ of $\Hbar^\DR$ over the cusp $e_o$ has $\Q$-basis $S$ and $T$.
Proposition~\ref{prop:nabla_0} implies that $T=\bw$ and that, when $q=0$,
$$
S=\aa - G_2(0)\bw/2 = \aa + \bw/12.
$$
Consequently,
$$
H^\DR = \Q\aa \oplus \Q\bw.
$$
The residue of the connection at $e_o$ is the nilpotent operator
$$
N := \aa\frac{\partial}{\partial \bw} \in \End H^\DR.
$$
The associated monodromy weight filtration of $H^\DR$ (centered at the weight,
1, of $\H$) is
$$
0 = M_{-1} H^\DR \subset M_0 H^\DR = M_1 H^\DR \subset M_2 H^\DR = H^\DR.
$$

It remains to determine the Betti structure $H^B$ on $H$ associated to the
tangent vector $\tate =\partial/\partial q$. This is computed according to
Schmid's prescription \cite{schmid}. The complex vector space underlying $H$ is
$H^\DR\otimes\C$. Its integral lattice is
$$
H^B_\Z := H^0(\h,\H^B) = \Z\aa \oplus \Z\bb.
$$
The comparison isomorphism $H^B \to H^\DR\otimes \C$ associated to $\tate$ takes $\bv \in H^B$ to
$$
\lim_{q\to 0}q^N\bv
= \lim_{q\to 0}\big(\id + \log q\, \aa\partial/\partial \bw\big)\bv.
$$
Since $\bb = \big((\log q)\aa - \bw\big)/(2\pi i)$, the comparison isomorphism takes
$\aa \in H^B$ to $\aa \in H^\DR$ and $\bb \in H^B$ to
$$
\lim_{q\to 0}\big(\id + \log q\, \aa\partial/\partial \bw\big)\bb
= -(2\pi i)^{-1}\bw.
$$
Since $\bw$ spans $F^1 H^\DR$, it follows that the limit MHS is isomorphic
to $\Z(0) \oplus \Z(-1)$. The copy of $\Z(0)$ is spanned by $\aa$ and
the copy of $\Z(-1)$ is spanned by $\bb = -(2\pi i)^{-1}\bw$.

The image of the positive generator of $\pi_1^\top(\D^\ast)$ in
$\pi_1^\top(\M_{1,1},\tate) = \SL_2(\Z)$
is
$$
\sigma_o := \begin{pmatrix} 1 & 1 \cr 0 & 1\end{pmatrix}.
$$
Formula (\ref{eqn:action}) implies that in $\End H$
$$
\log \sigma_o = \aa\frac{\partial}{\partial \bb}
= -{2\pi i}\,\aa\frac{\partial}{\partial \bw}.
$$

\begin{remark}
For the uninitiated, it may seem strange that the pure Hodge structure on
$H^1(E_\tau)$ becomes a mixed Hodge structure in the limit. One way to come to
terms with this is to think of the limit MHS $H=H_\tate$ as being the mixed
Hodge structure on $H^1(E_\tate)$, where $E_\tate$ denotes a smoothing of the
nodal cubic $E_0$ in the direction of $\tate$.\footnote{The notion of the fiber
$E_\vv$ of $\E$ over a non-zero tangent vector $\vv$ of the origin of the
$q$-disk can be made precise. See, for example, see Appendix C of
\cite{hain:kzb}.} There is a continuous retraction $E_\tate \to E_0$ whose
composition with the inclusion $E_0^\ast \hookrightarrow E_\tate$ is the
inclusion $E_0^\ast \hookrightarrow E_0$,  where $E_0^\ast$ denotes the
non-singular locus of $E_0$, which is isomorphic to $\C^\ast$. This sequence
induces an exact sequence of MHS
$$
0 \to H^1(E_0) \to H^1(E_\tate) \to H^1(E_0^\ast) \to 0,
$$
which exhibits $H^1(E_\tate)$ as an extension of $H^1(\C^\ast) =\Z(-1)$ by
$H^1(E_0) = \Z(0)$. This MHS is not always split --- if one replaces $\tate$ by
$\lambda\tate$, then the MHS on $H^1(E_{\lambda\tate})$ is the extension of
$\Z(-1)$ by $\Z(0)$ corresponding to $\lambda \in \C^\ast \cong
\Ext^1(\Z(-1),\Z(0))$. To prove this, one replaces $q$ by $q/\lambda$ in the
above computations of the limit.
\end{remark}

\subsection{\'Etale realization}
\label{sec:etale}

The $\ell$-adic realization of $\H$ is the lisse sheaf
$$
\H_\ell := R^1\pi_\ast\Ql
$$
over the stack $\M_{1,1/\Q}$. Its fiber over the moduli point of an elliptic
curve $(E,0)$ over a number field $K$ in $\Qbar$ is $\pi_1(E,0)\otimes\Ql(-1)$,
endowed with the natural action of $G_K := \Gal(\Qbar/K)$. As in the Hodge case,
its fiber $ H_\ell = H^1_\et(E_{\tate/\Qbar},\Ql) $ over $\tate = \Spec
\Qbar((q^{1/n}:n\ge 1))$ is a split extension of $\Ql(-1)$ by $\Ql(0)$. Details
can be found in Nakamura \cite{nakamura}.

The local system $\H_\ell$ is determined by its monodromy representation
$$
\rho_{H,\ell} : \pi_1(\M_{1,1/\Q},\tate) \to \Aut H_\ell.
$$
Its restriction
$$
\pi_1(\M_{1,1/\Qbar},\tate) \to \Aut H_\ell = \Aut(\Ql(0)\oplus \Ql(-1))
$$
to the geometric fundamental group is $G_\Q$-equivariant and corresponds to the
action of $\pi_1^\top(\M_{1,1},\tate)$ on $H^B\otimes \Ql$ under the comparison
isomorphisms
$$
H_\ell \cong H^B\otimes\Ql \text{ and }
\pi_1(\M_{1,1/\Qbar},\tate)^\op \cong \pi_1^\top(\M_{1,1},\tate)^\wedge,
$$
where $(\blank)^\wedge$ denotes profinite completion and $\op$ denotes opposite
group. (Cf.\ Section \ref{sec:opposite}.)

\subsection{Summary}
\label{sec:summary}
The key point of the previous discussion is that the fibers over $\tate$ of the local system $\H^B$, $\H_\ell$, $\Hbar^\DR$ are naturally isomorphic to the realizations of the object $H=\Q(0)\oplus \Q(-1)$ of $\MTM$. In this sense, the fibers over $\tate$ can be lifted canonically to an object $H$ of $\MTM$. The local systems $\H^B$, $\Hbar^\DR$ and $\H_\ell$ are determined by the mixed Tate motive $H$ and the action of $\SL_2(\Z)$ on its Betti realization. More precisely:
\begin{enumerate}

\item There is an object $H = \Q(0)\oplus \Q(-1)$ of $\MTM$, endowed with
two weight filtrations: $M_\dot$, its weight filtration in $\MTM$, and the
second weight filtration
$$
0 = W_0 H \subset W_1 H = H.
$$
Its Betti realization $H^B = \Q\aa \oplus \Q\bb$ is the fiber of $\H^B$ over $\tate$; its de~Rham realization is the fiber $H^\DR = \Q\aa \oplus \Q\bw$ of $\Hbar^\DR$ over the cusp. The comparison isomorphism takes $\aa$ to $\aa$ and $\bb$ to $-(2\pi i)^{-1}\bw$.

\item There is an action of $\pi_1^\top(\M_{1,1},\tate)\cong \SL_2(\Z)$ on $H^B$, where
$\gamma \in \SL_2(\Z)$ acts via the formula (\ref{eqn:action}). This determines
the $\Q$-local system $\H^B$ over $\M_{1,1}^\an$.

\item There is a filtered bundle with connection $(\Hbar^\DR, F^\dot,\nabla_0)$
over $\Mbar_{1,1/\Q}$ whose complexification is isomorphic to the canonical
extension of $\cH := \H^B\otimes \O_{\M_{1,1}^\an}$ to $\Mbar_{1,1}^\an$ and
where $F^\dot$ is the usual Hodge filtration. Together these give $\H^B$ the
structure of a polarized variation of Hodge structure over $\M_{1,1}^\an$.

\item The fiber of $\Hbar^\DR$ over the cusp $e_o$ of $\Mbar_{1,1/\Q}$ is
naturally isomorphic to $H^\DR$. The filtration $M_\dot$ is the monodromy weight
filtration of the residue $N = \aa\partial/\partial \bw \in \End H^\DR$ of
$\nabla$ at $e_o$. The limit MHS on the fiber of the polarized variation $\H^B$
over $\tate$ is the Hodge realization of $H$.

\item For each prime number $\ell$, the action $\pi_1(\M_{1,1/\Qbar},\tate) \to
\Aut H_\ell$ induced by the action of $\SL_2(\Z)$ on $H^B$ via the comparison
isomorphism is $G_\Q$-equivariant.

\end{enumerate}

This is the basic universal elliptic motive. Other universal elliptic motives
will include $S^n \H(r) := (S^n\H)\otimes\Q(r)$.

\section{Universal Mixed Elliptic Motives}
\label{sec:mem}

Suppose that $\ast \in \{1,\uu,2\}$. Denote the natural tangential base point of
$\M_{1,\ast/\Q}$ constructed in Paragraph~\ref{sec:tangent} by $\vv_o$. It
induces a section $\vv_{o\ast} : G_\Q \to \pi_1(\M_{1,\ast/\Q},\vv_o)$ of the
natural homomorphism $\pi_1(\M_{1,\ast/\Q},\vv_o) \to G_\Q$ and therefore
an action of $G_\Q$ on $\pi_1(\M_{1,\ast/\Qbar},\vv_o)$.

Let $\H$ be the structure described in the summary in Section~\ref{sec:summary}, pulled
back to $\M_{1,\ast}$.

\begin{definition}[mixed elliptic motives]
 A {\em universal mixed elliptic motive} $\V$ over $\Z$ of type $\ast$ 
consists of:
\begin{enumerate}

\item an object $V$ of $\MTM$ (which is called the {\em fiber of $\V$ over
$\vv_o$}) whose weight filtration is denoted $M_\dot$;

\item an increasing filtration $W_\dot$ of $V$ in $\MTM$ that
satisfies
$$
V = \bigcup_m W_m V \text{ and } \bigcap_m W_m V = 0;
$$

\item a bifiltered vector bundle $(\cV,W_\dot,F^\dot)$ over
$\Mbar_{1,\ast/\Q}$ whose fiber over $e_o$ is the $\Q$-de Rham realization of
$(V,W_\dot)$;

\item an integrable flat connection
$$
\nabla : \cV \to \cV \otimes \Omega^1_{\Mbar_{1,\ast/\Q}}(\log \Delta)
$$
defined over $\Q$ with nilpotent residue along each component of the boundary
divisor $\Delta$ which preserves $W_\dot$ and satisfies Griffiths
transversality:
$$
\nabla :
F^p\cV \to F^{p-1}\cV \otimes \Omega^1_{\Mbar_{1,\ast/\Q}}(\log \Delta);
$$

\item a homomorphism $\rho_V : \pi_1^\top(\M_{1,\ast}^\an,\vv_o) \to
\Aut(V^B,W_\dot)$.

\end{enumerate}
Denote by $(\V^B,W_\dot)$ the filtered $\Q$-local system over
$\M_{1,\ast}^\an$ whose fiber over $\vv_o$ is $(V^B,W_\dot)$ and whose monodromy
representation is $\rho_V$;

\begin{enumerate}
\setcounter{enumi}{5}

\item an isomorphism $(\cV,W_\dot,\nabla)\otimes_\O \O_{\Mbar_{1,\ast}^\an}$
with the canonical extension of the filtered flat bundle $(\V^B,W_\dot)\otimes
\O_{\M_{1,\ast}^\an}$ to $\Mbar_{1,\ast}^\an$ that induces the comparison
isomorphism of $(V^\DR,W_\dot)\otimes\C$ with $(V^B,W_\dot)\otimes \C$ on the fiber over $\vv_o$.

\end{enumerate}
These are required to satisfy:
\begin{enumerate}

\item[(a)] for each prime number $\ell$, the homomorphism
$$
\rho_{V,\ell} : \pi_1(\M_{1,\ast/\Qbar},\vv_o) \to \Aut V_\ell
$$
induced by $\rho_V$ via the comparison isomorphism $V_\ell \cong V^B\otimes \Ql$
is $G_\Q$-equivariant;

\item[(b)] each weight graded quotient $\Gr^W_m \V$ of $\V$ is isomorphic to a direct
sum of copies (with multiplicities) of $S^{m+2r}\H(r)$.

\end{enumerate}
\end{definition}

\begin{remark}
The last condition implies that $M_\dot$ is the relative weight filtration
associated to the nilpotent endomorphism $\log \sigma_o$ (the positive generator
of the fundamental group of the punctured $q$-disk) of the filtered vector space
$(V^B,W_\dot)$. The isomorphism of the canonical extension of $\V^B\otimes
\O_{\M_{1,\ast}^\an}$ with the complexification of  $(\V,W_\dot,F^\dot)$ defines
an admissible variation of MHS over $\M_{1,\ast}^\an$ whose limit MHS over
$\vv_o$ is naturally isomorphic to the Hodge realization of $(V,M_\dot,W_\dot)$.

In addition, since $V$ is an object of $\MTM$, the associated Galois
representation $G_\Q \to \Aut(V\otimes\Ql)$ is unramified at all primes $p\neq
\ell$ and is crystalline at $\ell$.
\end{remark}

\begin{remark}
Lemma~4.3 of \cite{hain:db_coho} implies that the filtration $W_\dot$ of a
universal mixed elliptic motive $V$ is uniquely determined by the filtration
$M_\dot$ and the $\pi_1^\top(\M_{1,\ast},\vv_o)$ action on $V^B$.
\end{remark}

\begin{definition}
A morphism $\phi : \V \to \bU$ of universal mixed elliptic motives consists of a
morphism $\phi^\MTM : V \to U$ in $\MTM$ and a morphism $\phi^\DR :
(\cV,F^\dot,\nabla) \to (\U,F^\dot,\nabla)$ of their de~Rham realizations. These
are required to be compatible with all additional structures in the sense that
\begin{enumerate}

\item the morphisms $V^\DR \to U^\DR$ induced by $\phi^\MTM$ and $\phi^\DR$ are
equal;

\item for all prime numbers $\ell$, the diagram
$$
\xymatrix@R=.7pc@C=.8pc{
\pi_1^\top(\M_{1,\ast},\vv_o)\ar[rr]^(0.55){\rho_U}
\ar[dr]_{\rho_V}\ar[dd]
&& \Aut U^B \ar[dd] \cr
 & \Aut V^B\ar[ur]\ar[dd]\cr
\pi_1(\M_{1,\ast/\Q},\vv_o) \ar'[r]^(.8){\rho_{U,\l}}[rr]
\ar[dr]_{\rho_{V,\l}} && \Aut U_{\l}\cr
 & \Aut V_{\l}\ar[ur]\cr
G_\Q \ar[dr]\ar[uu]^{\vv_o} \ar'[r][rr]  && \pi_1(\MTM,\w_\ell)(\Ql) \ar[uu] \cr
& \pi_1(\MTM,\w_\ell)(\Ql) \ar@{=}[ur]\ar[uu]
}
$$
commutes, where the homomorphisms $\Aut V^B \to \Aut V_\ell$ and $\Aut U^B \to
\Aut U_\ell$ are induced by the comparison maps;

\item The induced map $\V^B \to \bU^B$ of local systems induces a morphism of
variations of MHS over $\M_{1,\ast}^\an$.

\end{enumerate}
\end{definition}

Denote the category of mixed elliptic motives of type $\ast \in \{1,\uu,2\}$ by
$\MEM_\ast$.\label{def:MEM} There is a functor $\vv_o^\ast : \MEM_\ast \to
\MTM$ that takes a mixed elliptic motive $\V$ to its fiber $(V,M_\dot)$ over the
base point $\vv_o$.

\begin{example}[Geometrically constant mixed elliptic motives]
Suppose that $\ast \in \{1,\uu,2\}$. Objects $\V$ of $\MEM_\ast$ for which the
representation $\rho_V$ is trivial will be called {\em geometrically constant}.
These have the property that the two weight filtrations $W_\dot$ and $M_\dot$
coincide on their fiber over $\vv_o$ and are characterized by this property when
$\ast \neq \uu$. (Cf.\ Proposition~\ref{prop:m=w}.) One can think of the
geometrically constant objects of $\MEM_\ast$ as pullbacks of objects of $\MTM$
along the structure morphism $\M_{1,\ast} \to \Spec\Z$.
\end{example}

\begin{example}[Simple universal mixed elliptic motives]
These are the Tate twists $S^m\H(r)$ of symmetric powers of $\H$. That they are
universal elliptic motives follows from the discussion in the previous 
section that was summarized in Section~\ref{sec:summary}. The
mixed Tate motive underlying $S^m\H(r)$ is
$$
S^m H(r) = \Q(r) \oplus \Q(r-1) \oplus \dots \oplus \Q(r-m).
$$
\end{example}

\begin{definition}
An object $V$ of $\MEM_\ast$ is {\em $W$-pure of weight $r$} if $\Gr^W_j V = 0$ when $j\neq r$. It is {\em $M$-pure of weight $m$} if $\Gr^M_j V = 0$ when $j\neq m$.
\end{definition}

The simple object $S^m \H(r)$ of $\MEM_\ast$ is $W$-pure of weight $m-2r$. It is $M$-pure if and only if $m=0$. Not all
objects of $\MEM_\ast$ are $W$-pure or geometrically constant. The simplest
non-trivial examples are extensions of $\Q$ by $S^{2n}\H(2n+1)$ over $\M_{1,2}$
for each $n\ge 1$. These are the elliptic polylogarithms of Beilinson and Levin
\cite{beilinson-levin}.

\begin{example}
An important and non-trivial example of a pro-object of $\MEM_\ast$ is provided
by the local system over $\M_{1,2}$ whose fiber over $[E,x]$ is the Lie algebra
$\p(E',x)$ of the unipotent completion\footnote{Unipotent completion is briefly
reviewed in Section~\ref{sec:unipt_comp}.} of $\pi_1(E',x)$. Its restriction to
$\M_{1,\uu}$ is the local system whose fiber over $[E,\vv]$ is the Lie algebra
of the unipotent completion of $\pi_1(E',\vv)$.

Choose a parameter $w$ on the fiber $\Ebar_0$ over $q=0$ of the Tate curve
that takes the value $1$ at the identity and defines an isomorphism of the
smooth points $E_0$ of $\Ebar_0$ with $\Gm$. It is unique up to the involution
$w\mapsto 1/w$ of $\Ebar_0$. The tangent vector
$$
\ww_o : \Spec\Z((w)) \to E_0
$$
is integrally defined and is non-zero mod $p$ for all prime numbers $p$. It will
be used as a base point for both $E_0'$ and also for $E_\tate$ via the inclusion
$E_0\to E_\tate$. Note that $E_0'$ is isomorphic to $\Pminus$.

\begin{theorem}[\cite{hain:nodal}]
The Lie algebra $\p(E'_\tate,\ww_o)$ is a pro-object of $\MTM$. The inclusion
$E_0 \to E_\tate$ induces a morphism
$$
\p(\Pminus,\ww_o) \cong \p(E'_0,\ww_o) \to \p(E'_\tate,\ww_o).
$$
\end{theorem}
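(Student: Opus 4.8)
The plan is to build the three compatible realizations of $\p(E'_\tate,\ww_o)$ directly, check that each is ``of mixed Tate type unramified over $\Z$'', and then obtain the map in the statement from functoriality of unipotent fundamental groups together with the full faithfulness of $\MTM\to\MHS$. First I would pin down the underlying object: since the fibre $E_\tate$ of the universal elliptic curve over the tangential base point $\tate$ is topologically a smooth elliptic curve, $E'_\tate$ is a once-punctured torus and $\pi_1^\top(E'_\tate,\ww_o)$ is free of rank $2$, so $\p(E'_\tate,\ww_o)$ is noncanonically the completion of the free Lie algebra $\L\big(H_1(E_\tate)\big)$, with weight filtration $W_\dot$ the suitably indexed lower central series. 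By the Summary of Section~\ref{sec:summary}, $H_1(E_\tate)=\dual H$ is the object $\Q(0)\oplus\Q(1)$ of $\MTM$, and therefore $\Gr^W_\dot\p(E'_\tate,\ww_o)\cong\L\big(H_1(E_\tate)\big)$ is manifestly a pro-object of $\MTM$. Thus the real content of the first assertion is that the iterated extension $\p(E'_\tate,\ww_o)$ itself, not merely its associated graded, carries a compatible system of realizations defining a pro-object of $\MTM$.

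Next I would produce the realizations. For the Hodge realization I would invoke the theory of limit mixed Hodge structures on fundamental groups (\cite{hain:modular,hain:db_coho}): the fibrewise unipotent fundamental group of the punctured universal elliptic curve over the punctured $q$-disc is an admissible variation of mixed Hodge structure, and $\p(E'_\tate,\ww_o)$ with its mixed Hodge structure is its limit at the cusp; since every weight-graded quotient is Tate, this limit mixed Hodge structure is of mixed Tate type. For the $\ell$-adic realization, the tangential section $\vv_o$ determines a $G_\Q$-action on $\pi_1^\un(E'_\tate,\ww_o)\otimes\Ql$; it is unramified at every prime $p\neq\ell$ by the Deligne--Goncharov mechanism, because the Tate curve $\E_\Tate\to\Spec\Z[[q]]$, the coordinate $w$ on $\Ebar_0$, and the base point $\vv_o=\tate+\ww_o$ are all defined over $\Z$ with non-zero reduction at every prime (Section~\ref{sec:tangent}), so the relevant arithmetic fundamental group is a quotient of one unramified over $\Z$; and it is crystalline at $\ell$ by the theory of crystalline (and weighted) completion applied to the semistable Tate curve, together with Nakamura's explicit description \cite{nakamura} of the Galois action on $\pi_1(E'_\tate)$. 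The $\Q$-de~Rham realization is the canonical $\Q$-structure supplied by the elliptic KZB connection (\cite{kzb,levin-racinet,hain:kzb}), and the Betti--de~Rham and Betti--$\ell$-adic comparisons are the standard iterated-integral ones; their mutual compatibility and compatibility with $W_\dot$ is the usual functoriality of comparison isomorphisms for unipotent fundamental groups. Assembling these gives the first assertion.

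Then I would treat the map. The natural inclusion $E_0\hookrightarrow E_\tate$ of the smooth locus of the nodal fibre into the nearby fibre is defined over $\Z$, carries $\ww_o$ to $\ww_o$, and is compatible with the degeneration; restricting it away from $e_o$ gives an algebraic map $E'_0\hookrightarrow E'_\tate$ over $\Z$, hence a morphism of each of the realizations above and of the limit mixed Hodge structures and limit $\ell$-adic representations. Now $E'_0\cong\Pminus$ (with $\ww_o$ the tangential base point at the puncture $w=1$), so $\p(E'_0,\ww_o)$ is a pro-object of $\MTM$ by \cite{deligne-goncharov}, and $\p(E'_\tate,\ww_o)$ is one by the previous paragraph. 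Since $\MTM\to\MHS$ is fully faithful, so is its pro-extension, and therefore the morphism of Hodge realizations just produced is automatically a morphism of pro-objects of $\MTM$; this proves the second assertion.

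I expect the main obstacle to be the second step: upgrading the mixed Tate structure from $\Gr^W_\dot\p(E'_\tate,\ww_o)$ to the filtered Lie algebra $\p(E'_\tate,\ww_o)$, and within it the $\ell$-adic part --- reconciling the ``unramified outside $\ell$'' argument with the ``crystalline at $\ell$'' property, which is precisely where the semistable reduction of the Tate curve and the crystalline completion machinery (\cite{nakamura} and its refinements) are needed. This is the core of \cite{hain:nodal}; the Hodge side, by contrast, is by now routine, and the map in the statement is then essentially formal.
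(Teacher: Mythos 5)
First, a point of reference: the paper gives no proof of this theorem at all --- it is quoted from \cite{hain:nodal} (Brown--Hain, ``in preparation''), so there is nothing internal to compare your argument against. Judged on its own terms, your proposal correctly identifies the realization-level facts (the limit MHS on $\p(E'_\tate,\ww_o)$ from \cite[\S18]{hain:kzb}, Nakamura's description of the $G_\Q$-action, the KZB $\Q$-de~Rham structure, and the fact that $\Gr^W_\dot\p(E'_\tate,\ww_o)\cong\L(\dual{H})$ is built from Tate objects), and your treatment of the second assertion is sound: once both Lie algebras are known to be pro-objects of $\MTM$, the full faithfulness of $\MTM\to\MHS$ does promote the map of Hodge realizations induced by $E_0'\hookrightarrow E_\tate'$ to a morphism in pro-$\MTM$.

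The gap is in the first assertion, and it is conceptual rather than technical. You propose to prove that $\p(E'_\tate,\ww_o)$ is a pro-object of $\MTM$ by assembling a compatible system of Betti, de~Rham, Hodge and $\ell$-adic realizations of mixed Tate type, unramified over $\Z$ and crystalline at $\ell$. But $\MTM$ here is the Deligne--Goncharov--Levine category, constructed inside Voevodsky's triangulated category; membership in it is strictly stronger than admitting such a system of realizations. Full faithfulness of $\MTM\to\MHS$ only identifies morphisms between objects already known to lie in the essential image --- it does not let you recognize which mixed Hodge--Tate structures lie in that image, and there is no period/realization criterion available that would. For $\Pminus$ Deligne and Goncharov get around this by constructing the motivic $\pi_1^\un$ directly (via the cosimplicial bar construction on an actual scheme over $\Z$ satisfying their criteria). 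The difficulty for $E'_\tate$ is that it is not a scheme but the fibre over a tangential base point of a degenerating family, so one must construct a \emph{limit motivic} fundamental group --- e.g.\ via motivic nearby cycles for $\E_\Tate\to\Spec\Z[[q]]$, or by exhibiting $\p(E'_\tate,\ww_o)$ as an explicit iterated extension in $\MTM$ built from $\p(\Pminus,\ww_o)$, the monodromy operator $N\in\Q(1)$, and the injection (\ref{eqn:inclusion}). That construction is the actual content of \cite{hain:nodal}; your sketch locates the difficulty instead in reconciling ``unramified outside $\ell$'' with ``crystalline at $\ell$'', which is a genuine but subsidiary issue that already lives at the level of realizations.
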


\begin{corollary}
\label{cor:elliptic_polylog}
There is an object $\bp$ of $\MEM_2$ whose fiber over $\vv_o$ is
$\p(E'_\tate,\ww_o)$.
\end{corollary}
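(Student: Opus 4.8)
The plan is to take for $\bp$ the pro-local system over $\M_{1,2}$ introduced in the example above---whose fiber over $[E,x]$ is the Lie algebra $\p(E',x)$ of the unipotent completion of $\pi_1(E',x)$---and to check that the natural set of realizations carried by it satisfies the axioms of Section~\ref{sec:mem} defining $\MEM_2$, with fiber over the tangential base point $\vv_o=\tate+\ww_o$ equal to the pro-object $\p(E'_\tate,\ww_o)$ of $\MTM$. The preceding theorem is the only non-formal ingredient: it is exactly what identifies this fiber and asserts that it lies in $\MTM$. Everything else consists of recording the standard structures attached to a family of unipotent fundamental groups and checking their mutual compatibility.

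First I would list the realizations of $\bp$. The Betti realization $\bp^B$ is the pro-local system over $\M_{1,2}^\an$ whose monodromy $\pi_1^\top(\M_{1,2}^\an,\vv_o)\to\Aut\bp^B$ is the action produced by the split homotopy exact sequence of the family of once-punctured elliptic curves over $\M_{1,2}$ (with fiber $E'$ over $(E,x)$, the splitting being the tautological section), applied to unipotent completions; this gives the monodromy representation and the filtered local system $(\bp^B,W_\dot)$. The $\Q$-de~Rham realization is the bifiltered vector bundle over $\Mbar_{1,2/\Q}$ underlying the relative unipotent de~Rham fundamental group of that family, equipped with the elliptic KZB connection of \cite{kzb,levin-racinet,hain:kzb}: it is defined over $\Q$, has regular singularities with nilpotent residues along the boundary divisor, its Hodge filtration satisfies Griffiths transversality, and its fiber over $e_o$ is the $\Q$-de~Rham realization of $\p(E'_\tate,\ww_o)$ with its weight and Hodge filtrations; comparing the KZB connection with its monodromy yields the comparison with Deligne's canonical extension of $\bp^B\otimes\O_{\M_{1,2}^\an}$ demanded by the definition. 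The Hodge realization is the admissible pro-variation of mixed Hodge structure over $\M_{1,2}^\an$ furnished by the mixed Hodge theory of relative unipotent fundamental groups (cf.\ \cite{hain:kzb}), and the $\ell$-adic realization is the action of $\pi_1(\M_{1,2/\Qbar},\vv_o)$ on the $\ell$-adic unipotent fundamental group of the fiber coming from the \'etale homotopy exact sequence, which is $G_\Q$-equivariant by Nakamura's analysis \cite{nakamura} of the $G_\Q$-action on $\pi_1^\un(E'_\tate,\ww_o)$. For the weight filtration $W_\dot$ of $\bp$ I would take the one induced by the lower central series, equivalently the one inherited from the weight filtration of $\H$; since $\pi_1(E',x)$ is free, $\bp$ is freely (pro)generated as a pronilpotent Lie algebra by its abelianization $\bp/[\bp,\bp]$, a rank-two local system that is a Tate twist of $\H$, so $\Gr^W_\dot\bp$ is the completed free Lie algebra on it, and by the representation theory of $\SL_2$ each of its bracket-degree pieces decomposes into a sum of copies of the simple objects $S^m\H(r)$ of the appropriate weight. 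This last point is the axiom on weight graded quotients, while the pro-nilpotence of $\bp$ gives the axiom that $W_\dot$ is exhaustive and separated.

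Next I would identify the fiber over $\vv_o$ and match the two weight filtrations. Specializing the realizations of $\bp$ at $\vv_o$---the limit mixed Hodge structure on the Hodge side, the $\ell$-adic limit on the \'etale side, the fiber over $e_o$ on the de~Rham side---produces a compatible system which, by the preceding theorem, underlies the pro-object $\p(E'_\tate,\ww_o)$ of $\MTM$; this is the axiom that the fiber over $\vv_o$ lies in $\MTM$, and the morphism $\p(\Pminus,\ww_o)\cong\p(E'_0,\ww_o)\to\p(E'_\tate,\ww_o)$ of that theorem pins down the mixed Tate structure at the cusp. It then remains to see that the $\MTM$-weight filtration $M_\dot$ of this fiber is the relative weight filtration of the pair $(W_\dot,\log\sigma_o)$, where $\log\sigma_o$ is the monodromy of $\bp^B$ about the nodal cubic. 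Admissibility of the Hodge realization guarantees that this relative weight filtration exists, and the general theory of limit mixed Hodge structures identifies it with the weight filtration of the limit MHS, i.e.\ with $M_\dot$; concretely one verifies this on the abelianization using the computation of Section~\ref{sec:localsys_H} that $\log\sigma_o$ acts there as the residue $N=\aa\,\partial/\partial\bw$, and then propagates it along the bracket by functoriality of relative weight filtrations. The $\ell$-adic conditions at $\vv_o$---unramified away from $\ell$, crystalline at $\ell$---follow from the corresponding statements for $\p(E'_\tate,\ww_o)$ in $\MTM$ through the comparison isomorphisms, and the $G_\Q$-equivariance over $\M_{1,2}$ has already been recorded.

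Granting the preceding theorem, which I am free to assume, the step that requires the most care is the construction of the $\Q$-de~Rham realization over $\Mbar_{1,2/\Q}$ as a bifiltered bundle carrying a $\Q$-rational connection with the stated singularity behaviour, for which the elliptic KZB connection is precisely the needed object. What is then left---verifying that the Betti, $\Q$-de~Rham, Hodge and $\ell$-adic realizations are glued by the expected comparison isomorphisms and that these restrict at $\vv_o$ to the comparison isomorphisms of the mixed Tate motive $\p(E'_\tate,\ww_o)$---is essentially bookkeeping; I expect the main nuisance to be keeping the normalizations consistent between the analytic KZB picture, the $\Q$-de~Rham picture, and the $\ell$-adic picture of \cite{nakamura}, rather than any conceptual obstacle.
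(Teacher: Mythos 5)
Your proposal is correct and is essentially the argument the paper intends: the corollary is stated without proof because, granting the quoted theorem that $\p(E'_\tate,\ww_o)$ is a pro-object of $\MTM$, the remaining content is exactly the standard package of compatible realizations of the relative unipotent fundamental group of the universal punctured elliptic curve (Betti monodromy from the homotopy exact sequence, the $\Q$-de~Rham KZB bundle over $\Mbar_{1,2/\Q}$, the admissible Hodge pro-variation, Nakamura's $\ell$-adic picture) together with the observation that $\Gr^W_\dot\bp\cong\L(\H(1))$ forces the weight graded quotients to be sums of $S^m\H(r)$'s. Your verification of the axioms, including the relative-weight-filtration point at $\vv_o$, matches what the paper leaves implicit.
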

\end{example}

\subsection{Variants: higher levels and more decorations}

When $r+n>0$, one can make a similar definition of mixed elliptic motives over the moduli stack $\M_{1,n+\vec{r}/\O_N}[N]$ of decorated genus 1 curves with a level $N\ge 1$ structure, where $\O_N = \Z[\bmu_N,1/N]$. One first has to choose an integrally defined tangent vector $\vv_o$ at a cusp with everywhere good reduction. (That is, a tangent vector defined over $\O_N$ that is non-zero mod $\wp$ for all prime ideals $\wp$ of $\O_N$.) When $N>1$ or $r+n>2$, there are several possible choices of tangent vector $\vv_0$. All give equivalent categories of mixed elliptic motives. This will be proved in \cite{hain:nodal}. The coordinate ring of the tannakian fundamental group of level $N$ universal mixed elliptic motives should be an ind-object of $\MTM(\O_N)$ and there might be an interesting connection to Deligne's work \cite{deligne:N}.

\subsection{A ``theorem of the fixed part''}
We conclude this section by proving a mild generalization of the Theorem of the Fixed Part. For this, we need the following result, which follows from GAGA.

\begin{lemma}
\label{lem:gaga}
Suppose that $k$ is a subfield of $\C$ and that $X$ is a smooth variety defined
over $k$ with an action by a $k$-algebraic group $G$. If $\cV$ is a $G$-
invariant vector bundle over $X$ with a rational, $G$-invariant connection
$\nabla : \cV \to \cV \otimes \Omega^1_X\otimes_{\O_X}k(X)$, then for all $P\in
X(k)$ the square
$$
\xymatrix{
H^0(X,(\cV,\nabla))^G \ar[r]  \ar[d] & V_P \ar[d] \cr
H^0(X^\an,(\cV^\an,\nabla))^G \ar[r] & V_P\otimes_k \C
}
$$
is cartesian. Here $V_P$ denotes the fiber of $\cV$ over $P$. \qed
\end{lemma}

We also need the stack version. When the connection is trivial on $G$-orbits, there is an isomorphism
$$
H^0(X,(\cV,\nabla))^G  \cong H^0(G\bbs X,(\cV,\nabla)).
$$
This implies that the square
$$
\xymatrix{
H^0(G\bbs X,(\cV,\nabla)) \ar[r]  \ar[d] & V_P \ar[d] \cr
H^0(G\bbs X^\an,(\cV^\an,\nabla))^G \ar[r] & V_P\otimes_k \C
}
$$
is also cartesian.

The next result is a version of the Theorem of the Fixed Part for universal mixed elliptic motives.

\begin{proposition}
\label{prop:fixed_part}
For all objects $\V$ of $\MEM_\ast$ there is an object $H^0(\M_{1,\ast},\V)$ of $\MTM$ whose Hodge realization is the canonical mixed Hodge structure on the invariants $H^0(\M_{1,\ast}^\an,\V^B)$ of $\V^B$. It is a sub object of the fiber $V$ of $\V$ over $\vv_o$.
\end{proposition}

\begin{proof}
The Theorem of the Fixed Part implies that $H^0(\M_{1,\ast},\V^B)$ has a natural MHS and that this is a sub-MHS of the Hodge realization of $V$ with $M_\dot=W_\dot$. Since the Hodge realization functor on $\MTM$ is fully faithful, $H^0(\M_{1,\ast},\V^B)$ is the Hodge realization of an object of $\MTM$. Denote it by $H^0(\M_{1,\ast},\V)$. Compatibility with the $\Q$-de Rham realization follows from Lemma~\ref{lem:gaga}.
\end{proof}

Lemma~\ref{lem:gaga} and Proposition~\ref{prop:fixed_part} also imply that the geometrically constant object of $\MEM_\ast$ that restricts to $H^0(\M_{1,\ast},\V)$ over $\vv_o$ is a sub-MEM of $\V$.

\begin{corollary}
\label{cor:purity}
Every $W$-pure object of $\MEM_\ast$ is semi-simple.
\end{corollary}

\begin{proof}
By standard arguments, it suffices to show that every extension
$$
0 \to \V \to \bE \to \Q(0) \to 0
$$
in $\MEM_\ast$, where $\bE$ is $W$-pure of weight 0, splits. The definition of universal mixed elliptic motives implies that, since $\bE$ is $W$-pure, as an $\SL_2(\Z)$-module, $E^B$ is isomorphic to a direct sum of copies of symmetric powers of $H^B$. It follows that
$$
H^0(\M_{1,\ast},\bE^B) \to H^0(M_{1,\ast},\Q)
$$
is a surjective morphism of objects of $\MTM$ of $W$-weight 0. Since these are trivial $\SL_2(\Z)$-modules, the two weight filtrations $M$ and $W$ are equal, so they are both also $M$-pure of weight 0. The morphism therefore splits. The choice of a splitting of $E \to \Q(0)$ in $\MTM$ induces a splitting in $\MEM_\ast$. 
\end{proof}

\section{Tannakian Considerations}
\label{sec:tannaka}

The category $\MEM_\ast$, $\ast \in \{1,\uu,2\}$, is a rigid abelian tensor
category over $\Q$. The functor that takes an object of $\MEM_\ast$ to its fiber
$(V,M_\dot)$ over $\vv_o$ defines a functor $\vv_o^\ast : \MEM_\ast \to \MTM$.
It is faithful, exact and preserves tensor products and unit objects.
Consequently, each fiber functor $\w : \MTM \to \Vec_F$ gives rise to a fiber
functor $\w\circ \vv_o^\ast : \MEM_\ast \to \Vec_F$ that we shall also denote by
$\w$.

\begin{proposition}
The category $\MEM_\ast$ is a neutral tannakian category over $\Q$. \qed
\end{proposition}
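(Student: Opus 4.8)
The plan is to deduce the proposition from the recognition theorem for neutral tannakian categories of Deligne and Milne: a $\Q$-linear abelian rigid tensor category $\cC$ with $\End_\cC(\mathbf{1})$ a field $k$ is a neutral tannakian category over $k$ as soon as it admits an exact, faithful, $k$-linear tensor functor to $\Vec_k$. By the remarks preceding the statement, $\MEM_\ast$ is already a $\Q$-linear abelian rigid tensor category, so it remains only to verify that $\End_{\MEM_\ast}(\mathbf{1}) = \Q$ and that a $\Q$-valued fibre functor exists.

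First I would exhibit the fibre functor. The functor $\vv_o^\ast : \MEM_\ast \to \MTM$ is exact, faithful, $\Q$-linear and compatible with $\otimes$ and unit objects; composing it with the Betti realization $\w^B : \MTM \to \Vec_\Q$, which is itself an exact faithful $\Q$-linear tensor functor since $\MTM$ is neutral tannakian, yields $\w^B\circ \vv_o^\ast : \MEM_\ast \to \Vec_\Q$ with the same properties. (One could equally use $\w^\DR$; indeed $\w^\DR\circ\vv_o^\ast$ is the fibre functor that is singled out later in the paper.) This is precisely the functor called $\w$ in the excerpt, so a $\Q$-valued fibre functor is in hand.

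Next I would check $\End_{\MEM_\ast}(\mathbf{1}) = \Q$. The unit object $\mathbf{1}$ of $\MEM_\ast$ is the geometrically constant motive with underlying mixed Tate motive $\Q(0)$ and trivial monodromy, so $\vv_o^\ast(\mathbf{1}) = \Q(0)$. Faithfulness of $\vv_o^\ast$ gives an injection $\End_{\MEM_\ast}(\mathbf{1}) \hookrightarrow \End_{\MTM}(\Q(0)) = \Q$, while $\Q$-linearity of $\MEM_\ast$ gives the reverse inclusion; hence equality. With both hypotheses verified, the recognition theorem applies to $\big(\MEM_\ast,\ \w^B\circ\vv_o^\ast\big)$ and shows it is a neutral tannakian category over $\Q$, with tannakian fundamental group $\pi_1(\MEM_\ast,\w) = \underline{\Aut}^\otimes(\w)$ an affine $\Q$-group scheme and an equivalence $\MEM_\ast \simeq \Rep\big(\pi_1(\MEM_\ast,\w)\big)$.

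The only genuine content has been absorbed into the facts cited from Section~\ref{sec:mem}: that $\MEM_\ast$ is abelian, which requires that the kernel and cokernel of a morphism of universal mixed elliptic motives, formed realization by realization, again satisfies all the axioms of the definition — in particular that the weight-graded-quotient condition passes to sub- and quotient objects; and that $\MEM_\ast$ is rigid and closed under $\otimes$, for which the Clebsch--Gordan decomposition $S^a\H\otimes S^b\H \cong \bigoplus_{j=0}^{\min(a,b)} S^{a+b-2j}\H(-j)$ keeps the graded-quotient condition stable under tensor product, and the self-duality $(S^m\H(r))^{\vee} \cong S^m\H(m-r)$ of the simple objects supplies internal Hom's. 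Granting these structural inputs, which is where all the real work lies, the argument above is purely formal and I expect no further obstacle.
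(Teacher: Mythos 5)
Your proposal is correct and follows exactly the route the paper intends: the proposition is stated with no written proof precisely because it is the Deligne--Milne recognition theorem applied to the rigid abelian $\Q$-linear tensor structure and the fibre functor $\w\circ\vv_o^\ast$ described in the preceding paragraph. Your additional check that $\End_{\MEM_\ast}(\mathbf{1})=\Q$ via faithfulness of $\vv_o^\ast$, and your honest acknowledgment that the real content lies in the structural facts (abelianness, rigidity, stability of the weight-graded condition under $\otimes$ and duals) taken from the definition, simply make explicit what the paper leaves implicit.
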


The standard fiber functors for both $\MTM$ and $\MEM_\ast$ are:
$$
\w^B : \MEM_\ast \to \Vec_\Q,\
\w^\DR : \MEM_\ast \to \Vec_\Q,\text{ and }
\w_\ell : \MEM_\ast\otimes\Ql \to \Vec_\Ql.
$$
Other useful fiber functors include $\Gr^W_\dot\w^B$, $\Gr^W_\dot\w^\DR$ and
$\Gr^W_\dot\w_\ell$.

Tannaka duality implies that, when $\w = \w^B$ and $\w = \w^\DR$, the category
$\MEM_\ast$ is equivalent to the category $\Rep(\pi_1(\MEM_\ast,\w))$ of finite
dimensional representations of $\pi_1(\MEM_\ast,\w)$.

The morphisms
$$
\xymatrix{
\M_{1,\uu} \ar[r]\ar[dr] & \M_{1,1}\ar[d] & \M_{1,2} \ar[l]\ar[dl] \cr
& \Spec \Z
}
$$
induce functors
$$
\xymatrix{
\MEM_\uu & \MEM_1 \ar[l] \ar[r] & \MEM_2 \cr
& \MTM(\Z) \ar[ul]\ar[u]\ar[ur]
}
$$
In Section~\ref{sec:restn} we show that there is a ``restriction functor''
$\MEM_2 \to \MEM_\uu$ and that $\MEM_1 \to\MEM_\uu$ factors through it: $\MEM_1
\to \MEM_2 \to \MEM_\uu$.

\subsection{Generalities}

The functor $c: \MTM \to \MEM_\ast$ that takes a mixed Tate motive
$(V,M_\dot)$ to the geometrically constant mixed elliptic motive with fiber
$(V,M_\dot)$ over $\vv_o$ is fully faithful and induces a surjective
homomorphism
$$
\pi_1(\MEM_\ast,\w) \to \pi_1(\MTM,\w)
$$
for each fiber functor $\w : \MTM \to \Vec_F$. It is split by the homomorphism
$\vv_o^\ast$.

Set \label{def:pigeom}
$$
\pi_1^\geom(\MEM_\ast,\w) = \ker\{\pi_1(\MEM_\ast,\w) \to \pi_1(\MTM,\w)\}.
$$
One thus has a split extension
$$
1 \to \pi_1^\geom(\MEM_\ast,\w) \to \pi_1(\MEM_\ast,\w)
\to \pi_1(\MTM,\w) \to 1
$$
which is split by $\vv_o^\ast$.

One also has the group $\pi_1(\MEM_\ast,\vv_o^\ast)$. It is an affine group
scheme in $\MTM$. More precisely:

\begin{proposition}
The coordinate ring of $\pi_1(\MEM_\ast,\vv_o^\ast)$ is a Hopf algebra in the
category of ind-objects of $\MTM$. The Betti, de~Rham and $\ell$-adic
realizations of $\O\big(\pi_1(\MEM_\ast,\vv_o^\ast)\big)$ are the coordinate
rings of
$$
\pi_1^\geom(\MEM_\ast,\w^B),\ \pi_1^\geom(\MEM_\ast,\w^\DR)
\text{ and }
\pi_1^\geom(\MEM_\ast\otimes\Ql,\w_\ell),
$$
respectively. \qed
\end{proposition}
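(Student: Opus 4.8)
This statement is essentially formal, a consequence of the relative tannakian formalism; I would organize the argument around the following steps. First I would record the relevant properties of $\vv_o^\ast\colon\MEM_\ast\to\MTM$: it is $\Q$-linear, exact, faithful, compatible with tensor products and unit objects, and it admits a tensor section, namely the functor $c\colon\MTM\to\MEM_\ast$ of the previous subsection, which satisfies $\vv_o^\ast\circ c=\id_{\MTM}$; in particular $\vv_o^\ast$ is essentially surjective. These are exactly the hypotheses under which one forms the internal automorphism group scheme of $\vv_o^\ast$ relative to the tannakian base $\MTM$.

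Next I would observe that $\pi_1(\MEM_\ast,\vv_o^\ast)=\underline{\Aut}^\otimes(\vv_o^\ast)$, viewed as a group-valued functor on the commutative algebra objects of $\mathrm{Ind}\,\MTM$, is representable by an affine group scheme in $\MTM$ (this is already built into the definition recalled above), and its group law makes the corepresenting object $\O(\pi_1(\MEM_\ast,\vv_o^\ast))$ a commutative Hopf algebra in $\mathrm{Ind}\,\MTM$ --- which is the first assertion. It is useful to have the concrete description obtained by fixing the fiber functor $\w^\DR$: then $\MTM\simeq\Rep^\fte(\pi_1(\MTM,\w^\DR))$, the category $\mathrm{Ind}\,\MTM$ is the category of $\pi_1(\MTM,\w^\DR)$-representations with locally finite action, and under this equivalence $\pi_1(\MEM_\ast,\vv_o^\ast)$ is the affine $\Q$-group $\pi_1^\geom(\MEM_\ast,\w^\DR)$ equipped with the conjugation action of $\pi_1(\MTM,\w^\DR)$ coming from the splitting $\vv_o^\ast$, the ind-structure on $\O(\pi_1^\geom(\MEM_\ast,\w^\DR))$ being inherited from the locally finite action of $\pi_1(\MEM_\ast,\w^\DR)$ on $\O(\pi_1(\MEM_\ast,\w^\DR))$.

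Finally, for the realization statement I would use that each of $\w^B$, $\w^\DR$ (resp.\ $\w_\ell$) is an exact faithful tensor functor out of $\MTM$ (resp.\ $\MTM\otimes\Ql$), hence extends to an exact tensor functor on the ind-category carrying commutative Hopf algebras to commutative Hopf algebras, so that $\w^B(\O(\pi_1(\MEM_\ast,\vv_o^\ast)))$, and likewise for $\w^\DR$ and $\w_\ell$, is the coordinate ring of an affine group over $\Q$ (resp.\ $\Ql$). To identify these groups I would appeal to the compatibility of the internal $\underline{\Aut}^\otimes$ with fiber functors: for any fiber functor $\eta$ of $\MTM$ there is a natural isomorphism of affine groups carrying $\pi_1(\MTM,\eta)$-actions
$$
\eta\big(\pi_1(\MEM_\ast,\vv_o^\ast)\big)\;\cong\;\ker\big(\pi_1(\MEM_\ast,\eta\circ\vv_o^\ast)\to\pi_1(\MTM,\eta)\big)=\pi_1^\geom(\MEM_\ast,\eta\circ\vv_o^\ast),
$$
compatible with the splitting $\pi_1(\MEM_\ast,\eta\circ\vv_o^\ast)\cong\pi_1(\MTM,\eta)\ltimes\eta(\pi_1(\MEM_\ast,\vv_o^\ast))$ supplied by the tensor section $c$. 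Taking $\eta=\w^B$, $\eta=\w^\DR$ and, over $\Ql$, $\eta=\w_\ell$ yields the three asserted identifications, and the comparison isomorphisms between realizations of objects of $\MTM$ are respected automatically, being induced by $\otimes$-isomorphisms of the corresponding fiber functors. The only genuine (though standard) work is this last compatibility: I would need to unwind the universal property of $\underline{\Aut}^\otimes(\vv_o^\ast)$ and check that an $R$-valued point of $\eta(\pi_1(\MEM_\ast,\vv_o^\ast))$ is the same datum as a $\otimes$-natural automorphism of the functor $X\mapsto\eta(\vv_o^\ast X)\otimes_\Q R$ that restricts to the identity along $c$ --- equivalently, that maps to $1$ in $\Aut^\otimes(\eta\otimes_\Q R)$ --- and this is where I expect the main obstacle to lie.
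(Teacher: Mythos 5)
Your argument is correct and is exactly the standard tannakian formalism the paper is implicitly invoking: the authors state this proposition without proof (note the \qed in the statement itself), treating it as a formal consequence of the fact that $\vv_o^\ast$ is an exact faithful tensor functor split by $c$, so that $\underline{\Aut}^\otimes(\vv_o^\ast)$ is an affine group scheme in $\MTM$ whose image under any fiber functor $\eta$ of $\MTM$ is $\ker\{\pi_1(\MEM_\ast,\eta)\to\pi_1(\MTM,\eta)\}=\pi_1^\geom(\MEM_\ast,\eta)$. The final compatibility you flag as the "main obstacle" is indeed the only point requiring unwinding, and your identification of an $R$-point of $\eta(\pi_1(\MEM_\ast,\vv_o^\ast))$ with a tensor automorphism of $\eta\circ\vv_o^\ast\otimes R$ trivial along $c$ resolves it correctly.
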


A universal mixed elliptic motive $V$ is {\em semi-simple} if it is a
direct sum of simple universal elliptic motives:\label{def:MEMss}
$$
V = \bigoplus_{j=1}^N S^{m_j}\H(r_j).
$$
The category $\MEM_\ast^\ss$ of semi-simple universal elliptic motives is the
full tannakian subcategory of $\MEM_\ast$ generated by the $S^m\H(r)$. For each
of the standard fiber functors $\w$, we have
$$
\pi_1(\MEM_\ast^\ss,\w) \cong \GL(H_\w)
$$
where $V_\w$ denotes $\w(V)$ for all $V\in\MEM_\ast$. The simple object $\H$
corresponds to the defining representation of $\GL(H_\w)$, and the simple object
$\Q(-1)$ is $\Lambda^2 \H$, and thus corresponds to the 1-dimensional
representation $\det : \GL(H_\w) \to \Gm$. Combining these, we see that $S^m
\H(r)$ corresponds to the $m$th symmetric power of the defining representation,
twisted by $\det^{\otimes(-r)}$. Note that if $\V$ is a $W$-pure object $V$ of
$\MEM_\ast$ of weight $m$, then the scalar $\lambda\id_H$ matrix in $\GL(H_\w)$
acts on $V_\w$ as $\lambda^m\id_V$.

\begin{proposition}
\label{prop:W-splitting}
For each of the standard fiber functors, the surjection
\begin{equation}
\label{eqn:surj}
\pi_1(\MEM_\ast,\w) \to \GL(H_\w)
\end{equation}
is split and has prounipotent kernel.
\end{proposition}

\begin{proof}
Corollary~\ref{cor:purity} implies that the $W$-graded quotients $\Gr^W_m \V$ of an MEM $\V$ are semi-simple. That is, we have a functor $\Gr^W_\dot :\MEM_\ast \to \MEM_\ast^\ss$. It is exact and thus induces a homomorphism
\begin{equation}
\label{eqn:splitting}
\GL(H_\w) \to \pi_1(\MEM_\ast,\Gr^W_\dot\w).
\end{equation}
Each choice of an $F$-rational point ($F=\Q,\Ql$) of the ``path torsor'' $\Isom^\otimes(\w,\Gr^W_\dot\w)$ gives an isomorphism $\pi_1(\MEM_\ast,\Gr^W_\dot\w) \to \pi_1(\MEM_\ast,\w)$. Composing it with (\ref{eqn:splitting}) gives a splitting of (\ref{eqn:surj}).

The fact that the kernel is prounipotent is a consequence of the general fact that if $\cC$ is a neutral tannakian category whose category of semi-simple objects is $\cC^\ss$, then the kernel of the homomorphism $\pi_1(\cC,\w) \to \pi_1(\cC^\ss,\w)$ induced by the inclusion $\cC^\ss \to \cC$ has unipotent kernel.
\end{proof}

\begin{remark}
Later we will need to know that the functor $\V \mapsto \Gr^W_\dot\Gr^M_\dot V$
that takes a mixed elliptic motive $\V$ to the associated bigraded of its fiber
over $\vv_o$ is exact. This is proved in Appendix~\ref{sec:splittings}.
\end{remark}

\subsection{The restriction functor $\MEM_2 \to \MEM_\uu$}
\label{sec:restn}

Set $\cN = \cL^{-1}$. Recall that $\M_{1,2} = \E'$.
Since $\cN^\an$ is a covering space of $\E^\an$, there are neighbourhoods $U$
of the zero section of $\cN^\an$ and $V$ of the zero section of $\E^\an$ such
that $V$ gets mapped biholomorphically onto $U$ by the projection $\cN \to
\E$ and such that $U$ is a deformation retract of $\cN$. Denote the
complement of the zero section in $U$ and $V$ by $U'$ and $V'$. Then the
sequence of maps
$$
\xymatrix{
\M_{1,\uu}^\an & \ar@{_{(}->}[l]_(0.4){\simeq} U' \ar[r]^\approx &
V' \ar@{^{(}->}[r] & \M_{1,2}^\an		
}
$$
induces a homomorphism $\pi_1^\top(\M_{1,\uu},\vv_o)\to
\pi_1^\top(\M_{1,2},\vv_o)$. This induces a $G_\Q$-equivariant homomorphism
$\pi_1(\M_{1,\uu/\Qbar},\vv_o) \to \pi_1(\M_{1,2/\Qbar},\vv_o)$.

\begin{proposition}
\label{prop:restriction}
There is a natural restriction functor $\MEM_2 \to \MEM_\uu$ that is
the identity on the fiber over the base point $\vv_o$ and takes the
object $\V$ with monodromy representation $\rho_V : \pi_1^\top(\M_{1,2},\vv_o)
\to \Aut V^B$ to an object of $\MEM_\uu$ with monodromy representation the
composite
$$
\pi_1^\top(\M_{1,\uu},\vv_o) \to \pi_1^\top(\M_{1,2},\vv_o) \to \Aut V^B.
$$
\end{proposition}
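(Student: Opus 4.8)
The plan is to transport each of the realizations of an object $\V$ of $\MEM_2$ along the maps constructed above and then to propagate the axioms of Section~\ref{sec:mem} through these transports. The fiber $(V,M_\dot)$ of $\res\V$ over $\vv_o$, together with its filtration $W_\dot$, is taken to be that of $\V$; since the base point $\vv_o$ of $\M_{1,\uu}$ was defined in Paragraph~\ref{sec:tangent} as the image of the base point $\vv_o$ of $\M_{1,2}$, this is consistent with the requirement that $\res$ be the identity on fibers over $\vv_o$. The Betti realization $(\res\V^B,W_\dot)$ is the $\Q$-local system over $\M_{1,\uu}^\an$ whose monodromy is the composite $\pi_1^\top(\M_{1,\uu},\vv_o)\to\pi_1^\top(\M_{1,2},\vv_o)\xrightarrow{\rho_V}\Aut(V^B,W_\dot)$; concretely, restrict $\V^B$ to the punctured neighbourhood $V'$ of the zero section of $\E^\an$, carry it across the biholomorphism $V'\xrightarrow{\approx}U'$, and extend along the deformation retraction $U'\hookrightarrow\M_{1,\uu}^\an$. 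The $\ell$-adic realization is defined the same way using the $G_\Q$-equivariant homomorphism $\pi_1(\M_{1,\uu/\Qbar},\vv_o)\to\pi_1(\M_{1,2/\Qbar},\vv_o)$, so $G_\Q$-equivariance of $\rho_{\res\V,\ell}$ is immediate from that of $\rho_{V,\ell}$.

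The substantive point is the $\Q$-de~Rham realization. Over $\Q$, the formal group of the generalized elliptic curve $\cEbar\to\Mbar_{1,1}$ has a logarithm, which gives a canonical isomorphism between the formal completion of $\Mbar_{1,2/\Q}=\cEbar_{/\Q}$ along its zero section and the formal completion of the line bundle $\cN=\cL^{-1}=\check\cL$, i.e.\ of $\Mbar_{1,\uu/\Q}$, along its zero section $\Mbar_{1,1/\Q}$. Restricting the bifiltered bundle with connection $(\cV,W_\dot,F^\dot,\nabla)$ of $\V$ to this formal neighbourhood and transporting it across this exponential isomorphism produces a bifiltered formal bundle with log-connection near the zero section of $\Mbar_{1,\uu/\Q}$, whose residue along the zero section is the logarithm of the monodromy operator about it. This formal object then extends, uniquely, to a bifiltered algebraic bundle with connection $(\res\cV,W_\dot,F^\dot,\nabla)$ over all of $\Mbar_{1,\uu/\Q}$: on the open stack the flat bundle $\res\V^B\otimes\O_{\M_{1,\uu}^\an}$ is already determined by the monodromy $\rho_{\res\V}$, and since $\M_{1,\uu}$ deformation retracts onto a punctured neighbourhood of the zero section, it is determined by its restriction there, where the $\Q$-structure and the Hodge filtration are supplied by the transported formal object; the extension across the boundary of $\Mbar_{1,\uu/\Q}$ is then forced to be Deligne's canonical extension. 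The comparison isomorphism on the fiber over $\vv_o$ and the identification of $(\res\cV,W_\dot,\nabla)\otimes\O_{\Mbar_{1,\uu}^\an}$ with the canonical extension of $(\res\V^B,W_\dot)\otimes\O_{\M_{1,\uu}^\an}$ are inherited from those of $\V$, because every operation used is compatible with analytification and with passing to the fiber at the cusp.

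It remains to verify the two standing conditions and functoriality. $G_\Q$-equivariance was noted above. For the weight-graded condition, the pullbacks of $\H$ from $\M_{1,1}$ to $\M_{1,2}$ and to $\M_{1,\uu}$ are both the object denoted $\H$, and they are compatible with the restriction $\M_{1,\uu}\to\M_{1,2}$ in every realization; since each operation used to build $\res\V$ is exact and commutes with $\Gr^W_\dot$, the graded quotient $\Gr^W_m\res\V$ is, realization by realization, the $\res$-image of $\Gr^W_m\V$, hence a direct sum of copies of $S^{m+2r}\H(r)$ because $\Gr^W_m\V$ is. Admissibility of the resulting variation of mixed Hodge structure over $\M_{1,\uu}^\an$ follows from that of $\V$, admissibility being a local condition preserved under the exponential identification near the zero section and under restriction to a Zariski open. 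A morphism in $\MEM_2$ transports realization by realization to a morphism compatible with all the structures, so $\res$ is a functor, and it is the identity on fibers over $\vv_o$ by construction. The main obstacle is the de~Rham step of the previous paragraph: one must check that the formal bundle produced near the zero section genuinely extends to an algebraic bundle with connection over the whole partial compactification $\Mbar_{1,\uu/\Q}$ and that this extension is canonical, so that $\V\mapsto\res\V$ is well defined and functorial; everything else is a routine transfer of the structure of $\MEM_2$ through functorial constructions.
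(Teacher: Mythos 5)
Your reduction of the problem to the $\Q$-de~Rham realization is exactly right, and your handling of the Betti and $\ell$-adic realizations matches the paper's. But your de~Rham construction has a genuine gap, one you yourself flag as ``the main obstacle'' without closing it. Transporting $(\cV,W_\dot,F^\dot,\nabla)$ across the formal-group logarithm produces an object only on the \emph{formal} completion of $\Mbar_{1,\uu/\Q}=\check\cL$ along its zero section. From there you claim the object ``extends, uniquely'' to all of $\Mbar_{1,\uu/\Q}$, with the extension ``determined'' because $\M_{1,\uu}^\an$ deformation retracts onto a punctured neighbourhood of the zero section. That argument establishes at most \emph{uniqueness}: an automorphism of a flat analytic bundle is determined by its value on one fiber, so two algebraic $\Q$-models agreeing formally at $\vv_o$ coincide. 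It does not establish \emph{existence}: a $\Q$-structure on the formal punctured neighbourhood of a boundary divisor does not algebraically propagate to the open complement (the deformation retraction is topological, not algebraic, and formal GAGA along the zero section does not produce a sheaf on $\M_{1,\uu/\Q}$). Since the existence of the algebraic bundle with connection over $\Mbar_{1,\uu/\Q}$ is precisely the content of the proposition, the proof is incomplete at its central point.

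The paper avoids this by constructing the global object directly: using the canonical splitting $\O_\E/\m_Z^2=\O_Z\oplus\cN^\ast$ it sets
$$
\Gr^\dot\cV^\DR := \big(\cV^\DR\otimes_{\O_\E}\O_\E/\m_Z^2\big)\otimes_{\O_\E/\m_Z^2}\O_\cN,
$$
an honest graded $\O_\cN$-module over all of $\cN=\check\cL$, and then pins down the connection $\nablabar$ by three properties (regular singularities along $Z$, $\Gm$-invariance, and matching residues with $\nabla$ along $Z$). The $\Gm$-invariance is the ingredient your construction is missing: it is what lets first-order (or formal) data along the zero section determine a bundle with connection on the whole $\Gm$-torsor $\M_{1,\uu}$. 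Note also that your transported formal object is not a priori $\Gm$-equivariant, since the logarithm identification is not compatible with any $\Gm$-action on $\cEbar$; to repair your argument you would need to verify equivariance of the induced connection and then invoke descent along the $\Gm$-action, at which point you have essentially reconstructed the paper's proof. Finally, the paper separately verifies, via regularized solutions of the ODE at the boundary, that the monodromy of $\nablabar$ is the asserted composite; in your setup the analogous comparison between the transported de~Rham object and the Betti local system you defined by fiat would still need to be checked once the existence gap is filled.
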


\begin{proof}
The only task is to prove that if $\V$ is an object of $\MEM_2$, then its de
Rham realization $(\cV^\DR,\nabla)$, a vector bundle with connection over
$\M_{1,2/\Q}$, pulls back to a vector bundle with connection over
$\M_{1,\uu/\Q}$ satisfying the required compatibilities with the other
realizations.

Denote the zero section of $\E_{/\Q}$ by $Z$ and its ideal sheaf in $\O_\E$ by
$\m_Z$. Then
$$
\O_\cN = \Gr^\dot\O_\E := \bigoplus_{n\ge 0} \m_Z^n/\m_Z^{n+1}.
$$

There is a natural isomorphism $\O_\E \cong \O_Z\oplus\m_Z$ that is induced by
the inclusion $Z \hookrightarrow \E$ and the projection $\E \to \M_{1,1}\cong
Z$. This induces the eigenspace decomposition
$$
\O_\E/\m_Z^2 = \O_Z \oplus \m_Z/\m_Z^2 = \O_Z \oplus \cN^\vee
$$
under the natural $\Gm$-action, where $(\blank)^\vee$ denotes dual. Note that $\O_\cN$ is the graded $\O_Z$ algebra
$$
\O_\cN =
\mathrm{Sym}_{\O_Z} \cN = \O_Z \oplus \cN^\vee \oplus S^2 \cN^\vee \oplus
\cdots
$$
The pullback of $\cV^\DR$ from $\E$ to $\cN$ is defined to be the graded
$\O_\cN$-module
$$
\Gr^\dot \cV^\DR :=
\big(\cV^\DR\otimes_{\O_\E} {\O_\E/\m_Z^2}\big) \otimes_{\O_\E/\m_Z^2}\O_\cN.
$$
The connection $\nabla$ on $\cV^\DR$ induces a connection\footnote{Examples
of how this works in practice can be found in \cite[\S13]{hain:kzb}.}
$$
\nablabar : \Gr^\dot\cV^\DR \to (\Gr^\dot\cV^\DR)\otimes \Omega^1_\cN(\log Z)
$$
which is characterized by the properties:
\begin{enumerate}

\item it has regular singular points along $Z$,

\item it is invariant under the $\Gm$-action on $\cN$,

\item the residues of $\nabla$ and $\nablabar$ along $Z$, which lie in
$H^0(Z,\End\cV^\DR|_Z)$, are equal.

\end{enumerate}

To complete the proof, we sketch a proof of the compatibility of the monodromy
representations with the Betti realizations. More precisely, we explain why the
diagram
$$
\xymatrix{
\pi_1(\M_{1,\uu}^\an,\vv) \ar[dr]_{\rho_\nablabar}\ar[r] &
\pi_1(\M_{1,2}^\an,\vv) \ar[d]^{\rho_\nabla} \cr
& \Aut V_P
}
$$
commutes, where $P\in Z$, $\vv$ is a non-zero element of $T_P \M_{1,2}^\an$ (so
$\vv \in \M_{1,\uu}^\an$), and where $\rho_\nabla$ and $\rho_\nablabar$ are the
monodromy representations of $\nabla$ and $\nablabar$. It suffices to consider
the case where $\vv = \partial/\partial \xi$. Let $\tau_o\in \h$ be a point that
lies above $P\in Z$.

By standard ODE theory (see, for example, \cite{wasow}), since $\nabla$ has
nilpotent residue at each point along $Z$, there is a polynomial
$$
p(\tau,\xi,T) \in \Aut V_P \otimes_\C \O(\h\times \C)[T]
$$
whose regularized value $p(\tau_o,1,0)$ at $\vv := \partial/\partial \xi \in T_P
\M_{1,2}$ is the identity of $V_P$ and with the property that all flat sections
of $\cV$ are of the form $vp(\tau,\xi,\log\xi)$ for some $v\in V_P$. The
monodromy of $\nabla$ about an element $\gamma \in \pi_1(\M_{1,2}^\an,\vv)$ is
obtained by taking the analytic continuation $\tilde{p}$ of $p$ along $\gamma$
and then taking its regularized value at $\vv$:
$$
\rho_\nabla(\gamma) = \tilde{p}(\tau_o,1,0).
$$
On the other hand, ODE theory implies that the flat sections of $\nablabar$ over
$\M_{1,\uu}$ are of the form $p(\tau,1,\log \xi)$. The regularized monodromy
representation of $\nablabar$ on $\gamma \in \pi_1(\M_{1,\uu}^\an,\vv)$ is
computed from $\tilde{p}$. Since we may assume that $\gamma$ lies in the
neighbourhood $U'$ of $Z$, we have the claimed compatibility of monodromy
representations.
\end{proof}

\section{Hodge Theoretic Considerations}
\label{sec:hodge}

For $F=\Q$ or $\R$, define $\MHS_F(\M_{1,\ast},\H)$\label{def:MHS_H} to be the
category of admissible variations of $F$-MHS over $\M_{1,\ast}^\an$ whose weight
graded quotients are sums of polarized variations of Hodge structure of the form
$S^m\H \otimes A$, where $A$ is a Hodge structure.\footnote{It is important to
note that $A$ is not necessarily of type $(p,p)$.} It is neutral tannakian.
Denote the forgetful functor that takes a variation to the $F$-vector space
underlying its fiber over $\vv_o$ by $\w_o$. When $F=\Q$ we will omit the
subscript.

\begin{theorem}
\label{thm:surjective}
The forgetful functor $\MEM_\ast \to \MHS(\M_{1,\ast},\H)$ that takes a
universal mixed elliptic motive $\V$ to the associated variation of MHS $\V^\an$
over $\M_{1,\ast}^\an$ is fully faithful. Consequently,
$$
\pi_1(\MHS(\M_{1,\ast},\H),\w_o) \to \pi_1(\MEM_\ast,\w^B)
$$
is surjective.
\end{theorem}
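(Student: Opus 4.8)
The plan is to reduce the theorem to two facts already in hand: the near-rigidity of an object of $\MEM_\ast$ (it is essentially determined by its Betti local system together with the mixed Tate motive over $\vv_o$ and the $\Q$-de~Rham bundle), and the full faithfulness of the Hodge realisation $\MTM\to\MHS$, applied at the fibre over $\vv_o$; the statement about $\pi_1$ then follows from the tannakian formalism. Faithfulness of the forgetful functor is immediate: if a morphism $\phi=(\phi^\MTM,\phi^\DR)$ of $\MEM_\ast$ has vanishing associated morphism of variations of MHS, then its Betti realisation is $0$, so $\phi^\MTM=0$ since Betti realisation is faithful on $\MTM$; and then $\phi^\DR$ is a flat $\O$-linear morphism of the bundles $\cV,\U$ over the connected base $\Mbar_{1,\ast/\Q}$ whose fibre over $e_o$ is the de~Rham realisation of $\phi^\MTM$, hence zero, forcing $\phi^\DR=0$.

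For fullness, let $f\colon\V^\an\to\bU^\an$ be a morphism of variations. Restricting over $\vv_o$, exactness of the limit MHS functor shows that $f|_{\vv_o}$ is a morphism of the limit mixed Hodge structures; by the definition of $\MEM_\ast$ these are the Hodge realisations of the objects $V,U$ of $\MTM$, so full faithfulness of $\MTM\to\MHS$ produces $\phi^\MTM\colon V\to U$ in $\MTM$, whose de~Rham realisation $\phi^\MTM_\DR\colon V^\DR\to U^\DR$ is the fibre over $e_o$ of the analytic flat $\O$-linear map $f_\DR\colon\cV^\an\to\U^\an$ that $f$ induces on canonical extensions. The key point is that $f_\DR$ descends to $\Q$. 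Let $L_\Q$ be the finite-dimensional $\Q$-vector space of algebraic flat $\O$-linear maps $\cV\to\U$ over $\Mbar_{1,\ast/\Q}$; Deligne's comparison of algebraic and analytic de~Rham cohomology for connections with regular singularities gives $L_\Q\otimes_\Q\C\cong L_\C$, the space of analytic flat maps, and $f_\DR\in L_\C$. Restriction to the fibre over $e_o$ is an injection $r\colon L_\Q\hookrightarrow\Hom_\Q(V^\DR,U^\DR)$ (a flat bundle map vanishing at one point of a connected base is zero), whose complexification $r_\C$ is also injective and sends $f_\DR$ to $\phi^\MTM_\DR\otimes 1\in\Hom_\Q(V^\DR,U^\DR)$. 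Since $(P\otimes_\Q\C)\cap(Q\otimes_\Q 1)=P\otimes_\Q 1$ inside $Q\otimes_\Q\C$ for any $\Q$-subspace $P$ of a $\Q$-vector space $Q$, applying this with $P=r(L_\Q)$ forces $r_\C(f_\DR)\in r(L_\Q)$, hence $f_\DR\in L_\Q$; this gives the algebraic $\phi^\DR\colon\cV\to\U$ over $\Q$, which preserves $W_\dot$ and $F^\dot$ because these subbundles are $\Q$-rational and $f$ preserves them. Finally $(\phi^\MTM,\phi^\DR)$ satisfies the compatibility axioms of a morphism of $\MEM_\ast$: axioms (i) and (iii) hold by construction, and axiom (ii) holds because $\phi^B$ is a morphism of Betti local systems and, by the $\MEM_\ast$ axioms, the $G_\Q$-action on the fibres through $\vv_o$ is the one coming from $\MTM$, for which $\phi^\MTM$ is equivariant. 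This morphism realises $f$, so the functor is full, hence fully faithful.

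To deduce surjectivity of $\pi_1(\MHS(\M_{1,\ast},\H),\w_o)\to\pi_1(\MEM_\ast,\w^B)$ I invoke the tannakian criterion (Deligne--Milne): a fully faithful exact tensor functor $F$ compatible with the fibre functors induces a faithfully flat (equivalently surjective) map on fundamental groups provided, in addition, that every subobject in the target of an object in the image of $F$ lies in the image. So let $\cW\subseteq\V^\an$ be a sub-variation with $\V\in\MEM_\ast$. Its fibre over $\vv_o$ is a sub-MHS of the limit MHS of $\V$ at $\vv_o$ (exactness of the limit functor again), hence the Hodge realisation of a subobject $V'\subseteq V$ in $\MTM$, because the image of $\MTM\to\MHS$ is closed under subobjects. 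One assembles the required $\V'\in\MEM_\ast$ by equipping $V'$ with the restriction of $W_\dot$, taking $\rho_{V'}$ to be $\rho_V$ on the $\rho_V$-stable subspace $V'^B$, and taking as de~Rham bundle the $\nabla$-stable subbundle $\cV'\subseteq\cV$ over $\Q$ determined by the $\Q$-rational, $N$-stable fibre $V'^\DR$ at $e_o$ (the same rationality mechanism as above applies, since a $\nabla$-stable subbundle with regular singularities is pinned down by that fibre, and the comparison isomorphisms and $\ell$-adic equivariance are inherited from $\V$ by restriction). The last axiom of $\MEM_\ast$ holds because each $\Gr^W_m\cW$ is a sub-variation of a sum of copies of the $S^{m+2r}\H(r)$, which are irreducible polarized variations of Hodge structure ($\SL_2(\Z)$ being Zariski dense in $\SL_2$), and a sub-variation of a semisimple one is again such a sum. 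Thus $\cW\cong\V'^{\an}$, the criterion applies, and surjectivity follows.

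The step I expect to be the main obstacle is the de~Rham rationality argument used twice above: that an analytic flat morphism, or $\nabla$-stable subbundle, of the canonical extensions whose fibre over the cusp $e_o$ is $\Q$-rational must already be defined over $\Q$. This is where the algebraic/analytic comparison for regular singular connections does real work, together with the elementary rigidity of $\Q$-subspaces under complexification; everything else is either formal or is the cited closure of $\MTM\to\MHS$ under subobjects.
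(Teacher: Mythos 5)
Your proof is correct and follows essentially the same route as the paper's: restrict a morphism of variations to the fibre over $\vv_o$, invoke the full faithfulness of $\MTM\to\MHS$ to produce the morphism in $\MTM$, and then descend the flat de~Rham morphism to $\Q$ via the algebraic/analytic comparison for regular singular connections together with the rationality of its fibre at $e_o$ --- this last step is exactly the content of the paper's Lemma~\ref{lem:gaga} (a cartesian square), which you have re-derived by hand. Your explicit verification that sub-variations of objects in the image again lie in the image is a genuine improvement in rigor: the Deligne--Milne criterion for surjectivity on tannakian fundamental groups requires this closure under subobjects in addition to full faithfulness, and the paper's ``Consequently'' leaves it implicit.
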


\begin{proof}
Since $\Hom_\cC(\bA,\bB)$ is naturally isomorphic to
$\Hom_\cC(\Q(0),\Hom_\Q(\bA,\bB))$ for all objects $\bA$ and $\bB$ of
$\MEM_\ast$ and $\cC = \MEM_\ast$ and $\MHS(\M_{1,\ast},\H)$, it suffices to
show that
$$
\Hom_{\MEM_\ast}(\Q(0),\V) \to \Hom_{\MHS(\M_{1,\ast},\H)}(\Q(0),\V^\an)
$$
is an isomorphism for all objects $\V$ of $\MEM_\ast$.

Injectivity is easily proved and is left to the reader. We will prove
surjectivity. Suppose that $\V = (V,\V^\DR,\rho_V)$ is an object of $\MEM_\ast$
and that $\phi^\an : \Q(0)^\an \to \V^\an$ is a morphism of variations of
MHS. It induces a morphism of MHS
$$
v : \Q(0) \to V^\MHS := (V^B,W_\dot,F^\dot)
$$
of limit MHS over $\vv_o$. Since the Hodge realization functor $\MTM \to \MHS$
is fully faithful \cite{deligne-goncharov}, this map is the Hodge realization of
a morphism $\Q(0) \to V$ in $\MTM$.

Identify the morphism $v : \Q(0) \to V$ with the set of the realizations ($v^B
\in V^B$, $v^\DR \in V^\DR$, $v_\ell \in V_\ell$) of the image of $1\in \Q(0)$.
For each prime $\ell$
$$
v_\ell \in H^0(\M_{1,\ast/\Q},V_\ell)
$$
as $v_\ell$ is fixed by both $G_\Q$ and $\pi_1(\M_{1,\ast/\Qbar},\vv_o)$.

To complete the construction of a morphism $\phi : \Q(0) \to \V$ in $\MEM_\ast$
that lifts $\phi^\an$, we need to construct a morphism
$\Q(0)_{\M_{1,\ast/\Q}}^\DR \to \V^\DR$. Such a map corresponds to an element of
$H^0(\M_{1,\ast/\Q},(\cV,\nabla))$, where $\V^\DR = (\cV,\nabla)$. The vector
$v^B\in V^B$ lies in the image of the restriction map
$$
H^0(\Mbar_{1,\ast}^\an,(\cV^\an,\nabla)) \to V^B\otimes\C,
$$
where $\cV^\an$ denotes Deligne's canonical extension of
$\V^B\otimes\O_{\M_{1,\ast}}^\an$ to $\Mbar_{1,\ast}^\an$. Lemma~\ref{lem:gaga}
implies that $v^\DR$ lies in the image of the corresponding homomorphism
$$
H^0(\Mbar_{1,\ast/\Q},(\cV,\nabla)) \to V^\DR.
$$
\end{proof}

\part{Simple Extensions in $\MEM_\ast$}
\label{part:exts}

In this section we compute the groups $\Ext^1_{\MEM_\ast}(\Q,S^m\H(r))$ for all $m$ and $r$. These computations are made possible by the fact that the Hodge realization is fully faithful, which means that an extension in $\MEM_\ast$ is non-trivial if and only its Hodge realization is non-trivial. We also prove the corresponding statement for $\ell$-adic Galois realizations. This part concludes with a discussion of $\Ext^2_{\MEM_\ast}(\Q,S^m\H(r))$ and its relation to standard conjectures in number theory. 

\section{Cohomology of $\M_{1,\ast}$}
\label{sec:coho}

Here we recall the basic facts we need. Full details in the Hodge case can be
found in \cite{hain:modular}. References in the $\ell$-adic case are given
below.

\subsection{The cohomology of $\M_{1,1}^\an$} The first basic observation is
that, since we are regarding $\M_{1,1}^\an$ as the orbifold $\SL_2(\Z)\bbs\h$,
there is a natural isomorphism
$$
H^\dot(\M_{1,1}^\an,\V) \cong H^\dot(\SL_2(\Z),V)
$$
for all $\SL_2(\Z)$-modules $V$, where $\V$ denotes the local system over
$\M_{1,1}^\an$ that corresponds to $V$. Since $\SL_2(\Z)$ is virtually free,
this implies that $H^j(\M_{1,1}^\an,\V)$ vanishes for all $j\ge 2$ whenever $\V$
is a local system of $\Q$-modules. Since $-I\in \SL_2(\Z)$ acts as $(-1)^m$ on
$H^\dot(\SL_2(\Z),S^m H)$, it follows that $H^\dot(\M_{1,1}^\an,S^m\H)$ vanishes
when $m$ is odd. When $m=0$, $H^1(\M_{1,1}^\an,\Q) = 0$ as $H_1(\SL_2(\Z);\Z) =
\Z/12$.

Well-known results of Shimura, Manin, Drinfeld and Zucker (cf.\
\cite{lang,zucker}) imply that the cohomology groups
$H^\dot(\M_{1,1}^\an,S^n\H)$ and their mixed Hodge structures can be expressed
in terms of modular forms. Denote the space of (holomorphic) modular forms of
$\SL_2(\Z)$ of weight $w$ by $\fM_w$ and the subspace of cusp forms by
$\fM_w^o$. Recall that these are trivial when $w$ is odd.

In this section we regard $\H$ as a polarized variation of Hodge structure over
$\M_{1,1}^\an$ of weight 1. As explained in Section~\ref{sec:moduli}, its fiber
over the tangent vector $\tate$ is
$$
H^B = \Q\aa \oplus \Q\bb \text{ and } H^\DR = F^0 H^\DR = \Q\aa \oplus \Q\bw,
$$
where the comparison isomorphism $H^B\otimes\C \to H^\DR\otimes\C$ takes $\aa$
to $\aa$ and identifies $\bw$ with $-2\pi i\bb$. The Hodge filtration $F^1H^\DR$
is spanned by $\bw$.

For $f \in \fM_{2n+2}$ define
$$
\w_f = 2\pi i f(\tau) \bw^{2n} d\tau
= (2\pi i)^{2n+1} f(\tau) (\bb - \tau\aa)^{2n} d\tau
\in E^1(\h)\otimes S^{2n}H.
$$
This is a holomorphic 1-form on $\h$ with values in $S^{2n}\cH_\h$. Since it is
$\SL_2(\Z)$ invariant and since the section $\bw$ spans $F^1\cH_\h$,
$$
\w_f \in H^0(\M_{1,1}^\an,\Omega^1\otimes F^{2n}S^{2n}\cH).
$$
Since this form is holomorphic, it defines a class in
$H^1(\M_{1,1}^\an,S^{2n}\H)$. When $f$ is the Eisenstein series $G_{2n}$, we
denote $\w_f$ by $\psi_{2n}$.\label{def:psi}

\begin{remark}
\label{rem:scalings}
The particular scalings have been chosen to have the property that, if $f \in
\fM_{2n+2}$ has rational Fourier coefficients, then $\w_f$ is an element of the
$\Q$ de~Rham cohomology $H_\DR^1(\M_{1,1/\Q},S^{2n}\cH)$. (Cf.
\cite[\S21]{hain:kzb}.) In particular, the class of $\psi_{2n+2}$ under the
residue map
$$
H^1(\M_{1,1}^\an,S^{2n}\H) \to \Q(-2n-1)
$$
is a rational generator of $\Q(-2n-1)^\DR$.
\end{remark}

The following statement is a composite of well-known results. (Cf.\
\cite{beilinson-levin}, \cite{zucker}.)

\begin{theorem}[Shimura, Zucker, Manin-Drinfeld]
\label{thm:eichler_shimura}
For each $n> 0$, the group $H^1(\M_{1,1}^\an,S^{2n}\H)$ has a natural MHS with
Hodge numbers $(2n+1,0)$, $(0,2n+1)$ and $(2n+1,2n+1)$. The map $\fM_{2n+2} \to
H^1(\M_{1,1}^\an,S^{2n}\H_\C)$ that takes $f$ to the class of $\w_f$ is
injective and has image $F^{2n+1}H^1(\M_{1,1}^\an,S^{2n}\H)$. The image of
$\fM_{2n+2}^o$ is
$$
H^{2n+1,0}(\M_{1,1}^\an,S^{2n}\H) := F^{2n+1}W_{2n+1}H^1(\M_{1,1}^\an,S^{2n}\H).
$$
The MHS $H^1(\M_{1,1}^\an,S^{2n}\H)$ splits over $\Q$. The copy of
$\Q(-2n-1)^\DR$ is spanned by $\psi_{2n+2}$.
\end{theorem}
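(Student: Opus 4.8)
The plan is to assemble the statement from three ingredients, since (as noted) it is a composite of well-known results: the de~Rham description of the Hodge filtration on $H^1(\M_{1,1}^\an,S^{2n}\H)$ coming from the canonical extension $\Hbar^\DR$; Zucker's purity theorem for the interior cohomology of a polarized variation of Hodge structure over a modular curve; and the Manin--Drinfeld argument for the rationality of the Eisenstein splitting. Full details in the Hodge case are in \cite{hain:modular}; see also \cite{zucker} and \cite{beilinson-levin}.

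First I would set up the mixed Hodge structure and read off the holomorphic part. Let $e_o$ be the cusp and put $\cS:=\Sym^{2n}\Hbar^\DR$, with its Hodge filtration $F^\dot$ and its log connection $\nabla_0$, whose residue at $e_o$ is nilpotent with a single Jordan block on $S^{2n}H$. By Deligne's construction, as made precise by Zucker, $H^1(\M_{1,1}^\an,S^{2n}\H)$ is the degree-$1$ hypercohomology on $\Mbar_{1,1}$ of the log-de~Rham complex $[\cS\xrightarrow{\nabla_0}\cS\otimes\Omega^1_{\Mbar_{1,1}}(\log e_o)]$; its Hodge filtration is induced by the stupid filtration of this complex, and the Hodge--de~Rham spectral sequence degenerates at $E_1$. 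Since $F^{2n+1}\cS=0$, $F^{2n}\cS\cong\cL^{\otimes 2n}$, and $\Omega^1_{\Mbar_{1,1}}(\log e_o)\cong\cL^{\otimes 2}$ by Kodaira--Spencer, the subcomplex computing $F^{2n+1}H^1$ reduces to the line bundle $\cL^{\otimes(2n+2)}$ placed in degree $1$, so
$$
F^{2n+1}H^1(\M_{1,1}^\an,S^{2n}\H)\;\cong\;H^0\big(\Mbar_{1,1},\cL^{\otimes(2n+2)}\big)\;=\;\fM_{2n+2}.
$$
Tracing the identification, the class attached to $f\in\fM_{2n+2}$ is exactly $[\w_f]$, with the normalization recalled in Remark~\ref{rem:scalings}; this shows that $f\mapsto\w_f$ is injective with image $F^{2n+1}H^1$.

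Next I would pin down the weight filtration. Let $H^1_!:=\im\big(H^1_c(\M_{1,1}^\an,S^{2n}\H)\to H^1(\M_{1,1}^\an,S^{2n}\H)\big)$ be the interior cohomology. Zucker's theorem identifies $H^1_!$ with $L^2$-cohomology and shows it is a polarizable pure Hodge structure of weight $2n+1$, equal to $W_{2n+1}H^1$; the classical Eichler--Shimura description of $L^2$-harmonic forms identifies $F^{2n+1}W_{2n+1}H^1$ (the Hodge piece of type $(2n+1,0)$) with the image of the space of cusp forms $\fM_{2n+2}^o$ (a cusp form being precisely a holomorphic, $L^2$ class), with the $(0,2n+1)$-part its complex conjugate; this is statement~(c) and gives two of the Hodge numbers with multiplicity $\dim\fM_{2n+2}^o$. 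Comparing with the de~Rham count above, the remaining quotient $H^1/H^1_!$ is one-dimensional, and the boundary computation at $e_o$ (unipotent monodromy, single Jordan block on $S^{2n}H$) shows it is $\Q(-2n-1)$, of Hodge type $(2n+1,2n+1)$. This completes the list of Hodge numbers, consistent with $\dim\fM_{2n+2}=\dim\fM_{2n+2}^o+1$.

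Finally I would deduce the rational splitting. The Hecke operators $T_\ell$ act $\Q$-linearly on $H^1(\M_{1,1}^\an,S^{2n}\H)$ by morphisms of mixed Hodge structure; on the quotient $\Q(-2n-1)\cong H^1/H^1_!$ the operator $T_\ell$ is multiplication by $1+\ell^{2n+1}$, whereas on $H^1_!$ its eigenvalues are the Hecke eigenvalues of weight-$(2n+2)$ normalized cusp forms, of absolute value at most $2\ell^{n+1/2}<1+\ell^{2n+1}$ by Deligne's bound (one may instead use two primes to avoid invoking Ramanujan). Hence the generalized $T_\ell$-eigenspace for the eigenvalue $1+\ell^{2n+1}$ is a canonical $\Q$-rational complement to $H^1_!$, so $H^1=H^1_!\oplus\Q(-2n-1)$ as $\Q$-mixed Hodge structures --- this is Manin--Drinfeld. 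By Remark~\ref{rem:scalings} the residue of $\psi_{2n+2}=[\w_{G_{2n+2}}]$ along $e_o$ is a rational generator of $\Q(-2n-1)^\DR$, so $\psi_{2n+2}\notin H^1_!$ and it spans the complement just constructed, which is therefore the $\Q(-2n-1)$ summand. I expect the main obstacle to be the input from Zucker's $L^2$-theory --- purity of $H^1_!$, the identification of its $(2n+1,0)$-part with cusp forms, and the compatibility of the abstract mixed Hodge structure with the explicit log-de~Rham picture, together with degeneration of the Hodge--de~Rham spectral sequence --- but these are precisely the facts collected in \cite{hain:modular} and \cite{zucker}, so in practice that step is a citation rather than a new argument.
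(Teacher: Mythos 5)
Your argument is correct and follows exactly the route the paper intends: the paper itself gives no proof, stating the theorem as ``a composite of well-known results'' with the attribution [Shimura, Zucker, Manin--Drinfeld] and citations to \cite{zucker} and \cite{beilinson-levin}, and your three ingredients --- the log-de~Rham/Kodaira--Spencer identification $F^{2n+1}H^1\cong H^0(\Mbar_{1,1},\cL^{\otimes(2n+2)})=\fM_{2n+2}$, Zucker's purity of the interior cohomology together with the residue computation giving the $\Q(-2n-1)$ quotient, and the Manin--Drinfeld Hecke-eigenvalue argument for the rational splitting --- are precisely the cited results. The only point worth making explicit is that $\psi_{2n+2}$ lies in (not merely projects nontrivially to) the Eisenstein summand because $f\mapsto\w_f$ is Hecke-equivariant and $G_{2n+2}$ is an eigenform with eigenvalue $1+\ell^{2n+1}$, which you use implicitly.
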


\subsection{The Hodge structures associated to a Hecke Eigenform}

It is useful to decompose the real MHS on $H^1(\M_{1,1}^\an,S^{2n}\H)$ under the
action of the Hecke correspondences. Denote the set of normalized Hecke eigen
cusp forms of $\SL_2(\Z)$ of weight $w$ by $\B_w$.  It is a basis of $\fM_w^o$.

For $f\in \B_{2n+2}$, let $K_f$ be the subfield of $\C$ generated by the Fourier
coefficients of $f$. It is a totally real field. For each prime number $p$,
$T_p(f) = a_p f$ where $a_p$ is the $p$th Fourier coefficient of $f$. Let
$$
\label{def:V_f}
V_f := \bigcap_p \ker\big\{T_p - a_p\id : 
H^1(\M_{1,1}^\an,S^{2n}\H_\R) \to H^1(\M_{1,1}^\an,S^{2n}\H_\R)\big\}
$$
This is a 2-dimensional $\R$ Hodge substructure of
$H^1(\M_{1,1}^\an,S^{2n}\H)$. Its complexification $V_{f,\C}$ is spanned by
$\w_f$ and its complex conjugate. As a real Hodge structure, $\M_{1,1}^\an$
decomposes
\begin{equation}
\label{eqn:decomp}
H^1(\M_{1,1}^\an,S^m\H_\R) \cong \R(-2n-1)
\oplus \bigoplus_{f\in \B_{2n+2}} V_f.
\end{equation}
The copy of $\R(-2n-1)$ is spanned by the class $\psi_{2n+2}$.

There is a similar decomposition of $H^1(\M_{1,1}^\an,S^{2n}\H)$ as a $\Q$-MHS.
Suppose that $f\in \B_{2n+2}$. As above, let $a_p$ be the $p$th Fourier
coefficient of $f$. Let $m_p(x)$ be the minimal polynomial of $a_p$ over $\Q$.
Then
$$
\label{def:M_f}
M_p = \bigcap_p\ker\big\{m_p(T_p):
H^1(\M_{1,1}^\an,S^{2n}\H_\Q) \to H^1(\M_{1,1}^\an,S^{2n}\H_\Q)\big\}.
$$
This is a simple $\Q$ Hodge substructure $M_f$ of $H^1(\M_{1,1}^\an,S^{2n}\H_\Q)$ with the property that
$$
M_f\otimes\R = \bigoplus_{g} V_g,
$$
where $g$ ranges over the Hecke eigen cusp forms whose Fourier expansions are
Galois conjugate to that of $f$. It has dimension $2\dim_\Q K_f$. There is a
decomposition
$$
H^1(\M_{1,1}^\an,S^{2n}\H_\Q) \cong \Q(-2n-1) \oplus \bigoplus_{f} M_f.
$$
where $f$ ranges over the Galois equivalence classes of Hecke eigen cusp forms
of weight $2n+2$.

\subsection{Cohomology of $\M_{1,\uu}^\an$ and $\M_{1,2}^\an$}

These are easily deduced from the cohomology of $\M_{1,1}^\an$ using the Leray
spectral sequences of the projections $\M_{1,\uu}\to\M_{1,1}$ and
$\M_{1,2}\to\M_{1,1}$. We state the results and leave the proof to the reader.

\begin{proposition}
\label{prop:coho_vec}
There is an isomorphism
$$
H^1(\M_{1,\uu}^\an,\Q) \cong \Q(-1).
$$
This group is generated by the class of the form (\ref{eqn:G2}) corresponding to
the Eisenstein series $G_2$.\footnote{It is natural to extend the definition of
the $\psi_{2n}$ ($n>1$) to the case $n=1$ by defining $\psi_2 = 2\pi i
G_2(\tau)d\tau - (1/2)d\xi/\xi$. Then $\psi_2$ is a generator of
$H^1_\DR(\M_{1,\uu/\Q})$ and $H^1(\M_{1,\uu}^\an,\Q(1))$. Note also that
$\psi_2=-\frac{1}{24}\frac{dD}{D}$, where $D=u^3-27v^2$ denotes the
discriminant function on $\M_{1,\vec{1}}$.} If $m>0$, then the projection to
$\M_{1,1}^\an$ induces an isomorphism
$$
H^1(\M_{1,\uu}^\an,S^m\H) \cong H^1(\M_{1,1}^\an,S^m\H)
$$
and the cup product $H^1(\M_{1,\uu}^\an,S^m\H)\otimes H^1(\M_{1,\uu}^\an,\Q) \to
H^2(\M_{1,\uu}^\an,S^m\H)$ is an isomorphism of MHS, so that
$$
H^2(\M_{1,\uu}^\an,S^m\H) \cong H^1(\M_{1,1}^\an,S^m\H)(-1).
$$
\end{proposition}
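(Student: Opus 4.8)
The plan is to read off all of this from the Leray spectral sequence of the principal $\Gm$-bundle $p\colon\M_{1,\uu}^\an\to\M_{1,1}^\an$. Since the fibre $\Gm^\an\simeq\C^\ast$ has $H^0=\Q$, $H^1=\Q(-1)$ and no higher cohomology, and since $\Gm$ acts trivially on $H^1(\Gm)=\Q(-1)$, we have $R^0p_\ast\Q=\Q$, $R^1p_\ast\Q=\Q(-1)$ (a constant local system) and $R^tp_\ast\Q=0$ for $t\ge2$; by the projection formula $R^0p_\ast p^\ast\V=\V$ and $R^1p_\ast p^\ast\V=\V(-1)$ for any $\Q$-local system $\V$ on $\M_{1,1}$. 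Hence the Leray spectral sequence
\[
E_2^{s,t}=H^s\big(\M_{1,1}^\an,R^tp_\ast p^\ast\V\big)\Rightarrow H^{s+t}\big(\M_{1,\uu}^\an,p^\ast\V\big)
\]
has only the two rows $E_2^{s,0}=H^s(\M_{1,1}^\an,\V)$ and $E_2^{s,1}=H^s(\M_{1,1}^\an,\V)(-1)$. The only possibly-nonzero differential is $d_2\colon E_2^{s,1}\to H^{s+2}(\M_{1,1}^\an,\V)$, and it vanishes because $H^j(\M_{1,1}^\an,\V)=0$ for $j\ge2$ (Section~\ref{sec:coho}; $\SL_2(\Z)$ is virtually free). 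So the sequence degenerates at $E_2$, and for every such $\V$ one obtains a short exact sequence of mixed Hodge structures
\[
0\to H^1(\M_{1,1}^\an,\V)\xrightarrow{\ p^\ast\ } H^1(\M_{1,\uu}^\an,p^\ast\V)\to H^0(\M_{1,1}^\an,\V)(-1)\to0
\]
together with an isomorphism of MHS $H^2(\M_{1,\uu}^\an,p^\ast\V)\cong H^1(\M_{1,1}^\an,\V)(-1)$.

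Next I would specialise. For $\V=\Q$: $H^1(\M_{1,1}^\an,\Q)=0$ (as $H_1(\SL_2(\Z))$ is torsion) and $H^0(\M_{1,1}^\an,\Q)=\Q$, so the short exact sequence yields $H^1(\M_{1,\uu}^\an,\Q)\cong\Q(-1)$. To see that the generator is the class of (\ref{eqn:G2}): that form is $\SL_2(\Z)$-invariant on $\C^\ast\times\h$, hence descends to a closed $1$-form on $\M_{1,\uu}^\an$, and its restriction to a fibre of $p$ is $d\xi/\xi$, a generator of $H^1(\Gm)=\Q(-1)$; therefore its image under the edge map to $E_\infty^{0,1}=\Q(-1)$ is a generator, so it generates $H^1(\M_{1,\uu}^\an,\Q)$. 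For $\V=S^m\H$ with $m>0$: $H^0(\M_{1,1}^\an,S^m\H)=(S^mH)^{\SL_2(\Z)}=0$, since the $m$-th symmetric power of the standard representation has no $\SL_2$-invariants and $\SL_2(\Z)$ is Zariski dense in $\SL_2$; so the short exact sequence collapses to the asserted isomorphism $p^\ast\colon H^1(\M_{1,1}^\an,S^m\H)\xrightarrow{\ \sim\ }H^1(\M_{1,\uu}^\an,S^m\H)$, while the $H^2$ isomorphism is the general one above.

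For the cup-product assertion I would bring in the Gysin (fibre-integration) map $\pi_\ast\colon H^k(\M_{1,\uu}^\an,p^\ast\V)\to H^{k-1}(\M_{1,1}^\an,\V)(-1)$ of the oriented $\C^\ast$-bundle $p$; it is a morphism of MHS and is, up to the Tate twist, the edge homomorphism realising the identifications above, so $\pi_\ast$ carries the chosen generator $\eta\in H^1(\M_{1,\uu}^\an,\Q)\cong\Q(-1)$ to a generator of $H^0(\M_{1,1}^\an,\Q)(-1)$. Writing a class in $H^1(\M_{1,\uu}^\an,S^m\H)$ as $p^\ast\beta$ with $\beta\in H^1(\M_{1,1}^\an,S^m\H)$, the projection formula gives $\pi_\ast(p^\ast\beta\cup\eta)=\beta\cup\pi_\ast\eta$. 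Tracing through the identifications, the cup product
\[
H^1(\M_{1,\uu}^\an,S^m\H)\otimes H^1(\M_{1,\uu}^\an,\Q)\longrightarrow H^2(\M_{1,\uu}^\an,S^m\H)
\]
becomes the tautological isomorphism $H^1(\M_{1,1}^\an,S^m\H)\otimes\Q(-1)\xrightarrow{\ \sim\ }H^1(\M_{1,1}^\an,S^m\H)(-1)$; in particular it is bijective, and since $p^\ast$, $\pi_\ast$ and cup product all underlie morphisms of MHS it is an isomorphism of MHS, giving $H^2(\M_{1,\uu}^\an,S^m\H)\cong H^1(\M_{1,1}^\an,S^m\H)(-1)$.

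This argument is essentially bookkeeping, so the main point requiring care is the mutual consistency of all the Tate twists---in $R^1p_\ast$, in the Gysin map, and in the edge homomorphisms of the Leray spectral sequence---together with the (standard) compatibility with mixed Hodge structures of the Leray spectral sequence of a morphism of complex algebraic varieties and of the Gysin map of an oriented $\C^\ast$-bundle. Since $\M_{1,\uu}^\an=\C^2\setminus\{D=0\}$ is an honest smooth affine variety and we are using $\Q$-coefficients on the orbifold $\M_{1,1}^\an$, no orbifold subtleties intervene.
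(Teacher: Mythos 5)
Your proof is correct and is exactly the argument the paper intends: the paper explicitly states that these results ``are easily deduced from the cohomology of $\M_{1,1}^\an$ using the Leray spectral sequences of the projections'' and leaves the details to the reader, and you have supplied precisely those details (two-row degeneration forced by $H^{\ge 2}(\M_{1,1}^\an,\V)=0$, vanishing of $H^0$ and $H^1$ in the relevant cases, fibre restriction of the $G_2$ form, and the projection formula for the cup product).
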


In the case of $\M_{1,2}^\an$, we have:

\begin{proposition}
There are natural isomorphisms of MHS
$$
H^1(\M_{1,2}^\an,\H) \cong \Q(-1) \text{ and } H^2(\M_{1,2}^\an,\H)=0.
$$
If $m>1$, then the projection induces an isomorphism
$$
H^1(\M_{1,2}^\an,S^m\H) \cong H^1(\M_{1,1}^\an,S^m\H)
$$
in degree 1, and in degree 2, there is an isomorphism of MHS
$$
H^2(\M_{1,2}^\an,S^m\H) \cong
H^1(\M_{1,1}^\an,S^{m+1}\H)\oplus H^1(\M_{1,1}^\an,S^{m-1}\H)(-1).
$$
In particular, $H^2(\M_{1,2},S^m\H)$ is non-trivial only when $m$ is odd. \qed
\end{proposition}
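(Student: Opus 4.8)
The plan is to use the Leray spectral sequence of the projection $\pi'\colon\M_{1,2}=\E'\to\M_{1,1}$, whose fibre over $[E]$ is the once--punctured elliptic curve $E'=E\setminus\{0\}$. The first step is to identify the higher direct images of the constant sheaf: since $E'$ is homotopy equivalent to a wedge of two circles, $R^q\pi'_*\Q=0$ for $q\ge 2$ and $R^0\pi'_*\Q=\Q$, while for $q=1$ I would apply $R\pi_*$ to the Gysin triangle $i_*\Q_Z(-1)[-2]\to\Q_\E\to Rj_*\Q_{\E'}\to$ of the open immersion $j\colon\E'\hookrightarrow\E$ with complement the zero section $i\colon Z\hookrightarrow\E$. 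As $\pi\circ i$ is an isomorphism, this yields the exact sequence of sheaves on $\M_{1,1}$
\[
0\to\H\to R^1\pi'_*\Q\to\Q(-1)\xrightarrow{\ \partial\ }R^2\pi_*\Q_\E\cong\Q(-1),
\]
in which $\partial$ is fibrewise the pushforward $H^0(\{0\},\Q)(-1)\to H^2(E,\Q)$ along the origin; this is an isomorphism because the origin is a single point (equivalently, because $Z$ is a section of $\pi$), so $R^1\pi'_*\Q\cong\H$. This is exactly where the geometry of $\M_{1,2}\to\M_{1,1}$ diverges from that of the $\Gm$--bundle $\M_{1,\uu}\to\M_{1,1}$, whose fibre $\C^\times$ genuinely contributes a $\Q(-1)$ to $R^1$.

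Next I would use the projection formula $R^q\pi'_*(S^m\H)\cong S^m\H\otimes R^q\pi'_*\Q$, together with the $\SL_2$ Clebsch--Gordan decomposition $S^m\H\otimes\H\cong S^{m+1}\H\oplus S^{m-1}\H(-1)$ (using $\Lambda^2\H=\Q(-1)$), to write the $E_2$ page. It has two rows, $H^\dot(\M_{1,1}^\an,S^m\H)$ and $H^\dot(\M_{1,1}^\an,S^{m+1}\H\oplus S^{m-1}\H(-1))$, and since $\SL_2(\Z)$ is virtually free, $H^p(\M_{1,1}^\an,-)=0$ for $p\ge 2$, so the $E_2$ page lives in the square $0\le p,q\le 1$. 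The only differential that could be nonzero, $d_2\colon E_2^{0,1}\to E_2^{2,0}$, has target $0$, so the sequence degenerates at $E_2$; and because $\pi'$ is a morphism of smooth varieties and $\H$ is a polarisable variation of Hodge structure, the Leray spectral sequence is one of mixed Hodge structures, so the resulting edge isomorphisms and the single extension in total degree $1$ are morphisms of MHS.

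Then I read off the groups. In total degree $2$ only $E_2^{1,1}$ survives, which gives
\[
H^2(\M_{1,2}^\an,S^m\H)\cong H^1(\M_{1,1}^\an,S^{m+1}\H)\oplus H^1(\M_{1,1}^\an,S^{m-1}\H)(-1)
\]
for all $m\ge 1$, and the ``in particular'' clause follows because $H^\dot(\M_{1,1}^\an,S^k\H)$ vanishes for odd $k$ (there $-I\in\SL_2(\Z)$ acts by $-1$). For $m>1$ both $S^{m+1}\H$ and $S^{m-1}\H$ are symmetric powers of strictly positive degree, hence have no $\SL_2(\Z)$--invariant global sections (by Zariski density of $\SL_2(\Z)$ in $\SL_2$), so $E_2^{0,1}=0$ and $H^1(\M_{1,2}^\an,S^m\H)=E_2^{1,0}=H^1(\M_{1,1}^\an,S^m\H)$, realised by pullback along $\pi'$. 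For $m=1$ the same computation with $S^2\H\oplus\Q(-1)$ in place of $S^{m+1}\H\oplus S^{m-1}\H(-1)$ gives $H^0(\M_{1,1}^\an,S^2\H\oplus\Q(-1))=\Q(-1)$ and $H^1(\M_{1,1}^\an,\H)=0$, hence $H^1(\M_{1,2}^\an,\H)\cong\Q(-1)$, while the degree--$2$ term $H^2(\M_{1,2}^\an,\H)\cong H^1(\M_{1,1}^\an,S^2\H)\oplus H^1(\M_{1,1}^\an,\Q)(-1)$ is then pinned down using $H^1(\M_{1,1}^\an,\Q)=0$ (as $H_1(\SL_2(\Z))$ is torsion) and Theorem~\ref{thm:eichler_shimura}.

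The hard part is the identification $R^1\pi'_*\Q\cong\H$ in the first step --- concretely, verifying that the Gysin boundary $\partial$ is an isomorphism, i.e.\ that the removed divisor meets each fibre with multiplicity one. This is what suppresses any $\Q(-1)$--twisted copy of $H^1(\M_{1,1}^\an,S^m\H)$ in $H^2(\M_{1,2}^\an,S^m\H)$ and so makes the $\M_{1,2}$ answer genuinely different from the $\M_{1,\uu}$ answer of Proposition~\ref{prop:coho_vec}; everything after it is formal bookkeeping with the Leray spectral sequence and the Eichler--Shimura description of $H^1(\M_{1,1}^\an,S^{2n}\H)$.
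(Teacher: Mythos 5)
Your route --- the Leray spectral sequence of $\pi'\colon\M_{1,2}=\E'\to\M_{1,1}$, the Gysin identification $R^1\pi'_*\Q\cong\H$, Clebsch--Gordan for $S^m\H\otimes\H$, and the vanishing of $H^p(\M_{1,1}^\an,-)$ for $p\ge 2$ --- is exactly the computation the authors have in mind (they state the result and leave the proof to the reader), and the individual steps are correct: the identification of the higher direct images, the degeneration at $E_2$, the degree-one analysis for $m>1$ and for $m=1$, and the degree-two formula.

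The one place the proposal does not deliver what it promises is the claimed vanishing $H^2(\M_{1,2}^\an,\H)=0$. Your own computation gives
$$
H^2(\M_{1,2}^\an,\H)\cong H^1(\M_{1,1}^\an,S^2\H)\oplus H^1(\M_{1,1}^\an,\Q)(-1),
$$
and while the second summand vanishes, the first does not: Theorem~\ref{thm:eichler_shimura}, which you invoke to ``pin down'' the answer, says that $\fM_4\to H^1(\M_{1,1}^\an,S^2\H_\C)$ is injective with image $F^3$, so $H^1(\M_{1,1}^\an,S^2\H)\cong\Q(-3)$, spanned by the class $\psi_4$ of the Eisenstein series $G_4$ (there are no cusp forms of weight $4$, but there is an Eisenstein series). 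No differential is available to kill this class, since $E_2^{p,q}=0$ for $p\ge 2$ forces every $d_r$ into and out of $E_2^{1,1}$ to vanish; so your method yields $H^2(\M_{1,2}^\an,\H)\cong\Q(-3)$, i.e.\ the displayed degree-two formula is valid at $m=1$ as well and contradicts the stated vanishing. (The paper's subsequent evaluation of $H^2(\M_{1,2},\H\otimes S^{2n}\H)$, which at $n=1$ produces two copies of $H^1(\M_{1,1}^\an,S^{2n}\H)(-1)$, is consistent with the nonzero answer and not with $H^2(\M_{1,2}^\an,\H)=0$, so the vanishing claim appears to be a slip in the statement rather than a feature your argument is missing.) You should either flag this discrepancy explicitly or supply an argument that does not factor through the Leray spectral sequence; within that spectral sequence there is no room for the group to vanish.
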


This implies that
\begin{multline*}
H^2(\M_{1,2},\H\otimes S^{2n}\H) \cr
\cong
H^1(\M_{1,1}^\an,S^{2n+2}\H) \oplus H^1(\M_{1,1}^\an,S^{2n}\H)^{\oplus 2}(-1)
\oplus H^1(\M_{1,1}^\an,S^{2n-2}\H)(-2).
\end{multline*}
One can show that the cup product
$$
H^1(\M_{1,2}^\an,\H) \otimes H^1(\M_{1,2}^\an,S^{2n}\H)
\to H^2(\M_{1,2},\H\otimes S^{2n}\H)
$$
is injective and has image in the middle factor 
$H^1(\M_{1,1}^\an,S^{2n+2}\H)^{\oplus 2}(-1)$.

\section{Relative Unipotent Completion}
\label{sec:rel}

The next task is to describe the tannakian fundamental group of
$\MHS(\M_{1,\ast},\H)$. Computing it will allow us to bound the size of
$\pi_1(\MEM_\ast,\w^B)$. To do this, we need to review some basic facts about
relative completion of discrete groups and facts about the relative completion
of modular groups and their relation to extensions of variations of MHS.
References for this material include
\cite{hain:malcev,hain:modular,hain:db_coho}.

\subsection{Relative unipotent completion}

Suppose that $\G$ is a discrete group and that  $R$ is a reductive algebraic
group over a field $F$ of characteristic zero. Suppose that $\rho : \G \to R(F)$
is a Zariski dense representation. Consider the full subcategory $\cR(\G,\rho)$ of
$\Rep_F(\G)$ consisting of those $\G$-modules $M$ that admit a filtration
$$
0 = M_0 \subset M_1 \subset M_2 \subset \cdots \subset M_n = M
$$
in $\Rep_F(\G)$, where the action of $\G$ on each graded quotient $M_j/M_{j-1}$
factors through a rational representation $R \to \Aut(M_j/M_{j-1})$ via $\rho$.
This category is neutral tannakian over $F$. The completion of $\G$ relative to
$\rho$ is the affine group scheme $\pi_1(\cR(\G,\rho),\w)$ over $F$, where $\w$
takes a module $M$ to its underlying vector space. It is an extension
$$
1 \to \U(\G,\rho) \to \pi_1(\cR(\G,\rho),\w) \to R \to 1
$$
where $\U(\G,\rho)$ is prounipotent. There is a natural homomorphism
$$
\rhotilde : \G \to \pi_1(\cR(\G,\rho),\w)(F)
$$
whose composition with the canonical quotient map $\pi_1(\cR(\G,\rho),\w) \to R$
is $\rho$. It is Zariski dense. The coordinate ring of $\pi_1(\cR(\G,\rho),\w)$
is the Hopf algebra of matrix entries of the objects of $\cR(\G,\rho)$.

\subsection{Cohomological properties}

Set $\cG = \pi_1(\cR(\G,\rho),\w)$ and $\U = \U(\G,\rho)$. Denote the Lie
algebra of $\U$ by $\u$. It is pronilpotent. The action of $\cG$ on $\u$ induced
by the inner action of $\cG$ on $\U$ induces an action of $\cG$ on $H^\dot(\u)$.
Standard arguments imply that it factors through the quotient mapping $\cG\to
R$, so that $H^\dot(\u)$ is an $R$-module. The homomorphism $\G \to \cG(F)$
induces a homomorphism $H^\dot(\cG,V) \to H^\dot(\G,V)$. There is a natural
isomorphism
$$
H^\dot(\cG,V) \cong [H^\dot(\U,V)]^R.
$$
The following basic property of relative completion is easily proved by adapting
the proof of \cite[Thm.~4.6]{hain-matsumoto:weighted} or
\cite[Thm.~8.1]{hain-matsumoto:survey}. A proof in the de~Rham case is given in
\cite{hain:malcev}. 

\begin{proposition}
\label{prop:coho}
For all $R$-modules $V$, there are natural isomorphisms
$$
H^1(\cG,V) \cong [H^1(\u)\otimes V]^R \cong H^1(\G,V)
$$
and an injection
$$
H^2(\cG,V) \cong [H^2(\u)\otimes V]^R \hookrightarrow H^2(\G,V).
$$
\end{proposition}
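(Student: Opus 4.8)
The plan is to strip the statement down to a soft homological fact. The two displayed isomorphisms with $[H^i(\u)\otimes V]^R$ are already essentially in hand: since $\cG$ is an extension of the reductive group $R$ by the prounipotent $\U$ and $V$, being an $R$-module, is a trivial $\U$-module, one has $H^\dot(\cG,V)\cong[H^\dot(\U,V)]^R\cong[H^\dot(\u,V)]^R\cong[H^\dot(\u)\otimes V]^R$ --- the first step as recalled above, using that $(-)^R$ is exact because $R$ is reductive in characteristic zero, and the last step because a trivial coefficient module splits off the Chevalley--Eilenberg complex. So the substance of the proposition is that the map $H^i(\cG,V)\to H^i(\G,V)$ induced by $\rhotilde:\G\to\cG(\Q)$ is an isomorphism for $i=1$ and injective for $i=2$, for every $R$-module $V$. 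Since both sides commute with filtered colimits in $V$, it suffices to treat finite-dimensional $V$, and then, via the tannakian equivalence $\Rep(\cG)\simeq\cR(\G,\rho)$, this comparison map is the map $\Ext^i_{\cR(\G,\rho)}(\Q,V)\to\Ext^i_{\Rep_\Q(\G)}(\Q,V)$ induced by the inclusion of categories (both $\Ext$'s taken among finite-dimensional modules).

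The first real step is to observe that $\cR(\G,\rho)$ is a full subcategory of $\Rep_\Q(\G)$ closed under subobjects, quotients and extensions; in particular it is abelian and the inclusion is exact. Fullness is by definition. If $M\in\cR(\G,\rho)$ has a filtration $0=M_0\subset\cdots\subset M_n=M$ whose graded pieces are $R$-modules via $\rho$, and $N\subseteq M$ is a $\G$-submodule, then the $N\cap M_j$ filter $N$, and each subquotient $(N\cap M_j)/(N\cap M_{j-1})$ is a $\G$-submodule of the $R$-module $M_j/M_{j-1}$; because $\rho(\G)$ is Zariski dense in $R$ and the stabiliser of a subspace is Zariski closed, it is automatically $R$-stable, whence $N\in\cR(\G,\rho)$ and dually $M/N\in\cR(\G,\rho)$. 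Closure under extensions follows by splicing the filtration of a submodule with that of the associated quotient.

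The second step is the general homological lemma: for a full subcategory $\mathcal{A}$ of an abelian category $\mathcal{B}$ that is closed under subquotients and extensions, the map $\Ext^i_{\mathcal{A}}(X,Y)\to\Ext^i_{\mathcal{B}}(X,Y)$ is an isomorphism for $i\le 1$ and injective for $i=2$, for all $X,Y\in\mathcal{A}$. The cases $i=0,1$ are immediate: fullness gives $i=0$; for $i=1$, every $\mathcal{B}$-extension of $X$ by $Y$ already lies in $\mathcal{A}$ by extension-closure, and any $\mathcal{B}$-linear splitting is automatically an $\mathcal{A}$-morphism. For $i=2$, represent a class by a four-term exact sequence $0\to Y\to E_1\xrightarrow{\,d\,}E_0\to X\to 0$ in $\mathcal{A}$, put $K=\im d\in\mathcal{A}$, and note the class is $\partial\bigl[\,0\to K\to E_0\to X\to 0\,\bigr]$ for the connecting map $\partial$ of the long exact $\Ext$-sequence attached to $0\to Y\to E_1\to K\to 0$; a diagram chase comparing these long exact sequences in $\mathcal{A}$ and $\mathcal{B}$ and using the $i\le 1$ isomorphisms for the modules $E_1$ and $K$ shows that if the class dies in $\mathcal{B}$ then $[\,0\to K\to E_0\to X\to 0\,]$ lifts to $\Ext^1_{\mathcal{A}}(X,E_1)$, hence the class already dies in $\mathcal{A}$. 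Applying this with $\mathcal{A}=\cR(\G,\rho)$, $\mathcal{B}=\Rep_\Q(\G)$, $X=\Q$ and $Y=V$ yields the proposition.

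None of these steps is difficult; the one I would expect to demand the most care is not the homological algebra but the bookkeeping in the first step --- pinning down that the abstract comparison map $\Ext^i_{\cR}(\Q,V)\to\Ext^i_{\Rep_\Q(\G)}(\Q,V)$ really is the map on cohomology induced by $\rhotilde$, and reconciling the Yoneda-$\Ext$ descriptions (among finite-dimensional modules) with ordinary group cohomology and with the passage to arbitrary $R$-module coefficients. This is exactly the routine verification that lets one call the proposition ``easily proved''.
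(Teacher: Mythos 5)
Your argument is correct and is essentially the proof the paper outsources to Hain--Matsumoto: one checks, via Zariski density of $\rho(\G)$, that $\cR(\G,\rho)$ is a full subcategory of $\Rep_F(\G)$ closed under subquotients and extensions, and then invokes the standard fact that for such a subcategory the Yoneda $\Ext$ comparison is bijective in degrees $\le 1$ and injective in degree $2$, the degree-$0,1$ isomorphisms with $[H^\dot(\u)\otimes V]^R$ being the formal consequences of reductivity already recorded in the paper. The only point to watch is your reduction to finite-dimensional $V$ (cohomology of a discrete group commutes with filtered colimits of coefficients only under finiteness hypotheses on $\G$), but this is harmless for the groups and coefficients actually in play.
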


Using the fact (cf.\ \cite{hain:db_coho}) that, for all $\cG$-modules $V$, there
are natural isomorphisms
$$
\Ext^\dot_{\Rep(\cG)}(A,B) \cong H^\dot(\cG,\Hom_F(A,B))
\cong H^\dot(\u,\Hom_F(A,B))^R,
$$
one can restate the previous result as follows. For all $\cG$-modules $A$ and
$B$, the natural map
$$
\Ext^j_{\Rep(\cG)}(A,B) \to \Ext^j_{\Rep_F(\G)}(A,B)
$$
is an isomorphism in degree 1 and injective in degree 2.

\subsection{Unipotent completion}
\label{sec:unipt_comp}

Unipotent completion is the special case where $R$ is the trivial group. In 
this case, $\cR(\G,\rho)$ is the category of unipotent representations of $\G$.
The unipotent completion of $\G$ over $F$ will be denoted by $\G_{/F}^\un$.

Suppose that $(E,0)$ is an elliptic curve over $\C$. Set $E'=E-\{0\}$. Let $b$ be a
base point of $E'$. (So $b$ can be a point of $E'$ or a non-zero tangent vector
$\vv \in T_0 E$.) Denote the unipotent completion of $\pi_1(E',b)$ by $\cP(E,b)$
and its Lie algebra by $\p(E',b)$. For all such $E$ and $b$, the coordinate ring
$\O(\cP(E,b))$ and Lie algebra $\p(E',b)$ of $\cP(E,b)$ have natural mixed Hodge
structures that are compatible with their algebraic structures. The weight
filtration $W_\dot$ of $\p(E',b)$ is (essentially) its lower central series:
$$
W_{-n}\p(E',b) = L^n \p(E',b).
$$
There is a canonical isomorphism $\Gr^W_\dot\p(E',b) \cong \L(H_1(E))$ of the
associated weight graded of $\p(E',b)$ with the free Lie algebra generated by
$H_1(E)$. These form a variation of MHS over $\M_{1,2}^\an$ when $b$ is a point
of $E'$, and over $\M_{1,\uu}^\an$ when $b$ is a tangential base point at $0$.

There is also a canonical MHS on the unipotent completion of
$\pi_1(E_\tate',\ww_o)$. It is the limit MHS associated to the tangent vector
$\vv_o$ of $\Mbar_{1,2}$ at the identity of the nodal cubic. It has a weight
filtration $W_\dot$ as above, and a relative weight filtration $M_\dot$. This
limit MHS is computed explicitly in \cite[\S18]{hain:kzb}.

Denote the commutator subalgebra of $\p = \p(E',b)$ by $\p'$ and its commutator
subalgebra by $\p''$. Then there is a short exact sequence
$$
0 \to H_1(\p') \to \p/\p'' \to H_1(E) \to 0
$$
in the category of Lie algebras with mixed Hodge structure. There is a canonical
isomorphism of mixed Hodge structures
$$
H_1(\p') \cong [\Sym H_1(E)](1) = \bigoplus_{m\ge 0} S^m H^1(E)(m+1).
$$
Consequently, these variations yield extensions
$$
0 \to S^m \H(m+1) \to \V_m \to \H(1) \to 0
$$
of variations of MHS over $\M_{1,2}^\an$ and $\M_{1,\uu}^\an$ for each $m\ge 0$.
But since the log of monodromy about the cusp of $E'$ lies in $\p'$ and spans
the copy of $\Q(1)$ in $H_1(\p')$, the monodromy of $\V_m$ about any loop in a
fiber of $\M_{1,\uu}^\an \to \M_{1,1}^\an$ is  trivial. Consequently, the
restriction of this extension to $\M_{1,\uu}^\an$ descends to a variation over
$\M_{1,1}^\an$. 

By standard manipulations, each of these variations is equivalent to a variation
\begin{equation}
\label{eqn:extn_p}
0 \to S^{m+1} \H(m+1) \oplus S^{m-1}\H(m) \to \bE_m \to \Q \to 0.
\end{equation}
Similar constructions can be made in the $\ell$-adic case.

The following result follows from Proposition~\ref{prop:coho_vec} when $\ast
=\uu$ and the vanishing of $H^1(\M_{1,\ast}^\an,\Q)$ when $\ast\neq \uu$.

\begin{proposition}
\label{prop:unipt_mcg}
The unipotent completion of $\pi_1^\un(\M_{1,\ast}^\an,x)$ is trivial when $\ast
\in \{1,2\}$ and is isomorphic to $\Ga$ when $\ast = \uu$.
\end{proposition}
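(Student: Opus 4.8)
The plan is to deduce the proposition from the standard description of the Malcev (prounipotent) completion of a finitely generated group in terms of its first rational cohomology, together with the cohomology computations recorded above and in Proposition~\ref{prop:coho_vec}.

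First I recall the general fact I will use. Let $\G$ be a finitely generated discrete group, let $F$ be a field of characteristic zero, and write $\G^\un_{/F}$ for the unipotent completion of $\G$ over $F$, with pronilpotent Lie algebra $\u$. Then there is a natural isomorphism $H_1(\u) \cong H_1(\G,F)$, and $\u$ is topologically generated by any lift of $H_1(\u)$; equivalently, there is a continuous surjection from the completed free Lie algebra on the vector space $H^1(\G,F)^\vee$ onto $\u$. Two consequences suffice here. First, if $H^1(\G,F) = 0$, then $\u = 0$, so $\G^\un_{/F}$ is trivial. Second, if $\dim_F H^1(\G,F) = 1$, then $\u$ is a continuous quotient of the completed free Lie algebra on a one-dimensional space; but that free Lie algebra is already the one-dimensional abelian Lie algebra $F$ (there are no brackets to form), so $\u$ is either $0$ or $F$, and since $H_1(\u) \cong H^1(\G,F)^\vee \neq 0$ we get $\u \cong F$, whence $\G^\un_{/F} \cong \Ga$.

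Next I apply this with $F = \Q$ and $\G = \pi_1^\top(\M_{1,\ast}^\an,x)$, using the identification $H^\dot(\M_{1,\ast}^\an,\Q) \cong H^\dot(\G,\Q)$ valid for the orbifold $\M_{1,\ast}^\an$, exactly as in the discussion of $\M_{1,1}^\an$ above. For $\ast = 1$ we have $\G = \SL_2(\Z)$ and $H^1(\M_{1,1}^\an,\Q) = 0$, since $H_1(\SL_2(\Z)) = \Z/12$ is finite; so the completion is trivial. For $\ast = 2$, the Leray spectral sequence of $\M_{1,2} \to \M_{1,1}$ with $\Q$-coefficients contributes to $H^1(\M_{1,2}^\an,\Q)$ only through $E_2^{1,0} = H^1(\M_{1,1}^\an,\Q) = 0$ and $E_2^{0,1} = H^0(\M_{1,1}^\an,\H) = 0$ (there are no nonzero $\SL_2(\Z)$-invariants in the fiber $H$, as $-I$ acts by $-1$), so $H^1(\M_{1,2}^\an,\Q) = 0$ and the completion is again trivial. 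For $\ast = \uu$, Proposition~\ref{prop:coho_vec} gives $H^1(\M_{1,\uu}^\an,\Q) \cong \Q(-1)$, which is one-dimensional, so the second consequence above yields $\pi_1^\un(\M_{1,\uu}^\an,x) \cong \Ga$.

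The one point needing care is the general statement in the second paragraph, and specifically the remark that a pronilpotent Lie algebra topologically generated by a one-dimensional space must itself be one-dimensional abelian; this is what upgrades a bound on the abelianization into the precise identification with $\Ga$ in the case $\ast = \uu$. Everything else is routine bookkeeping with cohomology groups already at hand, so apart from that observation I anticipate no obstacles.
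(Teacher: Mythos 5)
Your proof is correct and takes essentially the same route as the paper's one-sentence remark, which simply cites the vanishing of $H^1(\M_{1,\ast}^\an,\Q)$ for $\ast \in \{1,2\}$ and Proposition~\ref{prop:coho_vec} for $\ast=\uu$. You have filled in the standard bookkeeping that the paper leaves implicit: the identification of $H_1$ of the Malcev Lie algebra with $H_1(\G,\Q)$, the Leray spectral sequence computation showing $H^1(\M_{1,2}^\an,\Q)=0$, and the elementary observation that a pronilpotent Lie algebra topologically generated by a one-dimensional space is abelian of dimension one, which pins down the $\ast=\uu$ case precisely.
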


\subsection{Relative completion of $\pi_1^\top(\M_{1,\ast},\vv_o)$}

Denote by $\cG_\ast^\rel(x)$ \label{def:Grel} the completion of
$\pi_1^\top(\M_{1,\ast},x)$ with respect to the natural homomorphism
$\pi_1^\top(\M_{1,\ast},x) \to \SL(H_x^B)$. It is an extension
$$
1 \to \U_\ast^\rel(x) \to \cG_\ast^\rel(x) \to \SL(H_x) \to 1,
$$
where $\U_\ast^\rel(x)$ is prounipotent and $\SL(H_x)$ denotes $\SL(H_x^B)$. 

\begin{theorem}[\cite{hain:malcev,hain:modular}]
For each choice of base point $x$ of $\M_{1,\ast}^\an$, the coordinate ring of
$\cG_\ast^\rel(x)$ has a natural mixed Hodge structure. The coordinate rings
$\{\O(\cG_\ast^\rel(x))\}_x$ form an ind-object of $\MHS(\M_{1,\ast},\H)$. The
MHS on the coordinate ring of $\cG_\ast^\rel(\vv_o)$ is the limit MHS of this
VMHS associated to the tangent vector $\vv_o$. In this case, the weight
filtration is the relative weight filtration $M_\dot$.
\end{theorem}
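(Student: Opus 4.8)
The plan is to recall, in the situation at hand, the construction of the mixed Hodge structure on a relative Malcev completion from \cite{hain:malcev}, together with its specialization to the modular group in \cite{hain:modular}. By definition $\cG_\ast^\rel(x)$ is the tannakian fundamental group of $\cR(\pi_1^\top(\M_{1,\ast},x),\rho)$, so $\O(\cG_\ast^\rel(x))$ is the Hopf algebra of matrix entries of objects of that category; each such object is the monodromy representation of a $\Q$-local system over $\M_{1,\ast}^\an$ all of whose graded pieces, for the filtration by $\SL(H)$-isotypic components, are sums of the $S^m\H$. The first step is the description, going back to Chen and carried out for relative completions in \cite{hain:malcev}, of $\O(\cG_\ast^\rel(x))$ as $H^0$ of the reduced bar construction on a differential graded algebra with coefficients whose cohomology is $\bigoplus_m H^\dot(\M_{1,\ast}^\an,S^m\H)$, based at $x$. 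On the level of associated graded Lie algebras this reduces the structure of $\u:=\Lie\U_\ast^\rel(x)$ to these cohomology groups: by Proposition~\ref{prop:coho}, $H^1(\u)$ is dual to $\bigoplus_m (S^mH)^\vee\otimes H^1(\M_{1,\ast}^\an,S^m\H)$ and the relations are carried by the corresponding $H^2$'s.

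Second, I would put the MHS on $\O(\cG_\ast^\rel(x))$. The variation $\H$ is a polarized VHS of weight $1$ over $\M_{1,\ast}^\an$ whose monodromy image is $\SL_2(\Z)$, which is Zariski dense in $\SL(H)\cong\SL_2$, so the main theorem of \cite{hain:malcev} applies. Concretely, one replaces the de~Rham data in the first step by the Deligne mixed Hodge complex on $\Mbar_{1,\ast}$ computing $H^\dot(\M_{1,\ast}^\an,S^\dot\H)$ --- whose cohomology carries the Eichler--Shimura MHS of Theorem~\ref{thm:eichler_shimura} --- and applies the reduced bar construction to obtain a mixed Hodge complex whose $H^0$ is $\O(\cG_\ast^\rel(x))$ together with a MHS compatible with the Hopf algebra structure. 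By Theorem~\ref{thm:eichler_shimura} the weight-graded quotients of $H^\dot(\M_{1,1}^\an,S^{2n}\H)$ are of the form $S^m\H\otimes A$ with $A$ a constant Hodge structure, hence the same is true of $\Gr^W_\dot\O(\cG_\ast^\rel(x))$. Letting $x$ vary and using the functoriality of the bar construction together with the admissibility criteria of Steenbrink--Zucker, Kashiwara and Saito, the family $\{\O(\cG_\ast^\rel(x))\}_x$ is an admissible variation of MHS, and by the previous sentence it lies in $\MHS(\M_{1,\ast},\H)$. Finiteness of each weight-graded piece, which is needed for the ind-object to be well defined, follows from the finite-dimensionality of the $H^1(\M_{1,1}^\an,S^{2n}\H)$ in Theorem~\ref{thm:eichler_shimura} together with Proposition~\ref{prop:coho}.

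Third, for $x=\vv_o$ I would identify $\O(\cG_\ast^\rel(\vv_o))$, with its natural MHS, with the limit MHS of this family at the cusp. The bar construction based at the tangential base point $\vv_o$ is computed from the same mixed Hodge complex on $\Mbar_{1,\ast}$, now via the residue data at $q=0$; this is the limit-over-a-tangential-base-point construction, so the resulting MHS on $\O(\cG_\ast^\rel(\vv_o))$ is by construction the limit MHS of $\{\O(\cG_\ast^\rel(x))\}_x$ in the sense of Schmid and Steenbrink--Zucker. Its weight filtration is then, by the nilpotent orbit theorem, the relative weight filtration of the monodromy logarithm $N=\log\sigma_o$ about $q=0$ relative to the weight filtration $W_\dot$ of $\O(\cG_\ast^\rel(\vv_o))$ coming from the weight of the family; this is the filtration denoted $M_\dot$.

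The technical heart, and the main obstacle, is the second step: constructing the MHS on $\O(\cG_\ast^\rel(x))$ from the bar construction, checking that it is independent of the auxiliary choices, and verifying that it is multiplicative and comultiplicative. This is exactly what is carried out in \cite{hain:malcev} in general and, for the modular group, in \cite{hain:modular}, using Hodge theory of iterated integrals. Once it is in place, the variation statement and the identification of the limit MHS at $\vv_o$ together with its relative weight filtration are formal consequences of admissibility and of the standard theory of limit mixed Hodge structures.
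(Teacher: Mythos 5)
This theorem is stated in the paper with a citation to \cite{hain:malcev,hain:modular} and carries no proof of its own, so there is nothing internal to compare your argument against. That said, your outline is an accurate reconstruction of what those references do: $\O(\cG_\ast^\rel(x))$ as $H^0$ of a (reduced) bar construction on a suitable DGA, with the MHS obtained by running the same bar construction on a mixed Hodge complex computing $H^\dot(\M_{1,\ast}^\an,S^\dot\H)$; admissibility of the resulting family of Hopf algebras via Steenbrink--Zucker/Kashiwara; and the identification of the MHS at the tangential base point $\vv_o$ as the limit MHS, with $M_\dot$ the relative weight filtration of $\log\sigma_o$. One minor imprecision worth flagging: you describe the relevant filtration on an object of $\cR(\pi_1^\top(\M_{1,\ast},x),\rho)$ as ``the filtration by $\SL(H)$-isotypic components.'' In the category in question the filtration is one whose graded quotients are $\SL(H)$-representations (and hence sums of $S^m\H$), which is not the same thing as a decomposition into isotypic pieces; the objects themselves need not be semisimple as $\SL(H)$-modules. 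This does not affect the substance of the argument, since what matters is only that $\Gr^W_\dot$ of the resulting Hopf algebras consists of sums of polarizable variations $S^m\H\otimes A$, which your second step establishes via the Eichler--Shimura description of the cohomology.
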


Denote the Lie algebra of $\cG_\ast^\rel(x)$ by $\g_\ast^\rel(x)$ and the Lie
algebra of $\U_\ast^\rel(x)$ by $\u_\ast^\rel(x)$. The Lie algebra
$\g_\ast^\rel(x)$ is dual to the cotangent space $\m/\m^2$, where $\m$ denotes
the ideal of $\O(\cG_\ast^\rel(x))$ that vanish at the identity. Its bracket is
induced by the skew symmetrized coproduct of $\O(\cG_\ast^\rel(x))$. The theorem
implies that $\g_\ast^\rel(x)$ is an inverse limit of Lie algebras with mixed
Hodge structure and that $\u_\ast^\rel(x)$ is a pronilpotent Lie algebra in the
category of mixed Hodge structures. These mixed Hodge structures have the
property that
$$
\g_\ast^\rel(x) = W_0\g_\ast^\rel(x) \text{ and }
\u_\ast^\rel(x) = W_{-1}\g_\ast^\rel(x).
$$

We now fix the base point of $\M_{1,\ast}^\an$ to be $\vv_o$ and write
$\cG_\ast^\rel$ for $\cG_\ast(\vv_o)$, $\U_\ast^\rel$ for $\U_\ast^\rel(\vv_o)$
and similarly for their Lie algebras. The coordinate ring of $\cG_\ast^\rel$ and
each of the Lie algebras has two weight filtrations, $W_\dot$ and $M_\dot$. Both
are compatible with the algebraic operations.

Since $\pi_1^\top(\M_{1,1},\tate) \cong \SL_2(\Z)$ has virtual cohomological
dimension 1, Proposition~\ref{prop:coho} implies that $H^2(\u_1) = 0$. This
implies that $\u_1$ is topologically freely generated by $H_1(\u)$. (See the
discussion in Section~\ref{sec:presentations}.) Proposition~\ref{prop:coho}
also implies that there are natural $\SL(H)$-module isomorphisms
$$
H^1(\u_1^\rel) \cong
\bigoplus_{n\ge 1} H^1(\M_{1,1}^\an,S^{2n}\H)\otimes (S^{2n}H)^\vee
\cong \bigoplus_{n\ge 1} H^1(\M_{1,1}^\an,S^{2n}\H)\otimes (S^{2n}H)(2n).
$$
Both are isomorphisms of MHS.

Recall from \cite[\S14]{hain:modular} that the exact sequence
$$
0 \to \Z \to \pi_1^\top(\M_{1,\uu},\vv_o) \to \pi_1^\top(\M_{1,1},\tate) \to 1
$$
induces an exact sequence
$$
0 \to \Ga \to \cG_\uu^\rel \to \cG_1^\rel \to 1
$$
and that this sequence splits, so that there is a natural isomorphism
$$
\cG_\uu^\rel \cong \cG_1^\rel \times \Ga.
$$
The projection onto $\Ga$ is induced by the Hurewicz homomorphism
$$
\pi_1^\top(\M_{1,\uu},\vv_o) \to H_1(\M_{1,\uu}^\an,\Q) \cong \Q,
$$
and is obtained by integrating the 1-form (\ref{eqn:G2}) along loops.

Applying completion to the extension
$$
1 \to \pi_1(E'_\tate,\ww_o) \to \pi_1^\top(\M_{1,2},\vv_o)
\to \pi_1^\top(\M_{1,1},\tate) \to 1
$$
gives an extension
$$
1 \to \cP \to \cG_2^\rel \to \cG_1^\rel \to 1
$$
where $\cP$ is the unipotent completion of $\pi_1(E'_\tate,\ww_o)$. The
corresponding sequence of Lie algebras
$$
0 \to \p \to \g_2^\rel \to \g_1^\rel \to 0
$$
is exact in the category of pro-objects of $\MHS_\Q$.

\begin{proposition}
There are natural $\SL(H)$-equivariant isomorphisms of MHS
$$
H^1(\u_1) \cong \bigoplus_{n\ge 1}
\Big(H^1_\cusp(\M_{1,1}^\an,S^{2n}\H)(2n)\oplus \Q(-1)\Big)\otimes S^{2n}H,
$$
$$
H^1(\u_\uu) \cong H^1(\u_1)\oplus \Q(-1)
\text{ and } H^1(\u_2) \cong H^1(\u_1) \oplus H. \qed
$$
\end{proposition}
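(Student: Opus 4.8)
The plan is to establish the three isomorphisms of the proposition in turn (the object $\u$ of the first displayed isomorphism being $\u_1^\rel$). The first is essentially a reformatting of the identity for $H^1(\u_1^\rel)$ already extracted above from Proposition~\ref{prop:coho}: I would combine the $\SL(H)$-equivariant isomorphism of mixed Hodge structures
$$
H^1(\u_1^\rel) \cong \bigoplus_{n\ge 1} H^1(\M_{1,1}^\an,S^{2n}\H)\otimes (S^{2n}H)(2n)
$$
with the $\Q$-splitting $H^1(\M_{1,1}^\an,S^{2n}\H) \cong H^1_\cusp(\M_{1,1}^\an,S^{2n}\H)\oplus\Q(-2n-1)$ supplied by Theorem~\ref{thm:eichler_shimura} (the Eisenstein line being spanned by $\psi_{2n+2}$), and then distribute the twist $(2n)$ over the two summands. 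This step is bookkeeping only.

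For $H^1(\u_\uu)$ I would use the product decomposition $\cG_\uu^\rel \cong \cG_1^\rel \times \Ga$ recalled above from \cite{hain:modular}, an isomorphism of affine group schemes in $\MHS$ compatible with the quotient maps to $\SL(H)$, on which $\Ga$ acts trivially. Passing to Lie algebras gives $\u_\uu^\rel \cong \u_1^\rel \times \Lie\Ga$, and the K\"unneth formula in degree one yields $H^1(\u_\uu^\rel) \cong H^1(\u_1^\rel)\oplus H^1(\Lie\Ga)$, compatibly with the $\SL(H)$-action and the Hodge structures. Since the projection $\cG_\uu^\rel \to \Ga$ is induced by the Hurewicz homomorphism $\pi_1^\top(\M_{1,\uu},\vv_o) \to H_1(\M_{1,\uu}^\an,\Q)$ and $H_1(\M_{1,\uu}^\an,\Q)\cong\Q(1)$ by Proposition~\ref{prop:coho_vec}, one has $\Lie\Ga\cong\Q(1)$ with trivial $\SL(H)$-action, so $H^1(\Lie\Ga)\cong\Q(-1)$.

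For $H^1(\u_2)$, restricting the exact sequence $0\to\p\to\g_2^\rel\to\g_1^\rel\to 0$ to prounipotent radicals gives an extension $0\to\p\to\u_2^\rel\to\u_1^\rel\to 0$ of pronilpotent Lie algebras in the category of $\SL(H)$-equivariant pro-mixed Hodge structures (the Lie algebra $\p$ of the prounipotent group $\cP$ lies in $\u_2^\rel$). I would run the five-term exact sequence of the Hochschild--Serre spectral sequence for the ideal $\p$,
$$
0\to H^1(\u_1^\rel)\to H^1(\u_2^\rel)\to H^1(\p)^{\u_1^\rel}\to H^2(\u_1^\rel),
$$
whose last term vanishes because $\SL_2(\Z)$ has virtual cohomological dimension one, so Proposition~\ref{prop:coho} gives $H^2(\u_1^\rel)=0$. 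The third term is computed from $H_1(\p)=\p/\p'\cong H_1(E_\tate)\cong H^\vee$ (the isomorphism induced by $E'_\tate\hookrightarrow E_\tate$), whence $H^1(\p)\cong H$; and since $\u_1^\rel$ has strictly negative weights and preserves the weight filtration of $\p$, it acts trivially on $\Gr^W_\dot\p$, hence on $\p/\p'$ and on $H^1(\p)$, giving $H^1(\p)^{\u_1^\rel}\cong H$. This leaves a short exact sequence $0\to H^1(\u_1^\rel)\to H^1(\u_2^\rel)\to H\to 0$ of $\SL(H)$-equivariant mixed Hodge structures. By the first isomorphism the only $\SL(H)$-irreducibles occurring in $H^1(\u_1^\rel)$ are the $S^{2n}H$ with $n\ge 1$; in particular $H\cong S^1H$ does not occur, so $\Hom_{\SL(H)}\big(H,H^1(\u_1^\rel)\big)=0$ and the sequence splits in the category of $\SL(H)$-equivariant mixed Hodge structures, giving $H^1(\u_2^\rel)\cong H^1(\u_1^\rel)\oplus H$.

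I expect the $\u_2$ case to be the main obstacle: one has to pin down $H^1(\p)$ together with its $\u_1^\rel$-module structure and its Hodge structure (the inputs being the identification of $\p/\p'$ and the negativity of the weights of $\u_1^\rel$), and then argue that the resulting extension of $\SL(H)$-equivariant mixed Hodge structures splits --- the decisive point being the absence of odd symmetric powers of $H$ from $H^1(\u_1^\rel)$. Checking that the Hochschild--Serre and K\"unneth arguments remain valid in the pro-category and respect the mixed Hodge structures should be routine given the results cited above.
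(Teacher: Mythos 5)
Your decomposition and the route for the first two isomorphisms match what the paper leaves to the reader (the paper supplies no argument beyond the preceding displays, ending the proposition with \qed). For $H^1(\u)$, combining the $\SL(H)$-equivariant isomorphism $H^1(\u_1^\rel) \cong \bigoplus_{n\ge 1} H^1(\M_{1,1}^\an,S^{2n}\H)\otimes (S^{2n}H)(2n)$ with the $\Q$-splitting of $H^1(\M_{1,1}^\an,S^{2n}\H)$ from Theorem~\ref{thm:eichler_shimura} and redistributing the twist is exactly the intended bookkeeping. For $H^1(\u_\uu)$, the K\"unneth argument via $\cG_\uu^\rel\cong\cG_1^\rel\times\Ga$ and $\Lie\Ga\cong\Q(1)$ is also what the paper has in mind.

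For $H^1(\u_2)$, the five-term sequence approach works and your identification of the terms is correct: $H^2(\u_1^\rel)=0$ by the vcd-one argument and Proposition~\ref{prop:coho}, and $H^1(\p)^{\u_1^\rel}\cong H$ because $\u_1^\rel$ has strictly negative $W$-weights and therefore acts trivially on $\Gr^W\p$, hence on $H_1(\p)=\Hdual$. But the sentence ``$\Hom_{\SL(H)}(H,H^1(\u_1^\rel))=0$ and the sequence splits'' elides the real point: vanishing of $\Hom$ by itself shows at most that a splitting is \emph{unique}; you need $\Ext^1$ to vanish in the ambient tensor category. The missing step is short: since the sequence is a sequence of $\pi_1(\MHS)\ltimes\SL(H)$-modules and $\SL(H)$ is a normal reductive subgroup, the Hochschild--Serre spectral sequence together with $H^{>0}(\SL(H),-)=0$ gives $\Ext^1(H,H^1(\u_1^\rel))\cong H^1\big(\pi_1(\MHS),\Hom(H,H^1(\u_1^\rel))^{\SL(H)}\big)$, and the coefficient module is zero because $H^\vee\otimes S^{2n}H\cong S^{2n+1}H\oplus S^{2n-1}H$ contains no trivial summand for $n\ge 1$. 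An alternative, and perhaps cleaner, route avoids the splitting question entirely: the composite $\cG_1^\rel\hookrightarrow\cG_\uu^\rel\to\cG_2^\rel$ (using $\cG_\uu^\rel\cong\cG_1^\rel\times\Ga$ and the homomorphism from Section~\ref{sec:restn}) splits the extension $1\to\cP\to\cG_2^\rel\to\cG_1^\rel\to 1$ as affine groups with MHS, so $\u_2^\rel\cong\u_1^\rel\ltimes\p$ and one reads off $H^1(\u_2^\rel)\cong H^1(\u_1^\rel)\oplus H^1(\p)^{\u_1^\rel}$ directly from the degeneration of the spectral sequence in degree one, with no extension class to worry about.
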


\subsection{Variations of MHS over $\M_{1,\ast}^\an$}

The category $\MHS$ of $\Q$-MHS is neutral tannakian. Let $\w^B$ be the fiber
functor that takes a MHS to its underlying $\Q$-vector space. Set $\pi_1(\MHS) =
\pi_1(\MHS,\w^B)$.

Suppose that $\cG$ is an affine group scheme over $\Q$ whose coordinate ring is
a Hopf algebra in the category of ind-objects of $\MHS$. A {\em Hodge
representation} of $\cG$ is a $\cG$-module $V$, endowed with a MHS for which the
corresponding coaction
$$
V \to V \otimes \O(\cG)
$$ 
is a morphism of MHS. The action of $\pi_1(\MHS)$ on $\O(\cG)$ respects its Hopf
algebra structure. We can thus form the semi-direct product $\pi_1(\MHS)\ltimes
\cG$. The Hodge representations of $\cG$ are precisely the representations of
$\pi_1(\MHS)\ltimes\cG$.

Recall the definition of $\MHS(\M_{1,\ast},\H)$ from the beginning of Section~\ref{sec:hodge}.
A proof of the following result is sketched in \cite{hain:db_coho}. A more
detailed proof of a more general result will appear in a joint paper with
Gregory Pearlstein.

\begin{theorem}
Taking the fiber at the base point $x$ defines an equivalence of categories from
$\MHS(\M_{1,\ast},\H)$ to the category of Hodge representations of
$\cG_\ast^\rel(x)$. Equivalently, for all base points, there is a natural
isomorphism
$$
\pi_1(\MHS(\M_{1,\ast},\H),\w_x^B) \cong
\pi_1(\MHS)\ltimes \cG_\ast^\rel(x),
$$
where $\w_x^B$ denotes the fiber functor that takes a variation to the 
$\Q$-vector space underlying its fiber over $x$.
\end{theorem}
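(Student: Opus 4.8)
The plan is to realise ``taking the fibre at $x$'' as an exact faithful tensor functor $\Phi$ from $\MHS(\M_{1,\ast},\H)$ to the category of Hodge representations of $\cG_\ast^\rel(x)$, and then to verify the two conditions that make such a functor an equivalence: that it be fully faithful, and that every object of the target be a subquotient of one in its image. Write $\Gamma := \pi_1^\top(\M_{1,\ast},x)$ and $\rho : \Gamma \to \SL(H_x^B)$ for the monodromy of $\H$. Given an admissible variation $\V$ in $\MHS(\M_{1,\ast},\H)$, its fibre $V_x$ comes with the fibre mixed Hodge structure and the monodromy action of $\Gamma$; since $W_\dot$ is monodromy--stable with $\Gr^W_\dot\V$ a sum of copies of $S^m\H\otimes A$, the filtration $W_\dot$ on $V_x$ exhibits it as an object of $\cR(\Gamma,\rho)$ --- on each $\Gr^W_k V_x$ the $\Gamma$-action factors through $\rho$ followed by a rational representation of $\SL(H_x^B)$, the constant factors $A$ carrying the trivial action --- so the $\Gamma$-action extends uniquely to a $\cG_\ast^\rel(x)$-action on $V_x$. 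That the resulting coaction $V_x \to V_x\otimes\O(\cG_\ast^\rel(x))$ is a morphism of mixed Hodge structures, so that $V_x$ is genuinely a Hodge representation, is where I would invoke the preceding theorem of \cite{hain:malcev,hain:modular}: this coaction is the value at $x$ of the canonical morphism of variations from $\V$ to the tensor product of $\V$ with the universal variation $\{\O(\cG_\ast^\rel(y))\}_y$, which is an ind-object of $\MHS(\M_{1,\ast},\H)$, and a morphism of variations is a morphism of MHS on each fibre (a careful version of this compatibility is sketched in \cite{hain:db_coho}). That $\Phi$ is then $\Q$-linear, exact, faithful, tensor--compatible, and compatible with the fibre functor $\w_x^B$ on the source and the forgetful functor on the target is immediate, since morphisms of variations of MHS are strict and the underlying--vector--space functor is exact and faithful. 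Once $\Phi$ is an equivalence, the displayed isomorphism follows, because the Hodge representations of $\cG_\ast^\rel(x)$ are exactly the objects of $\Rep\bigl(\pi_1(\MHS)\ltimes\cG_\ast^\rel(x)\bigr)$, as recorded above the statement; naturality in $x$ is clear, a path from $x$ to $x'$ conjugating $\Gamma$, $\cG_\ast^\rel$ and the fibre functors compatibly.

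To prove $\Phi$ fully faithful I would follow the reduction used in the proof of Theorem~\ref{thm:surjective}: since $\Hom_\cC(\bA,\bB)\cong\Hom_\cC\bigl(\Q(0),\Hom_\Q(\bA,\bB)\bigr)$ in each of the two (rigid, tensor) categories, it is enough to show that for every $\V$ the restriction map
$$
\Hom_{\MHS(\M_{1,\ast},\H)}(\Q(0),\V)\longrightarrow \Hom_{\cG_\ast^\rel(x)}(\Q(0),V_x)
$$
is a bijection. The right--hand side is $(V_x)^{\cG_\ast^\rel(x)}\cap W_0V_x\cap F^0V_{x,\C}$, and since $\Gamma\to\cG_\ast^\rel(x)(\Q)$ is Zariski dense the $\cG_\ast^\rel(x)$-invariants are the monodromy invariants, i.e.\ the flat global sections of $\cV$. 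Injectivity is clear, a flat section being determined by one of its values. Surjectivity is the theorem of the fixed part for admissible variations of MHS over $\M_{1,\ast}^\an$: the space of flat global sections of $\V$ carries a natural MHS whose inclusion into $V_x$ is an injective morphism of MHS, so a flat section lying in $W_0V_x\cap F^0V_{x,\C}$ generates a copy of $\Q(0)$ there and hence defines a morphism of variations $\Q(0)\to\V$. This is the step requiring genuine analytic input; it rests on admissibility and on Schmid's nilpotent orbit theorem \cite{schmid} at the cusp, and in the present concrete setting is part of \cite{hain:modular,hain:db_coho}.

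For essential surjectivity I would apply the standard criterion that a fibre-functor-compatible exact faithful $\Q$-linear tensor functor between neutral tannakian categories is an equivalence once every object of the target is a subquotient of one coming from the source. Let $V$ be a finite--dimensional Hodge representation of $\cG_\ast^\rel(x)$, and let $V^{\mathrm{t}}$ be its underlying mixed Hodge structure with the $\cG_\ast^\rel(x)$-action forgotten; then $V^{\mathrm{t}}$ underlies a constant variation in $\MHS(\M_{1,\ast},\H)$ (its weight graded quotients are pure Hodge structures, i.e.\ variations of the form $S^0\H\otimes A$). The coaction embeds $V$, as a comodule, into the diagonal comodule $V\otimes\O(\cG_\ast^\rel(x))$, and the standard trivialisation identifies the latter with the tensor product of the trivial comodule on $V^{\mathrm{t}}$ and the right-regular comodule; this identification is a morphism in the category of ind-objects of $\MHS$, being built from the coaction, the antipode and the product of $\O(\cG_\ast^\rel(x))$, all of which are morphisms of ind-objects of $\MHS$ by the preceding theorem. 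By that theorem $\{\O(\cG_\ast^\rel(y))\}_y$ is a filtered colimit of finite--rank sub-variations $\bO_j$ in $\MHS(\M_{1,\ast},\H)$, and $\Phi$, being evaluation at $x$, sends this to $\O(\cG_\ast^\rel(x)) = \varinjlim\Phi(\bO_j)$; since $V$ is finite--dimensional, its image under the embedding above lands in the tensor product of $V^{\mathrm{t}}$ with $\Phi(\bO_j)$ for some $j$, which is in the image of $\Phi$. Hence $V$ is a subobject of an object coming from the source, and the criterion yields the equivalence.

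The place where all the real Hodge theory sits is the preceding theorem itself: the admissibility of the variations in $\MHS(\M_{1,\ast},\H)$ and the compatibility of the limit mixed Hodge structures at the cusp with the relative completion, for which I would refer to \cite{hain:malcev,hain:modular,hain:db_coho}. Granting that, the only remaining step of the present argument that is not purely formal tannakian bookkeeping is the full faithfulness of $\Phi$, via the theorem of the fixed part; constructing $\Phi$ and the subquotient argument for essential surjectivity are then routine.
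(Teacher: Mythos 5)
The paper does not actually prove this theorem: it states only that a proof is sketched in \cite{hain:db_coho} and that a detailed proof of a more general result will appear in joint work with Pearlstein, so there is no internal argument to compare yours against. Judged on its own terms, your outline is the natural tannakian one, and you correctly isolate the two genuine Hodge-theoretic inputs: the ind-variation structure on $\{\O(\cG_\ast^\rel(y))\}_y$ with MHS-compatible coaction (which you, like the paper, outsource to \cite{hain:db_coho}), and the theorem of the fixed part, which drives full faithfulness.

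There is, however, a genuine gap in the equivalence criterion you invoke. Full faithfulness together with ``every object of the target is a subquotient of an object in the essential image'' does not suffice for an exact faithful tensor functor compatible with fibre functors to be an equivalence; one also needs the essential image to be closed under subobjects, which is what makes the induced homomorphism of tannakian fundamental groups faithfully flat rather than merely a closed immersion. The restriction functor $\Rep(\SL_2)\to\Rep(B)$ to a Borel subgroup $B$ defeats the criterion exactly as you state it: it is fully faithful (for any representation $W$ of a reductive group one has $W^B=W^{\SL_2}$, since the $B$-invariants of a nontrivial irreducible vanish), and every $B$-module is a subquotient of a restricted $\SL_2$-module, yet it is not an equivalence --- the highest weight line of a nontrivial irreducible is a $B$-subobject that is the restriction of no $\SL_2$-subobject. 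In your situation the missing condition amounts to showing that every $\cG_\ast^\rel(x)$-stable sub-MHS of a fibre $V_x$ underlies an admissible sub-variation in $\MHS(\M_{1,\ast},\H)$: the flat sub-local system exists by Zariski density of the monodromy, but one must check that the induced Hodge and weight filtrations define a sub-MHS at every fibre, not only at $x$. That is another rigidity/fixed-part statement of the same nature as the one you use for full faithfulness, not routine bookkeeping, and it needs to be proved or explicitly cited alongside the rest.
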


This result applies to tangential base points as well. This follows as the
weight filtration $W_\dot$ can be recovered from the relative weight filtration
$M_\dot$ and the monodromy logarithm $N$. (Cf.\ \cite[Lem.~4.3]{hain:db_coho}.)
It implies that a Hodge representation of $\cG_\ast^\rel$ must also respect the
weight filtration $W_\dot$.

\begin{proposition}
\label{prop:m=w}
If $\V$ is an admissible variation of mixed Hodge structure over
$\M_{1,\ast}^\an$ whose monodromy representation factors through the relative
completion of $\pi_1(\M_{1,\ast}^\an,\vv_o)$, then the weight filtration
$W_\dot$ and the relative weight filtration $M_\dot$ on the fiber over the
tangential base point $\vv_o$ are equal if and only if $\V$ has unipotent
monodromy. If $\ast \in \{1,2\}$, every admissible variation with unipotent
monodromy is constant. If $\ast = \uu$, every admissible unipotent variation is 
pulled back from an admissible unipotent variation of MHS over $\C^\ast$ along
the discriminant map $\M_{1,\uu}^\an \to \C^\ast$.
\end{proposition}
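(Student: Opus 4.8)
The plan is to use the Theorem immediately preceding the statement to translate the problem into one about Hodge representations of the relative completion $\cG_\ast^\rel$: the hypothesis that the monodromy of $\V$ factors through relative completion says exactly that $\V$ corresponds to a Hodge representation $V$ of $\cG_\ast^\rel = \SL(H)\ltimes\U_\ast^\rel$. On the fiber $V_o=V$ one then has two filtrations: the specialization $W_\dot$ of the weight filtration of $\V$, which as a filtration of the module $V$ is preserved by $\SL(H)$ and lowered by $\u_\ast^\rel$; and the relative weight filtration $M_\dot=M(N,W_\dot)$, where $N=\log\sigma_o\in\g_\ast^\rel$ is the monodromy logarithm about $e_o$, whose image in $\sl(H)$ is the nilpotent $\aa\,\partial/\partial\bb$. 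The first step is the translation $\V$ \emph{has unipotent monodromy} $\iff$ $\SL(H)$ \emph{acts trivially on} $V$: since $\pi_1^\top(\M_{1,\ast},\vv_o)\to\cG_\ast^\rel(\Q)$ has Zariski dense image, the image of $\cG_\ast^\rel$ in $\GL(V)$ is the Zariski closure of the monodromy image; if the latter is unipotent, its restriction to the semisimple subgroup $\SL(H)$ is trivial, and conversely if $\SL(H)$ acts trivially the monodromy factors through the image of $\U_\ast^\rel$, which is unipotent.

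For the ``only if'' half of the equivalence I would argue by contraposition. If $\SL(H)$ does not act trivially on $V$, then it does not act trivially on the semisimple module $\Gr^W_\dot V$, on which $\u_\ast^\rel$ acts trivially; so $\Gr^W_\dot V$ contains an irreducible $\SL(H)$-submodule $S^mH$ with $m\ge 1$, and $N$ acts on $\Gr^W_\dot V$ through $\sl(H)$, i.e.\ through the image of $\log\bigl(\begin{smallmatrix}1&1\\0&1\end{smallmatrix}\bigr)=\aa\,\partial/\partial\bb$, whose induced action on $S^mH$ is nonzero. Hence $N$ induces a nonzero endomorphism of some $\Gr^W_kV$, so $N W_kV\not\subseteq W_{k-1}V$ and a fortiori $N W_kV\not\subseteq W_{k-2}V$; since $M(N,W_\dot)=W_\dot$ holds precisely when $NW_kV\subseteq W_{k-2}V$ for all $k$, this gives $M_\dot\ne W_\dot$. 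For the ``if'' half I would deduce it from the structural statements below: when $\ast\in\{1,2\}$ a variation with unipotent monodromy is constant, and the limit mixed Hodge structure at $\vv_o$ of a constant variation is that constant one, for which $M_\dot=W_\dot$ trivially; when $\ast=\uu$ the variation is $D^\ast\mathcal{W}$ for an admissible unipotent variation $\mathcal{W}$ over $\C^\ast$, and then $W_\dot$ and $M_\dot$ for $\V$ at $\vv_o$ coincide with those for $\mathcal{W}$ at the corresponding tangential base point of $\C^\ast$, where equality holds because the Lie algebra of $\pi_1^\un(\C^\ast)$ is generated in weight $-2$ (as $H^1(\C^\ast)\cong\Q(-1)$) and so lowers $W_\dot$ by at least $2$.

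The structural statements (the last two sentences) I would prove by reducing, in all three cases, to the computation of $H_1(\u_\ast^\rel)$ as an $\SL(H)$-module. Assume $\SL(H)$ acts trivially on $V$. Filtering by $W_\dot$ and using that $\u_\ast^\rel$ lowers weights shows $V$ is an iterated extension of trivial modules, so the image $U$ of $\cG_\ast^\rel$ in $\GL(V)$ is unipotent with $\SL(H)\to U$ trivial; the derivative $\u_\ast^\rel\to\Lie U$ is then $\SL(H)$-equivariant with trivial target action, hence factors through the coinvariants $(\u_\ast^\rel)_{\SL(H)}$, whose abelianisation is $\bigl(H_1(\u_\ast^\rel)\bigr)_{\SL(H)}$. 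By the Proposition computing the $H^1(\u_\ast^\rel)$, the module $H_1(\u_\ast^\rel)$ is, up to Tate twists, a sum of copies of $S^{2n}H$ ($n\ge 1$) together with, for $\ast=2$, one copy of $H$, and for $\ast=\uu$, one copy of the trivial module $\Q(1)$. When $\ast\in\{1,2\}$ there is no nonzero trivial summand, so $(\u_\ast^\rel)_{\SL(H)}=0$, the module $V$ is trivial, and $\V$ is constant. When $\ast=\uu$ one uses $\cG_\uu^\rel\cong\cG_1^\rel\times\Ga$, where the $\Ga$-factor is the image of $\pi_1^\un(\C^\ast)$ under $D$ (the projection to $\Ga$ being integration of $\psi_2=-\tfrac1{24}\,dD/D$, cf.\ Proposition~\ref{prop:coho_vec}); the same argument applied to the factor $\cG_1^\rel$ shows it acts trivially on $V$, so $V$ is a Hodge representation of $\Ga=\pi_1^\un(\C^\ast)$ pulled back along $D$, i.e.\ $\V=D^\ast\mathcal{W}$ for an admissible unipotent variation of mixed Hodge structure $\mathcal{W}$ over $\C^\ast$. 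I expect the main obstacle to be bookkeeping: keeping the two weight filtrations $W_\dot$, $M_\dot$ and the semidirect-product structure straight, and verifying carefully (using that $N$ is a $(-1,-1)$-morphism of the limit mixed Hodge structure) that $N$ really lowers $W_\dot$ by $2$ in the unipotent cases; once these are in place, each individual step is short.
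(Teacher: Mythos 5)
Your argument is correct and follows essentially the same route as the paper's: $M_\dot=W_\dot$ forces $N$ to act trivially on $\Gr^W_\dot$, which (since the $W$-graded quotients are sums of $S^m\H(r)$ on which $\aa\,\partial/\partial\bb$ acts nontrivially for $m\ge 1$) happens exactly when the monodromy is unipotent, and the structural statements come down to the unipotent completion of $\pi_1(\M_{1,\ast}^\an,\vv_o)$ being trivial for $\ast\in\{1,2\}$ and isomorphic to $\Ga$, detected by the discriminant, for $\ast=\uu$. The only differences are cosmetic: you rederive that last fact from the $\SL(H)$-coinvariants of $H_1(\u_\ast^\rel)$ instead of citing Proposition~\ref{prop:unipt_mcg}, and you obtain the implication (unipotent $\Rightarrow M_\dot=W_\dot$) from the classification of unipotent variations and the weight $-2$ action of $\pi_1^\un(\C^\ast)$ rather than directly from the defining properties of the relative weight filtration.
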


\begin{proof}
Suppose that $\V$ an admissible variation of MHS over $\M_{1,\ast}^\an$ whose
monodromy factors through the relative completion of
$\pi_1(\M_{1,\ast}^\an,\vv_o)$. Denote its fiber over $\vv_o$ by $V$. Let $N \in
\End V$ be the local monodromy logarithm. The definition of the relative weight
filtration implies that $M_\dot V = W_\dot V$ if and only if $N$ acts trivially
on $\Gr^W_\dot V$. But since each $W$ graded quotient of $V$ is a sum of $S^m
H(r)$, and since $N$ acts on $H$ as $\aa\partial/\partial\bb$, it follows that
$M_\dot = W_\dot$ if and only if $\Gr^W_\dot \V$ is a trivial local system ---
that is, if and only if $\V$ is a unipotent local system.

If $\V$ is a unipotent local system, then its monodromy representation factors
through the unipotent completion of $\pi_1(\M_{1,\ast}^\an,x)$. By
Proposition~\ref{prop:unipt_mcg}, this is trivial when $\ast = 1,2$. So $\V$ is
a trivial local in this case, and therefore trivial as a variation of MHS as
well. When $\ast = \uu$, the discriminant map $\M_{1,\uu}^\an \to \C^\ast$
induces an isomorphism on unipotent fundamental groups. So the last statement
follows from the classification of unipotent variations of MHS in
\cite{hain-zucker}.
\end{proof}

\subsection{Density results}

The forgetful functor
$$
\MHS(\M_{1,\ast},\H) \to 
\cR(\pi_1^\top(\M_{1,\ast},\vv_o),\rho)
$$
induces a homomorphism $\cG_\ast \to \pi_1(\MEM_\ast,\w^B)$. There is thus
a natural homomorphism
$$
\pi_1^\top(\M_{1,\ast},\vv_o) \to \pi_1(\MEM_\ast,\w^B)(\Q).
$$

\begin{theorem}
\label{thm:density}
The image of the natural homomorphism $\cG_\ast^\rel \to \pi_1(\MEM_\ast,\w^B)$
is $\pi_1^\geom(\MEM_\ast,\w^B)$. Consequently, $\pi_1^\geom(\MEM_\ast,\w^B)$ is
the Zariski closure of the image of the natural homomorphism
$\pi_1^\top(\M_{1,\ast},\vv_o) \to \pi_1(\MEM_\ast,\w^B)(\Q)$.
\end{theorem}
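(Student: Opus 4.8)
The plan is to read the statement off from Theorem~\ref{thm:surjective} together with the structure of $\pi_1(\MHS(\M_{1,\ast},\H),\w_o)$ established in the previous subsection. Write $q : \pi_1(\MHS(\M_{1,\ast},\H),\w_o)\twoheadrightarrow\pi_1(\MEM_\ast,\w^B)$ for the surjection of Theorem~\ref{thm:surjective}, and use the isomorphism $\pi_1(\MHS(\M_{1,\ast},\H),\w_o)\cong\pi_1(\MHS)\ltimes\cG_\ast^\rel$, in which $\cG_\ast^\rel$ is the normal factor and is the kernel of the projection $\mathrm{pr}$ onto $\pi_1(\MHS)$ that is induced by the embedding of $\MHS$ into $\MHS(\M_{1,\ast},\H)$ as the constant variations. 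The first thing to check is that the natural homomorphism appearing in the statement is exactly the restriction $\psi:=q|_{\cG_\ast^\rel}$: the forgetful functor $\MEM_\ast\to\cR(\pi_1^\top(\M_{1,\ast},\vv_o),\rho)$ factors through $\MHS(\M_{1,\ast},\H)$, and under the equivalence $\cR(\pi_1^\top(\M_{1,\ast},\vv_o),\rho)\simeq\Rep(\cG_\ast^\rel)$ it is restriction of representations along $\cG_\ast^\rel\hookrightarrow\pi_1(\MHS)\ltimes\cG_\ast^\rel$, so on fundamental groups it is $q|_{\cG_\ast^\rel}$.

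Second, I would prove $\psi(\cG_\ast^\rel)\subseteq\pi_1^\geom(\MEM_\ast,\w^B)$ by exploiting the commuting square of exact tensor functors
$$
\xymatrix{
\MTM \ar[r]^{c}\ar[d]_{\hodge} & \MEM_\ast \ar[d] \cr
\MHS \ar[r]_{c} & \MHS(\M_{1,\ast},\H)
}
$$
where $c$ denotes passage to the (geometrically) constant object and the right-hand arrow is the forgetful functor: a mixed Tate motive goes, by either route, to the constant variation attached to its Hodge realization. On fundamental groups this gives $r\circ q=p\circ\mathrm{pr}$, where $r : \pi_1(\MEM_\ast,\w^B)\to\pi_1(\MTM,\w^B)$ is the defining surjection with kernel $\pi_1^\geom(\MEM_\ast,\w^B)$ and $p : \pi_1(\MHS)\to\pi_1(\MTM,\w^B)$ is induced by $\hodge$. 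Since $\mathrm{pr}$ kills $\cG_\ast^\rel$, so does $r\circ q$, whence $\psi(\cG_\ast^\rel)=q(\cG_\ast^\rel)\subseteq\ker r=\pi_1^\geom(\MEM_\ast,\w^B)$.

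Third, for the reverse inclusion the crucial input is that the splitting $\iota : \pi_1(\MHS)\hookrightarrow\pi_1(\MHS)\ltimes\cG_\ast^\rel$ of $\mathrm{pr}$ is induced by the ``limit MHS over $\vv_o$'' functor $\MHS(\M_{1,\ast},\H)\to\MHS$, and that the limit MHS over $\vv_o$ of the variation attached to an object $\V$ of $\MEM_\ast$ is, by the very definition of $\MEM_\ast$, the Hodge realization of its fiber $\vv_o^\ast\V$ in $\MTM$; hence $q\circ\iota=s_{\vv_o}\circ p$, where $s_{\vv_o}$ is the splitting of $r$ coming from $\vv_o^\ast$. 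Combining $r\circ q=p\circ\mathrm{pr}$ with the description of preimages in a semidirect product gives $q^{-1}(\ker r)=\mathrm{pr}^{-1}(\ker p)=\iota(\ker p)\cdot\cG_\ast^\rel$, and then, since $q$ is surjective and $q\circ\iota=s_{\vv_o}\circ p$ is trivial on $\ker p$,
$$
\pi_1^\geom(\MEM_\ast,\w^B)=q(q^{-1}(\ker r))=q(\iota(\ker p))\cdot q(\cG_\ast^\rel)=\psi(\cG_\ast^\rel).
$$
This proves $\psi(\cG_\ast^\rel)=\pi_1^\geom(\MEM_\ast,\w^B)$. The Zariski-closure statement is then formal: $\rhotilde : \pi_1^\top(\M_{1,\ast},\vv_o)\to\cG_\ast^\rel(\Q)$ has Zariski-dense image, so its image under the morphism $\psi$ is Zariski-dense in $\psi(\cG_\ast^\rel)$, and $\psi(\cG_\ast^\rel)=\pi_1^\geom(\MEM_\ast,\w^B)$ is closed, being $\ker r$; therefore the Zariski closure of the image of the natural map $\pi_1^\top(\M_{1,\ast},\vv_o)\to\pi_1(\MEM_\ast,\w^B)(\Q)$ is $\pi_1^\geom(\MEM_\ast,\w^B)$.

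The main obstacle, such as it is, is bookkeeping rather than mathematics: pinning down that every fiber functor in sight is compatibly $\w^B$ (equivalently $\w_o$), that the ``natural homomorphism'' of the statement is literally $q|_{\cG_\ast^\rel}$, and---most delicately---that ``limit MHS over $\vv_o$ of $\V^\an$'' coincides with ``Hodge realization of $\vv_o^\ast\V$'', which is precisely how $\MEM_\ast$ was set up. Once these identifications are in place, everything above is a two-step diagram chase.
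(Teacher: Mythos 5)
Your proof is correct and takes essentially the same route as the paper's: both use the surjection from Theorem~\ref{thm:surjective}, the commuting diagram comparing the extension $1\to\cG_\ast^\rel\to\pi_1(\MHS)\ltimes\cG_\ast^\rel\to\pi_1(\MHS)\to1$ with $1\to\pi_1^\geom(\MEM_\ast)\to\pi_1(\MEM_\ast)\to\pi_1(\MTM)\to1$, and the compatibility of the two splittings (via $\vv_o$) to force surjectivity of the left-hand vertical map, after which Zariski density of $\pi_1^\top(\M_{1,\ast},\vv_o)$ in $\cG_\ast^\rel$ finishes. You have simply spelled out in full the diagram chase that the paper compresses into "the surjectivity of the middle map \ldots and the splitting \ldots imply the surjectivity of the left-hand map."
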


\begin{proof}
Since $\MEM_\ast \to \MHS(\M_{1,\ast},\H)$ takes geometrically constant MEMs to
constant variations of MHS, the surjection $\pi_1(\MHS(\M_{1,\ast},\H)) \to
\pi_1(\MEM_\ast,\w^B)$ commutes with their projections to $\pi_1(\MHS)$ and to
$\pi_1(\MTM,\w^B)$. So one has the commutative diagram
$$
\xymatrix{
1 \ar[r] & \cG_\ast^\rel \ar[r]\ar[d] &
\pi_1(\MHS)\ltimes\cG_\ast^\rel \ar[r]\ar[d] &
\pi_1(\MHS) \ar[r]\ar[d] & 1
\cr
1 \ar[r] & \pi_1^\geom(\MEM_\ast,\w^B) \ar[r] & \pi_1(\MEM_\ast,\w^B) \ar[r]
& \pi_1(\MTM,\w^B) \ar[r] & 1
}
$$
The surjectivity of the middle map, which follows from
Theorem~\ref{thm:surjective}, and the splitting $\pi_1(\MEM_\ast,\w^B) \cong
\pi_1(\MTM_\ast,\w^B)\ltimes \pi_1^\geom(\MEM_\ast,\w^B)$, imply the surjectivity of the
left hand map. The last assertion follows as the canonical homomorphism
$\pi_1^\top(\M_{1,\ast},\vv_o) \to \cG_\ast^\rel(\Q)$ is Zariski dense.
\end{proof}

Recall that $\K=\K^B$ is the kernel of the natural homomorphism
$\pi_1(\MTM,\w^B) \to \Gm$. It is prounipotent. Denote the kernel of
$\pi_1(\MEM_\ast,\w^B) \to \pi_1(\MTM,\w^B)$ by $\U_\ast^\MEM$. \label{def:umem}
Since the commutative diagram (in which all fiber functors are $\w^B$)
$$
\xymatrix@C=20pt@R=18pt{
& 1 \ar[d] & 1 \ar[d] & 1 \ar[d]
\cr
1 \ar[r] & \U^\geom_\ast \ar[d]\ar[r] & \U_\ast^\MEM \ar[r]\ar[d] &
\K \ar[d]\ar[r] & 1
\cr
1 \ar[r] & \pi_1^\geom(\MEM_\ast) \ar[d]\ar[r] & \pi_1(\MEM_\ast)
\ar[r]\ar[d] & \pi_1(\MTM) \ar[d]\ar[r] & 1
\cr
1 \ar[r] & \SL(H) \ar[r]\ar[d] & \GL(H) \ar[r]\ar[d] & \Gm \ar[r]\ar[d] & 1
\cr
& 1 & 1 & 1
}
$$
has exact rows and columns, we have:

\begin{corollary}
The kernel of the natural homomorphism $\pi_1(\MEM_\ast,\w) \to \GL(H_\w)$
is prounipotent. It is a split extension
$$
1 \to \U_\ast^\geom \to \U_\ast^\MEM \to \K \to 1
$$
of $\K$ by $\U_\ast^\geom := \U_\ast^\MEM\times_{\pi_1(\MEM_\ast,\w)} \pi_1^\geom(\MEM_\ast,\w)$.
\label{def:ugeom} \qed
\end{corollary}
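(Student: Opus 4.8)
The plan is to check that the displayed $3\times 3$ diagram has the claimed exact rows and columns and commuting squares; the two assertions of the corollary are then exactly the content of its top row together with what is already known about its middle column. Three of the ingredients are in hand: the middle column $1\to\U_\ast^\MEM\to\pi_1(\MEM_\ast,\w)\to\GL(H_\w)\to 1$ is Proposition~\ref{prop:W-splitting}, which also supplies a splitting and the prounipotence of $\U_\ast^\MEM$; the right-hand column $1\to\K\to\pi_1(\MTM,\w)\to\Gm\to 1$ is the definition of $\K$; the middle row $1\to\pi_1^\geom(\MEM_\ast,\w)\to\pi_1(\MEM_\ast,\w)\to\pi_1(\MTM,\w)\to 1$, split by $\vv_o^\ast$, was established earlier in this section; and the bottom row is the determinant sequence of $\GL(H_\w)$.

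The only genuinely new point is the commutativity of the lower right square, i.e.\ that the two homomorphisms $\pi_1(\MEM_\ast,\w)\to\Gm$ obtained via $\det$ along the bottom and via the weight character along the right coincide. Each is the character attached to a one-dimensional object of $\MEM_\ast$: the first to $\Lambda^2\H$, the second to the geometrically constant motive $c(\Q(-1))$. Since $\Lambda^2\H\cong\Q(-1)$ in $\MEM_\ast$, these objects agree and hence so do the two characters; the lower left square then commutes because the bottom row is exact. Granting this, I would read off the top row directly rather than run a chase: restricting $\pi_1(\MEM_\ast,\w)\to\pi_1(\MTM,\w)$ to $\U_\ast^\MEM$ and following with the weight character gives the same map as following $\pi_1(\MEM_\ast,\w)\to\GL(H_\w)\xrightarrow{\det}\Gm$, which is trivial because $\U_\ast^\MEM$ is by definition the kernel of $\pi_1(\MEM_\ast,\w)\to\GL(H_\w)$. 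Hence $\U_\ast^\MEM$ maps into $\K$, and its kernel there is $\U_\ast^\MEM\cap\pi_1^\geom(\MEM_\ast,\w)=\U_\ast^\geom$ by definition; prounipotence of $\U_\ast^\MEM$, and so of its closed subgroup $\U_\ast^\geom$, is already in Proposition~\ref{prop:W-splitting}.

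It remains to produce a splitting of $\U_\ast^\MEM\to\K$, which will also yield its surjectivity. I would use $\vv_o^\ast$: the composite $\pi_1(\MTM,\w)\xrightarrow{\vv_o^\ast}\pi_1(\MEM_\ast,\w)\to\GL(H_\w)$ is the homomorphism induced by the functor $\MEM_\ast^\ss\hookrightarrow\MEM_\ast\xrightarrow{\vv_o^\ast}\MTM$, which carries $\H$ to the mixed Tate motive $H=\Q(0)\oplus\Q(-1)$; this action factors through the weight character $\pi_1(\MTM,\w)\to\Gm$, so its restriction to $\K$ is trivial. Therefore $\vv_o^\ast(\K)\subseteq\U_\ast^\MEM$, and since $\vv_o^\ast$ splits $\pi_1(\MEM_\ast,\w)\to\pi_1(\MTM,\w)$, the induced map $\K\to\U_\ast^\MEM$ splits $\U_\ast^\MEM\to\K$; this gives the exact split sequence $1\to\U_\ast^\geom\to\U_\ast^\MEM\to\K\to 1$ and finishes the corollary. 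The remaining (left) column of the diagram follows by the parallel argument: the $\Gr^W$-splitting $\GL(H_\w)\hookrightarrow\pi_1(\MEM_\ast,\w)$ restricted to $\SL(H_\w)$ lands in $\pi_1^\geom(\MEM_\ast,\w)$, since its composite to $\Gm$ is $\det|_{\SL(H_\w)}=1$ and $\K$ is prounipotent, hence admits no nontrivial homomorphism from the semisimple group $\SL(H_\w)$. The one place that requires care is the identity $\Lambda^2\H\cong\Q(-1)$ that makes the crucial square commute; the rest is bookkeeping with functors already in play, so I do not expect a real obstacle here.
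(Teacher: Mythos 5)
Your proposal is correct and follows essentially the same route as the paper: the paper displays the $3\times 3$ commutative diagram and reads off the corollary, with the splitting attributed to $\vv_o$, and your argument simply verifies the exactness and commutativity the paper leaves implicit (most notably the lower-right square via $\Lambda^2\H\cong c(\Q(-1))$, and the fact that $\vv_o^\ast|_\K$ lands in $\U_\ast^\MEM$ because $\K$ acts trivially on the split mixed Tate motive $H=\Q(0)\oplus\Q(-1)$). No discrepancy with the paper's approach.
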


The splitting is induced by the base point $\vv_o$.

\section{Extensions of Variations of Mixed Hodge Structure over
$\M_{1,\ast}^\an$}
\label{sec:vmhs}

In this section we give a brief summary of results from \cite{hain:db_coho} in
the case of $\M_{1,\ast}^\an$ and apply these to compute the extension groups in
$\MHS(\M_{1,\ast},\H)$. It will be natural to work with real variations of MHS
as well as $\Q$-variations. So in this section, $F$ will be $\Q$ or $\R$.

\subsection{Computation of the Ext groups}
By the results of the previous section, the category $\MHS_F(\M_{1,\ast},\H)$ is
the category of representations of \label{def:Ghat}
$$
\cGhat_\ast := \pi_1(\MHS_F)\ltimes \cG^\rel_\ast.
$$
Consequently, for an object $\V$ of $\MHS_F(\M_{1,\ast},\H)$ with fiber $V$ over
$\vv_o$, there is a natural isomorphism
$$
\Ext^\dot_{\MHS_F(\M_{1,\ast},\H)}(F,\V) \cong H^\dot(\cGhat_\ast,V)
$$
that is compatible with products. The conjugation action of $\cGhat_\ast$ on
$\cG_\ast^\rel$ induces an action of $\pi_1(\MHS_F)$ on
$H^\dot(\cG_\ast^\rel,V)$. So $H^\dot(\cG_\ast^\rel,V)$ has a natural $F$-MHS.
An explicit construction of this MHS is given in \cite{hain:db_coho}.

The spectral sequence of the extension
$$
1 \to \cG_\ast^\rel \to \cGhat_\ast \to \pi_1(\MHS_F) \to 1
$$
and the fact that $\Ext^j_{\MHS_F}(A,B)$ vanishes for all MHS $A$ and $B$ when
$j>1$, implies that there is an exact sequence
\begin{equation}
\label{eqn:ses_ext}
0 \to \Ext^1_\MHS(F,H^{j-1}(\cG_\ast^\rel,V)) \to
\Ext^j_{\MHS_F(\M_{1,\ast},\H)}(F,\V)
\to \G H^j(\cG_\ast^\rel,V) \to 0,
\end{equation}
where $\G$ denotes the functor $\Hom_\MHS(F,\blank)$. For all objects $\V$ of
$\MHS_F(\M_{1,\ast},\H)$, there is a natural homomorphism
\begin{equation}
\label{eqn:morphism}
\Ext^\dot_{\MHS_F(\M_{1,\ast},\H)}(F,\V) \to H^\dot_\cD(\M_{1,\ast}^\an,\V).
\end{equation}
In the case of modular curves, we get the best possible result.

\begin{theorem}[\cite{hain:db_coho}]
The natural homomorphism $H^\dot(\cG_\ast^\rel,V) \to H^\dot(\M_{1,\ast}^\an,\V)$
is an isomorphism of MHS. Consequently, the homomorphism (\ref{eqn:morphism})
is an isomorphism.
\end{theorem}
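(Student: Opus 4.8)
The plan is to first establish the asserted isomorphism $H^\dot(\cG_\ast^\rel,V)\to H^\dot(\M_{1,\ast},\V)$ of mixed Hodge structures, for $F=\Q$ or $\R$ and all $\cG_\ast^\rel$-modules $V$ carrying a compatible MHS, and then to deduce the statement about (\ref{eqn:morphism}) by a five lemma argument. Since $\M_{1,\ast}^\an$ is an orbifold $K(\pi,1)$ with $\pi=\pi_1^\top(\M_{1,\ast},\vv_o)$, the right-hand side is $H^\dot(\pi,V)$, and the map is the one induced on $\Ext$-groups by the inclusion of $\cR(\pi,\rho)=\Rep(\cG_\ast^\rel)$ into $\Rep_F(\pi)$, equivalently by the Zariski dense homomorphism $\pi\to\cG_\ast^\rel(F)$. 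The mixed Hodge structure on $H^\dot(\cG_\ast^\rel,V)$ and the fact that this comparison map is a morphism of MHS are part of the construction of \cite{hain:db_coho}; so one only has to check that it is a bijection, and by dévissage along the weight filtration of $V$ it suffices to treat $\SL(H)$-modules $V$ (bijectivity on the weight-graded pieces in all degrees propagates to $V$ via the long exact sequences and the five lemma, and a bijective morphism of MHS is an isomorphism of MHS).

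For $V$ an $\SL(H)$-module, Proposition~\ref{prop:coho} already gives that the comparison map is bijective in degrees $0$ and $1$ and injective in degree $2$; it also yields $H^2(\u_1^\rel)=0$ (as recalled in the text, $\SL_2(\Z)$ is virtually free, so $H^2(\g_1^\rel,-)\hookrightarrow H^2(\SL_2(\Z),-)=0$), whence $\u_1^\rel$ is topologically free and $H^{\ge2}(\g_1^\rel,V)=0$. This settles the case $\ast=1$ outright, since $H^{\ge2}(\SL_2(\Z),V)=0$ as well. For $\ast=\uu$ and $\ast=2$ I would argue by comparing spectral sequences. The fibration $\M_{1,\ast}^\an\to\M_{1,1}^\an$ — a $\C^\ast$-bundle when $\ast=\uu$ and an $E'$-bundle when $\ast=2$ — gives a Leray spectral sequence converging to $H^\dot(\M_{1,\ast}^\an,\V)$; on the other side, the splitting $\cG_\uu^\rel\cong\cG_1^\rel\times\Ga$ (resp.\ the extension $0\to\p\to\g_2^\rel\to\g_1^\rel\to0$, with $\p$ free) gives a Künneth, resp.\ Hochschild--Serre, spectral sequence converging to $H^\dot(\cG_\ast^\rel,V)$. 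The comparison map is a map of spectral sequences, and on $E_2$ it is an isomorphism: the base contributions agree by the case $\ast=1$, and the fibre contributions agree because $\pi_1^\top$ of the fibre ($\Z$, resp.\ the free group $\pi_1(E')$) has cohomological dimension $1$, acts on the restrictions of $\cG_\ast^\rel$-modules through its unipotent completion, and its cohomology in degrees $\le1$ is computed by that completion by Proposition~\ref{prop:coho}. Hence the comparison map is an isomorphism in all degrees for $\ast=\uu,2$ as well; vanishing in degrees $\ge3$ on both sides then falls out, using that $\M_{1,\ast}^\an$ has cohomological dimension $\le2$ and that $H^{\ge2}(\u_1^\rel)=0=H^{\ge2}(\p)$.

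The step I expect to be the crux — and the main obstacle — is the surjectivity in degree $2$ hidden in this $E_2$-comparison: one must know that $\cG_\ast^\rel$ is cohomologically large enough to produce all of $H^2(\M_{1,\ast}^\an,\V)$. For $\ast=1$ this is precisely the freeness of $\u_1^\rel$, itself a consequence of the vanishing $H^2(\SL_2(\Z),V)=0$; for $\ast=\uu,2$ it is then forced by the spectral sequences together with the one-dimensionality of the fibre cohomology. Equivalently, it amounts to the assertion that $H^2$ of each of these modular curves, with coefficients in an $\SL(H)$-module, is generated by cup products of degree-$1$ classes with the $H^1$ of the relevant affine factor — this is Proposition~\ref{prop:coho_vec} and its $\M_{1,2}$-analogue from Section~\ref{sec:coho} — combined with the compatibility of the comparison map with cup products and its bijectivity in degree $1$.

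For the final assertion, $\Ext^j_{\MHS_F(\M_{1,\ast},\H)}(F,\V)$ sits in the exact sequence (\ref{eqn:ses_ext}) attached to $1\to\cG_\ast^\rel\to\cGhat_\ast\to\pi_1(\MHS_F)\to1$, with outer terms $\Ext^1_\MHS(F,H^{j-1}(\cG_\ast^\rel,V))$ and $\Hom_\MHS(F,H^j(\cG_\ast^\rel,V))$, while $H^j_\cD(\M_{1,\ast}^\an,\V)$ sits in the standard exact sequence with outer terms $\Ext^1_\MHS(F,H^{j-1}(\M_{1,\ast}^\an,\V))$ and $\Hom_\MHS(F,H^j(\M_{1,\ast}^\an,\V))$. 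The morphism (\ref{eqn:morphism}) is compatible with these two sequences, and the isomorphism of MHS $H^\dot(\cG_\ast^\rel,V)\cong H^\dot(\M_{1,\ast}^\an,\V)$ just proved identifies the outer terms; the five lemma then shows that (\ref{eqn:morphism}) is an isomorphism.
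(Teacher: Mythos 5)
The statement is cited from \cite{hain:db_coho} and the paper supplies no proof of its own, so there is no internal argument to compare against; I can only assess your reconstruction on its merits, and it is essentially correct. The core of the argument — dévissage along $W_\dot$ to reduce to $\SL(H)$-modules, then settle $\ast=1$ by noting that $H^{\ge 2}(\SL_2(\Z),V)=0$ (vcd $1$) together with the injection $H^2(\cG_1^\rel,V)\hookrightarrow H^2(\SL_2(\Z),V)$ from Proposition~\ref{prop:coho} forces both sides to vanish in degrees $\ge 2$, then handle $\ast=\uu,2$ by comparing the Hochschild--Serre spectral sequences of $1\to\cP\to\cG_2^\rel\to\cG_1^\rel\to1$ (resp.\ $\cG_\uu^\rel\cong\cG_1^\rel\times\Ga$) with the Leray/Hochschild--Serre spectral sequence of the corresponding extension of orbifold fundamental groups — is the right argument and is complete: the fibre factors $\Ga$ and $\cP$ have cohomological dimension $1$ and compute the group cohomology of $\Z$ and the free group $\pi_1(E'_\tate)$ on the coefficient modules that arise, so both $E_2$ pages are concentrated in $p,q\in\{0,1\}$, degenerate, and agree via the $\ast=1$ case applied to the $\SL(H)$-modules $H^q(\text{fibre},V)$. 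The five-lemma step from (\ref{eqn:ses_ext}) is also as you describe.

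One small inaccuracy in your third paragraph: the ``equivalent'' reformulation in terms of cup products of degree-$1$ classes does not quite work uniformly. It is literally Proposition~\ref{prop:coho_vec} for $\ast=\uu$, but for $\ast=2$ the paper points out explicitly (end of Section~\ref{sec:coho}) that the cup product $H^1(\M_{1,2}^\an,\H)\otimes H^1(\M_{1,2}^\an,S^{2n}\H)\to H^2(\M_{1,2}^\an,\H\otimes S^{2n}\H)$ lands only in the middle summand $H^1(\M_{1,1}^\an,S^{2n+2}\H)^{\oplus 2}(-1)$ and is not surjective. The correct statement is the one your spectral-sequence argument already gives, namely $H^2(\M_{1,2}^\an,\V)\cong H^1(\M_{1,1}^\an,H^1(E',V))$ with $H^1(E',V)\cong\H\otimes V$; this is an $E_2^{1,1}$ term, not a naive cup product of two classes on $\M_{1,2}$. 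So the second paragraph's argument is the one to keep, and the ``equivalently'' sentence should be dropped or restricted to $\ast=\uu$.
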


Plugging this into the exact sequence (\ref{eqn:ses_ext}) yields the following
useful computational tool.

\begin{corollary}
For all $j\ge 0$, there is an exact sequence
$$
0 \to \Ext^1_{\MHS_F}(F,H^{j-1}(\M_{1,\ast}^\an,\V)) \to
\Ext_{\MHS_F(\M_{1,\ast},\H)}^j(F,\V) \to \G H^j(\M_{1,\ast}^\an,\V) \to 0.
$$
\end{corollary}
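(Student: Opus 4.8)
The plan is to obtain the Corollary as an immediate consequence of the short exact sequence (\ref{eqn:ses_ext}) combined with the Theorem of \cite{hain:db_coho} recalled just above. Recall that (\ref{eqn:ses_ext}) is produced from the Lyndon--Hochschild--Serre spectral sequence of the group extension $1 \to \cG_\ast^\rel \to \cGhat_\ast \to \pi_1(\MHS_F) \to 1$, which computes $\Ext^\dot_{\MHS_F(\M_{1,\ast},\H)}(F,\V) \cong H^\dot(\cGhat_\ast,V)$. Its $E_2$-page is $E_2^{p,q} = \Ext^p_\MHS\big(F,H^q(\cG_\ast^\rel,V)\big)$, and since $\Ext^p_\MHS(A,B)$ vanishes for $p>1$, only the columns $p=0,1$ are nonzero; the spectral sequence therefore degenerates at $E_2$ and yields, in each total degree $j$, the short exact sequence
$$
0 \to \Ext^1_\MHS\big(F,H^{j-1}(\cG_\ast^\rel,V)\big) \to
\Ext^j_{\MHS_F(\M_{1,\ast},\H)}(F,\V)
\to \G H^j(\cG_\ast^\rel,V) \to 0,
$$
where $\G = \Hom_\MHS(F,-)$ picks out the $\pi_1(\MHS_F)$-invariant part of the $p=0$ column.

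First I would invoke the Theorem: the natural comparison map $H^\dot(\cG_\ast^\rel,V) \to H^\dot(\M_{1,\ast}^\an,\V)$ is an isomorphism of $F$-mixed Hodge structures. Applying the additive functor $\G(-)$ in cohomological degree $j$ and the additive functor $\Ext^1_\MHS(F,-)$ in degree $j-1$ to this isomorphism, and substituting into the displayed sequence, gives
$$
0 \to \Ext^1_\MHS\big(F,H^{j-1}(\M_{1,\ast}^\an,\V)\big) \to
\Ext^j_{\MHS_F(\M_{1,\ast},\H)}(F,\V)
\to \G H^j(\M_{1,\ast}^\an,\V) \to 0,
$$
which is precisely the assertion of the Corollary (stated for $F=\Q$, in which case the subscript on $\MHS(\M_{1,\ast},\H)$ is dropped).

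The only things that require attention are bookkeeping matters, all of which are already packaged into the cited Theorem, so there is no genuinely hard step here. One needs that the comparison isomorphism $H^\dot(\cG_\ast^\rel,V) \cong H^\dot(\M_{1,\ast}^\an,\V)$ really is an isomorphism of $F$-MHS, so that applying $\G$ and $\Ext^1_\MHS(F,-)$ is legitimate and returns exactly the terms displayed; and one needs naturality of this isomorphism in $\V$, so that the two maps in (\ref{eqn:ses_ext}) transport to the expected maps (the left-hand one remains an Ext-edge map, the right-hand one the restriction to the invariant part). Both are part of the statement of the Theorem of \cite{hain:db_coho}. Hence the proof is a pure substitution, the substantive content having been absorbed into that Theorem and into the degeneration argument behind (\ref{eqn:ses_ext}).
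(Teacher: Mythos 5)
Your proposal is correct and is exactly the argument the paper uses: the paper explicitly says "Plugging this into the exact sequence (\ref{eqn:ses_ext}) yields the following useful computational tool," which is precisely your substitution of the isomorphism $H^\dot(\cG_\ast^\rel,V) \cong H^\dot(\M_{1,\ast}^\an,\V)$ of MHS into (\ref{eqn:ses_ext}). The bookkeeping checks you mention — that the comparison is an isomorphism of $F$-MHS and is natural in $\V$ — are indeed the only points of attention, and both are part of the cited Theorem.
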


This and the cohomology computations in Section~\ref{sec:coho} imply:

\begin{proposition}
\label{prop:vanishing}
The extension groups $\Ext^j_{\MHS_F(\M_{1,\ast},\H)}(F,\V)$ vanish when $j>2$
and $\ast \in \{1,\uu\}$, and when $j>3$ if $\ast = 2$.
\end{proposition}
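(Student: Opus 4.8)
The plan is to deduce this formally from the exact sequence of the corollary above together with the cohomology computations of Section~\ref{sec:coho}.

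Recall that for every object $\V$ of $\MHS_F(\M_{1,\ast},\H)$ and every $j\ge 0$ that corollary provides a short exact sequence
$$
0 \to \Ext^1_\MHS\bigl(F,H^{j-1}(\M_{1,\ast}^\an,\V)\bigr) \to \Ext^j_{\MHS_F(\M_{1,\ast},\H)}(F,\V) \to \G\, H^j(\M_{1,\ast}^\an,\V) \to 0.
$$
Hence $\Ext^j_{\MHS_F(\M_{1,\ast},\H)}(F,\V)$ vanishes once $H^j(\M_{1,\ast}^\an,\V)=0$ and $\Ext^1_\MHS\bigl(F,H^{j-1}(\M_{1,\ast}^\an,\V)\bigr)=0$; the latter holds in particular whenever $W_0 H^{j-1}(\M_{1,\ast}^\an,\V)=0$, since any extension of $F$ by a mixed Hodge structure with strictly positive weights splits. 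So the whole proof reduces to establishing vanishing --- and, in one boundary case, weight positivity --- of the cohomology of $\M_{1,\ast}^\an$.

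First I would invoke the cohomological dimension bounds recorded in Section~\ref{sec:coho}: $H^j(\M_{1,1}^\an,\V)=0$ for all $j\ge 2$ (as $\M_{1,1}^\an=\SL_2(\Z)\bbs\h$ and $\SL_2(\Z)$ is virtually free), and, since $\M_{1,\uu}^\an\to\M_{1,1}^\an$ and $\M_{1,2}^\an\to\M_{1,1}^\an$ are fibrations with fibres (a $\C^\ast$, resp.\ a once-punctured elliptic curve) of cohomological dimension $1$, the Leray spectral sequence gives $H^j(\M_{1,\uu}^\an,\V)=H^j(\M_{1,2}^\an,\V)=0$ for all $j\ge 3$. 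Feeding these into the displayed sequence: if $\ast=1$ and $j>2$ then $j-1\ge 2$, so both $H^{j-1}$ and $H^j$ over $\M_{1,1}^\an$ vanish and $\Ext^j=0$; if $\ast\in\{\uu,2\}$ and $j>3$ then $j-1\ge 3$, so both vanish and $\Ext^j=0$. For $\ast=2$ this is exactly the claimed range, and for $\ast=\uu$ it settles every $j$ except $j=3$.

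The remaining case is $\ast=\uu$, $j=3$. Here $H^3(\M_{1,\uu}^\an,\V)=0$, so the corollary identifies $\Ext^3_{\MHS_F(\M_{1,\uu},\H)}(F,\V)$ with $\Ext^1_\MHS\bigl(F,H^2(\M_{1,\uu}^\an,\V)\bigr)$, and I would finish by checking $W_0 H^2(\M_{1,\uu}^\an,\V)=0$. The weight spectral sequence of $\V$ exhibits $\Gr^W_\bullet H^2(\M_{1,\uu}^\an,\V)$ as a subquotient of $\bigoplus_i H^2(\M_{1,\uu}^\an,S^{m_i}\H)\otimes A_i$, the sum over the pure summands $S^{m_i}\H\otimes A_i$ of $\Gr^W_\bullet\V$; and by Proposition~\ref{prop:coho_vec} the group $H^2(\M_{1,\uu}^\an,S^m\H)$ vanishes unless $m$ is a positive even integer, in which case Theorem~\ref{thm:eichler_shimura} identifies it with $H^1(\M_{1,1}^\an,S^m\H)(-1)$, a mixed Hodge structure of weights $m+3$ and $2m+4$. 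Thus $H^2(\M_{1,\uu}^\an,\V)$ has no weight-$\le 0$ part, so $\Ext^1_\MHS\bigl(F,H^2(\M_{1,\uu}^\an,\V)\bigr)=0$ and $\Ext^3_{\MHS_F(\M_{1,\uu},\H)}(F,\V)=0$. The one step that is not pure bookkeeping is this last one --- it is where the arithmetic of modular forms (Eichler--Shimura) is used to push $H^2(\M_{1,\uu}^\an,\V)$ into positive weights, and it is where one must keep track of the Tate twists carried by the Hodge structures $A_i$ appearing in $\V$; everything else follows formally from the corollary and the cohomological dimensions of the $\M_{1,\ast}^\an$.
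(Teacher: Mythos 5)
Your reduction via the short exact sequence of the Corollary together with the cohomological dimension bounds from Section~\ref{sec:coho} is exactly the argument the paper has in mind (the paper offers no more detail than ``this and the cohomology computations imply''), and it cleanly disposes of every case except $\ast=\uu$, $j=3$, which you correctly single out as the only one requiring more than bookkeeping.

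The step that fails is precisely your weight claim in that remaining case. You argue that $W_0H^2(\M_{1,\uu}^\an,\V)=0$ because $H^2(\M_{1,\uu}^\an,S^m\H)\cong H^1(\M_{1,1}^\an,S^m\H)(-1)$ has weights $m+3$ and $2m+4$; but the weight graded pieces of an object of $\MHS_F(\M_{1,\uu},\H)$ are $S^m\H\otimes A$ with $A$ an \emph{arbitrary} Hodge structure, and tensoring with $A$ shifts those weights by the weight of $A$, which may be arbitrarily negative. Concretely, take $\V=S^{2n}\H(r)$ with $n\ge 1$ and $r\ge 2n+3$: then $H^2(\M_{1,\uu}^\an,\V)\cong H^1(\M_{1,1}^\an,S^{2n}\H)(r-1)$ has weights $2n+3-2r\le -3$ and $4n+4-2r\le -2$ and satisfies $F^0=0$, so $\Ext^1_\MHS\bigl(F,H^2(\M_{1,\uu}^\an,\V)\bigr)\neq 0$ and the Corollary forces $\Ext^3_{\MHS_F(\M_{1,\uu},\H)}(F,\V)\neq 0$. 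So the $(\uu,3)$ case cannot be repaired by a weight argument: the cohomological dimension of $\pi_1(\MHS_F)\ltimes\cG^\rel_\uu$ is $1+2=3$, not $2$, and the vanishing range for $\ast=\uu$ should be $j>3$, exactly as for $\ast=2$ --- which is what your argument does prove. (The source of the discrepancy is internal to the paper: the proof of Theorem~\ref{thm:ext2} asserts that $H^2(\M_{1,\ast}^\an,S^m\H)$ vanishes for $\ast\in\{1,\uu\}$, whereas Proposition~\ref{prop:coho_vec} --- confirmed by the Gysin sequence of the $\Gm$-bundle $\M_{1,\uu}\to\M_{1,1}$, or by Hochschild--Serre for the central extension of $\SL_2(\Z)$ by $\Z$ --- says it equals $H^1(\M_{1,1}^\an,S^m\H)(-1)\neq 0$ for $m=2n>0$.) You should either prove only $j>3$ for $\ast=\uu$, or flag the inconsistency rather than paper over it with the unqualified positivity claim.
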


First the 1-extensions. The cohomology computations in Section~\ref{sec:coho}
imply that:

\begin{theorem}
\label{thm:ext1}
When $\ast \in \{1,\uu,2\}$, there is an exact sequence
$$
0 \to \Ext^1_\MHS(F,F(r)) \to \Ext^1_{\MHS(\M_{1,\ast},\H)}(F,F(r))
\to \G H^1(\M_{1,\ast}^\an,F(r)) \to 0.
$$
The right-hand group is trivial except when $\ast = \uu$ and $r=1$. In this
case, it is spanned by the class $\psi_2$ associated to the Eisenstein series
$G_2$. When $\ast \in \{1,\uu\}$, and $m>0$, we have
$$
\Ext^1_{\MHS(\M_{1,\ast},\H)}(F,S^m\H(r)) =
\begin{cases}
F & m=2n > 0 \text{ and } r=2n+1,\cr
0 & \text{otherwise.}
\end{cases}
$$
When $\ast = 2$, we have
$$
\Ext^1_{\MHS(\M_{1,2},\H)}(F,S^m\H(r)) =
\begin{cases}
F & m = 1 \text{ and } r=1,\cr
F & m=2n > 0 \text{ and } r=2n+1,\cr
0 & \text{otherwise.}
\end{cases}
$$
When $m=2n> 0$, the extension corresponds to the Eisenstein series
$G_{2n+2}$.\footnote{These extensions correspond to the elliptic polylogarithms
of Beilinson--Levin \cite{beilinson-levin}. Their restriction to $\M_{1,1}^\an$
is described in \cite[\S13.3]{hain:modular}.} \qed
\end{theorem}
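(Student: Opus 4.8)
The plan is to feed the simple object $\V = S^m\H(r)$ into the exact sequence of the Corollary above (the case $j=1$),
$$
0 \to \Ext^1_\MHS(F,H^{0}(\M_{1,\ast}^\an,S^m\H(r))) \to
\Ext_{\MHS(\M_{1,\ast},\H)}^1(\Q,S^m\H(r)) \to \G H^1(\M_{1,\ast}^\an,S^m\H(r)) \to 0,
$$
and then to read off the two outer terms from the cohomology computations of Section~\ref{sec:coho}. When $m=0$ this is already the assertion of the theorem, so in that case all that is needed is to record $\G H^1(\M_{1,\ast}^\an,F(r))$: by Section~\ref{sec:coho} one has $H^1(\M_{1,1}^\an,\Q)=0=H^1(\M_{1,2}^\an,\Q)$, while $H^1(\M_{1,\uu}^\an,\Q)\cong\Q(-1)$ is spanned by the class of the form (\ref{eqn:G2}) (Proposition~\ref{prop:coho_vec}); hence $\G H^1(\M_{1,\ast}^\an,\Q(r))$ equals $F$ exactly when $\ast=\uu$ and $r=1$, generated by $\psi_2$, and vanishes otherwise.

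For $m>0$ the first step is to kill the left-hand term. I would observe that $H^0(\M_{1,\ast}^\an,S^m\H(r)) = (S^mH)^{\SL_2(\Z)}(r)$, and that this vanishes: for $m$ odd because $-I\in\SL_2(\Z)$ acts by $-1$, and for $m$ even because $\SL_2(\Z)$ is Zariski dense in $\SL_2$ and a positive symmetric power of the standard $\SL_2$-representation has no invariants. Then $\Ext^1_\MHS(F,H^0)=0$, so the exact sequence yields a canonical isomorphism $\Ext^1_{\MHS(\M_{1,\ast},\H)}(\Q,S^m\H(r)) \cong \Hom_\MHS\big(F,H^1(\M_{1,\ast}^\an,S^m\H)(r)\big)$, reducing the problem to understanding the MHS on $H^1(\M_{1,\ast}^\an,S^m\H)$.

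Here Theorem~\ref{thm:eichler_shimura} does the work for $\ast=1$: $H^1(\M_{1,1}^\an,S^m\H)=0$ for $m$ odd, and for $m=2n>0$ it is a $\Q$-MHS that splits over $\Q$ with Hodge bidegrees $(2n+1,0)$, $(0,2n+1)$ and $(2n+1,2n+1)$, the diagonal $(2n+1,2n+1)$-summand being the copy of $\Q(-2n-1)$ spanned by $\psi_{2n+2}$. Since $\Hom_\MHS(F,M(r))$ only detects an $(r,r)$-summand of $M$, and the cuspidal summands $V_f$ (equivalently $M_f$) are purely off-diagonal in Hodge bidegree, $\Hom_\MHS(F,H^1(\M_{1,1}^\an,S^{2n}\H)(r))$ is $F$ when $r=2n+1$ and $0$ otherwise, the nonzero class being the Tate twist of $\psi_{2n+2}$, which by Remark~\ref{rem:scalings} is the Eisenstein extension attached to $G_{2n+2}$. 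For $\ast=\uu$ and $m>0$, Proposition~\ref{prop:coho_vec} identifies $H^1(\M_{1,\uu}^\an,S^m\H)$ with $H^1(\M_{1,1}^\an,S^m\H)$, giving the same answer; for $\ast=2$ and $m>1$ the analogous proposition for $\M_{1,2}^\an$ gives $H^1(\M_{1,2}^\an,S^m\H)\cong H^1(\M_{1,1}^\an,S^m\H)$, and the sole remaining case $m=1$ uses $H^1(\M_{1,2}^\an,\H)\cong\Q(-1)$, whose $r$-twist admits a morphism from $F$ exactly for $r=1$.

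Since every ingredient is either the Corollary above or a cohomology computation already recorded in Section~\ref{sec:coho}, I anticipate no essential difficulty; the only points requiring care are the bookkeeping in the edge cases — isolating $m=0$, checking $(S^mH)^{\SL_2(\Z)}=0$ and $H^1(\M_{1,2}^\an,\Q)=0$, and the $m=1$ versus $m>1$ split for $\M_{1,2}$ — together with the structural remark that the cuspidal part of $H^1$ of a modular curve, being off-diagonal in Hodge bidegree, contributes nothing to a group of the form $\Hom_\MHS(F,\blank(r))$.
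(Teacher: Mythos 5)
Your proof is correct and is exactly the argument the paper intends (the paper marks this theorem \texttt{\textbackslash qed} immediately after the statement, signalling that it is a direct read-off from the preceding Corollary combined with the cohomology computations of Section~\ref{sec:coho}). You have supplied the details the paper leaves tacit, and they all check out: the vanishing of the left-hand $\Ext^1_\MHS$ term for $m>0$ via $H^0(\M_{1,\ast}^\an,S^m\H)=(S^mH)^{\SL_2(\Z)}=0$ by Zariski density; the observation that the cuspidal pieces $V_f$, $M_f$ are of odd weight $2n+1$, hence their $r$-twists never have a copy of $F(0)$ (this works for both $F=\Q$ and $F=\R$); the reduction of $\ast=\uu$ and $\ast=2$, $m>1$, to $\ast=1$ via the isomorphisms of Section~\ref{sec:coho}; and the two special cases $H^1(\M_{1,\uu}^\an,\Q)\cong\Q(-1)$ and $H^1(\M_{1,2}^\an,\H)\cong\Q(-1)$. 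The one tacit fact worth recording explicitly is that $H^1(\M_{1,2}^\an,\Q)=0$, used in the $m=0$, $\ast=2$ case: this follows because $H^1(E')\cong H^1(E)=\H$ (removing a single point from $E$ does not change $H^1$), so in the Leray spectral sequence for $\M_{1,2}\to\M_{1,1}$ one has $R^1\pi_\ast\Q\cong\H$, whose $H^0$ over $\M_{1,1}$ vanishes, while $H^1(\M_{1,1}^\an,\Q)=0$ as $H_1(\SL_2(\Z))$ is finite. No difficulties beyond bookkeeping remain.
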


\begin{remark}
The generator of $\Ext^1_{\MHS(\M_{1,2},\H)}\big(\Q,\H(1)\big)$ is the extension whose fiber over $(E,0,x)$
$$
0 \to H_1(E,\Q) \to H_1(E,\{0,x\};\Q) \to \widetilde{H}_0(\{0,x\};\Q) \to 0.
$$

\end{remark}

In degree 2, we consider only extensions of real variations of MHS, and so, only
real Deligne--Beilinson cohomology. The reason for doing this should become
apparent in Section~\ref{sec:relns}. We will compute the answer only for $\ast
\in \{1,\uu\}$ as we shall not need the case $\ast = 2$.

\begin{theorem}
\label{thm:ext2}
For all integers $m\ge 0$ and $r$ we have
$$
\Ext^2_{\MHS(\M_{1,\ast},\H)}(\R,S^m\H(r)) =
\begin{cases}
\R & \ast = \uu,\ m = 0 \text{ and } r\ge 2,\cr
\R \oplus \bigoplus_{f\in \B_{2n+2}} V_f & m = 2n > 0,\ r\ge 2n+2,\cr
0 & \text{otherwise.}
\end{cases}
$$
\end{theorem}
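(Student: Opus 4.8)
The plan is to deduce the result from the general machinery of Section~\ref{sec:rel} together with the cohomology computations of Section~\ref{sec:coho}. Applying the exact sequence (\ref{eqn:ses_ext}) with $\V = S^m\H(r)$ and $j=2$, and using the isomorphism $H^\dot(\cG_\ast^\rel,V)\cong H^\dot(\M_{1,\ast}^\an,\V)$ of \cite{hain:db_coho}, one gets a short exact sequence
$$
0 \to \Ext^1_{\MHS_\R}\big(\R,\,H^1(\M_{1,\ast}^\an,S^m\H_\R(r))\big)
\to \Ext^2_{\MHS_\R(\M_{1,\ast},\H)}(\R,S^m\H(r))
\to \G\, H^2(\M_{1,\ast}^\an,S^m\H_\R(r)) \to 0 .
$$
So the task splits into two parts: (a) identify the mixed Hodge structures on $H^1$ and $H^2$, and (b) compute $\Hom_{\MHS_\R}(\R,\blank)$ and $\Ext^1_{\MHS_\R}(\R,\blank)$ on the pure Hodge structures that then occur.

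For (a): when $m$ is odd both cohomology groups vanish, since $-I\in\SL_2(\Z)$ acts by $-1$ on $S^m\H$; when $m=0$ the relevant groups are $H^1(\M_{1,1}^\an,\R(r))=0$, $H^1(\M_{1,\uu}^\an,\R(r))\cong\R(r-1)$ (Proposition~\ref{prop:coho_vec}), and $H^2(\M_{1,\ast}^\an,\R(r))=0$ for $\ast\in\{1,\uu\}$ (using that $\SL_2(\Z)$ has virtual cohomological dimension $1$ and $H^1(\M_{1,1}^\an,\Q)=0$). For $m=2n>0$, Theorem~\ref{thm:eichler_shimura} and the Hecke decomposition (\ref{eqn:decomp}) give $H^1(\M_{1,1}^\an,S^{2n}\H_\R)\cong\R(-2n-1)\oplus\bigoplus_{f\in\B_{2n+2}}V_f$ with each $V_f$ pure of weight $2n+1$ of Hodge types $(2n+1,0),(0,2n+1)$, and $H^2(\M_{1,1}^\an,S^{2n}\H)=0$; Proposition~\ref{prop:coho_vec} then identifies $H^1(\M_{1,\uu}^\an,S^{2n}\H)\cong H^1(\M_{1,1}^\an,S^{2n}\H)$ and $H^2(\M_{1,\uu}^\an,S^{2n}\H)\cong H^1(\M_{1,1}^\an,S^{2n}\H)(-1)$. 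After Tate twisting, (b) is reduced to the Hodge structures $\R(k)$ and $V_f(k)$.

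For (b): a pure real Hodge structure $W$ of weight $w$ has $\Hom_{\MHS_\R}(\R,W)\neq 0$ only if $w=0$, and $\Ext^1_{\MHS_\R}(\R,W)=0$ when $w\ge 0$, while for $w<0$ one uses the standard description $\Ext^1_{\MHS_\R}(\R,W)\cong W_\C/(F^0W_\C+W_\R)$. This immediately gives $\Hom_{\MHS_\R}(\R,\R(k))=\R$ iff $k=0$, $\Ext^1_{\MHS_\R}(\R,\R(k))=\R$ iff $k\ge 1$, and $\Hom_{\MHS_\R}(\R,V_f(k))=0$ always (the weight $2n+1-2k$ is odd). For $\Ext^1_{\MHS_\R}(\R,V_f(k))$ one reads off from the Hodge types $(2n+1-k,-k)$ and $(-k,2n+1-k)$ of $V_f(k)$ that $F^0(V_f(k)\otimes\C)=0$ exactly when $k\ge 2n+2$, in which case the group is $V_f(k)_\C/V_f(k)_\R\cong V_{f,\R}$; when $n+1\le k\le 2n+1$ the weight is negative but $F^0(V_f(k)\otimes\C)$ is the one-dimensional Hodge piece of type $(2n+1-k,-k)$, whose complex conjugate is the complementary piece, so $F^0(V_f(k)\otimes\C)+V_f(k)_\R=V_f(k)_\C$ and the group vanishes; for $k\le n$ it vanishes by weight.

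Feeding this back into the short exact sequence gives the theorem. For $m=0$ the $H^2$-term vanishes, so $\Ext^2=\Ext^1_{\MHS_\R}(\R,H^1(\M_{1,\ast}^\an,\R(r)))$, which is $0$ for $\ast=1$ and equals $\Ext^1_{\MHS_\R}(\R,\R(r-1))$ for $\ast=\uu$, hence $\R$ if $r\ge 2$ and $0$ otherwise. For $m=2n>0$ and $\ast=1$, $\Ext^2=\Ext^1_{\MHS_\R}\big(\R,\R(r-2n-1)\oplus\bigoplus_f V_f(r)\big)$, which is $\R\oplus\bigoplus_f V_{f,\R}$ for $r\ge 2n+2$ and $0$ otherwise; for $\ast=\uu$ the same $\Ext^1$-term appears, together with $\G\,H^2(\M_{1,\uu}^\an,S^{2n}\H_\R(r))=\G\big(H^1(\M_{1,1}^\an,S^{2n}\H_\R)(r-1)\big)$, which vanishes except in the boundary value $r=2n+2$, where it is spanned by the Eisenstein class (the image of $\psi_{2n+2}$ under cup product with the $G_2$-class), and one must check how that class sits relative to the image of the $\Ext^1$-term to confirm the stated answer there. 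The main obstacle is precisely the real-Hodge-theoretic computation in (b): pinning down the Hodge types of the modular pieces $V_f$ and running the $W_\C/(F^0W_\C+W_\R)$ computation carefully enough to see why the cut-off is exactly $r\ge 2n+2$ and why no intermediate Tate twist contributes — everything else is either quoted from Sections~\ref{sec:coho} and \ref{sec:rel} or is routine bookkeeping.
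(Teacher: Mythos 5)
Your approach is the same as the paper's: apply the exact sequence (\ref{eqn:ses_ext}) in degree $2$ together with the cohomology computations of Section~\ref{sec:coho}, and then evaluate $\Hom_\MHS(\R,\blank)$ and $\Ext^1_\MHS(\R,\blank)$ on the relevant pure real Hodge structures. Your computation in step (b) — the cut-off $k\ge 2n+2$ for $\Ext^1_\MHS(\R,V_f(k))$ via $F^0(V_f(k)\otimes\C)=0$, and the intermediate range $n+1\le k\le 2n+1$ where $F^0+V_\R=V_\C$ — is exactly the content of the paper's proof and is correct.

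The hesitation you express at the very end is, however, sharper than you give it credit for, and you should not wave it away. The paper's own proof opens with the assertion that $H^2(\M_{1,\ast}^\an,S^m\H)$ vanishes for all $m\ge 0$ when $\ast\in\{1,\uu\}$. For $\ast=1$ this follows from $\mathrm{vcd}\,\SL_2(\Z)=1$, but for $\ast=\uu$ it directly contradicts Proposition~\ref{prop:coho_vec}, which states that the cup product with $H^1(\M_{1,\uu}^\an,\Q)\cong\Q(-1)$ gives an isomorphism $H^2(\M_{1,\uu}^\an,S^m\H)\cong H^1(\M_{1,1}^\an,S^m\H)(-1)$ for $m>0$. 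You correctly identified this nonvanishing $H^2$-term. But there is nothing left "to check how that class sits relative to the image of the $\Ext^1$-term": the sequence
$$
0 \to \Ext^1_\MHS\big(\R,H^1(\M_{1,\uu}^\an,S^{2n}\H_\R(r))\big) \to
\Ext^2_{\MHS_\R(\M_{1,\uu},\H)}(\R,S^{2n}\H(r)) \to
\G\,H^2(\M_{1,\uu}^\an,S^{2n}\H_\R(r)) \to 0
$$
is short exact, so the term $\G\,H^2$ is a genuine quotient and sums with the $\Ext^1_\MHS$-subgroup. Running the computation: $\G\big(H^1(\M_{1,1}^\an,S^{2n}\H_\R)(r-1)\big)$ vanishes except on the Eisenstein summand $\R(r-2n-2)$, which contributes $\R$ precisely when $r=2n+2$ (the cusp pieces $V_f(r-1)$ have odd weight and contribute nothing). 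So for $\ast=\uu$, $m=2n>0$, $r=2n+2$, you obtain $\R^2\oplus\bigoplus_{f\in\B_{2n+2}}V_{f,\R}$, one more copy of $\R$ than the theorem states. In short: your proof is essentially correct, your worry is well founded, and you have in effect found that the paper's proof (and very likely the theorem statement itself, at the boundary $r=2n+2$, $\ast=\uu$) needs a small correction — the term the paper claims is zero is the one you flagged.
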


\begin{proof}
Since $H^2(\M_{1,\ast}^\an,S^m\H)$ vanishes for all $m\ge 0$ when $\ast \in
\{1,\uu\}$, we have
$$
H^2_\cD(\M_{1,\ast},S^m\H_\R(r)) = \Ext^1_\MHS(\R,H^1(\M_{1,\ast},S^m\H(r))).
$$
This vanishes when $m$ is odd. So suppose that $m=2n$. The decomposition
(\ref{eqn:decomp}) implies that if $r\ge 2n+2$, where $n > 0$ when $\ast = 1$
and $n\ge 0$ when $\ast = \uu$, we have
\begin{align*}
H^2_\cD(\M_{1,\ast},S^{2n}\H_\R(r))
&= \Ext^1_\MHS(\R,H^1(\M_{1,\ast},S^{2n}\H(r)))
\cr
&\cong \Ext^1_\MHS(\R,\R(r-2n-1)) \oplus \bigoplus_{f\in \B_{2n+2}} V_f(r)
\end{align*}
and that this ext group vanishes in the remaining cases.

To complete the proof, recall that if $A$ is a real mixed Hodge structure, then
$$
\Ext^1_\MHS(\R,A(r)) \cong A_\C/(i^r A_\R + F^r A).
$$
Since $V_f$ has Hodge type $(2n+1,0)$ and $(0,2n+1)$, 
$$
V_{f,\C} = i^r V_f + F^r V_f
$$
when $r < 2n+2$ and $F^0V_f = 0$ when $r\ge 2n+2$. This implies
$\Ext^1_\MHS(\R,V_f(r))$ vanishes when $r<2n+2$ and is isomorphic to $V_f$
when $r\ge 2n+2$. Similarly, $\Ext^1_\MHS(\R,\R(r-2n-1))$ vanishes when
$r-2n-1\le 0$ and is isomorphic to $\R$ when $r\ge 2n-2$.
\end{proof}

\subsection{The $\Frbar_\infty$ action}

In this section, we take $\ast \in \{1,\uu\}$. Since $\M_{1,\ast}$  and the
universal elliptic curve over it are defined over $\Z$, complex conjugation (aka
``real Frobenius'') $\Fr_\infty \in \Gal(\C/\R)$ acts on $\E^\an \to
\M_{1,\ast}^\an$. This implies that $\Fr_\infty$ acts on $\H_\R$, and thus on
$H^\dot(\M_{1,\ast}^\an, S^{2n}\H_\R)$ as well. It extends to a $\C$-linear
involution of $H^\dot(\M_{1,\ast}^\an, S^{2n}\H_\C)$. As is well known, this can
be computed in terms of modular symbols. (Cf.\ \cite[Lem.~17.7]{hain:modular}.)

For each $f\in \B_{2n+2}$, $V_f$ is preserved by $\Fr_\infty$. Write
$$
V_f = V_f^+ \oplus V_f^-
$$
where $\Fr_\infty$ acts at $+1$ on $V_f^+$ and $-1$ on $V_f^-$. The action on
the class of the Eisenstein series is given by:

\begin{lemma}
\label{lem:real_frob}
The real Frobenius operator $\Fr_\infty$ multiplies the class of $\psi_{2n+2}$ 
in $H^1(\M_{1,\ast}^\an,S^{2n}\H_\C)$ by $-1$.
\end{lemma}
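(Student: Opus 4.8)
The plan is to compute the action of $\Fr_\infty$ on $\psi_{2n+2}$ by representing this cohomology class explicitly, either as the differential form $\psi_{2n+2} = \w_{G_{2n+2}}$ on $\h$ or, dually, via its modular symbol, and then tracking how complex conjugation on $\M_{1,\ast}^\an$ and on the universal elliptic curve acts on it. The cleanest route is to first reduce to $\ast = 1$: the class $\psi_{2n+2}$ lives (by Remark~\ref{rem:scalings} and Theorem~\ref{thm:eichler_shimura}) in $H^1(\M_{1,1}^\an, S^{2n}\H_\C)$, its pullback to $\M_{1,\uu}^\an$ is the restriction along the projection, and since the projection $\M_{1,\uu}\to\M_{1,1}$ is defined over $\R$, it is $\Fr_\infty$-equivariant; so it suffices to treat $\M_{1,1}$.

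Next I would recall the explicit description of $\Fr_\infty$ on $\M_{1,1}^\an = \SL_2(\Z)\bbs\h$. Complex conjugation sends the elliptic curve $E_\tau = \C/\Lambda_\tau$ to its conjugate $E_{-\bar\tau}$, so on $\h$ it is induced by $\tau \mapsto -\bar\tau$, and on $H_1$ it acts by $\aa \mapsto \aa$, $\bb \mapsto -\bb$ (reading off from the lattice $\Z\oplus\Z\tau \mapsto \Z\oplus\Z(-\bar\tau)$, which fixes $1$ and negates $\tau$ at the level of the real structure). Correspondingly on $H^1$ with its de~Rham frame, $\Fr_\infty$ fixes $\adual$ and negates $\bdual$; equivalently, in the $(\aa,\bw)$ frame, using the comparison $\bw \leftrightarrow -2\pi i\,\bb$ and the fact that $\Fr_\infty$ is antiholomorphic, one gets $\Fr_\infty(\bw) = \bw$ and $\Fr_\infty(\aa) = \aa$ after accounting for the complex conjugation on the coefficient $\C$ — the point being that $\Fr_\infty$ is a \emph{conjugate-linear} involution of $S^{2n}\H_\C$. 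Then I would plug this into the formula
$$
\psi_{2n+2} = 2\pi i\, G_{2n+2}(\tau)\, \bw^{2n}\, d\tau,
$$
and compute $\Fr_\infty^\ast \psi_{2n+2}$. Under $\tau \mapsto -\bar\tau$ one has $d(-\bar\tau) = -d\bar\tau$, and $G_{2n+2}(-\bar\tau) = \overline{G_{2n+2}(\tau)}$ because $G_{2n+2}$ has real (indeed rational) Fourier coefficients; combining with the conjugation of the $2\pi i$ and of $\bw^{2n}$, and using that $\Fr_\infty$ is the composite of pullback along $\tau\mapsto-\bar\tau$ with complex conjugation of coefficients, one finds that the holomorphic form $\psi_{2n+2}$ is sent to its own negative in $H^1$. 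The sign $-1$ comes out as the product of the sign from $d\tau \mapsto -d\bar\tau$ and the relation between the holomorphic and antiholomorphic pieces; concretely, $\psi_{2n+2}$ spans the copy of $\Q(-2n-1)^{\DR}$, and real Frobenius acts on $\Q(k)^{\DR}$ — more precisely on the Betti-de~Rham comparison — by $(-1)^k$, so on $\Q(-2n-1)$ it acts by $(-1)^{-2n-1} = -1$. This last observation is in fact the slickest formulation: since $\psi_{2n+2}$ is, up to a rational scalar, the image under the residue map of a rational generator of $\Q(-2n-1)^{\DR}$ (Remark~\ref{rem:scalings}), and the residue map and the rational structures are all defined over $\R$ and hence $\Fr_\infty$-equivariant, the action on $\psi_{2n+2}$ is forced to be that of $\Fr_\infty$ on $\Q(-2n-1)$, namely multiplication by $-1$.

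\textbf{The main obstacle} will be pinning down the precise normalization of the $\Fr_\infty$-action on $\H_\C$ — in particular getting the conjugate-linearity and the interaction with the factors of $2\pi i$ in the definitions of $\bw$ and $\psi_{2n+2}$ exactly right, since a sign error there propagates directly to the answer. I would handle this by cross-checking against the modular-symbol description (cited as \cite[Lem.~17.7]{hain:modular}): real Frobenius on $H^1(\M_{1,1}^\an, S^{2n}\H_\C)$ corresponds to the involution on modular symbols $\{0,\infty\}\otimes X^iY^{2n-i} \mapsto -\{0,\infty\}\otimes (-X)^iY^{2n-i}$, and one checks directly that the period pairing of $\psi_{2n+2}$ against the symbol $\{0,\infty\}\otimes Y^{2n}$ (which computes a special value of the $L$-function of $G_{2n+2}$, i.e.\ a nonzero rational multiple of a power of $\pi$) changes sign under this involution. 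Either computation gives the stated result; I would present the $\Q(-2n-1)$-argument as the main proof and mention the modular-symbol check as corroboration.
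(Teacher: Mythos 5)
Your ``slickest formulation'' is essentially the paper's own proof, just with the bookkeeping moved around: the paper uses the \emph{untwisted} residue $H^1(\M_{1,\ast}^\an,S^{2n}\H_\C)\to S^{2n}H/\im N$, observes that it \emph{anti}-commutes with $\Fr_\infty$ (because $\Fr_\infty$ reverses the orientation of the $q$-disk), and then notes that $\Res_{q=0}\psi_{2n+2}=(2\pi i\,\bb)^{2n}G_{2n+2}(0)$ is $\Fr_\infty$-invariant; you instead work with the Tate-twisted residue $H^1\to\Q(-2n-1)$ of Remark~\ref{rem:scalings}, which is genuinely equivariant precisely because the extra $\Q(-1)$ factor absorbs the orientation-reversal sign, and then invoke $\Fr_\infty|_{\Q(-2n-1)}=(-1)^{-2n-1}=-1$. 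These are the same computation, and your version is fine provided you are aware of \emph{why} the twisted residue is equivariant while the naive one anti-commutes --- your justification ``defined over $\R$ hence $\Fr_\infty$-equivariant'' is exactly the place where, with the untwisted convention, the argument would silently drop a sign. Two smaller slips that do not affect the conclusion: the direction of the residue map is $H^1\to\Q(-2n-1)$, so $\psi_{2n+2}$ maps \emph{to} a rational generator of $\Q(-2n-1)^\DR$, not the other way round; and your intermediate claim $\Fr_\infty(\bw)=\bw$ is actually the formula for the conjugate-linear $\Frbar_\infty$, whereas the $\C$-linear $\Fr_\infty$ of the lemma satisfies $\Fr_\infty(\bw)=\Fr_\infty(-2\pi i\,\bb)=2\pi i\,\bb=-\bw$ (so $\Fr_\infty(\bw^{2n})=\bw^{2n}$, which is what the residue computation actually uses).
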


\begin{proof}
Since $\Fr_\infty$ commutes with the action of the Hecke operators, $\Fr_\infty$
multiplies the eigenform $\psi_{2n+2}$ by $\pm 1$. To determine this
multiple we use the residue map
$$
\Res : H^1(\M_{1,\ast}^\an,S^{2n}\H_\C) \to S^{2n}H/\im N \cong \R \bb^{2n},
$$
where $N$ denotes the monodromy logarithm $-\aa\partial/\partial \bb$. It 
anti-commutes with $\Fr_\infty$ as $\Fr_\infty$ reverses the orientation of the
$q$-disk. The result follows as $\bb^{2n}$ is invariant and
$\Res_{q=0}\psi_{2n+2} = (2\pi i \bb)^{2n}G_{2n+2}(0)$.
\end{proof}

Let
$$
\Frbar_\infty : H^\dot(\M_{1,\ast}^\an, S^{2n}\H_\C)
\to H^\dot(\M_{1,\ast}^\an, S^{2n}\H_\C)
$$
be the composition of $\Fr_\infty$ with complex conjugation on the coefficients.
This is $\C$-antilinear. It corresponds to complex conjugation on
$H^\dot_\DR(\M_{1,\ast/\R},S^{2n}\cH_\R)\otimes\C$ under the comparison
isomorphism. It therefore preserves the Hodge filtration and the real structure.
It lifts to an involution of the real Deligne--Beilinson cohomology
$$
H_\cD^\dot(\M_{1,\ast}^\an,S^{2n}\H(r)) \cong
\Ext^\dot_{\MHS(\M_{1,\ast},\H)}(\R,S^{2n}\H(r)).
$$

The (Betti) copy of $\Q(r-2n-1)$ in $H^1(\M_{1,\ast}^\an,S^{2n}\H_\Q(r))$ is
spanned by
$$
(2\pi i)^{r-2n-1}\psi_{2n+2}.
$$
(Cf.\ Remark~\ref{rem:scalings}.) Lemma~\ref{lem:real_frob} implies that
$\Frbar_\infty$ multiplies it by $(-1)^{r}$. These observations yield the
following refinement of Theorems~\ref{thm:ext1} and \ref{thm:ext2}.

\begin{proposition}
\label{prop:deligne_coho}
If $\ast \in \{1,\uu,2\}$, then
$$
H^1_\cD(\M_{1,\ast}^\an,\R(r))^{\Frbar_\infty} \cong
\begin{cases}
\R & \ast = \uu,\ r=1 \cr
\R & r\ge 3 \text{ odd,}\cr
0 & \text{otherwise.}
\end{cases}
$$
If $\ast \in \{1,\uu\}$, then for all integers $m > 0$ and $r$
$$
H^1_\cD(\M_{1,\ast}^\an,S^{m}\H_\R(r))
= H^1_\cD(\M_{1,\ast}^\an,S^{m}\H_\R(r))^{\Frbar_\infty}.
$$
It is non-zero if and only if $m=2n$, $r=2n+1$ and $m\ge 0$ when $\ast = \uu$
and $m>0$ when $\ast = 1$. In degree 2
$$
H^2_\cD(\M_{1,\ast}^\an,S^{m}\H_\R(r))^{\Frbar_\infty}
= 
\begin{cases}
\R & \ast = \uu,\ m=0 \text{ and } r \ge 2 \text{ even,} \cr
\R \oplus \bigoplus_{f\in \B_{2n+2}} V_f^- &
m=2n>0,\ r \ge 2n+2 \text{ even},\cr
 \bigoplus_{f\in \B_{2n+2}} V_f^+ &
 m=2n>0,\ r > 2n+2 \text{ odd},\cr
0 & \text{otherwise.}
\end{cases}
$$
\end{proposition}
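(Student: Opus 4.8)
The plan is to deduce everything from work already done. Theorems~\ref{thm:ext1} and \ref{thm:ext2}, together with the identifications $H^j_\cD(\M_{1,\ast}^\an,S^m\H_\R(r)) \cong \Ext^j_{\MHS(\M_{1,\ast},\H)}(\R,S^m\H(r))$ and the vanishing in Proposition~\ref{prop:vanishing}, already pin down the full groups $H^j_\cD(\M_{1,\ast}^\an,S^m\H_\R(r))$ in degrees $1$ and $2$. As recalled just before the statement, $\Frbar_\infty$ corresponds under the comparison isomorphism to complex conjugation on $H^\dot_\DR(\M_{1,\ast/\R},S^m\cH_\R)\otimes\C$, hence is an $\R$-linear involution of each of these real Deligne cohomology groups; the only remaining task is to split each group into its $(\pm1)$-eigenspaces. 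Since the short exact sequence (\ref{eqn:ses_ext}) is $\Frbar_\infty$-equivariant, this reduces to computing the $\Frbar_\infty$-action on the Hodge-class pieces $\G H^j(\M_{1,\ast}^\an,S^m\H_\R(r))$ and on the Ext pieces $\Ext^1_\MHS\big(\R,H^{j-1}(\M_{1,\ast}^\an,S^m\H_\R(r))\big)$ separately.

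In degree $1$ with $m=2n>0$, one has $H^0(\M_{1,\ast}^\an,S^{2n}\H(r))=0$, so $H^1_\cD = \G H^1(\M_{1,\ast}^\an,S^{2n}\H_\R(r))$. By the Hecke decomposition (\ref{eqn:decomp}), a nonzero Hodge class can only be produced by the Tate summand $\R(r-2n-1)$ --- the $V_f$ are concentrated in Hodge types $(2n+1,0)$ and $(0,2n+1)$ --- so the group vanishes unless $r=2n+1$, in which case it is the line through the Betti generator of that $\R(0)$-summand, a scaling of the Eisenstein class $\psi_{2n+2}$. Lemma~\ref{lem:real_frob}, combined with the sign coming from the twist, forces the $\Frbar_\infty$-eigenvalue on this line to be $+1$, which gives both the equality $H^1_\cD=(H^1_\cD)^{\Frbar_\infty}$ and the non-vanishing criterion. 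The remaining degree-$1$ assertion, about $H^1_\cD(\M_{1,\ast}^\an,\R(r))$ (now also for $\ast=2$), is handled the same way, using $H^1(\M_{1,1}^\an,\R)=H^1(\M_{1,2}^\an,\R)=0$, the isomorphism $H^1(\M_{1,\uu}^\an,\Q)\cong\Q(-1)$ of Proposition~\ref{prop:coho_vec} with generator $\psi_2$, and the $\Frbar_\infty$-action on $\Ext^1_\MHS(\R,\R(r))$.

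In degree $2$ with $\ast\in\{1,\uu\}$, the vanishing $H^2(\M_{1,\ast}^\an,S^m\H)=0$ identifies $H^2_\cD(\M_{1,\ast}^\an,S^m\H_\R(r))$ with $\Ext^1_\MHS\big(\R,H^1(\M_{1,\ast}^\an,S^m\H_\R(r))\big)$, which is $0$ for $m$ odd; for $m=2n$ one substitutes (\ref{eqn:decomp}) and gets $\Ext^1_\MHS(\R,\R(r-2n-1))\oplus\bigoplus_{f\in\B_{2n+2}}\Ext^1_\MHS(\R,V_f(r))$. Each summand is then evaluated by means of the formula $\Ext^1_\MHS(\R,A(r))\cong A_\C/(i^rA_\R+F^rA)$: the Tate summand is nonzero exactly when $r\ge 2n+2$, and $\Frbar_\infty$ acts on it by $(-1)^r$ (combine the coefficient conjugation on the factor $(2\pi i)^{r-2n-1}$ with Lemma~\ref{lem:real_frob}), so it survives only for $r$ even; and since $F^0V_f=0$, each $\Ext^1_\MHS(\R,V_f(r))$ is isomorphic to $V_{f,\R}$ when $r\ge 2n+2$ and vanishes otherwise, with $\Frbar_\infty=\Fr_\infty\circ(\text{coefficient conjugation})$ carried through the $\R(r)$-twist identifying its invariant subspace with $V_{f,\R}^-$ for $r$ even and $V_{f,\R}^+$ for $r$ odd. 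Reassembling these, together with the $\ast=\uu$, $m=0$ case treated as above, yields the stated table.

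I expect the genuine difficulty to be confined to the last paragraph: getting, with the correct signs, the action of $\Frbar_\infty$ on $\Ext^1_\MHS(\R,V_f(r))$ and on the Eisenstein line as the parity of $r$ varies. This means tracking three separate sources of sign: (i) that $\psi_{2n+2}$ is a rational \emph{de~Rham} class, not a rational Betti class, so its relation to the Betti generator of the relevant Tate summand carries a power of $2\pi i$; (ii) the interplay between the $\C$-linear operator $\Fr_\infty$, with its $\pm$-eigenspaces $V_f^\pm$ on $V_{f,\R}$, and the $\C$-antilinear coefficient conjugation, whose composite is $\Frbar_\infty$; and (iii) the Tate twist $\R(r)$, on whose Betti realization $\Fr_\infty$ acts by $(-1)^r$. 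The rest is assembly of pieces already established.
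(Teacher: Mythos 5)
Your plan is sound and follows the same route as the paper, whose own ``proof'' consists of the two sentences just before the statement (compute the $\Frbar_\infty$-action on the Eisenstein class via Lemma~\ref{lem:real_frob} and the $(2\pi i)$-power, then read off the table from Theorems~\ref{thm:ext1} and \ref{thm:ext2}). You make the implicit scaffolding explicit --- the exact sequence (\ref{eqn:ses_ext}), the Hecke decomposition (\ref{eqn:decomp}), the formula $\Ext^1_\MHS(\R,A(r))\cong A_\C/(i^rA_\R+F^rA)$ --- which is exactly how the verification should be organized, and your list of the three sources of sign is correct and is where the real content lives.

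One point is worth flagging because it is where the paper's displayed computation and yours quietly diverge. The paper states that $\Frbar_\infty$ multiplies the Betti generator of the $\Q(r-2n-1)$-summand by $(-1)^r$; taken literally, this is the sign on the \emph{Betti real line} and would give $-1$ at $r=2n+1$, contradicting the claim $H^1_\cD = (H^1_\cD)^{\Frbar_\infty}$ in degree $1$. The correct sign on the Betti real line is $(-1)^{r+1}$: the $(-1)$ from Lemma~\ref{lem:real_frob} comes bundled with a $(-1)^{2n+1}$ from the $(2\pi i)^{\pm(2n+1)}$ relating $\psi_{2n+2}$ to the Betti generator, and the Tate twist $\R(r)$ contributes a further $(-1)^r$. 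Since $\Frbar_\infty$ is $\C$-antilinear, it acts on the imaginary complement (hence on $\Ext^1_\MHS$, which is the piece that appears in degree $2$) by the opposite sign, $(-1)^r$. Your degree-$1$ computation gets the Betti sign right (you invoke the twist, not just the coefficient conjugation), and your degree-$2$ computation gets the $\Ext^1$ sign right; but, as written, they look like the same calculation applied twice, when in fact they differ by exactly this antilinearity flip. Make that flip explicit --- i.e., that $\G H^j$ sees the real direction and $\Ext^1_\MHS$ sees the imaginary direction of the same complexified line --- and the plan becomes a complete proof. (Also, the ``$F^0 V_f = 0$'' you and the paper both write should of course be ``$F^r V_f = 0$ for $r \ge 2n+2$''; $F^0 V_f$ is all of $V_f$.)
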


\section{Weighted and Crystalline Completion}
\label{sec:weighted}

In this section we review the definitions of the $\ell$-adic weighted and
crystalline completion functors and make some initial observations about the
crystalline completion of $\pi_1(\M_{1,/\ast/\Q},\vv_o)$. More details
can be found in \cite{hain-matsumoto:weighted,hain-matsumoto:survey}.

Fix a prime number $\ell$. Suppose that $\G$ is a profinite group and that $R$
is a reductive algebraic group over $\Ql$, endowed with a central cocharacter
$c:\Gm \to R$. Suppose that $\rho : \G \to R(\Ql)$ is a continuous, Zariski
dense representation. Say that a representation $V$ of $R$ has weight $m$ if its
pullback to $\Gm$ via $c$ is of weight $m$. Since $c$ is central, Schur's Lemma
implies that every irreducible representation of $R$ has a weight.

Consider the full subcategory $\cR^\cts_\Ql(\G,\rho,c)$ of the category of
continuous $\G$-modules $\Rep_\Ql^\cts(\G)$ consisting of those $\G$-modules $M$
that are finite dimensional and admit a (necessarily unique) filtration
$$
0 = W_n M \subseteq W_{n+1}M \subseteq \cdots \subseteq
W_{N-1}M \subseteq W_N M = M,
$$
where the action of $\G$ on each graded quotient $\Gr^W_m M$ factors through a
rational representation $R \to \Aut(\Gr^W_m M)$ of weight $m$ via $\rho$. Note
that every irreducible $R$-module is an object of $\cR^\cts_\Ql(\G,\rho,c)$.

This category is neutral tannakian over $\Ql$. The weighted completion of $\G$
relative to $\rho$ is the affine group scheme
$\pi_1(\cR^\cts_\Ql(\G,\rho,c),\w)$ over $\Ql$, where $\w$ is the fiber functor
that takes a module $M$ to its underlying vector space. It is an extension
$$
1 \to \U(\G,\rho,c) \to \pi_1(\cR^\cts_\Ql(\G,\rho,c),\w) \to R \to 1
$$
where $\U(\G,\rho,c)$ is prounipotent. There is a natural homomorphism
$$
\rhotilde : \G \to \pi_1(\cR^\cts_\Ql(\G,\rho,c),\w)(\Ql)
$$
whose composition with the canonical quotient map
$\pi_1(\cR^\cts_\Ql(\G,\rho,c),\w) \to R$ is $\rho$. It is Zariski dense.

Levi's Theorem implies that the central cocharacter $c : \Gm \to R$ can be
lifted to a cocharacter $\ctilde : \Gm \to \pi_1(\cR^\cts_\Ql(\G,\rho,c),\w)$.
So every object $V$ of $\cR^\cts_\Ql(\G,\rho,c)$ is a $\Gm$-module via
$\ctilde$. Denote the subspace of $V$ on which $\Gm$ acts via the $n$th power of
the standard  character by $V_n$. The inclusion $V_n \hookrightarrow V$ induces
an isomorphism $V_n \cong \Gr^W_n V$.

\begin{proposition}[{\cite[Thm.~3.12]{hain-matsumoto:weighted}}]
The isomorphism $V \cong \bigoplus_{n\in\Z} V_n$ defines a splitting
$$
V \cong \bigoplus_{n\in \Z} \Gr^W_\dot V
$$
of the weight filtration that is natural with respect to morphisms of
$\cR^\cts_\Ql(\G,\rho,c)$. It is compatible with duals and tensor products.
\end{proposition}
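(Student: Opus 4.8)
The plan is to read off the splitting from the eigenspace decomposition of the $\Gm$-action defined by $\ctilde$. Write $\cG := \pi_1(\cR^\cts_\Ql(\G,\rho,c),\w)$ and recall that $\cR^\cts_\Ql(\G,\rho,c)$ is equivalent to the category of finite-dimensional $\cG$-modules, under which an object $V$ acquires its weight filtration $W_\dot V$ as the unique filtration by $\cG$-submodules on whose graded quotients $\cG$ acts through $R$, with $\Gr^W_m V$ of weight $m$. Since $\ctilde$ is a cocharacter whose composite with the quotient $\cG\to R$ is $c$, the induced $\Gm$-action on $\Gr^W_m V$ is the one through $c$, which by definition is the scalar action $t\mapsto t^m\id$. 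In particular $\Gm$, acting through $\ctilde$, acts on $\Gr^W_m V$ purely in weight $m$, and $W_\dot V$, being $\cG$-stable, is a fortiori a filtration by $\Gm$-submodules.

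The main step is to show that $W_m V=\bigoplus_{n\le m}V_n$ for every $m$, which in particular forces the composite $V_n\hookrightarrow W_n V\twoheadrightarrow\Gr^W_n V$ to be an isomorphism. For this I would use that $\Gm$ is linearly reductive over $\Ql$, so that the weight-$n$ functor $M\mapsto M_n$ is exact on $\Gm$-modules. Applying it to the exact sequence $0\to W_{m-1}V\to W_m V\to\Gr^W_m V\to 0$ of $\Gm$-modules yields an exact sequence $0\to (W_{m-1}V)_n\to (W_m V)_n\to (\Gr^W_m V)_n\to 0$ in which, by the first paragraph, $(\Gr^W_m V)_n$ equals $\Gr^W_m V$ when $n=m$ and $0$ otherwise. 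Hence $(W_m V)_n=(W_{m-1}V)_n$ whenever $n\neq m$; iterating this downward, using that $W_m V=0$ for all sufficiently negative $m$, gives $(W_m V)_n=0$ for $m<n$, and iterating it upward, using that $W_m V=V$ for all sufficiently large $m$, gives $(W_m V)_n=V_n$ for $m\ge n$. Feeding these back into the displayed exact sequence at $n=m$ identifies $(W_m V)_m\cong\Gr^W_m V$ via $W_m V\twoheadrightarrow\Gr^W_m V$, and since $(W_m V)_m=V_m$ this simultaneously gives the splitting $V\cong\bigoplus_n\Gr^W_n V$ and the compatibility $W_m V=\bigoplus_{n\le m}V_n$.

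Naturality and the compatibilities with $\otimes$ and duals are then formal: a morphism in $\cR^\cts_\Ql(\G,\rho,c)$ is $\cG$-linear, hence $\Gm$-linear through $\ctilde$, hence preserves the eigenspace decompositions, which is the asserted naturality; and because $\ctilde:\Gm\to\cG$ is a homomorphism of group schemes, the $\Gm$-module structure it induces on $V\otimes V'$ and on $\dual{V}$ is the tensor product, resp. the dual, of those on $V$ and $V'$, so the corresponding gradings agree and the unit object lies in weight $0$. I expect the only point requiring care to be the claim in the first paragraph that $\Gm$ acts on $\Gr^W_m V$ purely in weight $m$: this is precisely where the hypothesis that $c$ is \emph{central} in $R$ is needed, since centrality is what lets Schur's lemma attach a well-defined weight to each irreducible constituent of a rational $R$-module — and it is by exactly that weight that the filtration $W_\dot$ was defined — so once the definitions are unwound no further work is required there.
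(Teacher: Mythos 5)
The paper delegates this to \cite[Thm.~3.12]{hain-matsumoto:weighted} and does not reproduce the argument, so there is no in-text proof to compare against; your proof is correct and is the standard one that the cited theorem is proved by — exactness of the $\Gm$-weight-space functors (linear reductivity of $\Gm$) applied to the weight filtration, together with the fact that $\cG$ acts on $\Gr^W_m V$ through $R$ and hence $\Gm$ acts on it purely in weight $m$ via $c$. You correctly isolate where centrality of $c$ enters (Schur's lemma giving a well-defined weight to each irreducible $R$-module, hence making $W_\dot$ well-defined), and the naturality and $\otimes$/dual compatibilities are formal exactly as you say.
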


Although this isomorphism is natural, it is {\em not} canonical as it depends on the choice of lift $\ctilde$ of the central cocharacter $c$.

\begin{corollary}
For each $n\in \Z$, the functor $\Gr^W_n : \cR^\cts_\Ql(\G,\rho,c) \to
\Rep_F(R)$ is exact.
\end{corollary}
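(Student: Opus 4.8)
The plan is to read the corollary off the preceding proposition. That proposition produces, for each object $V$ of $\cR^\cts_\Ql(\G,\rho,c)$, a decomposition $V = \bigoplus_{n\in\Z} V_n$ into weight spaces for the $\Gm$-action coming from the lift $\ctilde$ of the central cocharacter, and it asserts that this decomposition is natural in morphisms. Composing the inclusion $V_n \hookrightarrow W_n V$ with the quotient $W_n V \twoheadrightarrow \Gr^W_n V$ gives an isomorphism $V_n \xrightarrow{\sim} \Gr^W_n V$, and I would first check that these isomorphisms are themselves natural in $V$: for a morphism $f : A \to B$ of the category, $f$ respects $W_\dot$ and, by the cited naturality, carries $A_n$ into $B_n$, so the square formed by $f|_{A_n} : A_n \to B_n$, $\Gr^W_n f : \Gr^W_n A \to \Gr^W_n B$, and the two vertical isomorphisms commutes. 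Hence, as functors from $\cR^\cts_\Ql(\G,\rho,c)$ to $\Rep_F(R)$ (the target carrying the $R$-structure built into the definition of $\cR^\cts_\Ql(\G,\rho,c)$, which is legitimate since $\rho$ has Zariski-dense image), one has $\Gr^W_n(-) \cong (-)_n$, with $\Gr^W_n f$ identified, up to this isomorphism, with the restriction of $f$ to the weight-$n$ eigenspace.

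With this in hand, exactness is formal. Given a short exact sequence $0 \to A \to B \to C \to 0$ in $\cR^\cts_\Ql(\G,\rho,c)$, naturality of the weight-space decomposition expresses it as the direct sum, over $n\in\Z$, of the complexes $0 \to A_n \to B_n \to C_n \to 0$. A direct sum of complexes of $F$-vector spaces is exact if and only if each summand is exact, so each $0 \to A_n \to B_n \to C_n \to 0$ is exact; translating back through the isomorphism $(-)_n \cong \Gr^W_n(-)$ yields exactness of $0 \to \Gr^W_n A \to \Gr^W_n B \to \Gr^W_n C \to 0$ in $\Rep_F(R)$, which is the assertion.

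I expect no real obstacle here; the corollary is essentially a restatement of the proposition. The only points requiring care are that ``natural with respect to morphisms'' is being used in exactly the sense needed --- namely that every morphism of the category is graded for the $\Gm$-action defined by $\ctilde$ --- and that $\Gr^W_n$ is regarded as taking values in $\Rep_F(R)$ rather than merely in $F$-vector spaces or in $\Rep_F(\G)$, which is valid precisely because each weight graded quotient is, by the definition of $\cR^\cts_\Ql(\G,\rho,c)$, an $R$-module.
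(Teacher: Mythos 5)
Your argument is correct and is exactly the argument the paper has in mind; the paper simply omits it because the corollary is a direct consequence of the preceding proposition, which is cited as the key input. You correctly identify that the content of the proposition is a functorial (hence morphism-compatible) splitting $V \cong \bigoplus_n \Gr^W_n V$, so a short exact sequence in $\cR^\cts_\Ql(\G,\rho,c)$ decomposes into the direct sum over $n$ of the sequences of $n$th weight graded pieces, and exactness passes to each summand. Your closing caveats --- that naturality must be with respect to all morphisms of the category, and that the target is $\Rep_F(R)$ because each $\Gr^W_n V$ carries an $R$-action by definition of the category --- are precisely the points that make the deduction legitimate, and they track the paper's intent.
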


\begin{example}
\label{ex:wtd_comp}
Here we recall from \cite{hain-matsumoto:weighted} the setup for the weighted
completion of $\pi_1(\Spec\Z[1/\ell])$. Let $\G = \pi_1(\Spec\Z[1/\ell],\Qbar)$,
$R = \Gm$ and $\rho : \G \to \Gm(\Ql)$ be the $\ell$-adic cyclotomic character
$\chi_\ell :G_\Q \to \Zl^\times\subset \Ql^\times$. The defining representation
of $\Gm$ thus corresponds to $\Ql(1)$. Since this has weight $-2$, we define the
central cocharacter to be the homomorphism $c : \Gm \to \Gm$ defined by $z
\mapsto z^{-2}$. The weighted completion of $\G$ with respect to $\chi_\ell$ and
$c$ is
$$
\A_\ell^\wtd = \pi_1(\cR^\cts_\Ql(\pi_1(\Spec\Z[1/l],\Qbar),\rho,c),\w).
$$
\end{example}

\subsection{Crystalline completion}
\label{sec:cris}
Suppose that $K \subset \Qbar$ is a number field. Set $G_K = \Gal(\Qbar/K)$. We
continue with the notation of the previous paragraph. Suppose in addition that
there is a surjection $\G \to G_K$ with a distinguished splitting $s : G_K \to
\G$. Define an object $V$ of $\cR^\cts_\Ql(\G,\rho,c)$ to be {\em crystalline}
(with respect to $s$) if the representation $\rho \circ s : G_K \to \GL(V)$ is
unramified at all primes that do not lie over $\ell$ and that it is crystalline
at all primes that lie over $\ell$.\footnote{In some applications it is natural
relax this definition and allow crystalline representations to be ramified at a 
set $S$ of primes. We will not need to do that here as $\M_{1,\ast/\Z}$ has
good reduction at all primes.}

Let $\cR^\cris_\Ql(\G,\rho,c)$ be the full subcategory of
$\cR^\cts_\Ql(\G,\rho,c)$ whose objects are the crystalline objects. It is
tannakian. The $\ell$-adic crystalline completion of $(\G,s)$ is defined to be
$$
\pi_1(\cR^\cris_\Ql(\G,\rho,c),\w).
$$
It is a quotient of the weighted completion $\pi_1(\cR^\cts_\Ql(\G,\rho,c),\w)$.

As in the case of weighted completion, every object of $\cR^\cts_\Ql(\G,\rho,c)$
has a natural weight filtration $W_\dot$, the functors $\Gr^W_n$ are exact. Each
lift $\ctilde$ of the central cocharacter $c$ to
$\pi_1(\cR^\cts_\Ql(\G,\rho,c),\w)$ defines a natural splitting of the weight
filtration of each object of $\cR^\cts_\Ql(\G,\rho,c)$.

\subsection{Crystalline completion of $G_\Q$}

The setup is similar to that in Example~\ref{ex:wtd_comp}. Let $\G=G_\Q$,
$R=\Gm$ and $\rho : G_\Q \to \Gm(\Ql)$ the $\ell$-adic cyclotomic character
$\chi_\ell$. The central cocharacter $c : \Gm \to \Gm$ is defined by $z\mapsto
z^{-2}$. We also take $K=\Q$ and the homomorphism $\G \to G_\Q$ to be the
identity. Define
$$
\A_\ell^\cris = \pi_1(\cR^\cris_\Ql(G_\Q,\chi_\ell,c),\w)
$$
There is a canonical surjection $\A_\ell^\wtd \to \A_\ell^\cris$, where
$\A_\ell^\wtd$ denotes the weighted completion of $\pi_1(\Spec\Z[1/\ell],\Qbar)$
defined in Example~\ref{ex:wtd_comp}.

The $\ell$-adic realization functor $\MTM\to \cR^\cris_\Ql(G_\Q,\chi_\ell,c)$ induces a homomorphism $\A_\ell^\cris \to \pi_1(\MTM,\w_\ell)$.

\begin{theorem}[\cite{hain-matsumoto:weighted}]
For all $\ell$, the natural homomorphism $\A_\ell^\cris \to \pi_1(\MTM,\w_\ell)$
is an isomorphism.
\end{theorem}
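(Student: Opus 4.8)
The plan is to reduce the statement to cohomological computations for two pronilpotent Lie algebras and then to invoke Soul\'e's theorem. First note that $\pi_1(\MTM,\w_\ell)$ is the split extension $\Gm\ltimes\K_\ell$, where $\K_\ell=\ker\big(\pi_1(\MTM,\w_\ell)\to\Gm\big)$ is prounipotent and the splitting is induced by the retraction $\Gr^M_\dot:\MTM\to\MTM^\ss$; likewise, by Levi's theorem, $\A_\ell^\cris$ is a split extension $\Gm\ltimes\U^\cris$, the $\Gm$ acting through a lift $\ctilde$ of the central cocharacter $z\mapsto z^{-2}$. Since the $\ell$-adic realization functor $\MTM\otimes\Ql\to\cR^\cris_\Ql(G_\Q,\chi_\ell,c)$ carries $\Q(n)$ to $\Ql(n)$, which is pure of weight $-2n$, the homomorphism $\A_\ell^\cris\to\pi_1(\MTM,\w_\ell)$ is compatible with the projections onto $\Gm$ and, after conjugating the Levi splitting (which we may, as all such splittings are conjugate), with the two $\Gm$-actions. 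It therefore restricts to a $\Gm$-equivariant homomorphism $\U^\cris\to\K_\ell$, and, these being prounipotent groups over a field of characteristic zero, it suffices to show that the induced map of Lie algebras $\varphi:\u^\cris\to\k_\ell$ is an isomorphism. Both are pronilpotent Lie algebras graded by the $\Gm$-action, with weights negative, hence bounded above.

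Next I would record the structure of $\k_\ell$. The theorem of Deligne--Goncharov quoted above gives $\Ext^j_\MTM(\Q,\Q(n))=H^j(\k,\Q(n))^{\Gm}=0$ for all $n$ once $j\ge 2$, so $\k$ --- hence also $\k_\ell=\k\otimes_\Q\Ql$ --- is a free pronilpotent Lie algebra; and the weight-$2n$ piece of $H^1(\k_\ell)$ is $\Ext^1_\MTM(\Q,\Q(n))\otimes\Ql$, which is one-dimensional for $n$ odd and $\ge 3$ and zero otherwise. Thus $\k_\ell$ is freely topologically generated by the set $\{\ssigma_{2m+1}:m\ge 1\}$; in particular $H_2(\k_\ell)=0$.

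The crux is the computation of $H^1(\u^\cris)$. By the standard description of the cohomology of affine group schemes recalled above, $\Ext^1_{\Rep(\A_\ell^\cris)}(\Ql,\Ql(n))\cong H^1(\A_\ell^\cris,\Ql(n))\cong\big(H^1(\u^\cris)\otimes\Ql(n)\big)^{\Gm}$, and by the definition of $\cR^\cris_\Ql(G_\Q,\chi_\ell,c)$ the group on the left is the group of crystalline extensions of $\Ql$ by $\Ql(n)$ --- i.e. the Bloch--Kato Selmer group $H^1_f(G_\Q,\Ql(n))$. Now Soul\'e's theorem, together with Borel's computation of $K_\bullet(\Z)\otimes\Q$, shows that $\dim_{\Ql}H^1_f(G_\Q,\Ql(n))=\dim_\Q\Ext^1_\MTM(\Q,\Q(n))$ for every $n$ and that the $\ell$-adic regulator $\Ext^1_\MTM(\Q,\Q(n))\otimes\Ql\to H^1_f(G_\Q,\Ql(n))$ is an isomorphism. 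As this regulator is exactly the map induced on $\Ext^1$ by the $\ell$-adic realization functor, $\varphi$ induces an isomorphism $H^1(\k_\ell)\cong H^1(\u^\cris)$, equivalently an isomorphism of abelianizations $H_1(\u^\cris)\cong H_1(\k_\ell)$.

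Finally I would conclude. Surjectivity of $\varphi$ on $H_1$ implies (graded Nakayama, valid for complete Lie algebras with weights bounded above) that $\varphi$ is surjective. Put $\n=\ker\varphi$, an ideal of $\u^\cris$, so that $0\to\n\to\u^\cris\to\k_\ell\to 0$ is exact. Its five-term exact sequence in Lie algebra homology, $H_2(\u^\cris)\to H_2(\k_\ell)\to(H_1(\n))_{\k_\ell}\to H_1(\u^\cris)\to H_1(\k_\ell)\to 0$, combined with $H_2(\k_\ell)=0$ and the isomorphism $H_1(\u^\cris)\cong H_1(\k_\ell)$ just established, shows that the $\k_\ell$-coinvariants $(H_1(\n))_{\k_\ell}$ vanish; since $H_1(\n)$ is graded with weights bounded above and $\k_\ell$ has strictly negative weights, this forces $H_1(\n)=0$, hence $\n=0$. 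So $\varphi$ is an isomorphism, and the theorem follows. The one essential external input --- and the step I expect to be the main obstacle --- is the identification of $\Ext^1_{\cR^\cris}(\Ql,\Ql(n))$ with the Selmer group $H^1_f(G_\Q,\Ql(n))$, together with Soul\'e's bound on these Selmer groups and the non-vanishing of the $\ell$-adic regulator; pinning down that the crystalline objects form a subcategory on which these identifications hold, and that the realization functor induces precisely the regulator, also requires some care.
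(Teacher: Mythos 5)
Your proof is correct, and it is in substance the argument behind the cited result; the paper does not reproduce a proof but refers to \cite{hain-matsumoto:weighted}, where the structure of the weighted completion of $\pi_1(\Spec\Z[1/\ell])$ is established by exactly this reduction to $H^1$ and $H^2$ and to Soul\'e's theorem.

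Two small remarks, both in the direction of what you already flagged. First, the crystalline condition (rather than just the weighted/unramified condition) is genuinely essential and not merely bookkeeping: $H^1(\A_\ell^\wtd,\Ql(1))\cong H^1(G_\ell,\Ql(1))\cong \Z[1/\ell]^\times\otimes\Ql$ is one-dimensional, so $\A_\ell^\wtd$ has an extra generator in weight $-2$ and is \emph{not} isomorphic to $\pi_1(\MTM,\w_\ell)$; it is the local condition at $\ell$ that removes this class. Your argument correctly works with the crystalline Selmer group $H^1_f$, which is zero in weight $-2$. Second, the identification $\Ext^1_{\cR^\cris_\Ql}(\Ql,\Ql(n))\cong H^1_f(G_\Q,\Ql(n))$ for $n>0$ is indeed the delicate point: one needs both that every extension in $\cR^\cris_\Ql$ produces a class in $H^1_f$ (clear) and that every crystalline, everywhere-good-reduction extension of $\Ql$ by $\Ql(n)$ admits the required weight filtration and so lies in the subcategory (this uses $n>0$ so the filtration is increasing, plus the standard Bloch--Kato characterization of crystalline extensions). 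Once this is in place, your Nakayama--plus--five-term argument, using freeness of $\k_\ell$ ($H_2(\k_\ell)=0$) and boundedness above of the grading, closes the proof exactly as intended.
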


\subsection{Weighted and crystalline completion of
$\pi_1(\M_{1,\ast},\vv_o)$}

The monodromy of the local system $\H_\ell$ over $\M_{1,\ast/\Z}$ induces a
homomorphism
$$
\rho_\ell: \pi_1(\M_{1,\ast/\Z[1/\ell]},\vv_o) \to \GL(H_\ell).
$$
Define $c: \Gm \to \GL(H)$ to be the cocharacter $\lambda \mapsto \lambda\id_H$.
It is central and assigns $W$-weight 1 to $H_\ell$. Recall from Section~\ref{sec:etale} that, as a $G_\Q$-module, $H_\ell$ is isomorphic to $\Ql(0)\oplus\Ql(-1)$. Define \label{def:Gcris}
$$
\cG_\ast^{\wtd,\ell} =
\pi_1(\cR^\cts_\Ql(\pi_1(\M_{1,\ast/\Z[1/\ell]},\vv_o),\rho_\ell,c),\w).
$$
To define the $\ell$-adic crystalline completion of
$\pi_1(\M_{1,\ast/\Q},\vv_o)$, we take $s$ to be the section of
$\pi_1(\M_{1,\ast/\Q},\vv_o) \to G_\Q$ induced by $\vv_o$. Define
$$
\cG_\ast^{\cris,\ell} =
\pi_1(\cR^\cris_\Ql(\pi_1(\M_{1,\ast/\Q},\vv_o),\rho_\ell,c),\w).
$$
Both groups are extensions of $\GL(H_\ell)$ by a prounipotent group:
$$
1 \to \U^{\wtd,\ell}_\ast \to \cG^{\wtd,\ell}_\ast \to \GL(H_\ell) \to 1
$$
$$
1 \to \U^{\cris,\ell}_\ast \to \cG^{\cris,\ell}_\ast \to \GL(H_\ell) \to 1.
$$

\begin{proposition}
\label{prop:surjective}
There are homomorphisms
$$
\xymatrix{
\cG^\rel_\ast \otimes\Ql \ar[r] & \cG^{\wtd,\ell}_\ast \ar@{->>}[r] &
\cG^{\cris,\ell}_\ast \ar@{->>}[r] & \A_\ell^\cris,
}
$$
the last two of which are surjective, such that the diagram
$$
\xymatrix@C=18pt{
\pi_1^\top(\M_{1,\ast},\vv_o) \ar[r]\ar[d] &
\pi_1(\M_{1,\ast/\Q},\vv_o) \ar[r]\ar[d] &
\pi_1(\M_{1,\ast/\Z[1/\ell]},\vv_o) \ar[d] \ar[r] &
\pi_1(\Spec\Z[1/\ell]) \ar[d]
\cr
\cG^\rel_\ast(\Ql) \ar[r] & \cG^{\wtd,\ell}_\ast(\Ql) \ar[r] &
\cG^{\cris,\ell}_\ast(\Ql) \ar[r] & \A_\ell^\cris
}
$$
commutes. The sequence $\cG^\rel_\ast\otimes\Ql \to
\cG^{\cris,\ell}\to\A_\ell^\cris \to 1$ is exact and the morphism
$\cG^{\cris,\ell}\to\A_\ell^\cris$ is split.

\end{proposition}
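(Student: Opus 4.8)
The plan is to obtain all three homomorphisms from functoriality of relative, weighted and crystalline completion with respect to homomorphisms of (pro)groups that respect the chosen reductive quotients and central cocharacters. Commutativity of the displayed square is then automatic, since each of these functors intertwines the canonical Zariski-dense homomorphism $\G\to(\text{completion of }\G)$ with the corresponding homomorphism of groups. Only the first map, $\cG^\rel_\ast\otimes\Ql\to\cG^{\wtd,\ell}_\ast$, requires real work: it bridges the discrete/Betti and the profinite/$\ell$-adic pictures.

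First I would construct $\cG^\rel_\ast\otimes\Ql\to\cG^{\wtd,\ell}_\ast$. Given an object $M$ of the defining category $\cR^\cts_\Ql(\pi_1(\M_{1,\ast/\Z[1/\ell]},\vv_o),\rho_\ell,c)$ of $\cG^{\wtd,\ell}_\ast$, I restrict its monodromy to the geometric fundamental group $\pi_1(\M_{1,\ast/\Qbar},\vv_o)$ and transport it, along the comparison isomorphism $\pi_1(\M_{1,\ast/\Qbar},\vv_o)^\op\cong\pi_1^\top(\M_{1,\ast},\vv_o)^\wedge$ of Section~\ref{sec:opposite}, to a continuous $\Ql$-representation $M'$ of $\pi_1^\top(\M_{1,\ast},\vv_o)$. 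The weight filtration $W_\dot M$ is $\pi_1^\top$-stable, and on each graded quotient $\pi_1^\top$ acts through a $\GL(H_\ell)$-representation; since the monodromy of $\H$ takes values in $\SL(H^B)$ (because $\SL_2(\Z)\subset\SL_2$) and $\SL(H^B)$ is split over $\Q$, each $\Gr^W_m M'$ is the base change to $\Ql$ of an algebraic $\SL(H^B)$-module on which $\pi_1^\top$ acts through $\rho$. Hence $M'$ lies in the $\Ql$-linear tannakian category $\cR(\pi_1^\top(\M_{1,\ast},\vv_o),\rho)\otimes\Ql$, whose fundamental group is $\cG^\rel_\ast\otimes\Ql$, and $M\mapsto M'$ is an exact, fiber-functor-preserving tensor functor. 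It induces the required homomorphism.

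The other two homomorphisms are formal. Using that $\M_{1,\ast}$ has everywhere good reduction, any object of the defining category of $\cG^{\cris,\ell}_\ast$ has monodromy factoring through $\pi_1(\M_{1,\ast/\Z[1/\ell]},\vv_o)$, so that category is a full tannakian subcategory of the defining category of $\cG^{\wtd,\ell}_\ast$, closed under subquotients (subquotients of representations unramified outside $\ell$ and crystalline at $\ell$ are again such); thus $\cG^{\wtd,\ell}_\ast\to\cG^{\cris,\ell}_\ast$ exists and is surjective. For $\cG^{\cris,\ell}_\ast\to\A_\ell^\cris$, I would use the homomorphism $f:\pi_1(\M_{1,\ast/\Q},\vv_o)\to G_\Q$ induced by the structure morphism together with its section $s=s_{\vv_o}$: the central cocharacters are compatible via $(\det)^{-1}:\GL(H_\ell)\to\Gm$ (since $\det H_\ell\cong\Ql(-1)$, on which $G_\Q$ acts by $\chi_\ell^{-1}$), and pullback $f^*$ sends a crystalline $G_\Q$-module to a module crystalline for $s$ (because $f\circ s=\id$), yielding a tensor functor $f^*:\cR^\cris_\Ql(G_\Q,\chi_\ell,c)\to\cR^\cris_\Ql(\pi_1(\M_{1,\ast/\Q},\vv_o),\rho_\ell,c)$ and hence $\cG^{\cris,\ell}_\ast\to\A_\ell^\cris$. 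Pullback $s^*$ runs in the opposite direction, lands back in $\cR^\cris_\Ql(G_\Q,\chi_\ell,c)$ by the very definition of a crystalline object, and satisfies $s^*\circ f^*=\id$; so $\A_\ell^\cris\to\cG^{\cris,\ell}_\ast$ is a section of $\cG^{\cris,\ell}_\ast\to\A_\ell^\cris$. This gives the splitting at once, and surjectivity follows because the essential image of $f^*$ --- the objects on which $\pi_1(\M_{1,\ast/\Qbar},\vv_o)$ acts trivially --- is closed under subobjects.

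Finally, for exactness of $\cG^\rel_\ast\otimes\Ql\to\cG^{\cris,\ell}_\ast\to\A_\ell^\cris\to1$: surjectivity on the right is already proved, and the composite $\cG^\rel_\ast\otimes\Ql\to\A_\ell^\cris$ is trivial because applying $f^*$ and then restricting to $\pi_1^\top$ produces a trivial module (the geometric $\pi_1$ lies in $\ker f$). It remains to check that the image of $\cG^\rel_\ast\otimes\Ql$ in $\cG^{\cris,\ell}_\ast$ is all of $\ker(\cG^{\cris,\ell}_\ast\to\A_\ell^\cris)$; by a standard tannakian criterion this reduces to showing that an object of $\cR^\cris_\Ql(\pi_1(\M_{1,\ast/\Q},\vv_o),\rho_\ell,c)$ on which $\pi_1(\M_{1,\ast/\Qbar},\vv_o)$ acts trivially is pulled back via $f^*$, which is clear: its monodromy then factors through $G_\Q$, and $s^*$ of it is the required crystalline $G_\Q$-module. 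The hard part will be the first step --- pinning down the dictionary between the discrete/Betti and the profinite/$\ell$-adic descriptions exactly, in particular the opposite-group twist in the comparison isomorphism, the continuity of the restricted representation, and the descent of the weight-graded pieces to $\Q$-rational $\SL(H^B)$-modules --- everything else being formal tannakian bookkeeping.
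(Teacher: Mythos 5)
Your proposal takes essentially the same route as the paper: the first map comes from the forgetful functor through the comparison isomorphism, the middle surjection from the crystalline category being a full subcategory closed under subquotients, the right-hand map from the structure morphism with compatible cocharacters, and the splitting from the composite $s^*\circ f^*=\id$ on the module categories. The one genuine difference is in how exactness at $\cG^{\cris,\ell}_\ast$ is established. The paper cites right exactness of completion (a theorem from the weighted-completion machinery) as a black box; you instead invoke a tannakian criterion directly, arguing that objects with trivial geometric monodromy are exactly those pulled back via $f^*$. This is cleaner in the sense that it is more self-contained, but you should note that it only shows the \emph{normal closure} of the image of $\cG^\rel_\ast\otimes\Ql$ equals $\ker(\cG^{\cris,\ell}_\ast\to\A_\ell^\cris)$; to conclude the image itself is the kernel you also need normality of that image. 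This holds because the image is the Zariski closure of the image of the normal subgroup $\pi_1(\M_{1,\ast/\Qbar},\vv_o)$ under a morphism with Zariski-dense image, and the Zariski closure of the image of a normal subgroup under such a map is normal; but this step should be said explicitly. Likewise, both arguments implicitly use that relative completion commutes with base field extension, so that $\cG^\rel_\ast\otimes\Ql$ is the tannakian fundamental group of $\cR_\Ql(\pi_1^\top(\M_{1,\ast},\vv_o),\rho)$; you gesture at this with the descent of the weight-graded pieces to $\Q$-rational $\SL(H^B)$-modules, but the general compatibility is what actually carries the argument.
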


\begin{proof}
The first homomorphism is induced by the forgetful functor
$$
\cR^\cts_\Ql(\pi_1(\M_{1,\ast/\Z[1/\ell]},\vv_o),\rho,c) \to
\cR_\Ql(\pi_1^\top(\M_{1,\ast},\vv_o),\rho).
$$
The existence of the middle surjection follows from the fact that
$$
\cR^\cris_\Ql := \cR^\cris_\Ql(\pi_1(\M_{1,\ast/\Q},\vv_o),\rho_\ell,c)
$$
is a full subcategory of $\cR^\cts_\Ql$. The existence of the right-hand square
follows from the commutativity of the diagram
$$
\xymatrix{
\pi_1(\M_{1,\ast/\Q},\vv_o) \ar[rr]\ar[d] && G_\Q \ar[d]^{\chi_\ell} \cr
\GL(H_\ell) \ar[rr]^{\det^{-1}} && \Gm(\Ql) \cr
& \Gm(\Ql) \ar[ul]^{c=(\blank)^1\id_H}\ar[ur]_{c=(\blank)^{-2}}
}
$$
The last statement follows from the right exactness of completion and from the
fact that one has functors $\cR^\cris_\Ql(G_\Q,\chi_\ell,c) \to \cR^\cris_\Ql
\to \cR^\cris_\Ql(G_\Q,\chi_\ell,c)$ whose composite is the identity. The first is the inclusion of the geometrically constant objects, the second is restriction to the fiber over $vv_o$.
\end{proof}


\section{Extensions of $\ell$-adic sheaves over $\M_{1,\ast/\Q}$}

Our goal in this section is to prove the $\ell$-adic analogues of the
results in Section~\ref{sec:vmhs}. The work \cite{olsson} of 
Martin Olsson, which we use as a ``black box'', is key. These results are
not needed in the sequel. They are included to show how the computations in
this paper are related to certain fundamental questions in number theory.

The $\ell$-adic analogues of the groups
$\Ext_{\MHS(\M_{1,\ast},\H)}^\dot(\R,S^m\H(r))$ are the groups
$H^\dot(\cG^{\cris,\ell}_\ast,S^m H_\ell(r))$. In this section, we compute these
cohomology groups in degree 1 and bound them in degree 2.

For convenience, we denote the category
$\cR^\cris(\pi_1(\M_{1,\ast/\Z[1/\ell]},\vv_o),\rho_\ell,c)$ by
$\MEM_\ast^\ell$, so that $\MEM_\ast^\ell$ is equivalent to
$\Rep(\cG_\ast^{\cris,\ell})$ and
$$
H^\dot(\cG^{\cris,\ell}_\ast,S^m H_\ell(r))
= \Ext^\dot_{\MEM_\ast^\ell}(\Ql,S^m\H_\ell(r)).
$$

In this and subsequent sections, we will use the following notation. Denote the
category of $G_\Q$ modules that are unramified at all $p\neq \ell$ and
crystalline at $\ell$ by $\cC_\ell$. For an object $V$ of $\cC_\ell$, define
$$
\Hfte^\dot(G_\Q,V) := \Ext^\dot_{\cC_\ell}(\Ql,V).
$$
This group is typically denoted by $H_f^\dot(G_\Q,V)$. We have chosen a
non-standard notation to avoid a notational conflict as $f$ will often denote
a cusp form of $\SL_2(\Z)$. \label{def:h_fte}

\subsection{Computation of $H^1(\cG^{\cris,\ell}_\ast,S^m H_\ell(r))$}

The following result is the \'etale analogue of Theorem~\ref{thm:ext1}.

\begin{proposition}
\label{prop:cris_h1}
When $\ast \in \{1,\uu\}$, we have
$$
H^1(\cG^{\cris,\ell}_\ast,S^mH_\ell(r)) =
\begin{cases}
\Ql & \ast = 1,\ m = 0 \text{ and $r\ge 3$ odd,}\cr
\Ql & \ast = \uu,\ m = 0 \text{ and $r\ge 1$ odd,}\cr
\Ql & m=2n > 0 \text{ and } r=2n+1,\cr
0 & \text{otherwise.}
\end{cases}
$$
When $\ast = 2$, we have
$$
H^1(\cG^{\cris,\ell}_\ast,S^mH_\ell(r)) =
\begin{cases}
\Ql & m = 0 \text{ and $r\ge 3$ odd,}\cr
\Ql & m = 1 \text{ and } r=1,\cr
\Ql & m=2n > 0 \text{ and } r=2n+1,\cr
0 & \text{otherwise.}
\end{cases}
$$
\end{proposition}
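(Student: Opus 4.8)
The plan is to carry out the crystalline counterpart of the argument in Section~\ref{sec:vmhs}, using Proposition~\ref{prop:surjective} together with the theorem identifying $\A_\ell^\cris$ with $\pi_1(\MTM,\w_\ell)$. Recall that $H^\dot(\cG_\ast^{\cris,\ell},S^mH_\ell(r))=\Ext^\dot_{\MEM_\ast^\ell}(\Ql,S^m\H_\ell(r))$. The first step is to establish the $\ell$-adic analogue of the corollary to the theorem of \cite{hain:db_coho} used on the Hodge side: for an object $\V$ of $\MEM_\ast^\ell$ with fiber $V$ over $\vv_o$ such that $H^0_\et(\M_{1,\ast/\Qbar},\V)$ is crystalline, there is an exact sequence
$$
0\to\Hfte^1\big(G_\Q,H^0_\et(\M_{1,\ast/\Qbar},\V)\big)\to\Ext^1_{\MEM_\ast^\ell}(\Ql,\V)\to H^1_\et(\M_{1,\ast/\Qbar},\V)^{G_\Q}\to\Hfte^2\big(G_\Q,H^0_\et(\M_{1,\ast/\Qbar},\V)\big).
$$
This comes from the Hochschild--Serre/Leray spectral sequence for the fibration $\M_{1,\ast/\Q}\to\Spec\Q$ (equivalently, for the split exact sequence $\cG_\ast^\rel\otimes\Ql\to\cG_\ast^{\cris,\ell}\to\A_\ell^\cris\to1$ of Proposition~\ref{prop:surjective}), combined with the fact that a geometrically constant $\ell$-adic sheaf over $\M_{1,\ast/\Q}$ is simply a $G_\Q$-module, crystalline at $\vv_o$ exactly when crystalline as a Galois module. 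The crystallinity of the fibers of $H^\dot_\et(\M_{1,\ast/\Qbar},\V)$ over the good-reduction base, and hence the exactness of the sequence (in particular surjectivity onto the Galois-invariant part), is exactly where Olsson's non-abelian $p$-adic Hodge theory \cite{olsson} enters as a black box.

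For $\V=S^m\H_\ell(r)$ the group $H^0_\et(\M_{1,\ast/\Qbar},\V)$ is $0$ when $m>0$ (no geometric invariants in $S^mH_\ell$) and is $\Ql(r)$ when $m=0$; in either case it is a Tate object, so the hypothesis holds and $\Hfte^2(G_\Q,\Ql(r))=\Ext^2_\MTM(\Q,\Q(r))\otimes\Ql=0$. Hence the sequence reduces to
$$
0\to\Hfte^1\big(G_\Q,H^0_\et(\M_{1,\ast/\Qbar},\V)\big)\to\Ext^1_{\MEM_\ast^\ell}(\Ql,\V)\to H^1_\et(\M_{1,\ast/\Qbar},\V)^{G_\Q}\to0.
$$
It remains to feed in the étale cohomology of $\M_{1,\ast/\Qbar}$, the $\ell$-adic transcription of Section~\ref{sec:coho}: the $\ell$-adic Eichler--Shimura isomorphism gives $H^1_\et(\M_{1,1/\Qbar},S^{2n}\H_\ell)\cong\Ql(-2n-1)\oplus\bigoplus_{f\in\B_{2n+2}}M_{f,\ell}$, with $M_{f,\ell}$ the two-dimensional irreducible crystalline representation attached to $f$ (Hodge--Tate weights $0$ and $2n+1$); $H^1_\et(\M_{1,1/\Qbar},S^m\H_\ell)=0$ for $m$ odd (as $-I$ acts by $(-1)^m$); the Leray sequences of $\M_{1,\uu},\M_{1,2}\to\M_{1,1}$ give $H^1_\et(\M_{1,\uu/\Qbar},S^m\H_\ell)\cong H^1_\et(\M_{1,1/\Qbar},S^m\H_\ell)$ for $m>0$ and $H^1_\et(\M_{1,2/\Qbar},S^m\H_\ell)\cong H^1_\et(\M_{1,1/\Qbar},S^m\H_\ell)$ for $m>1$, while $H^1_\et(\M_{1,\uu/\Qbar},\Ql)\cong\Ql(-1)$ (the class of $G_2$), $H^1_\et(\M_{1,2/\Qbar},\H_\ell)\cong\Ql(-1)$, and $H^1_\et(\M_{1,1/\Qbar},\Ql)=H^1_\et(\M_{1,2/\Qbar},\Ql)=0$.

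Taking $G_\Q$-invariants then yields the stated formulas. When $m=2n>0$, $\Ext^1_{\MEM_\ast^\ell}(\Ql,S^{2n}\H_\ell(r))\cong H^1_\et(\M_{1,\ast/\Qbar},S^{2n}\H_\ell(r))^{G_\Q}$; each $M_{f,\ell}(r)$ is an irreducible two-dimensional $G_\Q$-representation, hence has no invariants, while $\Ql(r-2n-1)^{G_\Q}$ is $\Ql$ exactly when $r=2n+1$. When $m=1$ and $\ast=2$ one gets $H^1_\et(\M_{1,2/\Qbar},\H_\ell(1))^{G_\Q}=\Ql(0)^{G_\Q}=\Ql$, whereas for $\ast\in\{1,\uu\}$ the odd-weight vanishing kills $m=1$. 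When $m=0$ one combines $\Hfte^1(G_\Q,\Ql(r))\cong\Ext^1_\MTM(\Q,\Q(r))\otimes\Ql$, which is $\Ql$ for odd $r\ge3$ and $0$ otherwise, with $H^1_\et(\M_{1,\ast/\Qbar},\Ql(r))^{G_\Q}$, which vanishes for $\ast\in\{1,2\}$ and equals $\Ql(r-1)^{G_\Q}$ (so $\Ql$ iff $r=1$) for $\ast=\uu$; since these are supported on disjoint sets of $r$, one obtains $\Ql$ for odd $r\ge3$ when $\ast\in\{1,2\}$ and for odd $r\ge1$ when $\ast=\uu$. All remaining cases give $0$.

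The main obstacle is the black box. One must correctly formulate the $\ell$-adic analogue of the \cite{hain:db_coho} comparison for crystalline sheaves and prove its exactness --- above all the surjectivity onto $H^1_\et(\M_{1,\ast/\Qbar},\V)^{G_\Q}$, i.e.\ that every $G_\Q$-invariant geometric class is the restriction of an extension of $\ell$-adic sheaves over $\M_{1,\ast/\Q}$ whose fiber at $\vv_o$ is crystalline. This is precisely what Olsson's comparison theorems in the good-reduction case supply, and it uses essentially that $\M_{1,\ast/\Z}$ and the relevant Kuga--Sato families have everywhere good reduction. Once this and the $\ell$-adic Eichler--Shimura decomposition are available, the remainder is the étale translation of the Hodge-side bookkeeping.
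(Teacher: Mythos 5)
Your proposal follows a genuinely different route from the paper's, and while the bookkeeping lands on the right answer, the key analytic steps are not quite the ones the paper uses, and one of them needs more care. The paper does \emph{not} run a Hochschild--Serre spectral sequence directly for the crystalline completion. Instead it (i) injects $H^1(\cG_1^{\cris,\ell},S^m H_\ell(r))$ into $H^1(\cG_1^{\wtd,\ell},S^m H_\ell(r))$ via the surjection $\cG_1^{\wtd,\ell}\twoheadrightarrow\cG_1^{\cris,\ell}$ of Proposition~\ref{prop:surjective}; (ii) identifies the weighted $H^1$ with $H^1$ of the arithmetic fundamental group $\pi_1(\M_{1,1/\Z[1/\ell]},\vv_o)$ using the general comparison theorem for weighted completion \cite[Thm.~4.6]{hain-matsumoto:weighted}; (iii) runs Hochschild--Serre for $1\to\pi_1(\M_{1,1/\Qbar})\to\pi_1(\M_{1,1/\Z[1/\ell]})\to G_\ell\to1$ to get the upper bound; and (iv) proves the nontrivial surjectivity when $m=2n>0$, $r=2n+1$ by \emph{explicitly exhibiting} the extension $\bE'_{2n+1}$ coming from the elliptic polylogarithm and appealing to Nakamura's computation \cite[Thms.~3.3, 3.5]{nakamura} to check that its fiber at $\tate$ is crystalline. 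Olsson's theorem is not used here; it is reserved for the subsequent $H^2$ bound, where no analogous explicit construction is available.

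There are two places where your direct crystalline spectral-sequence argument glosses over real content. First, Proposition~\ref{prop:surjective} only gives exactness of $\cG_\ast^\rel\otimes\Ql\to\cG_\ast^{\cris,\ell}\to\A_\ell^\cris\to1$ in the middle; the first arrow is not claimed to be injective. So the kernel $K$ of $\cG_\ast^{\cris,\ell}\to\A_\ell^\cris$ is a priori only a \emph{quotient} of $\cG_\ast^\rel\otimes\Ql$, and $H^1(K,V)$ injects into but need not equal $H^1(\cG_\ast^\rel\otimes\Ql,V)\cong H^1_\et(\M_{1,\ast/\Qbar},\V)$. Identifying $E_2^{0,1}=H^1(K,V)^{\A_\ell^\cris}$ with $H^1_\et(\M_{1,\ast/\Qbar},\V)^{G_\Q}$ is therefore essentially equivalent to the surjectivity you are trying to establish, so you must be careful that the argument is not circular. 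Second, there is no off-the-shelf analogue for crystalline completion of the clean $H^1$ comparison theorem that is available for weighted completion; the paper's detour through $\cG_\ast^{\wtd,\ell}$ exists precisely to avoid having to formulate and prove one. Your invocation of Olsson can in principle close both gaps --- if $\O(\cG_1^\rel)\otimes\Ql$ is crystalline as a $G_\Q$-module via $s_\tate$, then every finite-dimensional $\cG_1^\rel$-comodule, being a subquotient of a finite sum of copies of $\O(\cG_1^\rel)\otimes\Ql$, is crystalline, so every $G_\Q$-invariant extension class in $H^1_\et(\M_{1,\ast/\Qbar},\V)$ does come from an object of $\MEM_\ast^\ell$. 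But this deploys a much heavier black box than Nakamura's explicit computation and requires verifying that the resulting sheaf on $\M_{1,\ast/\Z[1/\ell]}$ satisfies the weight condition in the definition of $\cR_\Ql^\cris$. You should be aware that you are trading an explicit one-dimensional check (the elliptic polylog fiber is crystalline because Nakamura computes it to be a Soul\'e class) for a general structural theorem, and that the paper deliberately defers this structural input until it is genuinely unavoidable in degree $2$.
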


\begin{proof}
We will prove the result when $\ast = 1$. The remaining cases are left to the
reader. The proof uses weighted completion, which is easier to compute than
crystalline completion. Since the natural homomorphism $\cG_1^{\wtd,\ell} \to
\cG_1^{\cris,\ell}$ it surjective (Prop.~\ref{prop:surjective}), it induces an
injection
\begin{equation}
\label{eqn:injection}
H^1(\cG_1^{\cris,\ell},S^m H_\ell(r)) \hookrightarrow
H^1(\cG_1^{\wtd,\ell},S^m H_\ell(r)).
\end{equation}
By \cite[Thm.~4.6]{hain-matsumoto:weighted}, there is a natural isomorphism
$$
H^1(\cG_1^{\wtd,\ell},S^m H_\ell(r)) \cong H^1(\pi_1(\M_{1,1/\Z[1/\ell]},\vv_o))
$$

Set $G_\ell = \pi_1(\Spec\Z[1/\ell],\Qbar)$. The Hochschild-Serre spectral
sequence of the extension
$$
1 \to \pi_1(\M_{1,1/\Qbar},\vv_o) \to \pi_1(\M_{1,1/\Z[1/\ell]},\vv_o)
\to G_\ell \to 1
$$
degenerates at $E_2$ as the kernel has virtual cohomological dimension 1 and
because the extension is split. This implies that
$$
H^1(\cG_1^{\wtd,\ell},S^m H_\ell(r)) \cong
\begin{cases}
H^1(G_\ell,\Ql(r)) & m = 0,\cr
H^0\big(G_\ell,\Het^1(\M_{1,1/\Qbar},S^m\H_\ell(r))\big) & m > 0.
\end{cases}
$$

When $m=0$, the image of (\ref{eqn:injection}) equals the image of
$$
\Hfte^1(G_\Q,\Ql(r)) \hookrightarrow
H^1(G_\ell,\Ql(r)) \cong H^1(\cG_1^{\wtd,\ell},\Ql(r)).
$$
The $m=0$ case now follows from Soul\'e's computation \cite{soule}.

Now suppose that $m>0$. Then we have an injection
$$
H^1(\cG_1^{\cris,\ell},S^m H_\ell(r)) \hookrightarrow
H^0(G_\ell,\Het^1(\M_{1,1/\Qbar},S^m\H_\ell(r))).
$$
The comparison theorem and Theorem~\ref{thm:eichler_shimura} imply that
$$
H^0(G_\ell,\Het^1(\M_{1,1/\Qbar},S^m\H_\ell(r))) \cong \Ql
$$
when $m=2n>0$ and $r=2n+1$, and is zero otherwise.

To complete the proof, we have to show that (\ref{eqn:injection}) is surjective
when $n>0$. To do this, consider the extension $\bE_{2n+1}'$ of
$\pi_1(\M_{1,1/\Q},\tate)$-modules corresponding to the sub-extension of the
extension (\ref{eqn:extn_p}) over $\M_{1,1}^\an$ with $m=2n+1$ that is an
extension of $\Ql$ by $S^{2n}\H_\ell(2n+1)$. The restriction of 
$S^{2n}\H_\ell(2n+1)$ to the base point $\tate$ splits as a sum
$$
S^{2n}H_\ell(2n+1) = \Ql(1) \oplus \Ql(2) \oplus \cdots \oplus \Ql(2n+1).
$$
So the fiber of $\bE_{2n+1}'$ over $\tate$ splits as a sum of extensions of
$\Ql$ by $\Ql(j)$, with $1\le j \le 2n+1$. Nakamura \cite[Thms.~3.3 \&
3.5]{nakamura} shows that all but the last of these is trivial, and the last one
is a non-zero multiple of the generator of
$\Hfte^1(G_\Q,\Ql(2n+1))$.\footnote{See also Remarks~\ref{rem:extensions1} and
\ref{rem:extensions2}.}  This implies that the restriction of the $\bE_{2n+1}'$
to $\tate$ is crystalline, so that (\ref{eqn:injection}) is surjective.
\end{proof}

The following technical computation will be needed in the next section.

\begin{corollary}
\label{cor:van_ext}
Suppose that $\ast \in \{1,\uu\}$.
If $e\ge 0$, then
$$
\Ext^1_{\MEM_\ast^\ell}(S^{2n}H_\ell(2n+1),S^{2m}H_\ell(2m-e)) = 0.
$$
\end{corollary}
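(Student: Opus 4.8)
The plan is to reduce the statement to Proposition~\ref{prop:cris_h1}. Since $\MEM_\ast^\ell$ is the category of representations of $\cG^{\cris,\ell}_\ast$, the usual identification of $\Ext$-groups with group cohomology gives
$$
\Ext^1_{\MEM_\ast^\ell}\big(S^{2n}H_\ell(2n+1),\,S^{2m}H_\ell(2m-e)\big)
\cong H^1\big(\cG^{\cris,\ell}_\ast,\,V\big),
\qquad
V:=\big(S^{2n}H_\ell(2n+1)\big)^\vee\otimes S^{2m}H_\ell(2m-e).
$$
First I would rewrite $V$ as a sum of the simple objects $S^kH_\ell(r)$. Since $\Lambda^2H_\ell=\Q_\ell(-1)$, we have $H_\ell^\vee\cong H_\ell(1)$, hence $\big(S^{2n}H_\ell\big)^\vee\cong S^{2n}H_\ell(2n)$ and $\big(S^{2n}H_\ell(2n+1)\big)^\vee\cong S^{2n}H_\ell(-1)$; feeding this into the Clebsch--Gordan decomposition $S^aH_\ell\otimes S^bH_\ell\cong\bigoplus_{j=0}^{\min(a,b)}S^{\,a+b-2j}H_\ell(-j)$ gives
$$
V\cong\bigoplus_{j=0}^{\min(2n,2m)}S^{\,2n+2m-2j}H_\ell\big(2m-e-1-j\big).
$$
Because the $\cG^{\cris,\ell}_\ast$-action on $H_\ell$ factors through the reductive quotient $\GL(H_\ell)$, the same holds for $V$ and for every summand, so this is a decomposition of $\cG^{\cris,\ell}_\ast$-modules and $H^1(\cG^{\cris,\ell}_\ast,V)$ is the direct sum of the groups $H^1\big(\cG^{\cris,\ell}_\ast,S^{\,2n+2m-2j}H_\ell(2m-e-1-j)\big)$.

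Next I would show that each summand cohomology group vanishes by invoking Proposition~\ref{prop:cris_h1} in the cases $\ast\in\{1,\uu\}$. Write $k=2n+2m-2j$ (which is even) and $r=2m-e-1-j$. If $k>0$, non-vanishing of $H^1(\cG^{\cris,\ell}_\ast,S^kH_\ell(r))$ would force $r=k+1$, i.e.\ $2m-e-1-j=2n+2m-2j+1$; this rearranges to $j=2n+e+2$, which is impossible since $j\le\min(2n,2m)\le 2n$ while $e\ge 0$. If $k=0$, then $j=n+m$, and $j\le\min(2n,2m)=2\min(n,m)$ forces $n=m$ and $j=2n$, so $r=2m-e-1-j=-(e+1)\le -1$, whereas $H^1(\cG^{\cris,\ell}_\ast,\Q_\ell(r))$ can be non-zero only for $r\ge 1$. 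In every case the summand contributes $0$, so $H^1(\cG^{\cris,\ell}_\ast,V)=0$, which is the assertion.

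I do not expect any real obstacle here: once Proposition~\ref{prop:cris_h1} and the Clebsch--Gordan rule for $\GL_2$ are in hand the argument is pure bookkeeping, and the hypothesis $e\ge 0$ is used precisely to eliminate the two borderline families above. The one point that deserves a sentence of justification is that the decomposition of $V$ is a decomposition of $\cG^{\cris,\ell}_\ast$-modules and not merely of $\GL(H_\ell)$-modules, which holds because the monodromy of $\H_\ell$ already factors through $\GL(H_\ell)$, so that $V$, being built from $H_\ell$ by duals, tensor products and Tate twists, does as well.
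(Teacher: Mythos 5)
Your argument is correct and follows essentially the same route as the paper's: both reduce the Ext group to $H^1(\cG^{\cris,\ell}_\ast,-)$ of a Clebsch--Gordan decomposition into simples $S^kH_\ell(r)$ and then observe, via Proposition~\ref{prop:cris_h1}, that every Tate twist that appears is too small. The only cosmetic difference is that the paper first uses a weight inequality to force $n<m$ (so that the sum runs to $2n$) before invoking the decomposition, whereas you keep $\min(2n,2m)$ and dispose of the $k=0$ and $k>0$ cases separately; both are fine.
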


\begin{proof}
In order for the ext group to be non-zero, the weight of
$S^{2n}H_\ell(2n+1)$ has to be greater than the weight of $S^{2m}
H_\ell(2m-e)$. That is, we have
$-2n-2 > -2m + 2e$ or, equivalently, $0\le n < m-e-1$. In particular, $n<m$.
Since $n<m$
\begin{multline*}
\Ext^1_{\MEM_\ast^\ell}(S^{2n}H_\ell(2n+1),S^{2m}H_\ell(2m-e))
\cong
\Ext^1_{\MEM_\ast^\ell}(\Ql,S^{2n}H_\ell\otimes S^{2m}H_\ell(2m-e-1)) \hfill
\cr
\cong
\bigoplus_{t=0}^{2n} \Ext^1(\Ql,S^{2n+2m-2t}H_\ell(2m-e-t-1)).
\end{multline*}
Since $t\le 2n$ and $e\ge 0$, we have $2m - e - t - 1 < 2n +2m -2t$. So none of
the summands on the right hand side can be non-zero. The result follows.
\end{proof}

\subsection{A bound on $H^2(\cG^{\cris,\ell}_\ast,S^m H_\ell(r))$}

In this section we prove the following analogue of the second statement of
Proposition~\ref{prop:deligne_coho}.

Recall from \cite[Thm.~1.2.4]{scholl} that
$\Het^1(\M_{1,\ast/\Qbar},S^m\H_\ell(r))$ is a crystalline $G_\Q$-module.

\begin{theorem}
Fix a prime number $\ell$. If $\ast \in \{1,\uu\}$ and $m\ge 0$ and $r$ are
integers, then $H^2(\cG_\ast^{\cris,\ell},S^{m}H_\ell(r))$ vanishes when $m$
is odd and when $r<m+2$. When $m=2n$, there is an injection
$$
H^2(\cG_\ast^{\cris,\ell},S^{2n}H_\ell(r)) \hookrightarrow
\Hfte^1(G_\Q, H^1(\M_{1,\ast/\Qbar},S^{2n}H_\ell(r))).
$$
More precisely, when $r\ge 2n+2$, there is an injection
\begin{multline*}
H^2(\cG_\ast^{\cris,\ell},S^{2n}H_\ell(r))
\cr
\hookrightarrow
\begin{cases}
\Ql & \ast = \uu,\ n=0, r \ge 2 \text{ even,} \cr
\Ql \oplus \Hfte^1(G_\Q,H^1_\cusp(\M_{1,\ast/\Qbar},S^{2n}\H_\ell(r)))&
n>0,\ r \text{ even},\cr
\Hfte^1(G_\Q,H^1_\cusp(\M_{1,\ast/\Qbar},S^{2n}\H_\ell(r))) &
 n>0,\ r \text{ odd}.
\end{cases}
\end{multline*}
\end{theorem}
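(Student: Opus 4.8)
The plan is to transport the Hodge-theoretic argument behind Theorem~\ref{thm:ext2}, as refined in Proposition~\ref{prop:deligne_coho}, to the $\ell$-adic setting, with the crystalline completion $\cG^{\cris,\ell}_\ast$ replacing the relative completion and with Olsson's comparison theorem \cite{olsson} (used as a black box), together with Scholl's crystallinity result \cite[Thm.~1.2.4]{scholl}, playing the role of the identification of $\cG^\rel_\ast$-cohomology with the cohomology of $\M_{1,\ast}$.

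First I would build a Leray-type exact sequence. Write $\cG^{\cris,\geom}_\ast$ for the kernel of the split surjection $\cG^{\cris,\ell}_\ast\to\A_\ell^\cris$ of Proposition~\ref{prop:surjective}. Since $\A_\ell^\cris\cong\pi_1(\MTM\otimes\Ql)$ and $\MTM$ has cohomological dimension $1$, the Hochschild--Serre spectral sequence of this split extension vanishes in the rows $p\ge 2$ and collapses to exact sequences
\[
0\to\Hfte^1\!\big(G_\Q,\,H^{j-1}(\cG^{\cris,\geom}_\ast,S^{m}H_\ell(r))\big)\to H^j(\cG^{\cris,\ell}_\ast,S^{m}H_\ell(r))\to H^j(\cG^{\cris,\geom}_\ast,S^{m}H_\ell(r))^{G_\Q},
\]
the last map not necessarily surjective. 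What makes the $E_2^{1,\bullet}$-terms equal to $\Ext^1_{\cC_\ell}(\Ql,-)=\Hfte^1(G_\Q,-)$ is that $H^\bullet(\cG^{\cris,\geom}_\ast,S^{m}H_\ell(r))$ is a crystalline $G_\Q$-module, which is exactly what is imported from Olsson's work and from Scholl's theorem that $H^\bullet(\M_{1,\ast/\Qbar},S^{m}\H_\ell(r))$ is crystalline. Taking $j=2$ reduces the theorem to computing $H^1(\cG^{\cris,\geom}_\ast,S^{m}H_\ell(r))$ and bounding the $G_\Q$-invariants of $H^2(\cG^{\cris,\geom}_\ast,S^{m}H_\ell(r))$.

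For the geometric inputs I would use the $\ell$-adic analogues of the computations in Section~\ref{sec:coho}. The degree-one comparison for weighted/crystalline completion (the analogue of Proposition~\ref{prop:coho} in degree $1$) gives $H^1(\cG^{\cris,\geom}_\ast,S^{m}H_\ell(r))\cong H^1(\M_{1,\ast/\Qbar},S^{m}\H_\ell(r))$, and $H^2(\cG^{\cris,\geom}_\ast,S^{m}H_\ell(r))$ injects into $H^2(\M_{1,\ast/\Qbar},S^{m}\H_\ell(r))$. The $\ell$-adic Eichler--Shimura isomorphism and Hecke decomposition (analogues of Theorem~\ref{thm:eichler_shimura}) give, for $m=2n>0$, a $G_\Q$-equivariant splitting $H^1(\M_{1,1/\Qbar},S^{2n}\H_\ell(r))\cong\Ql(r-2n-1)\oplus H^1_\cusp(\M_{1,1/\Qbar},S^{2n}\H_\ell(r))$ with cuspidal part pure of weight $2n+1-2r$; for $m$ odd $H^1$ and $H^2$ both vanish (the central $-I\in\SL_2(\Z)$ acts by $-1$); $H^2(\M_{1,1/\Qbar},S^{m}\H_\ell(r))=0$ for every $m\ge 0$ (cohomological dimension $1$); and the Leray sequence of the $\Gm$-torsor $\M_{1,\uu}\to\M_{1,1}$ gives $H^2(\M_{1,\uu/\Qbar},S^{m}\H_\ell(r))\cong H^1(\M_{1,1/\Qbar},S^{m}\H_\ell(r-1))$ for $m>0$.

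It remains to feed these into the exact sequence and evaluate the Galois cohomology. Soulé's theorem \cite{soule} gives $\Hfte^1(G_\Q,\Ql(j))=\Ql$ exactly when $j\ge 3$ is odd and $0$ otherwise, while $\Ql(j)^{G_\Q}\ne 0$ only for $j=0$; and $\Hfte^1(G_\Q,M_f(r))=0$ for $r$ in the critical range $1\le r\le 2n+1$ --- the $\ell$-adic counterpart of the vanishing $\Ext^1_\MHS(\R,V_f(r))=0$ used in the proof of Theorem~\ref{thm:ext2}, forced by the Hodge--Tate weights $\{2n+1-r,-r\}$ of $M_f(r)$ and the Bloch--Kato local condition at $\ell$ --- while a cuspidal Galois representation of nonzero weight has no $G_\Q$-invariants. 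Thus $H^2(\cG^{\cris,\ell}_\ast,S^{m}H_\ell(r))$ vanishes for $m$ odd and for $r<m+2$ (in that range the Eisenstein and cuspidal $\Hfte^1$-groups vanish and, for $\ast=\uu$, the invariant term needs $r=m+2$), and for $m=2n$ one is left with the injection into $\Hfte^1\big(G_\Q,H^1(\M_{1,\ast/\Qbar},S^{2n}\H_\ell(r))\big)$, augmented for $\ast=\uu$ by the at-most-one-dimensional contribution of the $G_\Q$-invariants of $H^2(\M_{1,\uu/\Qbar},S^{2n}\H_\ell(r))$, which is nonzero only for $r=2n+2$ and which supplies the extra $\Ql$ in the precise form of the statement; tracking the parity of $r$ --- which, just as $\Frbar_\infty$ did in Proposition~\ref{prop:deligne_coho}, decides whether the Eisenstein summand survives --- then produces the three-case formula. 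I expect the first step, the crystalline Leray exact sequence with vanishing $p\ge 2$ rows, to be the main obstacle: it rests entirely on importing from \cite{olsson} and \cite{scholl} that the cohomology of the geometric crystalline completion is crystalline and compatible with the fibration over $\Spec\Q$. Everything downstream is a finite bookkeeping exercise with the computations of Section~\ref{sec:coho} and Soulé's theorem.
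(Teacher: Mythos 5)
The central step of your proposal — the Hochschild--Serre spectral sequence for
\[
1\longrightarrow \cG^{\cris,\geom}_\ast \longrightarrow \cG^{\cris,\ell}_\ast \longrightarrow \A_\ell^\cris \longrightarrow 1
\]
— does not produce the coefficient groups you need. The $E_2^{p,q}$-terms are $H^p\bigl(\A_\ell^\cris,\,H^q(\cG^{\cris,\geom}_\ast,V)\bigr)$, and the groups $H^q(\cG^{\cris,\geom}_\ast,V)$ carry the structure of $\A_\ell^\cris$-modules, i.e.\ ind-objects of $\Rep^\fte(\A_\ell^\cris)$. Since $\A_\ell^\cris\cong\pi_1(\MTM\otimes\Ql)$, every $\A_\ell^\cris$-module has a weight filtration whose graded pieces are sums of $\Ql(j)$'s; in particular $H^q(\cG^{\cris,\geom}_\ast,V)$ is of \emph{mixed Tate type} as a $G_\Q$-module. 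But for $m=2n>0$ the group $H^1(\M_{1,\ast/\Qbar},S^{2n}\H_\ell(r))$ contains the two-dimensional irreducible Deligne--Scholl representations attached to Hecke eigen cusp forms; these have no nontrivial $G_\Q$-stable filtration and are not objects of $\Rep(\A_\ell^\cris)$. So your claimed comparison $H^1(\cG^{\cris,\geom}_\ast,S^{2n}H_\ell(r))\cong H^1(\M_{1,\ast/\Qbar},S^{2n}\H_\ell(r))$ is false: the left side is at most the Tate ($\Ql(r-2n-1)$) summand of the right. The spectral sequence then bounds $H^2(\cG^{\cris,\ell}_\ast,\cdot)$ only by the Eisenstein contribution and misses the cuspidal Selmer group $\Hfte^1(G_\Q,H^1_\cusp(\cdots))$ entirely — which is the substantive part of the statement. (A smaller issue: with only two columns the edge map to $E_\infty^{0,j}$ \emph{is} surjective; your hedge is unnecessary but harmless.)

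This is exactly why the paper introduces the auxiliary group $\A_\ell^\Mod$ — the Zariski closure of the image of $G_\Q$ in $\Aut(\cG^\rel_{1/\Ql})$ — whose representations \emph{do} include the modular Galois representations, and runs the relevant spectral-sequence argument for $\A_\ell^\Mod\ltimes\cG^\rel_{1/\Ql}$ and $G_\Q\ltimes\cG^\rel_{1/\Ql}$ instead, together with the commutative square relating these to $\cG^{\wtd,\ell}_1$ and $\cG^{\cris,\ell}_1$. Because $\cG^{\cris,\ell}_1$ does not map naturally to $\A_\ell^\Mod$, the paper needs the separate injection $H^2(\cG^{\cris,\ell}_1,\cdot)\hookrightarrow H^2(\cG^{\wtd,\ell}_1,\cdot)$, established via Yoneda extensions and the Pruning Lemma, as well as the Lie-algebra cohomology argument for the vanishing range $m$ odd or $r<m+2$. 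These two ingredients — the $\A_\ell^\Mod$ construction and the pruning step — have no counterpart in your sketch and are precisely what is needed to carry the Hodge-theoretic strategy into the $\ell$-adic setting, where, unlike for $\pi_1(\MHS)$, the base affine group scheme $\A_\ell^\cris$ is too small to see cuspidal cohomology.
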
 

\begin{proof}
We will prove the theorem when $\ast = 1$. The proof of the case $\ast = \uu$ is
similar and is left to the reader.

We begin by establishing the vanishing of 
$H^2(\cG^{\cris,\ell}_1,S^{m}H_\ell(r))$ when $r<m+2$ and when $m$ is odd. There
are at least two ways to do this. Both exploit the vanishing of
$H^1(\cG_1^{\cris,\ell},S^{m}H_\ell(r))$ when $r<m+1$ or $m$ is odd. We will use
Lie algebra cohomology. The other approach uses Yoneda extensions.

Denote the Lie algebra of $\U^{\cris,\ell}_\ast$ by $\u^{\cris,\ell}_\ast$.
Since
$$
\Ext^j_{\MEM^\ell_1}(\Ql,S^m\H_\ell(r)) \cong
H^j(\u_1^{\cris,\ell},S^m H_\ell(r))^{\GL(H)}
$$
and since there is a natural isomorphism
$$
\Gr^W_\dot H^j(\u_1^{\cris,\ell},S^m H_\ell(r)) \cong
H^j(\Gr^W_\dot\u_1^{\cris,\ell},S^m H_\ell(r)),
$$
it follows that
$$
\Gr^W_\dot H_1(\u^{\cris,\ell}_1) \cong \bigoplus_{n>0} S^{2n}H_\ell (2n+1).
$$
Since $\Gr^W_\dot \u_1^{\cris,\ell}$ is generated as a Lie algebra in the
category of $\GL(H)$-modules by the image of any $\GL(H)$-invariant section
of $\Gr^W_\dot \u_1^{\cris,\ell} \to \Gr^W_\dot H_1(\u^{\cris,\ell}_1)$, 
and since
$$
S^a H \otimes S^b H \cong \bigoplus_{t\ge 0} S^{a+b-2t}H(-t)
$$
it follows that if $S^m H_\ell(r)$ appears in $\Gr^W_\dot\u_1^{\cris,\ell}$,
then $m$ is even and $r \ge m+1$. This implies that if $S^m H_\ell(r)$ occurs in
$\Lambda^2 \Gr^W_\dot \u_1^{\cris,\ell}$, then $m$ is even and $r \ge m+2$.
Consequently, the group of $\GL(H)$-invariant 2-cochains
$$
\Hom_{\GL(H)}(\Lambda^2 \Gr^W_\dot \u_1^{\cris,\ell},S^m H_\ell(r))
$$
is non-zero only when $m$ is even and $r\ge m+2$. It follows that
$H^2(\u_1^{\cris,\ell},S^m H_\ell(r))$ vanishes when $m$ is odd and when
$r<2m+2$.

There are natural homomorphisms
$$
H^2(\cG_1^{\cris,\ell},S^{2n}H_\ell(r))
\to H^2(\cG_1^{\wtd,\ell},S^{2n}H_\ell(r))
\to \Het^\dot(\M_{1,1/\Z[1/\ell]},S^m\H_\ell(r))
$$
General properties of weighted completion (cf.\
\cite[Thm.~4.6]{hain-matsumoto:weighted}) imply that the right-hand map is
injective. Our next task is to show that the second mapping is injective. We do
this using Yoneda extensions.

Suppose that the 2-extension
$$
0 \to S^{2n}H_\ell(r) \to E \to F \to \Ql \to 0
$$
in $\MEM_1^\ell$ represents a class in the kernel of
$$
H^2(\cG_1^{\cris,\ell},S^{2n}H_\ell(r)) \to
H^2(\cG_1^{\wtd,\ell},S^{2n}H_\ell(r))
$$
By Yoneda's criterion, there is a $\cG_1^{\wtd,\ell}$-module $V$ and a
commutative diagram
$$
\xymatrix{
0 \ar[r] & S^{2n}H_\ell(r) \ar[r] \ar@{>->}[d] & V \ar[d]^{\id_V}\ar[r] &
F \ar[r]\ar@{->>}[d] & 0 \cr
0 \ar[r] & E \ar[r] & V \ar[r] & \Ql \ar[r] & 0
}
$$
with exact rows in $\Rep(\cG_1^{\wtd,\ell})$. We view $V$ as a $G_\Q$-module
via the homomorphism
$$
G_\Q \to \pi_1(\M_{1,1/\Q},\tate) \to \cG^\wtd_1(\Ql) \to \Aut V
$$
where the first homomorphism is the section induced by the base point $\tate$.
To prove injectivity, it suffices to show that $V$ is a crystalline
$G_\Q$-module as the fact that it is a $\cG_\ell^\wtd$-module implies that it is
unramified away from $\ell$.

The action of $\cG_\ell^\wtd$ on $V$ determines compatible weight filtrations
$M_\dot$ on $E$, $F$ and $V$.

Let $B$ be the kernel of $F \to \Ql$. By the pruning lemma below
(Lemma~\ref{lem:pruning}), we may assume that each $W_\dot$ graded
quotient of $B$ is a sum of copies of $S^{m}H_\ell(r)$ where $r > m$ or,
equivalently, that $B = M_{-2}B$. In this case $B$ is an extension
$$
0 \to M_{-4}B \to B \to \Ql(1)^d \to 0
$$
for some $d\ge 0$. This implies that $E$ is also an extension
$$
0 \to M_{-4}E \to E \to \Ql(1)^d \to 0.
$$
The pushout of the extension
$$
0 \to E \to V \to \Ql \to 0
$$
along $E \to \Ql(1)^d$ is the extension
$$
0 \to \Ql(1)^d \to F/M_{-4}F \to \Ql \to 0.
$$
Since $F$ is crystalline, this is an extension of crystalline $G_\Q$-modules.
Proposition~11.3 of \cite{hain-matsumoto:ladic} implies that $V$ is also a
crystalline $G_\Q$-module and therefore a $\cG_1^{\cris,\ell}$ module. This
completes the proof of injectivity.

The final task is to show that the image of
\begin{equation}
\label{eqn:cris}
H^2(\cG_1^{\cris,\ell},S^{2n}H_\ell(r)) \to
H^1(G_\ell,\Het^1(\M_{1,1/\Qbar},S^{2n}\H_\ell(r)))
\end{equation}
is contained in $\Hfte^1(G_\Q,\Het^1(\M_{1,1/\Qbar},S^{2n}\H_\ell(r)))$.
When $n=0$, this is clear.

To prove this when $n>0$, we use the fact that $\cG^\rel_1\otimes\Ql$ is a
crystalline representation of $G_\Q$, or more accurately, that its coordinate
ring $\O(\cG^\rel_1)\otimes\Ql$ is a crystalline $G_\Q$-module. (That it is
unramified away from $\ell$ is proved in \cite{hain-matsumoto:ladic}, the full
statement is proved in \cite{olsson}.)

Let $\A_\ell^\Mod$ to be the Zariski closure of the image of $G_\Q$ in
$\Aut(\cG^\rel_{1/\Ql})$.\footnote{This group should be a quotient of the $\Ql$
form of the fundamental group of Francis Brown's category of {\em mixed modular
motives} over $\Z$, perhaps even isomorphic to it.} Representations of $G_\Q$
that factor through $G_\Q \to \A_\ell^\Mod(\Ql)$ are crystalline. Denote by
$\cC^{\cris,\ell}_1$ the tannakian category of finite dimensional
representations of $G_\Q \ltimes \cG^\rel_{1/\Ql}$. Its tannakian fundamental
group (with respect the underlying $\Ql$ vector space) is a split extension
$$
1 \to \cG^\rel_{1/\Ql} \to \pi_1(\cC^\cris_1) \to \A_\ell^\Mod \to 1.
$$
The splitting is induced by the base point $\tate$. There is a commutative
diagram
\begin{equation}
\label{eqn:diag}
\xymatrix{
G_\Q \ltimes \cG^\rel_{1/\Ql} \ar[r]\ar[d] &
\A_\ell^\Mod\ltimes \cG^\rel_{1/\Ql} \ar[d] \cr
\cG^{\wtd,\ell}_1 \ar[r] & \cG^{\cris,\ell}_1
}
\end{equation}

Now suppose that $n>0$. Since $H^2(\cG^\rel_1,S^{2n}H(r))$ vanishes for all $n$
and $r$, since $H^0(\cG^\rel_{1/\Ql},S^{2n}H_\ell(r)) = 0$ when $n>0$, and
since there are $G_\Q$-module isomorphisms
$$
H^1(\cG_1^\rel,S^{2n}H_\ell(r)) \cong
[H^1(\u^\rel_1)\otimes S^{2n}H_\ell(r)]^{\SL(H)}
\cong \Het^1(\M_{1,1/\Qbar},S^{2n}\H_\ell(r))
$$
there are isomorphisms
$$
H^2(G_\Q \ltimes \cG^\rel_{1/\Ql},S^{2n}H_\ell(r))
\cong H^1\big(G_\Q,\Het^1(\M_{1,1/\Qbar},S^{2n}\H_\ell(r))\big)
$$
and
\begin{multline*}
H^2(\cG^\rel_{1/\Ql},S^{2n}H_\ell(r)) \cong
\Hfte^1(G_\Q,\Het^1(\M_{1,1/\Qbar},S^{2n}\H_\ell(r))) \cr
\cong
\Hfte^1(\M_{1,1/\Q},S^{2n}\H_\ell(r)).
\end{multline*}
Applying $H^2$ to the diagram (\ref{eqn:diag}) implies that the image of
(\ref{eqn:cris}) is contained in
$\Hfte^1(G_\Q,\Het^1(\M_{1,1/\Qbar},S^{2n}\H_\ell(r)))$.
\end{proof}

Our final task is to prove the Pruning Lemma.

\begin{lemma}[Pruning Lemma]
\label{lem:pruning}
Suppose that $\ast \in \{1,\uu\}$. Let $\cG$ be either $\cG^{\wtd,\ell}_\ast$ or
$\cG^{\cris,\ell}_\ast$. If
\begin{equation}
\label{eqn:big_extn}
0 \to B \to E \to \Ql \to 0
\end{equation}
is an extension of $\cG$-modules, then there is a $\cG$-sub module $A$ of $B$
such that the extension (\ref{eqn:big_extn}) is pulled back from an extension
$$
0 \to A \to F \to \Ql \to 0
$$
and $A = M_{-2}A$.
\end{lemma}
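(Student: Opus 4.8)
The plan is to take for $A$ the maximal $\cG$-submodule $M_{-2}B$ of $B$ all of whose weights are $\le -2$. This is legitimate: on $\cR^\cts_\Ql$ the weight filtration $M_\dot$ is functorial and each step $M_m$ is a subobject, so $M_{-2}B$ is a $\cG$-submodule and $M_{-2}A=A$ by construction. Put $N:=B/A$, so that $N$ has all weights $\ge -1$, i.e.\ $M_{-2}N=0$. I claim this forces $H^1(\cG,N)=0$. Granting the claim, apply $\Ext^\dot_\cG(\Ql,\blank)=H^\dot(\cG,\blank)$ to the short exact sequence $0\to A\to B\to N\to 0$ of coefficient modules; the fragment
\[
\Ext^1_\cG(\Ql,A)\longrightarrow \Ext^1_\cG(\Ql,B)\longrightarrow \Ext^1_\cG(\Ql,N)=H^1(\cG,N)=0
\]
of the long exact sequence shows that the class of (\ref{eqn:big_extn}) lies in the image of $\Ext^1_\cG(\Ql,A)$. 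Choosing an extension $0\to A\to F\to \Ql\to 0$ representing a preimage does it: its pushout along $A\hookrightarrow B$ is (\ref{eqn:big_extn}), and $A=M_{-2}A$.

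It remains to prove $H^1(\cG,N)=0$ for every finite-dimensional $\cG$-module $N$ with weights $\ge -1$. I would first reduce to $N$ simple by d\'evissage along the (finite) weight filtration of $N$, using exactness of $\Gr^W_\dot$ on $\cR^\cts_\Ql$. A simple object has the form $S^m H_\ell(r)$, of weight $m-2r$, so the hypothesis reads $m-2r\ge -1$. For $\cG=\cG^{\cris,\ell}_\ast$ with $\ast\in\{1,\uu\}$, Proposition~\ref{prop:cris_h1} lists every pair $(m,r)$ with $H^1(\cG^{\cris,\ell}_\ast,S^mH_\ell(r))\neq 0$, and each such pair satisfies $m-2r\le -2$; so the group vanishes once $m-2r\ge -1$. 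For $\cG=\cG^{\wtd,\ell}_\ast$ the same vanishing holds by the computation already made in the proof of Proposition~\ref{prop:cris_h1}, where $H^1(\cG^{\wtd,\ell}_\ast,S^mH_\ell(r))$ is identified with $H^1(\pi_1(\Spec\Z[1/\ell]),\Ql(r))$ for $m=0$ and with $H^0(\pi_1(\Spec\Z[1/\ell]),\Het^1(\M_{1,\ast/\Qbar},S^m\H_\ell(r)))$ for $m>0$; Soul\'e's computation \cite{soule} handles the first, and for the second one uses that $H^1(\M_{1,\ast/\Qbar},S^m\H_\ell)=0$ when $m$ is odd (because $-I\in\SL_2(\Z)$ acts by $(-1)^m$) together with Theorem~\ref{thm:eichler_shimura}, and in every surviving case $m-2r\le -2$. (One can also argue uniformly from the fact that the prounipotent radical of either completion has strictly negative weights, using $H^1(\cG,N)\cong H^1(\u,N)^{\GL(H_\ell)}$; this covers all $N$ of non-negative weight, and the remaining borderline weight $m-2r=-1$ is precisely where the parity remark above is needed.)

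The genuine content is this vanishing $H^1(\cG,N)=0$ for $N$ of weight $\ge -1$, which must be checked for both the weighted and the crystalline completion; within it the delicate point is the borderline odd weight $m-2r=-1$, not forced by weight bookkeeping alone, resting on the vanishing of $H^1$ of $\M_{1,\ast}$ with coefficients in odd symmetric powers of $\H$. The rest is a formal long-exact-sequence argument.
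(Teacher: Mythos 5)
The opening step of your proof is fatally flawed: $M_{-2}B$ is in general \emph{not} a $\cG$-submodule, so you cannot take $A=M_{-2}B$. You have conflated the two weight filtrations in play. On $\cR^\cts_\Ql$ the filtration that is canonically a filtration by $\cG$-submodules (and is splittable, functorial, etc.) is $W_\dot$, coming from the central cocharacter $c$. The filtration $M_\dot$ in the statement of the lemma is the relative ($\MTM$/Galois) weight filtration: it is the weight filtration of the $G_\Q$-module obtained by restricting along the section $s:G_\Q\to\cG$ determined by $\vv_o$, and it is not preserved by the geometric part of $\cG$. Concretely, take $B=S^2H_\ell(1)$. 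This is an irreducible $\cG$-module (pure of $W$-weight $0$, irreducible under $\GL(H)$), but as a $G_\Q$-module it is $\Ql(1)\oplus\Ql(0)\oplus\Ql(-1)$, so $M_{-2}B=\Ql(1)$ is a nonzero proper $G_\Q$-submodule that cannot possibly be $\cG$-stable. If instead you use the genuinely $\cG$-stable filtration and set $A=W_{-2}B$, then $A$ is a $\cG$-submodule and your $H^1$-vanishing argument for $B/A$ goes through, but the conclusion $A=M_{-2}A$ fails (e.g.\ $A$ could contain $S^2H_\ell(2)$, which has $W$-weight $-2$ but top $M$-weight $0$). So neither reading gives the lemma.

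This is exactly the difficulty the paper's iterative ``peeling'' argument is designed to handle. The condition $A=M_{-2}A$ is translated into the $\cG$-module-theoretic condition that every $\GL(H)$-irreducible factor $S^mH_\ell(r)$ of $\Gr^W_\dot A$ has $r>m$. The paper then builds such an $A$ inside $B$ by descending induction on $W$-weight: at each stage one peels off from the top $W$-graded piece the $\GL(H)$-summand $D^-$ consisting of the ``bad'' factors $S^mH_\ell(r)$ with $r\le m$, using Corollary~\ref{cor:van_ext} both to split $D^-$ off as a $\cG$-quotient of $B$ and to see that $\Ext^1(\Ql,D^-)=0$, which preserves surjectivity of $\Ext^1(\Ql,-)\to\Ext^1(\Ql,B)$. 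Your back-end computation — that $H^1(\cG,S^mH_\ell(r))=0$ when $m-2r\ge -1$, using Proposition~\ref{prop:cris_h1} for the crystalline completion and the parity/Soul\'e arguments for the weighted one — is correct and is in the same spirit as what the vanishing corollary encodes; but it cannot rescue the proof, because it is applied to a quotient $B/A$ that does not exist as a $\cG$-module.
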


\begin{proof}
All modules in this proof will be $\cG$-modules and all extension groups will be
extensions of $\cG$-modules. First recall that if $C$ is a sub-module of $B$ and
if $\Ext^1(\Ql, B/C)=0$, then $\Ext^1(\Ql,C)\to \Ext^1(\Ql,B)$ is surjective.

Note that $S^m H_\ell(r) = M_{-2}S^m H_\ell(r)$ if and only if $r > m$.
So the condition that $A=M_{-2}A$ is equivalent to the condition that
each copy of $S^m H_\ell(r)$ in $\Gr^W_\dot A$ satisfies $r>m$.

If $M_{-2}B \neq B$, choose the largest integer $w$ such that $\Gr^W_w B$
contains a copy of $S^m H_\ell(r)$ with $r\le m$. Then decompose $\Gr^W_w B$ 
$$
\Gr^W_w B = D^+ \oplus D^- 
$$
where $D^+$ is the sum of factors $S^m H_\ell(r)$ with $r>m$ and $D^-$ is the
sum of the factors with $r\le m$.

Corollary~\ref{cor:van_ext} implies that the extension
$$
0 \to D^- \to B/W_{w-1} \to C \to 0
$$
splits. There is therefore a projection $B \to D^-$ and an extension
$$
0 \to K \to B \to D^- \to 0.
$$
Corollary~\ref{cor:van_ext} implies that $\Ext^1(\Ql,D^-)$ vanishes, which
implies that $\Ext^1(\Ql,K) \to \Ext^1(\Ql,B)$ is surjective. Repeat this
procedure until $M_{-2}K = K$.
\end{proof}

\section{The Elliptic Polylogarithm}

Recall from Corollary~\ref{cor:elliptic_polylog} that there is an object $\bp$
of $\MEM_2$ whose corresponding local system has fiber the Lie algebra
$\p(E',x)$ of $\pi_1^\un(E',x)$ over the point $[E,x]$ of $\M_{1,2}^\an$. Denote
by $\bp'$ the sub-local system whose fiber over $[E,x]$ is the commutator
subalgebra $[\p(E',x),\p(E',x)]$. It is an object of $\MEM_2$.

The elliptic polylogarithm of Beilinson and Levin is the object
$\Pol^\elliptic_2 := \p/[\p',\p']$ of $\MEM_2$,
\cite[Prop.~1.4.3]{beilinson-levin}. It is an extension
\begin{equation}
\label{eqn:poly_log}
0 \to \big(\Sym \H(1)\big)(1) \to \Pol^\elliptic_2 \to \H(1) \to 0
\end{equation}
and therefore an element of
\begin{align}
\label{eqn:ext_decomp}
&\phantom{\cong} \bigoplus_{m\ge 0} \Ext^1_{\MEM_2}(\H(1),S^m\H(m+1))\cr
&\cong \bigoplus_{m\ge 0} \Ext^1_{\MEM_2}(\Q,\H\otimes S^m\H(m+1))\cr
&\cong \bigoplus_{m\ge 1} \Ext^1_{\MEM_2}(\Q,S^{m-1}\H(m))
\oplus \bigoplus_{m\ge 0}\Ext^1_{\MEM_2}(\Q,S^{m+1}\H(m+1)).
\end{align}

By Proposition~\ref{prop:restriction}, $\bp$, $\bp'$ and $\Pol^\elliptic_2$
restrict to objects of $\MEM_\uu$, which we will also denote by $\bp_\uu$,
$\bp_\uu'$ and $\Pol^\elliptic_\uu$. The fiber of $\bp_\uu$ over the point
$[E,\vv]$ of $\M_{1,\uu}^\an$ is $\p(E',\vv)$.

The monodromy of the local system $\Pol^\elliptic_\uu$ over $\M_{1,\uu}$ about
the fiber of the $\Gm$-bundle $\M_{1,\uu} \to \M_{1,1}$ is the unipotent
automorphism of $\p(E',\vv)$ induced by the element of $\pi_1(E',\vv)$ that
rotates the tangent vector once about the identity. Its logarithm spans a copy
of $\Q(1)$ in $\Der \p(E',\vv)$ and induces a map 
$$
\Pol^\elliptic_\uu(1) \to \Pol^\elliptic_\uu/(\Sym \H(1))(2)
\overset{\simeq}{\longrightarrow} \H(2) \hookrightarrow (\Sym \H(1))(1).
$$
The coinvariants $\Pol^\elliptic_\uu/\H(2)$ of this action descends to a local
system on  $\M_{1,1}$, which we denote by $\Pol^\elliptic_1$. It is an object of
$\MEM_1$.

The computations in the previous sections imply that some of the extensions in
the decomposition (\ref{eqn:ext_decomp}) are trivial in $\MHS(\M_{1,2},\H)$. The
next result shows that all extensions in $\Pol^\elliptic_\ast$ that can be
non-zero are indeed non-trivial.

\begin{proposition}
\label{prop:non-triviality}
The elements of $\Ext^1_{\MHS(\M_{1,\ast},\H)}(\Q,S^m\H(m+1))$ that occur in the
decomposition (\ref{eqn:ext_decomp}) and its analogues for $\Pol^\elliptic_1$
and $\Pol^\elliptic_\uu$, are non-trivial when
\begin{enumerate}

\item $m=1$ and $m = 2n > 0$, when $\ast = 2$,

\item $m=2n \ge 0$, when $\ast = \uu$,

\item $m=2n > 0$, when $\ast = 1$.

\end{enumerate}
\end{proposition}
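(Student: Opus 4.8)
The plan is to reduce the non-triviality of these extensions to statements about the Lie algebra $\p = \p(E'_\tate, \ww_o)$ and its lower central series filtration, together with the cohomology computations of Section~\ref{sec:coho}. First I would recall the precise form of the extension (\ref{eqn:poly_log}): the associated graded of $\p/\p''$ for the lower central series gives $0 \to H_1(\p') \to \p/\p'' \to H_1(E) \to 0$ with $H_1(\p') \cong [\Sym H_1(E)](1) = \bigoplus_{m\ge 0} S^m H^1(E)(m+1)$, exactly as stated in Section~\ref{sec:unipt_comp}. So $\Pol^\elliptic_\ast$ is, by construction, a genuine (a priori possibly split) extension of variations of MHS over $\M_{1,\ast}^\an$, and the task is to show its class in $\Ext^1_{\MHS(\M_{1,\ast},\H)}(\Q, S^m\H(m+1))$ is non-zero for the indicated $(m,\ast)$.

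The key step is to identify these Ext classes with specific geometric classes. Via the Corollary following Theorem~\ref{thm:ext2}, $\Ext^1_{\MHS(\M_{1,\ast},\H)}(\Q, S^m\H(m+1))$ sits in an exact sequence with $\Ext^1_\MHS(F, H^0(\M_{1,\ast}^\an, S^m\H(m+1)))$ on the left and $\G H^1(\M_{1,\ast}^\an, S^m\H(m+1))$ on the right; by Theorem~\ref{thm:ext1} this group is $F$ precisely when $m=2n>0$ and the middle weight piece is captured by the Eisenstein class, and it vanishes for $m$ odd except $m=1, \ast=2$. So I would argue as follows: the extension class of $\Pol^\elliptic_\ast$ in the even case $m=2n$ maps, under the residue/restriction to the cusp described in Section~\ref{sec:unipt_comp}, to a nonzero multiple of the generator of $\Hfte^1(G_\Q,\Q(2n+1))$ (or the Eisenstein class $\psi_{2n+2}$ on the Hodge side) — this is because the log of monodromy about the cusp of $E'$ spans the copy of $\Q(1)$ in $H_1(\p')$, so the relevant sub-extension of $\Pol^\elliptic_\uu$ realizes the Kummer/elliptic-polylog class directly. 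For the Hodge-theoretic statement the cleanest route is to invoke the explicit description of the restriction of these extensions to $\M_{1,1}^\an$ given in \cite[\S13.3]{hain:modular} (referenced in the footnote to Theorem~\ref{thm:ext1}), where non-triviality is established by computing periods; I would cite this rather than redo it. For $m=1, \ast=2$ the extension $0 \to \H(1) \to \bE_0 \to \Q(0) \to 0$ coming from the abelianization piece is detected by $H^1(\M_{1,2}^\an, \H) \cong \Q(-1)$ via the Leray argument of Section~\ref{sec:coho}, and non-triviality follows since this class is the one carried by the point $x$ moving in $E'$ — concretely, its restriction to a fiber $E'$ is the non-split extension coming from $\pi_1^\un(E',x)$ being non-abelian.

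The remaining bookkeeping is to match up which $m$ survive for which $\ast$: for $\ast=1$ one has descended to $\M_{1,1}$ by killing $\H(2)$, so the odd-weight and $m=1$ pieces disappear and only $m=2n>0$ remains, consistent with Theorem~\ref{thm:ext1}; for $\ast=\uu$ the case $m=0$ (i.e.\ the class in $\Ext^1_{\MHS(\M_{1,\uu},\H)}(\Q,\Q(1))$ spanned by $\psi_2$, the $G_2$ class) is available because $H^1(\M_{1,\uu}^\an,\Q(1)) = \Q(-1) \neq 0$ by Proposition~\ref{prop:coho_vec}, and the monodromy of $\Pol^\elliptic_\uu$ about the $\Gm$-fiber pairs non-trivially with this class — I would make this explicit by integrating the form (\ref{eqn:G2}) along the fiber. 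I expect the main obstacle to be the $m=0$, $\ast = \uu$ case: one must check that the $\Q(1)$-subquotient of $\Pol^\elliptic_\uu$ really is the pullback of the $G_2$-extension and not the trivial one, which requires tracking the elliptic KZB connection (or equivalently the formula $\psi_2 = -\frac{1}{24}\frac{dD}{D}$) carefully through the construction of $\Pol^\elliptic_\uu$ from $\p/[\p',\p']$; everything else is a matter of assembling the cohomology computations already in hand.
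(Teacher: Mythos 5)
Your overall plan — reduce everything to the fiber over $\vv_o$ and detect non-triviality there — is in the right circle of ideas, and the alternatives you gesture at (period computations, $\ell$-adic Soul\'e classes) are indeed acknowledged as valid routes in Remark~\ref{rem:extensions1}. However, there is a genuine gap at the central step, and your route is not the one taken in the paper.

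The gap: you assert that the extension class of the $m=2n$ component ``maps to a nonzero multiple of the generator of $\Hfte^1(G_\Q,\Q(2n+1))$,'' and your offered justification is that ``the log of monodromy about the cusp of $E'$ spans the copy of $\Q(1)$ in $H_1(\p')$.'' This is a non-sequitur. The fact that $\log\gamma_o$ spans $\Q(1)$ in $H_1(\p')$ is a structural statement about $\bp$ (it explains why the extension descends to $\M_{1,1}$), but it says nothing about whether the resulting extension class is zero. You are also conflating two distinct monodromy operators: the loop about the puncture in $E'$ (the one relevant to $H_1(\p')$) and the loop $\sigma_o$ about the cusp of $\M_{1,1}$ in the $q$-disk (the one relevant to the limit MHS at $\tate$). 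Citing ``periods computed in \cite[\S13.3]{hain:modular}'' does not close the gap: that reference describes the restriction of these extensions to $\M_{1,1}^\an$ but the period computation that actually establishes non-triviality is Brown's \cite[Lem.~7.1]{brown:mmv} or \cite[Prop.~18.3]{hain:kzb}, or $\ell$-adically Nakamura \cite[Thms.~3.3 \& 3.5]{nakamura}.

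The paper's actual proof is cleaner and is the piece you are missing. It invokes \cite[Lem.~1.5.3]{beilinson-levin}: the fiber over $\vv_o$ of the kernel of $\log\sigma_o : \Pol^\elliptic_2 \to \Pol^\elliptic_2$ is the fiber of the \emph{classical} polylogarithm local system $\Pol^\class$ over $\Pminus$ at $\ww_o$, whose class is well known to be $\zeta(m) \in \Ext^1_\MHS(\Q,\Q(m))$. Since $\zeta(2n+1)$ has infinite order for $n\ge 1$, the $(2n+1)$st elliptic-polylog sub-extension $\bE_{2n+1}$ is nonzero in $\Ext^1(\H(1),S^{2n+1}\H(2n+2)) \cong \Ext^1(\Q,S^{2n+2}\H(2n+2)) \oplus \Ext^1(\Q,S^{2n}\H(2n+1))$; since the first summand vanishes by Theorem~\ref{thm:ext1}, the component you want is nonzero. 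This last ``kill one summand'' step is a small but essential bookkeeping move your proposal omits. Conversely, your concern about the $m=0$, $\ast=\uu$ case being ``the main obstacle'' is misplaced: the hard content is exactly the $m=2n>0$ case you hand-wave. If you want to complete your version, you should either correctly invoke Beilinson--Levin's degeneration lemma as above, or cite the precise period/$\ell$-adic computations rather than the wrong reference.
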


\begin{proof}
The vanishing follows from Theorem~\ref{thm:ext1}. There are several ways to
prove the non-vanishing. Here will appeal to the result
\cite[Lem.~1.5.3]{beilinson-levin} of Beilinson--Levin, which implies that the
fiber of the kernel of the monodromy logarithm
$$
\log\sigma_o : \Pol^\elliptic_2 \to \Pol^\elliptic_2
$$
over the tangent vector $\vv_o$, where $\sigma_o$ is the generator of the
fundamental group of the punctured $q$-disk, is the fiber over $\ww_o =
\vec{10}$ of the classical polylogarithm local system $\Pol^\class$ over
$\Pminus$. This is the sub-extension
$$
0 \to \bigoplus_{m\ge 1} \Q(m) \to \mathrm{\Pol}_{\ww_o}^\class \to \Q(1) \to 0
$$
of the extension (\ref{eqn:poly_log}) obtained by replacing each copy of $S^m
H(m+1)$ by the space $\Q(m+1)$ of its lowest weight vectors. This is an
extension of $\Q(1)$ by $\Q(m+1)$. It is well known that the fiber of the
polylog variation of MHS over $\vv_o$ is given by the zeta values
$$
\zeta(m) \in \Ext^1_\MHS(\Q,\Q(m)) \cong \Ext^1_\MHS(\Q(1),\Q(m+1)).
$$
This has infinite order when $m\ge 3$ is odd, which implies that the $(2n+1)$st
sub extension of $\Pol_2^\elliptic$ (the ``($2n+1$)st elliptic polylog'')
$$
0 \to S^{2n+1}\H(2n+2) \to \bE_{2n+1} \to \H(1) \to 0
$$
is non-trivial in
$$
\Ext^1(\H(1),S^{2n+1}\H(2n+2)) \cong \Ext^1(\Q,S^{2n+2}\H(2n+2))
\oplus \Ext^1(\Q,S^{2n}\H(2n+1)).
$$
Since the first summand is trivial, this implies that the second component of
$\bE_{2n+1}$ is non-trivial.
\end{proof}

\begin{remark}
\label{rem:extensions1}
The non-triviality of these extensions also follows from the explicit
computation \cite[Prop.~18.3]{hain:kzb} of the limit MHS on $\p =
\p(E'_\tate,\ww_o)$. Alternatively, one can compute the period of the limit MHS
on the fiber of $\Pol^\elliptic_\uu$ over $\vv_o$ directly. This was done by
Brown in \cite[Lem.~7.1]{brown:mmv}.

Finally, one can prove the non-triviality using $\ell$-adic methods. Since $\p$
is a mixed elliptic motive, one can check non-triviality of an extension in any
realization. Nakamura \cite[Thms.~3.3 \& 3.5]{nakamura} computed the $\ell$-adic
Galois action on $\p$ and showed that the extension above is given by a non-zero
rational multiple of the Soul\'e character.
\end{remark}

\section{Simple Extensions in $\MEM_\ast$}
\label{sec:exts}

We can now assemble the results of this part to give a computation of the
extension groups of $\Q$ by $S^m\H(r)$ in $\MEM_\ast$.

\begin{theorem}
\label{thm:exts}
When $\ast \in \{1,\uu\}$, we have
$$
\Ext^1_{\MEM_\ast}(\Q,S^m\H(r)) =
\begin{cases}
\Q & \ast = 1,\ m = 0 \text{ and $r\ge 3$ odd,} \cr
\Q & \ast = \uu,\ m = 0 \text{ and $r\ge 1$ odd,}\cr
\Q & m=2n > 0 \text{ and } r=2n+1,\cr
0 & \text{otherwise.}
\end{cases}
$$
When $\ast = 2$, we have
$$
\Ext^1_{\MEM_2}(\Q,S^m\H(r)) =
\begin{cases}
\Q & m = 0 \text{ and $r\ge 3$ odd,} \cr
\Q & m = 1 \text{ and } r=1,\cr
\Q & m=2n > 0 \text{ and } r=2n+1,\cr
0 & \text{otherwise.}
\end{cases}
$$
The natural maps
$$
\Ext^1_{\MEM_1}(\Q,S^m\H(r)) \to \Ext^1_{\MEM_\uu}(\Q,S^m\H(r)) \to \Ext^1_{\MEM_2}(\Q,S^m\H(r))
$$
are both injective for all $m$ and $r$.
\end{theorem}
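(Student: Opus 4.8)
The goal is to trap each group $\Ext^1_{\MEM_\ast}(\Q,S^m\H(r))$ between an upper bound obtained by transporting the problem to variations of mixed Hodge structure, and a lower bound obtained from explicit motivic extensions. The two regimes $m>0$ and $m=0$ must be treated separately, because the Hodge-theoretic bound is only finite in the first.

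\textbf{The case $m>0$.} First I would use Theorem~\ref{thm:surjective}, which supplies a surjection $\pi_1(\MHS(\M_{1,\ast},\H),\w_o)\twoheadrightarrow\pi_1(\MEM_\ast,\w^B)$. For any surjection of affine group schemes, inflation on $H^1$ with coefficients pulled back from the quotient is injective (a splitting of a module extension over the total group is automatically a splitting over the quotient, the kernel acting trivially on both terms), so
\[
\Ext^1_{\MEM_\ast}(\Q,S^m\H(r))\hookrightarrow\Ext^1_{\MHS(\M_{1,\ast},\H)}(\Q,S^m\H(r)).
\]
Theorem~\ref{thm:ext1} evaluates the target: it is $\Q$ exactly when $m=2n>0$ and $r=2n+1$ (for every $\ast$), together with the case $(m,r)=(1,1)$ when $\ast=2$, and is $0$ otherwise (in particular for all odd $m\ge 3$). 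For the matching lower bound I would invoke Proposition~\ref{prop:non-triviality}: the elliptic polylogarithm $\Pol^\elliptic_\ast$ is a genuine object of $\MEM_\ast$, hence so are the subquotients occurring in its decomposition (\ref{eqn:ext_decomp}) and in the $\Pol^\elliptic_1$, $\Pol^\elliptic_\uu$ analogues; Proposition~\ref{prop:non-triviality} asserts these are nonsplit already in $\MHS(\M_{1,\ast},\H)$ in precisely the cases above, hence nonsplit in $\MEM_\ast$ by the injection just established. This pins down $\Ext^1_{\MEM_\ast}(\Q,S^m\H(r))$ to $\Q$ in those cases and $0$ otherwise, for $m>0$.

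\textbf{The case $m=0$.} Here $S^0\H(r)=\Q(r)$ and the Hodge bound is useless, since $\Ext^1_\MHS(\Q,\Q(r))$ is infinite dimensional; instead I would reduce to $\MTM$. Given an extension $0\to\Q(r)\to\V\to\Q\to 0$ in $\MEM_\ast$, its monodromy $\rho_V$ takes values in the unipotent radical of the stabilizer of the line $\Q(r)\subset V^B$, so its class lies in the image of $\Ext^1_{\MHS(\M_{1,\ast},\H)}(\Q,\Q(r))$ in $H^1(\M_{1,\ast}^\an,\Q(r))$, which by (\ref{eqn:ses_ext}) equals $\G H^1(\M_{1,\ast}^\an,\Q(r))$. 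By Section~\ref{sec:coho} this vanishes when $\ast\in\{1,2\}$ (from $H^1(\M_{1,1}^\an,\Q)=0$ and the analogous vanishing for $\M_{1,2}$), and when $\ast=\uu$ it is $\Hom_\MHS(\Q,\Q(r-1))$, which vanishes unless $r=1$. In all those cases $\rho_V$ is trivial, so $\V$ is geometrically constant; by Proposition~\ref{prop:m=w}, together with full faithfulness of $\MEM_\ast\to\MHS(\M_{1,\ast},\H)$ (whose essential image contains every constant motive $c(V)$), $\V\cong c(V)$ is pulled back from $\MTM$, so $\Ext^1_{\MEM_\ast}(\Q,\Q(r))\cong\Ext^1_\MTM(\Q,\Q(r))$, which is $\Q$ for $r\ge 3$ odd and $0$ for $r=1$ and $r$ even. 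The only residual case, $\ast=\uu$ and $r=1$: the kernel of the split surjection $\Ext^1_{\MEM_\uu}(\Q,\Q(1))\to\Ext^1_\MTM(\Q,\Q(1))$ injects into $\G H^1(\M_{1,\uu}^\an,\Q(1))\cong\Q$ and is nonzero, since the $\Q(1)$-summand of $\Pol^\elliptic_\uu$ (the Eisenstein class of $G_2$, i.e.\ $\psi_2$) is a nonsplit such extension; as $\Ext^1_\MTM(\Q,\Q(1))=0$, this gives $\Ext^1_{\MEM_\uu}(\Q,\Q(1))=\Q$. Comparing with the three displayed formulas completes the argument.

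\textbf{Expected obstacle.} The delicate point is the $m=0$ reduction: one must check that an object of $\MEM_\ast$ with trivial Betti monodromy is $c(V)$ \emph{as an object of $\MEM_\ast$} --- that its $\Q$-de~Rham bundle with connection over $\Mbar_{1,\ast/\Q}$, and hence all its realizations, coincide with those of the constant motive --- not merely that the associated variation of MHS is constant. Everything else is assembly of Theorems~\ref{thm:ext1} and \ref{thm:surjective}, Propositions~\ref{prop:m=w} and \ref{prop:non-triviality}, and the standard value of $\Ext^1_\MTM(\Q,\Q(r))$ coming from Borel's computation of the rational $K$-theory of $\Z$. The $\ell$-adic/crystalline computation of Proposition~\ref{prop:cris_h1} is not needed here, but reproves the statement along completely parallel lines and serves as a consistency check.
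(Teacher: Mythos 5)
Your proof is correct and follows essentially the same route as the paper: for $m>0$ it uses the injection into $\Ext^1_{\MHS(\M_{1,\ast},\H)}$ coming from full faithfulness (Theorem~\ref{thm:surjective}), the Hodge computation of Theorem~\ref{thm:ext1} for the upper bound, and the non-triviality of the elliptic polylogarithm (Proposition~\ref{prop:non-triviality}) for the lower bound. Your treatment of the $m=0$ case is in fact more careful than the paper's one-line remark that the restriction to $\MTM$ is an isomorphism --- that statement, read literally, fails for $\ast=\uu$, $r=1$, and your argument via the vanishing of $\G H^1(\M_{1,\ast}^\an,\Q(r))$, triviality of monodromy, and geometric constancy (Proposition~\ref{prop:m=w}), plus the separate handling of the $G_2$-Eisenstein class, is the correct way to fill in what the paper leaves implicit.
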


When $m=2n$ and $r=2n+1$, the group $\Ext^1_{\MEM_\ast}(\Q,S^m\H(r))$ is generated by the $2n$th elliptic polylogarithmic extension, which corresponds to the Eisenstein series of weight $2n+2$. When $m=0$ and $r\ge 3$ is odd, it is generated by the $r$th zeta element; when $\ast=\uu$ and $r=1$, it is generated by an invertible function associated to the Eisenstein series of weight 2, which can be regarded as the discriminant function on $\M_{1,\uu}$. When $\ast=2$ and $m=1$, it is generated by the extension that corresponds to the tautological section of the universal elliptic curve over $\M_{1,2}$.

\begin{proof}
When $m=0$ and $r\ge 3$, the result follows from the fact that the restriction mapping $\Ext^1_{\MEM_\ast}(\Q,\Q(r)) \to \Ext^1_{\MTM_\ast}(\Q,\Q(r))$ is an
isomorphism. When $m=0$ and $r=1$, the result follows from the fact that $\O(\M_{1,\ast})^\times$ is trivial when $\ast = 1,2$ and generated by the discriminant when $\ast = \uu$.

Suppose that $m>0$. Since the functor $\MEM_\ast \to \MHS(\M_{1,\ast},\H)$ is
fully faithful (Theorem~\ref{thm:surjective}), the induced homomorphisms
\begin{equation}
\label{eqn:regulator}
\Ext^1_{\MEM_\ast}(\Q,S^m\H(r)) \to \Ext^1_{\MHS(\M_{1,\ast},\H)}(\Q,S^m\H(r))
\end{equation}
are injective. So Theorem~\ref{thm:ext1} shows that the extension groups are no
bigger than claimed in the statement.

When $m>0$, surjectivity follows from the existence of the elliptic
polylogarithms in $\MEM_\ast$. (Proposition~\ref{prop:non-triviality}.) The
$\Q$-de Rham realization of the generator of
$$
\Ext^1_{\MEM_\ast}\big(\Q,S^{2n}\H(2n+1)\big),\quad n>0
$$
can be written down explicitly using Proposition~\ref{prop:nabla_0} and
Remark~\ref{rem:scalings}, or deduced from \cite[Thm.~20.2]{hain:kzb}.
\end{proof}

\begin{remark}
\label{rem:extensions2}
Since the fiber $H$ of $\H$ over $\tate$ is $\Q(0) \oplus \Q(-1)$, the fiber $V$
over $\tate$ of a generator of $\Ext^1_{\MHS(\M_{1,1})}(\Q,S^{2n}\H(2n+1))$ is
an extension of $\Q$ by
$$
\Q(1) \oplus \Q(2) \oplus \dots \oplus \Q(2n+1).
$$
and thus determines elements
$$
\lambda_j \in \Ext^1_\MHS(\Q,\Q(j)), \quad 1 \le j \le 2n+1.
$$
It is easy to see that $\lambda_j = 0$ when $j<2n+1$; the proof is in the  next
paragraph. The proof of the theorem implies that the class of $\lambda_{2n+1}$
is a non-zero rational multiple of the class of $\zeta(2n+1)$. And indeed, this
is exactly what one sees in the computations of Brown \cite{brown:mmv} and
Nakamura \cite[Thms.~3.3 \& 3.5]{nakamura}.

We know that $N := \log \sigma_o$ acts on the limit $V$ as a morphism of type
$(-1,-1)$. Since the residue of the form $\psi_{2n}$ at $q=0$ is non-zero, the
map
$$
N : \Gr^W_0 V \to \Gr^W_{-1} V
$$
is non-zero and therefore injective. Suppose that $1\le j \le 2n$. Since
$j<2n+1$, $N : S^{2n} H \to S^{2n} H$ maps $\Q(j)$ onto $\Q(j+1)$. It thus maps
the extension $\lambda_j$ of $\Q$ by $\Q(j)$ isomorphically into an extension of
$\Q(1)$ by $\Q(j+1)$ that lies inside $S^{2n}H(2n+1)$. But all such extensions
are split, so $\lambda_j = 0$. 
\end{remark}

\section{Structure Theorems}
\label{sec:structure}

The results of the preceding sections allow us to determine how the fundamental
groups of the various categories $\MEM_\ast$ are related.

Let $\w$ be any fiber functor that is the composition of the fiber functor
$\MEM_\ast \to \MTM$ with any fiber functor $\MTM \to \Vec_F$. Denote the
unipotent completion of $\pi_1(E_\tate',\ww_o)$ by $\cP$. The inclusion
$E_\tate'\hookrightarrow \M_{1,2}$ induces a restriction functor $\MEM_2 \to
\Rep(\cP)$ and therefore a homomorphism $\cP \to \pi_1(\MEM_2,\w)$.

The inclusion of a formal punctured disk $\D' \hookrightarrow E_\tate'$ induces
a homomorphism $\Q(1)= \pi_1^\un(\D',\ww_o)  \to \cP$. We will denote this
subgroup of $\cP$ by $\Ga(1)$. It is also the unipotent fundamental group of the
fiber of $\M_{1,\uu} \to \M_{1,1}$. As above, restriction to the fiber induces a
homomorphism $\Ga(1) \to \pi_1(\MEM_\uu,\w)$ such that the diagram
$$
\xymatrix{
\Ga(1) \ar[r]\ar[d] & \pi_1(\MEM_\uu,\w) \ar[d] \cr
\cP \ar[r] & \pi_1(\MEM_2,\w)
}
$$
commutes.

\begin{theorem}
\label{thm:structure}
There are exact sequences
$$
1 \to \cP \to \pi_1(\MEM_2,\w) \to \pi_1(\MEM_1,\w) \to 1
$$
and 
$$
1 \to \Ga(1) \to \pi_1(\MEM_\uu,\w) \to \pi_1(\MEM_1,\w) \to 1.
$$
\end{theorem}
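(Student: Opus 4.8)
The plan is to deduce both exact sequences from the standard criterion for exactness of a sequence of tannakian fundamental groups (this is the argument used, for the corresponding statements about relative completions, in \cite{hain:modular}), applied to the chains of tensor functors
$$
\MEM_1 \xrightarrow{q^\ast} \MEM_2 \xrightarrow{\res} \Rep(\cP)
\quad\text{and}\quad
\MEM_1 \xrightarrow{q^\ast} \MEM_\uu \xrightarrow{\res} \Rep(\Ga(1)),
$$
where $q^\ast$ is pullback along the structure map of the universal curve (resp.\ along the $\Gm$-bundle $\M_{1,\uu}\to\M_{1,1}$) and $\res$ is restriction to the fibre $E'_\tate$ (resp.\ the $\Gm$-fibre) through $\vv_o$. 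First I would record the formal compatibilities: $\w$ restricts on $\Rep(\cP)$ (resp.\ $\Rep(\Ga(1))$) to the fibre functor at $\ww_o$, so these categories have tannakian groups $\cP$ and $\Ga(1)$; the composite $\res\circ q^\ast$ lands in the trivial objects, since a local system pulled back from $\M_{1,1}$ is constant on every fibre; and all the functors and homomorphisms commute with, and are the identity on, the projections to $\MTM$ and to $\pi_1(\MTM)$. Since exactness is insensitive to the choice of $\w$, one may then take $\w=\w^B$ and reduce to proving the analogous sequences with $\pi_1$ replaced by $\pi_1^\geom$.

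Surjectivity of $\pi_1(\MEM_2,\w)\to\pi_1(\MEM_1,\w)$ and $\pi_1(\MEM_\uu,\w)\to\pi_1(\MEM_1,\w)$ I would get from Theorem~\ref{thm:density} (under $\w^B$, $\pi_1^\geom(\MEM_\ast)$ is the image of $\cG^\rel_\ast$) together with surjectivity of $\cG^\rel_2\to\cG^\rel_1$ and $\cG^\rel_\uu\to\cG^\rel_1$ (recall $\cG^\rel_\uu\cong\cG^\rel_1\times\Ga$), using that the square relating the $\MEM$'s to the $\MHS(\M_{1,\ast},\H)$'s commutes; alternatively, directly, $q^\ast$ is fully faithful with image closed under subobjects. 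For injectivity of $\cP\hookrightarrow\pi_1(\MEM_2,\w)$ I would use the pro-object $\bp$ of $\MEM_2$ from Corollary~\ref{cor:elliptic_polylog}: its restriction to $E'_\tate$ carries the adjoint action of $\cP$ on $\p$, and this is faithful because the free group $\pi_1(E'_\tate,\ww_o)$ of rank $2$ has prounipotent completion with trivial centre; hence $\res$ is $\otimes$-generating. For $\Ga(1)\hookrightarrow\pi_1(\MEM_\uu,\w)$ I would instead use the Eisenstein object attached to $G_2$ (nontrivial by Theorem~\ref{thm:exts}, as $\Ext^1_{\MEM_\uu}(\Q,\Q(1))\neq 0$), whose monodromy about the $\Gm$-fibre is a nontrivial unipotent and hence a faithful $\Ga(1)$-representation. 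That $\res\circ q^\ast$ is trivial shows both subgroups lie in the respective kernels, and they are normal, being completions of $\pi_1$ of a fibre inside $\pi_1$ of the total space.

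The heart of the matter — and the step I expect to be the main obstacle — is showing that these subgroups exhaust the kernels: equivalently, by the tannakian dictionary, that an object $\V$ of $\MEM_2$ (resp.\ $\MEM_\uu$) with trivial monodromy along the fibres is pulled back from $\M_{1,1}$, and more precisely that the largest fibre-trivial subobject of any $\V$ descends. The monodromy part is routine: $\rho_V$ kills $\pi_1(E'_\tate)$ (which acts unipotently because $\Gr^W\V$ is a sum of $S^m\H$'s and $\H$ is constant on the fibres), hence descends to a local system on $\M_{1,1}^\an$, and likewise $G_\Q$-equivariantly for $\rho_{V,\ell}$. The genuine work is descending the $\Q$-de~Rham realization: here I would form the relative de~Rham pushforward $R^0\pi_{\DR,\ast}(\cV,\nabla)$ along $\Mbar_{1,2/\Q}\to\Mbar_{1,1/\Q}$ (resp.\ along the $\Gm$-bundle), check — using that $(\cV,\nabla)$ is fibrewise trivial — that it is a bifiltered flat bundle over $\Mbar_{1,1/\Q}$ whose pullback recovers $(\cV,F^\dot,W_\dot,\nabla)$ and whose fibre over $e_o$ is $V^\DR$, and verify its log behaviour at the boundary. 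Assembling these data with the unchanged fibre $V\in\MTM$ over $\vv_o$ produces the required $\bar\V\in\MEM_1$ with $q^\ast\bar\V\cong\V$; applying the same construction to the maximal fibre-trivial subobject gives the refined statement, which verifies the remaining conditions of the exactness criterion. The $\Gm$-bundle case runs in parallel and is easier, the fibre $\Gm$ having $\pi_1=\Z$ central in $\pi_1(\M_{1,\uu})$.
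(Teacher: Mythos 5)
Your proposal is correct and follows the same strategy as the paper, but with a few worthwhile variations. Like the paper, you use the fact that the Lie algebra $\p$ of $\cP$ is a pro-object of $\MEM_2$ (Corollary~\ref{cor:elliptic_polylog}) together with triviality of the centre of $\cP$ to get injectivity of $\cP\to\pi_1(\MEM_2,\w^B)$, you use Theorem~\ref{thm:density} for surjectivity, and you characterize the kernel by noting that objects with trivial fibre monodromy are pullbacks from $\M_{1,1}$. The differences: (i) you package the argument as an application of the standard tannakian exactness criterion, which makes normality of $\cP$ in $\pi_1(\MEM_2,\w^B)$ automatic; the paper instead proves normality directly, observing that $\cP$ is normal in $\cGhat_2$ and that $\cGhat_2\twoheadrightarrow\pi_1(\MEM_2,\w^B)$ (your one-line gloss ``being completions of $\pi_1$ of a fibre inside $\pi_1$ of the total space'' would not, on its own, establish normality of the image of $\cP$ under the noninjective map from $\cG^\rel_2$, so it is better to lean on the criterion as you do, or reproduce the paper's diagram argument). (ii) For $\Ga(1)\hookrightarrow\pi_1(\MEM_\uu,\w)$ you use the $G_2$ Eisenstein extension with $\Ext^1_{\MEM_\uu}(\Q,\Q(1))\neq 0$; the paper instead pushes forward through the commutative square into the $\cP$ case. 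Your route is more self-contained, the paper's shorter once the $\cP$ case is done. (iii) You rightly flag that showing a fibre-trivial object descends all the way (in particular its $\Q$-de~Rham realization and filtrations over $\Mbar_{1,\ast/\Q}$) requires an actual argument; the paper elides this by asserting directly that such objects ``are precisely the pullbacks''. Your proposal via $R^0\pi_{\DR,\ast}$ of a fibrewise-trivial log connection is the right kind of construction, though as you say the verification that it is bifiltered, flat, defined over $\Q$, and agrees with the Betti/\'etale data is the point where care is needed. None of this affects the correctness of the overall approach.
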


\begin{proof}
It suffices to prove the result when $\w=\w^B$. We first show that $\cP$ is a
normal subgroup of $\pi_1(\MEM_2,\w^B)$. Since the Lie algebra of $\cP$ is an
object of $\MEM_2$, there is a natural action of $\pi_1(\MEM_2,\w^B)$ on $\cP$.
The composite
$$
\cP \to \pi_1(\MEM_2,\w^B) \to \Aut\cP
$$
is the inner action. It is injective as $\cP$ has trivial center, which shows
that $\cP$ is a subgroup of $\pi_1(\MEM_2,\w^B)$.

To see that it is normal, consider the diagram
$$
\xymatrix{
\cP \ar[r]\ar@{=}[d] & \cGhat_2 \ar[d] \cr
\cP \ar[r] & \pi_1(\MEM_2,\w^B)
}
$$
Since the right-hand vertical map is surjective, and since $\cP$ is a
normal subgroup of $\cGhat_2$, it follows that $\cP$ is a normal subgroup of
$\pi_1(\MEM_2,\w^B)$.

We can thus consider the quotient group $\pi_1(\MEM_2,\w^B)/\cP$. Its
representations are objects of $\MEM_2$ that have trivial monodromy on each
fiber of $\M_{1,2}\to \M_{1,1}$, and are thus constant on each fiber. This
implies that its representations are precisely the pullbacks of objects of
$\MEM_1$ along the projection. This implies that $\pi_1(\MEM_2,\w^B)/\cP$ is
isomorphic to $\pi_1(\MEM_1,\w^B)$.

That $\Ga(1)$ is a normal subgroup of $\pi_1(\MEM_\uu,\w)$ follows from the
commutativity of the diagram
$$
\xymatrix{
\Ga(1) \ar[r]\ar[d] & \cGhat_\uu \ar[r]\ar[d] & \pi_1(\MEM_\uu,\w^B)\ar[d]
\cr
\cP \ar[r] & \cGhat_2 \ar[r] & \pi_1(\MEM_2,\w^B)
}
$$
As in the $\ast = 2$, case, $\pi_1(\MEM_\uu,\w^B)/\Ga(1) \cong
\pi_1(\MEM_1,\w^B)$.
\end{proof}

\begin{corollary}
\label{cor:structure}
There are natural isomorphisms
$$
\pi_1(\MEM_\uu,\w) \cong \pi_1(\MEM_1,\w)\ltimes \Ga(1)
$$
and
$$
\pi_1(\MEM_2,\w) \cong \pi_1(\MEM_\uu,\w) \ltimes_{\Ga(1)} \cP.
$$
\end{corollary}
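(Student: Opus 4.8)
The plan is to derive both isomorphisms from the two exact sequences of Theorem~\ref{thm:structure} together with the commuting square relating them (which also records the inclusion $\Ga(1)\hookrightarrow\cP$); the one substantive point is that the surjection $\pi_1(\MEM_\uu,\w)\to\pi_1(\MEM_1,\w)$ admits a splitting. It suffices to treat $\w=\w^B$, since the other standard fibre functors are obtained from it by base change and the splitting produced below is functorial. Granting a splitting $\sigma\colon\pi_1(\MEM_1,\w)\to\pi_1(\MEM_\uu,\w)$, the first isomorphism
$$
\pi_1(\MEM_\uu,\w)\;\cong\;\sigma\big(\pi_1(\MEM_1,\w)\big)\ltimes\Ga(1)\;=\;\pi_1(\MEM_1,\w)\ltimes\Ga(1)
$$
is immediate from exactness of $1\to\Ga(1)\to\pi_1(\MEM_\uu,\w)\to\pi_1(\MEM_1,\w)\to1$, where $\pi_1(\MEM_1,\w)$ acts on $\Ga(1)$ by conjugation; since $\Ga(1)$ is one dimensional this action factors through $\det^{-1}\colon\GL(H)\to\Gm$.

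To produce $\sigma$, note that $\Ga(1)$ is an abelian prounipotent group whose Lie algebra, with its conjugation action of $\pi_1(\MEM_1,\w)$, is the object $\Q(1)$ of $\MEM_1$. Hence the extension above is classified by a class in
$$
H^2\big(\pi_1(\MEM_1,\w),\Q(1)\big)\;\cong\;\Ext^2_{\MEM_1}(\Q,\Q(1)),
$$
and it is enough to see this group vanishes. By Proposition~\ref{prop:W-splitting} and the Corollary following Theorem~\ref{thm:density}, $\pi_1(\MEM_1,\w)=\GL(H)\ltimes\U^\MEM_1$ with $\GL(H)$ reductive and $\U^\MEM_1$ prounipotent with Lie algebra $\u^\MEM_1$, so $\Ext^2_{\MEM_1}(\Q,\Q(1))\cong H^2(\u^\MEM_1,\Q(1))^{\GL(H)}$, a subquotient of $\Hom_{\GL(H)}(\Lambda^2\u^\MEM_1,\Q(1))$. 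By Theorem~\ref{thm:exts} every $\GL(H)$-constituent of $H_1(\u^\MEM_1)$ is an $S^m H(r)$ with $m-2r\le-4$; since $\u^\MEM_1$ is topologically generated by a lift of $H_1(\u^\MEM_1)$ and Lie brackets only lower $W$-weight, every $\GL(H)$-constituent of $\Lambda^2\u^\MEM_1$ has $W$-weight $\le-8$. As $\Q(1)$ is $\GL(H)$-irreducible of $W$-weight $-2$ (the central torus of $\GL(H)$ acting on a pure object of $W$-weight $w$ through $\lambda\mapsto\lambda^w$), there is no nonzero $\GL(H)$-map $\Lambda^2\u^\MEM_1\to\Q(1)$, whence $\Ext^2_{\MEM_1}(\Q,\Q(1))=0$ and $\sigma$ exists. (Alternatively one may transport the splitting $\cG^\rel_\uu\cong\cG^\rel_1\times\Ga$ of \cite{hain:malcev,hain:modular} along the surjections $\cG^\rel_\ast\to\pi_1^\geom(\MEM_\ast,\w^B)$ of Theorem~\ref{thm:density}; the point is then that the first-factor inclusion $\cG^\rel_1\hookrightarrow\cG^\rel_\uu$ carries $\ker\big(\cG^\rel_1\to\pi_1^\geom(\MEM_1)\big)$ into $\ker\big(\cG^\rel_\uu\to\pi_1^\geom(\MEM_\uu)\big)$, which by the same weight bookkeeping reduces to the absence of a copy of $\Q(1)$ among the $\GL(H)$-constituents of $H_1(\u^\rel_1)$ — visible from the Eichler--Shimura description recalled in Section~\ref{sec:coho}.)

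For the second isomorphism, let $j\colon\pi_1(\MEM_\uu,\w)\to\pi_1(\MEM_2,\w)$ be the homomorphism induced by the restriction functor $\MEM_2\to\MEM_\uu$ of Proposition~\ref{prop:restriction}. The map of extensions in Theorem~\ref{thm:structure} has the injection $\Ga(1)\hookrightarrow\cP$ on kernels and the identity of $\pi_1(\MEM_1,\w)$ on quotients, so the short five lemma shows $j$ is injective and carries $\Ga(1)$ isomorphically onto the subgroup $\Ga(1)\subseteq\cP$. Moreover the composite $\pi_1(\MEM_\uu,\w)\xrightarrow{j}\pi_1(\MEM_2,\w)\to\pi_1(\MEM_1,\w)$ is the quotient map $\pi_1(\MEM_\uu,\w)\to\pi_1(\MEM_1,\w)$, because the functor $\MEM_1\to\MEM_\uu$ factors as $\MEM_1\to\MEM_2\to\MEM_\uu$. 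Hence $\sigma_2:=j\circ\sigma$ splits $\pi_1(\MEM_2,\w)\to\pi_1(\MEM_1,\w)$, giving $\pi_1(\MEM_2,\w)\cong\pi_1(\MEM_1,\w)\ltimes\cP$; under this identification $j\big(\pi_1(\MEM_\uu,\w)\big)=\sigma_2\big(\pi_1(\MEM_1,\w)\big)\cdot\Ga(1)$, while $\cP$ is normal and $j\big(\pi_1(\MEM_\uu,\w)\big)\cap\cP=\Ga(1)$, which is precisely the content of $\pi_1(\MEM_2,\w)\cong\pi_1(\MEM_\uu,\w)\ltimes_{\Ga(1)}\cP$.

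The main obstacle is the construction of $\sigma$, i.e.\ the vanishing $\Ext^2_{\MEM_1}(\Q,\Q(1))=0$ (equivalently, in the second approach, the compatibility of the relative-completion splitting with passage to $\MEM$); both come down to the weight estimate that $\u^\MEM_1$ is generated in $W$-weights $\le-4$. Everything after that is the formal diagram chase above, using only Theorem~\ref{thm:structure} and the functorial square relating the restriction functors $\MEM_1\to\MEM_2\to\MEM_\uu$.
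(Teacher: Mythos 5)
Your proof is correct, and for the first isomorphism it takes a genuinely different route from the paper's. The paper produces the splitting \emph{from} a nonzero class: it uses $\Ext^1_{\MTM}(\Q,\Q(1))=0$ together with $\Ext^1_{\MEM_\uu}(\Q,\Q(1))\cong\Q$ (Theorem~\ref{thm:exts}) to get $H^1(\pi_1^\geom(\MEM_\uu,\w),\Q(1))\cong\Q$, observes that the resulting homomorphism $\pi_1^\geom(\MEM_\uu,\w)\to\Ga(1)$ restricts to an isomorphism on $\Ga(1)$, and thereby obtains a \emph{direct-product} decomposition $\pi_1^\geom(\MEM_\uu,\w)\cong\pi_1^\geom(\MEM_1,\w)\times\Ga(1)$ (the $\Ga(1)$ factor is central since $\SL(H)$ and any prounipotent group act trivially on $\Q(1)$); crossing with the $\pi_1(\MTM,\w)$-equivariance then gives $\pi_1(\MEM_\uu,\w)\cong\pi_1(\MEM_1,\w)\ltimes\Ga(1)$. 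You instead kill the obstruction outright: you show $\Ext^2_{\MEM_1}(\Q,\Q(1))=0$ by the $W$-weight estimate that every constituent of $H_1(\u_1^\MEM)$ is an $S^mH(r)$ with $m-2r\le-4$, so $\Lambda^2\u_1^\MEM$ has $W$-weights $\le -8$ while $\Q(1)$ has $W$-weight $-2$. Both are sound; your version is more self-contained but less explicit. To get the \emph{naturality} asserted in the statement from your argument, you should also invoke $\Ext^1_{\MEM_1}(\Q,\Q(1))=0$ (which Theorem~\ref{thm:exts} gives, since $r=1<3$): this kills the $H^1$-torsor of splittings and makes the Levi complement unique up to the conjugation already built into the isomorphism, whereas the paper sidesteps this by exhibiting a canonical retraction. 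Your treatment of the second isomorphism — using $j\colon\pi_1(\MEM_\uu,\w)\to\pi_1(\MEM_2,\w)$, the five lemma, and the factorization $\MEM_1\to\MEM_2\to\MEM_\uu$ to see that $j\circ\sigma$ splits $\pi_1(\MEM_2,\w)\to\pi_1(\MEM_1,\w)$ — is exactly the paper's argument, with the added benefit of unpacking what $\ltimes_{\Ga(1)}$ means.
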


\begin{proof}
Since $\Ext^1_\MTM(\Q,\Q(1))=0$, the isomorphism  $\Ext^1_{\MEM_\uu}(\Q,\Q(1))
\cong \Q$ of Theorem~\ref{thm:exts} restricts to an isomorphism
$$
H^1(\pi_1^\geom(\MEM_\uu,\w),\Q(1)) \cong \Q.
$$
The composition of the corresponding homomorphism $\pi_1^\geom(\MEM_\uu,\w) \to
\Q(1)$ with the inclusion $\Ga(1) \to \pi_1^\geom(\MEM_\uu,\w)$ is easily seen
to be an isomorphism. There is therefore an isomorphism
$$
\pi_1^\geom(\MEM_\uu,\w) \cong
\big[\pi_1^\geom(\MEM_\uu,\w)/\Ga(1)\big]\times \Ga(1)
\cong \pi_1^\geom(\MEM_1,\w)\times \Ga(1).
$$
Since all of these isomorphisms are respected by the $\pi_1(\MTM,\w)$ action,
there is a natural isomorphism
$$
\pi_1(\MEM_\uu,\w) \cong
\pi_1(\MTM,\w)\ltimes \big(\pi_1^\geom(\MEM_1,\w)\times \Ga(1)\big)
\cong \pi_1(\MEM_1)\ltimes \Ga(1).
$$

The second isomorphism follows from Theorem~\ref{thm:structure} and the
first assertion. The splitting is induced by the homomorphism
$\pi_1(\MEM_\uu,\w) \to \pi_1(\MEM_2,\w)$:
$$
\xymatrix{
&&& \pi_1(\MEM_\uu,\w) \ar[d]\ar[dl] \cr
1 \ar[r] & \cP \ar[r] & \pi_1(\MEM_2,\w) \ar[r] &
\pi_1(\MEM_1,\w) \ar[r]\ar@/_/[u] & 1.
}
$$
\end{proof}

\section{Motivic Remarks}

Suppose that $\ast \in \{1,\uu\}$. We expect that the ext groups of the category
$\MEM_\ast$ to be faithfully represented in the various categories of
realizations. To be more precise, set $\MHS_\ast := \MHS(\M_{1,\ast},\H)$
and $\MEM_\ast^\ell = \Rep(\cG^{\cris,\ell}_\ast)$. The realization functors
$$
\real_\MHS : \MEM_\ast \to \MHS_\ast
\text{ and }
\real_\ell : \MEM_\ast \to \MEM_\ast^\ell
$$
induce ``regulator mappings''
$$
\reg_\Q: \Ext^\dot_{\MEM_\ast}(\Q,S^m\H(r)) \to
\Ext_{\MHS_\ast}^\dot(\Q,S^m\H(r))^{\Frbar_\infty}
\cong H_\cD^\dot(\M_{1,\ast}^\an,S^m\H_\Q(r))^{\Frbar_\infty}
$$
and
$$
\reg_\ell: \Ext^\dot_{\MEM_\ast}(\Q,S^m\H(r)) \to
\Ext_{\MEM_\ast^\ell}^\dot(\Ql,S^m\H_\ell(r)) \cong
H^\dot(\cG^{\cris,\ell}_\ast,S^mH_\ell(r))
$$
that are compatible with products. We have already seen that, in degree 1, the
first is an isomorphism and the second is an isomorphism after tensoring with
$\Ql$. We can also map to real Deligne cohomology
$$
\reg_\R : \Ext^\dot_{\MEM_\ast}(\Q,S^m\H(r)) \to
\Ext_{\MHS_\ast}^\dot(\R,S^m\H_\R(r))^{\Frbar_\infty}
\cong H_\cD^\dot(\M_{1,\ast}^\an,S^m\H_\R(r))^{\Frbar_\infty}.
$$
In degree 1, this map is an isomorphism after tensoring with $\R$. The story is
more interesting in degree 2, where we have regulator mappings
\begin{multline}
\label{eqn:reg_R}
\reg_\R : \Ext^2_{\MEM_\ast}(\Q,S^m\H(r))
\to H^2_\cD(\M_{1,\ast}^\an,S^m\H_\R(r))^{\Frbar_\infty}
\cr
\cong \Ext^1_\MHS(\R,H^1(\M_{1,\ast}^\an,S^m\H(r)))^{\Frbar_\infty}
\end{multline}
and 
\begin{multline}
\label{eqn:reg_ell}
\reg_\ell: \Ext^2_{\MEM_\ast}(\Q,S^m\H(r))
\to H^2(\cG_\ast^{\cris,\ell},S^m H_\ell(r))
\cr
\cong \Hfte^1(G_\Q,\Het^1(\M_{1,\ast/\Qbar},S^m\H_\ell(r))).
\end{multline}

\begin{conjecture}
\label{conj:h2}
For all $m\ge 0$ and all $r\in \Z$:
\begin{enumerate}

\item\label{item:hodge}
The degree $2$ regulator mapping (\ref{eqn:reg_R}) is an isomorphism after
tensoring $\Ext^2_{\MEM_\ast}(\Q,S^m\H(r))$ with $\R$.

\item\label{item:galois}
For each prime number $\ell$, the degree $2$ regulator mapping
(\ref{eqn:reg_ell}) is an isomorphism after tensoring
$\Ext^2_{\MEM_\ast}(\Q,S^m\H(r))$ with $\Ql$.

\end{enumerate}
\end{conjecture}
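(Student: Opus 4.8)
The statement is a conjecture of Beilinson--Bloch--Kato type, so a ``proof'' must really consist of isolating the unconditional content, proving it, and reducing the remainder to a case of Beilinson's conjecture (for part~(\ref{item:hodge})) and of the Bloch--Kato conjecture on Selmer groups of modular forms (for part~(\ref{item:galois})). The plan is to organise the argument around the fact that source and target are both governed by modular forms of level one, and to view each regulator as a pairing whose non-degeneracy is the conjectural point.

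First I would establish the half that is within reach: that the images of $\reg_\R$ and $\reg_\ell$ in degree $2$ are as large as claimed. By Corollary~\ref{cor:structure} and the tannakian dictionary, $\Ext^2_{\MEM_\ast}(\Q,S^m\H(r)) \cong H^2(\u^\MEM_\ast,S^m H(r))^{\GL(H)}$, so such a class is detected by a minimal relation among the generators $\ee_{2n}$ and $\zz_{2m+1}$ of $\u^\MEM_1$. The relations produced by Pollack's classification (Section~\ref{sec:pollack}) together with Brown's period computations of twice iterated integrals of Eisenstein series \cite{brown:mmv} yield, for each normalized Hecke eigen cusp form $f\in\B_{2n+2}$ and each $r$ of the appropriate parity, an explicit nonzero class in the target; combined with the elliptic polylogarithm classes of Theorem~\ref{thm:exts} and their cup products with $H^1_\cD(\M_{1,\ast}^\an,\R(2k+1))$, this realises a copy of $\R\oplus\bigoplus_{f\in\B_{2n+2}}V_{f,\R}^{\pm}$ inside $H^2_\cD(\M_{1,\ast}^\an,S^{2n}\H_\R(r))^{\Frbar_\infty}$, exactly matching Theorem~\ref{thm:ext2} and Proposition~\ref{prop:deligne_coho}. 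The same motivic input, pushed through the $\ell$-adic realization and the bound of the previous section, lands in $\Hfte^1(G_\Q,\Het^1(\M_{1,\ast/\Qbar},S^{2n}\H_\ell(r)))$ and gives the surjectivity statement in part~(\ref{item:galois}) onto the span of the constructed classes. At this stage one also checks the two routine compatibilities: that $\reg_\R$ and $\reg_\ell$ respect products and are compatible with the Betti--de~Rham and Betti--$\ell$-adic comparisons, which follows formally from the construction of the realization functors.

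The heart of the matter is injectivity — equivalently, that the classes constructed above are all of them. A class killed by $\reg_\R$ (resp.\ $\reg_\ell$) is a $2$-extension in $\MEM_\ast$ whose Hodge (resp.\ $\ell$-adic) realization is trivial, hence a relation among the motivic generators that holds after realization but is ``new'' motivically. Using the upper bound on $\u^\MEM_\ast$ coming from the monodromy representation $\u^\geom_1\to\Der\p$, Pollack's classification of the quadratic and near-quadratic relations, and fully faithfulness of the Hodge realization on $\MTM$, the goal is to show that the module of relations of $\u^\MEM_\ast$ is exactly the one generated by the Hecke eigenform relations, so no such new relation can exist. This is precisely where the conjecture is irreducible: establishing it unconditionally amounts to proving the non-vanishing of the Beilinson regulator for the motive of a cusp form twisted into the relevant range of Tate twists (part~(\ref{item:hodge})), and the finiteness and expected size of $\Hfte^1(G_\Q,\Het^1(\M_{1,1/\Qbar},S^{2n}\H_\ell(r)))$ (part~(\ref{item:galois})). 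So the honest proposal is: prove surjectivity onto the constructed subspace and the two compatibilities above, then reduce the remaining injectivity to the appropriate case of Beilinson's conjecture (known in weight $2$ by work building on \cite{beilinson:modular}, open in general) and of Bloch--Kato.

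The main obstacle is therefore exactly the injectivity of the degree-$2$ regulator on the cuspidal part; it is the genuinely open input and cannot be bootstrapped from the tannakian formalism, since in $\MEM_\ast$ the size of $\Ext^2$ is what one is trying to pin down. Two subtleties deserve care. In the Hodge case one must track the $\Frbar_\infty$-equivariance: Pollack's cuspidal cocycles should land in $V_{f,\R}^-$ when $r$ is even and in $V_{f,\R}^+$ when $r$ is odd, matching Proposition~\ref{prop:deligne_coho}, and Lemma~\ref{lem:real_frob} together with the modular-symbol description of $\Fr_\infty$ are the tools for checking this. In the $\ell$-adic case the analogous worry is ramification, but here $\M_{1,\ast/\Z}$ has everywhere good reduction and Scholl's theorem \cite{scholl} that $\Het^1(\M_{1,\ast/\Qbar},S^m\H_\ell(r))$ is crystalline removes it, so the $\ell$-adic statement is clean modulo Bloch--Kato.
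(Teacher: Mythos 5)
You have correctly recognized that this statement is a \emph{conjecture}, not a theorem, so there is no proof in the paper to compare against; the paper poses it as an analogue of Beilinson and Bloch--Kato and then, in Theorem~\ref{thm:h2_hodge} and Corollary~\ref{cor:h2_hodge}, establishes exactly the unconditional content you describe (surjectivity of the regulator onto a $\Q$-form of the target via Brown's period computations, plus the equivalence of the conjecture with surjectivity of the cup product and non-vanishing of quadratic heads). Your decomposition into an unconditional surjectivity step, a compatibility check, and an open injectivity step is the same structure the paper uses, and your tracking of the $\Frbar_\infty$-eigenspaces (odd $r$ giving $V_f^+$, even $r$ giving $V_f^-$) and of Scholl's crystallinity result for the $\ell$-adic case both match Proposition~\ref{prop:deligne_coho} and the discussion in Section~\ref{sec:weighted}; so your analysis is correct and faithful to the paper, with the honest caveat — which you already state — that the injectivity of the degree-$2$ regulator is the irreducibly conjectural input and cannot be proved from the tannakian formalism alone.
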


The first conjecture is a analogue of Beilinson's conjecture
\cite[Conj.~8.4.1]{beilinson:hodge_coho} and his result for weight 2 modular
forms \cite{beilinson:modular}; the second is an analogue of a version of the
Bloch--Kato conjecture (cf.\ \cite[4.2.2]{fontaine}).

In Section~\ref{sec:pollack_motivic} we show that (\ref{item:hodge}) holds if
and only if the cup product
\begin{multline*}
\sum_{a+b=n}
\Ext^1_{\MEM_\ast}(\Q,S^{2a}\H(2a+1)) \otimes
\Ext^1_{\MEM_\ast}(\Q,S^{2b}\H(2b+1))
\cr
\to
\bigoplus_{r=0}^b \Ext^2_{\MEM_\ast}(\Q,S^{2n-2r}\H(2n+2-r))
\end{multline*}
is surjective.

\part{Towards a Presentation of $\pi_1(\MEM_\ast)$}
\label{part:presentation}

The goal of this part is to find a presentation of $\pi_1(\MEM_\ast,\w^\DR)$, or
more accurately, a presentation of the Lie algebra $\u^\MEM_\ast$ of its
prounipotent radical. While we do not achieve this goal, we are able to come
close, thanks to computations of Brown \cite{brown:mmv} and Pollack
\cite{pollack}. Specifically, we are able to determine the ``quadratic heads''
of all relations in $\u^\MEM_\ast$. If the variant
Conjecture~\ref{conj:h2}(\ref{item:hodge}) of standard conjecture holds, then
the quadratic head of every non-trivial minimal relation is non-trivial.

In Section~\ref{sec:splitting_DR}, we show that the de~Rham realization of
every object of $\MEM_\ast$ has a canonical bigrading which splits the Hodge
filtration and both weight filtrations. This is an important tool. It means that
$\u^\MEM_\ast$ is the completion of a bigraded Lie algebra in the category of
$\sl(H)$-modules.

\section{Presentations of Graded Lie algebras}
\label{sec:presentations}

Before attempting the problem of finding a presentation of
$\pi_1(\MEM_\ast,\w)$, we first consider a more abstract setting. Suppose that
$\cG$ is an affine group scheme over a field of characteristic zero that is an
extension
$$
1 \to \U \to \cG \to S \to 1
$$
of a connected, reductive group $S$ by a prounipotent group $\U$. Suppose that
the Lie algebra $\g$ of $\cG$ is graded:
$$
\g = \prod_{n\le 0} \g_n
$$
where $\g_0$ is isomorphic to the Lie algebra $\s$ of $S$ and
$$
\u = \prod_{n<0} \g_n
$$
is the Lie algebra of $\U$. For simplicity, we will suppose that each $\g_n$  is
finite dimensional. The splitting $\g = \s \ltimes \u$ gives a Levi
decomposition $\cG \cong S\ltimes \U$ of $\cG$. To give a presentation of $\cG$,
it suffices to give a presentation of the associated graded Lie  algebra
$\u_\dot$ of $\u$ in the category of $S$-modules.

The first step in finding a minimal such presentation of $\u_\dot$ is to choose
an $S$-invariant splitting of the graded surjection
$$
\phi : \u_\dot \to H_1(\u_\dot).
$$
This induces a graded $S$-equivariant surjection $\L(H_1(\u_\dot)) \to \u_\dot$
that induces the identity on $H_1$.

Set $\f = \L(H_1(\u_\dot))$. The ideal of relations $\r := \ker \phi$ is a graded
submodule of the commutator subalgebra $[\f,\f]$. The Lie algebra version of
Hopf's theorem implies that, since $\phi$ induces an isomorphism on $H_1$, there
is a natural isomorphism
$$
H_2(\u_\dot) \cong \r/[\r,\f]
$$
of $S$-modules. The image of any $S$-invariant section $\psi : H_2(\u_\dot) \hookrightarrow \r$ of the canonical surjection $\r \to H_2(\u_\dot)$ will be a minimal set of generators of $\r$. That is, $\r$ is the ideal $(\im \psi)$ generated by $\im \psi$ and there are isomorphisms
$$
\im\psi \overset{\simeq}{\longrightarrow} \r/[\r,\f] = H_2(\u_\dot).
$$
So, loosely speaking, every pronilpotent Lie algebra in $\Rep_F(R)$ has a
minimal presentation of the form
$$
\u \cong \L(H_1(\u))^\wedge/\big(H_2(\u)\big).
$$
Of course, one has to determine the mapping $\psi$. Its quadratic part is easy to describe.

Denote the lower central series of a Lie algebra $\h$ by
$$
\h = L^1\h \supseteq L^2\h \supseteq L^3 \h \supseteq \cdots
$$
The bracket $\f\otimes \f \to \f$ induces an isomorphism
$$
\Lambda^2 H_1(\u_\dot) \overset{\sim}{\longrightarrow} L^2\f/L^3\f.
$$
Examining the Chevalley-Eilenberg cochain complex of $\u_\dot$, one sees
immediately that:

\begin{lemma}
\label{lem:quad_relns}
The composition
$$
\xymatrix{
H_2(\u_\dot) \ar[r]^\psi & L^2 \f \ar[r] & L^2\f/L^3\f \ar[r]^\simeq &
\Lambda^2 H_1(\u_\dot)
}
$$
is the dual of the cup product $\Lambda^2 H^1(\u_\dot) \to H^2(\u_\dot)$.
Equivalently, there is an exact sequence
$$
\xymatrix{
H_2(\u_\dot) \ar[r]^(0.4){\Delta} & \Lambda^2 H_1(\u_\dot)
\ar[r]^(.45){[\blank,\blank]} &
L^2\u_\dot/L^3\u_\dot \ar[r] & 0
}
$$
where $\Delta$ denotes the coproduct; i.e., the graded dual of the cup product.
\end{lemma}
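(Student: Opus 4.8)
The plan is to run the whole argument inside the Chevalley--Eilenberg cochain complex $C^\dot(\u_\dot)$, with $C^p=\Lambda^p\u_\dot^*$ (the graded dual, which is legitimate because each graded piece of $\u_\dot$ is finite dimensional, as assumed) and differential $d\xi(x\wedge y)=-\xi([x,y])$; every construction below is manifestly $S$-equivariant. Since $d$ vanishes on $C^0$ and $\ker(d\colon C^1\to C^2)$ is the space of functionals killing $[\u_\dot,\u_\dot]$, one gets a canonical identification $H^1(\u_\dot)=H_1(\u_\dot)^*$, under which the cup product $\Lambda^2H^1(\u_\dot)\to H^2(\u_\dot)$ is induced by $\alpha\otimes\beta\mapsto\alpha\wedge\beta$ on cocycles. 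Moreover $C^\dot(\u_\dot)$ is the $F$-dual of the Chevalley--Eilenberg chain complex $\Lambda^\dot\u_\dot$, so there is a perfect pairing $H^2(\u_\dot)\times H_2(\u_\dot)\to F$, and the assertion to be proved is that the composite in the statement — call it $\Delta'$ — is the transpose of $\cup$ under this pairing together with $H^1=H_1^*$, $H^2=H_2^*$. The ``equivalently'' clause is then the $F$-dual of this identity, once one knows that $\Lambda^2H_1(\u_\dot)\xrightarrow{[\,,\,]}L^2\u_\dot/L^3\u_\dot\to 0$ is exact, which falls out of the first step.

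First I would establish the exact sequence and pin down $\Delta'$ concretely. Write $\f=\L(H_1(\u_\dot))$, let $\r$ be the kernel of the canonical surjection $\f\to\u_\dot$, and give $\f=\bigoplus_{n\ge1}\f^{(n)}$ its grading by bracket-length, so $L^m\f=\bigoplus_{n\ge m}\f^{(n)}$, $\f^{(1)}=H_1(\u_\dot)$ and $\f^{(2)}\cong\Lambda^2H_1(\u_\dot)$; note $\r\subseteq L^2\f$ because $\f\to\u_\dot$ is an isomorphism on $H_1$. The surjection $\f\to\u_\dot$ induces a surjection of lower-central-series graded Lie algebras, which is the identity in degree $1$ and in degree $2$ is the bracket map $\Lambda^2H_1(\u_\dot)\twoheadrightarrow L^2\u_\dot/L^3\u_\dot$; a short check with the bracket-length grading shows its kernel is exactly $\{r^{(2)}:r\in\r\}\subseteq\f^{(2)}$, where $r^{(2)}$ denotes the bracket-length-$2$ component. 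Since $[\r,\f]\subseteq L^3\f$ has vanishing bracket-length-$2$ component, the map $r\mapsto r^{(2)}$ factors through the Lie-algebra Hopf isomorphism $H_2(\u_\dot)=\r/[\r,\f]$, and the resulting map is precisely $\Delta'$ — the ``leading quadratic term of a relation.'' This gives the advertised exact sequence at once, and reduces the lemma to identifying $\Delta'$ with $\cup^*$ as maps (not merely matching their images).

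For that identification I would use transgression. For the free presentation $0\to\r\to\f\to\u_\dot\to0$, the five-term inflation--restriction sequence, together with $H^2(\f)=0$ and the isomorphism $H^1(\u_\dot)\cong H^1(\f)$, yields $H^2(\u_\dot)\cong H^1(\r)^{\f}\cong(\r/[\r,\f])^*=H_2(\u_\dot)^*$, which is the same pairing as above; concretely, to pair $[c]\in H^2(\u_\dot)$ with $\bar r\in H_2(\u_\dot)$ one pulls $c$ back to a coboundary $d\eta$ on $\f$ and evaluates $\eta(r)$. Apply this with $c=\alpha\wedge\beta$: the pullback of $c$ to $\f$ is $\alpha'\wedge\beta'$ where $\alpha',\beta'$ are $1$-forms on $\f$ vanishing on $L^2\f$, so this $2$-cochain factors through $\Lambda^2\f^{(1)}=\Lambda^2H_1(\u_\dot)$; hence one may choose a primitive $\eta$ that vanishes on $\f^{(n)}$ for $n\ne2$ and equals $\mp(\alpha\wedge\beta)$ on $\f^{(2)}\cong\Lambda^2H_1(\u_\dot)$. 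Since $r\in L^2\f$ and $\eta$ kills $\f^{(n)}$ for $n\ge3$, $\eta(r)=\eta(r^{(2)})$ and $r^{(2)}=\Delta'(\bar r)$, so $\langle\alpha\cup\beta,\bar r\rangle=\mp(\alpha\wedge\beta)\bigl(\Delta'(\bar r)\bigr)=\mp\langle\alpha\wedge\beta,\Delta'(\bar r)\rangle$; that is, $\Delta'$ is the transpose of $\cup$ up to the sign that the cup-product, bracket and differential conventions are chosen to absorb.

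The sign bookkeeping and the bidegree-by-bidegree verification that the chosen $\eta$ really is a primitive of the pullback of $\alpha\wedge\beta$ are routine. The one delicate point — the main obstacle — is the transgression step: making sure that the pairing $H^2(\u_\dot)\times H_2(\u_\dot)\to F$ coming from the Chevalley--Eilenberg dual is genuinely the one computed by ``restrict the transgressed $1$-cocycle and evaluate on the relation $r$'' (i.e. matching the bar/Chevalley--Eilenberg model of $H_2$ against the Hopf model $\r/[\r,\f]$), and then exploiting that the primitive $\eta$ can be taken concentrated in bracket-length $2$ so that only the quadratic head $r^{(2)}$ of each relation contributes. Once this is in place, the two displayed forms of the statement are formal $F$-duals of one another.
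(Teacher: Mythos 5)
Your proposal is sound and the first two paragraphs (setting up the Chevalley--Eilenberg pairing, identifying $\Delta'$ with ``take the quadratic head of a relation'' via the Hopf isomorphism $H_2(\u_\dot)\cong\r/[\r,\f]$, and deducing the exact sequence) are essentially the intended argument; the paper offers no written proof, remarking only that the lemma follows on ``examining the Chevalley--Eilenberg cochain complex.'' Where you diverge is the third paragraph: you reach for a transgression/five-term argument to identify $\Delta'$ with $\cup^*$, and you then honestly flag as the ``one delicate point'' the compatibility between the transgression pairing $H^2(\u_\dot)\times H_2(\u_\dot)\to F$ and the Chevalley--Eilenberg evaluation pairing. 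That compatibility is true but you do not supply it, so as written your argument has a genuine gap there, albeit one you correctly locate.

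The simpler route, which also evades the sign-bookkeeping you worry about, is to realize the Hopf isomorphism explicitly at the chain level and compute the pairing directly. Given a $2$-cycle $z=\sum\bar a_i\wedge\bar b_i\in\Lambda^2\u_\dot$ (so $\sum[\bar a_i,\bar b_i]=0$), lift to $\tilde z=\sum a_i\wedge b_i\in\Lambda^2\f$; then $r:=\partial\tilde z=\sum[a_i,b_i]\in\r$, and $[z]\mapsto[r]$ is the Hopf isomorphism $H_2(\u_\dot)\xrightarrow{\sim}\r/[\r,\f]$. Writing $a_i=a_i^{(1)}+a_i^{>1}$ with $a_i^{(1)}\in\f^{(1)}=H_1(\u_\dot)$ and $a_i^{>1}\in L^2\f$ (and likewise for $b_i$), one has $r^{(2)}=\sum[a_i^{(1)},b_i^{(1)}]$, so $\Delta'([z])=\sum a_i^{(1)}\wedge b_i^{(1)}$ under $\f^{(2)}\cong\Lambda^2 H_1(\u_\dot)$. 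On the other hand, for $\alpha,\beta\in H^1(\u_\dot)=H_1(\u_\dot)^*$ (regarded as $1$-cocycles vanishing on $L^2\u_\dot$) one has $\alpha(\bar a_i)=\alpha(\bar a_i^{(1)})=\alpha(a_i^{(1)})$ since $\bar a_i^{>1}\in L^2\u_\dot$, and therefore
$$
\langle\alpha\cup\beta,[z]\rangle
=(\alpha\wedge\beta)(z)
=\sum\bigl(\alpha(a_i^{(1)})\beta(b_i^{(1)})-\alpha(b_i^{(1)})\beta(a_i^{(1)})\bigr)
=\bigl\langle\alpha\wedge\beta,\Delta'([z])\bigr\rangle,
$$
with no sign ambiguity and no invocation of the five-term sequence at all. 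This is the ``immediate'' computation the paper alludes to; you already had all the ingredients for it in your second paragraph, and the transgression detour (while correct in outline) introduced the one step you could not close.
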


\begin{definition}
\label{def:quad_part}
The {\em quadratic leading term} (or part) of a relation
$r\in \r$ is its image in $L^2\u_\dot/L^3\u_\dot$.
\end{definition}

The final observation is that the canonical isomorphism
$$
H^\dot(\cG,V) \cong H^\dot(\u,V)^S
$$
implies that (provided that every irreducible $S$-module is absolutely
irreducible) there is a natural isomorphism
\begin{equation}
\label{eqn:homology}
H_\dot(\u_\dot) \cong
\bigoplus_{\alpha\in \Sdual} H^\dot(\cG,V_\alpha)^\vee\otimes V_\alpha,
\end{equation}
where $\Sdual$ denotes the set of isomorphism classes of irreducible $S$-modules, $(\blank)^\vee$ denotes dual, and $\{V_\alpha\}$ an $S$-module in the class $\alpha$.

If we fix a Cartan subalgebra $\h$ of $\s$ and a set of positive roots, we can
decompose $\s$ as
$$
\s = \n_+ \oplus \h \oplus \n_-
$$
where $\n_+$ (resp.\ $\n_-$) is the nilpotent subalgebra consisting of
positive (resp.\ negative) root vectors. So we can think of $\u_\dot$ as
being generated as an $\n_-$ module by its highest weight vectors. Applying
this to the isomorphism (\ref{eqn:homology}) gives the following result.

\begin{proposition}
\label{prop:gens+relns}
The space of highest weight vectors of the generating space of a minimal
presentation of $\u_\dot$ projects isomorphically to
$$
H_0\big(\n_-,H_1(\u_\dot)\big) \cong
H_1(\u_\dot)/\n_-H_1(\u_\dot) \cong
\bigoplus_{\alpha \in \Sdual} H^1(\cG,V_\alpha)^\vee.
$$
The space of highest weight vectors in a minimal space $\r/[\r,\f]$ of relations
projects isomorphically to
$$
H_0(\n_-,\r/[\r,\f]) \cong H_0(\n_-,H_2(\u)) \cong H_2(\u)/\n_-H_2(\u_\dot)
\cong \bigoplus_{\alpha \in \Sdual} H^2(\cG,V_\alpha)^\vee.
$$
\end{proposition}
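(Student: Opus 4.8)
The plan is to deduce both assertions from one elementary fact about completely reducible $S$-modules, together with the isomorphism (\ref{eqn:homology}); there is essentially nothing to compute once that fact is isolated. The key point I would record first is the following: if $M$ is a completely reducible $S$-module (automatic here, since $S$ is reductive over a field of characteristic zero) in which every irreducible constituent is absolutely irreducible, then the composite of the inclusion $M^{\n_+}\hookrightarrow M$ with the quotient map $M\twoheadrightarrow M/\n_-M=H_0(\n_-,M)$,
\[
M^{\n_+}\hookrightarrow M \twoheadrightarrow M/\n_-M,
\]
is an isomorphism. One reduces to the case $M=V_\alpha$ irreducible: $V_\alpha^{\n_+}$ is the highest weight line, which is one-dimensional and defined over the ground field precisely because $V_\alpha$ is absolutely irreducible; every weight occurring in $\n_-V_\alpha$ is strictly below the highest weight $\lambda$, so the $\lambda$-weight space meets $\n_-V_\alpha$ trivially, and since $V_\alpha=Fv_\lambda+\n_-V_\alpha$ the quotient $V_\alpha/\n_-V_\alpha$ is exactly the (one-dimensional, nonzero) image of that line. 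The general case follows because both $(\blank)^{\n_+}$ and $(\blank)/\n_-(\blank)$ are additive in direct sums.

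Next I would apply this degree by degree. All modules in play are graded with finite-dimensional graded pieces: $H_1(\u_\dot)$ is a graded quotient of $\u_\dot$, and $H_2(\u_\dot)\cong\r/[\r,\f]$ (as noted above, from the Lie algebra version of Hopf's theorem) is a graded subquotient of $\f=\L(H_1(\u_\dot))$. Moreover the adjoint action of $\s=\n_+\oplus\h\oplus\n_-$ preserves each grading, since $[\g_0,\g_n]\subseteq\g_n$. Realizing $H_1(\u_\dot)$, respectively a minimal relation module, as an $S$-submodule of $\u_\dot$, respectively of $\r$, via the $S$-equivariant splittings used above to build the minimal presentation, its space of highest weight vectors is by definition $H_1(\u_\dot)^{\n_+}$, respectively $H_2(\u_\dot)^{\n_+}$, and applying the displayed isomorphism on each graded piece identifies these with $H_0(\n_-,H_1(\u_\dot))$, respectively $H_0(\n_-,H_2(\u_\dot))$.

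Finally I would feed in (\ref{eqn:homology}). In degree $1$ it reads $H_1(\u_\dot)\cong\bigoplus_{\alpha\in\Sdual}H^1(\cG,V_\alpha)^\ast\otimes V_\alpha$ as $S$-modules; applying $H_0(\n_-,\blank)$ and using that $V_\alpha/\n_-V_\alpha$ is one-dimensional gives $H_0(\n_-,H_1(\u_\dot))\cong\bigoplus_{\alpha}H^1(\cG,V_\alpha)^\ast$. The identical computation in degree $2$, together with $H_2(\u_\dot)\cong\r/[\r,\f]$, gives $H_0(\n_-,H_2(\u_\dot))\cong\bigoplus_{\alpha}H^2(\cG,V_\alpha)^\ast$. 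Concatenating the two chains of isomorphisms yields the proposition.

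I expect the only delicate point — bookkeeping rather than a genuine obstacle — to be verifying that the grading hypotheses really do force finite-dimensionality of the relevant graded pieces, so that complete reducibility, the degreewise application of the key lemma, and the degreewise use of (\ref{eqn:homology}) are all legitimate; once that is in place the argument is purely formal.
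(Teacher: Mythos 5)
Your proof is correct and is essentially the argument the paper has in mind: the paper states the proposition immediately after observing that $\u_\dot$ is generated as an $\n_-$-module by highest weight vectors and then says ``Applying this to the isomorphism (\ref{eqn:homology}) gives the following result,'' which is exactly the combination of your key lemma with the degree-$1$ and degree-$2$ instances of (\ref{eqn:homology}). Your closing worry about finite-dimensionality is already handled by the standing hypothesis in that section that each $\g_n$ is finite dimensional, from which finite-dimensionality of the graded pieces of $H_1(\u_\dot)$ and $H_2(\u_\dot)$ (the latter via $\Lambda^2\u_\dot$ in the Chevalley--Eilenberg complex) follows.
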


Combined with Lemma~\ref{lem:quad_relns}, we see that the space $\r/(\r\cap
L^3\f)$ of quadratic leading terms of the space of minimal relations is dual to the cup product.

\begin{corollary}
\label{cor:quad_relns}
The diagram
$$
\xymatrix{
H_0\big(\n_-,\r/(\r\cap L^3\f)\big)\ar@{^{(}->}[d]\ar@{^{(}->}[r] &
\Lambda^2 H_0\big(\n_-,H_1(\u_\dot)\big) \ar[d]
\cr
\bigoplus_{\alpha \in \Sdual} H^2(\cG,V_\alpha)^\vee \ar[r]^(.4)\Delta &
\bigoplus_{\beta,\gamma \in \Sdual}
H^1(\cG,V_\beta)^\vee \otimes H^1(\cG,V_\gamma)^\vee
}
$$
commutes, where $\Delta$ denotes the dual of the cup product.
\end{corollary}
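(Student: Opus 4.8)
The plan is to deduce Corollary~\ref{cor:quad_relns} formally from Lemma~\ref{lem:quad_relns} and Proposition~\ref{prop:gens+relns} by applying the $\n_-$-coinvariants functor $H_0(\n_-,M)=M/\n_- M$ to the structures already in play. First I would record the two inputs in usable form. Lemma~\ref{lem:quad_relns} gives an exact sequence of graded $S$-modules
$$
H_2(\u_\dot)\xrightarrow{\ \Delta\ }\Lambda^2 H_1(\u_\dot)
\xrightarrow{\ [\blank,\blank]\ }L^2\u_\dot/L^3\u_\dot\to 0,
$$
in which $\Delta$ is the graded dual of the cup product $\Lambda^2 H^1(\u_\dot)\to H^2(\u_\dot)$; moreover $\im\Delta$ is precisely the space $\r/(\r\cap L^3\f)$ of quadratic leading terms of a minimal set of relations, realized simultaneously as an $S$-submodule of $\Lambda^2 H_1(\u_\dot)\cong L^2\f/L^3\f$ and, since $[\r,\f]\subseteq L^3\f$ (because $\r\subseteq L^2\f$), as a quotient of $H_2(\u_\dot)=\r/[\r,\f]$. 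On the other hand, the canonical isomorphism \eqref{eqn:homology} identifies $H_i(\u_\dot)\cong\bigoplus_{\alpha\in\Sdual}H^i(\cG,V_\alpha)^\ast\otimes V_\alpha$ for $i=1,2$, and this is the isomorphism through which Proposition~\ref{prop:gens+relns} reads off the bottom row of the square.

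Next I would use that, since $S$ is connected reductive over a field of characteristic zero, $\Rep_F(S)$ is semisimple, so $H_0(\n_-,-)$ is additive, exact on $\Rep_F(S)$, preserves direct summands (hence injections of $S$-modules), and sends each absolutely irreducible $V_\alpha$ to its one-dimensional highest-weight line. Combined with \eqref{eqn:homology} this yields $H_0(\n_-,H_i(\u_\dot))\cong\bigoplus_\alpha H^i(\cG,V_\alpha)^\ast$ (up to the chosen highest-weight lines), so that: the left vertical arrow is $H_0(\n_-,-)$ applied to the surjection $\r/[\r,\f]\twoheadrightarrow\r/(\r\cap L^3\f)$ followed by an $S$-equivariant splitting, and is injective because that surjection splits; the bottom arrow $\Delta$ is the corresponding dual-cup-product map $\bigoplus_\alpha H^2(\cG,V_\alpha)^\ast\to\bigoplus_{\beta,\gamma}H^1(\cG,V_\beta)^\ast\otimes H^1(\cG,V_\gamma)^\ast$; the top arrow is $H_0(\n_-,-)$ of the inclusion $\r/(\r\cap L^3\f)\hookrightarrow\Lambda^2 H_1(\u_\dot)$ followed by the natural surjection $H_0(\n_-,\Lambda^2 H_1(\u_\dot))\twoheadrightarrow\Lambda^2 H_0(\n_-,H_1(\u_\dot))$; and the right vertical arrow is the antisymmetrization embedding $\Lambda^2 H_0(\n_-,H_1(\u_\dot))\hookrightarrow H_0(\n_-,H_1(\u_\dot))^{\otimes 2}=\bigoplus_{\beta,\gamma}H^1(\cG,V_\beta)^\ast\otimes H^1(\cG,V_\gamma)^\ast$. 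With all four arrows described this way, commutativity of the square is nothing but the naturality of $H_0(\n_-,-)$ together with the assertion of Lemma~\ref{lem:quad_relns} that $\Delta$ equals the graded dual of the cup product, which simultaneously identifies the bottom $\Delta$ with the map induced on coinvariants.

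The only real work is bookkeeping, and the one delicate point lies there: one must track the multiplicity spaces $\Hom_S(V_\alpha,-)$ (equivalently, the highest-weight lines) as they pass through $H_0(\n_-,-)$, and must verify that the natural comparison between $H_0(\n_-,\Lambda^2 H_1(\u_\dot))$ and $\Lambda^2 H_0(\n_-,H_1(\u_\dot))$ behaves so that the highest-weight vectors of the quadratic relations land in the exterior square of the highest-weight generating space and the top arrow is a monomorphism onto its image there. I expect no analytic or geometric input to be needed; everything is a consequence of the tannakian and Lie-homological formalism already assembled in Lemma~\ref{lem:quad_relns}, Proposition~\ref{prop:gens+relns}, and the isomorphism \eqref{eqn:homology}.
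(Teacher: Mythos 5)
Your plan is the right one and is exactly the tacit proof the paper intends: Corollary~\ref{cor:quad_relns} is stated immediately after the sentence ``Combined with Lemma~\ref{lem:quad_relns}, we see that the space $\r/(\r\cap L^3\f)$ of quadratic leading terms of the space of minimal relations is dual to the cup product,'' so the intended argument really is nothing more than applying the exact functor $H_0(\n_-,\blank)$ to the conclusion of Lemma~\ref{lem:quad_relns} and invoking the identifications of Proposition~\ref{prop:gens+relns} and equation~(\ref{eqn:homology}). Your description of each of the four arrows, the use of semisimplicity of $\Rep_F(S)$ to make $H_0(\n_-,\blank)$ exact and compatible with split inclusions, and the appeal to naturality for commutativity all line up with what the paper is doing.

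You are also right to flag the last point as the delicate one, and I want to press on it because it is more than bookkeeping. The natural comparison map $H_0\big(\n_-,\Lambda^2 H_1(\u_\dot)\big)\twoheadrightarrow \Lambda^2 H_0\big(\n_-,H_1(\u_\dot)\big)$ is a surjection, not an isomorphism: on each summand $S^{2a}H\otimes S^{2b}H$ of $\Lambda^2 H_1(\u_\dot)$ it discards all but the top Clebsch--Gordan constituent, since $\ee_0^i\cdot\ee_{2a+2}$ is killed by the projection $H_1(\u_\dot)\to H_0(\n_-,H_1(\u_\dot))$ for $i>0$. Consequently the highest-weight vector $\what_{a,b}^d$ of a quadratic leading term with $d>2$ maps to $0$ in $\Lambda^2 H_0(\n_-,H_1(\u_\dot))$, so the composite you propose for the top arrow is \emph{not} a monomorphism on those graded pieces. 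With $H_0\big(\n_-,\Lambda^2 H_1(\u_\dot)\big)$ as the top-right vertex the diagram and your argument work cleanly, and that is also what the subsequent use of the corollary in Section~\ref{sec:relns} actually invokes: there the dual of the $\GL(H)$-invariant projection (\ref{eqn:projn}) is described as ``a line of highest weight vectors in $H_1(\f_1)^{\otimes 2}$'', which lives in $H_0(\n_-,H_1(\u_\dot)^{\otimes 2})$, not in $H_0(\n_-,H_1(\u_\dot))^{\otimes 2}$. So: your proof captures the intended argument, but if you want the top arrow to be injective for all $M$-degrees you must either change the target to $H_0(\n_-,\Lambda^2 H_1(\u_\dot))$ or note explicitly that the diagram is only faithful on the $d=2$ graded piece and is being used as an abbreviation for the full Clebsch--Gordan-indexed target.
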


\section{Splittings}

A fundamental fact about motives is that (say) the complex vector space
underlying a mixed Hodge structure is naturally (though not canonically)
isomorphic to the direct sum of its weight graded quotients. In
Appendix~\ref{sec:splittings} we show that we can simultaneously split the
weight filtrations $M_\dot$ and $W_\dot$ on the various realizations of all
objects of $\MEM_\ast$, as well as all $\cGhat_\ast$ and $\cG_\ast^{\cris,\ell}$
modules. This will simplify the problem of writing down the presentations as it
allows us to work in the category of bigraded Lie algebras with $\sl(H)$-action,
instead of in the much more slippery category of pronilpotent Lie algebras.

Specifically, let $\cG$ be any of the groups $\pi_1(\MEM_\ast,\w)$, where $\ast
\in \{1,\uu,2\}$ and $\w$ is $\w^\DR$ or $\w^B$. Then every $\cG$-module $V$ has a natural bigrading
$$
V = \bigoplus_{m,n\in\Z} V_{m,n}
$$
in the category of $\Q$-vector spaces with the property that
$$
M_mV = \bigoplus_{\substack{r\le m\cr n\in \Z}} V_{r,n}
\text{ and }
W_nV = \bigoplus_{\substack{r\le n\cr m\in \Z}} V_{m,r}.
$$
It is natural in the sense that it is preserved by $\cG$-module homomorphisms.
It is compatible with $\otimes$ and $\Hom$. One can also choose the bigradings
to be compatible with the functors $\MEM_2 \to \MEM_\uu \to \MEM_1$.

\begin{remark}
One consequence of the proof is that the functors $\Gr^M_\dot\Gr^W_\dot$ are
exact. If one knew this in advance, one could use the fiber functor
$\Gr^M_\dot\Gr^W_\dot$ to prove the result. Analogous results hold for
$\cGhat_\ast$-modules and $\cG^{\cris,\ell}$-modules. One can arrange for their
splittings to be compatible with those of $\pi_1(\MEM_\ast,\w)$ via the
homomorphisms $\cGhat_\ast \to \pi_1(\MEM_\ast,\w^B)$ and $\cG_\ast^{\cris,\ell}
\to \pi_1(\MEM_\ast,\w^\ell)\otimes\Ql$.
\end{remark}

\begin{remark}
In Section~\ref{sec:splitting_DR}, we will construct a canonical $\Q$-de Rham
bigrading of each object of $\MEM_\ast$ which splits both weight filtrations and
also the Hodge filtration. This bigrading generalizes, and reduces to, the
canonical grading of the $\Q$-DR realization of a mixed Tate motive which splits
the weight filtration and the Hodge filtration.
\end{remark}

\subsection{Bases}
\label{sec:bases}
In the subsequent sections, it will be important to distinguish between $H =
H^1(E_\tate)$ and $\Hdual := H(1) = H_1(E_\tate)$. \label{def:Hdual} The
intersection pairing induces an isomorphism
$$
H^B \to \Hom(H^B,\Q) = \Hdual^B;\quad x \mapsto (x\cdot\blank)
$$
of their Betti realizations. Both are isomorphic to $H^B = \Q\aa \oplus \Q\bb
\cong \Hdual^B$. Their de~Rham realizations are different:
$$
H^\DR = \Q\aa \oplus \Q\bw
$$
where $\bw = -2\pi i \bb$. Note that $\bw$ spans a copy of $\Q(-1)$ in $H(1)$.
The de Rham realization of $H(1)$ is
$$
\label{def:A+T}
\Q A \oplus \Q T
$$
where $\aa = 2\pi i A$ and $T = -\bb$. This is the $\Q$-de Rham basis used in
\cite{hain:kzb}. In this part, we will be mainly working in homotopy, and thus
with $H(1) = H_1(E_\tate)$.

Note that $\SL(H)$ and $\SL(\Hdual)$ are naturally isomorphic, as are their Lie
algebras. We will identify them.

Below we will take $\w$ to be either $\w^B$ or $\w^\DR$. When we use $\w^B$, we
will use the $\Q$ Betti basis $\aa,\bb$ of $\Hdual$; when we use $\w^\DR$, we
will use the $\Q-\DR$ basis $A,T$ of $\Hdual$.

\subsection{Splittings of $\g^\MEM_\ast$}

Fix $\ast \in \{1,\uu,2\}$. Set $\g=\g^\MEM_\ast$ and $\u=\u^\MEM_\ast$. By the
results of the previous section, the Lie algebras $\g$ and $\u$ are
isomorphic to the degree completion of their associated bigraded Lie algebras
$$
\g \cong \prod_{\substack{n \le 0\cr m\in \Z}} \g_{m,n} \text{ and }
\u \cong \prod_{\substack{n < 0\cr m\in \Z}} \u_{m,n}.
$$
Since $\gl(H) \cong \Gr^W_0 \g$ and $\u_{m,n}=\g_{m,n}$ when $n<0$, we have the
decomposition
$$
\g \cong \gl(H) \ltimes
\prod_{\substack{n < 0\cr m\in \Z}}\u_{m,n} \cong \gl(H)\ltimes \u.
$$
This decomposition corresponds to a Levi decomposition
$$
\pi_1(\MEM_\ast,\w) \cong \GL(H) \ltimes \U^\MEM_\ast.
$$
Since a prounipotent group is determined by its Lie algebra, to give a
presentation of $\pi_1(\MEM_\ast,\w)$, it suffices to give a presentation of
$\u=\u^\MEM_\ast$ in the category of $\GL(H)$-modules. To do this, it suffices
to give a presentation of the bigraded Lie algebra
$$
\Gr\u := \bigoplus_{m,n}\u_{m,n}
$$
as a bigraded $\sl(H)$-module, where $\sl(H)$ is the bigraded Lie algebra
$$
\sl(H) = \s_{-2,0} \oplus \s_{0,0} \oplus \s_{2,0}
\cong \Q(1) \oplus \Q(0) \oplus \Q(-1).
$$
The elements
\begin{equation}
\label{eqn:e_0}
\ee_0^\DR = -A\frac{\partial}{\partial T} \text{ and }
\ee_0^B = 2\pi i\,\ee_0^\DR = \aa\frac{\partial}{\partial \bb} 
\end{equation}
span $\s_{-2,0}^\DR$ and $\s_{-2,0}^B$, respectively. The subalgebra $\s_{0,0}$
is a Cartan subalgebra and is diagonal with respect to the basis $A$, $T$ of
$H^\DR$. It is natural to assign the $\sl(H)$ weights $1$ to $T$ and $-1$ to
$A$. With this convention, $A\partial/\partial T$ has $\sl(H)$ weight $-2$.

Each weight graded quotient $\Gr^W_n V$ of a $\cG$-module $V$ is an
$\SL(H)$-module. The subspace $V_{m,n}$ of $\Gr^W_n V$ is the set of vectors of
$\sl(H)$ weight $m-n$. In other words, the three notions of weight are related
by the formula
\begin{equation}
\label{eqn:weights}
M\text{-weight} = \sl(H)\text{-weight} + W\text{-weight}
\end{equation}

\begin{convention}
Most formulas will hold for both the Betti and de~Rham realizations. For this
reason, we will often omit the $B$ or the $\DR$ and write, for example, $\ee_0$,
which can be interpreted as $\ee_0^B$ in the Betti realization and $\ee_0^\DR$
in the de~Rham realization.
\end{convention}

\section{Generators of $\u_\ast^\MEM$}
\label{sec:gens}

In this and subsequent sections, $\Gr V$, where $V$ is in $\MEM_\ast$, will
denote $\Gr^M_\dot\Gr^W_\dot V$. Since there is a natural isomorphism
$\Gr\u^\MEM_\ast \cong \u^\MEM_\ast$, to find a presentation of $\u^\MEM_\ast$,
it suffices to find a presentation of its associated bigraded $\Gr\u^\MEM_\ast$.

The abelianization of $\Gr \u^\MEM_\ast$ is easily computed by plugging the
computations of Theorem~\ref{thm:exts} into isomorphism (\ref{eqn:homology}) and
Proposition~\ref{prop:gens+relns}.

\begin{proposition}
For $\ast \in \{1,\uu,2\}$, we have
\begin{align*}
H_1(\Gr\u^\MEM_\ast)
&\cong
\bigoplus_{\substack{m\ge 0\cr r\in \Z}}
\Ext^1_{\MEM_\ast}(\Q,S^m\H(r))^\vee\otimes S^mH(r)
\cr
&\cong
\begin{cases}
\bigoplus_{m>0} \Q(2m+1) \oplus \bigoplus_{n>0} S^{2n}H(2n+1) & \ast = 1, \cr
\bigoplus_{m>0} \Q(2m+1) \oplus \bigoplus_{n\ge 0} S^{2n}H(2n+1) &
\ast = \uu, \cr
\bigoplus_{m>0} \Q(2m+1) \oplus H(1) \oplus \bigoplus_{n>0} S^{2n}H(2n+1) &
\ast = 2.
\end{cases}
\end{align*}
\end{proposition}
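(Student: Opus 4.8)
The plan is to read $H_1(\Gr\u^\MEM_\ast)$ off the general homology formula (\ref{eqn:homology}) of Section~\ref{sec:presentations}, computing the multiplicity spaces via Theorem~\ref{thm:exts}. First I would set up the framework: fix $\w\in\{\w^B,\w^\DR\}$ and put $\cG=\pi_1(\MEM_\ast,\w)$. By Proposition~\ref{prop:W-splitting} and the corollary following Theorem~\ref{thm:density}, $\cG$ is a split extension of the connected reductive group $\GL(H)\cong\GL_2$ by its prounipotent radical $\U^\MEM_\ast$, and the canonical bigrading of this part (cf.\ Section~\ref{sec:bases}) exhibits $\g^\MEM_\ast$ as the completion of the bigraded Lie algebra $\gl(H)\ltimes\Gr\u^\MEM_\ast$. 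Thus $\cG$ is exactly of the form treated in Section~\ref{sec:presentations}, with reductive quotient $S=\GL(H)$ and graded pronilpotent part $\u_\dot=\Gr\u^\MEM_\ast$ (the grading being by $W_\dot$, with $\Gr^W_0=\gl(H)$). The finiteness hypothesis there is harmless here: each irreducible $\GL(H)$-module $S^mH(r)$ is finite dimensional, which is all that the derivation of (\ref{eqn:homology}) actually uses, and in any case every minimal generator of $\u^\MEM_\ast$ has $W$-weight $\le -4$ with only finitely many of each weight, so each $W$-graded piece of $\Gr\u^\MEM_\ast$ is finite dimensional.

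Next I would apply (\ref{eqn:homology}). The irreducible $\GL(H)$-modules are, up to isomorphism, exactly the $S^mH(r)$ with $m\ge 0$ and $r\in\Z$ (recalled in the discussion of $\MEM_\ast^\ss$ in Section~\ref{sec:tannaka}), and each is absolutely irreducible since $\GL_2$ is split over $\Q$. Hence (\ref{eqn:homology}) specializes to
$$
H_1(\Gr\u^\MEM_\ast)\cong\bigoplus_{\substack{m\ge 0\cr r\in\Z}}H^1(\cG,S^mH(r))^\ast\otimes S^mH(r),
$$
so that the multiplicity of $S^mH(r)$ in the abelianization of $\Gr\u^\MEM_\ast$ is $\dim_\Q H^1(\cG,S^mH(r))$. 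Then I would identify $H^1(\cG,S^mH(r))=\Ext^1_{\Rep(\cG)}(\Q,S^mH(r))$, and use the equivalence $\Rep(\cG)\simeq\MEM_\ast$ (valid for $\w\in\{\w^B,\w^\DR\}$), under which $S^mH(r)$ is the $\w$-realization of $S^m\H(r)$, to rewrite this as $\Ext^1_{\MEM_\ast}(\Q,S^m\H(r))$. This gives the first displayed isomorphism of the Proposition.

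Finally I would substitute the values of $\Ext^1_{\MEM_\ast}(\Q,S^m\H(r))$ from Theorem~\ref{thm:exts} and re-index. All the nonzero extension groups are one dimensional, and the surviving summands of $H_1(\Gr\u^\MEM_\ast)$ are: the terms with $m=0$ and $r$ odd in the $\ast$-dependent range, each equal to $S^0H(r)=\Q(r)$; for $\ast=2$ only, the term $m=1$, $r=1$, equal to $S^1H(1)=H(1)$; and the terms with $m=2n>0$, $r=2n+1$, equal to $S^{2n}H(2n+1)$. Collecting these gives the three cases of the statement; for $\ast=\uu$ one notes that the extra term $m=0$, $r=1$ is $\Q(1)=S^0H(1)$, so it merges with the symmetric-power family into $\bigoplus_{n\ge 0}S^{2n}H(2n+1)$, whereas for $\ast=2$ the term $H(1)=S^1H(1)$ remains a separate summand. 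There is no genuine obstacle: the only points requiring care are the identification of the abstract $H^1(\cG,-)$ occurring in (\ref{eqn:homology}) with the Tannakian $\Ext^1$ in $\MEM_\ast$ (immediate from the equivalence of categories) and precisely this re-indexing bookkeeping — in particular the asymmetry that for $\ast=\uu$ the extra extension contributes $\Q(1)=S^0H(1)$ (absorbed into the $S^{2n}H(2n+1)$ family) while for $\ast=2$ it contributes $H(1)=S^1H(1)$ (which stays separate).
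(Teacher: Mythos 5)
Your proposal follows exactly the route the paper takes: the text immediately before the proposition says to plug Theorem~\ref{thm:exts} into isomorphism (\ref{eqn:homology}) via Proposition~\ref{prop:gens+relns}, which is precisely your argument, and your re-indexing bookkeeping (including the absorption of $\Q(1)=S^0H(1)$ into the symmetric-power family for $\ast=\uu$ versus the separate $H(1)=S^1H(1)$ summand for $\ast=2$) is correct.
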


Since the surjection $\pi_1(\MEM_\ast,\w) \to \pi_1(\MTM,\w)$ is split, it
follows that the sequence
$$
0 \to H_1(\Gr\u^\geom_\ast) \to H_1\big(\Gr\u^\MEM_\ast\big)
\to H_1(\Gr\k) \to 0
$$
is exact. Since
$$
H_1(\Gr\k) = \bigoplus_{m>0} \Q(2m+1),
$$
is a trivial $\SL(H)$-module and since $H_1(\u^\geom_\ast)^{\SL(H)}=0$, we
obtain the following result.

\begin{corollary}
There is a natural $\GL(H)$-module isomorphism
$$
H_1(\Gr\u^\MEM_\ast) \cong H_1(\Gr \k) \oplus H_1(\Gr\u_\ast^\geom).
$$
\end{corollary}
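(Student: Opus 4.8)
The plan is to obtain the splitting purely formally from the exact sequence of bigraded $\GL(H)$-modules
\[
0 \to H_1(\Gr\u^\geom_\ast) \to H_1(\Gr\u^\MEM_\ast) \to H_1(\Gr\k) \to 0
\]
recorded above, using only two facts that have already been established: that $H_1(\Gr\k) = \bigoplus_{m>0}\Q(2m+1)$ is a trivial $\SL(H)$-module, and that $H_1(\Gr\u^\geom_\ast)^{\SL(H)} = 0$. The splitting will be produced by decomposing all three terms into $\SL(H)$-isotypic pieces, which makes the resulting isomorphism canonical.

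First I would note that, since $\GL(H)$ is linearly reductive in characteristic zero and each term of the sequence is a direct sum (over the bigrading) of finite-dimensional $\GL(H)$-submodules, each term decomposes canonically as a direct sum of its $\SL(H)$-isotypic components; since all maps in the sequence are $\GL(H)$-equivariant, the sequence is the direct sum of the corresponding exact sequences of isotypic components. Next, I would split the isotypic components into two classes. On the $\SL(H)$-trivial part — the sum of the $\det$-twists of the trivial $\SL(H)$-representation — the left-hand term vanishes because $H_1(\Gr\u^\geom_\ast)$ has no $\SL(H)$-invariants, so the projection restricts to a $\GL(H)$-isomorphism of the $\SL(H)$-trivial part of $H_1(\Gr\u^\MEM_\ast)$ onto $H_1(\Gr\k)$. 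On the part supported at non-trivial $\SL(H)$-types, the right-hand term vanishes because $H_1(\Gr\k)$ is $\SL(H)$-trivial, so the inclusion identifies the non-trivial part of $H_1(\Gr\u^\geom_\ast)$ with the non-trivial part of $H_1(\Gr\u^\MEM_\ast)$; and the former is all of $H_1(\Gr\u^\geom_\ast)$, again because $H_1(\Gr\u^\geom_\ast)^{\SL(H)}=0$. Reassembling the two classes yields the $\GL(H)$-module decomposition $H_1(\Gr\u^\MEM_\ast) \cong H_1(\Gr\k) \oplus H_1(\Gr\u^\geom_\ast)$.

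I do not expect a genuine obstacle here, since every input — exactness of the displayed sequence, the computation of $H_1(\Gr\k)$, and the vanishing $H_1(\Gr\u^\geom_\ast)^{\SL(H)}=0$ — is already in hand. The only point deserving a word of care is the word ``natural'': rather than invoking reductivity of $\GL(H)$ to split an abstract extension, the argument via the $\SL(H)$-isotypic decomposition produces a canonical splitting, and its compatibility with the decomposition $\u^\MEM_\ast \cong \k \ltimes \u^\geom_\ast$ induced by $\vv_o$ is immediate, since the splitting $\Gr\k \hookrightarrow \Gr\u^\MEM_\ast$ coming from $\vv_o$ is $\GL(H)$-equivariant and, being $\SL(H)$-equivariant with $\SL(H)$ acting trivially on $\Gr\k$, has image in the $\SL(H)$-trivial part.
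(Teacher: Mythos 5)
Your proposal is correct and matches the paper's own argument: the paper deduces the corollary from exactly the same exact sequence plus the two facts that $H_1(\Gr\k)$ is $\SL(H)$-trivial and $H_1(\Gr\u^\geom_\ast)^{\SL(H)}=0$, so that the two terms occupy disjoint $\SL(H)$-isotypic parts. You have simply spelled out the isotypic decomposition that the paper leaves implicit.
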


Following the procedure in Section~\ref{sec:presentations}, we choose a graded
$\GL(H)$-invariant (and therefore bigraded) section of $\Gr \u^\MEM_\ast \to
H_1(\Gr \u^\MEM_\ast)$. This induces a surjection
$$
\L\big(H_1(\Gr\u^\MEM_\ast)\big) \to \Gr\u^\MEM_\ast.
$$
We may assume that these maps are compatible with the projections
$\u^\MEM_\uu\to \u^\MEM_2 \to \u^\MEM_1$. We now fix a basis of
$H_1(\Gr\u^\MEM_\ast)$.\footnote{We shall see shortly that there is a natural choice.} Its image under the section will be a generating set of
$\Gr\u^\MEM_\ast$. These generators will not be unique as they will depend on
the choice of the section. More will be said about this in
Remark~\ref{rem:gens}.

For the rest of this section $\ast \in \{1,\uu\}$. Recall that $\Q(n)$ is the
1-dimensional $\Gm$-module $\Q$ on which $\Gm$ acts by the $n$th power of the
standard character. Let \label{def:zz}
$$
\ssigma_{2m+1}\in
\Ext^1_\MTM(\Q,\Q(2m+1))^\vee\otimes \Q(2m+1) \subseteq H_1(\Gr \k)
$$
be the basis vector on which the motivic zeta value $\zeta^\m(2m+1)$ defined in
\cite{brown:mtm} takes the value 1. Let
$$
\zz_{2m+1}\in
\Ext^1_{\MEM_\ast}(\Q,S^0 H(2m+1))^\vee\otimes \Q(2m+1)
\subseteq H_1(\Gr \u^\MEM_\ast)
$$
be the corresponding element of $H_1(\Gr \u^\MEM_\ast)$. The projection
$\u^\MEM_\ast \to \k$ takes $\zz_{2m+1}$ to $\ssigma_{2m+1}$. Let
$$
\ee_{2n} \in
\Ext^1_{\MEM_\ast}(\Q,S^{2n-2}\H(2n-1))^\vee \otimes S^{2n-2}H(2n-1)
$$
\label{def:e_2n} be the element whose value on the class $\psi_{2n}$ of the
normalized Eisenstein series is the highest weight vector $2\pi i \bb^{2n-2} \in
\Q(1)$ of $S^{2n-2}H(2n-1)$.

\begin{remark}
\label{rem:gens}
Just as in the case of the generators $\ssigma_{2m+1}$ of $\Gr\k$, it is
important to note that, even though the $\zz_{2m+1}$'s are canonical in
$H_1(\Gr\u^\MEM_\ast)$, they are not canonical in $\Gr\u^\MEM_\ast$ as:
\begin{enumerate}

\item The lift of $\zz_{2m+1}$ from $H_1(\u^\MEM_\ast)$ to $\Gr\u^\MEM_\ast$ is
not unique. For example, $\zz_{11}$ can be replaced by any element of $\zz_{11}
+ \Q[\zz_3,[\zz_3,\zz_5]]$.

\item Even if we fix the lift of $\zz_{2m+1}$ to $\Gr\u^\MEM_\ast$, we can
adjust it by any $\SL(H)$ invariant element of
$\Gr^M_{-4m-2}\Gr^W_{-4m-2}\u^\geom_\ast$.

\end{enumerate}
Note, however, that the geometric generators $\ee_{2n}$ are unique as
$\Gr^M_{-2}\Gr^W_{-2n}\u^\MEM_\ast$ is one dimensional for each $n\ge 1$.
\end{remark}

\begin{remark}
The results of Section~\ref{sec:ihara-takao} imply that there is a canonical
$\Q$-de~Rham choice of the homomorphism
$$
\bigoplus_{m\ge 1} \Gr^M_{-4m-2}\k \to
\bigoplus_{m\ge 1} \Gr^M_{-4m-2}\Gr^W_{-4m-2}\u^\MEM_\uu.
$$
Namely, it is the homomorphism that takes $\zz \in \Gr^M_{-4m-2} \k$ to its
component (denoted by $\zz^\round{4m+2}$ in Section~\ref{sec:ihara-takao}) of
the image of $\zz$ in the $\sl(H)$ invariants of $\Gr^W_\dot\u^\MEM_\uu$, where
$\u_\uu^\MEM$ is identified with its $W$-graded quotient using the canonical
$\Q$-DR splitting of $W$ constructed in Section~\ref{sec:splitting_DR}.
\end{remark}

\begin{notation}
Denote the adjoint action of the enveloping algebra of $\g^\MEM_\ast$ on
$\u^\MEM_\ast$ by $f\cdot u$. In particular, if $x\in \g^\MEM_\ast$ and
$u\in \u^\MEM_\ast$, then $x^j\cdot u = \ad_x^j(u)$.
\end{notation}

With this notation,
$$
S^{2n-2}H(2n-1) = \Span\{\ee_0^j\cdot\ee_{2n}: 0\le j \le 2n-2\}.
$$
One also has the relation $\ee_0^{2n-1}\cdot \ee_{2n}=0$. Note that
$$
\zz_{2m+1} \in \Gr^M_{-4m-2}\Gr^W_{-4m-2}\u^\MEM_\ast \text{ and }
\ee_0^j\cdot\ee_{2n} \in \Gr^M_{-2-2j}\Gr^W_{-2n} \u^\MEM_\ast
$$
Their images under the chosen section of $\L(H_1(\Gr\u^\MEM_\ast)) \to
\Gr\u^\MEM_\ast$ will be a generating set, so that, for example,
$$
\Gr\u^\MEM_\uu = \f_\ast/\text{(relations)},
$$
where
$$
\label{def:f}
\f_\uu := \L(\ee_0^j\cdot\ee_{2n+2},\zz_{2m+1} :
n\ge0,\ m\ge 1,\ 0\le j\le 2n)
$$
and $\f_1 := \f_\uu/(\ee_2)$. This is free. Denote the Lie subalgebra of
$\f_\ast$ generated by the $\ee_0^j\cdot\ee_{2n+2}$, $j,n\ge 0$, by
$\f_\ast^\geom$. The Lie algebra $\u_\ast^\geom$ is a quotient of
$\f_\ast^\geom$.

We will refer to the $\ee_{2n}$ with $n\ge 0$ as {\em geometric generators} as
they generate $\Gr\u^\geom_\ast$. The $\zz_{2m+1}$'s will be called {\em
arithmetic generators} as they come from lifts of elements of $\k$.

\begin{remark}
\label{rem:degree}
Each element of $\Gr\u^\geom_\ast$ can be expressed as a Lie word in the
$\ee_{2n}$'s, where $n\ge 0$. Since each $\ee_{2n}$ is in
$\Gr^M_{-2}\u^\MEM_\ast$, each element of $\Gr^M_{-2d}$ can be expressed as a
Lie word of degree $d$ in the generators $\ee_{2n}$. We will therefore refer to
$\Gr^M_{-2d}\u^\MEM_\ast$ as the {\em degree $d$ elements of $\Gr\u^\MEM_\ast$}.
\end{remark}

\section{General Comments about the Relations in $\u^\MEM_\ast$}
\label{sec:relns}

Before embarking on the problem of determining the relations between the
generators of $\u^\MEM_\ast$, it is useful to ponder their shape and structure.
As above, $\ast \in \{1,\uu\}$. In this and subsequent sections, we denote
$\pi_1(\MEM_\ast,\w^B)$ by $\cG^\MEM_\ast$ and $\pi_1(\MEM_\ast,\w^B)^\geom$
by $\cG^\geom_\ast$.\label{def:GMEM}

Relations between the $\ee_{2n}$'s will be called {\em geometric relations}.
Since $\k$ is free, there are no relations between the $\ssigma_{2m+1}$'s and
hence no relations between the $\zz_{2m+1}$'s modulo the $\ee_{2n}$'s. Since
$\u^\geom_\ast$ is an ideal in $\u_\ast^\MEM$, there are relations of the form
\begin{equation}
\label{eqn:arith_reln}
[\zz_{2m+1},\ee_{2n+2}] =
\text{Lie word in the geometric generators } \ee_{2j},\ j\ge 0.
\end{equation}
These will be called {\em arithmetic relations}. They describe the action of
$\k$ on $\u^\geom_\ast$ and will be discussed in greater detail in
joint work of the first author with Francis Brown.


The bigraded ideals of relations are
$$
\r_\ast = \ker\{\f_\ast \to \Gr\u^\MEM_\ast\}
\text{ and }
\r^\geom_\ast := \ker\{\f_\ast^\geom \to \Gr\u^\geom_\ast\}
= \r_\ast \cap \f_\ast^\geom.
$$
These are $\gl(H)$-modules.

A minimal space of relations of $\u_\ast^\MEM$ is a minimal bigraded subspace of
the space of $\sl(H)$ highest weight vectors of $\r_\ast$ that generate it as an
ideal. A minimal set of relations of $\u^\MEM_\ast$ is a bigraded basis of a
minimal subspace of generators of $\r_\ast$.

Proposition~\ref{prop:gens+relns} implies that each minimal subspace of
relations in $\u^\MEM_\ast$ is isomorphic to
$$
H_0\big(\n_-,\r_\ast/[\r_\ast,\f_\ast]\big),
$$
where $\n_-$ is the Lie subalgebra $\Q\ee_0$ of $\sl(H)$. It also implies that
the space of minimal relations of $\u^\MEM_\ast$ of $\sl(H)$-weight $2n$ and
$M$-weight $-2d$, and thus $W$-weight $-2n-d$, is dual to
$$
H^2\big(\cG^\MEM_\ast,S^{2n}H(2n+d)\big).
$$
Remark~\ref{rem:degree} implies that the geometric relations of $M$-weight $-2d$
have degree $d$ in the $\{\ee_{2j}:j\ge 0\}$. It also implies that the right
hand side of the arithmetic relation (\ref{eqn:arith_reln}) has degree $2m+2$ in
the $\{\ee_{2j}:j\ge 0\}$ as it has $M$-weight $-4m-4$.

A minimal space of geometric relations is a subspace of the $\sl(H)$ highest
weight vectors in $\r_\ast^\geom$ that injects into
$H_0\big(\n_-,\r_\ast/[\r_\ast,\f_\ast]\big)$ under the canonical projection
$$
\r_\ast^\geom \to \r_\ast/[\r_\ast,\f_\ast]
\to H_0\big(\n_-,\r_\ast/[\r_\ast,\f_\ast]\big).
$$
Since $\k$ is free, $[\r_\ast,\f_\ast]\subseteq \r_\ast^\geom$, which implies
that any minimal space of geometric relations is isomorphic to
$$
H_0(\n_-,\r_\ast^\geom/[\r_\ast,\f_\ast]).
$$
A minimal set of geometric relations is a basis of a minimal space of geometric
relations. Those of $\sl(H)$-weight $2n$ and $M$-weight $-2d$ are dual to the
image of the restriction mapping
$$
H^2\big(\cG^\MEM_\ast,S^{2n}H(2n+d)\big) \to
H^2\big(\cG_\ast^\geom,S^{2n}H(2n+d)\big).
$$
As remarked above, these have degree $d$ in the $\ee_{2m}$'s and $W$-weight
$-2n-2d$. They occur as highest weight vectors of copies of $S^{2n}H(d)$ in the
relations.


\subsection{Conjectural size and shape of the relations}

In this section we explain how standard conjectures in number theory, if true,
would constrain the size and form of a minimal set of relations of $\Gr
\u^\MEM_\ast$. In Section~\ref{sec:pollack_motivic} we will give an {\em
unconditional} proof that there is a set of minimal relations is of this form and give a partial computation of it.

Corollary~\ref{cor:quad_relns} implies that the highest weight vectors in the
space of leading quadratic terms
$$
\r_\ast/\big([\r_\ast,\f_\ast]+\r_\ast\cap L^3\f_\ast\big)
= \r_\ast/(\r_\ast\cap L^3\f_\ast)
$$
of a set of minimal relations of $\u_\ast^\MEM$ of $M$-weight $d$ and
$\sl(H)$-weight $2n$ is dual to the image of the map
\begin{multline}
\label{eqn:cup_prod}
\bigoplus_{\substack{j+k=n+d-2\cr j,k\ge 0}}
H^1\big(\cG_\ast^\MEM,S^{2j}H(2j+1)\big) \otimes
H^1\big(\cG_\ast^\MEM,S^{2k}H(2k+1)\big)
\cr
\to H^2\big(\cG_\ast^\MEM,S^{2n}H(2n+d)\big)
\end{multline}
obtained by composing the cup product with a $\GL(H)$-invariant projection
\begin{equation}
\label{eqn:projn}
S^{2j}H(2j+1)\otimes S^{2k}H(2k+1) \to S^{2n}H(2n+d).
\end{equation}

Since $\k$ is free and acts trivially on $H_1(\u^\geom)$, we have
$[\k,\r^\geom_\ast] \subseteq L^3\f^\geom_\ast$. It follows that
$$
\r_\ast^\geom/(\r_\ast^\geom\cap L^3\f_\ast^\geom)
\to \r_\ast/(\r_\ast\cap L^3\f_\ast)
$$
is injective. The image is the set of quadratic leading terms (Def.~\ref{def:quad_part}) of the geometric
relations. The quadratic parts of geometric relations of $M$-weight $-2d$ have
degree $d$ as expressions in the $\{\ee_{2m}:m\ge 0\}$, but are quadratic in
$\f_\ast$.\footnote{For example, $[\ee_0^j\cdot \ee_{2a},\ee_0^k\cdot
\ee_{2b}]$, where $a,b>1$ and $j,k\ge 0$, is a quadratic element of $\f_1$, but
has degree $d=j+k+2$ in the $\ee_m$'s.}

To better
understand the relations we restrict to the case $\ast = 1$. Since $\u^\MEM_\uu
= \u^\MEM_1 \oplus \Q\ee_2$, where $\ee_2$ is central, there is no loss of
generality.\footnote{To understand the case $\ast =2$, by Corollary~\ref{cor:structure}, it suffices to understand the action of $\u_\uu^\MEM$ on $\p$. The action of the geometric generators is described in Section~\ref{sec:monod}.} The generator $\ee_{2m+2}$ of $\f_1$ (Betti or de~Rham, according to
taste) is dual to a generator of $H^1(\cG^\MEM_\ast,S^{2m}\H(2m+1))$ and is a
highest weight vector of the unique copy of $S^{2m}H$ in $H_1(\f_1)$. When
$m=0$, the generator $\zz_{2m+1}$ of $\f_1$ is dual to a generator of
$H^1(\cG^\MEM_1,\Q(2m+1))$. The projection (\ref{eqn:projn}) is dual to an
inclusion
$$
S^{2n}H(2n+d) \hookrightarrow H_1(\f_1)\otimes H_1(\f_1)
$$
and thus corresponds to a line of highest weight vectors in $H_1(\f_1)^{\otimes
2}$ of $\sl(H)$-weight $2n$. The line lies in $\Gr^M_{-2d}$ as the highest
weight vector of $S^{2n}H(2n+d)$ has $M$-weight $-2d$.  Since the cup product is
anti-commutative, the image of $S^{2n}H(2n+d)$ lies in $\Lambda^2 H_1(\f_1)
\cong L^2\f_1/L^3$.

To get an idea of the shape of the relations and to see how they might be
computed, we assume Conjecture~\ref{conj:h2}(\ref{item:hodge}) for the rest of
this section. This means that we identify
$H^\dot(\cG^\MEM_\ast,S^{2m}H(r))\otimes\R$ with
$H^\dot_\cD(\M_{1,\ast}^\an,S^{2m}\H_\R(r))^{\Frbar_\infty}$. 

Recall that $V_f$ denotes the two-dimensional real Hodge structure associated to
a Hecke eigen cusp form $f$ and that $\B_{2n+2}$ denotes the set of normalized
Hecke eigen cusp forms of weight $2n+2$. Each $V_f$ decomposes as the sum
of two 1-dimensional eigenspaces $V_f^\pm$ of the de~Rham Frobenius
$\Frbar_\infty$. The cup product
$$
H^1_\cusp(\SL_2(\Z),S^{2n}H) \otimes H^1_\cusp(\SL_2(\Z),S^{2n}H) \to H^2_\cusp(\SL_2(\Z),\Q)
$$
induces an isomorphism
$\Hom(V_f^\pm,\R) \cong V_f^\mp$. Conjecture~\ref{conj:h2}(\ref{item:hodge}) and Proposition~\ref{prop:deligne_coho} imply that there is an isomorphism
$$
H^2\big(\cG^\MEM_1,S^{2n}H(2n+d)\big)^\vee\otimes\R \cong
\begin{cases}
\R \oplus \bigoplus_{f\in \B_{2n+2}} V_f^+ & n>0,\ d \ge 2 \text{ even},\cr
 \bigoplus_{f\in \B_{2n+2}} V_f^- & n>0,\ d > 2 \text{ odd},\cr
0 & \text{ otherwise}.
\end{cases}
$$
The copy of $\R$ when $d$ is even corresponds to the Eisenstein series
$G_{2n+2}$.

\begin{proposition}
\label{prop:injectivity}
When $\ast = 1$, Conjecture~\ref{conj:h2}(\ref{item:hodge}) implies that the cup
product (\ref{eqn:cup_prod}) is surjective for all $n$ and $d$.
\end{proposition}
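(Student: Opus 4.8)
The plan is to transport the statement, via Conjecture~\ref{conj:h2}(\ref{item:hodge}), into real Deligne--Beilinson cohomology, where the required surjectivity becomes an \emph{unconditional} assertion that can be read off from the period computations of Brown~\cite{brown:mmv}. Since (\ref{eqn:cup_prod}) is $\Q$-linear, it is surjective if and only if it is so after tensoring with $\R$. The degree~$1$ regulator is already an isomorphism after $\otimes\R$ (unconditionally), and Conjecture~\ref{conj:h2}(\ref{item:hodge}) supplies the analogous isomorphism in degree~$2$. Because $\reg_\R$ is a homomorphism of graded-commutative rings and is compatible with the $\GL(H)$-equivariant projection (\ref{eqn:projn}), these identifications carry the map (\ref{eqn:cup_prod}), tensored with $\R$, to the composite of the Deligne cohomology cup product
\begin{multline*}
H^1_\cD(\M_{1,1}^\an,S^{2j}\H_\R(2j+1))^{\Frbar_\infty}\otimes
H^1_\cD(\M_{1,1}^\an,S^{2k}\H_\R(2k+1))^{\Frbar_\infty}
\\
\longrightarrow
H^2_\cD\big(\M_{1,1}^\an,\,S^{2j}\H_\R(2j+1)\otimes S^{2k}\H_\R(2k+1)\big)^{\Frbar_\infty}
\end{multline*}
with the projection of coefficients onto $S^{2n}\H_\R(2n+d)$, summed over $j+k=n+d-2$. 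The problem is thereby reduced to a statement purely about cup products in the Deligne cohomology of $\M_{1,1}$.

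I would then make both sides explicit using Section~\ref{sec:coho} and Proposition~\ref{prop:deligne_coho}. For $j\ge 1$ the group $H^1_\cD(\M_{1,1}^\an,S^{2j}\H_\R(2j+1))^{\Frbar_\infty}$ is one-dimensional, spanned by the real regulator of the Eisenstein class $\psi_{2j+2}$ of $G_{2j+2}$, so the surviving contributions to (\ref{eqn:cup_prod}) are cup products of Eisenstein classes; and by Proposition~\ref{prop:deligne_coho} the target is, in the non-trivial range of $(n,d)$, a sum of an Eisenstein line $\R$ (present precisely when $d$ is even) and one-dimensional cuspidal summands $V_{f,\R}^{\pm}$, one for each normalized Hecke eigen cusp form $f$ of weight $2n+2$. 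Since $\Frbar_\infty$ is compatible with cup products and the degree~$1$ classes being cupped are $\Frbar_\infty$-invariant, the products automatically lie in the $\Frbar_\infty$-invariant summand of Proposition~\ref{prop:deligne_coho}, so nothing is forced to vanish on parity grounds. It then remains to show that the projections of the cup products $\psi_{2j+2}\cup\psi_{2k+2}$ span (i)~the Eisenstein line and (ii)~each cuspidal summand $V_{f,\R}^{\pm}$.

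Statement (i) is the soft part of Brown's computation: for a suitable pair $(j,k)$ the Eisenstein component of $\psi_{2j+2}\cup\psi_{2k+2}$ is non-zero, which one extracts from the constant terms of the corresponding twice-iterated integral of Eisenstein series. Statement (ii) is the substance of \cite{brown:mmv} (and is closely related to the computations of Terasoma~\cite{terasoma}): evaluating the period of the twice-iterated integral of $G_{2j+2}$ and $G_{2k+2}$ shows that its $V_f$-component is a non-zero multiple of a period of $f$, and the non-vanishing of that period is exactly the non-degeneracy of the Haberland--Petersson pairing on period polynomials of weight $2n+2$ (equivalently, of a Rankin--Selberg-type integral attached to $f$). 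Dually, this is Pollack's result~\cite{pollack} that the quadratic relations among the derivations $\e_{2n}$ of $\L(A,T)$ are in bijection with the cuspidal cocycles of $\SL_2(\Z)$ that are invariant under the real Frobenius. Assembling (i), (ii) and the automatic parity matching gives surjectivity of the Deligne cohomology map, and hence of (\ref{eqn:cup_prod}).

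The hard step is (ii): everything else is formal once the conjecture is granted, but the non-vanishing of the cuspidal component of the cup product of two Eisenstein classes is a genuinely arithmetic input --- it is the statement that had to await the period computations of~\cite{brown:mmv}, and it is exactly what makes the lower bound on the quadratic heads of the minimal relations in $\u^\MEM_1$ coincide with the upper bound furnished by Pollack's relations.
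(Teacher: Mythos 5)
Your argument is correct and takes the same route as the paper: transport the question via $\reg_\R$ to real Deligne--Beilinson cohomology (using that the degree~$1$ regulators are unconditionally isomorphisms after $\otimes\R$ and that Conjecture~\ref{conj:h2}(\ref{item:hodge}) supplies the degree~$2$ isomorphism, together with compatibility of $\reg_\R$ with cup products), and then invoke Brown's period computations as interpreted in \cite{hain:db_coho} to conclude that the Deligne cohomology cup product surjects. The paper's proof is a one-sentence citation of precisely this mechanism; your proposal is a careful unwinding of it, including the decomposition of the target into Eisenstein and cuspidal pieces and the role of the Haberland--Petersson pairing, all of which matches the more detailed discussion that the paper defers to the proof of Theorem~\ref{thm:motivic}.
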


\begin{proof}
The interpretation of Brown's period computations \cite{brown:mmv} in terms of
Deligne--Beilinson cohomology \cite[\S7]{hain:db_coho} implies that the image
of the cup product (\ref{eqn:cup_prod}) under the regulator $\reg_\R$, after
tensoring with $\R$, is isomorphic to $H^2_\cD(\M_{1,1}^\an,S^{2n}\H(2n+d))$.
\end{proof}

\begin{corollary}
If we assume the Conjecture~\ref{conj:h2}(\ref{item:hodge}), then each minimal
relation is determined by its leading quadratic term. That is, the projections
$\r_1/[\r_1,\f_1] \to \r_1/(\r_1\cap L^3\f_1)$ and
$$
H_0\big(\n_-,\r_1/[\r_1,\f_1]\big) \to H_0\big(\n_-,\r_1/(\r_1\cap L^3\f_1)\big)
$$
are isomorphisms.
\end{corollary}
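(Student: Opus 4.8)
The plan is to reduce the assertion to the surjectivity of a cup product, which is then supplied by Proposition~\ref{prop:injectivity}. Write $\f=\f_1$, $\r=\r_1$ and $\Gr\u=\Gr\u^\MEM_1$. Since $\r\subseteq[\f,\f]=L^2\f$ and $\r$ is an ideal, one has $[\r,\f]\subseteq\r\cap L^3\f$, so the natural map
$$
\pi : \r/[\r,\f]\longrightarrow \r/(\r\cap L^3\f)
$$
is a well-defined surjection of bigraded $\gl(H)$-modules, and the second displayed isomorphism in the statement is obtained from the first by applying the exact functor $H_0(\n_-,-)$. Thus it suffices to prove that $\pi$ is injective. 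First I would identify $\pi$ with (a codomain-restriction of) the coproduct map of Lemma~\ref{lem:quad_relns}: under the Hopf isomorphism $\r/[\r,\f]\cong H_2(\Gr\u)$ and the inclusion $\r/(\r\cap L^3\f)\hookrightarrow L^2\f/L^3\f\cong\Lambda^2 H_1(\Gr\u)$ (which uses that $\f$ is free and that $\f\to\Gr\u$ is an isomorphism on $H_1$), the map $\pi$ becomes the map $\Delta$ of Lemma~\ref{lem:quad_relns} with its codomain cut down to its image. Since $\Delta$ is the graded dual of the cup product $\Lambda^2 H^1(\Gr\u)\to H^2(\Gr\u)$, and since every bidegree of $\f$ (hence of $\r$, $H_1(\Gr\u)$ and $H_2(\Gr\u)$) is finite dimensional — a bracket monomial of $M$-weight $-2d$ uses at most $d$ of the generators $\ee_{2n},\zz_{2m+1}$, which then bounds their indices — injectivity of $\pi$ in each bidegree is equivalent to surjectivity of this cup product there.

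Next I would prove that the cup product $\Lambda^2 H^1(\Gr\u)\to H^2(\Gr\u)$ is surjective. Via the isomorphisms $H^\dot(\cG^\MEM_1,V)\cong H^\dot(\Gr\u,V)^{\GL(H)}$ and the $\GL(H)$-isotypic decomposition~(\ref{eqn:homology}), this is equivalent to surjectivity, for each irreducible $\GL(H)$-module $V_\alpha$, of the cup-and-project map $\bigoplus_{\beta,\gamma} H^1(\cG^\MEM_1,V_\beta)\otimes H^1(\cG^\MEM_1,V_\gamma)\to H^2(\cG^\MEM_1,V_\alpha)$. By Theorem~\ref{thm:exts}, $H^1(\cG^\MEM_1,V_\beta)\neq 0$ only when $V_\beta=S^{2n}\H(2n+1)$ with $n\ge 1$ or $V_\beta=\Q(2m+1)$ with $m\ge 1$; so the only $V_\alpha$ occurring in a relevant $V_\beta\otimes V_\gamma$ are sums of modules $S^{2a}\H(b)$ and $\Q(b)$ — in particular, no odd symmetric power of $\H$ appears. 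On the other hand, assuming Conjecture~\ref{conj:h2}(\ref{item:hodge}), the regulator~(\ref{eqn:reg_R}) identifies $\Ext^2_{\MEM_1}(\Q,S^m\H(r))\otimes\R$ with $H^2_\cD(\M_{1,1}^\an,S^m\H_\R(r))^{\Frbar_\infty}$, which by Theorem~\ref{thm:ext2} (together with $H^1(\M_{1,1}^\an,\Q)=0$) vanishes when $m$ is odd, when $m=0$, and when $r<m+2$; hence $H^2(\cG^\MEM_1,V_\alpha)=0$ unless $V_\alpha=S^{2n}\H(2n+d)$ with $n\ge 1$ and $d\ge 2$. For those $V_\alpha$, the contribution of the geometric generators to the cup-and-project map above is exactly the map~(\ref{eqn:cup_prod}), which Proposition~\ref{prop:injectivity} asserts is surjective under Conjecture~\ref{conj:h2}(\ref{item:hodge}). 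Since these cases exhaust the possibilities, the cup product is surjective, so $\pi$ is injective, hence an isomorphism; applying $H_0(\n_-,-)$ then gives the coinvariant statement.

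The main obstacle I anticipate is the bookkeeping in the second paragraph: matching the abstract cup product $\Lambda^2 H^1(\Gr\u)\to H^2(\Gr\u)$, and its dual $\Delta$, against the concrete map~(\ref{eqn:cup_prod}) through the $\GL(H)$-isotypic decompositions, and making sure that no other isotypic component of $H^2(\Gr\u)$ has been overlooked. The latter point rests entirely on Conjecture~\ref{conj:h2}(\ref{item:hodge}), which is what forces $\Ext^2_{\MEM_1}(\Q,S^m\H(r))$ to vanish for odd $m$, for $m=0$, and for $r<m+2$; without it the image of the cup product could be a proper subspace of $H^2(\Gr\u)$ and $\pi$ would have a nonzero kernel. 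Granting the conjecture, the logical chain ``$\pi$ an isomorphism'' follows from ``$\Delta$ injective'', which follows from ``the cup product $\Lambda^2 H^1(\Gr\u)\to H^2(\Gr\u)$ is surjective'', which follows from Proposition~\ref{prop:injectivity}, and the argument is complete.
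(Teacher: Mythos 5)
Your proof is correct and fills in exactly the argument the paper leaves implicit: the map $\pi$ is, up to the Hopf isomorphism $\r_1/[\r_1,\f_1]\cong H_2(\Gr\u^\MEM_1)$, the map $\Delta$ of Lemma~\ref{lem:quad_relns} (the graded dual of the cup product) with codomain cut down to its image, so injectivity of $\pi$ is equivalent to surjectivity of the cup product in the relevant bidegree; the conjecture forces $H^2(\cG^\MEM_1,V_\alpha)$ to vanish except when $V_\alpha=S^{2n}H(2n+d)$ with $n\ge 1$, $d\ge 2$, in which case Proposition~\ref{prop:injectivity} gives the surjectivity of the sub-piece~(\ref{eqn:cup_prod}) and hence of the full cup product. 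This is the same route the paper intends, so there is no substantive difference of approach.

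Two small remarks. First, your justification of finite-dimensionality in each bidegree mentions only the $M$-weight: fixing $M$-weight $-2d$ bounds the \emph{number} of generators in a Lie word, but not the indices $n$ on the $\ee_0^j\cdot\ee_{2n}$ (those are bounded only once the $W$-weight is fixed as well, since each $\ee_{2n}$ has $W$-weight $-2n$). The fact you need is still true — it is the \emph{bidegree} that is finite-dimensional — but the stated justification is incomplete. Second, $H_0(\n_-,-)$ is indeed exact on $\sl(H)$-modules because $\sl_2$ is semisimple, so your phrasing is fine; but since you already have the first map as an isomorphism, any functor would preserve that, so the exactness is not really what is being used.
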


So, granted Conjecture~\ref{conj:h2}(\ref{item:hodge}), for each $f\in
\B_{2n+2}$ and each $d\ge 2$, there is a map
\begin{equation}
\label{eqn:cuspidal_relns}
V_{f}^\e \to \Gr^M_{-2d}\Gr^W_{-2n-2d}H_0(\n_-,\r_1/[\r_1,\f_1])
\end{equation}
where $\e$ is $+$ when $d$ is even and $-$ when $d$ is odd.\footnote{This map is
dual to the composite of the cup product with the projection onto $V_f^{-\e}$.}
The map
$$
V_f^\e \hookrightarrow \Gr^M_{-2d}H_0(\n_-,\r_1/(\r_1\cap L^3\f_1))
$$
obtained by composing (\ref{eqn:cuspidal_relns}) with the projection is dual to
the cup product. It is non-zero and therefore injective. The last statement of
Proposition~\ref{prop:injectivity} implies that the image of this
last map lies
in the geometric part
$$
\Gr^M_{-2d}H_0\big(\n_-,\r_1^\geom/(\r_1^\geom\cap L^3\f_1^\geom)\big).
$$
That is, the leading quadratic terms of each cuspidal relation are geometric.
More is true:

\begin{proposition}
Under the assumption that Conjecture~\ref{conj:h2}(\ref{item:hodge}) holds, the
image of (\ref{eqn:cuspidal_relns}) lies in the subspace
$\Gr^M_{-2d}\Gr^W_{-2n-2d}H_0(\n_-,\r_1^\geom/[\r_1,\f_1])$ of the space of
minimal relations.
\end{proposition}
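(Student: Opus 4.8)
The plan is to derive this as a short consequence of two facts already in hand, by upgrading the conclusion ``geometric modulo $L^3\f_1$'' to the conclusion ``geometric modulo $[\r_1,\f_1]$''; as usual we may restrict to $\ast=1$. The first input is the statement just proved, that the leading quadratic term of each cuspidal relation is geometric: the composite of $(\ref{eqn:cuspidal_relns})$ with the projection onto $\Gr^M_{-2d}H_0\big(\n_-,\r_1/(\r_1\cap L^3\f_1)\big)$ has image inside the image of the injection $\r_1^\geom/(\r_1^\geom\cap L^3\f_1^\geom)\hookrightarrow\r_1/(\r_1\cap L^3\f_1)$. The second input is the corollary following Proposition~\ref{prop:injectivity}: granted Conjecture~\ref{conj:h2}(\ref{item:hodge}), the projection $\r_1/[\r_1,\f_1]\to\r_1/(\r_1\cap L^3\f_1)$ is an isomorphism, equivalently $\r_1\cap L^3\f_1=[\r_1,\f_1]$. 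I would also use the inclusion $[\r_1,\f_1]\subseteq\r_1^\geom$, which holds because $\k$ is free.

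Granting these, the argument I have in mind is a diagram chase at the level of elements of $\r_1$. I would take $c$ in the image of $(\ref{eqn:cuspidal_relns})$ and lift it to an $\sl(H)$-highest weight vector $x\in\r_1$ of bidegree $(-2d,-2n-2d)$ (highest weight vectors of $\r_1$ represent the coinvariants, so such a lift exists). By the first input the class of $x$ in $\r_1/(\r_1\cap L^3\f_1)$ comes from $\r_1^\geom/(\r_1^\geom\cap L^3\f_1^\geom)$, so there is $y\in\r_1^\geom$, which we may take of the same bidegree, with $x-y\in\r_1\cap L^3\f_1$. By the second input $x-y\in[\r_1,\f_1]$, and then $x=y+(x-y)\in\r_1^\geom$ since $[\r_1,\f_1]\subseteq\r_1^\geom$. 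Hence the class of $x$ in $\r_1/[\r_1,\f_1]$ lies in $\r_1^\geom/[\r_1,\f_1]$; being the class of a highest weight vector of bidegree $(-2d,-2n-2d)$, it identifies $c$ with an element of $\Gr^M_{-2d}\Gr^W_{-2n-2d}H_0\big(\n_-,\r_1^\geom/[\r_1,\f_1]\big)$, as claimed.

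Because both inputs are already established, the proof is essentially a bookkeeping chase; the one step that carries weight is the use of the collapse $\r_1\cap L^3\f_1=[\r_1,\f_1]$ supplied by the conjecture, which is exactly what promotes ``geometric'' for quadratic leading terms to ``geometric'' for the relations themselves. For the record, the conceptual reason behind the first input is that, dually, a minimal relation of this bidegree is geometric precisely when its class in $H^2(\cG^\MEM_1,S^{2n}H(2n+d))$ survives restriction to $H^2(\cG^\geom_1,S^{2n}H(2n+d))$; the kernel of this restriction, computed from the Hochschild--Serre spectral sequence of $1\to\U^\geom_1\to\U^\MEM_1\to\K\to1$ using $H^{\ge 2}(\k,-)=0$ and the identification $H^1(\cG^\geom_1,S^{2n}H(2n+d))\cong\Q(d-1)$, is $\Ext^1_\MTM(\Q,\Q(d-1))$ --- the arithmetic (Eisenstein) summand of $H^2$ --- which is Hecke-orthogonal to the cuspidal classes. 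I would include this remark to explain why the cuspidal relations are detected by the geometric fundamental group, but for the proof itself the chase above suffices.
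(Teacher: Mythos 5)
Your argument is correct and, I believe, a genuinely different route from the paper's. The paper's proof is a direct elimination argument: it does not invoke the collapse $\r_1\cap L^3\f_1=[\r_1,\f_1]$ at all, but instead observes that (i) the $\zz_{2m-1}$'s satisfy no relations among themselves (as $\k$ is free), and (ii) because $\u_1^\geom$ is an ideal, the arithmetic relations express each $[\zz_{2m-1},\ee_{2n}]$ as a Lie word in the geometric generators modulo $\r_1$; since a cuspidal relation carries $\sl(H)$-weight $2n>0$ and the $\zz$'s are $\sl(H)$-invariant, every $\zz$ it contains sits inside a bracket with an $\ee$ and may therefore be substituted out, so the relation can be rewritten inside $\f_1^\geom$. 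That argument is hands-on and, in spirit, does not need the full strength of the Corollary you invoke. Your route instead leverages that Corollary head-on: under Conjecture~\ref{conj:h2}(\ref{item:hodge}) the projection $\r_1/[\r_1,\f_1]\to\r_1/(\r_1\cap L^3\f_1)$ is an isomorphism, so ``geometric modulo $L^3$'' promotes automatically to ``geometric modulo $[\r_1,\f_1]$''. This is logically clean, though it is worth noting that the element chase can be compressed: once you observe $\r_1^\geom\cap L^3\f_1^\geom=\r_1^\geom\cap L^3\f_1=\r_1^\geom\cap[\r_1,\f_1]=[\r_1,\f_1]$ (using the collapse and $[\r_1,\f_1]\subseteq\r_1^\geom$), the two quotients $\r_1^\geom/[\r_1,\f_1]$ and $\r_1^\geom/(\r_1^\geom\cap L^3\f_1^\geom)$ coincide on the nose, and input~(1) transfers through the isomorphism of input~(2) with no lifting of elements required. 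Your closing remark about the Hochschild--Serre spectral sequence is supplementary and not needed for the proof, and I would be cautious about asserting $H^1(\cG_1^\geom,S^{2n}H(2n+d))\cong\Q(d-1)$ without checking the exact weight; but since you flagged it as motivational rather than load-bearing, it does not affect the validity of the argument.
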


\begin{proof}
As pointed out above, there are no relations between the $\zz_{2m-1}$'s modulo
the ideal generated by the $\ee_{2n}$'s. Since $\u_1^\geom$ is an ideal, each
relation $[\zz_{2m-1},\ee_{2n}]$ lies in $\f_1^\geom$. So we can replace every
occurrence of $\zz_{2m-1}$ in a cuspidal relation by an element of $\f_1^\geom$.
So every cuspidal relation lies in $\f_1^\geom$.
\end{proof}

When $d\ge 2$ is even, write $d=2m+2$, where $m\ge 0$. The dual
$$
\R \hookrightarrow
\Gr^M_{-2d}\Gr^W_{-2n-2d}H_0\big(\n_-,\r_1/(\r_1\cap L^3\f_1)\big)
$$
of the composition of the cup product with the projection associated to the
Eisenstein series $G_{2n+2}$ will contain the leading quadratic terms of the
arithmetic relation (\ref{eqn:arith_reln}) associated to the Eisenstein series
$G_{2m+2}$. Determining this map is a joint project with Francis Brown.

\section{The Monodromy Representation}
\label{sec:monod}

Any relation that holds in $\g^\MEM_\uu$ will hold in any homomorphic image.
Relations that hold between the images of the $\ee_{2n}$ in a homomorphic image
give an upper bound on the relations in $\u^\MEM_\uu$. In this section we 
determine how the generators $\ee_{2n}$ of $\Gr \u^\MEM_\uu$ act on the
unipotent fundamental group of $E_\tate'$.

As in previous sections, $\p$ denotes the Lie algebra of the unipotent
completion of $\pi_1(E_\tate',\ww)$. Recall that $\Hdual$ denotes $H(1)$. It can
be though of as the first homology group of the first order Tate curve
$E_\tate$. The splittings given by Proposition~\ref{prop:splittings} give
natural isomorphisms
$$
\Gr^W_\dot\p^B \cong \L(\Hdual^B) = \L(\aa,\bb)
\text{ and }
\Gr^W_\dot\p^\DR \cong \L(\Hdual^\DR) = \L(A,T)
$$
which extend to the natural bigraded isomorphism obtained by splitting each
$W$-graded quotient into its $\sl(H)$ weight spaces.

The natural monodromy action $\cG^\MEM_\uu \to \Aut \p$ induces a Lie algebra
homomorphism $\u^\MEM_\uu \to \Der \p$. This induces a $\gl(H)$-invariant
bigraded monodromy homomorphism
\begin{equation}
\label{eqn:gr_monod}
\Gr\u_\uu^\MEM \to \Gr \Der \p \cong \Der \Gr\p \cong \Der \L(\Hdual).
\end{equation}
Set $\theta = [\aa,\bb] \in \Gr^M_{-2}\Gr^W_{-2}\L(\Hdual)$.

\begin{proposition}
\label{prop:der0}
The image of the graded monodromy action (\ref{eqn:gr_monod}) lies in
$$
\Der^0 \L(\Hdual) := \{\d \in \Der \L(\Hdual) : \d(\theta) = 0\}.
$$
\end{proposition}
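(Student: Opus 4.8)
The plan is to identify the element $\theta = [\aa,\bb]$ up to scalar with the de~Rham/Betti realization of a canonical copy of $\Q(-1)$ inside $\Lambda^2\Hdual \subset \L(\Hdual)$, and then observe that this copy of $\Q(-1)$ is a sub-object of $\Gr\p$ in the category $\MEM_\uu$ (or more precisely in the bigraded category of $\gl(H)$-modules through which the monodromy factors). Indeed, $\Gr^M_{-2}\Gr^W_{-2}\L(\Hdual) = \Lambda^2\Hdual$ carries the $\SL(H)$-action, and since $\Hdual = H(1)$ is the standard two-dimensional representation twisted by $\Q(1)$, we have $\Lambda^2\Hdual \cong \Q(1)\otimes\Q(1)\otimes\det = \Q(1)\otimes\Q(-1)\otimes\Q(1)^{\otimes 2}$; the upshot is that $\Lambda^2\Hdual$ is a one-dimensional $\SL(H)$-trivial piece spanning the copy of $\Q(-1)$ inside $\Q(-1) = \Lambda^2\H$, and $\theta$ is (a nonzero multiple of) a generator of it. Concretely: $\theta$ is $\SL(H)$-invariant (it is the image of the invariant vector in $\Lambda^2$ of the standard rep) and has $M$-weight $-2$ and $W$-weight $-2$.

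First I would make precise that the graded monodromy homomorphism $\Gr\u_\uu^\MEM \to \Der\L(\Hdual)$ is a morphism of bigraded $\sl(H)$-modules (indeed of $\gl(H)$-modules), which follows from the fact that the original monodromy $\cG^\MEM_\uu \to \Aut\p$ is $\GL(H)$-equivariant and compatible with both weight filtrations, together with the splitting results of Proposition~\ref{prop:splittings} and the discussion in Section~\ref{sec:bases}. Next I would observe that $\Gr\u_\uu^\MEM$ is concentrated in strictly negative $W$-weights (it is the Lie algebra of the prounipotent radical), so every $\d$ in the image of (\ref{eqn:gr_monod}) raises $W$-weight by a negative amount, i.e.\ lowers it. Since $\theta$ has $W$-weight $-2$, the element $\d(\theta)$ would have $W$-weight $\le -4$; but $\d(\theta) \in \L(\Hdual)$ and the only bracket-monomials of $W$-weight $\le -4$ live in $L^2\L(\Hdual)$ and deeper. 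This alone does not force $\d(\theta)=0$, so the real argument must use the $\sl(H)$-module structure: $\d(\theta)$ lies in the image of $\theta$ under an $\sl(H)$-equivariant map, hence $\d(\theta)$ must be an $\sl(H)$-invariant vector of the appropriate weights in $\L(\Hdual)$.

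The key step, then, is the observation that $\Q\theta$ is not merely a subspace but is the image of a sub-\emph{object} — it is $\Gr^W_{-2}\L(\Hdual)$ exactly, the degree-one piece being $\Gr^W_{-1}\L(\Hdual)=\Hdual$ and the copy of $\Q(-1)=\Lambda^2\Hdual$ sitting in degree two — and that a derivation $\d\in\u_\uu^\MEM$ acts on $\Gr\p$ compatibly with its structure as an object of $\MEM_\uu$. In motivic/$W$-graded terms, $\d$ must send the sub-object $\Gr^W_{-2}\p \supset \Q\theta$ into $\p$ while lowering $W$-weight; but I will argue more sharply that the relevant constraint is that $\theta$ spans the copy of $\Q(-1)$ that is a \emph{direct summand} $\SL(H)$-subrepresentation of $\Gr^W_{-2}\L(\Hdual)$, and the only possible target for $\d(\theta)$ consistent with $\sl(H)$-equivariance, the $W$-weight drop, and the $M$-weight bookkeeping (\ref{eqn:weights}) is zero — because there is no $\sl(H)$-trivial vector in $\L(\Hdual)$ of $M$-weight $-2$ and $W$-weight $\le -4$. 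Concretely, $\Gr^W_{-w}\L(\Hdual) = \L^w(\Hdual)$ (the degree-$w$ part of the free Lie algebra), its $\sl(H)$-invariants are spanned by iterated brackets of $\theta$'s, the lowest of which, $[\theta,\theta]=0$, sits in $W$-weight $-4$; so there is no nonzero invariant in $W$-weight $-4$ at all, and none in $W$-weight $-2w$ for $w\ge 2$ of $M$-weight $-2$. Hence $\d(\theta)=0$ for every $\d$ in the image. The main obstacle I anticipate is making the ``$\d$ respects the $\MEM_\uu$-structure, not just $\sl(H)$-equivariance'' step airtight: one must check that the bigraded monodromy map lands in $\gl(H)$-module-derivations in a way that forces $\d(\theta)$ to lie in the $\gl(H)$-isotypic piece of $\Q(-1)$-type, and then that this isotypic piece of $\L(\Hdual)$ in the relevant bidegrees vanishes. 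This is purely a representation-theoretic computation with the free Lie algebra on the standard $\SL_2$-representation, and I would carry it out using the structure of $\L(\Hdual)$ as an $\SL(H)$-module (each graded piece decomposing into irreducibles $S^k H$ with multiplicities), checking that $S^0 H$ occurs only in degree two, spanned by $\theta$.
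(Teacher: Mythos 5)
There is a genuine gap. Your core step is: since $\theta$ is $\sl(H)$-invariant, ``the only possible target for $\d(\theta)$ consistent with $\sl(H)$-equivariance\ldots is zero — because there is no $\sl(H)$-trivial vector in $\L(\Hdual)$ of $M$-weight $-2$ and $W$-weight $\le -4$.'' This conflates two different things. The \emph{map} $\Gr\u^\MEM_\uu \to \Der\L(\Hdual)$ is $\sl(H)$-equivariant, and the \emph{evaluation} map $\d\mapsto\d(\theta)$ is $\sl(H)$-equivariant (since $\theta$ is fixed). But an individual derivation $\d$ in the image is \emph{not} $\sl(H)$-invariant — for example, the image of $\ee_{2n}$ is a highest-weight vector of a copy of $S^{2n-2}H$, not an invariant. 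So all you can conclude is that $\{\d(\theta):\d\in\text{image}\}$ is an $\sl(H)$-submodule of $\L(\Hdual)$, not that it lies in the trivial-isotypic part. And indeed such submodules abound: a derivation $\d$ of bidegree $(M,W)=(-2,-2n)$ sends $\theta\in\L^2(\Hdual)$ to $\L^{2n+2}(\Hdual)$, whose $\sl(H)$-decomposition contains $S^{2n-2}H$ (among many others), exactly where $\ee_{2n}(\theta)$ would land. Nothing in the weight bookkeeping rules this out. (A side remark: $\d(\theta)$ would have $M$-weight $-4$, not $-2$, since both $\d$ and $\theta$ have $M$-weight $-2$; and the assertion that the $\sl(H)$-invariants of $\L(\Hdual)$ are spanned by iterated brackets of $\theta$ is also false — $[\theta,\theta]=0$, so that would give only $\theta$ itself.)

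The proposition therefore needs an input beyond $\sl_2$-representation theory and weight counting, and that is exactly what the paper supplies. For the geometric part $\u^\geom_\uu$, the argument is genuinely topological: each element of $\pi_1^\top(\M_{1,\uu}^\an,\vv_o)$ is represented by an orientation-preserving diffeomorphism of $(E,0)$ acting trivially on $T_0E$, so it fixes the based loop $\gamma_o$ that rotates the tangent vector once around $0$, hence fixes $\log\gamma_o\in W_{-2}\p$, whose class in $\Gr^W_{-2}\p$ is $\theta$. For the arithmetic part $\k$, the argument is that $\log\gamma_o$ spans a copy of $\Q(1)$ in $\p$, and the prounipotent group $\K$ acts trivially on any one-dimensional representation. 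Both pieces use the specific element $\log\gamma_o$ and the way it arises from the marked tangent vector; neither can be recovered from the abstract $\gl(H)$-module structure of $\Der\L(\Hdual)$ alone.
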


\begin{proof}
Set $E=E_\tate$. Let $\gamma_o$ be the element of $\pi_1(E',\ww)$ obtained by
rotating the tangent vector once about the identity. Observe that $\log \gamma_o
\in W_{-2}\p$ and that its image in  $\Gr^W_{-2}$ is $[\aa,\bb]$. Each element
of the group $\pi_1(\M_{1,\uu}^\an,\vv_o)$ is represented by an element of
$\Diff^+ (E,0)$ that acts trivially on $T_0 E$, the tangent space of $E$ at the
identity. This implies that the image of  $\Aut\p$ in
$\pi_1(\M_{1,\uu}^\an,\vv_o)$ is contained in the subgroup of elements that fix
$\log \gamma_o \in \p$ and therefore act trivially on $\Gr^W_{-2}\p$. This
implies that the image of $\Gr\u_\uu^\geom \to \Der\Gr\L(H)$ lies in
$\Der^0\L(H)$.

To complete the proof, recall that, using the splitting $\k \to \u_{\uu}^\MEM$
induced by the tangent vector $\tate$, we can identify $\u^\MEM_\uu$ with
$\k\ltimes \u^\geom_\uu$. Since $\log\gamma_o$ spans a copy of $\Q(1)$ in $\p$,
the image of $\k \to \u_\uu^\MEM \to \Der \p$ acts trivially on it. It follows
that $\Gr\k$ acts trivially on $\theta$, which completes the proof.
\end{proof}

Since, by Corollary~\ref{cor:structure}, $\u_\uu^\MEM \cong \u_1^\MEM \oplus
\Q\ee_2$ where $\ee_2$ is central, there are, by restriction, natural monodromy
representations
\begin{equation}
\label{eqn:monod}
\u_1^\MEM \to \Der\p \text{ and } \Gr\u^\MEM_1 \to \Der^0 \L(\Hdual).
\end{equation}
Restricting to this smaller algebra will simplify the computations in subsequent
sections.

\begin{remark}
Another way to see that there is a natural monodromy representation
$\Gr\u^\MEM_1 \to \Der^0\L(\Hdual)$ is to observe that, since the centralizer of a non-zero element $x$ in a free Lie algebra is spanned by $x$,
$$
\Der^0\L(\Hdual)\cap \Inn\L(\Hdual) = \Q\ad\theta.
$$
This implies that $W_{-3}\Der^0\L(\Hdual) \to W_{-3}\OutDer\L(\Hdual)$ is an
isomorphism. Since every generator of $\Gr\u_1^\MEM $ has weight $\le -4$, the
homomorphism $\u^\MEM_1 \to \OutDer \L(\Hdual)$ lifts to $\Der^0 \L(\Hdual)$.
\end{remark}

For a basis $\v_1,\v_2$ of $\Hdual$ and each $n\ge 0$, define the derivation
$\epsilon_{2n}(\v_1,\v_2)$ of $\L(\Hdual)$ by
\begin{equation}
\label{eqn:deltas}
\epsilon_{2n}(\v_1,\v_2) :=
\begin{cases}
-\v_2\frac{\partial}{\partial \v_1} & n = 0; \cr
\ad_{\v_1}^{2n-1}(\v_2)-
\sum_{\substack{j+k=2n-1\cr j>k > 0}}
(-1)^j[\ad_{\v_1}^j(\v_2),\ad_{\v_1}^k(\v_2)]
\frac{\partial}{\partial \v_2} & n > 0.
\end{cases}
\end{equation}
Here we are identifying $\L(\Hdual)$ with its image in $\Der\L(\Hdual)$ under
the inclusion $\ad : \L(\Hdual) \hookrightarrow \Der \L(\Hdual)$. Each
$\e_{2n}(\v_1,\v_2)$ annihilates $\theta$ and is thus in $\Der^0\L(\Hdual)$,
\cite[Prop.~21.2]{hain:kzb}.\footnote{These derivations occur in the work
\cite{tsunogai} of Tsunogai on the action of the absolute Galois group on the
fundamental group of a once punctured elliptic curve. They also occur in the
paper of Calaque et al \cite[\S 3.1]{kzb}.} It is useful to note that if
$c_1,c_2 \in \C$, then
$$
\e_{2n}(c_1\v_1,c_2\v_2) = c_1^{2n-1} c_2\, \e_{2n}(\v_1,\v_2).
$$

Recall the notation of Section~\ref{sec:bases}. Set
$$
\e_{2n} = \e_{2n}^\DR := \e_{2n}(T,A)
\text{ and }
\e_{2n}^B = 2\pi i \e_{2n}^\DR = \e_{2n}(-\bb,\aa).
$$
These lie in $\Gr^M_{-2}\Gr^W_{-2n}\Der^0\L(\Hdual)$ and are $\sl(H)$-highest
weight vectors of weight $2n-2$.\footnote{One can show that there is a unique
copy of $S^{2n-2}H(2n-1)$ in $\Gr^W_{-2n}\Der^0\L(H)$. It has highest weight
vector $\e_{2n}$.} When $n=0$, this agrees with the definition (\ref{eqn:e_0})
of $\ee_0$ given earlier.

Pollack \cite{pollack} found relations between the $\e_{2n}$.\footnote{Note that
in \cite{pollack}, $\e_{2n}$ denotes our $\e_{2n}(\aa,-\bb)$.} The relevance of
Pollack's computations to bounding the relations in $\Gr\u_\dot^\MEM$ comes from
the following result.

\begin{theorem}[{\cite[Thm.15.7]{hain:modular}}]
\label{thm:monod}
The monodromy representation (\ref{eqn:monod}) takes $\ee_0$ to $\e_0$ and
$\ee_{2n}\in \Gr\u^\MEM_\uu$ to $2\e_{2n}/(2n-2)!$ when $n>0$.
\end{theorem}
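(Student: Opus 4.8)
The plan is to reduce the statement to the associated bigraded monodromy map of~(\ref{eqn:gr_monod}) and then to pin it down on generators by a mixture of representation theory and one explicit $\Q$-de~Rham computation. First I would observe that, by the canonical $\Q$-de~Rham bigrading of Section~\ref{sec:splitting_DR}, the monodromy homomorphism $\u^\MEM_1 \to \Der\p$ of~(\ref{eqn:monod}) is the degree completion of the bigraded homomorphism $\Gr\u^\MEM_1 \to \Der^0\L(\Hdual)$ (using Proposition~\ref{prop:der0} for the target), so it suffices to compute the latter on $\ee_0$ and on $\ee_{2n}$, $n>0$. The case of $\ee_0$ is immediate: $\ee_0$ lies in $\sl(H) = \Gr^W_0\g^\MEM_1$ and acts on $\p$ through its action on $H_1(E_\tate) = \Hdual$, and comparing~(\ref{eqn:e_0}) with the $n=0$ clause of~(\ref{eqn:deltas}) identifies this action with the derivation $\e_0 = \e_0(T,A) = -A\,\partial/\partial T$, resp.\ with $\e_0^B$ in the Betti frame.

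For $n>0$ I would argue as follows. The monodromy map is $\gl(H)$-equivariant and bigraded. By Theorem~\ref{thm:exts} the piece $\Gr^M_{-2}\Gr^W_{-2n}\u^\geom_\uu$ is one dimensional, spanned by $\ee_{2n}$, which is an $\sl(H)$ highest weight vector of a copy of $S^{2n-2}H(2n-1)$; on the other hand $\Gr^W_{-2n}\Der^0\L(\Hdual)$ contains a unique copy of $S^{2n-2}H(2n-1)$, with highest weight vector $\e_{2n}$ (the footnote to~(\ref{eqn:deltas}); see \cite[Prop.~21.2]{hain:kzb}). Hence the image of $\ee_{2n}$ is some scalar multiple $c_n\,\e_{2n}$, and the theorem reduces to showing $c_n = 2/(2n-2)!$.

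To evaluate $c_n$ I would pass to the $\Q$-de~Rham realization, where the monodromy of the local system $\bp_\uu$ near the cusp $\vv_o$ is the regularized transport of the universal elliptic KZB connection on $\M_{1,\uu}$ (cf.\ \cite{kzb,levin-racinet,hain:kzb}). On the bundle with fiber the completion of $\L(A,T) = \Gr\p^\DR$ this connection has the schematic shape $\nabla_0 + \bigl(\e_0 + \sum_{n\ge 1}\frac{2}{(2n-2)!}\,G_{2n}(q)\,\e_{2n}\bigr)\frac{dq}{q}$ plus a holomorphic term, the Eisenstein coefficients being exactly those occurring in the expansion $\wp_\tau(z) = (2\pi i)^2\bigl((2\pi i z)^{-2} + \sum_{m\ge 1}\frac{2}{(2m)!}G_{2m+2}(\tau)(2\pi i z)^{2m}\bigr)$. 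Since $\ee_{2n}$ is, by construction, dual to the Eisenstein extension class $\psi_{2n} = 2\pi i\,G_{2n}(\tau)\bw^{2n-2}d\tau = G_{2n}(q)\bw^{2n-2}\frac{dq}{q}$, normalized (see \S\ref{sec:gens} and Remark~\ref{rem:scalings}) so that its value on $\psi_{2n}$ is the highest weight vector $2\pi i\,\bb^{2n-2}$ of $S^{2n-2}H(2n-1)$, reading off the linear term of the KZB $1$-form against $\psi_{2n}$ and matching with the normalization of $\e_{2n}$ in~(\ref{eqn:deltas}) yields $c_n = 2/(2n-2)!$. The same constant can alternatively be extracted from the explicit limit MHS on $\p(E_\tate',\ww_o)$ computed in \cite[\S18]{hain:kzb}.

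The main obstacle is the normalization bookkeeping concentrated in this last step: tracking the factors of $2\pi i$ in the comparison isomorphism $H^\DR \cong H^B\otimes\C$, in the definitions of $\psi_{2n}$ and of the dual generator $\ee_{2n}$, and in the KZB connection form, and reconciling these with the normalization of $\e_{2n}$ fixed in~(\ref{eqn:deltas}). The steps upstream of it --- the $\gl(H)$-equivariance, the bigrading, and the identification of the relevant one-dimensional graded pieces --- are comparatively formal. An $\ell$-adic route via Nakamura's computation \cite{nakamura} of the Galois action on $\pi_1^\un(E_\tate')$ would land on the same constant, but with at least as much normalization trouble, so I would stay with the Hodge/de~Rham approach.
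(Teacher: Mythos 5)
This theorem is not proved in the paper: it is imported by citation from \cite[Thm.~15.7]{hain:modular}, so there is no internal argument here to match against. That said, your sketch is a faithful reconstruction of the natural argument, and indeed the argument used in the cited reference (and implicitly in Section~\ref{sec:splitting_DR} of this paper, where Proposition~\ref{prop:kzb-splitting} reconciles the canonical $\Q$-de~Rham bigrading with the KZB frame, and the subsequent corollary then re-states the constant).

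Your outline has the right structure and no circularity: the reduction to the bigraded map and the $\gl(H)$-equivariance are formal; the $\ee_0$ case really is read off by comparing~(\ref{eqn:e_0}) with the $n=0$ clause of~(\ref{eqn:deltas}); and for $n>0$ the uniqueness of the copy of $S^{2n-2}H(2n-1)$ in $\Gr^W_{-2n}\Der^0\L(\Hdual)$ (the footnote at~(\ref{eqn:deltas}), or \cite[Prop.~21.2]{hain:kzb}) pins the image of $\ee_{2n}$ down to a scalar multiple $c_n\e_{2n}$ by weight and $\sl(H)$-equivariance. One small caution on the middle step: what you actually need is uniqueness of the copy of $S^{2n-2}H(2n-1)$ with highest weight vector in $\Gr^M_{-2}\Gr^W_{-2n}$; that is exactly what the footnote supplies once you invoke the relation $M\text{-weight}=\sl(H)\text{-weight}+W\text{-weight}$ from~(\ref{eqn:weights}). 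The one-dimensionality of $\Gr^M_{-2}\Gr^W_{-2n}\u^\geom_\uu$ that you cite (Remark~\ref{rem:gens}) is true but not what is directly used.

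The genuine content is, as you say yourself, concentrated in evaluating $c_n$, and your schematic KZB formula is a placeholder rather than a proof: the actual KZB $1$-form on the $\L(A,T)^\wedge$-bundle is not literally $\nabla_0+(\e_0+\sum\frac{2}{(2n-2)!}G_{2n}\e_{2n})\tfrac{dq}{q}$, and the matching with the $\wp$-expansion needs to go through the residue computation and the normalizations of $\psi_{2n}$, $\ee_{2n}$ and $\e_{2n}$ in lockstep. This is precisely what \cite{hain:kzb} and \cite{hain:modular} carry out, and why the present paper defers to them. So: correct in outline, consistent with the intended proof, but the last step is an appeal to the cited computations rather than a computation.
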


\begin{remark}
One consequence of this result is that a presentation of $\u^\geom_\uu$
determines a presentation of $\u^\geom_2$ as $\Gr \u^\geom_2 \cong
\Gr\u_1\ltimes\L(H)$ where the action is given by the $\ee_{2n}$.
\end{remark}

\section{The Canonical De~Rham Splitting}
\label{sec:splitting_DR}

The $\Q$-vector space $V^\DR$ that underlies a universal mixed elliptic motive
$\V$ in $\MEM_\ast$ is naturally isomorphic to its associated graded module
$$
V^\DR \cong \Gr^M_\dot V^\DR
$$
with respect to the weight filtration $M_\dot$. The canonical way to  realize
this isomorphism is via the identification
$$
F^m V^\DR \cap M_{2m}V^\DR \cong \Gr^M_{2m} V^\DR
$$
induced by the inclusion $F^m V^\DR \cap M_{2m}V^\DR \hookrightarrow
M_{2m}V^\DR$. This grading splits both the Hodge and weight filtrations. As
shown in Appendix~\ref{sec:splittings}, this isomorphism can be lifted to a
natural isomorphism
$$
V^\DR \cong \Gr^W_\dot \Gr^M_\dot V^\DR
$$
of $V^\DR$ with its associated bigraded module which splits the Hodge filtration
and both weight filtrations. The purpose of this section is to show that this
lift is canonical.

\begin{theorem}
\label{thm:DR_splitting}
For every object $\V$ of $\MEM_\ast$, there is a unique isomorphism
$$
V^\DR \cong \bigoplus_{m,n} \Gr^W_n \Gr^M_{2m} V^\DR
$$
of rational vector spaces, where $V^\DR$ is the $\Q$-de~Rham realization of the fiber $V\in \MTM$ of $\V$ over $\vv_o$ with the following properties:
\begin{enumerate}

\item The isomorphism is natural with respect to morphisms in $\MEM_\ast$ and
compatible with tensor products and duals.

\item This bigrading of $V^\DR$ refines the standard grading $V^\DR \cong
\bigoplus_p F^pM_{2p}V^\DR$ of the de~Rham realization of a mixed Tate motive.
That is, for each $p\in\Z$, this isomorphism restricts to an isomorphism
$$
F^p V^\DR \cap M_{2p}V^\DR \cong \bigoplus_n  \Gr^W_n \Gr^M_{2p} V^\DR.
$$
\end{enumerate}
\end{theorem}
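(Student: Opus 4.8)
The plan is to reformulate the statement tannakianly and to build the bigrading as a homomorphism $\Gm\times\Gm\to\cG$, where $\cG:=\pi_1(\MEM_\ast,\w^\DR)$. Giving, for each $\V$, a bigrading of $V^\DR$ that is natural in $\V$ and compatible with $\otimes$ and duals is the same as giving such a homomorphism; the first factor should induce a grading splitting $M_\dot$, the second a grading splitting $W_\dot$, and the bigrading should in addition split $F^\dot$ and induce the identity on the associated bigraded module. Property~(ii) of the theorem then pins down the first factor: it must be the grading whose degree-$2p$ piece is $V^\DR_{(p)}:=F^pV^\DR\cap M_{2p}V^\DR$.

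First I would establish the $M$-direction. Since $M_\dot$ is by construction the weight filtration of the mixed Tate motive $V$, each $\Gr^M_{2k}V^\DR$ is the de~Rham realization of a sum of copies of $\Q(-k)$ and so is of Hodge--Tate type $(k,k)$; the standard argument for mixed Tate motives then shows $V^\DR=\bigoplus_pV^\DR_{(p)}$, that this decomposition is functorial in $\MEM_\ast$ and compatible with $\otimes$ and duals --- hence is a cocharacter $\chi_M\colon\Gm\to\cG$ --- and that it splits $M_\dot$. The same Hodge--Tate property gives $F^pV^\DR\cap V^\DR_{(m)}=V^\DR_{(m)}$ for $m\ge p$ and $=0$ for $m<p$, so $\chi_M$ also splits $F^\dot$, with $F^pV^\DR=\bigoplus_{m\ge p}V^\DR_{(m)}$. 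This restricts to the canonical grading of the de~Rham realization of a mixed Tate motive, which is property~(ii).

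Next, the $W$-direction and the assembly. Here I would use the fact, established in the appendix, that the functor $\V\mapsto\Gr^W_\dot\Gr^M_\dot V^\DR$ is exact (and manifestly compatible with $\otimes$ and duals); hence $\Gr^W_\dot\Gr^M_\dot\w^\DR$ is a fiber functor and there is a natural, $\otimes$- and duality-compatible isomorphism $s_0\colon\w^\DR\xrightarrow{\sim}\Gr^W_\dot\Gr^M_\dot\w^\DR$, i.e.\ some bigrading splitting $M_\dot$ and $W_\dot$. Its $M$-direction and $\chi_M$ are two $\otimes$-compatible gradings splitting $M_\dot$, so they differ by a unique element $g_0$ of the pro-unipotent group $P:=\{g\in\cG:\Gr^M_\dot(g)=\id\}$ (whose Lie algebra is the part of $\g^\MEM_\ast$ of strictly negative $M$-weight, namely $\Q\ee_0\oplus\u^\MEM_\ast$); since $P$ preserves $W_\dot$, the bigrading $g_0\cdot s_0$ splits $W_\dot$, splits $M_\dot$ with $M$-direction $\chi_M$, and (by the previous paragraph, each $F^pV^\DR$ being a union of whole $\chi_M$-slices) splits $F^\dot$. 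Naturality, $\otimes$- and duality-compatibility, and compatibility with $\MEM_2\to\MEM_\uu\to\MEM_1$ are inherited from $\chi_M$ and from $s_0$.

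Finally, uniqueness. If two bigradings satisfy (i) and (ii), then by (ii) they have the same $M$-direction $\chi_M$, so their difference is an element $g\in\cG$ preserving the $\chi_M$-grading and inducing the identity on $\Gr^W_\dot\Gr^M_\dot\w^\DR$; the Lie algebra of the group of such $g$ consists of the elements of $\g^\MEM_\ast$ of $M$-weight $0$ that are strictly negative for $W_\dot$. But the pro-unipotent radical $\u^\MEM_\ast$ is generated by the $\ee_{2n}$ and $\zz_{2m+1}$, all of $M$-weight $\le-2$, so every element of $\u^\MEM_\ast$ has $M$-weight $\le-2$; hence that Lie algebra is $0$ and $g=\id$. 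The main obstacle is supplying the input used in the third paragraph --- the exactness of $\Gr^W_\dot\Gr^M_\dot$ --- which is genuinely delicate because $M_\dot$ is a \emph{relative} weight filtration; establishing it is the business of the appendix on splittings, and once it is in hand the rest of the proof is formal.
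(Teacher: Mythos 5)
Your proof is correct and takes essentially the same approach as the paper's: build the $M$-direction cocharacter $\chi_M$ from the $\Q$-de~Rham splitting of $\MTM$ composed with the section $\pi_1(\MTM)\to\pi_1(\MEM_\ast)$ induced by $\vv_o$, obtain a compatible bigrading from the appendix (Proposition~\ref{prop:splittings}), and derive uniqueness from the fact that every generator of $\u^\MEM_\ast$ has $M$-weight $\le-2$ --- the paper phrases that last step as $F^0W_0M_0\U_\uu=0$, while you reach it by using property~(ii) to pin down $\chi_M$ and then ruling out $M$-weight-$0$ elements of $\u^\MEM_\ast$, which is the same observation in different clothing. One cosmetic slip: the functors among the categories go $\MEM_1\to\MEM_2\to\MEM_\uu$, not $\MEM_2\to\MEM_\uu\to\MEM_1$ as you wrote, but this does not affect your argument (and the paper instead reduces at the outset to $\ast=\uu$, which you replace, equivalently, by deducing compatibility from uniqueness).
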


\begin{proof}
Denote $\pi_1(\MEM_\ast,\w^\DR)$ by $\cG_\ast$, its prounipotent radical by
$\U_\ast$ and its Lie algebra by $\g_\ast$. Results from
Section~\ref{sec:tannaka} imply that there are natural homomorphisms $\cG_\uu
\to \cG_2 \to \cG_1$. This means that every object of $\MEM_\ast$ can be viewed
as an object of $\MEM_\uu$. So it suffices to prove the result for $\ast = \uu$.
Applying Proposition~\ref{prop:splittings} to $\cG_\uu$ implies that such
compatible bigradings which split the Hodge filtration and both weight
filtrations exist. 

To prove uniqueness, observe that such bigradings correspond to a lift of the
central cocharacter $\Gm \to \GL(H)$ that gives the $\Q$-DR splitting of $H$ to
a cocharacter $\Gm \to \cG_\uu$. This cocharacter lies in $F^0W_0M_0\cG_\uu$ and
is unique up to conjugation by an element of $F^0W_0M_0\U_\uu$. But since $\Gr
\u_\uu$ is generated the $\ee_0^j\cdot \ee_{2n}$ and the $\zz_{2m+1}$ with
$n,m>0$, all of which lie in $M_{-2}$, and since $F^0 \cap M_{-2}=0$,
$F^0W_0M_0\U_\uu = 0$. Uniqueness follows.
\end{proof}

Denote the $\Q$-DR realization of the Lie algebra of the unipotent fundamental
group of $(E'_\tate,\ww_o)$ by $\p$. The isomorphism $H_1(\p) \cong \Q A \oplus
\Q T$ given in Section\ref{sec:bases} induces an isomorphism $\Gr \p \cong
\L(A,T)$ of bigraded Lie algebras. The canonical $\Q$-DR bigrading induces an
isomorphism
$$
\psi : \p \to \L(A,T)^\wedge
$$
which is compatible with the bracket and satisfies
\begin{equation}
\label{eqn:bigrading}
T\in \Gr^W_{-1}\Gr^M_0 \L(A,T),\qquad A \in \Gr^W_{-1}\Gr^M_{-2} \L(A,T).
\end{equation}

The KZB connection \cite{kzb,levin-racinet} is the $\Q$-DR realization of connection associated to the local system of unipotent fundamental groups of punctured elliptic curves over $\M_{1,2}$. This was suggested in \cite{levin-racinet} and proved in \cite{hain:kzb,rome}. It gives an isomorphism
$$
\phi : \p\otimes\C \to \L(A,T)^\wedge \otimes \C
$$
of $\p\otimes \C$ with the completion of the bigraded Lie algebra $\L(A,T)$
which splits the Hodge filtration and both weight filtrations. (Cf.\
\cite[\S15]{hain:kzb}.) In order to exploit the formulas derived in
\cite{hain:kzb} using the elliptic KZB connection, we need to show that the KZB bigrading agrees with the canonical one constructed above.

\begin{proposition}
\label{prop:kzb-splitting}
The isomorphism
$$
\phi \circ \psi^{-1} : \L(A,T)^\wedge \otimes\C \to \L(A,T)^\wedge\otimes \C
$$
is the identity, so that the canonical bigrading of $\p$ constructed above and  the bigrading given by the KZB connection agree.
\end{proposition}

\begin{proof}
The isomorphism $\phi : \p\otimes\C \to \L(A,T)^\wedge\otimes\C$ constructed
from the KZB connection induces the isomorphism
$$
H_1(\p)\otimes\C \cong \C T \oplus \C A
$$
of Section~\ref{sec:bases} which respects the canonical bigrading. It follows
that the map induced by $\phi \circ \psi^{-1}$ on $H_1$ is the identity. 

Since $\phi \circ \psi^{-1}$ corresponds to a morphism of mixed Tate structures,
it induces an isomorphism
$$
F^p\cap M_{2p} \L(A,T)^\wedge \otimes \C \to
F^p\cap M_{2p} \L(A,T)^\wedge\otimes \C
$$
for all $p\in \Z$. Since $F^0 \L(A,T)^\wedge\otimes\C = \C T$, it follows
that $\phi \circ \psi^{-1}(T) = T$. Since
$$
F^{-1}\cap M_{-2} \L(A,T)^\wedge = \prod_{n\ge 0} \C\, T^n\cdot A,
$$
it follows that
$
\phi \circ \psi^{-1} : A \mapsto A + \sum_{n=1}^\infty c_n T^n \cdot A
$
where each $c_n \in \C$.

As in the previous section, we denote by $\gamma_o$ the element of
$\pi_1^\un(E'_\tate,\ww_o)$ that rotates the tangent vector $\ww_o$ in $T_0
E_\tate$ once about $0$. Since it spans a copy of $\Q(1)$ in $\p$, its image
under $\psi$ is $2\pi i[T,A]$. But since, by \cite[\S12]{hain:kzb},  the residue
of the KZB connection at the identity of $E_\tate$ is $[T,A]$, the image of
$\log \gamma_o$ under $\phi$ is also $2\pi i[T,A]$. So $\phi \circ
\psi^{-1}([T,A]) = [T,A]$. Therefore
$$
[T,A] = \phi \circ \psi^{-1}([T,A]) =
[T,A] + \sum_{n=1}^\infty c_n T^{n+1} \cdot A.
$$
It follows that all $c_n$ vanish, so that $\phi \circ \psi^{-1}(A) = A$ and
$\phi \circ \psi^{-1}$ is the identity.
\end{proof}

By identifying $\g^\MEM_\uu$ with its associated bigraded Lie algebra via the
de~Rham splitting, we can regard each $\ee_{2n}$ as an element of $\g^\MEM_\uu$.
The following result is a consequence of the previous result and
Theorem~\ref{thm:monod}.

\begin{corollary}
The homomorphism $\g^\MEM_\ast \to \Der \L(A,T)^\wedge$ respects the bigrading
when $\ast = \uu,2$. It takes $\ee_{2n}$ to $2\e_{2n}/(2n-2)!$.
\end{corollary}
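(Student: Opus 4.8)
The plan is to deduce both assertions from the naturality of the canonical $\Q$-de~Rham bigrading (Theorem~\ref{thm:DR_splitting}), from the identification of that bigrading on $\p$ with the one coming from the elliptic KZB connection (Proposition~\ref{prop:kzb-splitting}), and from the graded monodromy formula of Theorem~\ref{thm:monod}.

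First I would record the relevant structures. For $\ast\in\{\uu,2\}$ the Lie algebra $\p$ of $\pi_1^\un(E'_\tate,\ww_o)$ is (the fiber over $\vv_o$ of) a pro-object of $\MEM_\ast$: this is Corollary~\ref{cor:elliptic_polylog} for $\ast=2$ and Proposition~\ref{prop:restriction} for $\ast=\uu$. Hence $\Der\p$ is a pro-object of $\MEM_\ast$ as well (it is an extension of $\gl(H_1(\p))$ by the pronilpotent pro-object $W_{-1}\Der\p$), and so is $\g^\MEM_\ast=\gl(H)\ltimes\u^\MEM_\ast$ with its adjoint action, since $\gl(H)=\End H$ lies in $\MEM_\ast^\ss$. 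The monodromy homomorphism $\g^\MEM_\ast\to\Der\p$ is $\pi_1(\MEM_\ast)$-equivariant for the adjoint action on the source and for conjugation via the monodromy on the target, so it is a morphism in $\MEM_\ast$. Applying the naturality clause of Theorem~\ref{thm:DR_splitting} to this morphism shows that its $\Q$-de~Rham realization carries the canonical bigrading of $\g^\MEM_\ast$ to that of $\Der\p$; transporting along the canonical de~Rham splitting $\psi\colon\p\to\L(A,T)^\wedge$, which by Proposition~\ref{prop:kzb-splitting} agrees with the KZB identification and sends the canonical bigrading to the standard bigrading (\ref{eqn:bigrading}), this is exactly the first claim.

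Next I would pin down the images of the generators. Because the de~Rham monodromy respects both weight filtrations, it equals its own associated graded; under the de~Rham splittings of $\g^\MEM_\ast$ and of $\p$, this associated-graded map is precisely the graded monodromy homomorphism $\Gr\u^\MEM_\ast\to\Der^0\L(\Hdual)$ of Section~\ref{sec:monod}, extended over $\gl(H)$ in the obvious way (so $\ee_0\mapsto\e_0$). Theorem~\ref{thm:monod} then gives $\ee_{2n}\mapsto 2\e_{2n}/(2n-2)!$ for $n>0$, where Proposition~\ref{prop:kzb-splitting} again guarantees that the combinatorial derivations $\e_{2n}$ of (\ref{eqn:deltas}) are the ones occurring there. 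This settles $\ast=\uu$. The case $\ast=2$ follows by functoriality: the functor $\MEM_2\to\MEM_\uu$ induces a homomorphism $\g^\MEM_\uu\to\g^\MEM_2$ carrying $\ee_{2n}$ to $\ee_{2n}$ (compatibility of the chosen generating sets), and the square it forms with the two monodromy maps and the identity of $\Der\p$ commutes, because $\bp_\uu$ is the restriction of $\bp$ and the two have the same fiber over $\vv_o$; the bigrading statement for $\ast=2$ is proved just as above, since $\bp$ is already a pro-object of $\MEM_2$.

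The step I expect to be the crux is the verification that the ``graded monodromy'' of Section~\ref{sec:monod}, defined intrinsically via $\Gr\Der\p\cong\Der\Gr\p\cong\Der\L(\Hdual)$, really is the associated graded (along $\psi$) of the ungraded de~Rham monodromy $\g^\MEM_\ast\to\Der\p$. This is where Theorem~\ref{thm:DR_splitting} and Proposition~\ref{prop:kzb-splitting} must be combined with care, since every identification in play takes place at the level of associated bigradeds and one must check their mutual compatibility; but once this bookkeeping is done, the conclusion is immediate from Theorem~\ref{thm:monod}.
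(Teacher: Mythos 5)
Your argument is correct and matches the paper's intended route: the paper itself offers no written proof beyond ``a consequence of the previous result and Theorem~\ref{thm:monod},'' and your proposal is precisely the careful unpacking of that sentence, using the naturality of Theorem~\ref{thm:DR_splitting} to see that the monodromy map preserves the canonical de~Rham bigradings and Proposition~\ref{prop:kzb-splitting} to reconcile the de~Rham splitting of $\p$ with the KZB identification under which the $\e_{2n}$ are defined. The final step invoking Theorem~\ref{thm:monod} and the functoriality argument for $\ast=2$ are both sound, and the ``crux'' you flag at the end is indeed what the two cited results jointly dispose of.
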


\begin{remark}
The theorem implies that the Hodge realization of an object of $\MEM_1$ is an
Eisenstein variation of MHS over $\M_{1,1}^\an$. (See
\cite[Rem.~16.5]{hain:modular} for a definition.) This implies, in particular,
that the weight filtration $W_\dot$ of the vector bundle $\cV$ over
$\Mbar_{1,1/\Q}$ that underlies the canonical extension of $\V$ to $\Mbar_{1,1}$
is isomorphic to its weight graded quotient: $$ \cV \cong \bigoplus_n \Gr^W_n
\cV $$ and that, in the notation of \cite{hain:modular}, the connection on $\cV$
is of the form $\nabla_0 + \Omega$, where $\nabla_0$ denotes the connection on
$\Gr^W_n \cV$ that is described in Section~\ref{sec:connection} and
$$
\Omega = \sum_n \psi_{2n}(\bphi_{2n}) \in
\Omega^1_{\Mbar_{1,1/\Q}}\otimes \End\big(\bigoplus_n \Gr^W_n \cV\big)
$$
where $\bphi_{2n} \in \Gr^M_{-2}\Gr^W_{-2n}\End V^\DR$ is the image of $\ee_{2n}$
under the monodromy homomorphism $\Gr \u_1^\DR \to \End \Gr V^\DR$.
\end{remark}

\section{Pollack's Relations}
\label{sec:pollack}

In this section we recall Pollack's relations \cite{pollack}, which give an
upper bound on the leading quadratic terms of these relations. If the cup
product (\ref{eqn:cup_prod}) is surjective, as would follow from
Conjecture~\ref{conj:h2}(\ref{item:hodge}), this will give an upper bound on the
space of minimal relations in $\u^\MEM_1$.

Pollack found all highest weight relations of degree $d=2$ that hold between the
$\e_{2n}$ and all highest weight relations between the $\e_{2n}$ of higher
degree $d>2$ that hold modulo a certain filtration that we now define.

Suppose that $\n$ is a Lie algebra. Denote its commutator subalgebra by $\n'$.
Define the filtration $P^\dot$ of $\n$ by \label{def:lcs}
$$
P^0 = \n,\quad P^m\n = L^m \n' \qquad m > 0.
$$
where $L^m$ denotes the $m$th terms of the lower central series. The filtration
$P^\dot$ of $\L(\Hdual)$ induces a filtration\footnote{In
Section~\ref{sec:depth}, we will see that this is the filtration induced by the
natural {\em elliptic depth filtration} of $\Der\p$.} on $\Der \L(\Hdual)$ by
$$
P^m \Der^0 \L(\Hdual) =
\big\{\delta \in \Der^0 \L(\Hdual) :
\delta P^j\L(\Hdual) \subseteq P^{j+m}\L(\Hdual)\big\}.
$$
It has the property that $[P^j\n,P^k\n] \subseteq P^{j+k}\n$.

\begin{lemma}
The image of $L^m \Gr\u^\MEM_\uu$ in $\Der^0\L(\Hdual)$ under the monodromy homomorphism $\Gr \u_\uu^\MEM \to \Der^0\L(\Hdual)$ is contained in $P^m\Der^0\L(\Hdual)$.
\end{lemma}

\begin{proof}
A derivation of $\L(\Hdual)$ lies in $P^1\Der^0\L(\Hdual)$ if and only if it
acts trivially on both $H_1(\L(\Hdual))$ and $H_1\big(\L(\Hdual)'\big)$. The map
$(\Sym \Hdual)(1)\to H_1\big(\L(\Hdual)'\big)$ defined by
$$
x_1\dots x_n \mapsto
\sum_{\sigma\in \Sigma_n} \big(x_{\sigma(1)}\dots x_{\sigma(n)}\big)\cdot \theta
$$
is an isomorphism. If $\delta \in \Der^0\L(\Hdual)$, then
$$
\delta \big((x_1 x_2 \dots x_n)\cdot \theta\big) \equiv
\sum_{j=1}^n \big(\delta(x_j)
x_1 x_2 \dots x_{j-1}x_{j+1}\cdots x_n\big)\cdot \theta
\bmod L^2\L(\Hdual).
$$
So if $\delta$ acts trivially on $H_1(\L(\Hdual))$, then it acts trivially
on $H_1\big(\L(\Hdual)'\big)$.

Since the image of $\u^\MEM_\uu$ in $\sl(H)$ is trivial, its image in
$\Der^0\L(\Hdual)$ acts trivially on $H_1(\L(\Hdual))$ and therefore lies in
$P^1\Der^0\L(\Hdual)$. It follows that $L^m\u_\uu^\MEM$ is mapped into
$P^m\Der^0\L(\Hdual)$.
\end{proof}

Regard $S^{2m}H(2m+1)$ as the copy of $S^{2a}H$ in $\Gr\u_\uu^\MEM$ generated by
$\ee_{2m+2}$. If $a\ge b >0$, then the image of the bracket is
$$
\big[S^{2a}H(2a+1), S^{2b}H(2b+1)\big] \cong 
\begin{cases}
\bigoplus_{r=0}^{2b} S^{2a+2b-2r}H(2a+2b+2-r) & a > b,\cr
\bigoplus_{r=1}^{2a} S^{4a-4r+2}H(4a-2r+3) & a=b.
\end{cases}
$$
The determination of the highest weight vectors in $L^2\f_1/L^3$ of degree $d$
is an exercise in the representation theory of $\sl_2$.

\begin{proposition}
If $a$, $b$ and $d$ are positive integers satisfying $0 \le d-2\le 2\min(a,b)$,
then
$$
\what^d_{a,b} := \frac{1}{4} \sum_{\substack{i+j=d-2\cr i\ge 0, j\ge 0}}
(-1)^i\binom{d-2}{i}(2a-i)!(2b-j)!
[\ee_0^i\cdot \ee_{2a+2},\ee_0^j\cdot \ee_{2b+2}]
$$
spans the space of $\sl(H)$ highest weight vectors in the subspace
$$
\Gr^M_{-2d}[S^{2a}H(2a+1),S^{2b}H(2b+1)]
$$
of $\u_\uu^\MEM$. Its image in $\Gr^M_{-2d}\Der^0\L(\Hdual)$ is
$$
w^d_{a,b} := \sum_{\substack{i+j=d-2\cr i\ge 0, j\ge 0}}
(-1)^i\binom{d-2}{i}\frac{(2a-i)!(2b-j)!}{(2a)!(2b)!}
[\e_0^i\cdot \e_{2a+2},\e_0^j\cdot \e_{2b+2}],
$$
which is an $\sl(H)$ highest weight vector in $\Gr^M_{-2d}\Der^0\L(\Hdual)$.
\end{proposition}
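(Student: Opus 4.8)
The claim has two parts: first, that $\what^d_{a,b}$ spans the $\sl(H)$ highest weight vectors in the indicated graded piece of $\u_\uu^\MEM$; second, the explicit formula for its image $w^d_{a,b}$ under the monodromy homomorphism. The plan is to reduce both parts to an $\sl_2$ representation-theory computation inside the free Lie algebra $\L(\Hdual)$, using the monodromy map $\Gr\u_\uu^\MEM \to \Der^0\L(\Hdual)$ together with Theorem~\ref{thm:monod}, which identifies $\ee_0 \mapsto \e_0$ and $\ee_{2n}\mapsto 2\e_{2n}/(2n-2)!$ for $n>0$.

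First I would set up the source side. The copy of $S^{2a}H$ in $\Gr\u_\uu^\MEM$ generated by $\ee_{2a+2}$ has highest weight vector $\ee_{2a+2}$ and weight-$j$ string vectors $\ee_0^j\cdot\ee_{2a+2}$ for $0\le j\le 2a$; likewise for $b$. The image of the bracket $[S^{2a}H(2a+1),S^{2b}H(2b+1)]$ decomposes by Clebsch--Gordan as stated. Extracting the highest weight vector of the top summand $S^{2a+2b-2r}H$ (here we want $r$ with $2(a+b)-2r = 2n$, i.e. the top $r=0$ piece when $d=2$, and more generally the degree-$d$ graded piece) is a standard computation: the highest weight vector of the Cartan-component of $V_{2a}\otimes V_{2b}$ of weight $2a+2b-2(d-2)$ is a specific linear combination $\sum_{i+j=d-2}(-1)^i\binom{d-2}{i} e_0^i v_a \otimes e_0^j v_b$ up to normalization, where the $(2a-i)!(2b-j)!$ factors arise from the $\sl_2$-action normalizations $e_0^i\cdot(\text{h.w.}) $ versus lowering operators. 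I would verify that $\what^d_{a,b}$ as written is annihilated by the raising operator of $\sl(H)$ (equivalently, the bracket with the $\sl_2$ raising vector) by a direct, short induction on $d$ using $[\text{raising}, \ee_0^i\cdot\ee_{2c+2}] = i(2c+1-i)\,\ee_0^{i-1}\cdot\ee_{2c+2}$, and that it is nonzero; since the space of highest weight vectors in that one Cartan component is one-dimensional (the constraint $0\le d-2\le 2\min(a,b)$ guarantees the component appears with multiplicity one), this forces it to span.

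Second, to get the formula for $w^d_{a,b}$, I would simply apply the monodromy homomorphism, which is a bigraded Lie algebra map and hence commutes with brackets and with the $\ee_0$-action. Using $\ee_0^i\cdot\ee_{2a+2}\mapsto \e_0^i\cdot(2\e_{2a+2}/(2a)!)$ and the analogous statement for $b$, each term $[\ee_0^i\cdot\ee_{2a+2},\ee_0^j\cdot\ee_{2b+2}]$ maps to $\frac{4}{(2a)!(2b)!}[\e_0^i\cdot\e_{2a+2},\e_0^j\cdot\e_{2b+2}]$; the leading $\tfrac14$ in $\what^d_{a,b}$ cancels the $4$, yielding exactly $w^d_{a,b}$ with the stated coefficient $(-1)^i\binom{d-2}{i}\frac{(2a-i)!(2b-j)!}{(2a)!(2b)!}$. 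That $w^d_{a,b}$ is itself a highest weight vector in $\Gr^M_{-2d}\Der^0\L(\Hdual)$ is automatic since the monodromy map is $\gl(H)$-equivariant and sends highest weight vectors to highest weight vectors (or to zero, but nonvanishing follows from the injectivity statements we have, or can be checked directly in low weight). The main obstacle I anticipate is pinning down the exact combinatorial normalization — getting the factorials $(2a-i)!$ versus $(2a)!/(2a-i)!$ correct requires being careful about whether $\ee_0^i\cdot\ee_{2a+2}$ denotes the $i$-fold adjoint action (a string vector) or a normalized basis vector, and matching the $\sl_2$ Clebsch--Gordan coefficient conventions; I would resolve this by fixing the convention that $\ee_0$ acts as the raw lowering operator $\ad_{\ee_0}$ and then verifying the highest-weight condition by the explicit $\sl_2$ identity above, which determines all constants uniquely.
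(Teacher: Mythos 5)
Your proof is correct and takes exactly the approach the paper has in mind — the text just before the proposition says it is ``an exercise in the representation theory of $\sl_2$'' and gives no further detail, so your elaboration supplies precisely the missing argument. The computational heart is sound: using the adjoint-action convention $\ee_0^i\cdot\ee_{2c+2}=\ad_{\ee_0}^i(\ee_{2c+2})$ and the identity $e_1\cdot(\ee_0^i\cdot v)=i(2c-i+1)\,\ee_0^{i-1}\cdot v$ for a highest weight vector $v$ of $\sl(H)$-weight $2c$, a one-line collection of coefficients shows that, for each $i'+j'=d-3$, the coefficient of $[\ee_0^{i'}\cdot\ee_{2a+2},\ee_0^{j'}\cdot\ee_{2b+2}]$ in $e_1\cdot\what^d_{a,b}$ is proportional to $-(i'+1)\binom{d-2}{i'+1}+(j'+1)\binom{d-2}{i'}=0$, so $\what^d_{a,b}$ is indeed a highest weight vector. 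Your application of Theorem~\ref{thm:monod} and $\gl(H)$-equivariance of the monodromy map — replacing each $\ee_0^i\cdot\ee_{2a+2}$ by $\tfrac{2}{(2a)!}\e_0^i\cdot\e_{2a+2}$, with the resulting factor $\tfrac{4}{(2a)!(2b)!}$ cancelling the $\tfrac14$ — gives exactly $w^d_{a,b}$.

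Two small points you could tighten. First, the intermediate sentence about the ``specific linear combination $\sum(-1)^i\binom{d-2}{i}e_0^iv_a\otimes e_0^jv_b$ up to normalization'' is misleading on its own (that expression is not a highest weight vector in the unnormalized lowering-operator basis); you do acknowledge this and fall back on the direct verification, which is the right move, so it is only a presentational issue. Second, for multiplicity-one spanning you appeal to the Clebsch--Gordan decomposition of $V_{2a}\otimes V_{2b}$; when $a=b$ the relevant space is $\Lambda^2 S^{2a}H$, and the component $S^{2a+2b-2(d-2)}H$ is absent exactly when $d$ is even. This is consistent with the statement, since the antisymmetry $\what^d_{a,a}=(-1)^{d+1}\what^d_{a,a}$ forces $\what^d_{a,a}=0$ for $d$ even, so zero spans a zero space; but it is worth noting explicitly so the ``multiplicity one'' claim does not look overreaching in that degenerate case.
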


Both $\what_{a,b}^d$ and $w_{a,b}^d$ have $W$-weight $-2a-2b-4$ and
$\sl(H)$-weight $2a+2b-2d+4$. Note that $\what_{a,b}^d$ vanishes when either $a$
or $b$ is zero as $\ee_2$ and $\e_2$ are central. Since $\what_{a,b}^d =
(-1)^{d+1}\what_{b,a}^d$, we can, and will, assume that the coefficients in an
expression
$$
\sum_{a+b=n} c_a w_{a,b}^d
$$
satisfy $c_a + (-1)^d c_b = 0$.

Before stating Pollack's result we need to recall a few basic facts about
cuspidal cocycles. This material is standard, but conventions are not. We
will use those given in \cite[\S17]{hain:modular}. The left action of
$\SL_2(\Z)$ on $H^1(E)$ is given by $(\aa,-\bb)\mapsto (\aa,-\bb)\gamma$. (Cf.\
\cite[Lem.~9.2]{hain:modular}.)

Denote the standard cochain complex of $\SL_2(\Z)$ with coefficients in the
left-module $V$ by $C^\dot(\SL_2(\Z),V)$ and its differential by $\delta$. Set
$$
Z^1_\cusp(\SL_2(\Z),S^m H) =
\{ r \in C^1(\SL_2(\Z),S^m H) : \delta r = 0, r(T) = 0\},
$$
where $T =\begin{pmatrix}1 & 1 \cr 0 & 1 \end{pmatrix}$. These groups vanish
when $m$ is odd. Cuspidal cocycles are determined by their value on $S =
\begin{pmatrix} 0 & -1 \cr 1 & 0\end{pmatrix}$ and are characterized by the
functional equations
$$
(I + S) r = (I+ U + U^2) r = 0,
$$
where $U = ST$. We will identify a cuspidal cocycle with its value on $S$.

The real Frobenius operator $\Fr_\infty$ acts on cuspidal cocycles via the
formula
$$
\Fr_\infty : \aa \to \aa,\quad \Fr_\infty: \bb \to -\bb.
$$
Let
$$
Z^1_\cusp(\SL_2(\Z),S^m H) =
Z^1_\cusp(\SL_2(\Z),S^m H)^+ \oplus Z^1_\cusp(\SL_2(\Z),S^m H)^-
$$
be the eigenspace decomposition. The $+$ (resp.\ $-$) eigenspace consists of
polynomials with even (resp.\ odd) degree in $\aa$.

The map $Z^1_\cusp(\SL_2(\Z),S^{2n}H) \to H^1_\cusp(\M_{1,1},S^{2n}\H)$ that
takes a cuspidal cocycle to its cohomology class has kernel spanned by the
unique cuspidal coboundary $\d(\aa^{2n}) = \bb^{2n} - \aa^{2n}$. There is thus a
short exact sequence
\begin{equation}
\label{eqn:cuspidal_ses}
0 \to \Q\delta(\aa^{2n}) \to
Z^1_\cusp(\SL_2(\Z),S^{2n}H) \mapsto H^1_\cusp(\M_{1,1},S^{2n}\H) \to 0.
\end{equation}
It is equivariant with respect to the $\Fr_\infty$ action. The class map has a
section given by modular symbols.

Recall that the modular symbol of a cusp form $f$ of $\SL_2(\Z)$ of weight
$2n+2$ is the homogeneous polynomial \label{def:mod_symb}
$$
\sr_f(\aa,\bb) = \sum_{j=0}^{2n} a_f(j)\aa^j\bb^{2n-j}
:= (2\pi i)^{2n+1} \int_0^{i\infty} f(\tau)(\bb-\tau\aa)^{2n} d\tau \in S^{2n}H
$$
of degree $2n$. This decomposes $\sr_f = \sr_f^+ + \sr_f^-$, where $\sr_f^\pm
\in Z^1_\cusp(\SL_2(\Z),S^{2n}H)^\pm\otimes\C$.

\begin{theorem}[Pollack]
\label{thm:pollack}
If $n > 0$, then
$$
\sum_{a+b=n} c_a [\e_{2a+2},\e_{2b+2}] = 0
$$
if and only if there is a cusp form $f$ of $\SL_2(\Z)$ of weight $2n+2$ with
$\sr_f^+(\aa,\bb) = \sum c_a \aa^{2a} \bb^{2n-2a}$. If $n\ge d \ge 2$, then
$$
\sum_{a+b=n} c_a w_{a,b}^d \equiv 0 \bmod P^3 \Der^0 \L(\Hdual)
$$
if and only if
$$
\sum_{\substack{a+b=n\cr 2a,2b\ge d-2}}
c_a \aa^{2a-d+2} \bb^{2b-d+2} \in Z^1_\cusp\big(\SL_2(\Z),S^{2n-2d+4}H\big)^\e,
$$
where $\e$ is $+$ when $d$ is even and $-$ when $d$ is odd.
\end{theorem}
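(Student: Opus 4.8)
The plan is to pass to the depth filtration $P^\dot$ defined just before the theorem and to reduce both assertions to a computation in the depth-graded Lie algebra $\Gr_P\Der^0\L(\Hdual)$. Each $\e_{2n}$ lies in $P^1\Der^0\L(\Hdual)$ but not in $P^2$, with leading term the derivation $T\mapsto-\ad_T^{2n}(A)$, $A\mapsto0$; hence $[\e_{2a+2},\e_{2b+2}]$, and more generally $w^d_{a,b}$, lie in $P^2\Der^0\L(\Hdual)$, so that ``$\sum_{a+b=n}c_a w^d_{a,b}\equiv0\bmod P^3$'' is a statement about the single $\SL(H)$-module $\Gr^2_P\Der^0\L(\Hdual)$. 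The first step is therefore to pin down the relevant weight spaces of $\Gr^1_P$ and $\Gr^2_P$ as $\SL(H)$-modules. The depth-one part is $\bigoplus_{n\ge1}S^{2n}H$, with $S^{2n}H$ the $\sl(H)$-orbit of $\e_{2n+2}$; and $\Gr^2_P\Der^0\L(\Hdual)$ embeds, via the standard crossed-homomorphism description of derivations relative to the $\SL_2(\Z)$-action on $\p=\p(E_\tate',\ww_o)$ coming from the monodromy over $\M_{1,1}$, into a space of $S^\bullet H$-valued functions on $\SL_2(\Z)$, whose ``cohomological'' part the Eichler--Shimura isomorphism (Theorem~\ref{thm:eichler_shimura}) identifies with $\bigoplus_n H^1(\SL_2(\Z),S^{2n-2d+4}H)$. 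This is the elliptic analogue of the description of depth-graded $\grt$ in terms of period polynomials, and I would organize the argument the same way.

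The second step is the explicit computation. From (\ref{eqn:deltas}) one has $\e_{2n}(T)=-\ad_T^{2n}(A)$ and $\e_{2n}([T,A])=0$; iterating the Leibniz rule using $\ad_T^{m}(A)=\ad_T^{m-1}[T,A]$ gives the exact identity $\e_{2a+2}(\ad_T^{2b+2}A)=-\sum_{i=0}^{2b}\ad_T^i[\ad_T^{2a+2}A,\ad_T^{2b+1-i}A]$ modulo $P^3$, so that
$$[\e_{2a+2},\e_{2b+2}](T)\ \equiv\ \sum_{i=0}^{2b}\ad_T^i\big[\ad_T^{2a+2}A,\ad_T^{2b+1-i}A\big]\ -\ (a\leftrightarrow b)\pmod{P^3},$$
a two-$A$ Lie word which, together with the vanishing $[\e_{2a+2},\e_{2b+2}](A)\in P^3$, determines the class of $[\e_{2a+2},\e_{2b+2}]$ in $\Gr^2_P\Der^0\L(\Hdual)=\Lambda^2 H_1\big(\L(\Hdual)'\big)$, i.e.\ in $\Lambda^2\big(\bigoplus_m S^mH\big)$. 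Under the identification of $H_1(\L(\Hdual)')$ with homogeneous polynomials in $\aa,\bb$ this class corresponds, after the normalisation built into $w^d_{a,b}$, to the polynomial $\sum_{a+b=n}c_a\,\aa^{2a-d+2}\bb^{2b-d+2}$; the passage from $d=2$ to general $d$ is pure bookkeeping, since applying $\ee_0=\e_0=-A\partial/\partial T$ repeatedly lowers $\sl(H)$-weight and the factorials and binomial coefficients in $w^d_{a,b}$ are exactly those produced by iterating Leibniz (this is what the Proposition giving $\what^d_{a,b}$ and $w^d_{a,b}$ records). The parallel statement for $\what^d_{a,b}$ in $\Gr\u^\MEM_\uu$ is then forced by Theorem~\ref{thm:monod}.

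The third step is to identify the kernel of this bracket map with cuspidal cocycles. The depth-two class of $\sum c_a w^d_{a,b}$ is the restriction to $P^2/P^3$ of a genuine element of $\Der^0\L(\Hdual)$ compatible with the full $\SL_2(\Z)$-monodromy; unwinding the crossed-homomorphism description, compatibility with the defining relations of $\pi_1^\top(\M_{1,1},\tate)\cong\SL_2(\Z)$ (namely $S$ of order $4$ and $ST$ of order $6$, with $S^2$ acting trivially on $S^{2k}H$) becomes precisely the functional equations $(I+S)r=(I+U+U^2)r=0$ with $U=ST$, while triviality of the monodromy $\sigma_o$ around the cusp on the relevant graded piece gives the cuspidality $r(T)=0$. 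Thus $\sum c_a w^d_{a,b}\equiv0\bmod P^3$ holds exactly when $\sum_{a+b=n,\,2a,2b\ge d-2}c_a\,\aa^{2a-d+2}\bb^{2b-d+2}$ is a cuspidal cocycle for $S^{2n-2d+4}H$. The sign $\e$ follows from $\Fr_\infty:\aa\mapsto\aa$, $\bb\mapsto-\bb$: every monomial above has $\aa$-degree $2a-d+2\equiv d\pmod2$, so the polynomial lies in the $(-1)^d$-eigenspace, whence $\e=+$ for $d$ even and $\e=-$ for $d$ odd. For $d=2$ this says $\sum c_a\aa^{2a}\bb^{2n-2a}$ is the plus-part of a cuspidal cocycle for $S^{2n}H$; via the exact sequence (\ref{eqn:cuspidal_ses}), the modular-symbol section $f\mapsto\sr_f$ of \ref{def:mod_symb}, and linearity of $f\mapsto\sr_f$, this is equivalent to the existence of a cusp form $f$ of weight $2n+2$ with $\sr_f^+(\aa,\bb)=\sum c_a\aa^{2a}\bb^{2n-2a}$. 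To upgrade the $d=2$ conclusion from ``$\equiv0\bmod P^3$'' to an honest identity of derivations, I would push the computation of $[\e_{2a+2},\e_{2b+2}]$ past depth $2$, evaluating it on both generators $T$ and $A$ to all orders, and check that the depth $\ge3$ contributions of $\sum c_a[\e_{2a+2},\e_{2b+2}]$ are again governed by, and vanish together with, the same period polynomial; this is feasible in degree $2$, and its failure to be feasible for $d>2$ is exactly why the theorem is sharp only modulo $P^3$ there.

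The main obstacle is the bookkeeping in the last two steps: matching the combinatorics of iterated brackets of the $\e_{2n}$ — in particular pinning down the exact constants in $w^d_{a,b}$ and the precise identification of $\Gr^2_P\Der^0\L(\Hdual)$ with period polynomials — against the group-cohomological cocycle conditions, and, for $d=2$, controlling the derivation to all depths rather than only modulo $P^3$.
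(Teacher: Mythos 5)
The paper does not prove this theorem; it states it as \textbf{Theorem (Pollack)} and cites Pollack's undergraduate thesis \cite{pollack}. So there is no in-paper proof to compare against, and your proposal must stand on its own. Judged on that basis, it has the right overall shape for the mod-$P^3$ statement but two genuine gaps.

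First, the reduction in your third step from the vanishing of $\sum c_a w^d_{a,b}$ in $\Gr^2_P\Der^0\L(\Hdual)$ to the cocycle conditions $(I+S)r=(I+U+U^2)r=0$ is not a proof. The theorem is a purely combinatorial ``if and only if'' about derivations of a free Lie algebra; the derivations $\e_{2n}$ and the depth filtration are defined with no reference to $\SL_2(\Z)$, and an arbitrary linear combination $\sum c_a w^d_{a,b}$ is not a priori the image of anything carrying an $\SL_2(\Z)$-action. Invoking ``compatibility with the defining relations of $\pi_1^\top(\M_{1,1},\tate)$'' therefore gives at most one implication, for those combinations that do come from the monodromy of an admissible variation, and even there it does not produce the exact normalization appearing in $w^d_{a,b}$. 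What Pollack actually does, and what any complete proof of the second statement must do, is carry out the explicit bracket computation in $\Gr^2_P\L(\Hdual)\cong L^2 D^1/L^3 D^1$, identify the resulting system of linear conditions on $(c_a)$, and then \emph{recognize} that system as the period-polynomial functional equations; the equivalence with cuspidality drops out of the combinatorics, not out of group cohomology. Your step 2 is moving in the right direction, but it stops before the identification that is the heart of the matter, and step 3 substitutes a heuristic for it.

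Second, and more seriously, the first assertion of the theorem is an exact identity in $\Der^0\L(\Hdual)$, not a congruence mod $P^3$, and you do not prove it. You write that you ``would push the computation of $[\e_{2a+2},\e_{2b+2}]$ past depth $2$ \ldots and check that the depth $\geq3$ contributions \ldots vanish together with the same period polynomial,'' but this is precisely the nontrivial content of Pollack's degree-two theorem, and there is no reason offered why the higher-depth terms should be controlled by the same polynomial. Indeed, for $d\geq 3$ the analogous statement is only known modulo $P^3$ (Baumard--Schneps and Brown later lift some of these to exact identities by quite different arguments), which is evidence that the all-depths vanishing is genuinely delicate. Asserting feasibility is not a proof, and in the absence of it the first half of the theorem is unestablished. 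The sign computation via $\Fr_\infty:\aa\mapsto\aa$, $\bb\mapsto-\bb$ at the end is fine, and the bookkeeping reducing general $d$ to the highest-weight-vector case via $\ee_0$ is reasonable, but these are the routine parts.
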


Here $f$ is arbitrary, in that it can have complex Fourier coefficients. The relations with arithmetic significance correspond to normalized Hecke eigenforms.

\begin{remark}
The second statement can be rewritten to focus on the cocycle:
$$
\sum_{\substack{A+B=2N\cr A,B \equiv d\, (2)}} c_A \aa^A \bb^B \in
Z^1_\cusp\big(\SL_2(\Z),S^{2N}H\big)^\e
$$
if and only if
$$
\sum_{\substack{A+B=2N\cr A,B \equiv d\, (2)}} c_A w^d_{a,b}
\equiv 0 \bmod P^3\Der^0\L(\Hdual),
$$
where $a=(A+d-2)/2$ and $b=(B+d-2)/2$.
\end{remark}

\begin{remark}
The trivial cuspidal cocycle $\bb^{2N}-\aa^{2N}$ in
$Z^1_\cusp(\SL_2(\Z),S^{2N}H)$ corresponds to the even degree relation
$$
w^{2k}_{k-1,N+k-1} - w^{2k}_{N+k-1,k-1}  = 2 w^{2k}_{k-1,N+k-1}
\in \Gr^M_{-4k}\Gr^W_{-2N-4k}\Der^0\L(\Hdual)/P^3.
$$
This has $\sl(H)$-weight $2N$. There is one non-trivial such relation for each
$k>1$.
\end{remark}

\section{Pollack's Relations are Motivic}
\label{sec:pollack_motivic}

In this section we use Brown's period computations \cite{brown:mmv} to prove
that there are relations in $\Gr\u_1^\MEM$ that project to Pollack's relations
in $\big(\Der\L(H)\big)/P^3$. These have also been proved in \cite[\S\S16,20]{brown:mmv} using tannakian methods. Both proofs use the same general approach, which was suggested in \cite{hain:letter}. Namely, that relations in $\u^\MEM_1$ correspond to certain non-trivial extensions of MHS in the coordinate ring $\O(\cG^\rel_1)$ of the relative completion of $\SL_2(\Z)$. The period computations in \cite{brown:mmv}, which are key, imply the non-triviality of these extensions and thus the existence of the lifted relations.

One can ask if the lifted Pollack relations generate all relations in $\u^\MEM_1$. We show that if we assume Conjecture~\ref{conj:h2}, then the lifts of the Pollack relations generate all relations in $\u^\MEM_1$. We continue with the notation of the previous four sections.

\begin{theorem}
\label{thm:motivic}
Pollack's relations that correspond to period polynomials of cusp forms are
motivic. That is, for each cusp form $f$ of $\SL_2(\Z)$ of weight $2n+2$ and
each $d\ge 2$, there is an element $\br_{f,d}$ of
$\Gr^M_{-2d}\r^\geom_1\otimes \C$ with
$$
\br_{f,d} \equiv \sum_{a+b=d-2} c_a \what_{a,b}^d \bmod L^3 \f_1^\geom,
$$
where
$$
\sr_f^\e(\aa,\bb) = \sum_{\substack{a+b=n\cr 2a,2b\ge d-2}}
c_a \aa^{2a-d+2} \bb^{2b-d+2} \in Z^1_\cusp\big(\SL_2(\Z),S^{2n-2d+4}\big)^\e
$$
is the even or odd part of the modular symbol of $f$, where $\e$ is the sign of
$(-1)^d$. The Pollack relation corresponding to the trivial cuspidal cocycle
lifts to the arithmetic relation
\begin{multline*}
[\zz_{2m-1},\ee_{2n+2}]
\cr
\equiv
\frac{(2m-2)!}{(2n+2m)!}\binom{2n+2}{2}
\frac{B_{2n+2m}}{B_{2n+2}}
\sum_{\substack{i+j=2m-2\cr i,j\ge 0}}
(-1)^i \frac{(2n+i)!}{i!}[\ee_0^i\cdot \ee_{2m},\ee_0^j\cdot \ee_{2n+2m}]
\end{multline*}
mod $L^3 \Gr^M_{-4m}\f_1^\geom$.
\end{theorem}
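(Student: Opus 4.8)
The plan is to determine, in each bidegree (that is, for each choice of $M$-weight $-2d$ and $\sl(H)$-weight $2n$), the space of quadratic leading terms of a minimal set of relations in $\Gr\u^\MEM_1$ by squeezing it between two bounds that turn out to coincide: an \emph{upper} bound extracted from Pollack's Theorem~\ref{thm:pollack} through the monodromy representation $\Gr\u^\MEM_1\to\Der^0\L(\Hdual)$, and a \emph{lower} bound extracted from Brown's period computations \cite{brown:mmv} through the cup product in $\MEM_1$ and its image in real Deligne--Beilinson cohomology. First I would fix the canonical $\Q$-de~Rham bigrading of Theorem~\ref{thm:DR_splitting}, identify $\u^\MEM_1$ with its associated bigraded Lie algebra, and use Theorem~\ref{thm:monod} together with the corollary to Proposition~\ref{prop:kzb-splitting} to arrange that the monodromy homomorphism $\rho\colon\Gr\u^\MEM_1\to\Der^0\L(\Hdual)$ is bigraded with $\rho(\ee_{2n})=2\e_{2n}/(2n-2)!$ and $\rho(\ee_0)=\e_0$. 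By Lemma~\ref{lem:quad_relns} and Corollary~\ref{cor:quad_relns}, in the bidegree above the space of quadratic leading terms of minimal relations is dual to the image of the cup product~(\ref{eqn:cup_prod}), and each such leading term is a linear combination $\sum_a c_a\what^d_{a,b}$ of the vectors introduced in Section~\ref{sec:pollack}.

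For the upper bound, suppose $\sum_a c_a\what^d_{a,b}$ is the quadratic leading term of a minimal relation $r\in\r^\geom_1$, and write $r=\sum_a c_a\what^d_{a,b}+h$ with $h\in L^3\f^\geom_1$. Applying $\rho$, which annihilates $\r_1$ and sends $\what^d_{a,b}$ to $w^d_{a,b}$, gives $\sum_a c_a w^d_{a,b}=-\rho(h)$; since the lemma in Section~\ref{sec:pollack} carries $L^3\f_1$ into $P^3\Der^0\L(\Hdual)$, this yields $\sum_a c_a w^d_{a,b}\equiv 0\bmod P^3\Der^0\L(\Hdual)$. Pollack's Theorem~\ref{thm:pollack} then forces the coefficient vector $(c_a)$ to be the coefficient vector of a cuspidal cocycle in $Z^1_\cusp\big(\SL_2(\Z),S^{2n}H\big)^\e$ with $\e=(-1)^d$. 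Hence the space of quadratic leading terms of minimal relations in this bidegree has dimension at most $\dim Z^1_\cusp\big(\SL_2(\Z),S^{2n}H\big)^\e$.

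For the lower bound, Brown's computation \cite{brown:mmv} of the periods of twice--iterated Eisenstein integrals, reinterpreted in Deligne--Beilinson cohomology as in \cite[\S7]{hain:db_coho} (the input already used in Proposition~\ref{prop:injectivity}), shows that the composite of the cup product~(\ref{eqn:cup_prod}) with the regulator $\reg_\R$ surjects onto $H^2_\cD\big(\M_{1,1}^\an,S^{2n}\H_\R(2n+d)\big)^{\Frbar_\infty}$. By Proposition~\ref{prop:deligne_coho} this target has dimension $\dim Z^1_\cusp\big(\SL_2(\Z),S^{2n}H\big)^\e$ (both sides equal $\dim\fM_{2n+2}^o$, increased by one exactly when $d$ is even, where the extra dimension is the Eisenstein class on the $H^2$ side and the coboundary $\bb^{2n}-\aa^{2n}$ on the cocycle side). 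Therefore the rank of the cup product~(\ref{eqn:cup_prod}) in $\MEM_1$ is at least $\dim Z^1_\cusp\big(\SL_2(\Z),S^{2n}H\big)^\e$, so the upper and lower bounds coincide. In particular the combination $\sum_a c_a\what^d_{a,b}$ attached to the even or odd part $\sr_f^\e$ of the modular symbol of each Hecke eigen cusp form $f$ is the quadratic leading term of a genuine minimal relation, which by the argument in Section~\ref{sec:relns} (the $\zz_{2m+1}$ act on the $\ee$'s through $\f^\geom_1$, since $\k$ is free, so every occurrence of a $\zz_{2m+1}$ may be replaced by a geometric expression) can be taken to lie in $\Gr^M_{-2d}\r^\geom_1\otimes\C$. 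This produces the elements $\br_{f,d}$ and establishes the first assertion.

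It remains, in the $d$ even case, to identify the relation dual to the Eisenstein summand of $H^2\big(\cG^\MEM_1,S^{2n}H(2n+d)\big)$ --- equivalently the one attached to the trivial cuspidal cocycle $\bb^{2N}-\aa^{2N}$ --- with the stated arithmetic relation. Since $\u^\geom_1$ is an ideal, $[\zz_{2m-1},\ee_{2n+2}]$ lies in $\u^\geom_1$ and hence equals a geometric Lie word that is quadratic in $\f_1$ and of degree $2m$ in the $\ee$'s; the dimension count above forces this word, modulo $L^3\f^\geom_1$, to be a multiple of $\what^{2m}_{m-1,n+m-1}$ (the factor $2$ in the final Remark of Section~\ref{sec:pollack} being the antisymmetry $\what^d_{a,b}=(-1)^{d+1}\what^d_{b,a}$ for $d$ even). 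The genuinely hard point, which I expect to be the main obstacle, is to pin down the exact rational coefficient: the structural parts above (Pollack's upper bound, Brown's lower bound, the matching of dimensions) are comparatively formal once the cited inputs are granted, but they only determine the relation up to an unknown scalar. I would fix the scalar by evaluating the regulator of the relevant cup product $H^1\big(\cG^\MEM_1,\Q(2m-1)\big)\otimes H^1\big(\cG^\MEM_1,S^{2n}\H(2n+1)\big)\to H^2\big(\cG^\MEM_1,S^{2n}\H(2n+2m)\big)$ directly from Brown's period formulas \cite{brown:mmv} for the iterated integrals attached to $G_{2m}$ and $G_{2n+2}$, and then matching the answer against the normalizations that define the generators --- $\ee_{2k}$ by its pairing with $\psi_{2k}$, and $\zz_{2m+1}$ by $\langle\ssigma_{2m+1},\zeta^\m(2m+1)\rangle=1$ --- the Bernoulli factors $B_{2n+2m}/B_{2n+2}$ and the binomial $\binom{2n+2}{2}$ emerging from the constant terms $G_{2k}(0)=-B_{2k}/4k$ of the Eisenstein series together with the combinatorics of the elliptic KZB connection formulas of \cite{hain:kzb}.
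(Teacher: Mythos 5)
Your proposal is essentially the paper's proof, with the same squeeze: an upper bound on the space of quadratic heads from Pollack's relations via the monodromy homomorphism $\Gr\u^\MEM_1\to\Der^0\L(\Hdual)$ and the filtration $P^\dot$, a lower bound from Brown's period computations interpreted as surjectivity of the regulator onto $H^2_\cD(\M_{1,1}^\an,S^{2n}\H_\R(2n+d))^{\Frbar_\infty}$, and a dimension match using Proposition~\ref{prop:deligne_coho} and the structure of $Z^1_\cusp(\SL_2(\Z),S^{2n}H)^\e$ (including the $+1$ from the trivial coboundary when $d$ is even). The paper organizes the two bounds differently at the presentational level — it dualizes everything into cohomology, introduces the intermediate group $\cS_1/P^3$ sitting between $\cG^\MEM_1$ and $\Der\p$, and runs the comparison through a large commutative diagram of $H^2$'s linked by the Haberlund--Petersson pairing — whereas you argue directly on the relation side, applying the monodromy map to a hypothetical minimal relation to force its quadratic head into $Z^1_\cusp$. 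These are dual formulations of the same argument. For the arithmetic relation, you correctly identify the remaining obstacle as pinning down the rational scalar; the paper delegates exactly this to \cite[Thm.~10.5]{hain:db_coho} together with Corollary~\ref{cor:quad_relns}, and your proposed computation (regulator of $H^1(\Q(2m-1))\otimes H^1(S^{2n}\H(2n+1))$ from Brown's period formulas, matched against the normalizations of $\ee_{2k}$ and $\zz_{2m+1}$, with the Bernoulli numbers entering through $G_{2k}(0)=-B_{2k}/4k$) is the content of that cited result — you describe the plan but do not carry it out, which is the one part of your proposal that remains a sketch rather than a proof.
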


The ``tail'' of $\br_{f,d}$ is well defined only modulo the ideal generated by
the $\br_{g,e}$ where $g$ has lower weight and $e<d$. Note also that if the
Fourier coefficients of an eigenform $f$ are rational, then the coefficients of the quadratic part of $\br_{f,d}$ a rational multiplies of a fixed complex number.

The arithmetic relations look a little nicer if we set
$$
\be_{2k} = \frac{(2k-2)!}{2}\ee_{2k}.
$$
The image of $\be_{2k}$ in the derivation algebra is $\e_{2k}$. With this
normalization, the arithmetic relations become
$$
[\zz_{2m-1},\be_{2n+2}] \equiv
\frac{(2n+2)!}{(2n+2m)!}
\frac{B_{2n+2m}}{B_{2n+2}}
\bw^{2m}_{m-1,n+m-1}
\bmod L^3 \Gr^M_{-8}\f_1^\geom.
$$
When $m=2$, these become
$$
[\zz_3,\be_{2n}] \equiv
\frac{1}{2}
\frac{1}{\binom{2n+2}{2}}
\frac{B_{2n+2}}{B_{2n}}
\bw^{4}_{1,n}
\bmod L^3 \Gr^M_{-8}\f_1^\geom.
$$
This agrees with Pollack's computations of $[\tilde{z}_3,\e_{2n}]$ for $n\le 6$
in $\Der^0\L(H)$ if one takes $\zz_3$ to $1/12$ times his derivation
$\tilde{z}_3$, as $\bw^{4}_{1,n}$ goes to his $E^4_{2n}$ in the derivation
algebra.

Baumard and Schneps \cite{schneps:relns} used combinatorial methods to prove
that Pollack's cubic relations between the $\e_{2n}$ in $\big(\Der^0
\L(H)\big)/P^3$ lift to relations in $\Der^0 \L(H)$. Brown gives a stronger
version of the second statement in \cite{brown:depth3}. Since we have proved
that Pollack's relations lift to relations in $\u^\MEM$, it follows that all of
his relations lift to relations in the derivation algebra.

\begin{corollary}
Pollack's relations in $\Der^0\L(\Hdual)/P^3$ lift to relations in
$\Der^0\L(\Hdual)$ for all $d\ge 3$. The Lie algebra $\k$ acts trivially on
$\Der^0\L(\Hdual)/P^3$.
\end{corollary}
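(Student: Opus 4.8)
The plan is to derive both statements from Theorem~\ref{thm:motivic} by pushing everything forward along the graded monodromy homomorphism $\mu\colon \Gr\u^\MEM_1 \to \Der^0\L(\Hdual)$ of (\ref{eqn:monod}). By Theorem~\ref{thm:monod} and the corollary to Proposition~\ref{prop:kzb-splitting}, $\mu$ is a bigraded Lie-algebra map with $\mu(\ee_0)=\e_0$ and $\mu(\ee_{2n})=2\e_{2n}/(2n-2)!$, so $\mu(\what^d_{a,b})=w^d_{a,b}$ (as recorded in the proposition of Section~\ref{sec:pollack} defining these elements), and by the lemma of Section~\ref{sec:pollack} it carries $L^m\Gr\u^\MEM_1$ into $P^m\Der^0\L(\Hdual)$. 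Now fix a normalized Hecke eigen cusp form $f$ of weight $2n+2$ and an integer $d\ge 2$, and let $\br_{f,d}\in\Gr^M_{-2d}\r^\geom_1\otimes\C$ be the element produced by Theorem~\ref{thm:motivic}, so that $\br_{f,d}\equiv\sum_a c_a\what^d_{a,b}\bmod L^3\f^\geom_1$. Since $\r^\geom_1=\ker\{\f^\geom_1\twoheadrightarrow\Gr\u^\geom_1\}$ and $\mu$ factors through $\Gr\u^\geom_1$, applying $\mu$ and writing $\br_{f,d}=\sum_a c_a\what^d_{a,b}+\rho$ with $\rho\in L^3\f^\geom_1$ gives $\sum_a c_a w^d_{a,b}=-\mu(\rho)\in P^3\Der^0\L(\Hdual)$. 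Thus the Pollack relation $\sum_a c_a w^d_{a,b}\equiv 0\bmod P^3$ of Theorem~\ref{thm:pollack} is the reduction modulo $P^3$ of the honest relation $\sum_a c_a w^d_{a,b}+\mu(\rho)=0$ in $\Der^0\L(\Hdual)$, whose correction term $\mu(\rho)$ lies in $P^3$. (For $d=2$ there is nothing to lift: Pollack's quadratic relations $\sum c_a[\e_{2a+2},\e_{2b+2}]=0$ already hold on the nose; this is why the corollary asserts only $d\ge3$.) Running the identical argument on the arithmetic relation in Theorem~\ref{thm:motivic} lifts the remaining Pollack relations, those attached to the trivial cuspidal cocycle, namely $2w^{2k}_{k-1,N+k-1}\equiv0\bmod P^3$.

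For the second assertion, note first that $[P^0,P^3]\subseteq P^3$, so $P^3\Der^0\L(\Hdual)$ is an ideal and $\Der^0\L(\Hdual)/P^3$ is a module over the image of $\mu$, hence over $\k$ through the splitting $\k\hookrightarrow\u^\MEM_1$. Since $\k$ is (topologically freely) generated by the $\ssigma_{2m-1}$, $m\ge2$, it suffices to show that each $\mu(\zz_{2m-1})$ is central in $\Der^0\L(\Hdual)/P^3$. Because $\zz_{2m-1}$ spans a trivial $\sl(H)$-submodule, $[\ee_0,\zz_{2m-1}]=0$, whence $[\mu(\zz_{2m-1}),\e_0]=0$ in $\Der^0\L(\Hdual)$. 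Applying $\mu$ to the arithmetic relation of Theorem~\ref{thm:motivic}, its right-hand side becomes a rational multiple of $w^{2m}_{m-1,n+m-1}$, which lies in $P^3\Der^0\L(\Hdual)$ by Pollack's trivial-cocycle identity, while the $L^3$-tail maps into $P^3$; hence $[\mu(\zz_{2m-1}),\e_{2k}]\in P^3\Der^0\L(\Hdual)$ for all $k\ge1$. Therefore $\ad\,\mu(\zz_{2m-1})$, a derivation of the Lie algebra $\Der^0\L(\Hdual)/P^3$, annihilates $\e_0$ and every $\e_{2n}$; invoking the fact that $\Der^0\L(\Hdual)/P^3$ is topologically generated as a Lie algebra by the classes of $\e_0$ and the $\e_{2n}$ ($n\ge1$), we conclude $\ad\,\mu(\zz_{2m-1})=0$, i.e.\ $\k$ acts trivially on $\Der^0\L(\Hdual)/P^3$.

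I expect the one delicate point to be the input used at the end of the second step: that $\Der^0\L(\Hdual)$ modulo $P^3$ is topologically generated by $\e_0$ and the $\e_{2n}$ ($n\ge1$) --- equivalently, that $P^2\Der^0\L(\Hdual)/P^3\Der^0\L(\Hdual)$ is spanned by the iterated brackets $w^d_{a,b}$, not merely by the Lie subalgebra they generate. I would verify this through the $\sl_2$-representation-theoretic bookkeeping already set up in Section~\ref{sec:pollack}: in $P$-degrees $0$ and $1$ the abelianization of $\Der^0\L(\Hdual)$ contributes only $\sl(H)$ and the copies of $S^{2n-2}H$ spanned by the $\e_{2n}$, and in $P$-degree $2$ the displayed decomposition of $[S^{2a}H(2a+1),S^{2b}H(2b+1)]$ together with Theorem~\ref{thm:pollack} exhibits every class as a bracket of two depth-one derivations. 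Should this generation fact prove inconvenient to justify cleanly, the weaker statement that $\k$ acts trivially on the image of $\u^\geom_1$ in $\Der^0\L(\Hdual)/P^3$ --- which is all that is used elsewhere in the paper --- is immediate from the arithmetic relations without it.
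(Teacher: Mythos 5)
The paper states this corollary without proof, as an immediate consequence of Theorem~\ref{thm:motivic}, so there is no line-by-line comparison to make; but your argument is the natural one, and the first half of it is complete and correct. Pushing $\br_{f,d}\in\r^\geom_1$ forward along the (bi)graded monodromy map $\mu$, which factors through $\Gr\u^\geom_1$, gives $0 = \sum_a c_a w^d_{a,b} + \mu(\rho)$ with $\mu(\rho)\in P^3$ because $\mu(L^3\f^\geom_1)\subseteq P^3$; this is exactly a relation among the $\e_0^j\cdot\e_{2n}$ in $\Der^0\L(\Hdual)$ whose reduction mod $P^3$ is Pollack's. Your remark that $d=2$ needs no lift, and the parallel treatment of the trivial-cocycle/arithmetic relation, are both right.

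For the second assertion your strategy is also the right one, and the part you verify cleanly does hold: $P^1\Der^0\L(\Hdual)/P^2$ is exactly $\bigoplus_{n\ge1}S^{2n-2}H(2n-1)$, spanned by the $\e_{2n}$ and their $\e_0$-translates (this drops out of Pollack's lemma together with the constraint $\delta(\theta)=0$ on $H_1(D^1\L)\cong\Sym\Hdual\cdot\theta$). What is genuinely delicate, as you flag, is the $P$-degree-$2$ step, and I do not think it follows by the bookkeeping you sketch. Concretely, in $W$-weight $-6$ one has $\dim\Gr^W_{-6}\Der^0\L(\Hdual)=2\cdot 18-30=6$ while the $\e_6$-orbit contributes only $\dim S^4H=5$; so $P^2\cap\Gr^W_{-6}\Der^0\L(\Hdual)$ is $1$-dimensional, a copy of $\Q(3)$. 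But every bracket $[\e_0^i\cdot\e_{2a+2},\e_0^j\cdot\e_{2b+2}]$ landing in $W$-weight $-6$ would need $a+b=1$ with $a,b\ge1$, which is impossible ($\e_2=\ad_\theta$ is central, so the $a$ or $b=0$ case gives zero). Thus in this degree the image of the bracket map from $\Lambda^2\Gr^P_1$ is zero, and the generation claim holds only if that $\Q(3)$ already sits inside $P^3$. That may well be true, but it is an extra computation, not a consequence of the decomposition of $[S^{2a}H,S^{2b}H]$ and Theorem~\ref{thm:pollack}. Either carry out that verification (in every $W$-degree) or, as you suggest, replace the second sentence of the corollary by the unconditional statement you prove directly: $\k$ acts trivially on the image of $\u^\geom_1$ in $\Der^0\L(\Hdual)/P^3$. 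That statement is immediate from $[\e_0,\mu(\zz_{2m-1})]=0$ together with $[\mu(\zz_{2m-1}),\e_{2k}]\in P^3$ (the arithmetic relations plus the trivial-cocycle Pollack relation), and it is all that the paper uses downstream.
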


\begin{proof}[Proof of Theorem~\ref{thm:motivic}]
The idea behind the proof is to bound the quadratic heads of the relations in
$\u^\MEM_1$ from above using Pollack's relations and from below using the cup
product in the Deligne cohomology of $\cG^\rel_1$.

For $\ast \in \{1,\uu\}$, denote the image of $\pi_1(\cG^\geom_\ast) \to
\Aut \p$ by $\cS_\ast^\geom$. Corollary~\ref{cor:elliptic_polylog} implies that
the coordinate ring of the image is an object of $\MTM$. 
Set
$$
\cS_\ast = \pi_1(\MTM)\ltimes \cS_\ast^\geom.
$$
Corollary~\ref{cor:structure} implies that there is a natural isomorphism
$\cS^\geom_\uu \cong \cS^\geom_1 \times \Ga(1)$. So it suffices to prove the
case $\ast = 1$.

Denote by $P^\dot$ the filtration of $\Aut \p$ induced by the filtration
$P^\dot$ of $\p$. It is obtained by exponentiating the filtration $P^\dot$ of
$\Der\p$ defined above. It restricts to a filtration $P^\dot$ of $\cS_\ast$. One
has the following factorizations
$$
\xymatrix{
\cGhat_\ast \ar[r] & \cG_\ast^\MEM \ar@{->>}[r]\ar[dr] &
\cS_\ast \ar@{->>}[r]\ar@{^{(}->}[d] & \cS_\ast/P^3 \ar@{^{(}->}[d] 
\cr
&& \Aut \p \ar[r] & \big(\Aut \p\big)/P^3
}
$$
of the monodromy representation. There are therefore maps
\begin{multline*}
H^\dot\big(\cS_\ast/P^3,S^{m}H(r)\big)
\to
H^\dot\big(\cG^\MEM_\ast,S^{m}H(r)\big)
\cr
\to
H^\dot\big(\cGhat_\ast,S^m H(r)\big) \cong
H^\dot_\cD\big(\cG^\rel_\ast,S^{m}H(r)\big)
\end{multline*}
that are compatible with cup products. Proposition~\ref{prop:deligne_coho} and
Theorems~\ref{thm:exts} and \ref{thm:monod} imply that each of these maps is an
isomorphism in degree 1 for all $m$ and $r$.

In the rest of the proof, $\e_d\in \{+,-\}$ is the sign of $(-1)^d$ and $\e_d'$
is the sign of $(-1)^{d+1}$.

Recall from \cite{brown:mmv} that when $n>0$, the Haberlund--Petersson inner
product induces a non-singular pairing
$$
Z^1_\cusp\big(\SL_2(\Z),S^{2n}H\big)^\pm \otimes
H^1\big(\M_{1,1}^\an,S^{2n}\H\big)^\mp \to \Q.
$$
under which the sequence (\ref{eqn:cuspidal_ses}) is dual to the sequence
$$
0 \to H^1_\cusp(\M_{1,1}^\an,S^{2n}\H)
\to H^1(\M_{1,1}^\an,S^{2n}\H) \to S^{2n}H/\im \ee_0 \to 0.
$$
The duality (Lemma~\ref{lem:quad_relns}) between cup products and quadratic
heads of the relations implies that the Pollack relations of degree $d$ with
$\SL_2$ highest weight $2n$ correspond to a surjection
$$
r_\cS : H^2\big(\cS_1/P^3,S^{2n}H(2n+d)\big) \to
\big[Z^1_\cusp\big(\SL_2(\Z),S^{2n}H\big)^{\e_d}\big]^\vee
\cong H^1\big(\M_{1,1}^\an,S^{2n}\H\big)^{\e_d'},
$$
where $[\blank]^\vee$ denotes dual, and that the restriction of $r_\cS$ to
the image of the cup product
\begin{multline*}
\bigoplus_{j+k=n+d-2}
H^1\big(\cS_1/P^3,S^{2j}H(2j+1)\big)\otimes
H^1\big(\cS_1/P^3,S^{2k}H(2k+1)\big)
\cr
\to H^2\big(\cS_1/P^3,S^{2n}H(2n+d)\big)
\end{multline*}
is an isomorphism.

By the results of Section~\ref{sec:vmhs}, there is an isomorphism
$$
H^\dot\big(\cGhat_1,S^{2n}H(2n+d)\big) \cong
H^\dot_\cD\big(\M_{1,1}^\an,S^{2n}H(2n+d)\big).
$$
Since the class of the Eisenstein series in
$H^1_\cD\big(\M_{1,1}^\an,S^{2n}\H(2n+1)\big)$ is $\Frbar_\infty$-invariant, the
image of the cup product lies in
$H^2_\cD\big(\M_{1,1}^\an,S^{2n}H_\R(2n+d)\big)^{\Frbar_\infty}$. Define $r_\cD$ to be the projection
\begin{align*}
H^2_\cD\big(\M_{1,1}^\an,S^{2n}H_\R(2n+d)\big)^{\Frbar_\infty}
&\cong
\Ext^1_\MHS(\R,H^1(\M_{1,1}^\an,S^{2n}H_\R(2n+d)))^{\Frbar_\infty}
\cr
&\cong H^1(\M_{1,1}^\an,S^{2n}\H_\R)^{\e_d'}
\cr
&\to H^1_\cusp\big(\M_{1,1}^\an,S^{2n}\H_\R\big)^{\e_d'}.
\end{align*}

Since the $H^1$'s of $\cS_1/P^3$, $\cG^\MEM_1$ and $\cGhat_1$ are all
isomorphic, we have the following commutative diagram:
{\tiny
$$
\xymatrix@C=-42pt{
&
\bigoplus_{j+k=n+d-2}
H^1\big(\cG^\MEM,S^{2j}H(2j+1)\big)\otimes
H^1\big(\cG^\MEM,S^{2k}H(2k+1)\big)
\ar@{->>}[dl]\ar@{->>}[d]\ar@{->>}[dr]
\cr
\text{image of cup prod}_\cS \ar@{^{(}->}[d] \ar[r] &	
\text{image of cup prod}_\MEM \ar@{^{(}->}[d] \ar[r] &	
\text{image of cup prod}_\cD \ar@{^{(}->}[d]		
\cr
H^2(\cS_1/P^3,S^{2n}H(2n+d)\big) \ar@{->>}[r]\ar@{->>}[d]^{r_\cS} &
H^2(\cG^\MEM_1,S^{2n}H(2n+d)\big) \ar[r]  &
H^2(\cGhat_1,S^{2n}H(2n+d)\big)^{\Frbar_\infty}  \ar[d]^{r_\cD}
\cr
H^1\big(\M_{1,1}^\an,S^{2n}\H_\Q\big)^{\e_d'} \ar@{.>}[rr]^\varphi
&&
H^1\big(\M_{1,1}^\an,S^{2n}\H_\R\big)^{\e_d'}.
}
$$
}

Brown's period computations \cite[Cor.~11.2]{brown:mmv} and the computation of
the cup product in \cite[\S8,\S10]{hain:db_coho} imply that there is a
homomorphism $\varphi$ that makes the diagram commute and is a multiple of the
adjoint of the Haberlund--Petersson pairing. It is therefore an isomorphism
after tensoring with $\R$. Since the restriction of $r_\cS$ to the image of
$\text{cup prod}_\cS$ is an isomorphism,
$$
\text{image of cup prod}_\cS \to \text{image of cup prod}_\MEM
$$
is injective. This implies that Pollack's relations lift to $\u_1^\MEM$.

The precise form of the arithmetic relation follows from
\cite[Thm.~10.5]{hain:db_coho} and the duality (Corollary~\ref{cor:quad_relns})
between cup products and the quadratic terms of the relations.
\end{proof}

As a corollary of the proof we obtain the following statement which relates
surjectivity of the cup product with standard conjectures. The point of this
result is that the degree 1 cohomology is motivic, so the image of the cup
product should be as well.

\begin{theorem}
\label{thm:h2_hodge}
The image of the composition
\begin{multline*}
\bigoplus_{j+k=n+d-2}
H^1\big(\cG_1^\MEM,S^{2j}H(2j+1)\big)\otimes
H^1\big(\cG_1^\MEM,S^{2k}H(2k+1)\big)
\cr
\to H^2\big(\cG_1^\MEM,S^{2n}H(2n+d)\big)
\to H^1(\M_{1,1}^\an,S^{2n}\H_\R\big)^{\e'}
\end{multline*}
of the cup product with the projection to the $\Frbar_\infty$ invariant part of
the real Deligne cohomology of $\M_{1,1}^\an$ is a $\Q$-form of
$H^2_\cD(\M_{1,1}^\an,S^{2n}\H_\R(2n+d)\big)^{\Frbar_\infty}$. Here, $\e'\in \{+,-\}$ is the sign of $(-1)^{d+1}$.
\end{theorem}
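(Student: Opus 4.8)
The statement is, in essence, the ``lower bound'' half of the analysis in the proof of Theorem~\ref{thm:motivic}, recast as a statement about the image of a cup product, so the plan is to isolate and repackage that argument. First I would fix the objects. By Theorem~\ref{thm:exts} each group $H^1(\cG_1^\MEM, S^{2k}H(2k+1))$ is one-dimensional over $\Q$ for $k\ge 1$ and vanishes for $k=0$, with generator the class dual to $\ee_{2k+2}$ (the elliptic polylogarithm class attached to the Eisenstein series $G_{2k+2}$). Since the cup product is a $\Q$-rational pairing in the tannakian category $\MEM_1$, its image $W\subseteq H^2(\cG_1^\MEM, S^{2n}H(2n+d))$ is a finite-dimensional $\Q$-vector space. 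Composing the cup product with the pullback $H^\dot(\cG_1^\MEM,\blank)\to H^\dot(\cGhat_1,\blank)$ along $\cGhat_1\to\cG_1^\MEM$, the identification of $\Ext$-groups with Deligne--Beilinson cohomology of Section~\ref{sec:vmhs}, the projection onto $\Frbar_\infty$-invariants, and the identifications furnished by Theorem~\ref{thm:ext2} and Proposition~\ref{prop:deligne_coho}, one obtains the $\Q$-linear map of the theorem, which factors through $W$ as
$$
\kappa : W \longrightarrow H^1(\M_{1,1}^\an, S^{2n}\H_\R)^{\e'} \cong H^2_\cD\big(\M_{1,1}^\an, S^{2n}\H_\R(2n+d)\big)^{\Frbar_\infty}.
$$
Since $W$ is finite-dimensional over $\Q$ and $\im\kappa$ is a $\Q$-subspace of the (finite-dimensional real) target, it suffices to prove that the $\R$-linear extension $\kappa\otimes_\Q\R$ is an isomorphism: this forces $\kappa$ injective, hence $\dim_\Q\im\kappa=\dim_\Q W=\dim_\R(\text{target})$, and makes $\im\kappa$ span the target over $\R$, which is exactly the assertion that $\im\kappa$ is a $\Q$-form.

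To analyse $\kappa\otimes\R$ I would factor it, exactly as in the proof of Theorem~\ref{thm:motivic}, through the monodromy quotients $\cG_1^\MEM\twoheadrightarrow\cS_1\twoheadrightarrow\cS_1/P^3$ and through $\cGhat_1$, reproducing the commutative square relating the images of the three cup products (over $\cS_1/P^3$, over $\cG_1^\MEM$, and in Deligne cohomology) together with the map $\varphi$ filling it in. Two inputs, already recorded in that proof, then do the work. First, Pollack's Theorem~\ref{thm:pollack}, via the duality between cup products and quadratic heads of relations (Lemma~\ref{lem:quad_relns}, Corollary~\ref{cor:quad_relns}), identifies the image of the cup product over $\cS_1/P^3$ under $r_\cS$ with all of $[Z^1_\cusp(\SL_2(\Z),S^{2n}H)^{\e_d}]^\ast\cong H^1(\M_{1,1}^\an,S^{2n}\H_\Q)^{\e'}$, where $\e_d$ is the sign of $(-1)^d$ and $\e'$ is the sign of $(-1)^{d+1}$, the parity labels matching as in the proof of Theorem~\ref{thm:pollack} so that no summand is lost. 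Second --- and this is the arithmetic heart --- Brown's computation \cite[Cor.~11.2]{brown:mmv} of the periods of the twice-iterated integrals of $G_{2j+2}$ and $G_{2k+2}$, combined with the explicit formula for the cup product in Deligne--Beilinson cohomology in \cite[\S8,\S10]{hain:db_coho}, shows that $\varphi$ is a nonzero scalar multiple of the adjoint of the Haberlund--Petersson pairing on the cuspidal part and, when $d$ is even, carries the Eisenstein line to a nonzero rational multiple of $\psi_{2n+2}$ times a product of odd zeta values. Consequently each cup product $G_{2j+2}\smile G_{2k+2}$ has, in each Hecke eigenspace $V_f$, component a rational multiple of one fixed vector whose coefficient is the (nonzero) Petersson period of $f$, and similarly on the Eisenstein line; hence $\im\kappa$ has $\Q$-dimension equal to $\dim_\R$ of the target and spans it over $\R$, i.e.\ $\kappa\otimes\R$ is an isomorphism.

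The remaining bookkeeping is routine: by Proposition~\ref{prop:deligne_coho} the target has $\R$-dimension $\dim\fM_{2n+2}^o$ when $d$ is odd and $1+\dim\fM_{2n+2}^o$ when $d$ is even, and the non-vanishing of Petersson inner products and of Brown's double-Eisenstein periods is what makes the two estimates meet. The main obstacle is the second input above: translating the abstract cup product $H^1\otimes H^1\to H^2$ in $\MEM_1$ into Brown's explicit period matrix for iterated Eisenstein integrals and checking that the induced pairing is non-degenerate in the strong sense needed (the image is an $N$-dimensional $\Q$-subspace, not merely a spanning set). Everything else is formal given results already in the paper; I would only need to keep careful track of the normalizations --- the powers of $2\pi i$ and the factorials in the definitions of $\ee_{2n}$, $\psi_{2n}$ and the modular symbols $\sr_f$ --- so that the translation between \cite{brown:mmv}, \cite{hain:db_coho} and \cite{hain:kzb} is consistent, since a scalar slip there would not affect the $\Q$-form conclusion but would corrupt the explicit constants.
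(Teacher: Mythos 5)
Your proposal is correct and follows the paper's approach: the theorem is, as the paper notes, a by-product of the proof of Theorem~\ref{thm:motivic}, and you isolate exactly the relevant commutative diagram relating the cup products over $\cS_1/P^3$, $\cG_1^\MEM$ and $\cGhat_1$, the surjectivity of $r_\cS$ onto $H^1(\M_{1,1}^\an,S^{2n}\H_\Q)^{\e'}$ via Pollack's theorem and Lemma~\ref{lem:quad_relns}/Corollary~\ref{cor:quad_relns}, and the identification of $\varphi$ (from Brown's period computation and \cite[\S8,\S10]{hain:db_coho}) as a nonzero multiple of the adjoint of the Haberlund--Petersson pairing. One minor logical wobble: you reduce to showing $\kappa\otimes_\Q\R$ is an isomorphism, but your closing ``hence $\im\kappa$ has $\Q$-dimension equal to $\dim_\R$ of the target and spans it over $\R$, i.e.\ $\kappa\otimes\R$ is an isomorphism'' conflates ``$\im\kappa$ is a $\Q$-form'' with ``$\kappa\otimes\R$ is an isomorphism'' (the latter needs injectivity of $\kappa$, which does hold but via the paper's observation that the restriction of $r_\cS$ to the image of $\text{cup prod}_\cS$ is an isomorphism); since what you actually establish at that point is already the theorem's conclusion, this does not create a gap.
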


The corresponding result also holds for $\ast = \uu,2$. The cohomology classes dual to the relations account for the degree 2 real Deligne cohomology of $\M_{1,\ast}^\an$ that is in the image of the cup product. For example, when $\ast = \uu$, the relation $[\ee_2,\ee_{2n}]=0$ is dual to the copy of $\R$ in $H^2_\cD(\M_{1,\uu}^\an,\R(2n))$ when $n\ge 2$ given in Proposition~\ref{prop:deligne_coho}. Similarly, when $\ast = 2$, the relations that give the action of the $\ee_{2n}$ on the generators $A,T$ (i.e., the formula for $\e_{2n}$ of $\L(\Hdual)$) correspond to copies of $\R$ in $H^2_\cD(\M_{1,2}^\an,S^{2m+1}\H_\R(2r))^{\Frbar_\infty}$.

From the discussion in Section~\ref{sec:presentations}, we know that the map
\begin{equation}
\label{eqn:head}
\r_\ast/[\r_\ast,\f_\ast] \to \r_\ast/(\r_\ast\cap L^3\f_\ast),
\end{equation}
which takes a minimal relation to its quadratic head, is injective if and only
if the cup product
\begin{multline}
\label{eqn:prod}
\bigoplus_{j+k=n+d-2}
H^1\big(\cG_\ast^\MEM,S^{2j}H(2j+1)\big)\otimes
H^1\big(\cG_\ast^\MEM,S^{2k}H(2k+1)\big)
\cr
\to H^2\big(\cG_\ast^\MEM,S^{2n}H(2n+d)\big)
\end{multline}
is surjective for all $n$ and $d$. Since, by Theorem~\ref{thm:h2_hodge}, the
image of the cup product under
$$
\reg_\Q :
H^2\big(\cG_1^\MEM,S^{2n}H(2n+d)\big)
\to H^2_\cD(\M_{1,1}^\an,S^{2n}\H_\R(2n+d)\big)^{\Frbar_\infty}
$$
is a $\Q$-form of
$$
H^2_\cD(\M_{1,1}^\an,S^{2n}\H_\R(2n+d)\big)^{\Frbar_\infty} \cong 
H^1(\M_{1,1}^\an,S^{2n}\H_\R\big)^{\e'},
$$
Conjecture~\ref{conj:h2}(\ref{item:hodge}) is equivalent to the surjectivity of
(\ref{eqn:head}).

\begin{corollary}
\label{cor:h2_hodge}
Suppose that $\ast = 1$.
The following statements are equivalent:
\begin{enumerate}

\item Conjecture~\ref{conj:h2}(\ref{item:hodge}) is true;

\item every non-trivial minimal relation in $\u_\ast^\MEM$ has a non-trivial
quadratic head --- that is, (\ref{eqn:head}) is injective;

\item the cup product (\ref{eqn:prod}) is surjective for all $n$ and $d$.

\end{enumerate}
\end{corollary}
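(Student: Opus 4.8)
\textbf{Proof proposal for Corollary~\ref{cor:h2_hodge}.}
The plan is to deduce the equivalence of the three statements from the machinery already assembled, essentially by stringing together the duality of Section~\ref{sec:presentations}, the regulator comparison of Theorem~\ref{thm:h2_hodge}, and the interpretation of Conjecture~\ref{conj:h2}(\ref{item:hodge}) given just above the statement. The equivalence of (ii) and (iii) is immediate and requires no new input: by the discussion surrounding (\ref{eqn:head}) and (\ref{eqn:prod}), and by Lemma~\ref{lem:quad_relns} together with Corollary~\ref{cor:quad_relns}, the map (\ref{eqn:head}) sending a minimal relation to its quadratic head is the graded dual of the cup product (\ref{eqn:prod}); hence (\ref{eqn:head}) is injective in a given bidegree precisely when (\ref{eqn:prod}) is surjective in the dual bidegree, and ranging over all $n$ and $d$ gives the stated equivalence. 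One small point to spell out here is that $H^1(\cG^\MEM_1,S^m\H(r))$ and $H^2(\cG^\MEM_1,S^m\H(r))$ are finite dimensional for each $(m,r)$ — this follows from Theorem~\ref{thm:exts} and the degreewise finiteness of $\u^\MEM_1$ — so that graded duals behave as expected and the equivalence is not merely formal nonsense about infinite-dimensional spaces.

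For the equivalence with (i), I would argue as follows. By the identification preceding Conjecture~\ref{conj:h2} of $H^2(\cG^\MEM_1,S^{2n}\H(2n+d))\otimes\R$ with $H^2_\cD(\M_{1,1}^\an,S^{2n}\H_\R(2n+d))^{\Frbar_\infty}$ — valid because, in degree~$1$, $\reg_\R$ is an isomorphism after tensoring with $\R$, and because the source of the cup product is spanned by Eisenstein classes which are $\Frbar_\infty$-invariant — statement (\ref{item:hodge}) of Conjecture~\ref{conj:h2} says exactly that $\reg_\R$ is an isomorphism in degree~$2$ after tensoring the source with $\R$. Now Theorem~\ref{thm:h2_hodge} asserts that the image of the cup product (\ref{eqn:prod}) (for $\ast=1$), composed with the regulator $\reg_\Q$ to real Deligne cohomology, is already a $\Q$-form of the full target $H^2_\cD(\M_{1,1}^\an,S^{2n}\H_\R(2n+d))^{\Frbar_\infty}\cong H^1(\M_{1,1}^\an,S^{2n}\H_\R)^{\e'}$. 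Consequently the composite
$$
\text{(image of cup product)}\otimes\R \to H^2(\cG_1^\MEM,S^{2n}\H(2n+d))\otimes\R \xrightarrow{\ \reg_\R\ } H^2_\cD(\M_{1,1}^\an,S^{2n}\H_\R(2n+d))^{\Frbar_\infty}
$$
is surjective. Therefore $\reg_\R$ itself is surjective in degree~$2$ if and only if the first arrow is surjective, i.e.\ if and only if the cup product (\ref{eqn:prod}) surjects onto $H^2(\cG_1^\MEM,S^{2n}\H(2n+d))$. Since the right-hand side of the degree-$2$ regulator is already of the predicted dimension (a $\Q$-form sits inside it with full rank), surjectivity of $\reg_\R$ after tensoring is equivalent to its being an isomorphism after tensoring, which is (\ref{item:hodge}). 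Running this over all $n,d\ge 0$ with $d\ge 2$ (the cases $d\le 1$ and the odd-$m$ cases being vacuous by Theorem~\ref{thm:ext2} and the vanishing results of Section~\ref{sec:coho}) yields (i)$\Leftrightarrow$(iii), and combined with (ii)$\Leftrightarrow$(iii) above this completes the proof.

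The step I expect to be the main obstacle is not a single hard lemma but rather the bookkeeping needed to be sure that Theorem~\ref{thm:h2_hodge} is being applied with exactly the right normalizations: one must check that the $\Q$-form produced there is the same $\Q$-structure as the one coming from $\Ext^2_{\MEM_1}$ (equivalently $H^2(\cG_1^\MEM,-)$) under $\reg_\Q$, and that the real Frobenius eigenspace decomposition is tracked consistently between the Deligne cohomology side and the cuspidal-cocycle side (where the signs $\e_d,\e_d'$ must match those in Proposition~\ref{prop:deligne_coho} and Theorem~\ref{thm:pollack}). Once that compatibility is in hand — and it is exactly what the proof of Theorem~\ref{thm:motivic} establishes via the commuting diagram relating $\mathrm{cup\ prod}_\cS$, $\mathrm{cup\ prod}_\MEM$, and $\mathrm{cup\ prod}_\cD$ — the corollary is a formal consequence. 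A secondary point to handle with care is that statement (ii) refers to \emph{all} minimal relations, including the arithmetic ones $[\zz_{2m+1},\ee_{2n}]$; but since $\k$ is free there are no relations among the $\zz$'s modulo the ideal generated by the $\ee$'s, so every minimal relation has its quadratic head landing in $\Lambda^2 H_1(\f_1)$, and the geometric/arithmetic split is already accounted for by the decomposition of $H^2(\cG_1^\MEM,-)$ used above; thus (ii) really is equivalent to the blanket surjectivity in (iii).
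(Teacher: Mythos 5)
Your proposal is correct and follows essentially the same route as the paper: (ii)$\Leftrightarrow$(iii) is the duality of quadratic heads and cup products from Section~\ref{sec:presentations}, and (i)$\Leftrightarrow$(iii) is read off from Theorem~\ref{thm:h2_hodge}, which exhibits the regulator image of the cup product as a $\Q$-form of the target. One sentence in your (i)$\Leftrightarrow$(iii) argument is off as written: ``$\reg_\R$ itself is surjective in degree~$2$ if and only if the first arrow is surjective'' is false --- once the composite is surjective, $\reg_\R$ is surjective unconditionally. What you want is: Theorem~\ref{thm:h2_hodge} says $\reg_\Q$ restricted to the cup-product image $C$ is injective with $\reg_\Q(C)$ a $\Q$-form of $D:=H^2_\cD(\cdot)^{\Frbar_\infty}$, so $\reg_\R|_{C\otimes\R}:C\otimes\R\to D$ is already an isomorphism; hence if $C=H^2(\cG_1^\MEM,\cdot)$ then $\reg_\R$ is an isomorphism, and conversely if $\reg_\R$ is an isomorphism then $\reg_\Q$ is injective on all of $H^2$, forcing $C\otimes\R=H^2\otimes\R$ and thus $C=H^2$. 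Your final sentence about ``surjectivity of $\reg_\R$ $\ldots$ equivalent to its being an isomorphism'' gestures at this but doesn't quite land; the fix above makes it clean.
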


\section{Problems, Questions and Conjectures}

\subsection{$\ell$-adic Analogues}

The existence of the lifts of the Pollack relations to $\u^\MEM_1$ was proved
using Hodge theory. One can ask whether one can also establish their existence
using $\ell$-adic methods and whether the $\ell$-adic analogues of
Theorem~\ref{thm:h2_hodge} and Corollary~\ref{cor:h2_hodge} hold.

\begin{conjecture}
For all prime numbers $\ell$ and all $n,d>0$, the cup product
\begin{multline*}
\bigoplus_{j+k=n+d-2}
H^1\big(\cG^{\cris,\ell}_1,S^{2j}H_{\Ql}(2j+1)\big)\otimes
H^1\big(\cG^{\cris,\ell}_1,S^{2k}H_{\Ql}(2k+1)\big)
\cr
\to H^2\big(\cG^{\cris,\ell}_1,S^{2n}H_\Ql(2n+d)\big)
\cr
\overset{\text{projn}}{\longrightarrow}
\Hfte^1\Big(G_\Q,H^1_\cusp(\M_{1,1/\Qbar},S^{2n}\H_\Ql(2n+d)\big)\Big)
\end{multline*}
is surjective and the natural homomorphism
$$
H^2\big(\cG^\MEM_1,S^{m}H(r)\big)\otimes \Ql
\to H^2\big(\cG^{\cris,\ell}_1,S^{m}H_\Ql(r)\big)
$$
is an isomorphism.
\end{conjecture}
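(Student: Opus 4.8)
The plan is to run the sandwich argument of the proof of Theorem~\ref{thm:motivic} in the $\ell$-adic setting, with the Deligne--Beilinson cohomology of $\cG_1^\rel$ replaced by the crystalline Selmer groups $\Hfte^1(G_\Q,H^1_\cusp(\M_{1,1/\Qbar},S^{2n}\H_\ell(2n+d)))$, and with Brown's period computation \cite{brown:mmv} replaced by an $\ell$-adic non-vanishing statement for the regulators of Eisenstein symbols. The first step is the $\ell$-adic analogue of Theorem~\ref{thm:monod}: the monodromy representation of $\u_1^{\cris,\ell}$ on the $\ell$-adic realization of $\p$, induced by the action of $\cG_1^{\cris,\ell}$ on the pro-object of $\MEM_1^\ell$ supplied by the $\ell$-adic form of Corollary~\ref{cor:elliptic_polylog}, should send $\ee_0$ to $\e_0$ and $\ee_{2n}$ to $2\e_{2n}/(2n-2)!$. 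This ought to follow from Nakamura's computation \cite{nakamura} of the Galois action on $\pi_1(E'_\tate)$ together with the compatibility of the relative, weighted and crystalline completions recorded in Proposition~\ref{prop:surjective}; since $\ee_{2n}$ is characterised in $H_1(\Gr\u_1^\MEM)$ by its pairing with the Eisenstein class $\psi_{2n}$, and this characterisation is preserved by the realization functors, one only has to check that $\e_{2n}$ spans the (unique) copy of $S^{2n-2}H(2n-1)$ in $\Gr^W_{-2n}\Der^0\L(\Hdual)$ on both sides.

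Granting this, the upper bound is formal and proceeds exactly as in Theorem~\ref{thm:motivic}. Pollack's relations (Theorem~\ref{thm:pollack}) are identities in $\Der^0\L(\Hdual)/P^3$ of a purely combinatorial nature, so they remain valid after tensoring with $\Ql$, and the $\ell$-adic monodromy factors through $\cS_1^{\cris,\ell}/P^3\hookrightarrow(\Aut\p)/P^3$, where $\cS_1^{\cris,\ell}$ denotes the image of $\cG_1^{\cris,\ell}$ in $\Aut\p$. This produces, for each $n$ and each $d\ge 2$, a surjection $H^2(\cS_1^{\cris,\ell}/P^3,S^{2n}H_\ell(2n+d))\twoheadrightarrow\big(Z^1_\cusp(\SL_2(\Z),S^{2n}H)^{\e_d}\big)^\ast\otimes\Ql$ whose restriction to the image of the $\ell$-adic cup product is an isomorphism. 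Composing with the factorization $\cG_1^{\cris,\ell}\to\cS_1^{\cris,\ell}\to\cS_1^{\cris,\ell}/P^3$ of the monodromy and with the injection $H^2(\cG_1^{\cris,\ell},S^{2n}H_\ell(2n+d))\hookrightarrow\Hfte^1(G_\Q,\Het^1(\M_{1,1/\Qbar},S^{2n}\H_\ell(2n+d)))$ established in the previous section, one sees that the image of the cup product in the conjecture is contained in a subspace of $\Hfte^1(G_\Q,H^1_\cusp(\cdots))$ of the same ``Pollack size'' as in the Hodge case.

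The lower bound is the hard part, and is precisely the step for which, as noted after Theorem~\ref{thm:motivic}, no $\ell$-adic argument is presently available. What one must prove is an $\ell$-adic period computation: that the composite of the cup product with the regulator $\reg_\ell$ lands on a full $\Ql$-form of $\Hfte^1(G_\Q,H^1_\cusp(\M_{1,1/\Qbar},S^{2n}\H_\ell(2n+d)))$, equivalently that the $\ell$-adic regulators of cup products of pairs of Eisenstein symbols pair non-degenerately, via the Haberlund--Petersson pairing, with cuspidal cohomology. I would attack this along one of three routes: (a) develop an $\ell$-adic or crystalline theory of twice-iterated integrals of Eisenstein series --- an $\ell$-adic analogue of the elliptic KZB calculations behind \cite{brown:mmv} --- and match its coefficients against $\ell$-adic modular-symbol periods; (b) import known non-vanishing of $\ell$-adic regulators of modular forms, via Kato's Euler system and the explicit reciprocity law, Beilinson--Kato, and the Eisenstein-class computations of Kings, Loeffler and Zerbes, in order to control the image of $\reg_\ell$ directly in Galois cohomology; or (c) pin the relevant classes down by a Soul\'e-element and Coleman-map computation resting on Nakamura's explicit formulas, in the spirit of the degree-one argument of Proposition~\ref{prop:cris_h1}. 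Any such input closes the sandwich: the image of the $\ell$-adic cup product then equals the ``Pollack subspace'', which is in turn all of $\Hfte^1(G_\Q,H^1_\cusp(\cdots))$, giving the asserted surjectivity, together with the Eisenstein contribution accounting for the $\Ql(-1)$-line when $r$ is even.

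Finally, the comparison isomorphism $H^2(\cG_1^\MEM,S^m H(r))\otimes\Ql\to H^2(\cG_1^{\cris,\ell},S^m H_\ell(r))$ would follow at once. The natural map is induced by the $\ell$-adic realization functor $\MEM_1\otimes\Ql\to\MEM_1^\ell$, it is compatible with cup products, and it is an isomorphism on $H^1$ (Theorem~\ref{thm:exts} and Proposition~\ref{prop:cris_h1} agree after tensoring with $\Ql$). By Theorem~\ref{thm:h2_hodge} the source is generated by cup products of $H^1$-classes together with the Eisenstein line, and by the surjectivity just established the target is generated by the same data; since the dual-of-cup-product description of Corollary~\ref{cor:quad_relns} exhibits both spaces with the same dimension --- the ``Pollack size'' plus the Eisenstein contribution --- the map is simultaneously injective and surjective. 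Thus the entire conjecture reduces to step~(a)--(c), i.e.\ to producing an $\ell$-adic analogue of Brown's period computation; I expect this to be the main obstacle, since it demands a theory of $\ell$-adic iterated integrals of Eisenstein series, or a sufficiently explicit Euler-system input, that does not yet exist in the literature.
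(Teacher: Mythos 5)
This statement is a \emph{conjecture} in the paper, not a theorem, and the paper offers no proof of it; it is stated precisely because an $\ell$-adic proof is not available. Indeed, the paper says explicitly (at the end of the History section and again in the remarks preceding the conjecture) that the lifting of Pollack's relations to $\u_1^\MEM$ is only known via Hodge theory and Brown's period computations, that there is no $\ell$-adic proof, and that it would be very interesting to have one. Your proposal is therefore not a proof of the statement, and you acknowledge this yourself in the third paragraph.

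That said, as an assessment of the state of the problem your write-up is accurate and well aligned with what the paper implicitly expects. The $\ell$-adic analogue of Theorem~\ref{thm:monod} that you invoke in the first paragraph is plausible and of roughly the difficulty you describe: the derivations $\e_{2n}$ are rigid, $\Gr^W_{-2n}\Der^0\L(\Hdual)$ contains a unique copy of $S^{2n-2}H(2n-1)$, and one expects Nakamura's computations together with the compatibilities of Proposition~\ref{prop:surjective} to pin the normalisation; but this normalisation is exactly the kind of fact that in the Hodge setting was extracted from the KZB connection, and the paper does not supply the $\ell$-adic version, so this too is a gap. Your ``upper bound'' paragraph is correct and formal, being a direct transcription of the corresponding step of Theorem~\ref{thm:motivic}. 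The genuine obstruction is, as you say, the ``lower bound'': one needs to know that the image of the $\ell$-adic cup product of Eisenstein classes under $\reg_\ell$ is all of $\Hfte^1(G_\Q,H^1_\cusp(\M_{1,1/\Qbar},S^{2n}\H_\ell(2n+d)))$, and each of your proposed routes (a)--(c) is a substantial and currently open project rather than a lemma one can quote. Route (b) via Kato's Euler system and the explicit reciprocity law is probably the most promising, but it gives information about the map $\Hfte^1 \to D_\cris$ rather than directly about the image of the particular ``Eisenstein cup product'' classes, and matching the two would require an $\ell$-adic analogue of the Haberlund--Petersson non-degeneracy computation at the heart of \cite{brown:mmv}.

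One smaller point to flag: your final paragraph deduces the comparison isomorphism $H^2(\cG_1^\MEM,S^mH(r))\otimes\Ql \to H^2(\cG_1^{\cris,\ell},S^mH_\ell(r))$ by a ``same dimension, same generators'' argument, but this step presupposes that $\Hfte^1(G_\Q,H^1_\cusp(\cdots))$ has the same dimension over $\Ql$ as the $\Frbar_\infty$-invariant real Deligne cohomology has over $\R$, which is (a consequence of) a Bloch--Kato-type statement and is not unconditional. So even granted (a)--(c), the equality of dimensions you use is itself conjectural, and the conclusion should be phrased conditionally on that input as well.
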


\subsection{Does $\p$ Generate $\MEM_1$?}

Another approach to understanding $\cG^\MEM_1$ is to ask whether it is
faithfully represented in $\Aut \p$.

\begin{question}
Is the homomorphism $\cG^\MEM_1 \to \Der \p$ injective? Equivalently,
does $\p$ generate $\MEM_1$ as a tannakian category?
\end{question}

If true, this would be an elliptic analogue of Brown's Theorem \cite{brown:mtm}.
The proof \cite{takao} of the Oda Conjecture implies the kernel of $\cG^\MEM_1
\to \Aut \p$ lies in $\cG_1^\geom$, so the conjectured statement is equivalent
to injectivity of $\cG^\geom_1 \to \Aut \p$.

\subsection{Eisenstein Quotients}

One can ask how faithfully $\MEM_1$ is represented in Hodge theory. To explain
this, recall from \cite[\S 16]{hain:modular} that $\cG^\eis_1$ denotes the
maximal Tate quotient of $\cG^\rel_1$ in the category of affine groups with MHS.
Since the Hodge realization of $\cG^\geom_1$ is a mixed Hodge-Tate structure,
the quotient mapping $\cG^\rel_1 \to \cG^\geom_1$ factors through a surjective
homomorphism $\cG^\eis_1 \to \cG^\geom_1$.

\begin{conjecture}
The natural surjection $\cG^\eis_1 \to \cG^\geom_1$ is an isomorphism.
\end{conjecture}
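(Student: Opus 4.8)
\smallskip
\noindent\textbf{Proof proposal.}
First I would record why the surjection $\cG^\eis_1 \to \cG^\geom_1$ exists: by Theorem~\ref{thm:DR_splitting} (cf.\ the remark following its proof) the Hodge realization of every object of $\MEM_1$ is an Eisenstein variation of MHS over $\M_{1,1}^\an$, i.e.\ a representation of the maximal Tate quotient $\cG^\eis_1$ of $\cG^\rel_1$, so the surjection $\cG^\rel_1 \to \cG^\geom_1$ of Theorem~\ref{thm:density} factors through $\cG^\eis_1$. Write $\u^\eis_1$, $\u^\geom_1$ for the Lie algebras of the prounipotent radicals; the induced map $\u^\eis_1 \to \u^\geom_1$ is surjective, with closed kernel $\n$ concentrated in strictly negative weights. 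The plan is to prove that the induced map on $H_1$ is an isomorphism and that the induced map on $H_2$ is surjective. Granting both, the five-term exact sequence of $0 \to \n \to \u^\eis_1 \to \u^\geom_1 \to 0$ forces $\n/[\u^\eis_1,\n] = 0$, hence $\n = [\u^\eis_1,\n]$; since $\u^\eis_1$ has strictly negative weights and is complete, iterating this identity places $\n$ inside $W_{-k}\u^\eis_1$ for every $k$, so $\n = 0$.

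For the statement on $H_1$: by Theorem~\ref{thm:exts}, the isomorphism (\ref{eqn:homology}) and its corollary, $H_1(\u^\geom_1) \cong \bigoplus_{n>0} S^{2n}H(2n+1)$. On the Eisenstein side, $H^1(\cG^\eis_1, S^{2n}\H(r))$ is the Hodge--Tate part of $H^1(\cG^\rel_1, S^{2n}\H(r)) \cong H^1(\M_{1,1}^\an, S^{2n}\H(r))$ (Proposition~\ref{prop:coho} and \cite{hain:modular}); by the Eichler--Shimura description in Theorem~\ref{thm:eichler_shimura}, the unique Hodge--Tate line there is spanned by $\psi_{2n+2}$ and occurs exactly for $r = 2n+1$, while $H^1(\M_{1,1}^\an,\Q(r)) = 0$ for $r > 0$. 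Hence $H_1(\u^\eis_1) \cong \bigoplus_{n>0} S^{2n}H(2n+1)$ as well. A surjection of bigraded $\SL(H)$-modules between objects that are finite dimensional and agree in every bidegree is an isomorphism; concretely it carries the generator $\ee_{2n+2}$ of $\u^\eis_1$, dual to the $(2n+2)$-nd elliptic polylogarithm (a genuine Eisenstein variation), to the corresponding generator of $\u^\geom_1$. Fixing compatible $\GL(H)$-invariant sections, one presents both as quotients of the same free bigraded Lie algebra $\f_1^\geom = \L\big(\bigoplus_{n>0} S^{2n}H(2n+1)\big)$ of Section~\ref{sec:gens}, with relation ideals $\r^\eis_1 \subseteq \r^\geom_1$.

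For the statement on $H_2$: the square whose horizontal maps are the cup products $\Lambda^2 H^1 \to H^2$ for $\cG^\eis_1$ and $\cG^\geom_1$ and whose vertical maps are induced by $\cG^\eis_1 \to \cG^\geom_1$ commutes by naturality of cup products, and its left vertical arrow is the isomorphism just established. By Corollary~\ref{cor:h2_hodge}, Conjecture~\ref{conj:h2}(\ref{item:hodge}) (for $\ast = 1$) is equivalent to surjectivity of the cup product (\ref{eqn:prod}), i.e.\ to $H^2(\cG_1^\MEM, S^{2n}H(2n+d))$ being spanned by cup products of degree-$1$ classes; since these cup products of Eisenstein classes land in the geometric part and the degree-$1$ cohomology of $\cG^\eis_1$, $\cG^\geom_1$ and $\cG_1^\MEM$ all coincide (there being no $\SL(H)$-invariants in $S^{2n}H$ for $n>0$), each class of $H^2(\cG^\geom_1, -)$ then lifts along the right vertical arrow, so $H^2(\cG^\eis_1, -) \to H^2(\cG^\geom_1, -)$ is surjective. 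This completes the deduction of the conjecture from Conjecture~\ref{conj:h2}(\ref{item:hodge}).

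The main obstacle is therefore precisely this last input: what is genuinely needed is an unconditional proof that every minimal relation of $\u^\geom_1$ has a non-trivial quadratic head, equivalently (Corollary~\ref{cor:h2_hodge}) that the cup product is surjective. Absent Conjecture~\ref{conj:h2}, a conjecture-free alternative would be to lift the Brown--Pollack relations $\br_{f,d}$ of Theorem~\ref{thm:motivic}, together with their full tails, directly into $\r^\eis_1$ --- for instance by pushing the monodromy representation $\u^\eis_1 \to \Der^0\L(\Hdual)$ through the filtration $P^\dot$ and matching it degree by degree against the corresponding computation in $\u^\geom_1$ --- but controlling the higher-degree tails inside the Eisenstein Lie algebra requires structural information that is not presently available.
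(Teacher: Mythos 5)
What you are proving is a \emph{conjecture} in the paper; the authors give no proof of it. The only observation they make about its status is the remark, in the same subsection, that Conjecture~\ref{conj:h2}(\ref{item:hodge}) would imply it, using Theorem~\ref{thm:motivic} and the fact that Pollack's cuspidal relations lift (with tails) to $\u^\eis_1$. Your proposal is a fleshed-out version of exactly that one-sentence remark, and you correctly acknowledge at the end that what you have is a conditional reduction, not an unconditional proof. In that sense the approach matches the paper's.

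One imprecision worth flagging: the surjection $\cG^\eis_1\twoheadrightarrow\cG^\geom_1$ induces \emph{inflation} $H^\dot(\cG^\geom_1,V)\to H^\dot(\cG^\eis_1,V)$, not a map in the direction you write, so the concluding claim that ``$H^2(\cG^\eis_1,-)\to H^2(\cG^\geom_1,-)$ is surjective'' is not the natural map. What the five-term exact sequence of $0\to\n\to\u^\eis_1\to\u^\geom_1\to 0$, combined with the $H_1$-isomorphism you establish, actually requires is surjectivity of $H_2(\u^\eis_1)\to H_2(\u^\geom_1)$, or dually injectivity of inflation in degree $2$. This follows from the argument you outline once the direction is fixed: present both Lie algebras as quotients of the free $\f_1^\geom$, so that the relation ideal of $\u^\eis_1$ is contained in that of $\u^\geom_1$; Corollary~\ref{cor:h2_hodge} (equivalent to Conjecture~\ref{conj:h2}(\ref{item:hodge})) says the Pollack--Brown relations $\br_{f,d}$ generate the relation ideal of $\u^\geom_1$; Theorem~\ref{thm:motivic}, via Brown's period computations, puts those same relations in the relation ideal of $\u^\eis_1$; hence the two ideals coincide and $\n=0$. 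That is the mechanism the authors have in mind in the single sentence they devote to this implication.
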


If true, this implies, via Corollary~\ref{cor:structure}, that $\cG^\MEM_\ast
\cong \pi_1(\MTM)\ltimes \cG_\ast^\eis$ for $\ast \in \{1,\uu,2\}$.

Denote the Lie algebra of the prounipotent radical of $\cG^\eis_1$ by
$\u_1^\eis$. Brown's period relations imply that Pollack's geometric (i.e.,
cuspidal) relations lift to relations in $\u^\eis_1$. If
conjecture~\ref{conj:h2}(\ref{item:hodge}) is true, then $\cG^\eis_1 \to
\cG^\geom_1$ is an isomorphism.

\subsection{Massey Products}

The higher order terms of the relations $\br_{f,d}$ correspond to non-vanishing
matric Massey products\footnote{See \cite{may} for the definition of matric
Massey products. The point for us is that the cohomology of a pronilpotent Lie
algebra is generated from its degree 1 cohomology by matric Massey products.} in
$H^2_\cD\big(\M_{1,1}^\an,S^{2n}\H(2n+d)\big)$ of the classes
$$
\bG_{2m+2} \in H^1_\cD\big(\M_{1,1}^\an,S^{2m}\H(2m+1)\big)
$$
of the normalized Eisenstein series $G_{2m+2}$. One approach to determining the
relations in $\cG^\MEM_1$ is to compute these.

Pollack \cite{pollack} found explicit lifts of the first two cubic relations
to $\Der^0\L(H)$. The additional terms imply that there are non trivial matric
Massey triple products of degree 2 in the cohomology of $\cG_1^\MEM$.
Specifically, as Pollack points out, the cubic relation
\begin{multline*}
80[\e_{12},[\e_4,\e_0]] + 16[\e_4,[\e_{12},\e_0]]
-250[\e_{10},[\e_6,\e_0]] - 125[\e_6,[\e_{10},\e_0]]
\cr
+ 280[\e_8,[\e_8,\e_0]] -462[\e_4,[\e_4,\e_8]] -1725[\e_6,[\e_6,\e_4]] = 0
\end{multline*}
that corresponds to the normalized cusp form $\Delta$ of weight 12, can be
rewritten (after dividing by $-40$) as
$$
4(\w^3_{1,5} + \w^3_{5,1}) - 25(\w^3_{2,4} + \w^3_{4,2}) + 42\,\w^3_{3,3}
+ \frac{231}{20}[\e_4,[\e_4,\e_8]] +\frac{345}{8}[\e_6,[\e_6,\e_4]] = 0.
$$
The first 3 terms comprise the quadratic head of the relation. It corresponds to
the multiple $4(x^9y + xy^9) -25(x^7y^3+x^5y^7) + 42\, x^5y^5$ of
$\br_\Delta^-$. The terms of the cubic tail imply that the projection of the
Massey triple products\footnote{Since there are no cusp forms of weight $<12$,
these Massey products have no indeterminacy.}
$$
\frac{231}{20}\langle \bG_4,\bG_4, \bG_8 \rangle
\text{ and }
\frac{345}{8} \langle \bG_6,\bG_6, \bG_4 \rangle
$$
to the cuspidal summand of $H^2_\cD\big(\M_{1,1}^\an,S^{10}\H(13)\big)$ under
the multiplication maps
\begin{multline*}
S^2\H(3) \otimes S^2\H(3) \otimes S^6\H(7) \to S^{10}\H(13)
\text{ and }
\cr
S^4\H(5) \otimes S^4\H(5) \otimes S^2\H(3) \to S^{10}\H(13)
\end{multline*}
are equal. It also implies that (with the correct normalization) these equal the
images of
$$
4\, \bG_{12}\smile \bG_4 = -25\, \bG_{10}\smile\bG_6 = 42\, \bG_8\smile \bG_8
$$
in $H^2_\cD\big(\M_{1,1}^\an,S^{10}\H(13)\big)$.

\begin{problem}
Compute (matric) Massey products in
$H^2_\cD\big(\M_{1,1}^\an,S^{2n}\H(2n+d)\big)$ of the classes of Eisenstein
series. Use them to compute higher order terms of the relations $\br_{f,d}$.
\end{problem}

The indeterminacy in the tails of the relations $\br_{f,d}$ and the
indeterminacy in the matric Massey products correspond.

\subsection{Motivic Sheaves}

Ayoub \cite{ayoub}, using the work of Voevodsky \cite{voevodsky} has constructed
a category of motivic sheaves. Arapura \cite{arapura}, using the work of Nori,
has also constructed a category of motivic sheaves. These constructions are not
known to be equivalent.

\begin{question}
Are universal elliptic motives the realization of motivic sheaves over
$\M_{1,\ast/\Z}$?
\end{question}

This makes sense when $\ast \in \{\uu,2\}$ as $\M_{1,\ast/\Z}$ is a scheme 
in these cases. When $\ast = 1$, define a motivic sheaf over $\M_{1,1/\Z}$
to be a $\Gm$-invariant motivic sheaf on $\M_{1,\uu/\Z}$ that is trivial
on $\Gm$ orbits.

An affirmative answer to this question will imply that the specialization of a
universal mixed elliptic motive (as a set of compatible realizations) to a point
$[E] \in \M_{1,1}(F)$ corresponding to an elliptic curve over $F$ will actually
be the set of realizations of a motive over $F$.

\part{Relation to $\MTM$ and the Genus $0$ Story}
\label{part:genus0}

Universal mixed elliptic motives are related to the study of the unipotent
fundamental group of $\Pminus$ via degeneration to the nodal cubic $\Ebar_0$.
This is because the nodal cubic can be identified with $\P^1_{/\Z}$ with $0$ and
$\infty$ identified. The corresponding group is $E_0 = \Gm_{/\Z}$ and the
associated punctured elliptic curve $E_0'$ is $\Gm-\{\id\} = \Pminus$. Unless
otherwise stated, in this part we will work with $\Q$-de~Rham realizations and
its canonical bigrading constructed in Section~\ref{sec:splitting_DR}.

\section{Degeneration to the Nodal Cubic}
\label{sec:pminus}

In this section we recall some basic formulas from \cite[Part~3]{hain:kzb} which
were deduced from the elliptic KZB connection \cite{kzb,levin-racinet}. Denote
the natural parameter on $\P^1_{\Z}$ by $w$, so that
$$
\P^1_{\Z} - \{0,1,\infty\} = \Spec \Z[w,1/w,1/(w-1)].
$$
Denote the tangent vector $\partial/\partial w$ at $w=1$ by $\ww_o$. We can
identify the nodal cubic $E_0$ (the fiber of the universal elliptic curve over
$q=0$) with $\P^1$ with $0$ and $\infty$ identified.

The morphism $(\P^1_{\Z} - \{0,1,\infty\},\ww_o) \to
(E_{\tate'/\Z[[q]]},\ww_o)$ induces a homomorphism (cf.\ \cite[\S18]{hain:kzb})
\begin{equation}
\label{eqn:homom}
\pi_1^\un(\Pminus,\ww_o) \to \pi_1^\un(E_\tate',\ww_o)
\end{equation}
on unipotent fundamental groups. The induced Lie algebra homomorphism
$$
\Lie\pi_1^\un(\Pminus,\ww_o) \to \Lie\pi_1^\un(E_\tate',\ww_o)
$$
is {\em not} a morphism in $\MEM_\uu$, but it is a morphism of $\MTM$ and is
thus $\pi_1(\MTM)$-equivariant.

To write down formulas, we now identify the de~Rham realizations of $\k$ and
$\Lie\pi_1^\un(\Pminus,\ww_o)$ with the completions of their $M_\dot$ graded
quotients
$$
\L(\ssigma_3,\ssigma_5,\ssigma_7,\ssigma_9,\dots) \text{ and }
\L(X_0,X_1) \cong \L(X_0,X_1,X_\infty)/(X_0+X_1+X_\infty),
$$
respectively via the canonical $\Q$-de~Rham splitting of mixed Tate motives.
Likewise, we identify $\p:= \Lie\pi_1^\un(E_\tate',\ww_o)^\DR$ with the
completion of its associated $(M_\dot, W_\dot)$-bigraded module via the
$\Q$-de~Rham splitting of mixed elliptic motives constructed in
Section~\ref{sec:splitting_DR}. To fix notation, recall from
Section~\ref{sec:bases} that
$$
\Hdual^\DR = H_1(E'_\tate)^\DR = \Q A \oplus \Q T
$$
where $T$ spans the copy of $\Q(0)^\DR$ and $A$ spans the copy of $\Q(1)^\DR$.
The associated bigraded of $\p$ is naturally isomorphic to $\L(\Hdual^\DR) =
\L(A,T)$.

In \cite[\S18]{hain:kzb} it is shown that, after identification with associated
graded Lie algebras as above, the map
\begin{equation}
\label{eqn:pminus}
\L(X_0,X_1,X_\infty)^\wedge/(X_0+X_1+X_\infty) \to \L(A,T)^\wedge
\end{equation}
induced by (\ref{eqn:homom}) is given by
\begin{align}
\label{eqn:inclusion}
X_0 &\mapsto R_0 = \bigg(\frac{T}{e^T-1}\bigg)\cdot A, \cr
X_1 &\mapsto R_1 = [T,A], \cr
X_\infty &\mapsto R_\infty = \bigg(\frac{T}{e^{-T}-1}\bigg)\cdot A,
\end{align}
where $\cdot$ denotes the adjoint action of $\Q\ll A,T\rr = U\L(A,T)$ on
$\L(A,T)$. This homomorphism is injective and $\k$-equivariant.\footnote{Similar
and related formulas appear in \cite[Prop.~4.9]{kzb} and
\cite[Prop.~3.8]{enriquez}.}

Recall from Section~\ref{sec:localsys_H} that $\sigma_o$ denotes the positive
integral generator of the fundamental group $\pi_1(\D^\ast,\tate)$ of the
$q$-disk, which is isomorphic to $\Q(1)$. It acts on $\pi_1^\un(E_\tate',\ww_o)$
and thus on $\p$. The image of $\sigma_o$ in $\Aut \p$ lies in the prounipotent
subgroup $W_{-2}\Aut\p$ and thus has a logarithm. Set
$$
N = N^\DR := \frac{1}{2\pi i} \log \sigma_o \in \Der\p.
$$
In \cite[\S13]{hain:kzb} it is shown that, after identification with the
associated bigraded module,
\begin{equation}
\label{eqn:monod_log}
N = \sum_{m\ge0} (2m-1)\frac{B_{2m}}{(2m)!}\, \e_{2m}
\in W_0 \Gr^M_{-2}\Der^0\L(A,T)^\wedge,
\end{equation}
where $B_{2m}$ denotes the $2m$th Bernoulli number. It commutes with the action
of $\k$ as it spans a copy of $\Q(1)$.

\section{Depth Filtrations}
\label{sec:depth}

There is a natural depth filtration in the elliptic case which generalizes the depth filtration in classical case, $\Pminus$. The classical and elliptic depth filtrations are quite closely related via the weight filtration $W_\dot$. Unfortunately, the link between them is not as straightforward as one might hope, as we shall see.

In this section we set $E=E_\tate$ and $U=\Pminus$. We will denote the Lie
algebra of $\pi_1^\un(X,\ww_o)$ by $\p(X)$, when $X=U$, $\Gm$, $E$, and
$E'$.

Recall that the depth filtration $D^\dot$ of the Lie algebra
$\pi_1^\un(U,\ww_o)$ is defined by
$$
D^d\p(U) = 
\begin{cases}
\p(U) & d = 0,\cr
\ker\big\{\p(U) \to \p(\Gm)\big\} & d = 1,\cr
L^d D^1\p(U) & d > 1,
\end{cases}
$$
where $L^d$ denotes the $d$th term of the lower central series. In the elliptic
case, define\footnote{This is the filtration $P^\dot$ defined in
Section~\ref{sec:pollack}.}
$$
D^d\p(E') = 
\begin{cases}
\p(E') & d = 0,\cr
\ker\big\{\p(E') \to \p(E)\big\} & d = 1,\cr
L^d D^1\p(E') & d > 1.
\end{cases}
$$
These filtrations are motivic and thus preserved by $\k$. The first is a
filtration of $\p(U)$ in $\MTM$ and the second is a filtration of $\p(E')$
in $\MEM_\uu$. They can thus be described on the associated graded modules: $D^d
\L(X_0,X_1)$ is the ideal spanned by the Lie monomials whose degree in $X_1$ is
$\ge d$; and $D^d\L(A,T)$ is the ideal spanned by the Lie monomials in which
$\theta := [T,A]$ occurs at least $d$ times.

\begin{remark}
Note that $\Gr^M_\dot \p(E')$ is naturally isomorphic to the $T$-adic completion
of $\L(A,T)$.
\end{remark}

The morphisms
$$
\xymatrix{
U \ar@{^{(}->}[r]\ar@{^{(}->}[d] & \Gm\ar@{^{(}->}[d] \cr	   
E' \ar@{^{(}->}[r] & E						
}
$$
preserve the base point $\ww_o$ and thus induce homomorphisms
$$
\xymatrix{
\p(U) \ar[r]\ar@{^{(}->}[d] & \p(\Gm)\ar@{^{(}->}[d] \cr	
\p(E') \ar[r] & \p(E)
}
$$
That the vertical homomorphisms are injective follows from the fact that a
Lie subalgebra of a free Lie algebra is free. It follows that $\p(U) \to
\p(E')$ preserves $D^\dot$.

\begin{proposition}
\label{prop:depth_strict}
The inclusion $\p(U) \hookrightarrow \p(E')$ is strictly compatible with
the depth filtrations. That is,
$$
D^d \p(U) = \p(U) \cap D^d\p(E').
$$
\end{proposition}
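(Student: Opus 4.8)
The plan is to reduce Proposition~\ref{prop:depth_strict} to an injectivity statement on associated graded Lie algebras. One inclusion, $D^d\p(U)\subseteq\p(U)\cap D^d\p(E')$, is immediate from the fact, established just above, that $\iota\colon\p(U)\hookrightarrow\p(E')$ preserves $D^\dot$. For the reverse inclusion I would use that, since $\iota$ preserves the depth filtrations, it induces a homomorphism $\Gr^D\iota\colon\Gr^D_\dot\p(U)\to\Gr^D_\dot\p(E')$ of graded Lie algebras, and that it is enough to show $\Gr^D\iota$ is injective. Indeed, if some $\xi\in\p(U)$ had $\iota(\xi)\in D^d\p(E')$ but $\xi\notin D^d\p(U)$, then (as $\bigcap_eD^e\p(U)=0$) there is a largest integer $e<d$ with $\xi\in D^e\p(U)$; the class of $\xi$ in $\Gr^D_e\p(U)$ is nonzero, whereas its image under $\Gr^D\iota$ is the class of $\iota(\xi)\in D^d\p(E')\subseteq D^{e+1}\p(E')$, hence zero, contradicting injectivity.

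To analyse $\Gr^D\iota$ I would work throughout with the associated bigraded models coming from the canonical $\Q$-de~Rham splitting, so that $\p(U)$ is identified with $\L(X_0,X_1)^\wedge$, $\p(E')$ with $\L(A,T)^\wedge$, and $\iota$ is the map of (\ref{eqn:inclusion}): $X_1\mapsto R_1=\theta=[T,A]$ and $X_0\mapsto R_0=\bigl(T/(e^T-1)\bigr)\cdot A=A-\tfrac12\theta+\cdots$; in particular $R_0\equiv A \bmod D^1\p(E')$. The two depth filtrations then have transparent associated gradeds. On the genus-zero side, $D^1\p(U)$ is the ideal generated by $X_1$, which is freely generated by the elements $h_i:=\ad_{X_0}^iX_1$, $i\ge0$, so $\Gr^D_0\p(U)=\Q X_0$ and $\Gr^D_{\ge1}\p(U)$ is the free Lie algebra on the depth-$1$ classes $\bar h_i$. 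On the elliptic side, $D^1\p(E')=\L(A,T)'$, and a standard fact about rank-two free Lie algebras (two applications of Lazard elimination) is that $\L(A,T)'$ is freely generated by the elements $g_{ij}:=\ad_A^i\ad_T^j\theta$, $i,j\ge0$; hence $\Gr^D_0\p(E')=\Q A\oplus\Q T$ and $\Gr^D_{\ge1}\p(E')$ is the free Lie algebra on the depth-$1$ classes $\bar g_{ij}$.

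The key computation is then to pin down $\Gr^D\iota$ on generators. Since $R_0-A\in D^1\p(E')$, the operator $\ad_{R_0}$ differs from $\ad_A$ by a depth-raising operator, so $\iota(h_i)=\ad_{R_0}^i(\theta)\equiv\ad_A^i(\theta)=g_{i0}\bmod D^2\p(E')$; also $\Gr^D_0\iota$ carries $\bar X_0$ to $\bar A$. Therefore $\Gr^D\iota$ is injective on $\Gr^D_0$, and on $\Gr^D_{\ge1}$ it is the homomorphism of free Lie algebras sending the free generating set $\{\bar h_i\}$ bijectively onto the subset $\{\bar g_{i0}:i\ge0\}$ of the free generating set $\{\bar g_{ij}\}$ of $\Gr^D_{\ge1}\p(E')$; since the subalgebra generated by part of a free generating set is free on that subset, this restriction is injective. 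As $\Gr^D\iota$ respects the depth grading, it is injective overall, and the proposition follows from the reduction in the first paragraph.

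The main obstacle I anticipate is bookkeeping rather than anything conceptual: one must reconcile the two descriptions of $D^\dot$ (as the lower central series of the respective ``first-depth'' subalgebra, and as the $X_1$-count, resp.\ $\theta$-count, filtration used above), recall precisely the free generation of $\L(A,T)'$ by the $\ad_A^i\ad_T^j\theta$ and that this set descends to a basis of $H_1\bigl(\L(A,T)'\bigr)$, verify that the filtrations are separated so that the reduction step is legitimate, and keep the sign/normalization conventions ($\theta=[T,A]$, the $\Q$-de~Rham versus Betti bases) consistent. None of these steps is difficult, but getting all of them aligned is where the care is needed.
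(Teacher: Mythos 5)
Your proof is correct and takes essentially the same route as the paper: both pass to the associated graded models $\L(X_0,X_1)\hookrightarrow\L(A,T)$, identify the depth-one subalgebras as free Lie algebras, and reduce strictness to the fact that the induced map $H_1\big(D^1\L(X_0,X_1)\big)\to H_1\big(D^1\L(A,T)\big)$ hits part of a free generating set. Your bookkeeping via two Lazard eliminations (free generators $g_{ij}=\ad_A^i\ad_T^j\theta$ of $\L(A,T)'$, with $\bar h_i\mapsto\bar g_{i0}$) is a more explicit packaging of the paper's use of the free $\Sym\Hdual$-module structure of $H_1\big(\L(A,T)'\big)$, and your preliminary reduction to injectivity of $\Gr^D\iota$ makes precise the paper's terser ``it follows.'' One small note: the paper's proof states that $X_0^m\cdot X_1$ maps to $T^m\cdot\theta$ in $H_1$, whereas your computation (correctly) gives $A^m\cdot\theta$, since $R_0\equiv A\bmod D^1$; the conclusion is unaffected because either family is linearly independent in the free module $\Sym\Hdual\cdot\theta$, but your version is the accurate one.
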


\begin{proof}
Since $\Gr^W_\dot$ and $\Gr^M_\dot$ are exact, we can work with the associated gradeds. The
Lie algebra $D^1\L(X_0,X_1)$ is freely generated by the set $\{X_0^m\cdot
X_1:m\ge 0\}$. Since it is free, it follows that for all $m\ge 1$, $D^m
\L(X_0,X_1) = L^m D^1\L(X_0,X_1)$. This remains true after completion when we
replace ``generated'' by ``topologically generated''. The Lie algebra $D^1
\L(A,T)$ is also free. Its abelianization is naturally isomorphic to the free
$\Sym \Hdual$-module generated by $\theta$. The map (\ref{eqn:pminus}) induces
the inclusion
$$
H_1\big(D^1\L(X_0,X_1)^\wedge\big) \to H_1\big(D^1\L(A,T)^\wedge\big)
$$
that takes $X_0^m\cdot X_1$ to $T^m\cdot\theta$ and is therefore injective. It
follows that
$$
L^m D^1\L(X_0,X_1)^\wedge = \L(X_0,X_1)^\wedge \cap L^m D^1\L(A,T)^\wedge,
$$
which implies the result.
\end{proof}

The weight filtration $W_\dot$ of $\p(E')$ restricts to a filtration of
$\p(U)$:
$$
W_{-m}\p(U) := \p(U)\cap W_{-m}\p(E').
$$
The three filtrations $D^\dot$, $M_\dot$ and $W_\dot$ are related by the
``convolution formula''.\footnote{The {\em convolution} of two filtrations
$F^\dot$ and $G^\dot$ of a vector space $V$ is the filtration $F\ast G$ defined
by $(F\ast G)^n V := \sum_{j+k=n} F^jV\cap G^k V$, There is a natural
isomorphism $\Gr^n_{F\ast G} V \cong \bigoplus_{j+k=n} \Gr^j_F\Gr^k_G V$.}

\begin{proposition}
\label{prop:convolution}
For all $m\ge 0$, we have
$$
W_{-m}\p(U) = \sum_{n+d\ge m} M_{-2n}\p(U)\cap D^d\p(U),
$$
so that
$$
\Gr^W_{-m}\p(U) \cong \bigoplus_{n+d=m} \Gr^M_{-2n}\Gr_D^d \p(U).
$$
\end{proposition}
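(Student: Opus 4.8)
The plan is to pass to the canonical $\Q$-de~Rham picture of Section~\ref{sec:splitting_DR}. There $\p(E')^\DR$ is identified with $\L(A,T)^\wedge$, bigraded so that $T\in\Gr^W_{-1}\Gr^M_0$ and $A\in\Gr^W_{-1}\Gr^M_{-2}$, with $W_\dot$ the lower central series; $\p(U)^\DR$ is identified with $\L(X_0,X_1)^\wedge$, with $X_0,X_1$ spanning $\Gr^M_{-2}$, so that $M_{-2n}\p(U)^\DR=L^n\L(X_0,X_1)^\wedge$ and $D^d\p(U)^\DR$ is the closure of the span of the Lie monomials of $X_1$-degree $\ge d$; and the inclusion $\p(U)^\DR\hookrightarrow\p(E')^\DR$ is the injective homomorphism $X_0\mapsto R_0$, $X_1\mapsto R_1$ of~(\ref{eqn:inclusion}). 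Write $\tilde W_{-m}\p(U):=\sum_{n+d\ge m}M_{-2n}\p(U)\cap D^d\p(U)$ for the convolution filtration, so the assertion is that $W_\dot\p(U)$ and $\tilde W_\dot\p(U)$ coincide. Since $M_\dot$, $D^\dot$, $\tilde W_\dot$ and $W_\dot\p(U)=\p(U)\cap W_\dot\p(E')$ are all filtrations of $\p(U)$ by sub-objects in $\MTM$ and the $\Q$-de~Rham realization is faithful and exact, it suffices to prove the equality after applying $(\cdot)^\DR$; the displayed isomorphism of graded quotients then follows from the standard formula $\Gr^n_{F\ast G}V\cong\bigoplus_{j+k=n}\Gr^j_F\Gr^k_GV$ for the graded quotients of a convolution of filtrations, applied to $M_\dot$ and $D^\dot$.

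One inclusion is a degree count: $R_0\in W_{-1}\L(A,T)^\wedge$ and $R_1=[T,A]\in W_{-2}\L(A,T)^\wedge$, so a Lie monomial in $X_0,X_1$ with $p$ letters $X_0$ and $q$ letters $X_1$ maps into $W_{-(p+2q)}$; as such a monomial lies in $M_{-2n}\cap D^d$ exactly when $p+q\ge n$ and $q\ge d$, whence $p+2q\ge n+d$, this gives $\tilde W_{-m}\p(U)^\DR\subseteq W_{-m}\p(U)^\DR$. For the reverse inclusion I would compare leading terms. Write $R_0=A+r_0$ with $r_0\in W_{-2}\L(A,T)^\wedge$ (and $R_1=\theta:=[T,A]$ exactly); then the image of a bihomogeneous element $\xi_{p,q}\in\L(X_0,X_1)$ of bidegree $(p,q)$ lies in $W_{-(p+2q)}$, with class $\overline{\xi_{p,q}}$ in $\Gr^W_{-(p+2q)}=\L_{p+2q}(A,T)$, where $\overline{(\cdot)}\colon\L(X_0,X_1)\to\L(A,T)$ is the Lie algebra homomorphism $X_0\mapsto A$, $X_1\mapsto\theta$ (replace each $R_0$ by its leading term $A$ and each $R_1$ by $\theta$). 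Hence for $\xi=\sum_{p,q}\xi_{p,q}\in\p(U)^\DR$ and $\mu:=\min\{\,p+2q:\xi_{p,q}\ne 0\,\}$, the image of $\xi$ lies in $W_{-\mu}\L(A,T)^\wedge$ with class in $\Gr^W_{-\mu}$ equal to $\overline{\eta}$, where $\eta:=\sum_{p+2q=\mu}\xi_{p,q}\ne 0$. Granting that $\overline{(\cdot)}$ is injective, $\overline{\eta}\ne 0$, so $\xi\notin W_{-\mu-1}$; thus $\xi\in W_{-m}\p(U)^\DR$ forces $\mu\ge m$, i.e.\ $\xi\in\tilde W_{-m}\p(U)^\DR$, as required.

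The crux is the injectivity of $\overline{(\cdot)}$. Since it sends $X_1$ into $\L(A,T)'$ and $X_0$ to $A$, it carries $D^1\L(X_0,X_1)=\ker(\L(X_0,X_1)\to\Q X_0)$ into $\L(A,T)'=\ker(\L(A,T)\to\L(A,T)^{\mathrm{ab}})$ and is injective on $\Q X_0$, so it is enough to prove it injective on $D^1\L(X_0,X_1)$. As recalled in the proof of Proposition~\ref{prop:depth_strict}, $D^1\L(X_0,X_1)$ is the free Lie algebra on $\{\ad_{X_0}^a(X_1):a\ge 0\}$, and $\overline{(\cdot)}$ sends $\ad_{X_0}^a(X_1)$ to $\ad_A^a(\theta)$. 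The elements $\ad_A^a(\theta)$ lie in the free Lie algebra $\L(A,T)'$, and their classes in $H_1(\L(A,T)')\cong(\Sym\Hdual)\cdot\theta$ are linearly independent, as they occupy distinct $\sl(H)$-weights; since $[\L(A,T)',\L(A,T)']$ maps to $0$ in $H_1(\L(A,T)')$, these classes remain linearly independent in the abelianization of the subalgebra $\mathfrak h$ they generate, so $\{\ad_A^a(\theta)\}$ is a free generating set of $\mathfrak h$. Hence the homomorphism $D^1\L(X_0,X_1)\to\mathfrak h$ induced by $\overline{(\cdot)}$ is an isomorphism of free Lie algebras, and in particular $\overline{(\cdot)}$ is injective.

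I expect the main obstacle to be precisely this injectivity of $\overline{(\cdot)}$ — that is, ruling out cancellation among the leading $W_\dot$-terms of the images of the bihomogeneous components of an element of $\p(U)^\DR$. The remaining points — the degree count, the routine handling of completed filtrations, and the passage from equality of filtrations to equality of graded quotients — are straightforward. Once the injectivity is in hand, the two inclusions give $W_{-m}\p(U)=\sum_{n+d\ge m}M_{-2n}\p(U)\cap D^d\p(U)$, and the convolution formula yields $\Gr^W_{-m}\p(U)\cong\bigoplus_{n+d=m}\Gr^M_{-2n}\Gr_D^d\p(U)$.
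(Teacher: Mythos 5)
Your proof is correct and follows the same strategy as the paper's: identify $\p(U)^\DR$ with $\L(X_0,X_1)^\wedge$ inside $\p(E')^\DR\cong\L(A,T)^\wedge$, obtain the inclusion of the convolution filtration into $W_\dot$ by the bidegree count $a+2b$, and then establish the reverse inclusion. Where you diverge is in how the reverse inclusion is justified. The paper introduces the auxiliary filtration $G_\dot$ (your $\tilde W_\dot$), notes by the convolution formula that $\Gr^G_{-m}$ is spanned by the bidegree-$(a,b)$ monomials with $a+2b=m$, and then asserts without further argument that these same monomials span $\Gr^W_{-m}\L(X_0,X_1)$. Unwound, that assertion \emph{is} the reverse inclusion, and it is not automatic: it requires exactly that the leading-term homomorphism $\overline{(\cdot)}\colon\L(X_0,X_1)\to\L(A,T)$, $X_0\mapsto A$, $X_1\mapsto\theta$, be injective, so that the lowest-$W$-degree parts of the bihomogeneous components of an element of $\p(U)^\DR$ cannot cancel. (That this is a genuine issue can be seen from a toy example: the subalgebra of $\L(x,y)$ generated by $x$ and $x+[x,y]$ has associated graded strictly larger than the subalgebra generated by the leading terms of its generators, since those leading terms both equal $x$.) You correctly isolate this injectivity as the crux, and your proof of it --- reducing to $D^1\L(X_0,X_1)$, using that it is free on $\{\ad_{X_0}^a(X_1)\}$, Shirshov--Witt, and the $\Sym\Hdual$-module description of $H_1\big(D^1\L(A,T)\big)$ --- is sound. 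So your argument is not merely right; it makes explicit a step that the paper handles very tersely.
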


\begin{proof}
Identify $\p(U)$ with $\L(X_0,X_1)^\wedge$ and $\p(E')$ with $\L(A,T)^\wedge$.
We first show that if $m=n+d$, then 
$$
M_{-2n}\p(U)\cap D^d\p(U) \subseteq W_{-m}\p(E').
$$
Suppose that $w$ is homogeneous element of $\L(X_0,X_1)$ of bidegree $(a,b)$ in
$X_0$ and $X_1$. Then $w \in W_{-2n}\cap D^d$ if and only if $b\ge d$ and $a+b
\ge n$. These conditions imply that $a+2b\ge m$. On the other hand, the
corresponding word in $\L(A,T)^\wedge$ lies in $W_{-a-2b}\L(A,T)^\wedge$,
which is contained in $W_{-m}$ when $a+2b\ge m$. It follows that
$$
W_{-m}\p(U) \supseteq
\sum_{n+d\ge m} M_{-2n}\p(U)\cap D^d\p(U).
$$

To prove the reverse inclusion, define a filtration $G_\dot$ on
$\L(X_0,X_1)^\wedge$ by
$$
G_{-m}\L(X_0,X_1)^\wedge = \sum_{n+d\ge m} M_{-2n}\L\cap D^d\L(X_0,X_1)^\wedge.
$$
The first part implies that the identity $\big(\L(X_0,X_1)^\wedge,G_\dot\big)
\to \big(\L(X_0,X_1)^\wedge,W_\dot\big)$ is filtration preserving. To prove
$G_\dot = W_\dot$, it suffices to show that the identity induces an isomorphism
on associated gradeds. An easy linear algebra argument implies that
$$
\Gr^G_{-m}\L(X_0,X_1) \cong \bigoplus_{n+d=m} \Gr^M_{-2n}\Gr_D^d \L(X_0,X_1).
$$
This implies that $\Gr^G_{-m}\L(X_0,X_1)$ is spanned by the monomials $w$ in
$X_0$ and $X_1$ of bidegree $(a,b)$ with $a+2b=m$. Since these also span
$\Gr^W_{-m}\L(X_0,X_1)$, the identify induces an isomorphism on associated
gradeds.
\end{proof}

\begin{remark}
One might suspect that one can invert this formula to get a formula for $D^\dot$
in terms of $M_\dot$ and $W_\dot$. However, while it is true that
$$
\Gr_D^d\L(X_0,X_1) = \sum_{m-n=d} \Gr^W_{-m}\Gr^M_{-2n}\L(X_0,X_1),
$$
it is {\em not} true that $D^d\L(X_0,X_1) =
\bigoplus_{m-n=d} W_{-m}\L(X_0,X_1) \cap M_{-2n}\L(X_0,X_1)$.
The point being that the right hand side is {\em not} the convolution of
$W_\dot$ and $M_\dot$.
\end{remark}

\begin{remark}
The proposition does not hold if $U$ is replaced by $E'$. For example,
$$
A^{n-1}T^{m-n-1}\cdot\theta \in \Gr^M_{-2n}\Gr^W_{-m}\L(\Hdual).
$$
On the other hand, it lies in $D^1\L(\Hdual)$ and projects to a non-trivial
element of $\Gr_D^1\L(\Hdual)$. So the relation $n+d=m$ between depth and the
weights holds if and only if the element is of the form $A^{n-1}\cdot\theta$.
\end{remark}

The depth filtrations on $\p(E')$ and $\p(U)$ induce depth filtrations
on their derivation algebras in the standard way. These are motivic as the
depth filtrations on $\p(E')$ and $\p(U)$ are.

Define the Lie algebra of {\em nodal derivations} $\Der^N\p(E')$ of $\p(E')$
to be \label{def:derN}
$$
\big\{
\delta \in \Der\p(E') : [\delta,N]=0,\ \delta(\log\sigma_o)=0
\text{ and }\delta\big(\p(U)\big)\subset \p(U)
\big\}.
$$
This is a pro-object of $\MTM$. Define the {\em extendable derivations} $\Der^E
\p(U)$ to be the image of the restriction map \label{def:derE}
$$
\Der^N \p(E') \to \Der \p(U).
$$
It is also a pro-object of $\MTM$. The image of the natural action of $\k$
on $\p(E')$ lands in $\Der^N\p(E')$, so that we have the commutative diagram
$$
\xymatrix{
\k \ar[r]^(.36){\phi_E} \ar[rd]_(.42){\phi_\P} & \Der^N\!\p(E') \ar[d]^\res \cr
& \Der^E\p(U).
}
$$
The kernel of the restriction map $\res$ is spanned by $\e_0$. Since this has
weight $0$, the restriction map is an isomorphism after applying $W_{-1}$. The
depth filtrations of $\Der\p(E')$ and $\Der\p(U)$ induce depth filtrations
on $\Der^N\p(E')$ and $\Der^E \p(U)$, respectively.

The following useful result was proved by Pollack \cite[4.5]{pollack}.

\begin{lemma}[Pollack]
\label{lem:pollack}
If $\delta \in \Der^0\L(A,T)$, then $\delta(A) \in D^d \L(A,T)$ if and only if
$\delta(T) \in D^d\L(A,T)$. \qed
\end{lemma}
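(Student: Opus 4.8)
The plan is to pass to the associated graded Lie algebra $\Gr_D^\dot \L(A,T)$ for the depth filtration. Since $D^\dot$ is a decreasing filtration by ideals with $\bigcap_d D^d=0$, membership $x\in D^d$ is controlled by the depth valuation $v(x):=\sup\{d:x\in D^d\}$, and $v(x)$, together with the leading term $\bar x\in\Gr_D^{v(x)}$, can be computed inside the graded object using only the graded bracket. The hypothesis $\delta\in\Der^0\L(A,T)$ says $\delta([T,A])=0$, which, $\delta$ being a derivation, is exactly the identity
$$
[\delta A,\,T]=[\delta T,\,A].
$$
So it suffices to show that, granted this identity, $v(\delta A)=v(\delta T)$ (away from the one exceptional direction $\e_0$, which as noted just above spans $\ker(\res)$ and is the only source of asymmetry between $A$ and $T$).

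First I would record the structure of $\Gr_D^\dot\L(A,T)$. Its degree-$0$ part is the abelianization $\Gr_D^0=\Q A\oplus\Q T$, and, since $D^d=L^d(D^1)$ with $D^1=\L(A,T)'$, the part $\bigoplus_{d\ge1}\Gr_D^d$ is the lower-central-series associated graded of $\L(A,T)'$. By the subalgebra theorem $\L(A,T)'$ is free, and (as used in the proof of Proposition~\ref{prop:convolution}) its abelianization is the free $\Sym\Hdual$-module on $\theta=[T,A]$, i.e.\ $\Gr_D^1\L(A,T)\cong\Q[A,T]$ as a $\Q[A,T]$-module via $\ad_A^i\ad_T^j\theta\leftrightarrow A^iT^j$. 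Hence $\bigoplus_{d\ge1}\Gr_D^d\L(A,T)$ is the free Lie algebra $\L(W)$ on $W:=\Q[A,T]$, graded by Lie-word length $=$ depth, and the adjoint actions of $A$ and $T$ descend to the derivations $D_A,D_T$ of $\L(W)$ induced by multiplication by $A$, resp.\ $T$, on $W$.

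The key technical input is then: if $\phi\colon W\to W$ is injective, the derivation $D_\phi$ of $\L(W)$ with $D_\phi|_W=\phi$ is injective on each homogeneous component $\L^d(W)$. For this one uses the embedding $\L^d(W)\hookrightarrow W^{\otimes d}$, under which $D_\phi$ acts as $\sum_{k=1}^d\mathrm{id}^{\otimes(k-1)}\otimes\phi\otimes\mathrm{id}^{\otimes(d-k)}$; with $W=\Q[A,T]$ and $\phi=$ multiplication by $A$ (resp.\ $T$), identify $W^{\otimes d}$ with the polynomial ring $\Q[A_1,T_1,\dots,A_d,T_d]$, where $D_\phi$ becomes multiplication by $A_1+\cdots+A_d$ (resp.\ $T_1+\cdots+T_d$), which is injective because the ring is a domain. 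Applying this for $d\ge1$ shows $D_A,D_T$ are injective on every $\Gr_D^d\L(A,T)$ with $d\ge1$; on $\Gr_D^1=\Q[A,T]$ they are just multiplication by $A$, $T$, again injective.

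Finally I would run the leading-term comparison on $[\delta A,T]=[\delta T,A]$. If $v(\delta A)=a\ge1$, the depth-$a$ component of $[\delta A,T]$ is $[\,\overline{\delta A},\bar T\,]=-D_T(\overline{\delta A})\neq0$ by injectivity of $D_T$ on $\Gr_D^a$, so $v([\delta A,T])=a$; symmetrically $v([\delta T,A])=t$ when $t:=v(\delta T)\ge1$, so the identity forces $a=t$. When instead $\delta A$ or $\delta T$ has a nonzero component in $\Gr_D^0=\Q A\oplus\Q T$, one compares the depth-$1$ components mod $D^2$: the $\Q A\oplus\Q T$-part of $\delta A$ contributes a constant (degree-$0$) multiple of $\theta$ to $[\delta A,T]$, while every other contribution lands in the ideal $(A)+(T)$ of $\Q[A,T]$, so a degree count in $\Q[A,T]$ rules out cancellation and pins down $v(\delta A)$ and $v(\delta T)$ together (the direction $\e_0$ being the sole exception). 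The main obstacle is precisely this last, low-depth, bookkeeping — keeping straight the interaction between the degree-$0$ piece $\Q A\oplus\Q T$ and the degree-$1$ piece $\Q[A,T]$ of $\Gr_D$, and isolating $\e_0$ as the unique degenerate case; once the injectivity of $D_A$ and $D_T$ is in hand, the rest is a routine graded comparison.
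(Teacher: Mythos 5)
The paper gives no proof of this lemma --- it is cited from Pollack's thesis --- so you are supplying new content rather than reproducing an argument. The core of what you propose is sound: passing to $\Gr_D^\dot\L(A,T)$, identifying $\Gr_D^0\cong\Q A\oplus\Q T$ and $\bigoplus_{d\ge1}\Gr_D^d\cong\L(W)$ with $W\cong\Q[A,T]$, descending $\ad_A,\ad_T$ to the derivations $D_A,D_T$ of $\L(W)$ induced by multiplication, proving injectivity of $D_A,D_T$ on each $\L^d(W)$ via the embedding into $W^{\otimes d}\cong\Q[A_1,T_1,\dots,A_d,T_d]$ (where they become multiplication by the non-zerodivisor $A_1+\cdots+A_d$, resp.\ $T_1+\cdots+T_d$), and then comparing leading terms on the two sides of $[\delta A,T]=[\delta T,A]$. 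Whenever $\delta(A)$ and $\delta(T)$ both lie in $D^1$, this comparison immediately forces $v(\delta A)=v(\delta T)$ and proves the lemma.

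The gap is in the low-depth case, which you flag as ``the main obstacle'' and then dismiss as routine bookkeeping. It is not: the lemma as literally stated \emph{fails} there, and your proposed fix would not detect the failure. For $\delta=\e_0=-A\,\partial/\partial T\in\Der^0\L(A,T)$ one has $\delta(A)=0\in D^d$ for all $d$ but $\delta(T)=-A\notin D^1$; the transpose $-T\,\partial/\partial A$ fails symmetrically; and $W$-inhomogeneous examples exist too, e.g.\ $\delta=-T\,\partial/\partial A+\e_2$, with $\delta(A)=T+[A,[A,T]]\notin D^1$ and $\delta(T)=-[T,[T,A]]\in D^1\setminus D^2$, yet $\delta(\theta)=0$. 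Your ``degree count mod $D^2$'' cannot see these because $[\delta A,T]$ detects only the $A$-coefficient of the depth-$0$ part of $\delta A$ (as $[T,T]=0$), while $[\delta T,A]$ detects only the $T$-coefficient of the depth-$0$ part of $\delta T$ (as $[A,A]=0$); comparing constant terms in $\Gr_D^1\cong\Q[A,T]$ ties those two coefficients together but leaves the $T$-component of $\delta A$ and the $A$-component of $\delta T$ completely unconstrained. The correct way out is to observe that the lemma carries an implicit hypothesis --- $\delta(A),\delta(T)\in D^1$, equivalently $\delta$ of nonzero $W$-weight, which holds in all of the paper's applications (e.g.\ for $\delta\in\Der^N$, where $[\delta,N]=0$ already excludes $\e_0$) --- and under that hypothesis your positive-depth leading-term comparison \emph{is} the entire proof, with no degree-$0$ bookkeeping needed. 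As written, the argument neither proves the general statement (which is false) nor cleanly isolates the hypothesis under which it is true.
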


Note that it is important that $\delta(\theta) = 0$. The derivation $\delta =
\ad_A$ is not in $D^1\Der\L(A,T)$, even though $\delta(A),\delta(T) \in
D^1\L(A,T)$.

\begin{lemma}
\label{lem:depth}
If $\delta \in \Der^0 \L(A,T)$, then
\begin{align*}
\delta \in D^d\Der^0\L(A,T) &\Longleftrightarrow
\delta(A),\delta(T) \in D^d\L(A,T) \cr
&\Longleftrightarrow
\delta(A) \in D^d\L(A,T) \text{ or } \delta(T) \in D^d\L(A,T).
\end{align*}
\end{lemma}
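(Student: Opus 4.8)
\textbf{Proof proposal for Lemma~\ref{lem:depth}.} The statement has three conditions; call them (a) $\delta \in D^d\Der^0\L(A,T)$, (b) $\delta(A),\delta(T)\in D^d\L(A,T)$, and (c) $\delta(A)\in D^d\L(A,T)$ or $\delta(T)\in D^d\L(A,T)$. Clearly (b) $\Rightarrow$ (c), so it suffices to prove (a) $\Rightarrow$ (b) and (c) $\Rightarrow$ (a), after which Pollack's Lemma~\ref{lem:pollack} will be invoked to upgrade (c) to (b) in the course of proving (c) $\Rightarrow$ (a). The plan is as follows.

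First I would prove (a) $\Rightarrow$ (b). Since $A$ and $T$ generate $\L(A,T)$ and $A,T\in D^0\L(A,T)$, the definition of the induced depth filtration on derivations gives that $\delta\in D^d\Der\L(A,T)$ sends $D^0\L(A,T)$ into $D^d\L(A,T)$; applying this to the generators $A$ and $T$ yields $\delta(A),\delta(T)\in D^d\L(A,T)$ directly. Here one uses only that $\delta\in\Der^0$ makes $\delta$ an honest element of $\Der^0\L(A,T)$ on which the depth filtration is defined as in Section~\ref{sec:depth}.

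The substance is in (c) $\Rightarrow$ (a). Suppose, say, $\delta(T)\in D^d\L(A,T)$. By Pollack's Lemma~\ref{lem:pollack} (valid since $\delta\in\Der^0$, i.e.\ $\delta(\theta)=0$), we then also have $\delta(A)\in D^d\L(A,T)$, so both generators are sent into $D^d$. Now I need to conclude that $\delta$ raises depth by $d$ on \emph{all} of $\L(A,T)$, i.e.\ $\delta\big(D^j\L(A,T)\big)\subseteq D^{j+d}\L(A,T)$ for every $j$. The key input is that $D^\dot\L(A,T)$ is a Lie algebra filtration with $[D^j,D^k]\subseteq D^{j+k}$ (stated in Section~\ref{sec:pollack}, where $D^\dot$ is the filtration $P^\dot$), and that $D^1\L(A,T)$ is \emph{freely} topologically generated by $\{T^m\cdot\theta : m\ge 0\}$ with $D^m\L(A,T)=L^m D^1\L(A,T)$ (used already in the proof of Proposition~\ref{prop:depth_strict}). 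So it suffices to check that $\delta$ sends the generators $T^m\cdot\theta$ of $D^1$ into $D^{1+d}$: by the Leibniz rule $\delta(T^m\cdot\theta)$ is a sum of terms $T^{a}\cdot(\delta(T))\cdot T^{b}\cdot\theta$-type expressions plus $T^m\cdot\delta(\theta)=0$; each nonzero term acquires one extra factor of $\theta$ from $\theta$ itself (depth $\ge 1$) and, from the $\delta(T)\in D^d$ piece, depth $\ge d$ more, using $[D^d,D^1]\subseteq D^{d+1}$ and that $\ad_T$ preserves depth. Hence $\delta(D^1)\subseteq D^{1+d}$, and then $\delta(D^m)=\delta(L^m D^1)\subseteq L^m\delta(D^1)+\cdots\subseteq D^{m+d}$ by the derivation property together with $[D^j,D^k]\subseteq D^{j+k}$. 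This gives $\delta\in D^d\Der\L(A,T)$, and since $\delta$ was assumed in $\Der^0$, we get $\delta\in D^d\Der^0\L(A,T)$, completing the cycle of implications.

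The main obstacle I anticipate is the bookkeeping in the last step: carefully writing $\delta(T^m\cdot\theta)$ via the Leibniz rule and tracking that each summand truly lands in $D^{1+d}$ — in particular handling the terms where $\delta$ hits one of the $m$ copies of $T$ (which land in $D^d$, then get bracketed against $T^{\text{stuff}}\cdot\theta\in D^1$) uniformly with the term where $\delta$ hits $\theta$ (which vanishes). Everything else is a direct consequence of results already established: the freeness of $D^1\L(A,T)$ and the identity $D^m=L^m D^1$ from the proof of Proposition~\ref{prop:depth_strict}, the filtration property $[D^j,D^k]\subseteq D^{j+k}$ from Section~\ref{sec:pollack}, and Pollack's Lemma~\ref{lem:pollack}.
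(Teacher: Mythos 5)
Your overall strategy is the same as the paper's: show $(\mathrm{a})\Rightarrow(\mathrm{b})\Rightarrow(\mathrm{c})$ trivially, use Pollack's Lemma~\ref{lem:pollack} to upgrade $(\mathrm{c})$ to $(\mathrm{b})$, and then close the loop by verifying that $\delta$ raises depth on the free generators of $D^1\L(A,T)$ using $\delta(\theta)=0$ and the filtration property of $D^\dot$. However, there is a concrete factual error in your step $(\mathrm{b})\Rightarrow(\mathrm{a})$: you assert that $D^1\L(A,T)$ is freely topologically generated by $\{T^m\cdot\theta:m\ge 0\}$, citing the proof of Proposition~\ref{prop:depth_strict}. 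That proof states that $D^1\L(X_0,X_1)$ is freely generated by $\{X_0^m\cdot X_1\}$, and that these map to $T^m\cdot\theta$; it does \emph{not} say that $\{T^m\cdot\theta\}$ generates $D^1\L(A,T)$. Indeed $D^1\L(A,T)$ is the full commutator subalgebra $\L(A,T)'$, whose abelianization is the rank-one free $\Sym\Hdual$-module on $\theta$, so a free generating set must run over $A^aT^b\cdot\theta$ for all $a,b\ge 0$ (the paper parametrizes them as $f(A,T)\cdot\theta$ with $f\in\Q\langle A,T\rangle$); the elements $T^m\cdot\theta$ alone miss, e.g., $A\cdot\theta$. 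As written, your computation verifies $\delta(D^1)\subseteq D^{d+1}$ only on the proper subset $\{T^m\cdot\theta\}$, leaving a gap.

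The gap is easy to repair and does not require any new idea: you already have both $\delta(A)\in D^d$ and $\delta(T)\in D^d$ at this point (thanks to Pollack's lemma), so the same Leibniz computation applied to $f(A,T)\cdot\theta$ gives $\delta(f(A,T)\cdot\theta)=\delta(f(A,T))\cdot\theta$ with $\delta(f(A,T))\in D^d\Q\langle A,T\rangle$, hence lands in $D^{d+1}\L(A,T)$. This is exactly what the paper does. Once this is fixed, the rest of your argument (bootstrapping from $D^1\mapsto D^{d+1}$ to $D^m=L^mD^1\mapsto D^{m+d}$ via $[D^j,D^k]\subseteq D^{j+k}$) is correct and matches the paper.
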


\begin{proof}
The left to right implications are clear. Pollack's Lemma~\ref{lem:pollack}
implies that the right-hand statement implies the middle one. To complete the
proof, we have to show that the middle statement implies the left-hand
statement. To do this, it suffices to prove that if $\delta(A)$ and $\delta(T)$
are in $D^d\L(A,T)$ and if $u\in D^1\L(A,T)$, then $\delta(u) \in
D^{d+1}\L(A,T)$. Since $D^1\L(A,T)$ is free, and since its abelianization is the
rank 1 free $\Sym \Hdual$ module generated by the class of $\theta$,
$D^1\L(A,T)$ is generated by elements of the form $f(A,T)\cdot\theta$, where
$f(A,T) \in \Q\langle A,T\rangle$. The assumptions imply that $\delta f(A,T) \in
D^d \Q\langle A,T\rangle$. So, since $\delta(\theta)=0$,
$$
\delta \big(f(A,T)\cdot\theta\big)
= \delta\big(f(A,T)\big)\cdot \theta \in D^{d+1}\L(A,T)
$$
from which it follows that $\delta$ takes $D^1\L(A,T)$ into $D^{d+1}\L(A,T)$.
\end{proof}

We can now prove that the elliptic and classical depth filtrations agree.

\begin{proposition}
The restriction mapping $\Der^N\p(E') \to \Der^E\p(U)$ preserves the depth
filtration and is strict with respect to it.
\end{proposition}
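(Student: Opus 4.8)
The claim has two parts: that the restriction map $\res:\Der^N\p(E') \to \Der^E\p(U)$ preserves $D^\dot$, and that it is strict, i.e.\ $D^d\Der^E\p(U) = \res\big(D^d\Der^N\p(E')\big)$ equals $\Der^E\p(U)\cap D^d\Der\p(U)$ where the depth filtration on the right is the one induced from $\Der\p(U)$. Since all the filtrations in sight are motivic (preserved by $\k$), it suffices to work with the associated $M_\dot$-graded Lie algebras, i.e.\ to replace $\p(E')$ by $\L(A,T)^\wedge$, $\p(U)$ by $\L(X_0,X_1)^\wedge$, and to use the explicit embedding $\iota:\L(X_0,X_1)^\wedge \hookrightarrow \L(A,T)^\wedge$ of (\ref{eqn:pminus})--(\ref{eqn:inclusion}). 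Preservation is essentially automatic from the definition of $\Der^E\p(U)$: if $\delta\in D^d\Der^N\p(E')$ then by definition $\delta$ maps $D^j\L(A,T)$ into $D^{j+d}\L(A,T)$, and since $\iota$ is strictly compatible with $D^\dot$ by Proposition~\ref{prop:depth_strict}, the restriction $\res(\delta)$ maps $D^j\L(X_0,X_1) = \L(X_0,X_1)\cap D^j\L(A,T)$ into $\L(X_0,X_1)\cap D^{j+d}\L(A,T) = D^{j+d}\L(X_0,X_1)$; hence $\res(\delta)\in D^d\Der\p(U)$.

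\textbf{Key steps.} The real content is strictness, i.e.\ surjectivity: given $\bar\delta\in\Der^E\p(U)$ which happens to lie in $D^d\Der\p(U)$, I must produce a lift $\delta\in\Der^N\p(E')$ that already lies in $D^d$. First I would pick \emph{any} lift $\delta\in\Der^N\p(E')$ with $\res(\delta)=\bar\delta$; such a lift exists by definition of $\Der^E\p(U)$, and the kernel of $\res$ on $\Der^N\p(E')$ is $\Q\e_0$ (noted just before Lemma~\ref{lem:pollack}). So the set of lifts is $\{\delta + c\,\e_0 : c\in\Q\}$. Now the point is that $\e_0$ has $M$-weight $-2$ and $W$-weight $0$, hence $\sl(H)$-weight $-2$, and it is \emph{not} in $D^1\Der^0\L(A,T)$ (indeed $\e_0(T)=-A\notin D^1$). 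Thus if $d\ge 1$, the term $c\,\e_0$ can never help: either way $\delta\in D^d$ or $\delta\notin D^d$ independent of $c$. So it suffices to show that the \emph{given} lift $\delta$ (any lift) automatically lies in $D^d\Der\p(E')$ once $\bar\delta=\res(\delta)$ does. By Lemma~\ref{lem:depth} applied to $\delta\in\Der^0\L(A,T)$, this reduces to showing $\delta(T)\in D^d\L(A,T)$ (it suffices to check just one of $\delta(A),\delta(T)$). Now $T = R_1/[\text{something}]$... more precisely, among the generators $R_0,R_1,R_\infty$ of the image of $\iota$, we have $R_1 = [T,A] = \theta$, and $R_0 = (T/(e^T-1))\cdot A = A - \tfrac12 T\cdot A + \cdots$, so $A = R_0 + (\text{higher-depth corrections})$ — in fact $A\equiv R_0\bmod D^1\L(A,T)$ since every term $T^k\cdot A$ with $k\ge 1$ lies in... hmm, $T^k\cdot A$ is depth $0$, not $\ge 1$. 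Let me re-examine: I want to express $A$ and $T$ in terms of the image $\iota(\p(U)) = \langle R_0,R_1,R_\infty\rangle$ modulo depth. From $R_0 = \sum_{k\ge 0}\tfrac{B_k}{k!}T^k\cdot A$ we get $A = R_0 - \sum_{k\ge 1}\tfrac{B_k}{k!}T^k\cdot A$, and each $T^k\cdot A$ ($k\ge 1$) can be rewritten: $T\cdot A = [T,A] = \theta = R_1$, so $T^k\cdot A = T^{k-1}\cdot R_1 = \ad_T^{k-1}(R_1)$, which lies in $D^1$ since $R_1=\theta$ does and $D^1$ is an ideal. Hence $A\equiv R_0 \bmod D^1\L(A,T)$, and similarly one obtains $T$ modulo higher depth from the $R$'s — this is exactly the computation behind Proposition~\ref{prop:depth_strict}.

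\textbf{Main obstacle.} The crux is the following: I must show that $\delta(A)$ (equivalently $\delta(T)$) lands in $D^d\L(A,T)$ knowing only that $\bar\delta = \res\delta$ satisfies $\bar\delta(X_0),\bar\delta(X_1)\in D^d\L(X_0,X_1)$ and using $\delta(\theta)=0$. Since $\delta$ restricts to $\bar\delta$ on $\iota(\p(U))$, we have $\delta(R_0) = \iota(\bar\delta(X_0))\in \iota(D^d\L(X_0,X_1)) = \L(X_0,X_1)\cap D^d\L(A,T)\subseteq D^d\L(A,T)$, and likewise $\delta(R_1)=\delta(\theta)=0\in D^d$. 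From $A = R_0 - \sum_{k\ge1}\tfrac{B_k}{k!}\ad_T^{k-1}(\theta)$ and $\delta(\theta)=0$, $\delta$ commuting past $\ad_T$ up to terms involving $\delta(T)$... this is where I must be careful: $\delta\,\ad_T^{k-1}(\theta) = \ad_T^{k-1}(\delta\theta) + (\text{terms with }\delta(T)) = (\text{terms with }\delta(T))$, each of which lies in $D^{k-1}\cdot(\text{stuff})$. The cleanest route, and the one I expect to carry out: show directly from Lemma~\ref{lem:depth} that it is enough to verify $\delta(T)\in D^d$, and that $\delta(T)$ is determined modulo $D^d$ by $\delta(R_0)\in D^d$ together with $\delta(\theta)=0$, via the identity $R_0 = (T/(e^T-1))\cdot A$ rearranged so that applying $\delta$ and using $\delta[\![T,A]\!]=0$ isolates $\delta(T)$ modulo higher-depth terms. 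The potential subtlety — the "hard part" — is bookkeeping the depth of the cross-terms produced when $\delta$ hits a product $f(A,T)\cdot A$ and lands partly on the coefficient $f(A,T)$ (where one uses $\delta f \in D^d$) and partly on $A$; Lemma~\ref{lem:pollack} and the module structure of $D^1\L(A,T)$ over $\Sym\Hdual$ (free on $\theta$) are exactly the tools that make these terms land in $D^{d}$, and I would invoke them to close the argument. Once $\delta(T)\in D^d\L(A,T)$ is established, Lemma~\ref{lem:depth} gives $\delta\in D^d\Der^0\L(A,T)$, hence $\delta\in D^d\Der^N\p(E')$, proving strictness.
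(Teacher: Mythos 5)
Your preservation argument is correct and in fact cleaner than the paper's: since $\delta\in D^d\Der^N\p(E')$ lowers depth by $d$ on $\L(A,T)^\wedge$ and maps $\L(X_0,X_1)^\wedge$ to itself, strictness of $\iota$ (Proposition~\ref{prop:depth_strict}) immediately gives that $\res(\delta)$ maps $D^j\L(X_0,X_1) = \L(X_0,X_1)\cap D^j\L(A,T)$ into $\L(X_0,X_1)\cap D^{j+d}\L(A,T) = D^{j+d}\L(X_0,X_1)$. The paper instead establishes $\delta(X_0)\in D^d\L(A,T)$ via the explicit expansion $\delta(X_0)=\delta(A)+\sum_{m\ge 1}\tfrac{B_{2m}}{(2m)!}\delta(T^{2m-1})\cdot\theta$, mainly as setup for the strictness direction.

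For strictness you correctly identify the key reduction --- by Lemma~\ref{lem:depth} it suffices to show one of $\delta(A),\delta(T)\in D^d\L(A,T)$ --- and you correctly note that the formula $R_0=(T/(e^T-1))\cdot A$ together with $\delta(\theta)=0$ is what lets you compare $\delta(R_0)$ with $\delta(A)$. But your sketch for closing the bookkeeping is circular: in the "Main obstacle" paragraph you propose to control the cross-terms by "where one uses $\delta f\in D^d$," which is exactly what you are trying to prove. The way to close the gap without circularity is to let $e$ be the common depth of $\delta(A)$ and $\delta(T)$ (equal by Lemma~\ref{lem:pollack}, so $e=$ the depth of $\delta$ in $\Der^0\L(A,T)$). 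Since $\theta\in D^1$ and $\delta(\theta)=0$, each term $\delta(T^{2m-1})\cdot\theta$ expands into words of the form $\ad_T^i\ad_{\delta(T)}\ad_T^j(\theta)$, which lie in $D^{e+1}$ --- one depth deeper than $\delta(A)$. Hence $\delta(R_0)\equiv\delta(A)\bmod D^{e+1}$, and since $\delta(A)\notin D^{e+1}$, $\delta(R_0)$ has depth exactly $e$; the hypothesis $\delta(R_0)\in D^d$ then forces $e\ge d$, and Lemma~\ref{lem:depth} gives $\delta\in D^d\Der^0\L(A,T)$. Note also that the formula isolates $\delta(A)$, not $\delta(T)$ as you say you want to do; this is harmless by Pollack's lemma, but it is the $\delta(A)$-term that falls out cleanly. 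Finally, the digression about the kernel of $\res$ being $\Q\e_0$ is a red herring: once one shows the \emph{given} $\delta$ is automatically in $D^d$, the question of adjusting the lift never arises, and this is how both you and the paper ultimately argue.
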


\begin{proof}
Identify $X_0$ and $X_1$ with their images in $\L(A,T)^\wedge$. If $\delta\in
\Der^0\L(A,T)$, then
\begin{equation}
\label{eqn:delta_cong}
\delta(X_0) = \delta\left(\frac{T}{e^T-1}\cdot A\right)
= \delta(A) + \sum_{m=1}^\infty \frac{B_{2m}}{(2m)!}\delta(T^{2m-1})\cdot\theta.
\end{equation}
Observe that if $\delta\in D^d\Der^0\L(A,T)$ and $n\ge 1$, then
$$
\delta(T^n\cdot A) = \delta(T^{n-1}\cdot\theta) =\delta(T^{n-1})\cdot \theta
\in D^{d+1}\L(A,T).
$$
So if $\delta \in D^d\Der^N \L(A,T)$, then $\delta(X_0)\in D^d\L(A,T)^\wedge$.
So, by Proposition~\ref{prop:depth_strict}, 
$$
\delta(X_0) \in \L(X_0,X_1)^\wedge \cap D^d \L(A,T)
\in  D^d \L(X_0,X_1)^\wedge.
$$
This implies that the restriction mapping $\Der^N \p(E') \to \Der^E \p(U)$
respects the depth filtrations.

It remains to show that the restriction mapping is strictly compatible with the
depth filtrations. Suppose that the restriction of $\delta \in \Der^N\L(A,T)$ to
$\L(X_0,X_1)^\wedge$ is in $D^d\Der\L(X_0,X_1)$. That is, that $\delta(X_0) \in
D^d\L(X_0,X_1)^\wedge$. Then equation (\ref{eqn:delta_cong}) implies that
$\delta(A) \in D^d\L(A,T)$. Applying Lemma~\ref{lem:depth}, we see that $\delta
\in D^d\Der^0\L(A,T)$, as required.
\end{proof}

So the depth filtration on $\k$ can be computed by pulling back the depth
filtration either from $\Der\p(U)$ (the ``classical'' case) or from $\Der
\p(E')$ (the ``elliptic'' case). Next, we use this to show that the depth
filtration on $\k$ is closely related to the elliptic weight filtration
$W_\dot$.

\begin{theorem}
For all $m\ge 0$, we have
$$
W_{-m}\k = \sum_{n+d\ge m} M_{-2n}\k\cap D^d\k,
$$
so that
$$
\Gr^W_{-m}\k \cong \bigoplus_{n+d=m} \Gr^M_{-2n}\Gr_D^d \k.
$$
\end{theorem}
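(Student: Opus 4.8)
The statement is the $\k$-analogue of the convolution formula for $\p(U)=\pi_1^\un(\Pminus)$ proved in Proposition~\ref{prop:convolution}, and the plan is to deduce it from that formula by transporting everything along the monodromy map $\phi_E:\k\to\Der^N\p(E')\subseteq\Der^0\L(A,T)^\wedge$. First I would reduce to a statement on associated gradeds: all three filtrations $M_\dot$, $D^\dot$ and $W_\dot$ of $\k$ are motivic (for $M_\dot$ this is the weight filtration of $\k$ in $\MTM$; $D^\dot$ is a filtration by pro-objects of $\MTM$; and $W_\dot$ is pulled back from the elliptic weight filtration of $\Der\p(E')$), and all three are compatible with the $M$-weight grading on $\k^\DR$ furnished by the canonical $\Q$-de~Rham splitting of Section~\ref{sec:splitting_DR}. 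So, working in the de~Rham realization and identifying $\k$ with its bigraded Lie algebra, it suffices to prove the two inclusions of filtrations; and since $M_\dot$ is there a grading it is in fact enough, for each fixed $n$, to show that on $\Gr^M_{-2n}\k$ the filtration $W_\dot$ coincides with the shifted depth filtration $D^{\bullet-n}$. Recall that $D^\dot\k$ and $M_\dot\k$ are by construction pulled back along $\phi_E$ (for $D^\dot$, using that the restriction map $\Der^N\p(E')\to\Der^E\p(U)$ is strict for depth), that $\phi_E(\k)\subseteq\Der^0\L(A,T)^\wedge$ annihilates $\theta=[T,A]$ (Proposition~\ref{prop:der0}), and that, with $X_1=\theta$ and $X_0=\bigl(\tfrac{T}{e^T-1}\bigr)\cdot A$ as in (\ref{eqn:inclusion}), each element of $\phi_E(\k)$ preserves the subalgebra $\p(U)=\L(X_0,X_1)^\wedge$.

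For the inclusion $W_{-m}\k\supseteq\sum_{n+d\ge m}M_{-2n}\k\cap D^d\k$, fix $x\in\Gr^M_{-2n}\k\cap D^d\k$ with $n+d\ge m$ and put $\delta=\phi_E(x)$. By Lemma~\ref{lem:depth} (which itself rests on Pollack's Lemma~\ref{lem:pollack}), both $\delta(A)$ and $\delta(T)$ lie in $D^d\L(A,T)$. A Lie monomial in which $\theta$ occurs at least $d$ times and in which the letter $A$ occurs $k$ times has total degree at least $k+d$, since each occurrence of $\theta$ uses a distinct $T$. Hence $\delta(A)$, which has $n+1$ $A$'s, has degree $\ge n+1+d\ge m+1$, i.e.\ $\delta(A)\in W_{-(m+1)}\p(E')$. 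From $\delta(\theta)=0$ one gets $[\delta(T),A]=-[T,\delta(A)]$, a Lie element of degree $\ge m+2$; since $\delta(T)$ lies in $D^d\L(A,T)$ and so (when nonzero) is not a scalar multiple of $A$, this forces $\deg\delta(T)\ge m+1$ as well. As $A$ and $T$ generate $\p(E')$ and both have elliptic weight $-1$, it follows that $\delta$ lowers the elliptic weight of $\p(E')$ by at least $m$, so $\delta\in W_{-m}\Der\p(E')$ and hence $x\in W_{-m}\k$.

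For the reverse inclusion, by the reduction above it suffices to show that $x\in\Gr^M_{-2n}\k$ with $\phi_E(x)\in W_{-m}\Der\p(E')$ has depth $\ge m-n$. Put $\delta=\phi_E(x)$. Then $\delta(X_1)=\delta(\theta)=0$, while $\delta(X_0)\in\p(U)$ because $\delta$ is a nodal derivation. Since $\delta\in W_{-m}\Der\p(E')$ and $X_0$ has elliptic weight $-1$ in $\p(E')$, the element $\delta(X_0)$ has degree $\ge m+1$ in $\L(A,T)$, hence elliptic weight $\le-(m+1)$; and it has $M$-weight $-2(n+1)$. Applying the convolution formula of Proposition~\ref{prop:convolution} to $\delta(X_0)\in\p(U)$ — whose elliptic weight equals $-(n+1)-e$, where $e$ is its depth — we conclude $e\ge m-n$, so $\delta(X_0)\in D^{m-n}\p(U)$. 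As also $\delta(X_1)=0\in D^{m-n+1}\p(U)$, the restriction $\delta|_{\p(U)}$ lies in $D^{m-n}\Der\p(U)$, i.e.\ $x\in D^{m-n}\k$.

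The crux is this last paragraph: the elliptic weight filtration of $\k$, defined a priori via $\Der\p(E')$, must be computed on the far smaller subalgebra $\p(U)$, where the identity $W=M\ast D$ is already available. A convolution relation is not inherited by an arbitrary Lie subalgebra, so one genuinely relies on the two special properties of nodal derivations — they preserve $\p(U)$ and they kill $\theta=X_1$ — which together with Lemma~\ref{lem:depth} reduce both inclusions to Proposition~\ref{prop:convolution}. The remaining points — that $D^\dot\k$ may indeed be computed through either of $\phi_E$, $\phi_\P$ in the degrees at issue, and the elementary degree bookkeeping for $D^d\L(A,T)$ — are routine given Pollack's lemma and the strictness statements established earlier in this section.
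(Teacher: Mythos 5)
Your proof is correct, and it takes a genuinely different route from the paper's. The paper constructs a non-motivic linear lift $\k\hookrightarrow\p(U)$ (by factoring $\phi_\P$ through the map $\phi:f\mapsto\delta_f$ with $\delta_f(\log\gamma_0)=[\log\gamma_0,f]$), checks that $\phi|_{M_{-4}\p(U)}$ is strict for all three filtrations, and then simply transports Proposition~\ref{prop:convolution} from $\p(U)$ to $\k$ along that strict embedding. You avoid constructing the lift entirely: you prove the inclusion $\sum M_{-2n}\cap D^d\subseteq W_{-m}$ by a direct count in $\L(A,T)$ (each $\theta$ forces a distinct $T$, so an $n$-$A$, depth-$d$ Lie word has degree $\ge n+d$), using $\delta(\theta)=0$ to gain the extra unit of degree on $\delta(T)$; and you prove the reverse inclusion by applying the $\p(U)$-convolution formula to the single element $\delta(X_0)\in\p(U)$, which lives where that formula is already available because $\delta$ is a nodal derivation. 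Both proofs thus ultimately probe with $\gamma_0\leftrightarrow X_0$, but yours replaces the paper's strictness bookkeeping for the auxiliary map $\phi$ with degree bookkeeping for $\delta$; the paper's route is slicker once the strictness of $\phi$ is in hand, whereas yours is more self-contained (the forward inclusion does not invoke Proposition~\ref{prop:convolution} at all). One small misattribution: the implication you cite from Lemma~\ref{lem:depth} (``$\delta\in D^d$ implies $\delta(A),\delta(T)\in D^d$'') is the trivial direction of that lemma and does not rest on Pollack's Lemma~\ref{lem:pollack}; Pollack's input is needed only for the converse, which you do not use at that point.
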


\begin{proof}
Let $\gamma_1$ be the class of the canonical loop in $\pi_1(\Pminus,\vv_o)$
about $1$ in $\U$. Its logarithm  $\log\gamma_1$ spans a copy of $\Q(1)$ in
$\p(U)$. Define
$$
\Der^0 \p(U) = \{\delta \in \Der\p(U) : \delta(\log \gamma_1) = 0\}.
$$
This is motivic as it is the kernel of the morphism $\Der\p(U) \to \p(U)$ that
takes the derivation $\delta$ to $\delta(\log \gamma_1)$. So it is a pro-object
of $\MTM$ that is filtered by $W_\dot$. The image of the canonical homomorphism
$\k \to \Der\p(U)$ is contained in $\Der^0 \p(U)$.

Choose any element $\gamma_0$ of $\pi_1(U,\vv_o)$ that, together with
$\gamma_1$, generates $\pi_1(U,\vv_o)$. Define a linear map $\phi:\p(U) \to
\Der^0\p(U)$ by $f\mapsto \delta_f$, where $\delta_f(\log\gamma_0) =
[\log\gamma_0,f]$. This has kernel $\Q\log\gamma_0$. Note that $\phi$ is {\em
not} motivic, or even a morphism of MHS. However, its restriction to
$M_{-4}\p(U)$ is strict with respect to the depth filtration and both weight
filtrations as we now show.

Since $\log\gamma_0$ is not in $M_{-4}\p(U)$ and since $M_{-2n}$ is the $n$th
term of the lower central series of $\p(U)$, the restriction
\begin{equation}
\label{eqn:restn}
M_{-4}\p(U) \to \Der^0\p(U)
\end{equation}
of $\phi$ to $M_{-4}\p(U)$ is injective and strictly compatible with $M_\dot$.

The inclusion (\ref{eqn:restn}) is also strict with respect
to the depth filtration $D^\dot$ because, for $f\in M_{-4}\p(U)$,
$$
\delta_f \in D^d\Der\p(U)
\Longleftrightarrow \delta_f(\log\gamma_0)\in D^d\p(U)
\Longleftrightarrow f \in D^d\p(U).
$$
Similarly, when $m\ge 3$, $\delta_f \in W_{-m}\Der\p(U)$ if and only if $f \in
W_{-m} \p(U)$. In other words, (\ref{eqn:restn}) is strict with respect to
$W_\dot$ as well.

It is well known that the image of $\k \to M_{-4}\Der^0\p(U)$ is contained in
the image of $\phi$. Since $M_{-4}\p(U) \to \Der^0 \p(U)$ is injective, this
lifts to a (non-motivic) map $\k \to \p(U)$.  The result now follows from
Proposition~\ref{prop:convolution} and the fact that $\p(U) \to \Der^0\p(U)$ is
strict with respect to the depth filtration and the two weight filtrations.
\end{proof}

Although we cannot ``invert'' the convolution formula, we nonetheless are able
to express the depth graded quotients of $\k$ in terms of the classical and
elliptic weight filtrations.

\begin{corollary}
The depth and elliptic weight filtrations on $\Gr^M_{-2n}\k$ are related by
$$
W_{-m}\Gr^M_{-2n}\k = D^{m-n}\Gr^M_{-2n}\k.
$$
Consequently, there is a natural isomorphism
$\Gr^W_{-m}\Gr^M_{-2n}\k \cong \Gr^M_{-2n}\Gr_D^{m-n}\k$. \qed
\end{corollary}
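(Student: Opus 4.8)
The plan is to deduce both assertions directly from the convolution formula $W_{-m}\k = \sum_{n+d\ge m} M_{-2n}\k\cap D^d\k$ of the preceding theorem by an elementary manipulation of the three filtrations, with no further geometric input. Throughout I adopt the convention $D^d\k=\k$ for $d\le 0$ and abbreviate $M_a := M_{-2a}\k$, so that $\k = M_0\supseteq M_1\supseteq M_2\supseteq\cdots$ is the lower central series filtration, reindexed, and the theorem reads $W_{-m}\k=\sum_{a+d\ge m}M_a\cap D^d$.

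Fix integers $m$ and $n\ge 0$. First I would compute $W_{-m}\k\cap M_n$ by sorting the defining sum according to the value of $a$. When $a=n$ and $d=m-n$ the summand is $M_n\cap D^{m-n}$; when $a=n$ and $d>m-n$ it is contained in $M_n\cap D^{m-n}$; when $a<n$, the constraint $a+d\ge m$ forces $d\ge m-n+1$, so the summand $M_a\cap D^d\cap M_n=M_n\cap D^d$ is again contained in $M_n\cap D^{m-n}$; and when $a>n$ the summand $M_a\cap D^d$ is contained in $M_{n+1}$. Hence
$$
W_{-m}\k\cap M_n = \big(M_n\cap D^{m-n}\big) + Q,\qquad Q\subseteq M_{n+1},
$$
while the reverse inclusion $M_n\cap D^{m-n}\subseteq W_{-m}\k\cap M_n$ is the summand $a=n$, $d=m-n$ of the theorem. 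Reducing modulo $M_{n+1}$ therefore identifies the image of $W_{-m}\k\cap M_n$ in $\Gr^M_{-2n}\k=M_n/M_{n+1}$ with the image of $M_n\cap D^{m-n}$, which is by definition the induced depth filtration $D^{m-n}\Gr^M_{-2n}\k$. This gives $W_{-m}\Gr^M_{-2n}\k=D^{m-n}\Gr^M_{-2n}\k$, the first assertion (and for $m\le n$ both sides are all of $\Gr^M_{-2n}\k$, consistent with the convention).

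For the second assertion I would apply the identity just proved with $m$ and with $m+1$ to obtain
$$
\Gr^W_{-m}\Gr^M_{-2n}\k
=\frac{W_{-m}\Gr^M_{-2n}\k}{W_{-m-1}\Gr^M_{-2n}\k}
=\frac{D^{m-n}\Gr^M_{-2n}\k}{D^{m-n+1}\Gr^M_{-2n}\k}
=\Gr_D^{m-n}\Gr^M_{-2n}\k,
$$
and then identify $\Gr_D^{m-n}\Gr^M_{-2n}\k$ with $\Gr^M_{-2n}\Gr_D^{m-n}\k$. The last identification holds because the depth filtration $D^\dot\k$ is strictly compatible with $M_\dot$ — equivalently, each $D^d\k$ is homogeneous for the $M$-grading on $\Gr^M_\dot\k$ — which is visible on associated graded objects via the identification of $\Gr^M_\dot\k$ with a subalgebra of the $T$-adically completed $\L(A,T)$ used throughout this section; it is also already encoded in the theorem's decomposition $\Gr^W_{-m}\k\cong\bigoplus_{n+d=m}\Gr^M_{-2n}\Gr_D^d\k$.

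There is no real obstacle here: the corollary is a formal consequence of the convolution theorem, and the only care needed is the bookkeeping with the reindexing (the factor $2$ between $M_{-2n}$ and $n$, and the convention $D^{\le 0}=\k$) together with the remark that $D^\dot$ and $M_\dot$ commute on passing to associated graded, so that the two iterated graded pieces $\Gr_D^{m-n}\Gr^M_{-2n}\k$ and $\Gr^M_{-2n}\Gr_D^{m-n}\k$ agree.
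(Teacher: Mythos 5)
Your derivation is the natural one and matches the spirit of the paper, which presents the corollary as an immediate formal consequence of the convolution theorem. Two small points deserve attention. First, when you compute $W_{-m}\k\cap M_n$ by "sorting the defining sum according to the value of $a$", you implicitly distribute the intersection with $M_n$ over the sum, which is not valid in general: if $W_{-m}\k=\sum_{a,d}V_{a,d}$ then $W_{-m}\k\cap M_n$ can strictly contain $\sum_{a,d}(V_{a,d}\cap M_n)$. The fix is the modular law: split $W_{-m}\k=A+B$ with $A=\sum_{a\ge n}V_{a,d}\subseteq M_n$ and $B=\sum_{a<n}V_{a,d}$; then $(A+B)\cap M_n=A+(B\cap M_n)$, and since $B\subseteq D^{m-n+1}$ you get $B\cap M_n\subseteq M_n\cap D^{m-n}$, so your conclusion $W_{-m}\k\cap M_n\subseteq (M_n\cap D^{m-n})+M_{n+1}$ holds as stated. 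Second, the identification $\Gr_D^{m-n}\Gr^M_{-2n}\k\cong\Gr^M_{-2n}\Gr_D^{m-n}\k$ needs no strict-compatibility hypothesis at all: for any two filtrations of a vector space this isomorphism is the Zassenhaus (butterfly) lemma, with both sides canonically isomorphic to $\bigl(M_n\cap D^{m-n}\bigr)\big/\bigl((M_{n+1}\cap D^{m-n})+(M_n\cap D^{m-n+1})\bigr)$. Invoking "strictness" here is a red herring and risks suggesting an extra condition is being used; the observation that it is already visible in the theorem's $\bigoplus$-decomposition is the cleaner justification. With these two repairs the argument is complete and is exactly the intended elementary derivation.
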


\section{The Infinitesimal Galois action and Ihara--Takao Congruences}
\label{sec:ihara-takao}

The Ihara--Takao congruence (Theorem~\ref{thm:ihara-takao} below) was proved
numerically by Ihara and Takao in \cite{ihara} and proved again using modular
symbols by Schneps \cite{schneps}. In this section, we use Pollack's relations
to give a more conceptual explanation of the Ihara--Tako congruence, which makes
it clearer why cusp forms impose relations in the depth graded quotients of
$\k$. We begin with a discussion of the infinitesimal Galois action $\phi_E : \k
\to \Der\p(E')$. We continue with the notation of the previous section. In
particular, $U$ denotes $\Pminus$.

The derivations $\e_{2n}$ are highest weight vectors. As we will see below, to
``first order'', the images of the $\zz_{2m+1}$ in $\Der^0\L(\Hdual)$ are {\em
lowest} weight vectors. For each $n\ge 0$, set\footnote{Recall the definition
of $\e_{2n}(\v_1,\v_2)$, equation (\ref{eqn:deltas}).}
$$
\label{def:eop}
\eop_{2n} = \eop_{2n}^\DR := \e_{2n}(A,T) =
\frac{1}{(2n-2)!}\e_0^{2n-2}\cdot\e_{2n}
$$
Its Betti counterpart is
$$
\eop_{2n}^B := \e_{2n}(\aa,-\bb) =
\frac{1}{(2n-2)!}(\e_0^B)^{2n-2}\cdot\e_{2n}^B
= (2\pi i)^{2n-1}\eop_{2n}^\DR.
$$
These are lowest weight vectors for $\sl(H)$.\footnote{The equality
$\eop_{2n}^B = (2\pi i)^{2n-1}\eop_{2n}^\DR$ reflects the fact that $\eop_{2n}$
spans a copy of $\Q(2n-1)$ in $\Gr^W_{-2n+2}$ $\Der\p(E')$.}

We will identify $\k$ with the completion of $\Gr^M_\dot\k$ using the
$\Q$-de~Rham splitting of $M_\dot$. Identify $\p(E')$ with the completion of its
associated $(M_\dot,W_\dot)$ bigraded quotient via the $\Q$-de~Rham splitting.
This gives an isomorphism of $\Gr^M_\dot\p(E')$ with the $T$-adic completion of
$\L(A,T)$. Choose generators of $\zz_{2m+1}$ of $\Gr^M_\dot\k$ that project to
the natural generators (see Section~\ref{sec:gens}) of $H_1(\k)$.

The infinitesimal Galois action $\phi_E : \k \to \Der\p(E')$ is a morphism in
$\MTM$. So it can be identified with the mapping
$$
\phi_E : \L(\ssigma_3,\ssigma_5,\ssigma_7,\dots) \to \Der^0 \L(A,T)^\wedge
$$
induced by applying $\Gr^M$ to $\phi_E$. Since $\phi_E$ is not a morphism in
$\MEM_\uu$, it does not preserve $W_\dot$. For $\ssigma\in \Gr^M_{-4m-2}\k$, let
$\ssigma^\round{r}$ be the component of $\phi_E(\ssigma)$ that lies in
$\Gr^W_{-r}\Gr^M_{-4m-2}\Der^0\L(A,T)$. 

Call an element of $\Der\p(E')$ {\em geometric} if it lies in the image of
$\g^\geom_\uu \to \Der\p(E')$. Theorem~\ref{thm:monod} implies that, after
identifying with the associated bigraded, the Lie algebra of geometric
derivations is the subalgebra of $\Der^0\L(A,T)$ generated by $\eop_0$ and the
$\e_0^j\cdot \e_{2n}$ with $j\ge 0$ and $n\ge 0$. Observe that
$$
\ssigma_{2m+1}^\round{4m+2} \equiv \zz_{2m+1}
\bmod \text{ geometric derivations}
$$
as $\zz_{2m+1} \mapsto \ssigma_{2m+1}$ under $u^\MEM_\uu \to \k$.

\begin{proposition}
\label{prop:geometric}
If $r\neq 4m+2$, then $\ssigma_{2m+1}^\round{r}$ is geometric. The remaining
term $\ssigma_{2m+1}^\round{4m+2}$ is well-defined modulo the image of
$L^3\Gr\u^\geom_\uu$.
\end{proposition}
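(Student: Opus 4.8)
\textbf{Proof plan for Proposition~\ref{prop:geometric}.}
The plan is to exploit the fact that $\phi_E$ is a morphism in $\MTM$, hence $\k$-equivariant and, more importantly, strictly compatible with the weight filtration $M_\dot$; the subtlety is entirely about the $W$-weight, which $\phi_E$ does \emph{not} respect. First I would record the $M$- and $W$-weights of the relevant objects. The element $\zz_{2m+1}$ lies in $\Gr^M_{-4m-2}\Gr^W_{-4m-2}\u^\MEM_\uu$ and maps to $\ssigma_{2m+1}\in\Gr^M_{-4m-2}\k$ under the projection $\u^\MEM_\uu\to\k$; the splitting $s_\tate$ (equivalently, the fact that $\k\to\u^\MEM_\uu$ is a section) lets us regard $\zz_{2m+1}$ as a lift of $\ssigma_{2m+1}$ into $\u^\MEM_\uu$. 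The monodromy homomorphism $\u^\MEM_\uu\to\Der^0\L(\Hdual)$ of Proposition~\ref{prop:der0} is a \emph{morphism in $\MEM_\uu$} (after passing to associated graded, by the canonical $\Q$-de~Rham splitting of Section~\ref{sec:splitting_DR}), so it is strictly bifiltered: it sends $\Gr^M_{-4m-2}\Gr^W_{-r}\u^\MEM_\uu$ into $\Gr^M_{-4m-2}\Gr^W_{-r}\Der^0\L(\Hdual)$. Composing the section $\k\to\u^\MEM_\uu$ with this monodromy map recovers $\phi_E$ after applying $\Gr^M$. Since $\zz_{2m+1}$ is purely of $W$-weight $-4m-2$, its image $\phi_E(\ssigma_{2m+1})$ is again of $W$-weight $-4m-2$; but $\zz_{2m+1}$ differs from an honest generator of $H_1(\Gr\u^\MEM_\uu)$ precisely by correction terms in $\Gr\u^\geom_\uu$ (Remark~\ref{rem:gens}), and those correction terms can and do have other $W$-weights.

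The key steps, in order, are: (1) Decompose $\phi_E(\ssigma_{2m+1})$ into its $W$-graded pieces $\ssigma_{2m+1}^{\round r}$, using the $W$-weight spaces of $\Gr^M_{-4m-2}\Der^0\L(A,T)$. (2) Observe that $\Gr^W_\dot\u^\MEM_\uu$ is generated as a Lie algebra in the category of $\sl(H)$-modules by the $\ee_0^j\cdot\ee_{2n}$ together with the $\zz_{2m+1}$, and that the kernel of $\u^\MEM_\uu\to\k$ is the ideal $\u^\geom_\uu$ generated by the $\ee_{2n}$ ($n\ge 1$). Therefore, in $\Gr\u^\MEM_\uu$, the element $\zz_{2m+1}$ is congruent modulo $\u^\geom_\uu$ to a lift of $\ssigma_{2m+1}\in\Gr\k$; all its $W$-graded components \emph{except} the one in $W$-weight $-4m-2$ must lie in $\Gr\u^\geom_\uu$, because $\Gr^W_{-4m-2}$ is the only $W$-graded piece on which the projection to $\Gr\k$ is nonzero (as $H_1(\Gr\k)=\bigoplus_{m>0}\Q(2m+1)$ sits in $W$-weight $-4m-2$ and $M$-weight $-4m-2$ simultaneously). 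Pushing this through the monodromy map and using that geometric derivations are exactly the image of $\g^\geom_\uu\to\Der\p(E')$, we conclude $\ssigma_{2m+1}^{\round r}$ is geometric for $r\ne 4m+2$. (3) For the remaining component $\ssigma_{2m+1}^{\round{4m+2}}$: this equals the image of $\zz_{2m+1}$ itself in $\Gr^W_{-4m-2}\Der^0\L(A,T)$ (the $W$-weight $-4m-2$ part), and $\zz_{2m+1}$ is only well-defined in $\Gr\u^\MEM_\uu$ modulo (a) the freedom to change its lift from $H_1$ to $\Gr\u^\MEM_\uu$, which by Remark~\ref{rem:gens}(i) is an element of $[\k,\k]$-type corrections, i.e.\ products of lower $\zz$'s, and (b) the freedom to add any $\sl(H)$-invariant element of $\Gr^M_{-4m-2}\Gr^W_{-4m-2}\u^\geom_\uu$, by Remark~\ref{rem:gens}(ii). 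Both ambiguities (a) and (b) land, after passing to the monodromy image, inside the image of $L^3\Gr\u^\geom_\uu$: for (b) this is because any such $\sl(H)$-invariant element in the indicated bidegree is a Lie word of degree $\ge 3$ in the $\ee_{2n}$ (there are no quadratic $\sl(H)$-invariants in that bidegree, by the $\sl_2$ decomposition $S^{2a}H\otimes S^{2b}H$), hence lies in $L^3\f^\geom_\uu$ and maps into $L^3$ of the image; for (a) one observes that a bracket of lower arithmetic generators, once its contribution to the geometric part is isolated via an arithmetic relation (\ref{eqn:arith_reln}), is a degree $\ge 3$ expression in the $\ee_{2n}$ (it has $M$-weight $-4m-2\le -12$, hence degree $\ge 6$, cf.\ Remark~\ref{rem:degree}). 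Collecting these, $\ssigma_{2m+1}^{\round{4m+2}}$ is well-defined modulo the image of $L^3\Gr\u^\geom_\uu$.

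The main obstacle I expect is step (3), and specifically controlling ambiguity (a): one must check carefully that changing the lift of $\zz_{2m+1}$ by an element of the sub-Lie-algebra generated by the lower $\zz_{2j+1}$ only alters the $W$-weight $-4m-2$ component $\ssigma_{2m+1}^{\round{4m+2}}$ by something landing in $\im(L^3\Gr\u^\geom_\uu)$. This requires knowing that such brackets of arithmetic generators, when rewritten using the fact that $\u^\geom_\uu$ is an ideal, produce only high-degree geometric words — which follows from the $M$-weight bookkeeping (the relevant $M$-weight is $\le -12$, so the degree in the $\ee_{2n}$ is $\ge 6 \ge 3$) but needs to be stated precisely. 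A secondary technical point is verifying the claim in (2) that the \emph{only} $W$-graded piece of $\zz_{2m+1}$ surviving to $\Gr\k$ sits in $W$-weight $-4m-2$: this is immediate from the computation $H_1(\Gr\k)\cong\bigoplus_{m>0}\Q(2m+1)$ in $\MTM$ together with the identity $M\text{-weight}=\sl(H)\text{-weight}+W\text{-weight}$ of (\ref{eqn:weights}), since $\Q(2m+1)$ has $\sl(H)$-weight $0$. Everything else is routine weight-and-degree bookkeeping using the already-established structure of $\Gr\u^\MEM_\uu$ and Theorem~\ref{thm:monod}.
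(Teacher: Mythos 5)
Your proof is essentially correct and takes a genuinely different route from the paper's for the first assertion. The paper works \emph{downstream} in $\Der\p(E')$: it observes that $\im\phi_E$ normalizes $\im\g^\geom_\uu$, that $N(\g^\geom_\uu)/\im\g^\geom_\uu$ embeds in the $\sl(H)$-invariants of $N(\u^\geom_\uu)/\im\u^\geom_\uu$, and that $\sl(H)$-invariants satisfy $M=W$ by (\ref{eqn:weights}); hence all pieces $\ssigma_{2m+1}^{\round r}$ with $r\ne 4m+2$ are geometric. You instead work \emph{upstream} in $\u^\MEM_\uu$: you decompose the image of $\ssigma_{2m+1}$ under the section into $W$-graded pieces and argue that every piece in $W$-weight $\ne -4m-2$ projects to zero in $\k$, hence lies in $\u^\geom_\uu$, hence maps to a geometric derivation. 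Both arguments hinge on the same identity $M = \sl(H)\text{-weight} + W$ and the fact that $\SL(H)$ acts trivially on $\k$; your version replaces the paper's normalizer computation with the more elementary observation that $\Gr^W_{-r}\Gr^M_{-4m-2}\k = 0$ for $r\ne 4m+2$, which is cleaner here but less informative about $\phi_E$ as a whole. For the second assertion, your split into ambiguities (a) and (b) is a useful unpacking of the paper's terse ``follows immediately from $H_1(\u^\geom_1)^{\SL(H)}=\Lambda^2 H_1(\u^\geom_1)^{\SL(H)}=0$''; the mechanism is the same.

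Two small imprecisions worth fixing. First, you speak of the $W$-graded components of $\zz_{2m+1}$, but $\zz_{2m+1}$ is $W$-homogeneous by construction (it lies in $\Gr^W_{-4m-2}\Gr^M_{-4m-2}\u^\MEM_\uu$); what actually decomposes into $W$-pieces is the image of $\ssigma_{2m+1}$ under the section $s_\tate$, which is $M$-homogeneous but \emph{not} $W$-homogeneous. Second, to justify that only the $W=-4m-2$ piece survives in $\k$, it is not enough to know that $H_1(\Gr\k)$ sits in bidegree $(-4m-2,-4m-2)$; you need that \emph{all} of $\Gr^M_{-4m-2}\k$ does, which follows because $\SL(H)$ acts trivially on $\k$ (so every element of $\k$ has $\sl(H)$-weight $0$, forcing $W=M$). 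You gesture at this at the end, but it should be stated for $\Gr\k$ itself, not just $H_1$. Also, in (a) the parenthetical ``$M$-weight $-4m-2\le -12$, hence degree $\ge 6$'' only applies for $m\ge 3$; the true bound is degree $2m+1\ge 3$ for all $m\ge 1$, which is what you actually need.
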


\begin{proof}
Denote the normalizer of the image of a Lie algebra homomorphism $\a \to
\Der\p(E')$ by $N(\a)$. Since $\g^\MEM_\uu/\g^\geom_\uu \cong \k$, and since
there is a monodromy homomorphism $\g^\MEM \to \Der\p(E')$, the image of
$\phi_E$ lies in $N(\g^\geom_\uu)$. Since $\u^\geom_\uu$ is an ideal of
$\g^\geom_\uu$, since $N(\u^\geom_\uu)/\im\u^\geom_\uu$ is a
$\g^\geom_\uu/\u^\geom_\uu$-module, and since $\g^\geom_\uu/\u^\geom_\uu \cong
\sl(H)$, it follows that
$$
N(\g^\geom_\uu)/\im\g^\geom_\uu \subseteq
\big[N(\u^\geom_\uu)/\im\u^\geom_\uu\big]^{\sl(H)}
$$
After identification with associated bigradeds, we see that, modulo geometric
derivations, the image of $\k$ is contained in $[\Der^0\L(A,T)]^{\sl(H)}$.

Since the $\sl(H)$-invariants have the property that their $M$- and $W$-weights
are equal by (\ref{eqn:weights}), $\ssigma_{2m+1}^\round{r}$ must be geometric
when $r\neq 4m+2$.

The second assertion follows immediately from the fact that
$$
H_1(\u^\geom_1)^{\SL(H)} = \Lambda^2 H_1(\u^\geom_1)^{\SL(H)} = 0.
$$
Since $\Gr^W_r \u^\geom_1 = \Gr^W_r \u^\geom_\uu$ for all $r \neq -2$, this
implies that all $\sl(H)$-invariants in $\Gr^W_{-4m-2} \u_\uu^\geom$ lie in
$L^3\Gr\u^\geom_\uu$.
\end{proof}

The next result is an immediate consequence of the fact that the odd $W$-graded
quotients of $\u^\geom_\uu$ vanish.

\begin{corollary}
\label{cor:expansion}
If $\ssigma_{2m+1}^\round{r}\neq 0$, then $r$ is even and $\ge 2m+2$, so that
$$
\phi_E(\ssigma_{2m+1}) = \sum_{k\ge m+1} \ssigma_{2m+1}^\round{2k}.
$$
\end{corollary}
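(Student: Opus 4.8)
The plan is to deduce the corollary from Proposition~\ref{prop:geometric} together with the explicit bigraded generating set of $\Gr\u^\geom_\uu$ recorded in Section~\ref{sec:gens}. Throughout, $m\ge 1$ is fixed, $\phi_E$ is viewed as the induced map on associated $M$-gradeds as in the discussion above, and $\phi_E(\ssigma_{2m+1})=\sum_r \ssigma_{2m+1}^\round{r}$ is the decomposition of the derivation $\phi_E(\ssigma_{2m+1})\in\Gr^M_{-4m-2}\Der^0\L(A,T)^\wedge$ into its $W$-homogeneous pieces $\ssigma_{2m+1}^\round{r}\in\Gr^W_{-r}\Gr^M_{-4m-2}\Der^0\L(A,T)$. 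We must show that $\ssigma_{2m+1}^\round{r}=0$ unless $r$ is even and $r\ge 2m+2$.

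For $r=4m+2$ there is nothing to do, since $4m+2$ is even and $\ge 2m+2$. So suppose $r\ne 4m+2$. By Proposition~\ref{prop:geometric} the element $\ssigma_{2m+1}^\round{r}$ is geometric, i.e.\ it lies in the image of the graded monodromy homomorphism~(\ref{eqn:gr_monod}); since that homomorphism is $\gl(H)$-equivariant and strictly bigraded, and since the $\sl(H)$-summand of $\g^\geom_\uu$ is concentrated in $M$-weights $-2,0,2$ while $-4m-2\le -6$, the element $\ssigma_{2m+1}^\round{r}$ lies in the image of $\Gr^W_{-r}\Gr^M_{-4m-2}\u^\geom_\uu$. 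Hence it suffices to prove that $\Gr^W_{-r}\Gr^M_{-4m-2}\u^\geom_\uu=0$ whenever $r$ is odd, or $r$ is even with $r<2m+2$.

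This is the elementary weight count at the heart of the argument. By Section~\ref{sec:gens}, $\Gr\u^\geom_\uu$ is topologically generated as a Lie algebra by the elements $\ee_0^j\cdot\ee_{2n}$ with $n\ge 1$ and $0\le j\le 2n-2$, and $\ee_0^j\cdot\ee_{2n}\in\Gr^M_{-2(j+1)}\Gr^W_{-2n}\u^\geom_\uu$. Thus every element of $\Gr^M_{-4m-2}\u^\geom_\uu$ is a $\Q$-combination of Lie monomials in $\nu\ge 1$ such generators $\ee_0^{j_1}\cdot\ee_{2n_1},\dots,\ee_0^{j_\nu}\cdot\ee_{2n_\nu}$ with $\sum_i (j_i+1)=2m+1$ and $0\le j_i\le 2n_i-2$; such a monomial lies in $\Gr^W_{-r}$ with $r=2\sum_i n_i$, which is automatically even, and
$$
2m+1-\nu=\sum_i j_i\le\sum_i(2n_i-2)=r-2\nu,
$$
so that $r\ge 2m+1+\nu\ge 2m+2$. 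Therefore $\Gr^W_{-r}\Gr^M_{-4m-2}\u^\geom_\uu$ vanishes for $r$ odd and for $r<2m+2$, as required; combining this with the $r=4m+2$ case gives $\ssigma_{2m+1}^\round{r}=0$ unless $r$ is even and $\ge 2m+2$, i.e.\ $\phi_E(\ssigma_{2m+1})=\sum_{k\ge m+1}\ssigma_{2m+1}^\round{2k}$.

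The only delicate point, which I would verify carefully, is the passage through Proposition~\ref{prop:geometric}: one needs that ``geometric'' genuinely places $\ssigma_{2m+1}^\round{r}$ in the bigraded piece $\Gr^W_{-r}\Gr^M_{-4m-2}\u^\geom_\uu$ rather than merely in some subquotient of it, which is exactly the strict bigradedness of the monodromy homomorphism~(\ref{eqn:gr_monod}); this in turn follows from the canonical $\Q$-de~Rham bigrading of Section~\ref{sec:splitting_DR} and the fact, already established, that $\phi_E$ respects it on associated $M$-gradeds. Everything else is the bookkeeping above, so I do not expect this corollary to present any real difficulty — the work has been done in Proposition~\ref{prop:geometric} and in identifying the generators of $\Gr\u^\geom_\uu$.
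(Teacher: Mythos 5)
Your proof is correct and follows the paper's route: via Proposition~\ref{prop:geometric}, the components $\ssigma_{2m+1}^\round{r}$ with $r\neq 4m+2$ are geometric, so the constraints on $r$ are read off from the bigraded structure of $\Gr\u^\geom_\uu$. The paper dismisses the corollary as an immediate consequence of the vanishing of the odd $W$-graded quotients of $\u^\geom_\uu$; your bookkeeping on the bidegrees of the generators $\ee_0^j\cdot\ee_{2n}$ (the inequality $2m+1-\nu\le r-2\nu$ forcing $r\ge 2m+1+\nu\ge 2m+2$) is a correct and useful unpacking of that one-line remark, making the lower bound $r\ge 2m+2$ explicit rather than implicit.
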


\begin{remark}
Brown's proof \cite{brown:mtm} of the injectivity of $\phi_\P : \k \to
\Der\p(U)$ and the Oda Conjecture, whose proof was completed by Takao
\cite{takao}, imply that the homomorphism $\k \to
N(\g^\geom_\uu)/\im\g^\geom_\uu$ is injective.
\end{remark}

The fact that the image of $\k$ lies in the centralizer of $N$ gives finer
information about the $\zz_{2m+1}^\round{r}$.

\begin{proposition}
\label{prop:congruences}
For all $m\ge 1$, we have $\phi_E(\ssigma_{2m+1}) \equiv \eop_{2m+2} \bmod D^2$.
\end{proposition}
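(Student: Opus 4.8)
The plan is to identify $\k$ and $\p := \Lie\pi_1^\un(E'_\tate,\ww_o)$ with the completions of their associated bigraded Lie algebras via the canonical $\Q$-de~Rham splitting, so that $\phi_E$ becomes the $\k$-equivariant map $\L(\ssigma_3,\ssigma_5,\dots)\to\Der^0\L(A,T)^\wedge$ induced on associated gradeds, and then to prove the congruence by reducing everything modulo $D^2=D^2\Der^0\L(A,T)$, where both sides become explicitly controlled. First I would record the input: by Proposition~\ref{prop:geometric} and the corollary following it, $\phi_E(\ssigma_{2m+1})=\sum_{k\ge m+1}\ssigma_{2m+1}^\round{2k}$ with every term other than the top one geometric, and by the lemma of Section~\ref{sec:pollack} relating the lower central series of $\Gr\u^\MEM_\uu$ to the depth filtration, the image of $\phi_E$ lies in $D^1\Der^0\L(A,T)$. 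Since $D^\dot$ is motivic it is a filtration by bigraded subspaces, so each $W$-homogeneous piece $\ssigma_{2m+1}^\round{2k}$ lies in $D^1$ and only its class in $D^1/D^2$ is relevant; that $\phi_E(\ssigma_{2m+1})\notin D^2$ is the classical non-vanishing of the depth-one part of the infinitesimal Galois action on $\pi_1^\un(\Pminus)$, carried over by $\phi_\P$ and $\phi_E$, which induce the same depth filtration on $\k$ (Section~\ref{sec:depth}).

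The key step is to show that the only piece surviving modulo $D^2$ lies in the same bidegree as $\eop_{2m+2}$. For this I would use the restriction map $\res\colon\Der^N\p(E'_\tate)\to\Der^E\p(U)$, where $\p(U):=\Lie\pi_1^\un(\Pminus,\ww_o)$: its kernel is $\Q\e_0$, it is strict for $D^\dot$, and $\phi_\P=\res\circ\phi_E$. Writing $f_{2m+1}\in\p(U)$ for the (non-motivic) preimage of $\phi_\P(\ssigma_{2m+1})$ under $f\mapsto\delta_f$, the classical description of the depth-one Galois derivation gives $f_{2m+1}\equiv c_m\,\ad_{X_0}^{2m}(X_1)\bmod D^2$ for a non-zero scalar $c_m$, hence $\phi_\P(\ssigma_{2m+1})(X_0)\equiv c_m\,\ad_{X_0}^{2m+1}(X_1)\bmod D^2$ and $\phi_\P(\ssigma_{2m+1})(X_1)=0$. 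Under the inclusion (\ref{eqn:inclusion}), $X_0\equiv A\bmod D^1$ and $X_1=[T,A]=\theta$, so since $X_0,X_1$ generate $\p(U)$, the class of $\phi_\P(\ssigma_{2m+1})$ in $(D^1/D^2)\Der\p(U)$ is homogeneous of elliptic weight $-2m-2$. As $\res$ is elliptic-weight compatible and $D^\dot$ is a bigraded filtration, this forces $\res(\ssigma_{2m+1}^\round{2k})\in D^2\Der^E\p(U)$ for all $k\ne m+1$; because these pieces have non-zero $W$-weight while $\ker\res=\Q\e_0$ has $W$-weight $0$, strictness of $\res$ then gives $\ssigma_{2m+1}^\round{2k}\in D^2$ for $k\ne m+1$. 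Hence $\phi_E(\ssigma_{2m+1})\equiv\ssigma_{2m+1}^\round{2m+2}\bmod D^2$.

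It then remains to identify the surviving term. Using $D^1\L(A,T)/D^2\L(A,T)\cong(\Sym\Hdual)\cdot\theta$ together with Lemma~\ref{lem:depth}, a short bidegree count shows that $(D^1/D^2)\cap\Gr^W_{-2m-2}\Gr^M_{-4m-2}\Der^0\L(A,T)$ is one-dimensional, spanned by the class of $\eop_{2m+2}$ (a derivation in this bidegree is determined mod $D^2$ by $\delta(A)\in\Q\,A^{2m+1}\!\cdot\theta$ and $\delta(T)\in\Q\,A^{2m}T\!\cdot\theta$ subject to $\delta(\theta)=0$). Thus $\ssigma_{2m+1}^\round{2m+2}\equiv\lambda\,\eop_{2m+2}\bmod D^2$ for some scalar $\lambda$; applying $\res$ and comparing the coefficient of $\ad_A^{2m+1}(\theta)$ in $\res(\ssigma_{2m+1}^\round{2m+2})(X_0)\equiv c_m\,\ad_A^{2m+1}(\theta)$ with $\eop_{2m+2}(A)=\ad_A^{2m+1}(\theta)$ gives $\lambda=c_m$, so the statement reduces to $c_m=1$.

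The hard part is this last normalisation. One must show that with the chosen motivic normalisation of $\ssigma_{2m+1}$ — the generator on which $\zeta^\m(2m+1)$ evaluates to $1$ — the leading coefficient of its depth-one image in $\p(U)$ equals $1$, while keeping straight the sign conventions in (\ref{eqn:deltas}), (\ref{eqn:inclusion}) and the definition of $\eop_{2n}$. This identification is essentially the de~Rham description of $\p(\Pminus)$ as a pro-object of $\MTM$ (Deligne; Deligne--Goncharov), but pushing it through all the normalisations in play here is the delicate bookkeeping. An alternative that avoids the classical input altogether is to extract $\lambda$ from the arithmetic relations of Theorem~\ref{thm:motivic}: applying the monodromy homomorphism (which sends $\ee_{2n}\mapsto 2\e_{2n}/(2n-2)!$ and $\zz_{2m-1}\mapsto\phi_E(\ssigma_{2m-1})$) to the relation for $[\zz_{2m-1},\ee_{2n+2}]$, reducing modulo $D^3$, and matching the result against $\lambda\,[\eop_{2m},\e_{2n+2}]$ determines $\lambda$ from the Bernoulli-number and binomial identities already present in the excerpt.
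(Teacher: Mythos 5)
Your proposal takes a genuinely different route from the paper, and most of its structure is sound, though the final normalization is left unresolved.

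The paper's proof never leaves the elliptic picture: it exploits the fact that $\phi_E(\ssigma_{2m+1})$ commutes with the monodromy logarithm $N = \sum c_{2k}\e_{2k}$ from (\ref{eqn:monod_log}), which yields a recursive system of equations $[\e_0,\ssigma_{2m+1}^\round{2k}] = \sum_j c_{2j}[\e_{2j},\ssigma_{2m+1}^\round{2k-2j}]$. Reading this off at $k=m+1$ shows $\ssigma_{2m+1}^\round{2m+2}$ is a lowest weight vector and hence a scalar multiple of $\eop_{2m+2}$, and the recursion together with Proposition~\ref{prop:geometric} pushes all higher terms into $L^2$ of the geometric derivations, which sits inside $D^2$. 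Your proof instead descends to the genus-0 picture via $\res\colon\Der^N\p(E')\to\Der^E\p(U)$, uses the known depth-one shape of the classical Galois action on $\p(\Pminus)$ to see that $\phi_\P(\ssigma_{2m+1})\bmod D^2$ is $W$-homogeneous of weight $-2m-2$, and then pulls the vanishing of the other $W$-graded pieces back along $\res$ using strictness and the fact that $\ker\res=\Q\e_0$ sits in $W$-weight $0$. Both arguments are valid; what yours buys is a direct link to the classical genus-0 story (which is where the Ihara--Takao applications of Section~\ref{sec:ihara-takao} live anyway), at the cost of importing the non-vanishing of the depth-one Galois derivation on $\p(\Pminus)$ as an external input, whereas the paper's $N$-commutation argument stays internal to $\Der^0\L(A,T)$ until the very last step. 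A few implications you use as if immediate do need spelling out: that $\delta(X_0)\in D^2$ and $\delta(X_1)=0$ force $\delta\in D^2\Der\p(U)$ requires the free-generation of $D^1\L(X_0,X_1)$ by the $X_0^j\cdot X_1$, and the one-dimensionality of the relevant piece of $(D^1/D^2)\Der^0\L(A,T)$ is the same content the paper packages as $\Gr^M_{-4m-2}\ker\e_0\cong\Q\eop_{2m+2}$.

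Where your proposal falls short is the normalization $c_m=1$, which you flag honestly but do not resolve. The paper closes this in one line by citing Brown's period computation \cite[Lem.~7.1]{brown:mmv}, which gives $\phi_E(\ssigma_{2m+1})\equiv\frac{(2m)!}{2}\,\im\eeop_{2m+2}\bmod D^2$; combined with Theorem~\ref{thm:monod} ($\ee_{2n}\mapsto 2\e_{2n}/(2n-2)!$) this collapses to exactly $\eop_{2m+2}$. Your second alternative (extracting $\lambda$ from the arithmetic relation of Theorem~\ref{thm:motivic}) is not circular — that theorem is proved earlier and independently of this proposition — but its proof already rests on the same Brown period computation, so this route is the same input in disguise and involves more bookkeeping than a direct appeal to \cite[Lem.~7.1]{brown:mmv}. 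Your first alternative (carrying the Deligne/Deligne--Goncharov de~Rham normalization of $\p(\Pminus)$ through (\ref{eqn:inclusion}) and (\ref{eqn:deltas})) is genuinely delicate and is precisely the kind of sign/constant chase that Brown's lemma was computed to avoid; without doing it, the congruence is only established up to a nonzero scalar.
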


\begin{proof}
Recall the formula for the monodromy logarithm $N$ from (\ref{eqn:monod_log}).
Since $\phi_E(\ssigma_{2m+1})$ commutes with $N$, the
$\ssigma_{2m+1}^\round{2k}$ satisfy the system of equations
$$
[\e_0,\ssigma_{2m+1}^\round{2k}]
= c_4[\e_4,\ssigma_{2m+1}^\round{2k-4}]
+ c_6[\e_6,\ssigma_{2m+1}^\round{2k-6}]
+ \dots 
+ c_{2k-2m-2}[\e_{2k-2m-2},\ssigma_{2m+1}^\round{2m+2}],
$$
where $c_{2k} = (2k-1)B_{2k}/(2k)!$. This equation determines
$\ssigma_{2m+1}^\round{2k}$ up to an element of $\ker \e_0$ and implies that
$\ssigma_{2m+1}^\round{2m+2}$ is a lowest weight vector of $\Der^0\L(A,T)$.
Since
$$
\Gr^M_{-4m-2}\ker\{\e_0 : H_1(\u^\geom_\uu) \to H_1(\u^\geom_\uu)\}
= \Q\eop_{2m+2},
$$
it follows that $\ssigma_{2m+1}^\round{2m+2}$ is a multiple (possibly zero) of
$\eop_{2m+2}$. It also implies that
$$
W_{-2m-3}\ker\big\{
\e_0 : \Der^\geom_{<0}\L(A,T) \to \Der^\geom_{<0}\L(A,T)\big
\} \subset L^2\Der^\geom_{<0}\L(A,T),
$$
where $\Der^\geom_{<0}\L(A,T)$ denotes the geometric derivations of $\L(A,T)$ of
negative $W$-weight. Combined with Proposition~\ref{prop:geometric}, this and
the equation above imply that $\ssigma_{2m+1}^\round{2k} \in L^2
\Der^\geom_{<0}\L(A,T)$ when $2k > 2m+2$. Since $D^\dot$ is a central filtration
of $\Der\L(A,T)$, $D^2\Der^\geom_{<0}\L(A,T) \supseteq
L^2\Der^\geom_{<0}\L(A,T)$. It follows that
$$
\phi_E(\ssigma_{2m+1}) \equiv \ssigma_{2m+1}^\round{2m+2} \bmod D^2.
$$

It remains to show that $\ssigma_{2m+1}^\round{2m+2} = \eop_{2m+2}$. This can be
proved either by computing the cocycle of the \'etale realization or the period
of a Hodge realization. Nakamura computed the cocycle in \cite[Thms.~3.3 \&
3.5]{nakamura}. We deduce it from Brown's computation \cite[Lem.~7.1]{brown:mmv}
of the period of the Eisenstein series $G_{2m+2}$. His result implies that
$$
\phi_E(\ssigma_{2m+1}) \equiv \frac{(2m)!}{2}\,\im\eeop_{2m+2} \bmod D^2,
$$
where $\eeop_{2m+2}:= \ee_0^{2m}\cdot \ee_{2m+2}/(2m)!$ and $\im$ means its
image in $\Der^0\L(A,T)$. Thus, by Theorem~\ref{thm:monod}, is $\eop_{2m+2}$.
\end{proof}

Brown \cite[Thm.~1.2]{brown:depth3} has computed $\phi_E(\ssigma_{2m+1})$ mod $W_{-2m-5}$ for a certain choice of $\ssigma_{2m+1}$. This gives the second term in the expansion in Corollary~\ref{cor:expansion}.

The $\eop_{2n}$ satisfy Pollack's relations as well. The elliptic analogue of
the Ihara--Takao congruence is now an immediate consequence of
Proposition~\ref{prop:congruences} and Pollack's relations.

\begin{theorem}
\label{thm:elliptic_ihara-takao}
If $n > 0$, then
$$
\sum_{a+b=n} c_a [\phi_E(\ssigma_{2a+1}),\phi_E(\ssigma_{2b+1})]
\equiv 0 \bmod D^3\Der\p(E')
$$
if and only if there is a cusp form $f$ of $\SL_2(\Z)$ of weight $2n+2$ with
$\sr_f^+(x,y) = \sum c_a x^{2a} y^{2n-2a}$.
\end{theorem}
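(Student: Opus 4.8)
The plan is to deduce Theorem~\ref{thm:elliptic_ihara-takao} from Pollack's Theorem~\ref{thm:pollack} by transporting the identity through the infinitesimal Galois action $\phi_E$ and using Proposition~\ref{prop:congruences} to control everything modulo $D^3$. First I would record the basic structural fact, already available from the discussion preceding the theorem: $\phi_E : \k \to \Der^0\L(A,T)^\wedge$ is a morphism in $\MTM$ (hence a Lie algebra homomorphism), so for any coefficients $c_a$ with $\sum_{a+b=n} c_a[\ssigma_{2a+1},\ssigma_{2b+1}]$ an element of $\k$, its image under $\phi_E$ is $\sum_{a+b=n} c_a[\phi_E(\ssigma_{2a+1}),\phi_E(\ssigma_{2b+1})]$. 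This reduces the theorem to comparing, inside $\Der\p(E')$, the bracket $\sum c_a[\phi_E(\ssigma_{2a+1}),\phi_E(\ssigma_{2b+1})]$ with the corresponding combination $\sum c_a[\eop_{2a+2},\eop_{2b+2}]$ of geometric derivations, modulo $D^3$.

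Next I would invoke Proposition~\ref{prop:congruences}: for each $m\ge 1$, $\phi_E(\ssigma_{2m+1}) \equiv \eop_{2m+2} \bmod D^2\Der\p(E')$. Write $\phi_E(\ssigma_{2m+1}) = \eop_{2m+2} + \delta_m$ with $\delta_m \in D^2\Der\p(E')$. Expanding the bracket bilinearly,
$$
[\phi_E(\ssigma_{2a+1}),\phi_E(\ssigma_{2b+1})]
= [\eop_{2a+2},\eop_{2b+2}] + [\eop_{2a+2},\delta_b] + [\delta_a,\eop_{2b+2}] + [\delta_a,\delta_b].
$$
Since $D^\dot$ is a central filtration of $\Der\p(E')$ (the depth filtration satisfies $[D^i,D^j]\subseteq D^{i+j}$, as noted in Section~\ref{sec:depth}), and since $\eop_{2a+2} \in D^1\Der\p(E')$ — one checks this directly from the formula (\ref{eqn:deltas}) defining $\e_{2n}(A,T)$, whose values on the generators involve $\theta = [T,A]$ and hence lie in $D^1\L(A,T)$, combined with Lemma~\ref{lem:depth} — each of the last three terms lies in $D^3\Der\p(E')$. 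Therefore
$$
\sum_{a+b=n} c_a[\phi_E(\ssigma_{2a+1}),\phi_E(\ssigma_{2b+1})]
\equiv \sum_{a+b=n} c_a[\eop_{2a+2},\eop_{2b+2}] \bmod D^3\Der\p(E').
$$

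Finally I would translate Pollack's quadratic relation. The $\eop_{2n} = \e_{2n}(A,T) = \e_0^{2n-2}\cdot\e_{2n}/(2n-2)!$ are the lowest-weight vectors of the $\sl(H)$-copies generated by the highest-weight vectors $\e_{2n}$; Pollack's Theorem~\ref{thm:pollack} states that $\sum_{a+b=n} c_a[\e_{2a+2},\e_{2b+2}] = 0$ in $\Der^0\L(A,T)$ exactly when $\sum c_a \aa^{2a}\bb^{2n-2a} = \sr_f^+(\aa,\bb)$ for a weight $2n+2$ cusp form $f$. Applying the lowering operator $\e_0$ (which acts as a derivation of the bracket and is $\sl(H)$-equivariant) an appropriate number of times to the highest-weight relation, one sees that the highest-weight relation $\sum c_a[\e_{2a+2},\e_{2b+2}]=0$ is equivalent to the lowest-weight relation $\sum c_a[\eop_{2a+2},\eop_{2b+2}]=0$ up to an invertible scalar depending only on $n$; both are the same $\sl(H)$-submodule of $\Lambda^2 H_1$. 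Hence $\sum c_a[\eop_{2a+2},\eop_{2b+2}] = 0$ (not merely mod $D^3$) precisely under Pollack's cusp-form condition, which combined with the congruence above gives the theorem. I expect the main obstacle to be the last paragraph: carefully justifying that passing from the highest-weight relation among the $\e_{2n}$ to the lowest-weight relation among the $\eop_{2n}$ preserves the equivalence with the cusp-form condition, i.e.\ that no extra relations or degeneracies are introduced by applying $\e_0^{2n-2}$ — this requires knowing that $\e_0$ acts injectively on the relevant highest-weight line, which follows from the $\sl_2$-representation theory bookkeeping in the statement of the Proposition preceding Theorem~\ref{thm:pollack} (each $[S^{2a}H,S^{2b}H]$ decomposes with multiplicity one in the relevant weight).
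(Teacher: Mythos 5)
Your proof follows the paper's route: Proposition~\ref{prop:congruences} plus Pollack's relations plus depth bookkeeping. Steps (1)--(3) are correct, including the key observations that $\phi_E$ is a Lie algebra homomorphism, that $\phi_E(\ssigma_{2m+1})\equiv\eop_{2m+2}\bmod D^2$, and that $\eop_{2n}\in D^1$ so that the error terms in the bilinear expansion drop to $D^3$. This gives the congruence $\sum c_a[\phi_E(\ssigma_{2a+1}),\phi_E(\ssigma_{2b+1})]\equiv\sum c_a[\eop_{2a+2},\eop_{2b+2}]\bmod D^3$, and the $\sl_2$ bookkeeping that identifies the highest- and lowest-weight relations is also correct; in fact the precise formula $\e_0^{2n}\cdot[\e_{2a+2},\e_{2b+2}]=(2n)!\,[\eop_{2a+2},\eop_{2b+2}]$ (all cross-terms vanish because $\e_0^j\cdot\e_{2a+2}=0$ for $j>2a$) is worth stating explicitly.

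There is, however, a gap in the converse direction of the ``if and only if.'' You show that $\sum c_a[\eop_{2a+2},\eop_{2b+2}]=0$ \emph{exactly} $\iff$ the cusp-form condition, which together with the congruence gives: cusp form $\Rightarrow$ vanishing mod $D^3$. For the converse you implicitly need $\sum c_a[\eop_{2a+2},\eop_{2b+2}]\in D^3\Rightarrow\sum c_a[\eop_{2a+2},\eop_{2b+2}]=0$ (or directly $\Rightarrow$ cusp form), and your argument characterizes only the exact kernel, not the kernel modulo $D^3$. The gap is fillable in one line by a bidegree count: the derivation $\delta:=\sum c_a[\eop_{2a+2},\eop_{2b+2}]$ lies in $\Gr^M_{-4n-4}\Gr^W_{-2n-4}\Der^0\L(A,T)$, so $\delta(A)$ is a Lie polynomial with exactly $2n+3$ copies of $A$ and $2$ copies of $T$; since every nonzero element of $D^3\L(A,T)$ has $T$-degree at least $3$, the relation $\delta(A)\in D^3$ forces $\delta(A)=0$, and Lemma~\ref{lem:pollack} then gives $\delta=0$. (Alternatively, invoke the $d=2$ case of the second, ``mod $P^3$,'' statement of Theorem~\ref{thm:pollack} and transfer it to the $\eop$'s by the same $\sl(H)$-equivariance you already use.) Without one of these, the argument as written establishes only half of the equivalence.
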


The classical case of the Ihara--Takao congruences now follows from the elliptic
case using the formulas (\ref{eqn:inclusion}).

\begin{theorem}[Ihara--Takao, Goncharov, Schneps]
\label{thm:ihara-takao}
If $n > 0$, then
$$
\sum_{a+b=n} c_a [\phi_\P(\ssigma_{2a+1}),\phi_\P(\ssigma_{2b+1})]
\equiv 0 \bmod D^3\Der\p(U)
$$
if and only if there is a cusp form $f$ of $\SL_2(\Z)$ of weight $2n+2$ with
$\sr_f^+(x,y) = \sum c_a x^{2a} y^{2n-2a}$.
\end{theorem}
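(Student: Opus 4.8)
The plan is to deduce the classical Ihara--Takao congruence (Theorem~\ref{thm:ihara-takao}) from its elliptic counterpart (Theorem~\ref{thm:elliptic_ihara-takao}) by transporting the relation modulo $D^3$ along the injective, depth-strict homomorphism $\L(X_0,X_1)^\wedge \hookrightarrow \L(A,T)^\wedge$ of Section~\ref{sec:pminus}. First I would record that the classical infinitesimal Galois action $\phi_\P : \k \to \Der\p(U)$ and the elliptic one $\phi_E : \k \to \Der\p(E')$ are intertwined by the inclusion $\p(U)\hookrightarrow\p(E')$, i.e.\ that for $\sigma\in\k$ the derivation $\phi_E(\sigma)$ preserves the Lie subalgebra $\L(X_0,X_1)^\wedge$ and restricts on it to $\phi_\P(\sigma)$. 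This is exactly the statement that the morphism $(\Pminus,\ww_o)\to(E'_\tate,\ww_o)$ is $\k$-equivariant, which is asserted in Section~\ref{sec:pminus}; concretely, $\phi_E(\sigma)$ is a nodal derivation in the sense of the definition of $\Der^N\p(E')$, so its restriction to $\p(U)$ is the extendable derivation $\phi_\P(\sigma)$, and the diagram with $\phi_E$, $\phi_\P$ and the restriction map commutes.

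Next I would check that the restriction map $\Der^N\p(E')\to\Der^E\p(U)$ carries a sum of brackets $\sum_{a+b=n}c_a[\phi_E(\ssigma_{2a+1}),\phi_E(\ssigma_{2b+1})]$ lying in $D^3\Der\p(E')$ to a sum $\sum_{a+b=n}c_a[\phi_\P(\ssigma_{2a+1}),\phi_\P(\ssigma_{2b+1})]$ lying in $D^3\Der\p(U)$, and conversely. The forward direction is immediate because restriction preserves the depth filtration. The converse --- the only place where strictness is needed --- uses the Proposition of Section~\ref{sec:depth} that $\Der^N\p(E')\to\Der^E\p(U)$ is strictly compatible with $D^\dot$: if the restriction of a derivation $\delta=\sum_{a+b=n}c_a[\phi_E(\ssigma_{2a+1}),\phi_E(\ssigma_{2b+1})]$ to $\L(X_0,X_1)^\wedge$ lies in $D^3$, then $\delta$ itself lies in $D^3\Der^N\p(E')\subseteq D^3\Der\p(E')$. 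Thus the condition ``$\equiv 0\bmod D^3\Der\p(U)$'' on the classical side is equivalent to ``$\equiv 0\bmod D^3\Der\p(E')$'' on the elliptic side. Combined with Theorem~\ref{thm:elliptic_ihara-takao}, both are equivalent to the existence of a cusp form $f$ of weight $2n+2$ with $\sr_f^+(x,y)=\sum c_a x^{2a}y^{2n-2a}$, which is precisely the assertion of Theorem~\ref{thm:ihara-takao}.

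I would then fill in the one subtlety hiding in the previous paragraph: one must know that $\delta:=\sum_{a+b=n}c_a[\phi_E(\ssigma_{2a+1}),\phi_E(\ssigma_{2b+1})]$ actually lies in $\Der^N\p(E')$ (so that strictness of the restriction applies), which follows since $\phi_E$ lands in $\Der^N\p(E')$ and the latter is a Lie subalgebra closed under bracket, being a pro-object of $\MTM$. With that in place, the equivalence of the two divisibility conditions is formal. I do not expect any serious obstacle: the elliptic Ihara--Takao theorem and Pollack's relations do all the analytic work, and the strictness results of Section~\ref{sec:depth} (Proposition~\ref{prop:depth_strict} and the subsequent proposition on $\Der^N\p(E')\to\Der^E\p(U)$) do all the combinatorial work. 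The only point requiring care is making sure the congruence is interpreted in the correct derivation algebra on each side and that the depth filtration used is the same as the filtration $P^\dot$ of Section~\ref{sec:pollack} --- which is established in Section~\ref{sec:depth} --- so that Pollack's relations and the congruence statement refer to the same filtration. The mild friction, if any, is purely bookkeeping: tracking that the $\ssigma_{2m+1}$ appearing in Pollack's relations (via $\phi_E(\ssigma_{2m+1})\equiv\eop_{2m+2}\bmod D^2$, Proposition~\ref{prop:congruences}) are the same generators whose images under $\phi_\P$ appear in the classical statement.
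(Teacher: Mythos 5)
Your overall strategy is sound and ends up in essentially the same place as the paper, but the two arguments are packaged differently, and there is one technical point in your version that needs care.

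The paper's proof does not invoke the strictness proposition for the restriction map $\Der^N\p(E') \to \Der^E\p(U)$. Instead it evaluates the derivations on the generator $R_0$ (equivalently $X_0$) and chains through four equivalences, using Proposition~\ref{prop:depth_strict} (strictness of $\p(U)\hookrightarrow\p(E')$), the fact that $\phi_E(\ssigma_{2m+1})(T)\in D^2$, and Lemma~\ref{lem:depth}. In effect it re-derives strictness of the restriction map inline rather than citing it. Your version cites the Section~\ref{sec:depth} proposition as a black box, which is a cleaner packaging of the same underlying argument. What the paper's more explicit route buys is a concrete intermediate formula, $\phi_\P(\ssigma_{2m+1})(X_0)\equiv X_0^{2m+1}\cdot X_1 \bmod D^2$, which is of independent interest.

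The point to tighten: the restriction map $\Der^N\p(E')\to\Der^E\p(U)$ is not injective. For a non-injective filtration-preserving map, the statement ``strictly compatible with $D^\dot$'' says $\res\bigl(D^d\Der^N\bigr)=D^d\Der^E$; it does \emph{not} formally say $\res^{-1}\bigl(D^d\Der^E\bigr)=D^d\Der^N$, which is what you actually use when you claim that $\res(\delta)\in D^3$ forces $\delta\in D^3\Der^N\p(E')$. Two ways to close this gap: (i) the paper's proof of the strictness proposition in fact proves the stronger implication $\res(\delta)\in D^d \Rightarrow \delta\in D^d\Der^N\p(E')$ directly, via (\ref{eqn:delta_cong}) and Lemma~\ref{lem:depth} (with a small induction on $d$), so you can cite that argument rather than the bare statement; or (ii) observe that the kernel of the restriction map is spanned by $\e_0$ in $W$-weight $0$, while your $\delta=\sum c_a[\phi_E(\ssigma_{2a+1}),\phi_E(\ssigma_{2b+1})]$ lies in strictly negative $W$-weight, the depth filtration is $W$-graded, and hence the implication holds on the relevant graded pieces. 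Either fix is routine, but it must be said; otherwise the step ``then $\delta$ itself lies in $D^3\Der^N\p(E')$'' does not literally follow from the proposition as stated. With that repaired, your proof is correct.
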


\begin{proof}
The formulas (\ref{eqn:inclusion}) and the definition of $\eop_{2n+2}$ imply
that
$$
\eop_{2m+2}(A) = -A^{2m+2}\cdot T = A^{2m+1}\cdot R_1
\text{ and } \eop_{2m+2}(T) \in D^2\L(A,T).
$$
Consequently
\begin{align*}
\eop_{2m+2}(R_0)
&= \eop_{2m+2}\big(A - [T,A]/2 + \frac{1}{12}T \cdot [T,A] + \cdots\big) \cr
&\equiv A^{2m+1}\cdot R_1  &\bmod D^2\p(E') \cr
&\equiv R_0^{2m+1}\cdot R_1  &\bmod D^2\p(E')
\end{align*}
Since $\p(U) \to \p(E')$ is strict with respect to $D^\dot$
(Prop.~\ref{prop:depth_strict}), Proposition~\ref{prop:congruences} implies that
$$
\phi_\P(\ssigma_{2m+1})(X_0) \equiv X_0^{2m+1}\cdot X_1 \bmod D^2\p(E').
$$
Since $\phi_E(\ssigma_{2m+1})(T)\in D^2\L(A,T)$, we have
\begin{align*}
&\phantom{xxxi}\sum_{a+b=n}
c_a [\phi_\P(\ssigma_{2a+1}),\phi_\P(\ssigma_{2b+1})]
\equiv 0 \bmod D^3\Der\p(U)
\cr
& \Longleftrightarrow 
\sum_{a+b=n}
c_a[\phi_\P(\ssigma_{2a+1}),\phi_\P(\ssigma_{2b+1})](X_0)\in D^3\p(U)
\cr
& \Longleftrightarrow 
\sum_{a+b=n}
c_a [\phi_E(\ssigma_{2a+1}),\phi_E(\ssigma_{2b+1})](R_0)\in D^3\p(E')
\cr
& \Longleftrightarrow 
\sum_{a+b=n}
c_a [\phi_E(\ssigma_{2a+1}),\phi_E(\ssigma_{2b+1})](A)\in D^3\p(E')
\cr
& \Longleftrightarrow 
\sum_{a+b=n} c_a [\phi_E(\ssigma_{2a+1}),\phi_E(\ssigma_{2b+1})] \equiv 0
\bmod D^3\Der^0\L(A,T).
\end{align*}
The result now follows from the elliptic case,
Theorem~\ref{thm:elliptic_ihara-takao}.
\end{proof}

\appendix

\section{Relative Weight Filtrations}
\label{sec:rel_wt_filt}

Universal mixed elliptic motives are mixed Tate motives with additional
structure. This extra structure includes a second weight filtration $W_\dot$ and
a nilpotent endomorphism $N$ of $V$ that preserves $W_\dot$. Every universal
mixed elliptic motive $V$ has two weight filtrations: its weight filtration
$M_\dot$ as an object of $\MTM$ and the second weight filtration $W_\dot$. One
of the axioms of a universal mixed elliptic motive is that $M_\dot$ be the relative
weight filtration of the nilpotent endomorphism $N$ of the filtered vector space
$(V,W_\dot)$. In this section we review Deligne's definition
\cite{deligne:weil2} of the relative weight filtration of a nilpotent
endomorphism of a filtered vector space. More information about relative weight
filtrations can be found in \cite{steenbrink-zucker}. A concise exposition is
given in Section~7 of \cite{hain:morita}.

\subsection{The weight filtration of a nilpotent endomorphism}

There is a natural weight filtration of a vector space associated to a nilpotent
endomorphism $N$ of it.

\begin{proposition}
\label{prop:wt-filt}
If $N$ is a nilpotent endomorphism of a finite dimensional vector space $V$ over
a field of characteristic zero, then there is a unique filtration
\begin{multline*}
0 = W(N)_{-m-1} \subseteq W(N)_{-m} \subseteq W(N)_{-m+1} \subseteq \cdots
\cr
\dots \subseteq W(N)_{m-1} \subseteq W(N)_m = V
\end{multline*}
of $V$ such that
\begin{enumerate}

\item for all $n\in \Z$, $NW(N)_n \subseteq W(N)_{n-2}$;

\item for each $k\in \Z$,  $N^k : \Gr_k^{W(N)} V \to \Gr_{-k}^{W(N)} V$ is an
isomorphism.

\end{enumerate}
\end{proposition}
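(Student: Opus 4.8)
The plan is to treat existence and uniqueness by a common induction on the index of nilpotency $m$ of $N$, i.e.\ the least integer with $N^{m+1}=0$; thus $m=0$ precisely when $N=0$. First I would record the reduction: any $W_\bullet$ satisfying (i) and (ii) is automatically supported in degrees $[-m,m]$. Indeed if $\Gr^W_k V\neq 0$ for some $k>m$ then $N^k=0$ cannot be the isomorphism $\Gr^W_kV\xrightarrow{\ \sim\ }\Gr^W_{-k}V$ demanded by (ii); since (ii) also forces $\Gr^W_{-k}V\cong\Gr^W_kV$, the same argument excludes $k<-m$. So one may normalise $W(N)_m=V$ and $W(N)_{-m-1}=0$ (extending by $W(N)_j=V$ for $j\ge m$ and $W(N)_j=0$ for $j\le -m$), and everything comes down to the finitely many intermediate steps.

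For uniqueness I would proceed by induction on $m$, the case $m=0$ being immediate from (ii). For $m\ge 1$, let $W$ and $W'$ both satisfy (i), (ii). The key point is that each is \emph{forced} to satisfy $W_{m-1}=\ker N^m$ and $W_{-m}=\im N^m$. For the first: $\ker N^m\subseteq W_{m-1}$ because a vector $v\in\ker N^m$ with nonzero image $\bar v$ in $\Gr^W_mV=V/W_{m-1}$ would, by (ii) for $k=m$, have $N^m\bar v\neq 0$, contradicting $N^m\bar v=\overline{N^m v}=0$; and $W_{m-1}\subseteq\ker N^m$ because (i) gives $N^mW_{m-1}\subseteq W_{m-1-2m}=W_{-m-1}=0$. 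For the second: $\im N^m=N^mV\subseteq W_{-m}$ by (i), and the isomorphism of (ii) in degree $m$ is the surjection $\bar v\mapsto N^m v$ onto $\Gr^W_{-m}V=W_{-m}$, whence $W_{-m}=\im N^m$. Since $\im N^m\subseteq\ker N^m$ (as $N^{2m}=0$), $N$ induces $\bar N$ on $\bar V:=\ker N^m/\im N^m$ with $\bar N^m=0$, so $\bar N$ has index $\le m-1$; the filtrations $W,W'$ induce on $\bar V$ filtrations satisfying (i), (ii) for $\bar N$ (using $\Gr^W_jV\cong\Gr^{\bar W}_j\bar V$ compatibly with the relevant powers of $N$ for $-m<j<m$), and by induction these agree. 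Pulling back along $\ker N^m\twoheadrightarrow\bar V$ gives $W_j=W'_j$ for $-m\le j\le m-1$, hence $W=W'$.

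For existence there are two routes, and I would present the first as the main one. \textbf{Route 1 (Jacobson--Morozov).} Since the ground field has characteristic $0$, complete $N$ to an $\sl_2$-triple $(N,H,N^{+})$ in $\gl(V)$ with $[H,N]=-2N$. Then $H$ is semisimple with integer eigenvalues (integrality comes from $\sl_2$-representation theory, and the eigenspace decomposition is defined over the ground field since $H$ is), and one puts $W(N)_n=\bigoplus_{k\le n}\ker(H-k)$. Condition (i) says exactly that $N$ lowers $H$-weight by $2$, and condition (ii) follows, by complete reducibility of $V$ as an $\sl_2$-module, from the elementary fact that in an irreducible $\sl_2$-module the $k$-th power of the lowering operator restricts to an isomorphism from the weight-$k$ space to the weight-$(-k)$ space for every $k\ge 0$. \textbf{Route 2 (recursion).} Avoiding Jacobson--Morozov, run the same recursion used for uniqueness: put $W_{m-1}=\ker N^m$, $W_{-m}=\im N^m$, obtain a weight filtration $\bar W$ for $\bar N$ on $\bar V$ (index $\le m-1$) by induction, and let the intermediate $W_j$ be the preimages of the $\bar W_j$; the axioms for the outcome then follow from those for $\bar W$ together with the isomorphism $V/\ker N^m\to\im N^m$ induced by $N^m$, which realises (ii) in degree $m$.

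This is a classical statement (see \cite{deligne:weil2}, \cite{steenbrink-zucker}, or \cite{hain:morita}), so there is no deep obstacle; the only genuinely fiddly point is in Route 2 of the existence argument, where one must check that (i) and (ii) survive the gluing of the four hand-set steps $W_{\pm m}$, $W_{m-1}$, $W_{-m-1}$ to the inductively produced middle part---in particular the identification $\Gr^W_jV\cong\Gr^{\bar W}_j\bar V$ and the behaviour of $N$ across the degree $m-1$ and degree $-m$ boundaries. Route 1 renders all of this transparent, at the cost of invoking Jacobson--Morozov; I would therefore lead with Route 1 and note that the recursion gives a self-contained, characteristic-free alternative whose engine is already the uniqueness proof.
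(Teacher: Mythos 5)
The paper offers no proof of this proposition at all: it is stated as a classical fact (Deligne's monodromy weight filtration), with the surrounding text pointing to \cite{deligne:weil2}, \cite{steenbrink-zucker} and \cite{hain:morita}, so there is no in-paper argument to compare yours against. Your proof is correct and is essentially the standard one. The uniqueness argument is exactly right: the normalization forcing the filtration into $[-m,m]$, the identifications $W_{m-1}=\ker N^m$ and $W_{-m}=\im N^m$, and the descent to $\bar V=\ker N^m/\im N^m$ all check out, and the induction closes because $\Gr^W_jV\cong\Gr^{\bar W}_j\bar V$ for $-m<j<m$. Route~1 for existence is also fine; the only point worth making explicit is that Jacobson--Morozov (or an explicit block-by-block $\sl_2$-triple built from the Jordan form of $N$, which exists over the ground field since $N$ is nilpotent) and Weyl complete reducibility are available over any field of characteristic zero, and you correctly note that the $H$-eigenspaces are rational because $H$ has integer eigenvalues. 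In Route~2 you flag but do not carry out the boundary verifications; for the record they do go through, e.g.\ $NV\subseteq W_{m-2}$ because $\bar W_{m-2}=\ker\bar N^{m-1}$ by the uniqueness characterization applied to $\bar N$, and $\bar N^{m-1}(\overline{Nv})=\overline{N^mv}=0$ in $\bar V$; similarly $NW_{-m+1}=N^m(\ker N^m)+\im N^{m+1}=0$. So the proposal is a complete and correct proof of a statement the paper merely quotes.
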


The filtration $W(N)_\dot$ of $V$ is called the {\em weight filtration of $N$}.

Note that $W(N)_\dot$ is centered at $0$. When $V$ is a motive of weight $m$,
it is natural to reindex the weight filtration of a nilpotent endomorphism $N$
of $V$ so that it is centered at $m$. The shifted filtration
$$
M_k V := W(N)_{k-m}
$$
is centered at $m$. The reindexed filtration $M_\dot$ satisfies $N M_k \subseteq
M_{k-2}$ and
$$
\xymatrix{
N^k : \Gr_{m+k}^M V \ar[r]^(0.55)\simeq & \Gr_{m-k}^M V
}
$$
is an isomorphism for all $k \in \Z$. We will call the shifted weight filtration
$M_\dot$ the {monodromy weight filtration} of $N:V\to V$.

\begin{example}
Let $H=\C\bw \oplus \C\aa$, regarded as a vector space of weight 1. Let $N$ be
the nilpotent endomorphism $\aa\frac{\partial}{\partial \bw}$ of $H$. It induces
a nilpotent endomorphism of $V=S^n H$, the space of homogeneous polynomials in
$\aa$ and $\bw$ of degree $n$, which we regard as a vector space of weight $n$.
The shifted monodromy weight filtration $M_\dot$ of $V$ is obtained by giving
$\aa$ weight $0$ and $\bw$ weight 2. The monomial $\aa^{n-j} \bw^j$ has weight
$2j$. Then
$$
M_k V = \text{span of the monomials $\aa^{n-j}\bw^j$ of weight $\le k$}.
$$
\end{example}

\subsection{The weight filtration of a nilpotent endomorphism of a filtered
vector space}

Now suppose that $N$ is a nilpotent endomorphism of a {\em filtered} finite
dimensional vector space $V$ over a field of characteristic zero. That is, $V$
has a filtration
$$
0 \subseteq \dots \subseteq 
W_{m-1}V \subseteq W_m V \subseteq W_{m+1}V \subseteq \dots \subseteq V
$$
which is stable under $N$.

Since $N$ preserves the weight filtration, it induces a nilpotent endomorphism
$$
N_m := \Gr^W_m N : \Gr^W_m V \to \Gr^W_m V.
$$
of the $m$th weight graded quotient of $V$. Proposition~\ref{prop:wt-filt}
implies that each graded quotient has a weight filtration $W(N_m)$. The
reindexed filtration $W(N_m)[m]_\dot$ is centered at $m$. Denote it by
$M^{(m)}_\dot$.

\begin{definition}
A filtration $M_\dot$ of $V$ is called a {\em relative weight filtration} of $N
: (V,W_\dot) \to (V,W_\dot)$  if
\begin{enumerate}

\item for each $k \in \Z$, $NM_k \subseteq M_{k-2}$;

\item the filtration induced by $M_\dot$ on $\Gr^W_m V$ is the reindexed weight
filtration $M^{(m)}_\dot$.

\end{enumerate}
\end{definition}

Relative weight filtrations, if they exist, are unique. (Cf.\
\cite{steenbrink-zucker}).

\begin{example}
\label{ex:trivial}
If $N : (V,W_\dot) \to (V,W_\dot)$ satisfies $N(W_mV)\subseteq W_{m-2}V$ for all
$m\in \Z$, then each $N_m = 0$ and the relative weight filtration $M_\dot$ of
$N$ exists and equals the original weight filtration $W_\dot$.
\end{example}

Even though the weight filtration of a nilpotent endomorphism of a finite
dimensional vector space always exists, the relative weight filtration of a
nilpotent endomorphism of a {\em filtered} vector space $(V,W_\dot)$ does not.
Necessary and sufficient conditions for the existence of a relative weight
filtration are given in \cite{steenbrink-zucker}. They imply that the generic
nilpotent endomorphism of $(V,W_\dot)$ does not have a relative weight
filtration. 

\begin{example}
Let $E$ be a compact Riemann surface of genus 1 and $P,Q$ two distinct points of
$E$. Let $V = H_1(E,\{P,Q\};\Q)$. Then one has the exact sequence
$$
0 \to H_1(E;\Q) \to V \to \widetilde{H}_0(\{P,Q\}) \to 0.
$$
Define a filtration $W_\dot$ on $V$ by $W_{-2}V=0$, $W_{-1}V = H_1(E)$, and $W_0
V = V$. Choose any path $\gamma$ from $P$ to $Q$. It determines a class
$[\gamma]$ in $V$. Let $u$ be any non-trivial element of $H_1(E;\Q)$. Define a
nilpotent automorphism $N$ of $(V,W_\dot)$ by insisting that $N$ be trivial on
$\Gr^W_\dot V$ and that $N[\gamma] = u$. Since $N$ is trivial on $\Gr^W_\dot V$,
the relative weight filtration $M_\dot$, should it exist, would equal $W_\dot$.
But $W_\dot$ is not a relative weight filtration because $N W_0V$ is not 
contained in $W_{-2}V$.
\end{example}

\section{Splitting the Weight Filtrations $M_\dot$ and $W_\dot$}
\label{sec:splittings}

In this section, we show that both weight filtrations of an object of
$\MEM_\ast$ can be simultaneously split and that such splittings are compatible
with tensor products and duals, and are preserved by morphisms in $\MEM_\ast$.
Moreover, such a splitting of $M_\dot$ can be chosen to agree with any given
natural splitting of the weight filtration in $\MTM$. The existence of such
splittings imply that $\Gr^M_\dot$, $\Gr^W_\dot$ and $\Gr^W_\dot\Gr^M_\dot$ are
exact functors on $\MEM_\ast$.

Fix a fiber functor $\w : \MTM \to \Vec_F$, where $F$ is a field of
characteristic zero. This induces the fiber functor
$$
\xymatrix{
\MEM_\ast \ar[r]^{\vv_o} & \MTM \ar[r]^\w & \Vec_F
}
$$
which we also denote by $\w$. We will regard objects of $\MEM_\ast$ as
$F$-vector spaces with an action of $\pi_1(\MEM_\ast,\w)$. Similarly for
mixed Tate motives.

Suppose that we have chosen a natural splitting
$$
V \cong \bigoplus_m \Gr^M_\dot V.
$$
of the weight filtration $M_\dot$ of each object $V$ of $\MTM$. That is, we have
chosen a splitting of the canonical homomorphism $\pi_1(\MTM,\w) \to \Gm$. The
weight filtration $M_\dot$ of a mixed Tate motive $V$ then splits under the
$\Gm$-action.

\begin{proposition}
\label{prop:splittings}
For each $\ast \in \{1,\uu,2\}$, the $F$-vector space $V$ underlying an object
of $\MEM_\ast$ has a bigraded splitting
$$
V = \bigoplus V_{m,n}
$$
in the category of $\Q$-vector spaces with the property that
$$
M_mV = \bigoplus_{\substack{r\le m\cr n\in \Z}} V_{r,n}
\text{ and }
W_nV = \bigoplus_{\substack{r\le n\cr m\in \Z}} V_{m,r}.
$$
This bigrading is natural in the sense that it is preserved by morphisms $\phi :
V \to V'$ of $\MEM_\ast$:
$$
\phi(V_{m,n})\subseteq V_{m,n}'
$$
and also by the functors $\MEM_1 \to \MEM_2 \to \MEM_\uu$. It is compatible with
tensor products and duals. Finally, this splitting of $(V,M_\dot)$ can be
chosen so that it agrees with the splitting of $M_\dot$ when $(V,M_\dot)$ is
viewed as an object of $\MTM$ via the action
$$
\pi_1(\MTM,\w) \overset{s_\vv}{\longrightarrow} \pi_1(\MEM_\ast,\w)
\longrightarrow \Aut V.
$$
\end{proposition}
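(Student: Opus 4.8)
\textbf{Proof strategy for Proposition~\ref{prop:splittings}.}
The plan is to realize the desired bigrading as coming from a cocharacter of $\pi_1(\MEM_\ast,\w)$ that simultaneously lifts the central cocharacter governing $W_\dot$ and is compatible with the fixed splitting of $M_\dot$ on $\MTM$. Recall from Section~\ref{sec:tannaka} that $\pi_1(\MEM_\ast,\w)$ is an extension
$$
1 \to \U_\ast^\MEM \to \pi_1(\MEM_\ast,\w) \to \GL(H_\w) \to 1,
$$
and by Proposition~\ref{prop:W-splitting} this extension is split, with $\U_\ast^\MEM$ prounipotent. First I would fix such a splitting $\sigma : \GL(H_\w) \to \pi_1(\MEM_\ast,\w)$; composing $\sigma$ with the cocharacter $\lambda \mapsto \lambda\,\id_{H_\w}$ of $\GL(H_\w)$ gives a cocharacter $c_W : \Gm \to \pi_1(\MEM_\ast,\w)$ whose weight decomposition on any object $V$ is exactly the splitting of $W_\dot$: this uses the last assertion of Section~\ref{sec:tannaka} that a pure object of weight $m$ is acted on by $\lambda\,\id_H$ as $\lambda^m$, together with exactness of $\Gr^W_\dot$. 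So $W_\dot$ is already split; the content is to refine this to a bigrading compatible with $M_\dot$.

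Second, I would produce the $M$-splitting. Composing $s_\vv : \pi_1(\MTM,\w) \to \pi_1(\MEM_\ast,\w)$ with the given splitting $\pi_1(\MTM,\w) \to \Gm$'s section (equivalently, using the chosen $\Gm$-action on $\MTM$) gives a cocharacter $c_M : \Gm \to \pi_1(\MEM_\ast,\w)$ whose weight-$k$ decomposition on $V$ recovers the splitting of $M_\dot$ induced from $\MTM$ via $s_\vv$. I would then show that $c_W$ and $c_M$ commute up to an inner automorphism by an element of $\U_\ast^\MEM$ that can be absorbed, so that after modifying $\sigma$ by a suitable $\U_\ast^\MEM$-conjugation the two cocharacters land in a common torus. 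Concretely, $c_W$ takes values in the Levi factor $\sigma(\GL(H_\w))$; the projection of $c_M$ to $\GL(H_\w)$ is a cocharacter of $\GL(H_\w)$, and since the relation (\ref{eqn:weights}), $M\text{-weight} = \sl(H)\text{-weight} + W\text{-weight}$, holds on weight-graded pieces, the projections of $c_M$ and $c_W$ differ (additively, on each irreducible) by the $\sl(H)$-weight, i.e.\ they generate a rank $\le 2$ torus inside a maximal torus of $\GL(H_\w)$ containing the image of $c_W$; choosing $\sigma$ so that $c_M$'s semisimple part lies in $\sigma$ of this torus makes $c_W$ and $c_M$ commute honestly. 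The pair $(c_W,c_M)$ then defines an action of $\Gm^2$ on every $V$, and $V_{m,n}$ is the subspace on which $(\lambda,\mu)$ acts by $\lambda^n\mu^m$. Naturality under morphisms, compatibility with $\otimes$ and duals, and compatibility with the functors $\MEM_1 \to \MEM_2 \to \MEM_\uu$ then all follow formally from the fact that these are maps of representation categories intertwining the relevant fundamental group homomorphisms, provided the splittings $\sigma$ for the three values of $\ast$ are chosen compatibly --- which one arranges by first choosing $\sigma$ for $\ast = \uu$ and pulling back along $\pi_1(\MEM_\uu,\w) \to \pi_1(\MEM_2,\w) \to \pi_1(\MEM_1,\w)$, using that each of these is surjective (Section~\ref{sec:structure}).

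Third, I would verify the filtration identities: that $M_mV = \bigoplus_{r\le m, n} V_{r,n}$ and $W_nV = \bigoplus_{r\le n,m} V_{m,r}$. The second holds because $c_W$ splits $W_\dot$ as noted. For the first, it suffices (since the relative weight filtration $M_\dot$ is the unique filtration on $(V,W_\dot)$ associated to $N=\log\sigma_o$, by the remark after the definition of $\MEM_\ast$ and Appendix~\ref{sec:rel_wt_filt}) to check that the candidate filtration $\bigoplus_{r\le m}(\text{weight-}r\text{ piece for }c_M)$ satisfies Deligne's two axioms: $N$ lowers it by $2$, and on each $\Gr^W_j V$ it induces the centered weight filtration of $N_j$. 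The first is immediate because $N$ spans a copy of $\Q(1)$, hence has $M$-weight $-2$ for the $\MTM$-splitting. The second is exactly the statement that on $\Gr^W_j V$ --- which is a sum of $S^mH(r)$'s --- the $M$-grading from $\MTM$ agrees with the monodromy weight grading of $N$ acting as $\aa\,\partial/\partial\bb$ on $H$; this is the computation recorded in the Example in Appendix~\ref{sec:rel_wt_filt} and in the summary of Section~\ref{sec:summary}.

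\textbf{Main obstacle.} The delicate point is the commutation of $c_W$ and $c_M$: a priori these two cocharacters of $\pi_1(\MEM_\ast,\w)$ need not commute, and one must show the ambiguity is exactly by an inner automorphism by a \emph{unipotent} element that can be removed by re-choosing the Levi splitting $\sigma$. The argument hinges on the fact that the prounipotent radical $\U_\ast^\MEM$ is generated in strictly negative $W$-weight (indeed all generators $\ee_0^j\cdot\ee_{2n}$ and $\zz_{2m+1}$ lie in $M_{-2}$, by Section~\ref{sec:gens}), so that the relevant obstruction group $F^0 W_0 M_0 \U_\ast^\MEM$-type piece vanishes --- this is precisely the vanishing exploited in the uniqueness part of Theorem~\ref{thm:DR_splitting}, and I would adapt that computation here. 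Once commutation is secured, everything else is formal tannakian bookkeeping.
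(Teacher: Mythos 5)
Your overall strategy—fix a Levi section $\sigma:\GL(H_\w)\to\pi_1(\MEM_\ast,\w)$, produce the two cocharacters $c_W$ (from the central cocharacter of $\GL(H)$ via $\sigma$) and $c_M$ (from the chosen $\Gm$-splitting of $\pi_1(\MTM,\w)$ via $s_\vv$), arrange for them to commute after modifying $\sigma$, and read off the bigrading from the resulting $\Gm\times\Gm$-action—is exactly the paper's approach. The paper packages the modification step as a self-contained group-theoretic lemma (Lemma~\ref{lem:splittings}): one passes to $G_c := \rho_G^{-1}(\im c)$, which is an extension of $\Gm$ by $\U_\ast^\MEM$, and invokes Levi's theorem to conjugate the restriction of an arbitrary Levi section $s$ to the given section $\sigma\circ s_A$; setting $s_G = u s u^{-1}$ for the conjugating $u\in\U$ then makes the two cocharacters honestly commute.

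The gap is in your ``Main obstacle'' paragraph. You assert that the removability of the ambiguity ``hinges on'' the vanishing of an $F^0W_0M_0\U_\ast^\MEM$-type group, citing the generator structure from Section~\ref{sec:gens} and the uniqueness argument in Theorem~\ref{thm:DR_splitting}. This is the wrong tool and it is circular. First, the Hodge filtration $F^\dot$ plays no role in Proposition~\ref{prop:splittings} (which concerns only $M_\dot$ and $W_\dot$ and is stated for an arbitrary fiber functor $\w$), so $F^0W_0M_0\U$ is not the relevant group. Second, and more seriously, the generator description of $\Gr\u_\ast^\MEM$ in Section~\ref{sec:gens} is obtained by identifying $\u_\ast^\MEM$ with its associated bigraded via Appendix~\ref{sec:splittings}, so appealing to it here presupposes the proposition you are proving. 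The vanishing argument belongs to the \emph{uniqueness} statement of Theorem~\ref{thm:DR_splitting}, which is a downstream, $\Q$-de~Rham specific refinement; for the \emph{existence} of a compatible Levi section, what you need is simply Levi's theorem for prounipotent extensions of reductive groups in characteristic zero (any two sections are conjugate by an element of the prounipotent radical). Replacing your vanishing argument by this conjugacy statement—applied to the subextension $G_c\to\Gm$ as in Lemma~\ref{lem:splittings}—closes the gap. The rest of your outline (compatibility with $\otimes$, duals, and the functors $\MEM_1\to\MEM_2\to\MEM_\uu$) is formal and fine, though your verification of Deligne's axioms in the third step is unnecessary: once $c_M$ is arranged to factor through $s_\vv(\pi_1(\MTM,\w))$ via the chosen $\Gm$-splitting, the $M$-grading on $V$ agrees with the $\MTM$-splitting by construction, with no need to re-derive the relative weight filtration.
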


This result will follow from the following lemma. Set
$$
\diag(t_1,t_2) := \begin{pmatrix} t_1 & 0 \cr 0 & t_2 \end{pmatrix}.
$$
Define a homomorphism $c : \Gm \to \GL_2$ by $c(t) = \diag(1,t)$. This is a
section of $\det : \GL_2 \to \Gm$.

\begin{lemma}
\label{lem:splittings}
Suppose that $F$ is a field of characteristic $0$ and that
$$
\xymatrix{
1 \ar[r] & U \ar[r]\ar[d] & G \ar[r]^{\rho_G}\ar[d] & \GL_2 \ar[r]\ar[d]^\det
 & 1 \cr
1 \ar[r] & K \ar[r] & A \ar@/_1pc/[u]_\sigma\ar[r] & \Gm \ar[r] & 1
}
$$
is a commutative diagram of affine $F$-groups with exact rows and where $K$
and $U$ are prounipotent and $\sigma$ is a section. If $s_A : \Gm \to A$ is a
section of $A\to \Gm$ such that $\rho_G\circ \sigma \circ s_A = c$, then there
exists a section $s_G : \GL_2 \to G$ that extends $\sigma\circ s_A$ in the sense
that the diagram
$$
\xymatrix{
&& \Gm \ar@/_/[dll]_{s_G\circ c} \ar[dl]^c \ar@/^/[ddl]^{\id} \cr
G & \GL_2 \ar[l]_{s_G} \ar[d]_\det \cr
A \ar[u]^\sigma & \Gm \ar[l]^{s_A}
}
$$
commutes. Moreover, any two such sections $s_G$ are conjugate by an element of
$U$ that centralizes $\sigma$.
\end{lemma}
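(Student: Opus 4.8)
The plan is to reduce the statement to a standard Levi-type lifting and conjugacy result for prounipotent extensions. First I would observe that it suffices to produce a section $s_G : \GL_2 \to G$ whose restriction to the central $\Gm$ (via $c$) equals $s_G\circ c$ and which, when composed with the map $G \to A$ (call it $p$), recovers $\sigma\circ s_A$. To organize this, note that $\GL_2$ is generated by its center $Z \cong \Gm$ (the scalars) and the derived subgroup $\SL_2$, and that $\det$ restricted to $Z$ is $t\mapsto t^2$ while $c(t)=\diag(1,t)$ gives $\det\circ c = \id$. The composite $\sigma\circ s_A : \Gm \to G$ already lifts $c : \Gm \to \GL_2$ by hypothesis ($\rho_G\circ\sigma\circ s_A = c$), so what remains is to extend this partial lift from the image of $c$ to all of $\GL_2$.

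The key step is: since $\SL_2$ is semisimple (in particular has no nontrivial extension by a prounipotent group that does not split), Levi's theorem / Mostow's theorem for proalgebraic groups gives a section $\GL_2 \to G$ of $\rho_G$; the content is to choose it compatibly with the already-chosen lift of $c$. I would proceed in two stages. First, pick \emph{any} Levi section $\tau : \GL_2 \to G$ of $\rho_G$ (this exists because $U$ is prounipotent and $\GL_2$ is reductive). Then $\tau\circ c$ and $\sigma\circ s_A$ are two lifts of $c : \Gm \to \GL_2$ into $G$; their "difference" is a cocharacter of $G$ landing in the fiber $U \rtimes (\text{centralizer of } c)$ — more precisely the map $t\mapsto (\sigma\circ s_A)(t)\cdot(\tau\circ c)(t)^{-1}$ takes values in $U$ because both project to $c(t)$ under $\rho_G$. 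Since $U$ is prounipotent and $c$ is a central cocharacter of $\GL_2$, the centralizer $Z_G(\tau(\GL_2))$-analysis shows this $U$-valued cocharacter can be "absorbed": conjugating $\tau$ by a suitable element of $U$ (obtained by exponentiating an appropriate element of $\Lie U$ in the relevant weight space), one arranges $\tau\circ c = \sigma\circ s_A$. This gives the desired $s_G$.

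For the diagram to commute I would then check that $p\circ s_G = \sigma\circ s_A\circ\det$ need not hold on the nose — rather the required commutativity is only that $p\circ s_G\circ c = \sigma\circ s_A$, which holds by construction. The "moreover" clause about uniqueness up to $U$-conjugacy is the standard uniqueness in Levi's theorem: any two sections $\GL_2 \to G$ of $\rho_G$ are conjugate by an element of $U$ (this is Mostow/Levi for proalgebraic groups), and if both restrict to $\sigma\circ s_A$ on the image of $c$, the conjugating element $u\in U$ must satisfy $u\cdot(\sigma\circ s_A)(t)\cdot u^{-1} = (\sigma\circ s_A)(t)$ for all $t$, i.e. $u$ centralizes $\sigma\circ s_A$ (equivalently centralizes $\sigma$, since $\sigma$ is determined up to the relevant data by $s_A$).

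\textbf{Expected main obstacle.} The delicate point is the absorption step: showing that the $U$-valued discrepancy cocharacter $t\mapsto(\sigma\circ s_A)(t)(\tau\circ c)(t)^{-1}$ can be removed by conjugating $\tau$ by an element of $U$ \emph{while preserving the lift of $c$}. This requires a weight/filtration argument on $\Lie U$ — decomposing $\Lie U$ under the $\Gm$-action induced by $c$ and using that the fixed part (or the part that could interfere) is controlled. Concretely one wants $H^1$ of the relevant cocharacter with coefficients in $\Lie U$ to vanish, or equivalently that the only obstruction lives in a weight space that is forced to be trivial by the structure of $G$ (in the application, $F^0\cap M_{-2} = 0$ plays exactly this role, but at the level of abstract Lemma~\ref{lem:splittings} one gets it from prounipotence of $U$ and the fact that $c$ is a \emph{central} cocharacter of $\GL_2$, so $\GL_2$-equivariance of everything forces the discrepancy into a single isotypic piece). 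I expect this to be a short but genuinely necessary cohomological computation rather than a formality.
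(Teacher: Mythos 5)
Your strategy --- choose any Levi section $\tau$ of $\rho_G$ and then conjugate it by an element of $U$ so that $\tau\circ c = \sigma\circ s_A$ --- is exactly the paper's. However, the step you flag as the ``expected main obstacle'' (absorbing the $U$-valued discrepancy cocharacter $t\mapsto(\sigma\circ s_A)(t)(\tau\circ c)(t)^{-1}$) is not an obstacle and requires no bespoke cohomological computation. The paper simply sets $G_c := \rho_G^{-1}(\im c)$, notes that $G_c$ is a prounipotent extension of $\Gm$ by $U$, and observes that both $\tau\circ c$ and $\sigma\circ s_A$ are splittings of $G_c\to\Gm$; the conjugacy part of the Levi--Mostow theorem applied to $G_c$ then immediately produces $u\in U$ with $u(\tau\circ c)u^{-1} = \sigma\circ s_A$, and $s_G := u\tau u^{-1}$ works. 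Two Levi splittings of a prounipotent extension of a reductive group are always conjugate by an element of the prounipotent radical --- that is the theorem, not something to be re-derived --- so the weight decomposition of $\Lie U$, the $H^1$ vanishing you discuss, and the $F^0\cap M_{-2}=0$ condition are all unnecessary here. (That last condition is used only later, in the uniqueness part of Theorem~\ref{thm:DR_splitting}, not in Lemma~\ref{lem:splittings}.) The remark about $\GL_2$ being generated by its center and $\SL_2$ likewise plays no role.

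Two smaller corrections. You wrote that the required commutativity is ``$p\circ s_G\circ c = \sigma\circ s_A$'' where $p:G\to A$, but the two sides have different codomains and $p$ does not appear in the lemma's diagram; the condition to verify is simply $s_G\circ c = \sigma\circ s_A$, which your construction and the paper's both deliver. Also, your parenthetical claim that centralizing $\sigma\circ s_A$ is ``equivalently'' centralizing $\sigma$ is not justified as written: centralizing $\im(\sigma\circ s_A)$ does not formally imply centralizing all of $\im\sigma$. The paper's own statement and proof are loose on exactly this point, so it is not a defect specific to your write-up, but the inference you inserted to bridge the gap does not hold without further argument.
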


\begin{proof}
Denote $\rho_G^{-1}\im c$ by $G_c$. This is an extension of $\Gm$ by $U$. Choose
a splitting $s : \GL_2 \to G$ of $\rho_G$. Then $s$ induces a splitting $s' :
\Gm \to G_c$. Since $\rho_G\circ \sigma \circ s_A = c$, the image of $\sigma$
lies in $G_c$. Levi's theorem implies that there is an element $u$ of $U$ such
that $\sigma = us' u^{-1}$. Now define $s_G = u s u^{-1}$. Levi's Theorem
applied to $G$ implies that any two such sections $s_B$ are conjugate by an
element of $U$ that centralizes with the image of $\sigma$.
\end{proof}

\begin{proof}[Proof of Proposition~\ref{prop:splittings}]
Recall that we have fixed an ordered basis $\bb,\aa$ of $\Hdual$. This gives an
identification of $\GL_2$ with $\GL(\Hdual)$. Denote the diagonal torus by $T$:
$$
\diag(t_1,t_2) : \bb \mapsto t_1 \bb \text{ and }
\diag(t_1,t_2) : \aa \mapsto t_2 \aa.
$$
Now take $A = \pi_1(\MTM,\w)$, $G=\pi_1(\MEM_\ast,\w)$, $s_A$ to be any
splitting (such as the $\Q$ de~Rham splitting), and $\sigma = s_\vv$. The
summand $V_{2m,n}$ of an object of $\MEM_\ast$ is the subspace of $V_\Q$ on
which $(t_1,t_2)\in T$ acts by $t_1^{m-n}t_2^{-m}$.
\end{proof}

Such bigraded splittings of $M_\dot$ and $W_\dot$ correspond to homomorphisms
$\Gm\times\Gm \to \pi_1(\MEM_\ast)$ whose composition with the natural
surjection $\pi_1(\MEM_\ast)\to \GL(H)$ is the inclusion of the maximal torus
associated to the basis $\aa,\bb$ of $H$. Since $V_{m,n} \cong \Gr^M_m\Gr^W_n
V$, we have:

\begin{corollary}
\label{cor:exactness}
Each mixed elliptic motive $V$ is naturally isomorphic to its associated
bigraded
$$
V\cong \bigoplus_{m,n} \Gr^M_m\Gr^W_n V
$$
in the category of $\Q$-vector spaces. Consequently, the functors $\Gr^M_\dot$,
$\Gr^W_\dot$ and $\Gr^M_\dot\Gr^W_\dot$ are exact functors from $\MEM_\ast$ to
the category of (bi)graded $\Q$-vector spaces and are compatible with
$\bigotimes$ and $\Hom$.
\end{corollary}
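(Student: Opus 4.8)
The plan is to deduce everything from Proposition~\ref{prop:splittings} together with the cocharacter reformulation of its splitting stated just above. First I would observe that, for each $\ast \in \{1,\uu,2\}$, Proposition~\ref{prop:splittings} produces a functorial bigrading $V = \bigoplus_{m,n} V_{m,n}$ of the $\Q$-vector space underlying an object $\V$ of $\MEM_\ast$, together with a natural isomorphism $V_{m,n}\cong \Gr^M_m\Gr^W_n V$ coming from the two filtrations $M_\dot$ and $W_\dot$; this isomorphism is of functors, since the bigrading is preserved by morphisms in $\MEM_\ast$ and by the functors $\MEM_1\to\MEM_2\to\MEM_\uu$. Combining the decomposition with these isomorphisms gives exactly the asserted natural isomorphism $V\cong \bigoplus_{m,n}\Gr^M_m\Gr^W_n V$ in the category of $\Q$-vector spaces. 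So the first assertion of the corollary requires nothing beyond restating Proposition~\ref{prop:splittings}.

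For exactness I would use the description of the bigraded splitting as a homomorphism $j : \Gm\times\Gm \to \pi_1(\MEM_\ast,\w)$ noted before the corollary: by the proof of Proposition~\ref{prop:splittings}, $V_{m,n}$ is the subspace of $V$ on which $j(t_1,t_2)$ acts through a fixed character of $\Gm\times\Gm$ (in the normalization used there, $(t_1,t_2)\mapsto t_1^{\,m-n}t_2^{-m}$). Now restriction along $j$ is an exact functor $\Rep(\pi_1(\MEM_\ast,\w)) \to \Rep(\Gm\times\Gm)$, and decomposition of a $(\Gm\times\Gm)$-representation into its character eigenspaces is exact, because $\Gm\times\Gm$ is linearly reductive over a field of characteristic zero (equivalently, the projector onto a given character eigenspace is natural in the representation). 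Hence each functor $\V\mapsto V_{m,n}\cong \Gr^M_m\Gr^W_n V$ is exact. Since $V$ is finite dimensional, only finitely many $V_{m,n}$ are nonzero, so $\Gr^W_n = \bigoplus_m \Gr^M_m\Gr^W_n$ and $\Gr^M_m = \bigoplus_n \Gr^M_m\Gr^W_n$ are finite direct sums of exact functors and are therefore exact as well.

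Finally, compatibility with $\otimes$ and $\Hom$ is immediate from the corresponding statement in Proposition~\ref{prop:splittings}: the bigraded splitting is compatible with tensor products and duals, so under the functorial isomorphism $V_{m,n}\cong \Gr^M_m\Gr^W_n V$ the bigrading of $A\otimes B$ is the convolution of those of $A$ and $B$, yielding a natural isomorphism $\Gr^M_\dot\Gr^W_\dot(A\otimes B)\cong \big(\Gr^M_\dot\Gr^W_\dot A\big)\otimes\big(\Gr^M_\dot\Gr^W_\dot B\big)$ of bigraded $\Q$-vector spaces, and, using rigidity to write $\Hom(A,B)\cong \check A\otimes B$, a natural isomorphism $\Gr^M_\dot\Gr^W_\dot\Hom(A,B)\cong \Hom\big(\Gr^M_\dot\Gr^W_\dot A,\Gr^M_\dot\Gr^W_\dot B\big)$. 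The same bookkeeping applies to $\Gr^M_\dot$ and $\Gr^W_\dot$ separately. There is no real obstacle here; the one point deserving a word is the exactness of passing to a torus character eigenspace, and that is standard, so the argument is essentially packaging of Proposition~\ref{prop:splittings}.
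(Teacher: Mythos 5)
Your argument is correct and follows essentially the same route as the paper, which deduces the corollary immediately from Proposition~\ref{prop:splittings} together with the observation that the bigrading corresponds to a cocharacter $\Gm\times\Gm\to\pi_1(\MEM_\ast)$, so that $V_{m,n}\cong\Gr^M_m\Gr^W_n V$ is a character eigenspace and passing to it is exact. Your write-up just fills in the routine details (exactness of restriction and of torus-eigenspace projection, finiteness of the sum, rigidity for $\Hom$); note only that the exponent formula quoted from the proof of Proposition~\ref{prop:splittings} is stated there for the piece indexed $V_{2m,n}$, so your $V_{m,n}$ should carry the character $t_1^{m/2-n}t_2^{-m/2}$, a normalization slip that has no bearing on the argument.
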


The Lie algebra $\g^\MEM_\ast$ of $\pi_1(\MEM_\ast)$ is a pro-object of
$\MEM_\ast$. Its weight filtration $W_\dot$ satisfies
$$
\g^\MEM_\ast = W_0 \g^\MEM_\ast,\ W_{-1}\g^\MEM_\ast \text{ is prounipotent, and
} \Gr^W_0 \g^\MEM_\ast \cong \gl(H).
$$
It follows that if $V$ is an object of $\MEM_\ast$, then each $\Gr^W_\dot V$ is
a $\gl(H)$-module. The spaces can be split according to their $\sl(H)$-weight.
To make this precise, we will view $\sl(H)$ as a subalgebra of $\End(\Hdual)$.
We fix a Cartan by  specifying that $\aa$ has $\sl(H)$-weight 1 and $\bb$ has
$\sl(H)$-weight $-1$.

\begin{lemma}
If $V$ is an object of $\MEM_\ast$, then, under the identification of $V$ with
$\bigoplus\Gr^M_\dot\Gr^W_\dot V$, the $\sl(H)$-weight of $V_{m,n}$ is $m-n$.
\end{lemma}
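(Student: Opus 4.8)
The plan is to unwind the definitions and reduce the statement to a computation on $\Hdual$ itself, then bootstrap to general $V$ via the tannakian formalism. First I would recall from Proposition~\ref{prop:splittings} (and its proof) that the bigrading $V\cong\bigoplus_{m,n}V_{m,n}$ comes from a homomorphism $T\times T'\to\pi_1(\MEM_\ast,\w)$ — or, more precisely, from a cocharacter $\Gm\times\Gm\to\pi_1(\MEM_\ast,\w)$ whose composition with $\pi_1(\MEM_\ast,\w)\to\GL(H)$ lands in the maximal torus of $\GL(\Hdual)$ associated to the ordered basis $\bb,\aa$. Concretely, $V_{2m,n}$ is the subspace of $V$ on which $\diag(t_1,t_2)\in T\subset\GL(\Hdual)$ acts by $t_1^{m-n}t_2^{-m}$, where $\diag(t_1,t_2):\bb\mapsto t_1\bb$, $\aa\mapsto t_2\aa$. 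Meanwhile, the $\sl(H)$-weight of a vector $v$ (with respect to the fixed Cartan in which $\aa$ has weight $1$ and $\bb$ has weight $-1$) is computed by the action of the one-parameter subgroup $\lambda\mapsto\diag(\lambda^{-1},\lambda)$ of $\SL(\Hdual)$: $v$ has $\sl(H)$-weight $w$ iff $\diag(\lambda^{-1},\lambda)$ acts on $v$ by $\lambda^{w}$.

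The key step is then purely a matter of matching exponents. Given $v\in V_{2m,n}$, the element $\diag(\lambda^{-1},\lambda)$ equals $\diag(t_1,t_2)$ with $t_1=\lambda^{-1}$, $t_2=\lambda$, so it acts on $v$ by $(\lambda^{-1})^{m-n}\lambda^{-m}=\lambda^{-(m-n)-m}=\lambda^{n-2m}$. Hence $v$ has $\sl(H)$-weight $n-2m$. If instead we index the first grading by $M$-weight itself rather than by $2m$ — writing $V_{M,n}$ for the piece in $M$-weight $M$ and $W$-weight $n$, so $M=2m$ — this reads: the $\sl(H)$-weight of $V_{M,n}$ is $n-M=-(M-n)$. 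To reconcile this with the asserted formula "the $\sl(H)$-weight of $V_{m,n}$ is $m-n$", I would check that the sign/indexing conventions in force here are those of Section~\ref{sec:bases} and equation~(\ref{eqn:weights}): the paper's convention is $M\text{-weight}=\sl(H)\text{-weight}+W\text{-weight}$, i.e.\ $\sl(H)\text{-weight}=M\text{-weight}-W\text{-weight}$, which for $V_{m,n}$ with subscripts denoting $(M\text{-weight},W\text{-weight})$ gives exactly $m-n$. (For the sanity check, $\aa$ spans a copy of $\Q(0)$ in $H$ with $M$-weight $0$ and $W$-weight... — one verifies the convention against $H=\Q(0)\oplus\Q(-1)$ as in Section~\ref{sec:summary}, where $\aa$ has $\sl(H)$-weight $1$ and $\bb$ has $\sl(H)$-weight $-1$, consistently with formula~(\ref{eqn:weights}).)

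To promote this from $V=H$ (or $\Hdual$) to an arbitrary object $V$ of $\MEM_\ast$, I would invoke Corollary~\ref{cor:exactness}: the bigrading is natural and compatible with $\otimes$ and $\Hom$, and every object of $\MEM_\ast^\ss$ is built from $\H$ and $\Q(-1)=\Lambda^2\H$ by these operations, so the identity~(\ref{eqn:weights}) relating the three weights — which holds on the generating object $\H$ by the explicit computation of its limit MHS in Section~\ref{sec:summary}, and is additive under $\otimes$ — holds on all of $\Gr^W_\dot V$ for every $V$. Equivalently, and more directly: the three cocharacters in play (the one cutting out $M_\dot$, the one cutting out $W_\dot$, and the Cartan $\Gm\hookrightarrow\SL(H)$) all factor through the torus $T\times T'\to\pi_1(\MEM_\ast,\w)$, and the stated relation is just the linear relation among their exponent characters, which one reads off once on $\Hdual$.

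The main obstacle I anticipate is bookkeeping rather than mathematics: there are several competing indexing conventions — $M$-weight versus "$m$" (where $M=2m$), the sign in the identification $\aa=2\pi i A$, $T=-\bb$, and which generator of the Cartan is declared to have weight $+1$ — and the lemma is only correct under the precise conventions fixed in Section~\ref{sec:bases} and~(\ref{eqn:weights}). So the real work is to pin down that $V_{m,n}$ here means $\Gr^M_m\Gr^W_n V$ with $m$ the actual $M$-weight (this is how $V_{m,n}$ is used in~(\ref{eqn:weights}) and in the statement "$V_{m,n}\cong\Gr^M_m\Gr^W_n V$" of Corollary~\ref{cor:exactness}), and then the formula $\sl(H)\text{-weight}=m-n$ is immediate from~(\ref{eqn:weights}). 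Once the conventions are aligned, the proof is a one-line consequence of~(\ref{eqn:weights}) together with the observation that $\sl(H)$ acts on $\Gr^W_\dot V$ through $\Gr^W_0\g^\MEM_\ast\cong\gl(H)\twoheadrightarrow\gl(H)/\Gm\,\mathrm{id}\supset\sl(H)$, compatibly with the bigrading.
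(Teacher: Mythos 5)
Your approach---computing the $\sl(H)$-weight of $V_{2m,n}$ directly from the cocharacter formula in the proof of Proposition~\ref{prop:splittings}---is the right one, and it is the argument the paper leaves implicit: the lemma carries no proof, being stated and then immediately rephrased as $M\text{-weight} = \sl(H)\text{-weight} + W\text{-weight}$. However, your proof inherits and then mishandles a genuine inconsistency in the paper's stated conventions, and as written the resolution of the sign is circular.

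Concretely, you compute with the Cartan for which ``$\aa$ has $\sl(H)$-weight $1$ and $\bb$ has weight $-1$'' (quoted from the sentence just before the lemma in the appendix), and you correctly obtain $\sl(H)$-weight $n - 2m$ on $V_{2m,n}$, i.e.\ $-(m-n)$ with $m$ the $M$-weight: the \emph{opposite} sign to the lemma. You then ``reconcile'' by invoking~(\ref{eqn:weights}); but~(\ref{eqn:weights}) is a verbatim restatement of the lemma you are proving---it is asserted in Section~\ref{sec:bases} without proof, and this lemma exists to justify it---so that step begs the question. Your sanity check also fails as stated: in $H$, $\aa$ has $M$-weight $0$ and $W$-weight $1$, so~(\ref{eqn:weights}) forces $\sl(H)$-weight $-1$ on $\aa$ (not $+1$), and $\bb$ (with $M$-weight $2$, $W$-weight $1$) must have $\sl(H)$-weight $+1$, not $-1$.

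The actual resolution is that the Cartan convention printed in the appendix is a typo; it should read ``$\aa$ has $\sl(H)$-weight $-1$ and $\bb$ has $\sl(H)$-weight $1$,'' which is the convention in force everywhere else in the paper. Section~\ref{sec:bases} assigns $T$ weight $1$ and $A$ weight $-1$ (with $T=-\bb$ and $\aa = 2\pi i\, A$), so that $\ee_0 = -A\,\partial/\partial T$ lowers $\sl(H)$-weight by $2$. Under that Cartan, the relevant one-parameter subgroup is $\lambda\mapsto\diag(\lambda,\lambda^{-1})$, and your own exponent computation gives $\lambda^{m-n}\cdot\lambda^{m}=\lambda^{2m-n}$ on $V_{2m,n}$, i.e.\ $\sl(H)$-weight $= 2m-n = M\text{-weight} - W\text{-weight}$, which is exactly the lemma, with no appeal to~(\ref{eqn:weights}) required. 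Your last paragraph correctly diagnoses the issue as bookkeeping, but the bookkeeping still has to be done; once it is, the rest of your argument---the cocharacter computation and the tannakian bootstrap to general $V$---is sound.
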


In other words, the three notions of weight in $\Gr^M_\dot\Gr^W_\dot V$ are
related by
$$
M\text{-weight} = \sl(H)\text{-weight} + W\text{-weight}.
$$

\section{Index of Principal Notation}

\begin{tabbing}
loooongnotation \= an explanation ---- quite looooong xxxxxxxxxxxxxxxxxxxx
then
\= page reference \kill


$E$ \> an elliptic curve, often the fiber of the Tate curve over $\tate$ \> \\

$E'$ \> $E-\{0\}$ \> \\

$e_o$ \> the unique cusp $q=0$ of $\Mbar_{1,1}$ \> p.~\pageref{def:cusp}\\

$\tate$ \> the integral tangent vector $\partial/\partial q$ of $\Mbar_{1,1}$ at
the cusp $e_o$ \>  p.~\pageref{sec:tangent} \\

$\ww_o$ \> the integral tangent vector $\partial/\partial w$ of $1$ in $\P^1$
\> p.~\pageref{sec:tangent}\\

$\vv_o$ \> the generic notation for the integral tangent vector of
$\Mbar_{1,\ast}$\> p.~\pageref{sec:tangent}\\

$\H$ \> the local system $R^1f_\ast\Q$ associated to $f : \E \to \M_{1,1}^\an$
\> p.~\pageref{sec:localsys_H} \\

$\cH$ \> the canonical extension of $\H\otimes \O_{\M_{1,1}^\an}$ to
$\Mbar_{1,1}^\an$ \> p.~\pageref{sec:connection} \\

$\nabla_0$ \> the canonical connection on $S^m \cH$ \>
p.~\pageref{sec:connection} \\

$H$ \> the fiber of $\cH$ over $\vv_o$, an object of $\MTM$
\> p.~\pageref{def:H} \\ 

\\

$\MTM$ \> the category of mixed Tate motives over $\Z$ \>
p.~\pageref{def:MTM} \\

$\MEM_\ast$ \> the category of universal mixed elliptic motives of type $\ast$
\> p.~\pageref{def:MEM} \\

$\MEM^\ss_\ast$ \> the category of semi-simple universal mixed elliptic motives
\> p.~\pageref{def:MEMss} \\


$\MHS(\M_{1,\ast},\H)$ \> the category of admissible VMHS over $\M_{1,\ast}$
generated by $\H$ \> p.~\pageref{def:MHS_H} \\

\\ 

$\K$ \> the prounipotent radical of $\pi_1(\MTM)$ \> p.~\pageref{def:K} \\

$\k$ \> the Lie algebra of $\K$ \> p.~\pageref{def:K} \\

$\pi_1^\geom(\MEM_\ast)$ \>
the kernel of $\pi_1(\MEM_\ast) \to \pi_1(\MTM)$ \> p.~\pageref{def:pigeom} \\


$\cG_\ast^\rel$ \> the relative completion of $\pi_1(\M_{1,\ast}^\an,\vv_o)$
\> p.~\pageref{def:Grel} \\

$\U_\ast^\rel$ \> its prounipotent radical \> p.~\pageref{def:Grel} \\

$\U^\MEM_\ast$ \> prounipotent radical of $\pi_1(\MEM_\ast)$ \>
p.~\pageref{def:umem}\\

$\U^\geom_\ast$ \> the prounipotent radical of $\pi_1^\geom(\MEM_\ast)$
 \> p.~\pageref{def:ugeom}\\
 
$\u^\geom_\ast$ \> the Lie algebra of $\U^\geom_\ast$ \> p.~\pageref{def:ugeom}
\\

$\cGhat_\ast$ \> $\pi_1(\MHS)\ltimes \cG^\rel_\ast$ \> p.~\pageref{def:Ghat} \\

$\cG^{\cris,\ell}_\ast$ \> the $\ell$-adic crystalline completion of
$\pi_1(\M_{1,\ast/\Z[1/\ell]},\vv_o)$ \> p.~\pageref{def:Gcris} \\

$\cG_\ast^\MEM$ \> notation for $\pi_1(\MEM_\ast)$\> p.~\pageref{def:GMEM} \\


\\  

$\psi_{2n}$ \> differential form associated to the Eisenstein series $G_{2n}$
\> p.~\pageref{def:psi} \\

$V_f$ \> real 2-dimensional Hodge structure associated to eigenform $f$ \>
p.~\pageref{def:V_f} \\

$M_f$ \> the simple $\Q$-Hodge structure associated to the eignform $f$ \>
p.~\pageref{def:M_f}\\

$\Fr_\infty$ \>
involution induced by complex conjugation on $\M_{1,\ast}^\an$ and $\H$\>
p.~\pageref{lem:real_frob}\\

$\Frbar_\infty$ \>
$\F_\infty$ composed with complex conjugation on $\H_\C$ \>
p.~\pageref{lem:real_frob} \\

$V^\pm$ \> the eigenspaces of the involution $\Frbar_\infty: V \to V$ \>
p.~\pageref{lem:real_frob} \\

$\sr_f$ \> modular symbol of the cusp form $f$ \> p.~\pageref{def:mod_symb}\\

$\sr_f^\pm$ \> the even and odd degree parts of $\sr_f$
\> p.~\pageref{def:mod_symb}\\

\\ 

$\aa,\bb$ \> $\Q$-Betti basis of $H$ \>  p.~\pageref{sec:orbifolds} \\

$\aa,\bw$ \> $\Q$-de~Rham basis of $H$ \> p.~\pageref{sec:connection} \\

$\Hdual$ \> $H(1)$ \> p.~\pageref{def:Hdual} \\

$A, T$ \> $\Q$-de~Rham basis of $\Hdual$ \> p.~\pageref{def:A+T}\\

\\ 

$\L(V)$ \> free Lie algebra generated by a vector space $V$ \> 
p.~\pageref{def:LV} \\

$\ee_0$ \> the weight lowering nilpotent in $\sl(H)$ \> 
p.~\pageref{eqn:e_0} \\

$\ee_{2n}$ \> the generator of $\u^\MEM_\uu$ dual to $G_{2n}$ when $n>0$
\> p.~\pageref{def:e_2n}\\

$\zz_{2m-1}$ \> a generator of $\k$ and also certain of its lifts to
$\u^\MEM_\ast$ \> p.~\pageref{def:zz} \\

$\f_\ast$ \> the free Lie algebra generated by $H_1(\u_\ast^\MEM)$
\> p.~\pageref{def:f}\\

$\f_\ast^\geom$ \> the free Lie algebra generated by $H_1(\u_\ast^\geom)$
\> p.~\pageref{def:f}\\

$L^r\g$ \> the $r$th terms of the lower central series of the Lie algebra
$\g$ \> p.~\pageref{def:lcs} \\

$\Der^0\L(A,T)$ \> the derivations of $\L(A,T)$ that annihilate $[T,A]$ \> 
p.~\pageref{prop:der0}\\

$\e_{2n}(\v_1,\v_2)$ \> a derivation of $\L(H)$ depending on $\v_1,\v_2\in H$
\> p.~\pageref{eqn:deltas} \\

$\e_{2n}$ \> $\e_{2n}(T,A)$ \> p.~\pageref{eqn:deltas} \\

$\eop_{2n}$ \> $\e_{2n}(A,T)$ \> p.~\pageref{def:eop} \\

\\

$\p(E')$ \> the Lie algebra of $\pi_1^\un(E',\ww_o)$ \>
p.~\pageref{sec:depth} \\

$\p(U)$ \> the Lie algebra of $\pi_1^\un(\Pminus,\ww_o)$ \>
p.~\pageref{sec:depth} \\

$\Der^N\p(E')$ \> derivations that commute with $N$ and kill
cuspidal loop \> p.~\pageref{def:derN}\\

$\Der^E\p(U)$ \> the derivations of $\p(U)$ that extend to
$\p(E')$, etc.\> p.~\pageref{def:derE} \\

\end{tabbing}

\end{document}